\newcommand{\ds}{\displaystyle}
\newtheorem{theorem}{Theorem}[section]
\newtheorem{lemma}[theorem]{Lemma}
\newtheorem{proposition}[theorem]{Proposition}
\newtheorem{corollary}[theorem]{Corollary}
\theoremstyle{definition}
\newtheorem{definition}[theorem]{Definition}
\newtheorem{remark}[theorem]{Remark}
\numberwithin{equation}{section}
\newtheorem{example}{Example}[section]
\DeclareMathOperator{\id}{Id}
\DeclareMathOperator{\diam}{Diam}
\DeclareMathOperator{\vol}{vol}
\DeclareMathOperator{\Ca}{Cap}
 \DeclareMathOperator{\Cov}{Cov}
\DeclareMathOperator{\dd}{d\mathfrak{m}}
\DeclareMathOperator{\ddd}{d}
\DeclareMathOperator{\supp}{Spt}
\DeclareMathOperator{\suppor}{supp}
\DeclareMathOperator{\Do}{Dou}
\DeclareMathOperator{\CD}{CD}
\DeclareMathOperator{\CDD}{CDD}
\DeclareMathOperator{\pCDD}{pCDD}
\DeclareMathOperator{\dis}{dis}
\DeclareMathOperator{\Leb}{\mathscr{L}}
\author{Alexandru Krist\'aly}
\address{Department of Economics\\
	Babe\c s-Bolyai University\\
	400591 Cluj-Napoca, Romania \&  Institute of Applied Mathematics\\
 \'Obuda University\\
 1034 Budapest, Hungary}
  \email{alexandru.kristaly@ubbcluj.ro; kristaly.alexandru@nik.uni-obuda.hu}
\author{Wei Zhao}
\address{
Department of Mathematics\\
East China University of Science and Technology\\
200237 Shanghai, China}
\email{szhao\underline{ }wei@yahoo.com}
\keywords{Irreversible metric space; Gromov-Hausdorff topology; optimal transport; weak curvature-dimension condition; Finsler manifold}
\subjclass[2010]{Primary 53C23; 49Q15}
\begin{document}

\title[]{On the geometry of irreversible metric-measure spaces: Convergence, Stability and Analytic aspects}

\begin{abstract}
The paper is devoted to the study of Gromov-Hausdorff convergence and stability of irreversible metric-measure spaces, both in the compact and noncompact cases. While the compact setting is mostly similar to the reversible case developed by J. Lott, K.-T. Sturm  and C. Villani, the noncompact case provides various surprising phenomena. Since the reversibility of noncompact irreversible spaces might be infinite, it is motivated to introduce  a suitable nondecreasing function that bounds the reversibility of larger and larger balls. By this approach, we are able to prove satisfactory convergence/stability results in a suitable  -- reversibility depending --  Gromov-Hausdorff topology.   A wide class of irreversible spaces is provided by Finsler manifolds, which serve to construct various model examples by pointing out genuine differences between the reversible and irreversible settings. We conclude the paper by proving various geometric and functional inequalities (as Brunn-Minkowski, Bishop-Gromov, log-Sobolev and Lichnerowicz inequalities) on  irreversible structures.
\end{abstract}
\maketitle

\tableofcontents

\section{Introduction} \label{sect1}

Irreversible metrics often occur in nature; a prominent example is the Matsumoto  metric (see \cite{Matsumoto}) describing the law of walking on a mountain slope under the action of gravity. Another important class of irreversible metrics is the Randers metric appearing as the solution of the Zermelo navitation problem, see Bao, Robles and Shen \cite{BRS}. A particular representation of the latter metric is the perturbation of the usual Klein metric over the $n(\geq3)$-dimensional Euclidean unit ball $\mathbb{B}^n=\{x\in \mathbb R^n:\|x\|<1\}$, called  the \textit{Funk metric} (see e.g. Shen \cite{Sh1}), defined as $F:\mathbb{B}^n\times
\mathbb R^{n}\to \mathbb R$ by
\begin{equation}\label{Funckmeatirc}
	F(x,y)=\frac{\sqrt{\|y\|^2-(\|x\|^2\|y\|^2-\langle
			x,y\rangle^2)}}{1-\|x\|^2}+\frac{\langle x,y\rangle}{1-\|x\|^2},\ x\in
	\mathbb{B}^n,\ y\in T_x\mathbb{B}^n=\mathbb R^n,
\end{equation}
where $\|\cdot\|$ and
$\langle\cdot, \cdot\rangle$ denote the $n$-dimensional Euclidean
norm and inner product, respectively. The distance function associated
to $F$ is
$$
d_{F}(x_1,x_2)=\ln\frac{\sqrt{\|x_1-x_2\|^2-(\|x_1\|^2\|x_2\|^2-\langle x_1,x_2\rangle^2)}-\langle x_1,x_2-x_1\rangle}{\sqrt{\|x_1-x_2\|^2-(\|x_1\|^2\|x_2\|^2-\langle x_1,x_2\rangle^2)}-\langle x_2,x_2-x_1\rangle},\ x_1,x_2\in \mathbb{B}^n.
$$
It is immediate that usually $d_F(x_1,x_2)\neq d_F(x_2,x_1)$ and particularly,
\[
\lim_{\|x\|\rightarrow1^-}d_F(\mathbf{0},x)=\infty,\ \lim_{\|x\|\rightarrow1^-} d_F(x,\mathbf{0})=\log2.
\]
  We observe that $d_F$ is non-negative and verifies the triangle inequality, but not the symmetry; in fact,  $(\mathbb{B}^n,d_F)$  is an object which serves as a model structure where the symmetry fails.

In general, a pair $(X,d)$ is called an {\it irreversible metric space} if $X$ is a nonempty set and the metric $d: X\times X\to \mathbb R$ verifies  for any $x,y,z\in X$  the following properties:
\begin{itemize}
	\item[(i)] non-negativity: $d(x,y)\geq 0$ with equality if and only if $x=y$;
	
	\item[(ii)] triangle inequality: $d(x,z)\leq d(x,y)+d(y,z)$.
\end{itemize}

In addition, if for every $x,y\in X$ one has the
\begin{itemize}
	\item[(iii)] symmetry: $d(x,y)=d(y,x)$,
\end{itemize}
the pair $(X,d)$ is called a {\it reversible metric space}.

The geometry of reversible metric spaces has been widely studied.  Especially, Gromov \cite{Gromov} introduced the so-called {\it Gromov-Hausdorff  topology} to study the convergence of such spaces, which plays an important role in many fields of mathematics.
During the last two decades deep studies appeared by describing the geometry of reversible metric-measure spaces.
In particular, based on the theory of optimal transport, Lott and Villani \cite{LV}, and Sturm \cite{Sturm-1, Sturm-2}  introduced independently the synthetic notion of Ricci curvature on reversible metric-measure spaces by providing the corresponding  stability under the measured Gromov-Hausdorff convergence.

Motivated by the aforementioned works, our purpose is to present a comprehensive study of \textit{irreversible metric spaces}, by describing the appropriate convergence/stability of such   objects. Inspired  by Rademacher \cite{R, Rademacher}, a  central role in our study is played by the {\it reversibility} of a metric space $(X,d)$  defined by   $$\lambda_d(X):=\sup_{x\neq y} \frac{d(x,y)}{d(y,x)}.$$  Clearly, $\lambda_d(X)\geq1$, while $\lambda_d(X)=1$ holds if and only if $(X,d)$ is reversible.


The first part of the present paper focuses on the geometry of irreversible metric spaces.
The primordial question in this setting is
the modality of studying the convergence of such spaces.
In fact, this issue has been investigated first by  Shen and the second author \cite{SZ} for a collection of compact metric spaces whose  reversibilities are uniformly bounded from above by some constant $\theta\in [1,\infty)$. The main tool rests upon the  introduction of a generalized topology, say {\it $\theta$-Gromov-Hausdorff topology}, to study the convergence of such spaces. In particular, Gromov's precompactness theorem remains valid in this setting. Moreover, by analyzing the approach from \cite{SZ}, we are going to prove that the $\theta$-Gromov-Hausdorff topology
is optimal from several points of view:

\begin{itemize}

\item \textbf{Compatibility}. As expected, the $1$-Gromov-Hausdorff topology is exactly the original Gromov-Hausdorff topology in the reversible case. Moreover, if a sequence of compact metric spaces is convergent in the $\theta$-Gromov-Hausdorff topology, then it must converge to the same limit in the $\vartheta$-Gromov-Hausdorff topology for any $\vartheta\geq \theta$.

\item \textbf{Necessity of boundedness}. Although different $\theta$-Gromov-Hausdorff topologies are compatible, the uniform boundedness of their reversibilities  is a necessary condition. Indeed, whenever no such uniform boundedness  is imposed on the  reversibilities, the convergence might be not well-defined and particularly,  Gromov's precompactness
 would fail; for details, see Example \ref{flawgroha} and Remark \ref{precompactnessfail}.

\item \textbf{Fineness}. Every irreversible metric space $(X,d)$ can be symmetrized to be a reversible one $(X,\hat{d})$ by setting $\hat{d}(x,y):=\frac12[d(x,y)+d(y,x)]$. It is remarkable that if a sequence  of compact irreversible metric spaces is convergent in some $\theta$-Gromov-Hausdorff topology, then the sequence of the corresponding symmetrized spaces must converge in the original Gromov-Hausdorff topology. However, the converse need \textit{not} hold, see Example \ref{symmetricexample}. Accordingly, this generalized Gromov-Hausdorff topology is (strictly) finer than the original Gromov-Hausdorff topology.

\end{itemize}

The convergence of noncompact irreversible metric spaces is also considered in the paper; note that in this case the reversibilities might be infinity, which prevents the applicability of the approach from Shen and Zhao \cite{SZ}.
However, considering the model Funk metric    \eqref{Funckmeatirc}, a direct calculation yields
\[
\lambda_{d_F}(\mathbb{B}^n)=\infty,\quad \lambda_{d_F}( \overline{B^+_\mathbf{0}(r)})\leq 2e^r-1,
\]
 where $B^+_\mathbf{0}(r)$ is the forward open ball of radius $r$ centered at $\mathbf{0}$, i.e.,
$B^+_\mathbf{0}(r)=\{x\in \mathbb{B}^n|\, d_F(\mathbf{0},x)<r\}$.
The latter estimate suggests to consider the collection of  pointed irreversible metric spaces $(X,\star,d)$ whose reversibilities satisfy $\lambda_d(\overline{B^+_\star(r)})\leq \Theta(r),$ where $\Theta:(0,\infty)\rightarrow [1,\infty)$ is a given nondecreasing function. By means of $\Theta$, we define an appropriate topology, called as the {\it pointed forward $\Theta$-Gromov-Hausdorff topology}  on such spaces. Compared with the compact case, it is immediate to observe that the presence of the boundedness function $\Theta$ is necessary, while   different pointed forward $\Theta$-Gromov-Hausdorff topologies are compatible. Furthermore, -- as expected -- this new topology covers both the reversible and  compact cases, see Proposition \ref{welldefinednoncompactGHCONVER}.
Even more, under such a topology, Gromov's precompactness theorem still holds, see Theorem \ref{noncompactprecompact}, while every tangent space is a tangent cone of a Finsler manifold, cf. Proposition \ref{tanggespacecone}. Spectacularly, it turns out that the generalized Gromov-Hausdorff topologies both in the compact and noncompact cases can
be defined equivalently  by almost isometries, which provides the required tool to study the convergence of irreversible metric-measure spaces. In particular, the corresponding measured Gromov-Hausdorff topology is a generalized Gromov-Hausdorff-Prokhorov topology, see Section \ref{GHPTOPOLOGY}.

 The second part of the paper is devoted to the study of optimal transport on irreversible metric-measure spaces. In particular, in the compact case optimal transport is stable under the generalized Gromov-Hausdorff topology (Theorem \ref{stabilityoptimal}), which is an irreversible version of Villani \cite[Theorem 28.9]{Vi}. In addition, we define the Ricci curvature on irreversible metric-measure space, i.e., {\it the weak curvature-dimension condition} ${\CD}(K,N)$. It turns out that  the Bishop-Gromov type volume comparison, the Bonnet-Myers  compactness theorem and the Brunn-Minkowski inequality remains valid in this setting, see Section \ref{Riccappli}. Furthermore,
 the weak curvature-dimension condition is also stable under the generalized Gromov-Hausdorff topology, see Section \ref{staofRicc}. Finally, we provide various functional inequalities on weak ${\CD}(K,N)$ irreversible spaces, see Section \ref{section5-4}; {special cases  of these inequalities in the Finsler context have been proved by  Ohta \cite{O,O1, O2}, and Ohta and Sturm \cite{Ot}.}


Although some constructions throughout the paper are similar to the reversible case, peculiar differences appear due to the irreversible character of the metric spaces we are working on. {Such phenomena are strongly supported by various examples (mostly coming from irreversible Finsler structures) which provide the  motivation and real flavor of the present work.}

\section{Forward metric and length spaces}\label{Forelengthsectonstad}

\subsection{Forward metric spaces}
In order to study irreversible metric spaces, we recall the following definition which was introduced in \cite{SZ}.

\begin{definition}\label{generalsapcedef}
Let $X$ be a set and $d:X\times X\rightarrow [0,\infty)$ be a function on $X$. The pair $(X,d)$ is called  {an} {\it irreversible metric space} if for any   $x,y,z \in X$:

\smallskip

(i) $d(x,y)\geq 0,
\mbox{ with equality if and only if } x=y;$
\ \ \ (ii) $ d(x,z)\leq d(x,y)+d(y,z).$

\smallskip

\noindent In particular, if $(X,d)$ is {an} irreversible metric space, then $d$ is called a {\it metric} on $X$.
\end{definition}
\begin{remark}
The reason why we assume $d(x,y)<\infty$ for every $x,y\in X$ is to eliminate  some problematic cases. Moreover, the distance on every forward/backward geodesically complete Finsler manifold is always finite.
\end{remark}

Since the metric $d$ of
 {an} irreversible metric space $(X,d)$
  could be asymmetric, there are two kinds of balls, i.e., forward and backward balls, respectively.
  More precisely, given any  $r>0$ and a point $x\in X$,  the forward ball $B^+_x(r)$ (resp., backward ball $B_x^-(r)$) of radius $r$ centered at $x$ is defined as
\[
B^+_x(r):=\{y\in X| \, d(x,y)<r\},\quad B^-_x(r):=\{y\in X| \, d(y,x)<r\}.
\]

Let $\mathcal {T}_+$ (resp., $\mathcal {T}_-$) denote by the   topology induced by forward balls (resp., by backward balls).
In order to investigate the relation between $\mathcal {T}_+$ and $\mathcal {T}_-$,   the following definition was introduced in \cite{SZ}.
\begin{definition}\label{reversibilitydef}
Let $(X,d)$ be {an}  irreversible metric space. Given any nonempty subset $A\subset X$, define
\begin{equation*}
\lambda_d(A):=\inf\left\{\lambda\geq1|\,d(x,y)\leq \lambda\cdot d(y,x) \text{ for any }x,y\in A  \right\}.
\end{equation*}
Here, $\lambda_d(X)$ is call the {\it reversibility} of $(X,d)$.  In particular, $(X,d)$ is called reversible if  $\lambda_d(X)=1$.
\end{definition}

Irreversible metric spaces with finite reversibility are studied in \cite{SZ,Z3}, in which case $\mathcal {T}_+=\mathcal {T}_-$. In this paper, we mainly consider
a more general case.

\begin{definition}\label{thetametricspace}
Let $\Theta:[0,\infty)\rightarrow [1,\infty)$ be a (unnecessarily continuous) nondecreasing function. A triple $(X,\star,d)$ is called a {\it pointed forward  $\Theta$-metric space} if $(X,d)$ is {an}  irreversible metric space and $\star$ is a point in $X$ such that $\lambda_d\left(\overline{B^+_\star(r)}\right)\leq \Theta(r)$ for any $r>0$. Moreover, if $\Theta\equiv\theta$ is a constant, $(X,d)$ is called a {\it $\theta$-metric space}.
\end{definition}

\begin{remark}\label{forwardpointspaceandbackwardones}

 We present some remarks which are useful in the sequel:

\begin{itemize}

\item[(a)]  If $(X,\star,d)$ is a pointed forward $\Theta$-metric space, then for every other point $x\in X$, the triplet $(X,x,d)$ is also a pointed forward $\Theta'$-metric space, where $\Theta'(r):=\Theta(r+d(\star,x))$. Moreover, if $\diam(X,d):=\sup_{x,y\in X}d(x,y)<\infty$, then $(X,d)$ is a $\theta$-metric space where $\theta:=\Theta(\diam(X,d))$.

\smallskip

\item[(b)] One can similarly define a     triple $(X,\star,d)$ to be a {\it pointed backward  $\Theta$-metric space}  if  there holds $\lambda_d\left(\overline{B^-_\star(r)}\right)\leq \Theta(r)$ for any $r>0$. Note that
a pointed backward  $\Theta$-metric space could be not a pointed forward one, and vice versa. For instance, the space $(\mathbb{B}^n,\mathbf{0},d_F)$ arising in \eqref{Funckmeatirc} is a pointed forward $(2e^r-1)$-metric space, but not backward one, since $\lambda_{d_F}\left(\overline{B^-_\star(r)}\right)=\infty$ for any $r\geq \log 2$.

\item[(c)] Given {an}  irreversible metric space $(X, d)$, {\it the  reverse metric} is defined as  $\overleftarrow{d}(x,y):=d(y,x)$.  In particular,
$(X,\star,d)$ is a pointed forward (resp., backward) $\Theta$-metric space if and only if $(X,\star,\overleftarrow{d})$ is a pointed backward (resp., forward)  $\Theta$-metric space. For this reason, we focus only to  forward $\Theta$-metric spaces.

\end{itemize}
\end{remark}


For the general case, we have the following result. Since the proof is long but standard,
we leave it in Appendix \ref{propergenerlengappex}.

\begin{theorem}\label{topologychara}
Let $\mathcal {X}=(X,\star, d)$ be a pointed forward $\Theta$-metric space. Thus:
\begin{itemize}

\item[(i)] $\mathcal {T}_-\subset \mathcal {T}_+$; hence, $d$ is continuous under $\mathcal {T}_+\times  \mathcal {T}_+$ and $\mathcal {X}$ is a Hausdorff space;

\smallskip

\item[(ii)]  $\mathcal {T}_+$ is exactly the topology $\hat{\mathcal{T}}$ induced by the symmetrized metric
\[
\hat{d}(x,y)=\frac12[d(x,y)+d(y,x)].\tag{2.1}\label{symmmetricde}
\]


\end{itemize}

\end{theorem}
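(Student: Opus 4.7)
The plan is to exploit the local reversibility bound $\lambda_d(\overline{B^+_\star(r)}) \le \Theta(r)$ to compare forward and backward distances on bounded regions centered at $\star$, and then read off both parts as consequences of this comparison.

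For (i), the core step is to show that every backward ball $B^-_x(r)$ is $\mathcal{T}_+$-open. Given $y \in B^-_x(r)$, set $\delta := r - d(y,x) > 0$ and pick any $R > d(\star,y)$; for $\epsilon < R - d(\star,y)$, the triangle inequality $d(\star,z) \le d(\star,y) + d(y,z)$ forces $B^+_y(\epsilon) \subset \overline{B^+_\star(R)}$. By Definition \ref{thetametricspace}, every $z \in B^+_y(\epsilon)$ then satisfies $d(z,y) \le \Theta(R)\,d(y,z) < \Theta(R)\epsilon$, so further shrinking $\epsilon < \delta/\Theta(R)$ yields $d(z,x) \le d(z,y) + d(y,x) < r$, i.e.\ $B^+_y(\epsilon) \subset B^-_x(r)$. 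This establishes $\mathcal{T}_- \subset \mathcal{T}_+$. Continuity of $d$ on $\mathcal{T}_+ \times \mathcal{T}_+$ then follows from the one-sided estimates $d(x_n,y_n) - d(x,y) \le d(x_n,x) + d(y,y_n)$ and $d(x,y) - d(x_n,y_n) \le d(x,x_n) + d(y_n,y)$, combined with the same ball-wise reversibility used to turn forward convergence $x_n \to x$ into two-sided convergence $d(x_n,x) \to 0$. For the Hausdorff property, given $x \ne y$, both $d(x,y), d(y,x)$ are positive; the sets $B^+_x(\epsilon) \cap B^-_x(\epsilon)$ and $B^+_y(\epsilon) \cap B^-_y(\epsilon)$ are $\mathcal{T}_+$-open by the preceding step, and any common point $z$ would give $d(x,y) \le d(x,z) + d(z,y) < 2\epsilon$, a contradiction for $\epsilon < d(x,y)/2$.

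For (ii), I would prove the two inclusions separately. The inclusion $\mathcal{T}_+ \subset \hat{\mathcal{T}}$ is immediate: if $y \in B^+_x(r)$, set $\epsilon := (r - d(x,y))/2$; any $z$ with $\hat{d}(y,z) < \epsilon$ has $d(y,z) < 2\epsilon$, hence $d(x,z) < r$, so $\hat{B}_y(\epsilon) \subset B^+_x(r)$. For the reverse inclusion $\hat{\mathcal{T}} \subset \mathcal{T}_+$, given $y \in \hat{B}_x(r)$ I would pick $R > d(\star,y)$ and $\epsilon > 0$ small enough that $B^+_y(\epsilon) \subset \overline{B^+_\star(R)}$ together with $\tfrac{1+\Theta(R)}{2}\epsilon < r - \hat{d}(x,y)$. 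Reversibility then gives $\hat{d}(y,z) \le \tfrac{1+\Theta(R)}{2}\,d(y,z)$ for every $z \in B^+_y(\epsilon)$, so $B^+_y(\epsilon) \subset \hat{B}_x(r)$, proving that $\hat{B}_x(r) \in \mathcal{T}_+$.

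The main technical subtlety throughout is that $\Theta$ is a function of the radius rather than a single global constant, so at each step one must first fix a reference radius $R$ large enough to contain both $y$ and a small forward neighborhood of $y$, and \emph{only then} shrink $\epsilon$ in terms of $\Theta(R)$. Once this bookkeeping is made explicit, the remaining arguments reduce to routine manipulations of the triangle inequality and the definitions of the three topologies.
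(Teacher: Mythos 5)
Your proof is correct and follows essentially the same route as the paper's (Appendix \ref{propergenerlengappex}): backward balls are shown to be $\mathcal{T}_+$-open by bounding the reversibility on a closed forward ball about $\star$ that contains a small forward neighbourhood of the point in question, and the two inclusions in (ii) are obtained in the same way (one trivially, one via $\hat{d}(y,z)\leq \tfrac{1+\Theta(R)}{2}d(y,z)$ on such a ball). The only differences are cosmetic — the paper first recentres at $x$ via Remark \ref{forwardpointspaceandbackwardones}/(a) and uses $\Theta(r)\,r\to 0$ to get $B^+_x(r)\subset B^-_x(R)$, whereas you work directly with a radius $R>d(\star,y)$, and you additionally spell out the continuity and Hausdorff assertions that the paper leaves as an immediate consequence.
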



\noindent\textbf{Convention 1.} For simplicity of presentation, we introduce the following conventions in the sequel:

\begin{itemize}

\item[(1)]  $\theta$ always denotes a constant not less than $1$ while $\Theta$ denotes a (unnecessarily continuous) nondecreasing function from  $[0,\infty)$ to $[1,\infty)$. Particularly, $\Theta$ could be a constant $\theta$.

\item[(2)] Every pointed forward $\Theta$-metric space  $(X,\star,d)$ is endowed with the forward topology. In particular, if $\star$ and $\Theta$ are not relevant or can be naturally deduced  their form from the context, we just use $(X,d)$ to denote them and called them {\it forward  metric spaces}  for convenience.

\end{itemize}

Now we recall the following definition (cf. \cite{SZ}).
\begin{definition}
Let $(X,d)$ be {an}  irreversible metric space.

\begin{itemize}

\item
A sequence $(x_i)_i$ in $X$ is called
a {\it forward} (resp., {\it backward}) {\it Cauchy sequence} if, for each $\epsilon>0$, there exists
$N>0$ satisfying when $j\geq i>N$, then $d(x_i,x_j)<\epsilon$ (resp.,
$d(x_j,x_i)<\epsilon$).

\item
Given $\epsilon>0$, a subset $A\subset X$ is called a
{\it forward} (resp., {\it backward})  {\it $\epsilon$-net} of $X$ if, for each $x\in
X$, there exists $a_x\in A\text{ such that }d(a_x,x)<\epsilon $ (resp., $d(x,a_x)< \epsilon$).

\smallskip

\item  $(X, d)$ is called {\it forward} (resp., {\it backward}) {\it complete} if
every forward (resp. backward) Cauchy sequence in $X$ converges in
$X$ with respect to $\mathcal {T}_+$.

\item   $(X,d)$ is called {\it forward} (resp., {\it backward}) {\it totally bounded} if  it has a finite forward (resp. backward) $\epsilon$-net
for each $\epsilon > 0$.

\item  $(X,d)$ is called {\it forward} (resp., {\it backward}) {\it boundedly compact} if every bounded closed forward (resp., backward) ball  is compact.

\end{itemize}

In particular, if $(X,d)$ is called {\it complete/totally bounded/boundedly compact} if it is both  forward  and backward complete/totally bounded/boundedly compact, respectively.
\end{definition}

If $\Theta$ is a constant, then forward and backward properties are equivalent.
On the other hand, the situation dramatically changes whenever $\Theta$ is unbounded. For example, $(\mathbb{B}^n,\mathbf{0},d_F)$ coming from \eqref{Funckmeatirc} is forward but not backward complete. Nevertheless,  the following relations hold.


\begin{proposition}\label{forwardandbackwardrelation}
Let $(X,d)$ is a forward metric space. Thus,
\begin{itemize}
\item[(i)] $(X,d)$ is  complete   if and only if it is  backward complete;

\item[(ii)]  $(X,d)$ is  boundedly compact if and only if it is  backward  boundedly compact;

\item[(iii)]  $(X,d)$ is forward complete if it is forward boundedly compact.
\end{itemize}

\end{proposition}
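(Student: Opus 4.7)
In all three parts the engine is the $\Theta$-bound on reversibility: on any forward ball of finite radius, the metrics $d$, $\overleftarrow d$, and $\hat d$ are pairwise bi-Lipschitz equivalent. This lets us convert forward-type hypotheses into backward or symmetric ones and then combine them with Theorem \ref{topologychara} ($\mathcal{T}_+ = \hat{\mathcal{T}}$ and continuity of $d$) and Remark \ref{forwardpointspaceandbackwardones}(a) (shifting the base point only translates the argument of $\Theta$).

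For (i), the implication complete $\Rightarrow$ backward complete is definitional. Conversely, let $(x_i)$ be forward Cauchy; choose $N$ with $d(x_i, x_j) < 1$ for $j \geq i \geq N$ and set $R := d(\star, x_N) + 2$. The tail then lies in $\overline{B^+_\star(R)}$, where $\lambda_d \leq \Theta(R)$, so
\[
d(x_j, x_i) \leq \Theta(R)\, d(x_i, x_j) \longrightarrow 0
\]
as $j \geq i \to \infty$. Thus $(x_i)$ is also backward Cauchy and converges in $\mathcal{T}_+$ by hypothesis.

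For (ii), again only backward $\Rightarrow$ forward bounded compactness needs proof. Given an arbitrary $\overline{B^+_z(r)}$, continuity of $d(z,\cdot)$ gives $d(z, y) \leq r$ for every $y$ in this closure, and the shifted bound $\Theta'(r) := \Theta(r + d(\star, z))$ from Remark \ref{forwardpointspaceandbackwardones}(a) then yields $d(y, z) \leq \Theta'(r)\, r$. Hence $\overline{B^+_z(r)} \subset \overline{B^-_z(\Theta'(r)\, r + 1)}$, which is compact by hypothesis; being a closed subset (Hausdorffness of $\mathcal{T}_+$) of this compact set, $\overline{B^+_z(r)}$ itself is compact.

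For (iii), repeat the argument of (i) to place the forward Cauchy sequence inside a compact $\overline{B^+_\star(R)}$ on which $\lambda_d \leq \Lambda := \Theta(R)$, so $d$ and $\hat d$ are bi-Lipschitz equivalent with constants depending only on $\Lambda$. The sequence is then $\hat d$-Cauchy in the compact metric space $(\overline{B^+_\star(R)}, \hat d)$, hence $\hat d$-convergent and therefore $\mathcal{T}_+$-convergent. The main subtlety is that without the $\Theta$-bound (iii) genuinely fails -- as the Funk example warns -- so identifying the correct bounded forward ball on which reversibility is controlled is the real crux in each of the three parts.
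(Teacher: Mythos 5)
Your proof is correct and follows essentially the same route as the paper's: in each part you locate the relevant tail or ball inside a forward ball of controlled radius, invoke the $\Theta$-bound on reversibility there (the paper does this via Remark \ref{forwardpointspaceandbackwardones}(a) with a shifted $\Theta'$, you recenter at $\star$ — an immaterial difference), and in (iii) pass to the symmetrized metric $\hat d$ together with Theorem \ref{topologychara}(ii), exactly as in the paper. No gaps.
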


\begin{proof}  (i) It is enough to show the $\Leftarrow$ part. Given any forward Cauchy sequence $(a_i)_i$, there exists $I>0$ such that $a_i\in B^+_{a_I}(1)$ for any $i>I$. Since $(X,a_I,d)$ is a forward $\Theta'$-metric space (see Remark \ref{forwardpointspaceandbackwardones}/(a)), we have $d(a_j,a_i)\leq \Theta'(1) \cdot d(a_i,a_j)$ for any $I<i<j$.
Thus, $(a_i)_{i\geq I+1}$ is a backward Cauchy sequence and hence, $(a_i)_i$ is convergent.

(ii) It suffices to prove the $\Leftarrow$ part.
 Given a closed forward ball $\overline{B^+_x(R)}$, by considering the pointed forward $\Theta'$-metric space $(X,x,d)$, we have $\overline{B^+_x(R)}\subset \overline{B^-_x(\Theta'(R)R)}$. Thus, $ \overline{B^+_x(R)}$ is compact
because $\overline{B^-_x(\Theta'(R)R)}$ is compact.

(iii) Given a forward Cauchy sequence $(x_i)_i$, there exists $N>0$ such that $d(x_i,x_j)<1$ if  $N\leq i<j$. Thus, $(x_j)_{j>N}\subset \overline{B^+_{x_N}(1)}$ and hence, $(x_j)_{j>N}$ is a Cauchy sequence with respect to $\hat{d}$ defined by (\ref{symmmetricde}). In view of Theorem \ref{topologychara}/(ii),
the compactness of $ \overline{B^+_{x_N}(1)}$ implies the convergence of $(x_i)_i$ under $\mathcal {T}_+$. Hence, $(X,d)$ is forward complete.
\end{proof}

Moreover, we have the following result.
\begin{theorem}\label{compactequvitheorem}
Let $(X,d)$ be a  forward metric space. The following are equivalent:

\begin{itemize}
\item[(i)] $(X,d)$ is compact;
\item[(ii)] $(X,d)$ is sequentially compact;
\item[(iii)] $(X,d)$ is forward complete and forward totally bounded;
\item[(iv)] $(X,d)$ is  complete and  totally bounded.
\end{itemize}
\end{theorem}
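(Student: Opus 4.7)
The plan is to leverage Theorem \ref{topologychara}(ii), which tells us that the forward topology $\mathcal{T}_+$ is induced by the symmetric metric $\hat{d}$ of \eqref{symmmetricde}. This metrizes $(X,\mathcal{T}_+)$ and lets us invoke standard metric-space theory for the pair $(X,\hat{d})$; the real work is translating between the symmetric notions for $\hat{d}$ and the forward/backward notions for $d$. I would prove the equivalences cyclically: (i)$\Rightarrow$(ii)$\Rightarrow$(iii)$\Rightarrow$(iv)$\Rightarrow$(i).

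For (i)$\Leftrightarrow$(ii), since $\mathcal{T}_+$ is metric, compactness and sequential compactness coincide. For (ii)$\Rightarrow$(iii), given a forward Cauchy sequence $(x_i)$, a standard summable‐tail argument places all tail terms in some forward ball $B^+_{x_N}(1)$, where the reversibility is bounded by $\Theta'(1)$ (using Remark \ref{forwardpointspaceandbackwardones}(a)); this upgrades forward Cauchy to backward Cauchy, hence to $\hat d$-Cauchy, after which the convergent subsequence provided by sequential compactness pulls the whole sequence along. Forward total boundedness follows by the usual contradiction: if no finite forward $\varepsilon$-net exists, I extract $(x_i)$ with $d(x_i,x_j)\ge\varepsilon$ for $j>i$, and any $\mathcal{T}_+$-convergent subsequence would force $d(x_{i_k},x_{i_l})\le 2\hat d(x_{i_k},x_{i_l})\to 0$, contradiction.

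The crux is (iii)$\Rightarrow$(iv), and this is where I expect the main obstacle. The point is that forward total boundedness is much stronger here than one might think: a finite forward $1$-net $A=\{a_1,\dots,a_n\}$ sits in some $\overline{B^+_\star(R_0)}$, and every $x\in X$ satisfies $d(a_i,x)<1$ for some $i$, giving
\[
d(\star,x)\le d(\star,a_i)+d(a_i,x)<R_0+1.
\]
Thus $X\subset\overline{B^+_\star(R_0+1)}$ and the reversibility on all of $X$ is controlled by $\theta:=\Theta(R_0+1)<\infty$. Consequently $d$ and $\hat d$ are bi-Lipschitz equivalent on $X$, which instantly promotes every forward notion to its backward counterpart: forward complete $\Leftrightarrow$ backward complete (also using Proposition \ref{forwardandbackwardrelation}(i)) and forward totally bounded $\Leftrightarrow$ backward totally bounded. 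This yields (iv).

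Finally, (iv)$\Rightarrow$(i) is routine once the bi-Lipschitz equivalence with $\hat d$ is in hand: (iv) trivially implies forward total boundedness, hence $\lambda_d(X)<\infty$ as above, so $(X,\hat d)$ is a complete and totally bounded metric space, therefore compact, and $(X,\mathcal{T}_+)$ inherits compactness from Theorem \ref{topologychara}(ii). The only step that genuinely uses the $\Theta$-structure in an essential way is the passage (iii)$\Rightarrow$(iv); the rest is either verbatim metric space theory or the simple comparison $d\le 2\hat d$.
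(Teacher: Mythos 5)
Your proof is correct and follows essentially the same route as the paper's (sketched) argument: both rest on the metrization of $\mathcal{T}_+$ by the symmetrized metric $\hat d$ from Theorem \ref{topologychara}/(ii) together with the fact that bounded sets have finite reversibility, which reduces everything to classical metric-space theory via Proposition \ref{forwardandbackwardrelation}. The only (minor) difference is organizational: you derive the global bound $\lambda_d(X)\le\Theta(R_0+1)$ from a finite forward $1$-net, so that (iii)$\Rightarrow$(iv) is proved directly, whereas the paper obtains (iii)$\Leftrightarrow$(iv) from compactness (a compact space being a $\theta$-metric space with $\theta=\Theta(\diam(X,d))$) and cites a modification of Bredon's argument for (i)$\Leftrightarrow$(ii)$\Leftrightarrow$(iii).
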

\begin{proof}[Proof of Sketch] If $(X,d)$ is compact, then it is a $\theta$-metric space, where $\theta:=\Theta(\diam(X,d))$, in which case (iii)$\Leftrightarrow$(iv). On the other hand,
a suitable modification of the proof of Bredon \cite[Theorem 9.4, p.25]{Bre} together with Theorem \ref{topologychara} and Proposition \ref{forwardandbackwardrelation} furnishes the equivalences (i)$\Leftrightarrow$(ii)$\Leftrightarrow$(iii).
\end{proof}

A forward metric space can be noncompact if it is forward complete and backward totally bounded; for instance, the space  $(\mathbb{B}^n,\mathbf{0},d_F)$ from \eqref{Funckmeatirc} satisfies these properties.
Now we turn to study the length of a curve in a forward metric space.
\begin{definition}\label{dfelength}
Let $(X,d)$ be a forward metric space and $\gamma$ be a path in $X$, i.e., a
continuous map $\gamma:[a,b]\rightarrow X$. Consider a partition $Y$
of $[a,b]$, that is, a finite collection of points $Y=\{t_0,\ldots
,t_N\}$ such that $a=t_0\leq t_1\leq \cdots \leq t_N=b$. The
supremum of the sums
\[
\Sigma(Y):=\overset{N}{\underset{i=1}{\sum}}d(\gamma(t_{i-1}),\gamma(t_i)),
\]
over all the partitions $Y$ is called the {\it length} of $\gamma$ (with
respect to the metric $d$) and denoted by $L_d(\gamma)$. A path is said
to be {\it rectifiable} if its length is finite.

The {\it length structure} induced by the metric $d$ is defined as follows: all continuous paths (parameterized by closed intervals) are admissible, and the length is given by the function $L_d$.
\end{definition}
For a continuous curve defined on an open interval, even if it has finite length,   the reverse may have infinite length. For example, consider the pointed forward metric space $(\mathbb{B}^n,\mathbf{0},d_F)$ from  \eqref{Funckmeatirc}; thus, $\gamma(t)=(1-t,0,\cdots,0)$, $t\in (0,1)$ satisfies  $L_{d_F}(\gamma)=\log 2$ while $L_{d_F}(\gamma^{-1})=\infty$, where $\gamma^{-1}(t):=\gamma(1-t)$.

\smallskip

\noindent \textbf{Convention 2.}
For convenience,
all the   admissible paths in this paper are defined on $[0,1]$. Given an admissible path $\gamma:[0,1]\rightarrow X$,  we use $L_d(\gamma,t,t')$ to denote the length $\gamma|_{[t,t']}$ for any $0\leq t\leq t'\leq 1$.


\smallskip

Using the same argument as in the proof of Burago, Burago and Ivanov\cite[Proposition 2.3.4]{DYS}, one can easily show the following result.

\begin{proposition}\label{basisessentially2} Let $(X,d)$  is a  forward metric space.
The length structure  satisfies the following properties: for any  admissible path $\gamma:[0,1]\rightarrow X$, we have

\begin{itemize}
\item[(i)] Generalized length inequality: $L_d(\gamma)\geq d(\gamma(0),\gamma(1))$.

\item[(ii)] Additivity: if $0<t<1$, then $L_d(\gamma,0,t)+L_d(\gamma,t,1)=L_d(\gamma)$. In particular, $L_d(\gamma,0,t)$ is a nondecreasing function of $t$.

\item[(iii)] If $\gamma(t)$, $0\leq t\leq 1$, is rectifiable, then function $L_d(\gamma,a,b)$ is uniformly continuous in $a$ and $b$.

\item[(iv)]   $L_d$ is lower semi-continuous function on $C([0,1];X)$
  with respect to pointwise convergence, where $C([0,1];X)$ denotes the set of  curves from $[0,1]$ to $X$.
\end{itemize}
\end{proposition}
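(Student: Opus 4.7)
The plan is to transfer the Burago--Burago--Ivanov argument \cite[Prop.~2.3.4]{DYS} to the forward setting, checking only that the asymmetry of $d$ never interferes. Three facts already established in the excerpt do all the work: (a) $d$ is jointly continuous on $(X,\mathcal{T}_+)\times(X,\mathcal{T}_+)$ by Theorem~\ref{topologychara}(i); (b) $\mathcal{T}_+$ coincides with the topology induced by $\hat{d}$ (Theorem~\ref{topologychara}(ii)), hence $\gamma:[0,1]\to X$ is uniformly $\hat{d}$-continuous; (c) the triangle inequality for $d$ is always applied in the natural order $t_{i-1}<s<t_i$ of partition points, so symmetry is never needed.

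\emph{Items (i), (ii) and (iv).} Taking the trivial partition $Y=\{0,1\}$ gives (i). For (ii), concatenating partitions of $[0,t]$ and $[t,1]$ yields $L_d(\gamma)\geq L_d(\gamma,0,t)+L_d(\gamma,t,1)$; the reverse inequality is obtained by refining any partition of $[0,1]$ to include $t$ and using the forward triangle inequality (a refinement can only increase $\Sigma$), then passing to suprema. Monotonicity of $t\mapsto L_d(\gamma,0,t)$ follows from (i) and (ii) applied to the sub-interval $[t,t']$. For (iv), fix a partition $Y$; by (a), pointwise convergence $\gamma_n\to\gamma$ implies $\Sigma_{\gamma_n}(Y)\to\Sigma_\gamma(Y)$, hence $\Sigma_\gamma(Y)\leq\liminf_n L_d(\gamma_n)$, and taking the supremum over $Y$ gives $L_d(\gamma)\leq\liminf_n L_d(\gamma_n)$.

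\emph{Item (iii).} Set $\ell(t):=L_d(\gamma,0,t)$, nondecreasing and bounded by $L_d(\gamma)<\infty$. I would first prove pointwise continuity of $\ell$ at each $t\in[0,1]$; uniform continuity of $\ell$ follows from compactness of $[0,1]$, and joint uniform continuity of $L_d(\gamma,a,b)$ in $(a,b)$ then follows from the additivity bound
\[
|L_d(\gamma,a,b)-L_d(\gamma,a',b')|\leq L_d(\gamma,a\wedge a',a\vee a')+L_d(\gamma,b\wedge b',b\vee b').
\]
For right-continuity, assume for contradiction that $\ell(t+)-\ell(t)=c>0$; then $L_d(\gamma,t,t')\geq c$ for every $t'>t$. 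Select $Y=\{0=t_0<\dots<t_N=1\}$ with $L_d(\gamma)-\Sigma(Y)<c/4$, and refine $Y$ (without decreasing $\Sigma$) so that $\hat{d}(\gamma(t_{i-1}),\gamma(t_i))<c/8$ for every $i$, which is possible by (b). Choosing $j$ with $t\in[t_{j-1},t_j)$, additivity combined with $L_d(\gamma,t_{i-1},t_i)\geq d(\gamma(t_{i-1}),\gamma(t_i))$ gives $L_d(\gamma,t_{j-1},t_j)-d(\gamma(t_{j-1}),\gamma(t_j))\leq L_d(\gamma)-\Sigma(Y)<c/4$, while $d\leq 2\hat{d}$ gives $d(\gamma(t_{j-1}),\gamma(t_j))<c/4$; thus $c\leq L_d(\gamma,t_{j-1},t_j)<c/2$, a contradiction. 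Left-continuity is analogous.

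\emph{Main obstacle.} The only delicate step is (iii), namely producing a single partition that is simultaneously near-optimal (so the slack $L_d(\gamma)-\Sigma(Y)$ is under control) and $\hat{d}$-fine (so the $\hat{d}$-oscillation between successive vertices is small); the asymmetry of $d$ obliges one to pass through $\hat{d}$ here via the bound $d\leq 2\hat{d}$, which is the only place where the symmetrized metric enters quantitatively. Once the identifications $\mathcal{T}_+=\hat{\mathcal{T}}$ and joint continuity of $d$ from Theorem~\ref{topologychara} are in hand, the remaining items are formal consequences of additivity and the forward triangle inequality.
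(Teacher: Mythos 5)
Your proposal is correct and follows essentially the same route as the paper, which simply defers to the argument of Burago--Burago--Ivanov \cite[Proposition 2.3.4]{DYS}; your write-up just makes explicit the adaptation the paper has in mind, namely that the forward triangle inequality is always used in the correct order, that joint continuity of $d$ and the identification $\mathcal{T}_+=\hat{\mathcal{T}}$ from Theorem \ref{topologychara} give (iv) and the $\hat{d}$-fine refinement in (iii), and that $d\leq 2\hat{d}$ closes the contradiction argument. No gaps.
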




\begin{definition}\label{unformconverge}
Given a forward metric space $(X,d)$,  equip $C([0,1];X)$ by the {\it (forward) uniform topology} $\mathfrak{T}_X$ induced by the metric
$\rho(\gamma_1,\gamma_2):=\max_{0\leq t\leq 1}d(\gamma_1(t),\gamma_2(t))$; i.e., a sequence $(\gamma_k)_k$ is said to be {\it convergent uniformly} to $\gamma$  {if} $\lim_{k\rightarrow \infty}\rho(\gamma,\gamma_k)=0$.
\end{definition}

\begin{remark}\label{unfromequaive}
Let  $(X,\star,d)$ be a pointed forward $\Theta$-metric space. Then   $\mathfrak{T}_X$ actually
coincides with the uniform topology induced by the symmetrized metric $\hat{\rho}(\gamma_1,\gamma_2):=\max_{0\leq t\leq 1}\hat{d}(\gamma_1(t),\gamma_2(t))$. In fact,
\[
\frac12 \rho(\gamma_1,\gamma_2)\leq \hat{\rho}(\gamma_1,\gamma_2)=\hat{\rho}(\gamma_2,\gamma_1)\leq \frac{1+\Theta\left(\rho(\star,\gamma_2)+\rho(\gamma_2,\gamma_1)\right)}2 \rho(\gamma_2,\gamma_1),
\]
where $\star$ denotes the constant curve $\gamma(t)\equiv \star$ for $t\in [0,1]$.
\end{remark}

Thanks to Remark \ref{forwardpointspaceandbackwardones}/(a), we have the following Arzel\`a-Ascoli theorem. The proof is almost the same as Burago, Burago and Ivanov  \cite[Theorem 2.5.14]{DYS} and hence, we omit it.

\begin{theorem}\label{Arzela-Ascoli Theorem}If a forward metric space is compact, then
 any sequence of curves $\gamma_i:[0,1]\rightarrow X$ with uniformly bounded length contains a uniformly converging subsequence.
\end{theorem}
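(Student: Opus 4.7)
The plan is to reduce the statement to its classical (reversible) counterpart, i.e.\ Burago-Burago-Ivanov \cite[Theorem 2.5.14]{DYS}, by passing to the symmetrized metric $\hat d$ defined in \eqref{symmmetricde}. Since $(X,d)$ is compact, we have $\diam(X,d)<\infty$, so Remark \ref{forwardpointspaceandbackwardones}/(a) gives that $(X,d)$ is in fact a $\theta$-metric space with $\theta:=\Theta(\diam(X,d))$. The bounds $d(x,y)\leq \theta\,d(y,x)$ and $d(y,x)\leq \theta\,d(x,y)$ then yield
\[
\frac{1+\theta^{-1}}{2}\,d(x,y)\;\leq\;\hat d(x,y)\;\leq\;\frac{1+\theta}{2}\,d(x,y),\qquad \forall\,x,y\in X,
\]
so $d$ and $\hat d$ are bi-Lipschitz equivalent on $X$. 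Consequently $(X,\hat d)$ is a compact reversible metric space, and, by Theorem \ref{topologychara}/(ii), the continuous curves $\gamma_i:[0,1]\to X$ in the forward topology $\mathcal{T}_+$ are exactly the continuous curves into $(X,\hat d)$.

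Next I would transfer the length bound to the symmetrized metric. For any partition $0=t_0\leq\cdots\leq t_N=1$, the right-hand inequality above gives
\[
\sum_{k=1}^N \hat d\bigl(\gamma_i(t_{k-1}),\gamma_i(t_k)\bigr)\;\leq\;\frac{1+\theta}{2}\sum_{k=1}^N d\bigl(\gamma_i(t_{k-1}),\gamma_i(t_k)\bigr)\;\leq\;\frac{1+\theta}{2}\,L,
\]
where $L$ is the common bound on $L_d(\gamma_i)$. Taking the supremum over partitions yields $L_{\hat d}(\gamma_i)\leq \frac{1+\theta}{2}\,L$ for every $i$, so the sequence $(\gamma_i)_i$ has uniformly bounded lengths in the compact reversible space $(X,\hat d)$. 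The classical Arzel\`a-Ascoli theorem of BBI therefore supplies a subsequence $(\gamma_{i_j})_j$ and a curve $\gamma\in C([0,1];X)$ with $\hat\rho(\gamma_{i_j},\gamma)\to 0$.

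To conclude, Remark \ref{unfromequaive} gives $\rho(\gamma_{i_j},\gamma)\leq 2\,\hat\rho(\gamma_{i_j},\gamma)\to 0$, which is exactly uniform convergence in the forward topology $\mathfrak{T}_X$ of Definition \ref{unformconverge}. The only genuinely non-trivial step is the passage from the $d$-length bound to a $\hat d$-length bound: this requires a \emph{global} bound on the reversibility of $(X,d)$, not merely the ball-by-ball control provided by $\Theta$. It is precisely compactness, via the observation $\lambda_d(X)\leq \Theta(\diam(X,d))<\infty$, that supplies this global bound and thus permits the reduction; once this is in place, the classical argument applies essentially verbatim, which is why the authors omit further details.
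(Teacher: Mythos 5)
Your proof is correct. The paper itself omits the argument, saying only that it is ``almost the same as'' Burago--Burago--Ivanov \cite[Theorem 2.5.14]{DYS} thanks to Remark \ref{forwardpointspaceandbackwardones}/(a) --- i.e.\ the intended route is to repeat BBI's argument (constant-speed reparametrization plus equicontinuity and a diagonal extraction) directly in the irreversible setting, using the finite reversibility $\theta=\Theta(\diam(X,d))$ wherever two-sided distance estimates are needed. You instead reduce to the reversible case outright: the global bound $\lambda_d(X)\leq\theta$ gives the two-sided comparison $d\leq 2\hat d\leq(1+\theta)\,d$, so $(X,\hat d)$ is a compact reversible space with the same topology (Theorem \ref{topologychara}/(ii)), the $\hat d$-lengths of the $\gamma_i$ are bounded by $\frac{1+\theta}{2}L$, BBI applies verbatim, and Remark \ref{unfromequaive} transfers the uniform convergence back to $\mathfrak{T}_X$. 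This black-box reduction is slightly cleaner than adapting BBI's proof step by step, and it makes explicit the point the paper leaves implicit, namely that compactness is what upgrades the ball-by-ball bound $\Theta$ to a global reversibility bound; the only caveat you inherit is the same one present in BBI's (and the paper's) formulation, that the uniform convergence is understood after the usual constant-speed reparametrization of the curves. Both arguments ultimately rest on exactly Remark \ref{forwardpointspaceandbackwardones}/(a), so the difference is one of packaging rather than substance.
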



\begin{definition}\label{shortpathdef}Let $(X,d)$ be a forward metric space and let $I$ denote $[0,1]$.

\begin{itemize}
\item A curve $\gamma:I\rightarrow X$ is called a {\it shortest path} if its length is minimal among the curves with the same endpoints; in other words $L_d(\zeta)\geq L_d(\gamma)$ for any curve $\zeta$ from $\gamma(0)$ to $\gamma(1)$.

\item A curve $\gamma:I\rightarrow X$ is called a {\it geodesic} if for every $t\in I$, there exists a closed interval $[a,b]$ containing $t$ in $I$ such that $L_d(\gamma,a,b)=d(\gamma(a),\gamma(b))$.

\item A curve $\gamma:I\rightarrow X$ is called a {\it minimal geodesic}, if  $L_d(\gamma,a,b)=d(\gamma(a),\gamma(b))$ for every  closed interval $[a,b]\subset I$.

\item A curve $\gamma:I\rightarrow X$ is said to  {\it have constant speed} if there exists a constant $C>0$ such that
\[
L_d(\gamma,a,b)=C(b-a),\ \forall\,[a,b]\subset I.
\]
In particular, if $C=1$, then $\gamma$ is said to be {\it naturally parameterized}.

\item A curve $\gamma:I\rightarrow X$ is said to  be {\it Lipschitz continuous} if there is a constant $C>0$ such that
\[
d(\gamma(a),\gamma(b))\leq C(b-a),\ \forall\,[a,b]\subset I,
\]
{in which case} $\gamma$ is also called a {\it $C$-Lipschitz curve}.
\end{itemize}
\end{definition}

Note that a minimal geodesic is always a shortest path, but not vice versa unless $(X,d)$ is a forward length space.

\subsection{Forward length spaces}
A forward metric space $(X,d)$ is called {\it accessible} if for every $x,y\in X$, there is a rectifiable path from $x$ to $y$. In the sequel, all  spaces are accessible.

\begin{proposition}\label{basiclengthspace}
Let $(X, \star,d)$ be a pointed forward  $\Theta$-metric space. Given two points $x,y\in X$, define the
{\rm associated metric} of $d$ as follows:
\[
d_{L}(x,y):=\inf\left\{L_d(\gamma)\,|\,\gamma:[0,1]\rightarrow X, \gamma\text{ is continuous},
\gamma(0)=x,\gamma(1)=y\right\}.\tag{2.2}\label{structureinstrcut}
\]
Then the following statements hold:
\begin{itemize}

\item[(i)] $(X,\star,d_L)$ is  a  pointed forward $\widehat{\Theta}$-metric space, where $\widehat{\Theta}(r):=\Theta\left(  (2+\Theta(r+1))r+1 \right);$

\smallskip

\item[(ii)]  $L_{d_L}(\gamma)=L_d(\gamma)$ for any rectifiable curve $\gamma$ in $(X,d);$

\smallskip

\item[(iii)] $d_{L^2}=d_L$, where   $d_{L^2}:=(d_L)_L$ is the associated metric of $d_L$.

\end{itemize}

\end{proposition}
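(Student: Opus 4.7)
The plan is to verify the three parts in the author's order, with the bulk of the work in (i); parts (ii) and (iii) will follow by standard length-structure bookkeeping.

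For (i), the metric axioms for $d_L$ are quick: non-negativity and the implication $d_L(x,y)=0\Rightarrow x=y$ follow from the generalized length inequality $d_L(x,y)\geq d(x,y)$ of Proposition \ref{basisessentially2}(i) together with $d\geq 0$; the triangle inequality is obtained by concatenating two $\varepsilon$-almost-minimizing paths and letting $\varepsilon\to 0$; finiteness comes from the standing accessibility convention. The substantive content is the reversibility bound: given $x,y\in\overline{B^+_\star(r)}$ (measured in $d_L$), the goal is $d_L(x,y)\leq\Theta(R)\,d_L(y,x)$ with $R:=(2+\Theta(r+1))r+1$. I would proceed in two stages.

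Stage A bounds $d_L(y,\star)$. Choose $\gamma_0:[0,1]\to X$ from $\star$ to $y$ with $L_d(\gamma_0)<r+\delta$ for small $\delta\in(0,1]$. Since $d(\star,\gamma_0(t))\leq L_d(\gamma_0,0,t)<r+\delta$, the image of $\gamma_0$ lies in $\overline{B^+_\star(r+\delta)}$, where $d$ has reversibility at most $\Theta(r+\delta)$. Applying this bound term-by-term to any partition sum defining $L_d(\gamma_0^{-1})$ yields $L_d(\gamma_0^{-1})\leq\Theta(r+\delta)L_d(\gamma_0)<\Theta(r+1)(r+\delta)$; sending $\delta\to 0$ gives $d_L(y,\star)\leq r\Theta(r+1)$, so that $d_L(y,x)\leq d_L(y,\star)+d_L(\star,x)\leq r(1+\Theta(r+1))$. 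Stage B reverses a near-optimal $y\to x$ path. Choose $\gamma$ from $y$ to $x$ with $L_d(\gamma)<d_L(y,x)+\eta$ for $\eta\in(0,1)$; then $d(\star,\gamma(t))\leq d(\star,y)+L_d(\gamma,0,t)<r+d_L(y,x)+\eta\leq 2r+r\Theta(r+1)+1=R$, so $\gamma\subset\overline{B^+_\star(R)}$. A second term-by-term reversibility argument gives $L_d(\gamma^{-1})\leq\Theta(R)L_d(\gamma)$, whence $d_L(x,y)\leq\Theta(R)(d_L(y,x)+\eta)$; letting $\eta\to 0$ completes (i).

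Part (ii) is direct: $L_{d_L}(\gamma)\geq L_d(\gamma)$ since $d_L\geq d$ pointwise; conversely, each partition sum $\sum d_L(\gamma(t_{i-1}),\gamma(t_i))\leq\sum L_d(\gamma,t_{i-1},t_i)=L_d(\gamma)$ by additivity (Proposition \ref{basisessentially2}(ii)), giving $L_{d_L}(\gamma)\leq L_d(\gamma)$. Part (iii) then reduces to $d_{L^2}(x,y)=\inf L_{d_L}(\zeta)=\inf L_d(\zeta)=d_L(x,y)$, the only subtlety being that admissibility for $d_{L^2}$ requires $d_L$-continuity; however, every $d$-rectifiable curve is $d_L$-continuous, because $d_L(\gamma(s),\gamma(t))\leq L_d(\gamma,s,t)$ (together with the symmetric bound via (i)) and the uniform continuity of arc-length (Proposition \ref{basisessentially2}(iii)) force arc-length to be a uniformly continuous $d_L$-modulus on $\gamma$, so the two infima range over the same family of rectifiable paths.

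The main obstacle is Stage A of (i): organising $\delta$ and $\eta$ so that the final ball radius truly fits within the specified formula $\widehat\Theta(r)=\Theta((2+\Theta(r+1))r+1)$. Because $\Theta$ is only nondecreasing and need not be right-continuous, one cannot simply send $\delta\to 0$ inside the outer $\Theta(\cdot)$; the ``$+1$'' buffer in the definition of $\widehat\Theta$ is exactly what absorbs the error $\eta<1$ and the potential jumps of $\Theta$.
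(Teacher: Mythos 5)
Your proof is correct and follows essentially the same route as the paper: verify the metric axioms via accessibility, reverse a near-minimal path through the basepoint to bound the reversed distance within the $d_L$-ball, conclude that near-optimal connecting paths stay inside the $d$-ball of radius $(2+\Theta(r+1))r+1$, and reverse partition sums there to get $\widehat\Theta(r)$; your Stages A and B are just the mirror image (reversing the $y\to x$ path instead of the $x\to y$ one) of the paper's inequalities (2.3)–(2.4), and your treatment of (ii) and (iii) matches the paper's, with your extra remark on $d_L$-continuity of rectifiable curves being a harmless (and slightly more careful) addition.
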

\begin{proof}(i)
Since $(X,d)$ is accessible, it is not hard to check that $d_L: X\times X\rightarrow [0,\infty)$ satisfies the conditions in Definition \ref{generalsapcedef}. For any $r>0$, let $\mathfrak{B}^+_\star(r)$ (resp., $B^+_\star(r)$) denote the forward ball induced by $d_L$ (resp., $d$).
Obviously, $\mathfrak{B}^+_\star(r)\subset B^+_\star(r)$.
We claim
\[
d_L(x,\star)\leq \theta_1\, d_L(\star,x),\ \forall\,x\in \overline{\mathfrak{B}^+_\star(r)},\tag{2.3}\label{lengthspacereversib}
\]
where $\theta_1=\Theta(r+1)$.
In fact, for any point $x\in \overline{\mathfrak{B}^+_\star(r)}$ and any $\varepsilon\in (0,1)$, there exists a path $\gamma_1(t)$, $0\leq t\leq 1$ from $\star$ to $x$ with $L_d(\gamma_1)<d_L(\star,x)+\varepsilon<r+1$.  Hence, $\gamma_1([0,1])\subset \mathfrak{B}^+_\star(r+1)\subset B^+_\star(r+1)$. For any partition $Y=\{t_i\}$ of $[0,1]$, by Definition \ref{thetametricspace} we have
\[
\Sigma(Y^{-1}):=\sum_i d(\gamma_1(t_{i+1}),\gamma_1(t_i))\leq \theta_1\sum_i d(\gamma_1(t_{i}),\gamma_1(t_{i+1}))=\theta_1\Sigma(Y),
\]
which together with relation (\ref{structureinstrcut}) implies
\[
d_L(x,\star)\leq \sup_Y\Sigma(Y^{-1})\leq \theta_1 \sup_Y\Sigma(Y)=\theta_1 \,L_d(\gamma_1)\leq \theta_1(d_L(\star,x)+\varepsilon).
\]
Therefore,  the claim (\ref{lengthspacereversib}) is true.

Now we show
\[
d_L(x,y)\leq \widehat{\Theta}(r)\, d_L(y,x),\ \forall\,x,y\in \overline{\mathfrak{B}^+_\star(r)}.\tag{2.4}\label{medietherPro3.15}
\]
In fact,
for any $x,y\in \overline{\mathfrak{B}^+_\star(r)}$, the triangle inequality together with (\ref{lengthspacereversib}) yields
\[
d_L(x,y)\leq d_L(x,\star)+d_L(\star,y)\leq  \theta_1\,d_L(\star,x)+d_L(\star,y)\leq(1+\theta_1)r.
\]
For any $\varepsilon\in (0,1)$, there exists a path $\gamma_2(t)$, $0\leq t\leq 1$ from $x$ to $y$ with
\[
L_d(\gamma_2)<d_L(x,y)+\varepsilon<(1+\theta_1)r+1,
\]
which implies $\gamma_2([0,1])\subset \mathfrak{B}^+_\star\left((2+\theta_1)r+1  \right)\subset {B}^+_\star\left((2+\theta_1)r+1  \right)$.
Thus, for any partition $Y$ of $[0,1]$,
\[
\Sigma(Y^{-1}):=\sum_i d(\gamma_2(t_{i+1}),\gamma_2(t_i))\leq \widehat{\Theta}(r)\,\sum_i d(\gamma_2(t_{i}),\gamma_2(t_{i+1}))=\widehat{\Theta}(r)\cdot\Sigma(Y),
\]
which implies $d_L(y,x)\leq L_d(\gamma_2^{-1})\leq \widehat{\Theta}(r) L_d(\gamma_2)\leq \widehat{\Theta}(r) \,(d_L(x,y)+\varepsilon)$.
Therefore, (\ref{medietherPro3.15}) follows.

(ii) For simplicity, set $\tilde{d}:=d_L$. It follows from $ \tilde{d}\geq d$ that $L_{\tilde{d}}(\gamma)\geq L_d(\gamma)$. For the reverse inequality, let $Y:=\{t_i\}$ be an arbitrary partition of $[0,1]$. Thus (\ref{structureinstrcut}) yields $\tilde{d}(\gamma(t_i),\gamma(t_{i+1}))\leq L_d(\gamma,t_i,t_{i+1})$,
which together with Proposition \ref{basisessentially2}/(ii) furnishes
\[
\Sigma_{\tilde{d}}(Y)=\sum \tilde{d}(\gamma(t_i),\gamma(t_{i+1}))\leq \sum L_d(\gamma,t_i,t_{i+1})=L_d(\gamma).
\]
The arbitrariness of $Y$ indicates $L_{\tilde{d}}(\gamma)\leq L_d(\gamma)$. Hence, (ii) follows.
Moreover, (iii) is a direct consequence of (ii).
\end{proof}

\begin{definition}Let  $(X,\star,d_L)$ be a pointed forward $\Theta$-metric space. The metric $d_L$ is called an {\it intrinsic  metric} (or a {\it length metric}) if there is a forward metric space $(X,d)$ satisfying (\ref{structureinstrcut}).
 In this case $(X,\star,d_L)$ is called a {\it pointed forward ${\Theta}$-length space}.  Furthermore,
  the intrinsic metric $d_L$ is called {\it strictly intrinsic} if
 for every two points $x, y \in X$ there exists an admissible path $\gamma$
from $x$ to $y$ with  $L_d(\gamma)=d_L(x, y)$, in which case $(X,\star,d_L)$ is called a {\it pointed forward ${\Theta}$-geodesic space}.
\end{definition}

\noindent \textbf{Convention 3.} For a pointed forward $\Theta$-length/geodesic space $(X,\star,d)$, if $\star$ and $\Theta$ are not relevant or can be naturally deduced  their form from the context, we just use {\it a forward length/geodesic space $(X,d)$} to denote it    for simplicity.
Moreover, if $ {\Theta}$ is a constant $\theta$, then $(X,d)$ is called a {\it $\theta$-length/geodesic space}.

\smallskip



The following result is useful although the proof is trivial.
\begin{proposition}\label{metricsigeodesic}
A forward metric space $(X,d)$  is a forward geodesic space if and only if for any $x,y\in X$, there exists a minimal geodesic
from $x$ to $y$.
\end{proposition}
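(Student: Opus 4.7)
The plan is to handle the two directions separately, using only additivity of length together with the generalized length inequality from Proposition \ref{basisessentially2} and the definitions from Definition \ref{shortpathdef}. Neither direction should be hard; the main subtlety is not to confuse three concepts: being a length space ($d=d_L$), having the infimum in \eqref{structureinstrcut} realized, and a curve being a \emph{minimal} geodesic in the sense that $L_d(\gamma,a,b)=d(\gamma(a),\gamma(b))$ for \emph{every} subinterval, not just the endpoints.

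For the ($\Leftarrow$) direction I would argue as follows. Fix $x,y\in X$ and let $\gamma:[0,1]\to X$ be a minimal geodesic from $x$ to $y$ furnished by hypothesis. Taking $a=0$, $b=1$ in the definition of minimal geodesic gives $L_d(\gamma)=d(x,y)$. Combined with Proposition \ref{basisessentially2}(i), which yields $d_L(x,y)\geq d(x,y)$ from $L_d(\zeta)\geq d(\zeta(0),\zeta(1))$ for any admissible $\zeta$, and with $d_L(x,y)\leq L_d(\gamma)=d(x,y)$ from \eqref{structureinstrcut}, we conclude $d_L=d$ and that the infimum defining $d_L(x,y)$ is attained by $\gamma$. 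Thus $d$ is strictly intrinsic and $(X,d)$ is a forward geodesic space.

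For the ($\Rightarrow$) direction, assume $(X,d)$ is a forward geodesic space, so that $d=d_L$ and for each pair $x,y$ there exists $\gamma:[0,1]\to X$ with $\gamma(0)=x$, $\gamma(1)=y$ and $L_d(\gamma)=d(x,y)$. I need to show any such $\gamma$ is in fact a minimal geodesic, i.e.\ $L_d(\gamma,a,b)=d(\gamma(a),\gamma(b))$ for every $0\leq a\leq b\leq 1$. The idea is the standard ``equality forces equality in the triangle inequality'' argument: apply additivity (Proposition \ref{basisessentially2}(ii)) to write
\[
d(x,y)=L_d(\gamma)=L_d(\gamma,0,a)+L_d(\gamma,a,b)+L_d(\gamma,b,1),
\]
apply the generalized length inequality (Proposition \ref{basisessentially2}(i)) termwise to bound this from below by $d(\gamma(0),\gamma(a))+d(\gamma(a),\gamma(b))+d(\gamma(b),\gamma(1))$, and observe that the triangle inequality forces this latter sum to be at least $d(x,y)$. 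All inequalities must therefore be equalities, and in particular $L_d(\gamma,a,b)=d(\gamma(a),\gamma(b))$.

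I do not foresee a genuine obstacle: irreversibility plays no role here since the argument only uses forward length, forward distance, and the triangle inequality in the direction $x\to a\to b\to y$, all of which are available in any forward metric space. The only small care needed is to invoke additivity for a three-piece decomposition (trivial from the two-piece version in Proposition \ref{basisessentially2}(ii) by iteration), and to note that the ``$d=d_L$'' content of being a length space is built into the hypothesis of $(\Rightarrow)$ and is an output of $(\Leftarrow)$.
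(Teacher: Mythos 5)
Your proof is correct: the ($\Leftarrow$) direction pins down $d_L=d$ with the infimum attained, and the ($\Rightarrow$) direction is the standard equality-forcing argument via additivity, the generalized length inequality, and the (one-directional) triangle inequality, all of which are legitimately available in the irreversible setting. The paper omits the proof of Proposition \ref{metricsigeodesic} entirely, calling it trivial, and your argument is precisely the routine one the authors evidently had in mind, so there is nothing to compare beyond noting that you supplied the details they skipped.
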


By Proportions \ref{compctlacalcomapctboundecompact} and  \ref{boundedlyinstrinct}, we have the following result immediately.
\begin{theorem}\label{geodesicexistencethe}
Every forward complete locally compact forward length space is always a forward geodesic space, i.e., every two points in such a space can be connected by a minimal geodesic.
\end{theorem}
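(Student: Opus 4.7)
The plan is to decompose the theorem into a Hopf--Rinow type statement and an Arzelà--Ascoli application. Concretely, I would first upgrade the hypotheses (forward complete plus locally compact plus length structure) to forward bounded compactness of $(X,d)$, and then use bounded compactness together with the length structure to extract a minimal geodesic between any prescribed pair of points. The two intermediate results are exactly Propositions \ref{compctlacalcomapctboundecompact} and \ref{boundedlyinstrinct} as alluded to by the author.

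For the first step, fix $x \in X$ and consider
\[
R(x) := \{\, r > 0 \,:\, \overline{B^+_x(r)} \text{ is compact}\,\}.
\]
Local compactness together with the forward topology description in Theorem \ref{topologychara} makes $R(x)$ nonempty for all sufficiently small $r$. I would then prove that $R(x)$ is both open and closed in $(0,\infty)$ and hence equal to $(0,\infty)$. For openness, on the compact set $\overline{B^+_x(r)}$ one extracts, via a finite subcover argument, a uniform $\delta>0$ such that $\overline{B^+_y(\delta)}$ is compact for every $y \in \overline{B^+_x(r)}$; using the length property, any $z \in \overline{B^+_x(r+\delta/2)}$ is reached by an almost length-minimizing curve from $x$, truncated to produce a point $w \in \overline{B^+_x(r)}$ with $d(w,z)<\delta/2$, so $\overline{B^+_x(r+\delta/2)}$ is contained in a finite union of compact forward balls. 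Closedness is the delicate part: given $r_n \uparrow r$ with $\overline{B^+_x(r_n)}$ compact, the same truncation trick shows that every $z \in \overline{B^+_x(r)}$ admits forward approximants in $\overline{B^+_x(r_n)}$ for $n$ large; combined with forward completeness and Proposition \ref{forwardandbackwardrelation}, this yields forward total boundedness of $\overline{B^+_x(r)}$ and then compactness via Theorem \ref{compactequvitheorem}.

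For the second step, given $x,y \in X$ pick a sequence of constant-speed admissible paths $\gamma_n:[0,1]\to X$ from $x$ to $y$ with $L_d(\gamma_n) \downarrow d(x,y)$, using that $d$ is intrinsic. Each $\gamma_n$ is $L_d(\gamma_n)$-Lipschitz, and for $n$ large its image sits inside the closed forward ball $\overline{B^+_x(d(x,y)+1)}$, which is compact by the first step. Applying Theorem \ref{Arzela-Ascoli Theorem} inside this compact forward metric space yields a subsequence converging uniformly to some path $\gamma$ from $x$ to $y$. Proposition \ref{basisessentially2} (parts (i) and (iv)) then gives
\[
d(x,y) \le L_d(\gamma) \le \liminf_{n\to\infty} L_d(\gamma_n) = d(x,y),
\]
and the same chain applied on every subinterval $[a,b]\subset[0,1]$ shows that $\gamma$ is a minimal geodesic. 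Proposition \ref{metricsigeodesic} then concludes that $(X,d)$ is a forward geodesic space.

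The main obstacle is the closedness of $R(x)$. In the reversible case one simply observes that a forwardly approximating sequence is Cauchy, but in our irreversible setting the asymmetry of $d$ makes it nontrivial that such a sequence is also backwardly Cauchy. This is exactly where the pointed forward $\Theta$-structure (Definition \ref{thetametricspace}) enters: inside a fixed forward ball of finite radius the reversibility is bounded by $\Theta$, so forward approximation automatically upgrades to two-sided approximation, and forward completeness (equivalent to full completeness by Proposition \ref{forwardandbackwardrelation}/(i)) provides the required limits. Without the $\Theta$-control one could not pass from forward compactness properties to the existence of limits needed for the closedness step.
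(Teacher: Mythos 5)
Your proposal is correct and follows essentially the same route as the paper: the proof given there is exactly the combination of Proposition \ref{compctlacalcomapctboundecompact} (forward bounded compactness via the Burago--Burago--Ivanov open-and-closed argument on $R(x)$ that you sketch) with Proposition \ref{boundedlyinstrinct} (extraction of a shortest path inside a compact forward ball via Theorem \ref{Arzela-Ascoli Theorem} and lower semicontinuity of $L_d$), followed by the identification of shortest paths with minimal geodesics in a forward length space, i.e.\ Proposition \ref{metricsigeodesic}. The only slip is your parenthetical reading of Proposition \ref{forwardandbackwardrelation}/(i): it asserts that completeness is equivalent to \emph{backward} completeness, so forward completeness is \emph{not} equivalent to full completeness (the Funk space is forward but not backward complete); this is immaterial here, since in your closedness step forward completeness alone, restricted to the forward ball where the reversibility is bounded by $\Theta$, already supplies the limits of the forward Cauchy approximants.
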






By letting $I$ be an arbitrary interval in Definition \ref{shortpathdef},
one can get the definition of geodesic defined on nonclosed intervals.
 Thus we have a generalized version of Hopf-Rinow-Cohn-Vossen Theorem.

\begin{theorem}\label{HopfRinowth} For a locally compact   forward  length space $(X,d)$, the  following  assertions are equivalent:

\begin{itemize}

\item[(i)] $(X,d)$ is forward complete;

\item[(ii)]  $(X,d)$ is forward boundedly compact;

\item[(iii)] Each  constant-speed geodesic $\gamma:[0,1)\rightarrow X$ can be extended to a continuous path $\overline{\gamma}:[0,1]\rightarrow X$;

\item[(iv)] There is a point $p\in X$ such that every constant-speed minimal geodesic $\gamma:[0,1)\rightarrow X$ with $\gamma(0)=p$ can be extended to a continuous path $\overline{\gamma}:[0,1]\rightarrow X$.

\end{itemize}
\end{theorem}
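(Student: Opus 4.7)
The plan is to prove the cycle $\mathrm{(ii)}\Rightarrow\mathrm{(i)}\Rightarrow\mathrm{(iii)}\Rightarrow\mathrm{(iv)}\Rightarrow\mathrm{(ii)}$, with the last implication carrying almost all the work. The implication $\mathrm{(ii)}\Rightarrow\mathrm{(i)}$ is immediate from Proposition \ref{forwardandbackwardrelation}(iii). For $\mathrm{(i)}\Rightarrow\mathrm{(iii)}$: if $\gamma:[0,1)\to X$ is a constant-speed geodesic with speed $C$, then $d(\gamma(s),\gamma(t))\leq L_d(\gamma,s,t)=C(t-s)$ for $s<t<1$, so $\gamma(t_n)$ is forward Cauchy whenever $t_n\to 1^-$; forward completeness gives the limit, and $\bar\gamma(1):=\lim_{t\to 1^-}\gamma(t)$ defines the continuous extension. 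The implication $\mathrm{(iii)}\Rightarrow\mathrm{(iv)}$ is trivial.

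For the main step $\mathrm{(iv)}\Rightarrow\mathrm{(ii)}$, I set $R^*:=\sup\{R>0\,:\,\overline{B^+_p(R)}\text{ is compact}\}$. Local compactness forces $R^*>0$, and I argue by contradiction assuming $R^*<\infty$. First I would show $\overline{B^+_p(R^*)}$ is itself compact: given a sequence $(x_n)\subset\overline{B^+_p(R^*)}$, the case $d(p,x_n)$ bounded below $R^*$ is handled by compactness of a slightly smaller ball, so I may assume $d(p,x_n)\to R^*$. Using the forward length structure, pick constant-speed paths $\sigma_n:[0,1]\to X$ from $p$ to $x_n$ with $L_d(\sigma_n)\leq d(p,x_n)+1/n$. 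Then $d(p,\sigma_n(s))\leq sL_d(\sigma_n)$, so for each $t_0<1$ the curves $\sigma_n$ eventually lie inside $\overline{B^+_p((1+t_0)R^*/2)}$, which is compact. Applying the forward Arzelà-Ascoli theorem (Theorem \ref{Arzela-Ascoli Theorem}) on $[0,t_0]$ with a diagonal extraction yields a subsequence $\sigma_{n_k}$ converging uniformly on every compact subset of $[0,1)$ to some $\sigma:[0,1)\to X$. A routine but essential computation, exploiting $L_d(\sigma_n,s,t)\leq d(\sigma_n(s),\sigma_n(t))+o(1)$ (the curves are approximately minimizing on every subinterval because $L_d(\sigma_n)-d(p,x_n)\to 0$) together with lower semicontinuity of $L_d$, shows that $\sigma$ is a constant-speed minimal geodesic from $p$ with speed $R^*$.

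By hypothesis $\mathrm{(iv)}$, $\sigma$ extends continuously to $\bar\sigma:[0,1]\to X$; let $x^*:=\bar\sigma(1)$. The delicate point is verifying $x_{n_k}\to x^*$ in $\mathcal{T}_+$. Working through the symmetrized metric $\hat d$ via Theorem \ref{topologychara}(ii), I split
$$
\hat d(x^*,x_{n_k})\leq \hat d(x^*,\sigma(t))+\hat d(\sigma(t),\sigma_{n_k}(t))+\hat d(\sigma_{n_k}(t),x_{n_k}).
$$
The first term is small by continuity of $\bar\sigma$, the second by uniform convergence on $[0,t]$, and the third is controlled using that all $\sigma_{n_k}$ stay in $\overline{B^+_p(R^*+1)}$, where Remark \ref{forwardpointspaceandbackwardones}(a) gives a uniform reversibility bound $\Theta'$, so $\hat d(\sigma_{n_k}(t),x_{n_k})\leq\tfrac{1+\Theta'}{2}(1-t)(R^*+1/n_k)$. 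Picking $t$ near $1$ then $k$ large finishes the verification.

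Finally, with $\overline{B^+_p(R^*)}$ compact, local compactness provides for each $x$ a compact $\overline{B^+_x(\delta_x)}$; a finite subcover yields a uniform $\delta>0$, and a short length-space argument (any $y$ with $d(p,y)\leq R^*+\delta/2$ admits a curve through some point of $\overline{B^+_p(R^*)}$ within forward distance $\delta$) shows $\overline{B^+_p(R^*+\delta/2)}$ is compact, contradicting maximality of $R^*$. The main obstacle I anticipate is Step 1/2: keeping the approximate geodesics inside compact forward balls and, crucially, bridging the gap between forward estimates on $\sigma_{n_k}$ and convergence in the symmetric topology $\mathcal{T}_+$ — this is exactly where the $\Theta$-control of reversibility on bounded regions is indispensable, and where the proof genuinely departs from the reversible Hopf-Rinow argument.
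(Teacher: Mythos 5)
Your proof is correct and, at bottom, it runs on the same engine as the paper's: the hard implication (iv)$\Rightarrow$(ii) is exactly the Hopf--Rinow--Cohn--Vossen ball-growing argument of Burago--Burago--Ivanov (Theorem 2.5.28), which the paper only cites, adapted to the irreversible setting by the observations you single out (near-minimizers from $p$ stay in compact forward balls of radius $<R^*$; the limit is a constant-speed minimal geodesic by lower semicontinuity of $L_d$; and convergence $x_{n_k}\to x^*$ in $\mathcal{T}_+$ is obtained by passing through $\hat d$ with the reversibility bound $\Theta'$ of Remark \ref{forwardpointspaceandbackwardones}/(a) on $\overline{B^+_p(R^*+1)}$, using Theorem \ref{topologychara}/(ii)). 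The only genuine structural difference is organizational: the paper establishes (i)$\Leftrightarrow$(ii) separately via Proposition \ref{compctlacalcomapctboundecompact} together with Proposition \ref{forwardandbackwardrelation}/(iii), and then does (ii)$\Rightarrow$(iii)$\Rightarrow$(iv)$\Rightarrow$(ii), whereas you prove the single cycle (ii)$\Rightarrow$(i)$\Rightarrow$(iii)$\Rightarrow$(iv)$\Rightarrow$(ii); this lets you dispense with Proposition \ref{compctlacalcomapctboundecompact} as an input (in effect you reprove it as a corollary of the cycle), at the price of writing out the BBI argument in full rather than quoting it. Two spots are terse but unproblematic and worth one line each in a final write-up: (a) at the end of the ball-growing step you should record explicitly that the contradiction gives $R^*=\infty$, and that compactness of $\overline{B^+_x(R)}$ for arbitrary centers then follows from $\overline{B^+_x(R)}\subset\overline{B^+_p(d(p,x)+R)}$; (b) the "uniform $\delta$" extension step needs the usual finite-subcover bookkeeping (cover $\overline{B^+_p(R^*)}$ by balls $B^+_{x_i}(\delta_{x_i}/3)$ with $\overline{B^+_{x_i}(\delta_{x_i})}$ compact, and take $\delta=\min_i\delta_{x_i}/3$), so that every $y$ with $d(p,y)<R^*+\delta/2$ lands in one of finitely many compact sets.
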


\begin{proof}[Sketch of the proof] Let $(X,\star,d)$ be the corresponding pointed forward $\Theta$-length space.
First,  (i)$\Leftrightarrow$(ii) follows from Proposition \ref{compctlacalcomapctboundecompact} and Proposition \ref{forwardandbackwardrelation}/(iii) directly.

 Now we show (ii)$\Rightarrow$(iii).
 Without loss of generality, we consider a natural parameterized geodesic $\gamma:[0,1)\rightarrow X$. Choose an increasing sequence $(t_i)_i$ convergent to $1$. Since $d(\gamma(t_i),\gamma(t_j))=(t_j-t_i)<1$ for any $j>i$, we see that
 $(\gamma(t_i))_i$ is a forward Cauchy sequence in $\overline{B^+_{\gamma(0)}(2)}$.
  Thanks to the compactness of $\overline{B^+_{\gamma(0)}(2)}$, the sequence $(\gamma(t_i))_i$ must converge to a point $q\in \overline{B^+_{\gamma(0)}(2)}$. Now define $q:=\gamma(1)$. By the triangle inequality, one can easily check this definition is well-defined and particularly,
$\gamma$ is continuous over $[0,1]$.

\smallskip (iii)$\Rightarrow $(iv) is obvious. Thus we have proved (i)$\Leftrightarrow$(ii)$\Rightarrow$(iii)$\Rightarrow $(iv). It remains to show (iv)$\Rightarrow $(ii). In view of Theorem \ref{compactequvitheorem} and Proposition \ref{boundedlyinstrinct},  a similar argument as in the proof of Burago, Burago and Ivanov \cite[Theorem 2.5.28]{DYS}  yields the required conclusion.
\end{proof}

\subsection{Forward metric-measure spaces}
In the paper, all the measures are non-negative locally finite Borel measures. A triple $(X,d,\mu)$ is called a {\it forward metric-measure} (resp., {\it length-measure, geodesic-measure}) {\it space}  if $(X,d)$ is a forward metric (resp., length, geodesic) space  endowed with a measure $\mu$. 

\begin{definition}\label{doulbidefin}
Let $\mathcal {X}:=(X,d, \mu)$ be a forward metric-measure space, where $\mu$ is a nonzero measure.

\begin{itemize}

\item $\mathcal {X}$ is called {\it globally doubling}, if there is a constant $L>0$ with
\[
 \mu\left[ \overline{B^+_x(2r)} \right]\leq L\cdot \mu\left[ \overline{B^+_x(r)} \right],\ \forall\,x\in X, \ \forall\,r>0.
\]

\item $\mathcal {X}$   is   called {\it almost doubling} if for any $r>0$,  there is a constant $L=L(r)>0$ with
\[
 \mu\left[ \overline{B^+_x(2r)} \right]\leq L\cdot \mu\left[ \overline{B^+_x(r)} \right],\ \forall\,x\in X.
\]


\end{itemize}
\end{definition}



\noindent \textbf{Convention 4.}
 Let $\mathcal {X}=(X,d,\mu)$ be a forward metric-measure space.

\begin{itemize}
\item[(1)] If $\mu$ is a nonzero measure, we introduce the notation
\[
\Do(\mathcal {X}):=\inf\left\{ L>0\,\left|\, \mu\left[ \overline{B^+_x(2r)} \right]\leq L\right.\cdot \mu\left[ \overline{B^+_x(r)} \right],\ \forall\,x\in X,\ \forall\,r>0 \right\}.
\]
Clearly, $\mathcal {X}$ is globally doubling if and only if $\Do(\mathcal {X})$ is finite.

\smallskip

\item[(2)] For convenience, we abuse notations
\[
\mu[\mathcal {X}]:=\mu[X],\ \ \ \diam(\mathcal {X}):=\diam(X,d)=\sup_{x,y\in X}d(x,y).
\]

\item[(3)] If $\mathcal {X}$ is bounded (i.e., $\diam(\mathcal {X})<\infty$),  set
\begin{align*}
\Ca_X(\varepsilon)&:= \text{maximum number of disjoint forward
}\frac{\varepsilon}{2}
\text{-balls in } X,\\
\Cov_X(\varepsilon)&:= \text{minimum number of forward }\varepsilon
\text{-balls it takes to cover }X.
\end{align*}
Since $\mathcal {X}$ is bounded, we can assume that $(X,d)$ is a $\theta$-metric space (see Remark \ref{forwardpointspaceandbackwardones}/(a)). Then
\[
\Ca_X(2\varepsilon)\leq \Cov_X(\varepsilon)\leq \Ca_X\left(\frac{\varepsilon}{\theta}\right),\ \forall\varepsilon>0.\tag{2.5}\label{capvscov}
\]
\end{itemize}

\begin{lemma}\label{doublingwithprecompact} Given $D,L,m,M>0$, let $\mathcal {X}=(X,d,\mu)$ be a compact $\theta$-metric-measure space with
\[
\diam(\mathcal {X})\leq D,\ \ \Do(\mathcal {X})\leq L,\ \ m\leq \mu[\mathcal {X}]\leq M.
\]
Thus for any $\varepsilon>0$, there is an $N(\varepsilon)=N(\varepsilon;\theta,D,L,m,M)$  such that $\Cov_X(\varepsilon)\leq \Ca_X\left(\frac{\varepsilon}{\theta}\right)\leq N(\varepsilon)$.
\end{lemma}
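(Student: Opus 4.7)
The plan is to observe that the first inequality $\Cov_X(\varepsilon)\leq \Ca_X(\varepsilon/\theta)$ is nothing but an instance of (2.5), so the real content of the lemma is to produce a uniform upper bound on $\Ca_X(\varepsilon/\theta)$ depending only on the stated parameters. This will be a standard \emph{volume packing} argument, adapted to the forward-ball setting, which is available because everything here is ``globally doubling'' via $\Do(\mathcal{X})\le L$.

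Concretely, first I would fix a maximal collection of pairwise disjoint open forward balls $\{B_{x_i}^+(\varepsilon/(2\theta))\}_{i=1}^{N}$ with $N=\Ca_X(\varepsilon/\theta)$. To turn disjointness into a measure estimate, I replace each open ball by the smaller \emph{closed} ball $\overline{B_{x_i}^+(\varepsilon/(4\theta))}\subset B_{x_i}^+(\varepsilon/(2\theta))$, so that
\[
N\cdot \min_{1\le i\le N}\mu\!\left[\overline{B_{x_i}^+(\varepsilon/(4\theta))}\right] \;\le\; \sum_{i=1}^{N} \mu\!\left[B_{x_i}^+(\varepsilon/(2\theta))\right] \;\le\; \mu[\mathcal{X}] \;\le\; M.
\]
It then remains to bound each $\mu[\overline{B_{x_i}^+(\varepsilon/(4\theta))}]$ from below by a quantity depending only on $\theta,D,L,m,\varepsilon$.

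For that lower bound I would iterate the doubling condition. Since $\diam(\mathcal{X})\le D$, every $y\in X$ satisfies $d(x_i,y)\le D$, so $X\subset \overline{B_{x_i}^+(D)}$ and hence $\mu[\overline{B_{x_i}^+(D)}]=\mu[\mathcal{X}]\ge m$. Pick the smallest integer $k\ge 1$ with $D/2^{k}\le \varepsilon/(4\theta)$, i.e.\ $k=\lceil \log_2(4\theta D/\varepsilon)\rceil$, which depends only on $\theta,D,\varepsilon$. Applying $\Do(\mathcal{X})\le L$ exactly $k$ times to radii $D/2^{k}, D/2^{k-1},\dots, D/2$ yields
\[
\mu\!\left[\overline{B_{x_i}^+(D)}\right] \;\le\; L\cdot \mu\!\left[\overline{B_{x_i}^+(D/2)}\right] \;\le\; \cdots \;\le\; L^{k}\cdot \mu\!\left[\overline{B_{x_i}^+(D/2^{k})}\right],
\]
so, using $\overline{B_{x_i}^+(D/2^{k})}\subset \overline{B_{x_i}^+(\varepsilon/(4\theta))}$, we obtain
\[
\mu\!\left[\overline{B_{x_i}^+(\varepsilon/(4\theta))}\right]\;\ge\; \frac{m}{L^{k}}.
\]

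Combining the two estimates gives $N\le M L^{k}/m$, and setting
\[
N(\varepsilon):=\frac{M}{m}\,L^{\lceil \log_2(4\theta D/\varepsilon)\rceil},
\]
which depends only on $\theta,D,L,m,M,\varepsilon$, completes the proof. The only mildly subtle point — and the one I would be most careful about — is the open/closed ball conversion: the definition of $\Ca_X$ uses \emph{open} forward balls (so that disjointness is easy to preserve), while the doubling hypothesis is phrased on \emph{closed} forward balls; the strict inclusion $\overline{B_{x_i}^+(\varepsilon/(4\theta))}\subset B_{x_i}^+(\varepsilon/(2\theta))$ is what bridges these two, and after that the rest is a routine geometric iteration. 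No symmetry of $d$ is used, so the argument goes through verbatim in the irreversible setting.
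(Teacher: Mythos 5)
Your proof is correct and follows essentially the same route as the paper: a maximal disjoint family of forward $\varepsilon/(2\theta)$-balls, a lower volume bound on each ball obtained by iterating the doubling condition from the diameter scale down to the $\varepsilon$-scale, and the total-mass bound $\mu[\mathcal{X}]\le M$ to cap the cardinality by $\frac{M}{m}L^{k}$. Your extra halving to pass between open packing balls and the closed balls in the definition of $\Do(\mathcal{X})$ only changes the exponent by one and is, if anything, a slightly more careful version of the paper's argument.
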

\begin{proof} Given $\varepsilon>0$, choose  a maximum  disjoint forward $\varepsilon/(2\theta)$-ball family $\{B^+_{x_i}(\varepsilon/(2\theta))\}_{i=1}^{\Ca_X(\varepsilon/\theta)}$.
On the one hand, by setting $T=\left\lceil  \log_2\frac{2\theta D}{\varepsilon} \right\rceil$, we have
\[
m\leq\mu(B_{x_i}^+(D))\leq L^T \mu(B^+_{x_i}(\varepsilon/(2\theta)))\Longrightarrow \mu(B^+_{x_i}(\varepsilon/(2\theta)))\geq m \cdot L^{-T}.
\]
 On the other hand, one has
\[
0\leq \mu\left[  X\backslash \sqcup_i B^+_{x_i}(\varepsilon/(2\theta)) \right]\leq M-\Ca_X\left(\frac{\varepsilon}{\theta}\right)\cdot m\cdot L^{-T}\Longrightarrow \Ca_X\left(\frac{\varepsilon}{\theta}\right)\leq \frac{M}{m} L^T:=N(\varepsilon).
\]
which concludes the proof of (\ref{capvscov}).
\end{proof}

\subsection{Finsler metric-measure spaces}

\subsubsection{Finsler manifolds.} In this subsection, we study forward length spaces induced by Finsler manifolds. First, we recall some definitions and properties from Finsler geometry; for details see  Bao, Chern and Shen \cite{BCS} and Shen \cite{Shen2013,Sh1}.

%
%
%

 Let $M$ be an $n(\geq2)$-dimensional connected
 smooth manifold without boundary and $TM=\bigcup_{x \in M}T_{x}
M $ be its tangent bundle. The pair $(M,F)$ is a \textit{Finsler
	manifold} if the continuous function $F:TM\to [0,\infty)$ satisfies
the conditions

\begin{itemize}

\item[(a)] $F\in C^{\infty}(TM\setminus\{ 0 \});$

\item[(b)] $F(x,\lambda y)=\lambda F(x,y)$ for all $\lambda\geq 0$ and $(x,y)\in TM;$

\item[(c)] $g_{ij}(x,y)=[\frac12F^{2}%
]_{y^{i}y^{j}}(x,y)$ is positive definite for all $(x,y)\in
TM\setminus\{ 0 \}$ where $F(x,y)=F(y^i\frac{\partial}{\partial x^i}|_x)$.

\end{itemize}

\noindent Note that $g_{ij}$  cannot be defined at $y=0$ unless $F$ is Riemannian. Furthermore, the Euler theorem yields that $F^2(x,y)=g_{ij}(x,y)y^iy^j$ for every $(x,y)\in TM\backslash\{0\}$.

Set $S_xM:=\{y\in T_xM\,|\,F(x,y)=1\}$ and $SM:=\cup_{x\in M}S_xM$. The {\it reversibility} $\lambda_F(M)$ (cf. Rademacher \cite{R, Rademacher}) and the {\it uniformity constant} $\Lambda_F(M)$ (cf. Egloff \cite{E}) of $(M,F)$ are defined as follows:
\[
\lambda_F(M):=\underset{y\in SM}{\sup}F(-y),\quad \Lambda_F(M):=\underset{v,y,z\in SM}{\sup}\frac{g_v(y,y)}{g_z(y,y)},
\]
where $g_v(y,y):=g_{ij}(v)y^iy^j$.
Clearly, ${\Lambda_F}(M)\geq \lambda_F^2(M)\geq 1$. In particular, $\lambda_F(M)=1$ if and only if $F$ is reversible (i.e., symmetric), while $\Lambda_F(M)=1$ if and
only if $F$ is Riemannian.  For convenience, we also introduce the reversibility of a subset $U\subset M$, i.e.,
\[
\lambda_F(U):=\sup_{y\in SU}F(-y), \text{ where }SU:=\cup_{x\in U}S_xM.
\]
In particular, $\lambda_F(x):=\lambda_F(\{x\})$  is a continuous function.
Furthermore, for a Rander manifold $(M,\alpha+\beta)$, there holds
\[
\lambda_F(M)=\frac{1+b }{1-b },\quad \Lambda_F(M)=\left( \frac{1+b }{1-b } \right)^2,\tag{2.6}\label{Randersnormreveruniform}
\]
where $b:=\sup_{x\in M}\|\beta\|_\alpha(x)$.

Given $y\in T_xM$, the {\it Riemannian curvature} $R_y$ of $F$ is a   linear transformation on $T_xM$. More precisely,
$R_y=R^i_k(y)\frac{\partial}{\partial x^i}\otimes dx^k$, where
\begin{align*}
R^i_{\,k}(y)&:=2\frac{\partial G^i}{\partial x^k}-y^j\frac{\partial^2G^i}{\partial x^j\partial y^k}+2G^j\frac{\partial^2 G^i}{\partial y^j \partial y^k}-\frac{\partial G^i}{\partial y^j}\frac{\partial G^j}{\partial y^k},\\
G^i(y)&:=\frac14 g^{il}(y)\left\{2\frac{\partial g_{jl}}{\partial x^k}(y)-\frac{\partial g_{jk}}{\partial x^l}(y)\right\}y^jy^k.
\end{align*}
Let $P:=\text{Span}\{y,v\}\subset T_xM$ be a plane; the \textit{flag curvature} is defined by
\[
\mathbf{K}(y,v):=\frac{g_y\left( R_y(v),v  \right)}{g_y(y,y)g_y(v,v)-g^2_y(y,v)}.
\]
The  {\it Ricci curvature} of $y$
is defined by
\[
\mathbf{Ric}(y):=\underset{i}{\sum}\,\textbf{K}(y,e_i)=\frac{R^i_{\,i}(y)}{F^2(y)},
\]
where $\{e_1,\ldots, e_n\}$ is a $g_y$-orthonormal basis for $T_xM$.

\subsubsection{Forward length space induced by Finsler structure}

Let $\zeta:[0,1]\rightarrow M$ be a piecewise smooth path. The \textit{length} of $\zeta$ is defined by
\[
L_F(\zeta):=\int^1_0 F(\dot{\zeta}(t)){\ddd}t.
\]
Define the {\it distance function} $d_F:M\times M\rightarrow [0,\infty)$ by
\[
d_{F}(p,q):=\inf\{L_F(\gamma)\,|\,\gamma:[0,1]\rightarrow X \text{ is a piecewise smooth path with }
\gamma(0)=p,\gamma(1)=q\}.\tag{2.7}\label{distanceinFinslersetting}
\]
Thus, $d_F:M\times M\rightarrow [0,\infty)$ is a continuous function with

\smallskip

(i) $d_F(p,q)\geq 0,
\mbox{ with equality if and only if } p=q;$
\ \ \ (ii) $ d_F(p,q)\leq d_F(p,t)+d_F(t,q).$

\smallskip

\noindent However, $d_F(p,q)\neq d_F(q,p)$  unless $F$ is reversible.
According to Busemann and Mayer \cite{BM},
if a curve $\gamma: (-\epsilon,\epsilon)\rightarrow M$ is $C^1$, there holds
\[
F(\dot{\gamma}(0))=\lim_{t\rightarrow 0^+}\frac{d_F(\gamma(0),\gamma(t))}{t}=\lim_{t\rightarrow 0^+}\frac{d_F(\gamma(-t),\gamma(0))}{t}.\tag{2.8}\label{BM-1}
\]
Furthermore, $\mathcal {T}_+= \mathcal {T}_-$ is exactly the original topology of $M$.

A smooth curve $t\mapsto \gamma(t)$ in $(M,d_F)$ is  a (constant-speed) \textit{geodesic} if it satisfies
\[
\ddot{\gamma}^i(t)+2G^i\left(\dot{\gamma}\right)=0.\tag{2.9}\label{odegeodesic}
\]
In this paper, we always use $\gamma_y(t)$ to denote  the geodesic with $\dot{\gamma}_y(0)=y$.
 Recall that $(M,F)$ is called a {\it forward complete} Finsler manifold if $(M,d_F)$ is forward complete, in which  case for each $y\in T_xM\backslash\{0\}$, the {\it exponential map} $\exp_x(ty):=\gamma_y(t)$ can be defined on
 $t\in [0,\infty)$.

\begin{theorem}\label{reversibifinslerdd}
Let $(M,F)$ be a forward complete Finsler manifold.  Thus,
\begin{itemize}
\item[(i)] for any $\star\in M$, the triple $(M,\star,d_F)$ is a pointed $\Theta$-forward geodesic space, where
\[
\Theta(r):=\lambda_F\left(\overline{B^+_\star\left( 2r+\lambda_F(B^+_\star(r))\, r\right)}\right);\tag{2.10}\label{FinslerreversThera}
\]

\item[(ii)] for any convex closed set $A\subset M$, one has $\lambda_{d_F}(A)=\lambda_F(A)$. In particular,
\[
\lambda_{d_F}(M)=\lambda_F(M).\tag{2.11}\label{reversibdindentiy}
\]

\end{itemize}
\end{theorem}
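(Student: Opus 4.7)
Part (i) has two ingredients. Firstly, the \emph{forward geodesic space} property: since \eqref{distanceinFinslersetting} writes $d_F$ as an infimum of path-lengths, $d_F$ is intrinsic, and forward completeness of $(M,F)$ invokes the classical Finsler Hopf--Rinow theorem (Bao--Chern--Shen) to produce a constant-speed minimizing geodesic between any two points, so $d_F$ is strictly intrinsic in the sense of the excerpt. Secondly, the \emph{reversibility bound} $\lambda_{d_F}(\overline{B^+_\star(r)}) \leq \Theta(r)$: fix $x,y \in \overline{B^+_\star(r)}$ and propagate control through three nested balls. Step~1: a minimizing geodesic $\gamma:[0,1]\to M$ from $\star$ to $x$ satisfies $d_F(\star,\gamma(s)) \leq s\, d_F(\star,x) \leq r$, so $\gamma(s) \in B^+_\star(r)$ for every $s\in [0,1)$; since $F(-\dot\gamma(s)) \leq \lambda_F(B^+_\star(r))\,F(\dot\gamma(s))$ almost everywhere, reversing $\gamma$ yields $d_F(x,\star) \leq \lambda_F(B^+_\star(r))\,r$. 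Step~2: the triangle inequality gives $d_F(x,y) \leq (1+\lambda_F(B^+_\star(r)))\,r$. Step~3: a minimizing geodesic $\sigma$ from $x$ to $y$ satisfies $d_F(\star,\sigma(t)) \leq r + d_F(x,y) \leq (2+\lambda_F(B^+_\star(r)))\,r$ at every $t$, so $\sigma$ lies in the closed ball of radius $(2+\lambda_F(B^+_\star(r)))\,r$ around $\star$. Step~4: reversing $\sigma$ on this larger closed ball and using the definition of $\lambda_F(\cdot)$ bounds $d_F(y,x) \leq L_F(\sigma^{-1}) \leq \Theta(r)\,d_F(x,y)$, which is precisely \eqref{FinslerreversThera}.

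For part (ii), the direction $\lambda_{d_F}(A) \leq \lambda_F(A)$ is immediate: by convexity, a minimizing geodesic from $x$ to $y$ stays in $A$, and reversing it yields $d_F(y,x) \leq \lambda_F(A)\,d_F(x,y)$. For the converse, fix an interior point $x\in A$ and a unit vector $y\in S_xM$; since $A$ has nonempty interior at $x$, the geodesic $\gamma_y(\pm t)$ lies in $A$ for small $|t|$, so the Busemann--Mayer formula \eqref{BM-1} applied with $p=\gamma_y(0)=x$ and $q=\gamma_y(-t)\in A$ gives
\[
F(-y) \;=\; \lim_{t\to 0^+}\frac{d_F(p,q)}{t} \;\leq\; \lambda_{d_F}(A)\,\lim_{t\to 0^+}\frac{d_F(q,p)}{t} \;=\; \lambda_{d_F}(A)\,F(y) \;=\; \lambda_{d_F}(A).
\]
Taking the supremum over $y\in S_xM$ gives $\lambda_F(x)\leq \lambda_{d_F}(A)$; continuity of $\lambda_F(\cdot)$ then extends this to all of $A$ (via $A=\overline{\mathrm{int}\,A}$ in the non-degenerate case), yielding $\lambda_F(A)\leq \lambda_{d_F}(A)$. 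The identity \eqref{reversibdindentiy} is the special case $A=M$, where every point is interior.

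\textbf{Main obstacle.} The combinatorially delicate step is the bookkeeping in part~(i): one must track precisely which ball each geodesic lives in (open in Step~1, closed in Step~4) so that the exact factor from \eqref{FinslerreversThera} is produced. Once the three-ball propagation is set up correctly, everything reduces to reversing curves under the pointwise estimate $F(-v)\leq \lambda_F(\cdot)\,F(v)$. For part~(ii), Busemann--Mayer makes the $(\geq)$ direction essentially automatic; the only mild caveat is the implicit non-degeneracy of $A$ (nonempty interior with $A=\overline{\mathrm{int}\,A}$), which is vacuous for $A=M$.
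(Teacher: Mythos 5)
Your proof of part (i) is essentially the paper's: the paper also takes the minimal geodesic from $x$ to $y$ (via Bao--Chern--Shen), observes by the triangle inequality that it stays in $\overline{B^+_\star\bigl((2+\lambda_F(B^+_\star(r)))r\bigr)}$, and reverses it to get $d_F(y,x)\leq\Theta(r)\,d_F(x,y)$; your Steps 1--3 simply make explicit the bookkeeping (the bound $d_F(x,\star)\leq\lambda_F(B^+_\star(r))\,r$ by reversing the geodesic from $\star$ to $x$) that the paper compresses into one sentence, and the strictly-intrinsic claim is handled the same way, with $L_{d_F}(\gamma)=L_F(\gamma)=d_F(x,y)$ justified through the Busemann--Mayer identity \eqref{BM-1}. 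The easy direction of (ii) is also identical. The one genuine difference is the converse inequality $\lambda_F(A)\leq\lambda_{d_F}(A)$: you apply Busemann--Mayer at an interior point of $A$ to get $F(-y)\leq\lambda_{d_F}(A)F(y)$ pointwise and then invoke continuity of $\lambda_F(\cdot)$, whereas the paper takes a maximizing sequence $x_i\in A$ with $\lambda_F(x_i)\to\lambda_F(A)$, runs short geodesics $\gamma_i$ in the extremal directions $y_i$ on intervals $[0,\varepsilon_i]$ with $\varepsilon_i\to 0$, and compares $d_F(x_i,\gamma_i(\varepsilon_i))$ with $L_F(\gamma_i^{-1})$, letting the ratio of the two length integrals tend to $\lambda_F(A)$. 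Both are the same infinitesimal-ratio idea in different packaging; yours trades the limit of ratios for the clean identity \eqref{BM-1}, while the paper avoids any appeal to interiors. Note, though, that your non-degeneracy caveat ($A=\overline{\mathrm{int}\,A}$) is not a defect relative to the paper: the paper's step ``by modifying $(x_i)_i$, we may assume $\gamma_i([0,\varepsilon_i])\subset A$'' quietly needs an analogous condition, and indeed for a degenerate convex closed set such as a single point one has $\lambda_{d_F}(\{x\})=1<\lambda_F(x)$ in general, so the identity as stated must be read with such sets excluded; for the sets actually used later (closed forward balls and $M$ itself) both arguments apply without issue.
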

\begin{proof}(i) Given any $r>0$ and $x,y\in \overline{B^+_\star(r)}$,
owing to Bao, Chern and Shen  \cite[Proposition 6.5.1]{BCS},
there exists a minimal geodesic $\gamma:[0,1]\rightarrow M$ from $x$ to $y$, i.e., $L_F(\gamma)=d_F(x,y)$. The triangle inequality yields $\gamma([0,1])\subset\overline{B^+_\star(2r+\lambda_F(B^+_\star(r))\, r)}$, which furnishes
\begin{align*}
d_F(y,x)\leq \int^1_0 F(-\dot{\gamma}(1-t)){\ddd} t\leq \Theta(r)\int^1_0 F(\dot{\gamma}(t)){\ddd}t=\Theta(r)\cdot d_F(x,y). \tag{2.12}\label{finslerrevervsdrever}
\end{align*}
Hence, $(M,\star,d_F)$ is a pointed forward $\Theta$-metric space. Moreover, by (\ref{BM-1}) one gets $(d_F)_L(x,y)=L_{d_F}(\gamma)=L_F(\gamma)=d_F(x,y)$, which implies that $d$ is strictly intrinsic.

\smallskip

(ii)
Since $A$ is convex, a similar argument to (\ref{finslerrevervsdrever}) yields $\lambda_{d_F}(A)\leq \lambda_F(A)$. For the reverse inequality, the continuity of  $\lambda_F(x)$  furnishes
  a sequence $(x_i)_i\subset A$ such that $\lambda_F(x_i)\rightarrow \lambda_F(A)$. For each $x_i$, there is a $y_i\in T_{x_i}M\setminus\{0\}$ with
$\lambda_F(x_i)={F(x_i,y_i)}/{F(x_i,-y_i)}$.
Denote by $\gamma_i(s)$, $s\in [0,\varepsilon_i]$  the minimal geodesic from $x_i$ with $\dot{\gamma}_i(0)=y_i$, where $\varepsilon_i\rightarrow 0$. By modifying $(x_i)_i$, we may assume
$\gamma_i([0,\varepsilon_i])\subset A$. Also set $\gamma^{-1}_i(s):=\gamma_i(\varepsilon_i-s)$ for $s\in [0,\varepsilon_i]$. It is not hard to check that
  \begin{align*}
\lambda_{d_F}(A)\geq \frac{d_F(x_i,\gamma_i(\varepsilon_i))}{d_F(\gamma_i(\varepsilon_i),x_i)}\geq \frac{d_F(x_i,\gamma_i(\varepsilon_i))}{L_F(\gamma_i^{-1})}=\frac{\int^{\varepsilon_i}_0 F(\dot{\gamma}_i(s)){\ddd}s}{\int^{\varepsilon_i}_0 F(-\dot{\gamma}_i(\varepsilon_i-s)){\ddd}s}\rightarrow \lambda_F(A), \text{ as }i\rightarrow \infty,
\end{align*}
which concludes the proof.
\end{proof}

We reconsider the metric from \eqref{Funckmeatirc}.
Since  $F$ is projectively
flat (i.e., the geodesics are straight lines), it follows  that
$\overline{B^+_{\mathbf{0}}(r)}=\left\{x\in \mathbb{B}^n\,|\ \|x\|\leq1-e^{-r}\right\}$,
which is a convex closed set. Moreover, a direct calculation together with (\ref{Randersnormreveruniform}) yields $\lambda_F(\overline{B^+_{\mathbf{0}}(r)})=2e^r-1$.
In view of Theorem \ref{reversibifinslerdd}/(ii)(i), the triple $(\mathbb{B}^n,\mathbf{0},d_F)$ is a pointed forward  $(2e^r-1)$-geodesic space.
 \begin{remark}
 It follows from Theorem \ref{reversibifinslerdd}/(i) that $(\mathbb{B}^n,\mathbf{0},d_F)$ is a pointed forward  $(2e^{(2e^r+1)r}-1)$-geodesic space, which is compatible with the above argument because $ 2e^{(2e^r+1)r}-1>2e^r-1$.
 \end{remark}

\subsubsection{Finsler metric-measure manifolds}

Let $\mathfrak{m}$ be a smooth positive measure on $M$; in a local coordinate system $(x^i)$ we
express ${\dd}=\sigma(x)\text{d}x^1\cdots \text{d}x^n$. In particular,
the \textit{Busemann-Hausdorff measure} ${\dd}_{BH}$ and the \textit{Holmes-Thompson measure} ${\dd}_{HT}$ are defined by
\begin{align*}
&{\dd}_{BH}:=\frac{\vol(\mathbb{B}^{n})}{\vol(B_xM)}\text{d}x^1\cdots \text{d}x^n,\\
 &{\dd}_{HT}:=\left(\frac1{\vol(\mathbb{B}^{n})}\ds\int_{B_xM}\det g_{ij}(x,y)\text{d}y\cdots \text{d}y^n \right) \text{d}x^1\cdots \text{d}x^n,
\end{align*}
where $B_xM:=\{y\in T_xM|\, F(x,y)<1\}$ and $\mathbb{B}^{n}$ is the usual Euclidean $n$-dimensional unit ball.

Define the \textit{distortion} of $(M,F,{\dd})$ as
\begin{equation*}
\tau(y):=\log \frac{\sqrt{\det g_{ij}(x,y)}}{\sigma(x)},\ \text{$y\in T_xM\backslash\{0\}$},
\end{equation*}
and the \textit{$S$-curvature} $\mathbf{S}$ is given by
\begin{equation*}
\mathbf{S}(y):=\left.\frac{d}{dt}\right|_{t=0}\tau(\dot{\gamma}_y(t)).
\end{equation*}

%


A triple $(M,F,\mathfrak{m})$ is called a {\it Finsler metric-measure manifold} if $(M, F)$ is a Finsler manifold equipped with a smooth positive measure $\mathfrak{m}$. It should be remarked that for a Finsler metric-measure manifold $(M,F,\mathfrak{m})$, the very space we care about is the forward length-measure space $(M,d_F,\mathfrak{m})$. However, similar to the Riemannian case, we do not mention $(M,d_F,\mathfrak{m})$ particularly.

For a Finsler metric-measure manifold $(M,F,\mathfrak{m})$,
the {\it weighted Ricci curvature} $\mathbf{Ric}_N$, introduced in Ohta and Sturm\cite{Ot}, is defined as follows: given $N\in [n,\infty]$, for any unit vector $y\in SM$,
\begin{align*}\mathbf{Ric}_N(y)=\left\{
\begin{array}{lll}
\mathbf{Ric}(y)+\left.\frac{d}{dt}\right|_{t=0}\mathbf{S}(\dot{\gamma}_y(t))-\frac{\mathbf{S}^2(y)}{N-n}, && \text{ for }N\in (n,\infty),\\
\\
\underset{L\downarrow n}{\lim}\mathbf{Ric}_L(y), && \text{ for }N=n,\tag{2.13}\label{defRicN}\\
\\
\mathbf{Ric}(y)+\left.\frac{d}{dt}\right|_{t=0}\mathbf{S}(\dot{\gamma}_y(t)),  && \text{ for }N=\infty.
\end{array}
\right.
\end{align*}
In the Riemannian case, it is exactly the modified Ricci tensor $\mathbf{Ric}_{N,\mathfrak{m}}$. Moreover, set
\begin{align*}\mathfrak{s}_{K,N}(t):=\left\{
\begin{array}{lll}
\sqrt{\frac{N-1}K}\sin \left(r\sqrt{\frac{K}{N-1}}\right), && \text{ if }K>0,\\
\\
r, && \text{ if }K=0,\tag{2.14}\label{skdefinition}\\
\\
\sqrt{\frac{N-1}{-K}}\sinh \left(r\sqrt{\frac{-K}{N-1}}\right),  && \text{ if }K<0.
\end{array}
\right.
\end{align*}
Thus there holds the following Bishop-Gromov comparison theorem.
\begin{theorem}[Ohta \cite{O1}]\label{Ohtacomparsiontheorem}Let $(M,F,\mathfrak{m})$  be an $n$-dimensional forward complete Finsler metric-measure manifold.
Assume that there are constants $K\in \mathbb{R}$ and
$N\in [n,\infty)$ such that $\mathbf{Ric}_N\geq K$.
Then we have $\diam(M)\leq  \pi\sqrt{(N-1)/K}$
  if $K > 0$ and, for any $x\in  M$
and $0 < r \leq R$ ($\leq \pi\sqrt{(N-1)/K}$ if $K > 0$), there holds
\[
\frac{\mathfrak{m}[B^+_x(R)]}{\mathfrak{m}[B^+_x(r)]}\leq \frac{\int^R_0 \mathfrak{s}_{K,N}(t)^{N-1}{\ddd}t}{\int^r_0 \mathfrak{s}_{K,N}(t)^{N-1}{\ddd}t}\leq e^{R\sqrt{(N-1)|K|}}\left( \frac{R}{r} \right)^N.
\]
\end{theorem}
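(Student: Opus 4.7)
The plan is to reduce the statement to a pointwise Jacobian estimate along radial minimal geodesics issuing from $x$, and then to apply a Sturm comparison argument to a Riccati-type differential inequality driven by the hypothesis $\mathbf{Ric}_N\ge K$. Fix $x\in M$. Since $(M,F)$ is forward complete, for each $v\in S_xM$ the geodesic $\gamma_v(t)=\exp_x(tv)$ is defined for all $t\ge 0$; let $c(v)\in(0,\infty]$ denote the forward cut value, so that $\exp_x$ is a diffeomorphism from the star-shaped domain $D_x:=\{tv:v\in S_xM,\ 0\le t<c(v)\}\subset T_xM$ onto its image, which covers $M$ up to a set of $\mathfrak{m}$-measure zero.

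Next I would write $\mathfrak{m}$ in these forward polar coordinates as $\mathfrak{m}=\mathcal{J}(t,v)\,dt\,dA_x(v)$, where $dA_x$ is a natural area measure on $S_xM$ and $\mathcal{J}(t,v)$ combines the Jacobian of $\exp_x$ with the local density $\sigma(x)$ of $\mathfrak{m}$. One checks $\mathcal{J}(t,v)\sim t^{n-1}e^{-\tau(\dot{\gamma}_v(0))}$ as $t\to 0^+$. The key step is to show that on $(0,c(v))$ the function $\mathcal{J}^{1/(N-1)}(\cdot,v)$ satisfies the differential inequality
\[
\bigl(\mathcal{J}^{1/(N-1)}\bigr)''+\frac{K}{N-1}\,\mathcal{J}^{1/(N-1)}\le 0.
\]
This is the weighted version of the classical Bishop-type Riccati inequality: the first and second derivatives of $\log\mathcal{J}$ decompose into a trace of the shape operator plus $\mathbf{S}$ and its derivative along $\gamma_v$; Cauchy-Schwarz on the trace term produces the $\mathbf{Ric}$ contribution with a deficit $(\mathrm{tr}\,U)^2/(n-1)$, which combines with the $\mathbf{S}^2/(N-n)$ penalty exactly so that $\mathbf{Ric}_N$ appears. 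This is the conceptual core of Ohta's definition \eqref{defRicN}.

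Once the Riccati inequality is in place, the Sturm comparison theorem implies that $\mathcal{J}(t,v)/\mathfrak{s}_{K,N}(t)^{N-1}$ is nonincreasing in $t$ on $(0,c(v))$. Consequently, for each fixed $v$ and $0<r\le R$ (with $R\le \pi\sqrt{(N-1)/K}$ if $K>0$),
\[
\frac{\int_0^{\min\{R,c(v)\}} \mathcal{J}(t,v)\,dt}{\int_0^{\min\{r,c(v)\}} \mathcal{J}(t,v)\,dt}\le\frac{\int_0^R \mathfrak{s}_{K,N}(t)^{N-1}\,dt}{\int_0^r \mathfrak{s}_{K,N}(t)^{N-1}\,dt}.
\]
Integrating over $v\in S_xM$ with respect to $dA_x$ and using Fubini gives the first (sharp) inequality of the theorem. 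The elementary bound $\mathfrak{s}_{K,N}(t)\le t\,\exp\!\bigl(t\sqrt{|K|/(N-1)}\bigr)$ together with $\int_0^r \mathfrak{s}_{K,N}(t)^{N-1}dt\ge r^N/N$ (valid via $\sin x/x, \sinh x/x\ge $ their Taylor majorants in the admissible range) then yields the stated exponential-polynomial upper bound. Finally, when $K>0$ the Jacobi comparison forces $\mathcal{J}(t,v)=0$ for $t\ge \pi\sqrt{(N-1)/K}$, which via a Myers-type argument rules out minimizing geodesics of greater length and produces the diameter bound.

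The main obstacle will be the passage from the synthetic bound $\mathbf{Ric}_N\ge K$ to the Riccati inequality for $\mathcal{J}^{1/(N-1)}$: in the Finsler category the Jacobi equation is with respect to the Chern connection along the reference geodesic, the shape operator $U=\nabla^2 b$ of the distance function $b(\cdot)=d_F(x,\cdot)$ is only well-defined away from the cut locus, and one must handle the nonsmooth contribution of the cut locus to the distributional Laplacian of $b$ so that the integrated comparison does not acquire spurious positive terms. Once these Finsler-specific technicalities are resolved along the lines of Ohta \cite{O1}, the rest of the argument is the standard Bishop-Gromov scheme.
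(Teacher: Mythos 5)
The paper itself gives no proof of this statement; it is imported verbatim from Ohta \cite{O1} and used as a black box (e.g. in Theorems \ref{basisprecomp} and \ref{RicNmeasurecompact}), so there is no in-paper argument to compare against. Your proposal is the standard route by which the result is actually established in the Finsler setting: polar decomposition of $\mathfrak{m}$ along minimal geodesics from $x$ up to the cut value, the Riccati/Sturm inequality $\bigl(\mathcal{J}^{1/(N-1)}\bigr)''+\frac{K}{N-1}\mathcal{J}^{1/(N-1)}\le 0$ obtained by combining the Cauchy--Schwarz deficit of the trace term with the $\mathbf{S}^2/(N-n)$ penalty built into (\ref{defRicN}), monotonicity of $\mathcal{J}/\mathfrak{s}_{K,N}^{N-1}$, the integral-ratio lemma, and the Myers-type conclusion from the forced vanishing of $\mathcal{J}$ by $t=\pi\sqrt{(N-1)/K}$ when $K>0$. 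Two remarks: the cut-locus issue you single out as the main obstacle does not actually arise in your own scheme --- you never need a distributional Laplacian of $b$, since integrating over the star-shaped domain $D_x$ and using that the cut locus is $\mathfrak{m}$-negligible suffices, and truncating at $c(v)$ only improves the inequality; also, for $N=n$ the Cauchy--Schwarz combination forces $\mathbf{S}\equiv 0$, consistent with the limit definition of $\mathbf{Ric}_n$, a corner case worth stating.

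The one genuine error is in the final elementary estimate: the bound $\int_0^r \mathfrak{s}_{K,N}(t)^{N-1}\,dt\ge r^N/N$ is false for $K>0$, since $\sin x\le x$ gives $\mathfrak{s}_{K,N}(t)\le t$ and the inequality goes the wrong way. It is harmless: for $K>0$ the function $t\mapsto \mathfrak{s}_{K,N}(t)/t$ is nonincreasing on the admissible range, hence $\mathfrak{s}_{K,N}(t)^{N-1}/t^{N-1}$ is nonincreasing, and the same integral-ratio lemma yields $\int_0^R \mathfrak{s}_{K,N}^{N-1}\,dt\,\big/\int_0^r \mathfrak{s}_{K,N}^{N-1}\,dt\le (R/r)^N\le e^{R\sqrt{(N-1)K}}(R/r)^N$; your argument via $\sinh x\le x e^x$ and $\sinh x\ge x$ is only needed (and is correct) for $K\le 0$. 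With that repair the sketch is complete and is essentially Ohta's proof.
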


For the Busemann-Hausdorff measure or the Holmes-Thompson measure, we have the following result.
\begin{theorem}[Zhao and Shen \cite{ZS}]\label{normalRiccicompari}
Let $(M,F,\mathfrak{m})$  be an $n$-dimensional forward complete Finsler metric-measure manifold, where $\mathfrak{m}$ is either the Busemann-Hausdorff measure or the Holmes-Thompson measure.
Assume that there are constants $\theta\geq 1$ and $K\in \mathbb{R}$  such that $\Lambda_F(M)\leq \theta^2$ and $\mathbf{Ric}\geq K$.
Then  for any $x\in  M$
and $0 < r \leq R$ ($\leq \pi\sqrt{(n-1)/K}$ if $K > 0$),
\begin{align*}
\mathfrak{m}[B^+_x(r)]\leq \vol(\mathbb{S}^{n-1})\theta^{2n}\int^r_0\mathfrak{s}_{K,n}(t)^{n-1}{\ddd}t,\ \frac{\mathfrak{m}[B^+_x(R)]}{\mathfrak{m}[B^+_x(r)]}\leq \theta^{4n}\frac{\int^R_0 \mathfrak{s}_{K,n}(t)^{n-1}{\ddd}t}{\int^r_0 \mathfrak{s}_{K,n}(t)^{n-1}{\ddd}t}\leq \theta^{4n}e^{R\sqrt{(n-1)|K|}}\left( \frac{R}{r} \right)^n,
\end{align*}
where $\mathbb{S}^{n-1}$ is the usual Euclidean $(n-1)$-dimensional unit sphere.
\end{theorem}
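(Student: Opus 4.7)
The plan is to run the classical Bishop--Gromov argument in Finslerian polar coordinates about $x$, and then to absorb into the uniformity factor $\theta$ the discrepancy between the intrinsic Jacobian coming out of the Jacobi field comparison and the density of the Busemann--Hausdorff (BH) or Holmes--Thompson (HT) measure $\mathfrak{m}$.

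First I would fix $x\in M$, parametrize points off the cut locus as $y=\exp_x(tv)$ with $v\in S_xM$ and $0<t<c(v)$, where $c(v)$ is the forward cut value along $\gamma_v$, and polarize to get
\[
\mathfrak{m}\bigl[B^+_x(r)\bigr]=\int_{S_xM}\int_0^{\min\{r,c(v)\}} \sigma(\gamma_v(t))\,\mathcal{J}(t,v)\,dt\,dA(v),
\]
using that the cut locus is $\mathfrak{m}$-null. Writing $\mathcal{J}(t,v)$ intrinsically through Jacobi fields $J_1,\dots,J_{n-1}$ along $\gamma_v$ with $J_i(0)=0$ and $\nabla_{\dot\gamma_v}J_i(0)$ a $g_v$-orthonormal basis of $v^\perp$, I set $\widehat{\mathcal{J}}_v(t):=\sqrt{\det\bigl(g_{\dot\gamma_v(t)}(J_i(t),J_j(t))\bigr)}$. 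Because $\mathbf{Ric}\ge K$ is unweighted, the standard Jacobi--Riccati argument along $\gamma_v$ gives $\widehat{\mathcal{J}}_v(t)\le\mathfrak{s}_{K,n}(t)^{n-1}$ on $(0,c(v))$, together with the nonincreasing monotonicity of $\widehat{\mathcal{J}}_v(t)/\mathfrak{s}_{K,n}(t)^{n-1}$; this is exactly the $N=n$ specialization of the estimate driving Theorem~\ref{Ohtacomparsiontheorem}, and no weighted correction appears because the hypothesis does not involve $\mathfrak{m}$. When $K>0$, the Finsler Bonnet--Myers theorem, which follows from $\mathbf{Ric}\ge K>0$ alone, yields $\diam(M)\le\pi\sqrt{(n-1)/K}$, so the polar decomposition is nontrivial on $[0,R]$.

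Second, to pass from the $\widehat{\mathcal{J}}_v$-estimate to the required BH or HT statement I would invoke $\Lambda_F(M)\le\theta^2$, which means $\theta^{-2}g_z\le g_y\le\theta^2 g_z$ at every common point for any tangent directions $y,z$. At the determinant level this gives $\sqrt{\det g_y}/\sqrt{\det g_z}\in[\theta^{-n},\theta^n]$, and $\vol(B_xM)$ is comparable to any $g_z$-volume of $B_xM$ within constants $\theta^{\pm n}$. Two such conversions enter the polar integral: one when the $g_v$-area element $dA(v)$ on $S_xM$ is replaced by the Euclidean area $d\vol_{\mathbb{S}^{n-1}}$, costing $\theta^n$; and a second when the ambient density $\sigma(\gamma_v(t))$, whose BH or HT form involves $\vol(B_{\gamma_v(t)}M)$ respectively an integral of $\det g_{ij}$, is compared with the $g_{\dot\gamma_v(t)}$-flavoured $\widehat{\mathcal{J}}_v(t)$, costing a further $\theta^n$. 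Together they produce the absolute bound with constant $\theta^{2n}$; for the ratio, upper-bounding the numerator and lower-bounding the denominator each costs $\theta^{2n}$, yielding the $\theta^{4n}$ constant, while the sharp Bishop ratio $\int_0^R\mathfrak{s}_{K,n}^{n-1}/\int_0^r\mathfrak{s}_{K,n}^{n-1}$ survives unchanged.

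The main obstacle will be the precise bookkeeping of the $\theta$-powers: uniformity produces the optimal constant $\theta^{\pm 2}$ on fibrewise inner products, hence $\theta^{\pm n}$ on determinants, and one must verify that exactly two such conversions enter the absolute estimate. Since BH and HT have structurally different densities (one uses $\vol(B_xM)$, the other an integral of $\det g_{ij}$), the identification of the two $\theta^n$ factors must be done separately in each case, even though both lead to the same final exponent. A minor secondary point is that the monotonicity of $\widehat{\mathcal{J}}_v(t)/\mathfrak{s}_{K,n}(t)^{n-1}$ is only used up to the cut value $c(v)$, past which it is irrelevant because the cut locus contributes no $\mathfrak{m}$-mass.
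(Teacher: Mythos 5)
You should first note that the paper itself contains no proof of this statement: it is quoted verbatim from Zhao--Shen [ZS], so your attempt can only be compared with the standard argument that the cited reference carries out. Your outline --- polar decomposition about $x$ away from the (null) cut locus, the unweighted Bishop comparison $\widehat{\mathcal J}_v(t)\le \mathfrak{s}_{K,n}(t)^{n-1}$ with monotonicity of $\widehat{\mathcal J}_v(t)/\mathfrak{s}_{K,n}(t)^{n-1}$ along each geodesic, and conversion between the intrinsic Jacobian and the Busemann--Hausdorff/Holmes--Thompson densities at the cost of powers of $\theta$ --- is exactly the right route and is in the spirit of [ZS].

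Two steps, however, would not deliver the stated constants as you sketched them. (a) For the Holmes--Thompson case, the claim that the density comparison costs only $\theta^{n}$ does not follow from the estimates you indicate: bounding $\det g_w\le\theta^{2n}\det g_y$ on $B_xM$ and $\vol(B_xM)\le\theta^{n}\vol(\mathbb{B}^n)/\sqrt{\det g_y}$ separately gives only a factor $\theta^{3n}$, which would produce $\theta^{4n}$ in the absolute bound and even more in the ratio, overshooting the theorem. To get the distortion bound $e^{|\tau|}\le\theta^{n}$ for HT one needs an extra idea, e.g.\ that the fibrewise Legendre transform $w\mapsto g_w(w,\cdot)$ has Jacobian $\det g_{ij}(x,w)$, so that $\int_{B_xM}\det g_{ij}(x,w)\,{\ddd}w$ equals the Lebesgue volume of the dual unit ball $B^*_xM$, which is squeezed between the $g_y^{-1}$-ellipsoids of radii $\theta^{\mp1}$; only then is HT on a par with BH. (b) In the ratio estimate, if ``lower-bounding the denominator'' means bounding $\mathfrak{m}[B^+_x(r)]$ from below by $\theta^{-2n}\vol(\mathbb{S}^{n-1})\int_0^r\mathfrak{s}_{K,n}^{n-1}$, this is false: a Ricci lower bound gives no lower volume bound (a flat thin cylinder already defeats it). The correct Gromov step is run per direction: extend $\widehat{\mathcal J}_v$ by zero for $t\ge c(v)$ (it is this truncation, not the $\mathfrak{m}$-negligibility of the cut locus, that keeps the monotone quotient usable beyond the cut value), apply the monotone-ratio lemma to get $\int_0^R\widehat{\mathcal J}_v/\int_0^r\widehat{\mathcal J}_v\le\int_0^R\mathfrak{s}_{K,n}^{n-1}/\int_0^r\mathfrak{s}_{K,n}^{n-1}$, integrate over $S_xM$ against the \emph{same} reference measure in numerator and denominator, and pay the distortion factor once at radius $R$ and once at radius $r$; this comfortably reaches the stated $\theta^{4n}$ (indeed $\theta^{2n}$ suffices), whereas the step as phrased is either unjustified or circular.
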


\section{Gromov-Hausdorff convergence for forward metric spaces}\label{GHDIS}
 \noindent \textbf{Convention 5.} Let $(X,d_X)$ and $(Y,d_Y)$ be two forward metric spaces.
\begin{itemize}
\item[(a)] Given a subset $A\subset X$ and an $\epsilon>0$, set ${A^\epsilon}:=\{ x\in X|\, d_X(A,x)<\epsilon\}$;

\item[(b)] Given a map $f:X\rightarrow Y$, set $$\dis f:=\sup_{x,x'\in X}|d_Y(f(x),f(x'))-d_X(x,x')|.$$
\end{itemize}

\begin{definition}\label{epsilonapproximation}
Let $(X,\star_X,d_X)$ and $(Y,\star_Y,d_Y)$ be two pointed forward $\Theta$-metric spaces. Thus
\begin{itemize}
\item[(i)] an {\it isometry}  $f:X\rightarrow Y$   is a homeomorphism  with $d_Y(f(x),f(x'))=d_X(x,x')$ for any $x,x'\in X$, in which case $(X,d_X)$ is said to be {\it isometric to} $(Y,d_Y)$;

\item[(ii)]  a {\it pointed isometry} $f:X\rightarrow Y$  is an isometry with $f(\star_X)=f(\star_Y)$, in which case $(X,\star_X,d_X)$ is said to be {\it pointed isometric to} $(Y,\star_Y,d_Y)$;

\item[(iii)] given $\epsilon>0$, an {\it $\epsilon$-isometry}  $f:X\rightarrow Y$ is a (not necessarily continuous) map with $\dis f\leq \epsilon$ and $Y\subset \overline{[f(X)]^\epsilon}$.

\item[(iv)] given $\epsilon>0$, a {\it pointed $\epsilon$-isometry}  $f:X\rightarrow Y$ is an $\epsilon$-isometry with $f(\star_X)=\star_Y$.
\end{itemize}

\end{definition}

\subsection{Gromov-Hausdorff topology I: compact spaces}

\begin{definition}[Forward Hausdorff distance]
Let $(X,d)$ be a  forward metric space.  Given two sets $A,B\subset X$,
the Hausdorff distance between them is defined as $d_H(A,B):=\inf\{\epsilon>0\,|\,A\subset B^\epsilon,B\subset
A^\epsilon\}$.
\end{definition}


In order to study the convergence of compact forward metric spaces, we recall the following generalized
Gromov-Hausdorff distance (cf. Shen and Zhao \cite{SZ}).
\begin{definition}[\cite{SZ}]\label{GromoHasdrdelta}
Given $\theta\geq 1$, let $\mathcal {M}^\theta$ denote the collection of all compact $\theta$-metric spaces.
Let $\mathcal {X}_i:=(X_i,d_i)$, $i=1,2$ be two elements in $\mathcal {M}^\theta$. A    {\it $\theta$-admissible metric} $d$  on the disjoint union $X_1\sqcup X_2$ is  {an}  irreversible metric with $\lambda_d(X_1\sqcup X_2)\leq \theta$ and  $ d|_{X_i}=d_i, i=1,2$.
The {\it$\theta$-{Gromov-Hausdorff} distance between
$\mathcal {X}_1$ and $\mathcal {X}_2$} is defined as
\[
d^\theta_{GH}(\mathcal {X}_1,\mathcal {X}_2):=\inf\{d_H(X_1,X_2)|\,\text{$\theta$-admissible
metrics on }X_1\sqcup X_2 \}.
\]
\end{definition}
\begin{remark}
An equivalent definition is $d^\theta_{GH}(\mathcal {X}_1,\mathcal {X}_2):=\inf\{d_H(\mathcal {X}'_1,\mathcal {X}'_2)\}$,
where the infimum is taken over all isometric embeddings $\mathcal {X}'_i$ of $\mathcal {X}_i$,  $i=1,2$ into a common $\theta$-metric space $\mathcal {Y}$. That is, $\mathcal {X}'_i$ is isometric to $\mathcal {X}_i$, $i=1,2$ and each $\mathcal {X}'_i$ is subspaces of $\mathcal {Y}$. The proof is similar to Burago, Burago and Ivanov \cite[Remark 7.3.12]{DYS}.
\end{remark}


Obviously, the $1$-Gromov-Hausdorff distance is the original Gromov-Hausdorff distance in the reversible case.
Moreover, the $\theta$-Gromov-Hausdorff distance is a reversible pseudo-metric on $\mathcal {M}^\theta$.
\begin{proposition}[\cite{SZ}]\label{Gromov-Haudsdiscpro}
Let $\mathcal {X},\mathcal {Y},\mathcal {Z}$ be three elements in $\mathcal {M}^\theta$. Thus, we have

\begin{itemize}
	\item[(i)] $d^\theta_{GH}(\mathcal {X},\mathcal {Y})=d^\theta_{GH}(\mathcal {Y},\mathcal {X})$;

\smallskip

\item[(ii)] $d^\theta_{GH}(\mathcal {X},\mathcal {Z})\leq d^\theta_{GH}(\mathcal {X},\mathcal {Y})+d^\theta_{GH}(\mathcal {Y},\mathcal {Z})$;

\smallskip

\item[(iii)] $d^\theta_{GH}(\mathcal {X},\mathcal {Y})\geq 0$, with equality if and only if $\mathcal {X}$ is isometric to $\mathcal {Y}$.
\end{itemize}

\end{proposition}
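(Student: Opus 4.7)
The plan is to address the three claims in sequence, with (ii) carrying the real content. For (i), the bijection $d \leftrightarrow d$ between $\theta$-admissible metrics on $X_1 \sqcup X_2$ and on $X_2 \sqcup X_1$ preserves the symmetric Hausdorff distance, so symmetry of $d^\theta_{GH}$ is immediate.

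For (ii), I would adapt the classical gluing-through-$Y$ construction to the irreversible setting. Fix $\delta > 0$ and choose $\theta$-admissible metrics $d_{XY}$ on $X \sqcup Y$ and $d_{YZ}$ on $Y \sqcup Z$ realizing $d_H^{XY}(X,Y) < d^\theta_{GH}(\mathcal{X},\mathcal{Y}) + \delta$ and similarly for $Y,Z$. Define $d_{XZ}$ on $X \sqcup Z$ to agree with $d_X$, $d_Z$ on same-side pairs, and for $x \in X$, $z \in Z$ set
\begin{align*}
d_{XZ}(x,z) &:= \inf_{y \in Y}\bigl[d_{XY}(x,y) + d_{YZ}(y,z)\bigr], \\
d_{XZ}(z,x) &:= \inf_{y \in Y}\bigl[d_{YZ}(z,y) + d_{XY}(y,x)\bigr].
\end{align*}
The verification splits into three items: (a) the triangle inequality by case analysis, with the key case $x_1, x_2 \in X$, $z \in Z$ handled by bridging via two copies $y, y' \in Y$ and invoking the triangle inequality in $Y \sqcup Z$ to bound $d_Y(y,y') \le d_{YZ}(y,z) + d_{YZ}(z,y')$; (b) the bound $\lambda_{d_{XZ}}(X \sqcup Z) \leq \theta$, obtained by combining $d_{XY}(x,y) \leq \theta\, d_{XY}(y,x)$ and $d_{YZ}(y,z) \leq \theta\, d_{YZ}(z,y)$ inside the infimum; (c) the Hausdorff bound, where for each $x \in X$ one picks $y \in Y$ with $d_{XY}(y,x) < d^\theta_{GH}(\mathcal{X},\mathcal{Y}) + \delta$ and then $z \in Z$ with $d_{YZ}(z,y) < d^\theta_{GH}(\mathcal{Y},\mathcal{Z}) + \delta$ to get a $z$ close to $x$ in $d_{XZ}(\cdot,\cdot)$; the reverse direction is symmetric. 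Letting $\delta \to 0$ yields the triangle inequality.

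For (iii), non-negativity is trivial, and if $\mathcal{X}$ is isometric to $\mathcal{Y}$ via $\phi$, then $d(x,y) := d_Y(\phi(x),y)$ and $d(y,x) := d_Y(y,\phi(x))$ (extended by $d_X, d_Y$ on same-side pairs) is a $\theta$-admissible metric on $X \sqcup Y$ with zero Hausdorff distance. Conversely, suppose $d^\theta_{GH}(\mathcal{X},\mathcal{Y}) = 0$ and take admissible $d_n$ with $d_H^n(X,Y) < 1/n$. Choose $f_n : X \to Y$ with $d_n(f_n(x),x) < 1/n$; reversibility gives $d_n(x,f_n(x)) \leq \theta/n$, and a two-sided triangle estimate yields $\dis f_n \leq (\theta+1)/n$, so each $f_n$ is an $\epsilon_n$-isometry (Definition \ref{epsilonapproximation}) with $\epsilon_n \to 0$ (the condition $Y \subset \overline{[f_n(X)]^{\epsilon_n}}$ follows from $d_H^n(X,Y) < 1/n$ plus the above). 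Fix a countable dense subset $\{x_k\} \subset X$; using compactness of $Y$ and a diagonal extraction, pass to a subsequence with $f_n(x_k) \to f(x_k)$ for every $k$. On $\{x_k\}$, $f$ is distance-preserving, hence $1$-Lipschitz both ways, so it extends uniformly continuously to all of $X$; the extension remains distance-preserving, and its image $f(X)$ is compact (hence closed) and dense in $Y$ (since each $f_n(X)$ is a $(2/n)$-net in $Y$), so $f$ is the desired isometry.

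The main technical obstacle is the combination (ii)(a)+(b): the glued metric $d_{XZ}$ must inherit both triangle inequality and $\theta$-reversibility without any ``reversibility loss'' being incurred twice, which is why the two infima (for $d_{XZ}(x,z)$ and $d_{XZ}(z,x)$) must be defined separately and one must track the direction of each term carefully. In (iii), the subtlety is that the extension of the limit map $f$ relies on a uniform Lipschitz control in both directions, so again the uniform bound $\theta$ is essential.
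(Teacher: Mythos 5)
The paper itself does not prove this proposition---it is imported from \cite{SZ}---so there is no in-text argument to compare against; judged on its own, your proof follows the expected route (the Burago--Burago--Ivanov gluing scheme adapted to the irreversible setting), and the substantive parts are right: the two separately defined infima for $d_{XZ}(x,z)$ and $d_{XZ}(z,x)$, the direction-tracking in the triangle and $\theta$-reversibility estimates, the Hausdorff bound with the correctly oriented choices $d_{XY}(y,x)$, $d_{YZ}(z,y)$, and the $\epsilon$-isometry/Arzel\`a--Ascoli--type extraction for the rigidity part, which is exactly the machinery the paper itself uses in Lemma \ref{inprotantisometry} and Proposition \ref{Ascloghtipo}.

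Two points need repair. First, in the forward direction of (iii) the glued metric $d(x,y):=d_Y(\phi(x),y)$, $d(y,x):=d_Y(y,\phi(x))$ is \emph{not} $\theta$-admissible: it gives $d(x,\phi(x))=0$ with $x\neq\phi(x)$ in $X\sqcup Y$, violating condition (i) of Definition \ref{generalsapcedef}. The standard fix is to add $\epsilon$ to every cross distance (this preserves the triangle inequality and $\lambda_d\leq\theta$ since $\theta\geq 1$) and let $\epsilon\to 0$; equivalently, note that an isometry is an $\epsilon$-isometry for every $\epsilon>0$ and invoke Lemma \ref{inprotantisometry}/(ii). For the same reason you should record in (ii) why $d_{XZ}(x,z)>0$ for $x\in X$, $z\in Z$: if the infimum vanished, compactness of $Y$ together with continuity of the cross distances (available because all reversibilities are bounded by $\theta$) would yield $y\in Y$ with $d_{XY}(x,y)=0$, a contradiction; without this remark $d_{XZ}$ is a priori only a semi-metric, hence not admissible. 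Second, in the converse of (iii), the density of $f(X)$ in $Y$ does not follow verbatim from ``each $f_n(X)$ is a $(2/n)$-net,'' since $f$ is only a pointwise limit along a subsequence on the countable dense set: given $y\in Y$ you should pick $x^{(n)}$ with $d_Y(f_n(x^{(n)}),y)<2/n$, pass to a subconvergent $x^{(n)}\to x^*$, and compare $f(x^*)$ with $f_n(x_k)$ for $x_k$ near $x^*$, using $\dis f_n\to 0$. Both repairs are routine, and with them your argument is complete.
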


\begin{definition}[\cite{SZ}]\label{def-conv}
A sequence $(\mathcal {X}_i)_i\subset \mathcal {M}^\theta$  is said to be {\it convergent to} a space $\mathcal {X}\in \mathcal {M}^\theta$ {\it in the $\theta$-Gromov-Hausdorff topology} if
$\lim_{i\rightarrow \infty}d^\theta_{GH} (\mathcal {X}_i,\mathcal {X})=0$,
in
which case, $\mathcal {X}$ is called the {\it
$\theta$-Gromov-Hausdorff limit} of $(\mathcal {X}_i)_i$.
\end{definition}


\begin{theorem}\label{imporstcompacthteorem} Let $\mathcal {M}^{\theta}_\sim$ denote  the collection of isometric classes of compact $\theta$-metric spaces.
  Then
 $\left(\mathcal {M}^{\theta}_\sim,d^\theta_{GH}\right)$
is a  Polish space (i.e., a complete separable reversible metric space).
\end{theorem}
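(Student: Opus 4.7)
The plan is to verify in turn that $d^\theta_{GH}$ descends to a reversible metric on $\mathcal{M}^\theta_\sim$, that this metric space is separable, and that it is complete. The metric property is essentially free: symmetry and the triangle inequality are Proposition \ref{Gromov-Haudsdiscpro}(i)--(ii), and the non-degeneracy assertion in Proposition \ref{Gromov-Haudsdiscpro}(iii) says exactly that $d^\theta_{GH}=0$ if and only if the two spaces are isometric, so after passing to isometry classes we obtain a bona fide reversible metric.

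For separability, I would exhibit a countable dense subset made of finite $\theta$-metric spaces. For each $n\geq 1$ let $M_n^\theta\subset\mathbb{R}_{\geq 0}^{n\times n}$ denote the set of matrices realising $\theta$-metrics on $\{1,\dots,n\}$; as a subset of Euclidean space it is separable and admits a countable dense subset $\mathcal{D}_n$. The map $\Phi_n:M_n^\theta\to\mathcal{M}^\theta_\sim$, $D\mapsto(\{1,\dots,n\},D)$, is Lipschitz in the obvious Euclidean distance on distance matrices: for $D_1,D_2\in M_n^\theta$ with $\alpha:=\max_{i,j}|D_1(i,j)-D_2(i,j)|$, the shortest-path metric $\tilde d$ on the disjoint union of two labelled copies of $\{1,\dots,n\}$ with edges given by $D_1$, $D_2$ inside the two factors and length-$\alpha$ edges joining $i_1$ to $i_2$ in both directions is $\theta$-admissible (reversing a path multiplies each edge-length by at most $\theta$, using $\alpha\leq\theta\alpha$), restricts to $D_1$ and $D_2$ on the two factors (any detour through the other copy pays at least $2\alpha$, and one checks by triangle inequality that this cannot beat the direct edge), and realises $d_H\leq\alpha$. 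Hence $d^\theta_{GH}(\Phi_n(D_1),\Phi_n(D_2))\leq\alpha$, so $\bigcup_n\Phi_n(\mathcal{D}_n)$ is a countable set whose density in $\mathcal{M}^\theta_\sim$ follows because every compact $\theta$-metric space $(X,d)$ admits, by Theorem \ref{compactequvitheorem}, a finite forward $\varepsilon$-net $N_\varepsilon$, and using $d$ itself as the $\theta$-admissible metric on $X\sqcup N_\varepsilon$ gives $d^\theta_{GH}(X,(N_\varepsilon,d|_{N_\varepsilon}))\leq\varepsilon$.

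For completeness, given a $d^\theta_{GH}$-Cauchy sequence $(\mathcal{X}_n)_n$, pass to a subsequence with $d^\theta_{GH}(\mathcal{X}_n,\mathcal{X}_{n+1})<2^{-n}$ and fix $\theta$-admissible metrics $d_n$ on $X_n\sqcup X_{n+1}$ realising $d_H\leq 2^{-n}$. Glue these consecutively into the shortest-path pseudometric $D$ on $\bigsqcup_n X_n$, using the original metric inside each $X_n$ and $d_n$ between $X_n$ and $X_{n+1}$. The reversing-path argument (an asymmetric analogue of the estimate in Proposition \ref{basiclengthspace}) yields $D(y,x)\leq\theta D(x,y)$, and iterating the triangle inequalities of the $d_n$'s shows $D|_{X_n}$ coincides with the original metric on $X_n$. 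Let $\widehat X$ be the completion after quotienting by the null-distance relation; it is a complete $\theta$-metric space. Define $\mathcal{X}_\infty\subset\widehat X$ to be the set of limits $\lim_k x_k$ of forward Cauchy sequences with $x_k\in X_k$. Since $\sum_{k\geq n}2^{-k}=2^{-n+1}$, every point of $\mathcal{X}_\infty$ lies within $2^{-n+1}$ of a point of $X_n$, so $X_n$ is a forward $2^{-n+1}$-net in $\mathcal{X}_\infty$; conversely every $x\in X_n$ extends to such a Cauchy sequence by successively choosing $2^{-k}$-close approximants in $X_{k+1}$. Consequently $\mathcal{X}_\infty$ is totally bounded and complete, hence compact by Theorem \ref{compactequvitheorem}. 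The restriction of $D$ to $X_n\sqcup\mathcal{X}_\infty$ is a $\theta$-admissible metric of Hausdorff distance $O(2^{-n})$, giving $d^\theta_{GH}(\mathcal{X}_n,\mathcal{X}_\infty)\to 0$, and the full original sequence converges to the same limit since it is Cauchy.

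The main technical obstacle will be the completeness step, specifically verifying that the glued pseudometric $D$ genuinely restricts to the original metric on each $X_n$ (so that the gluing introduces no spurious shortcuts through neighbouring $X_m$'s) and that the limit set $\mathcal{X}_\infty$ is compact. Both rely on the stability of the $\theta$-condition under the shortest-path construction, which is the genuinely asymmetric refinement of the classical Gromov argument; once these are in hand, the separability step is essentially the reversible argument with bookkeeping ensured by $\alpha\leq\theta\alpha$.
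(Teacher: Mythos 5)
Your proposal is correct and, for the metric and separability parts, follows essentially the paper's strategy: the metric axioms on isometry classes come straight from Proposition \ref{Gromov-Haudsdiscpro}, and density is obtained from a countable family of finite $\theta$-metric spaces approximating finite forward $\varepsilon$-nets (the paper parametrizes these by finite subsets of $\mathbb{N}$ with rational distances and rational reversibility and quotes a standard net argument, whereas you take a countable dense subset of the Euclidean set of $\theta$-distance matrices and make the Lipschitz estimate $d^\theta_{GH}(\Phi_n(D_1),\Phi_n(D_2))\leq\max_{i,j}|D_1(i,j)-D_2(i,j)|$ explicit via the doubled-set admissible metric --- a cosmetic difference). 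The genuine divergence is completeness: the paper simply cites Zhao \cite[Theorem 2.4]{Z3}, while you reprove it by the classical Gromov gluing construction (chained admissible metrics, shortest-path pseudometric, limit set of forward Cauchy sequences), adapted to the irreversible setting through the observation that reversing a path multiplies its length by at most $\theta$; this makes the theorem self-contained at the price of doing the $\theta$-bookkeeping yourself, and your outline of that bookkeeping is sound. Two small technicalities to repair when writing it up: taking ``$d$ itself'' as the admissible metric on $X\sqcup N_\varepsilon$ (and likewise restricting $D$ to $X_n\sqcup\mathcal{X}_\infty$) produces zero distance between the two copies of a common point, which the non-degeneracy requirement of Definition \ref{generalsapcedef} forbids, so one should perturb by an arbitrarily small $\delta>0$ exactly as in the proof of Lemma \ref{inprotantisometry}/(ii); and when chaining the Hausdorff bounds $d_H(X_k,X_{k+1})\leq 2^{-k}$ to build forward Cauchy sequences and the net property of $X_n$ in $\mathcal{X}_\infty$, converting backward estimates of the form $d_k(z,x)<2^{-k}$ into forward ones costs factors of $\theta$, so the advertised radius $2^{-n+1}$ should read $C(\theta)2^{-n}$ --- harmless for the limit $d^\theta_{GH}(\mathcal{X}_n,\mathcal{X}_\infty)\to 0$, but it should be stated.
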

\begin{proof}

 Proposition \ref{Gromov-Haudsdiscpro} indicates that $\left(\mathcal {M}^{\theta}_\sim,d^\theta_{GH}\right)$ is a reversible metric space. The completeness follows from Zhao \cite[Theorem 2.4]{Z3}.
Hence, it remains to show the separability. Let $\mathcal {N}^\theta$ denote the collection  of  finite subsets of $\mathbb{N}$ with rational-valued distance and rational-valued reversibility in $[1,\theta]$. That is, if $(X,d)\in \mathcal {N}^\theta$, then $X$ is a finite subset of $\mathbb{N}$ satisfying

\smallskip

(1) $d_X(x,y)$ is rational for any $x,y\in X$;\quad  (2) $\lambda_d(X)$ is a rational number in $[1,\theta]$.

\smallskip

\noindent Clearly, $\mathcal {N}^\theta$ is a countable subset of $\left(\mathcal {M}^{\theta},d^\theta_{GH}\right)$. Now we show that $\mathcal {N}^\theta$ is dense.
For any $\mathcal {Y}:=(Y,d_Y)\in \mathcal {M}^\theta$, it is not hard to find a countable dense subset $S=\{x^i\}_{i\in \mathbb{N}}$ of $Y$ and a function $N:\mathbb{N}\rightarrow \mathbb{N}$ such that for each $k\in \mathbb{N}$, the first $N(k)$ points of $S$, say  $S^k:=\{x^1,\ldots,x^{N(k)}\}$, is a forward $1/k$-net of $Y$.
Fixing $k\in \mathbb{N}$, for each $\alpha\in \mathbb{N}$, it is not hard to find a $\theta$-metric space $(S^k_\alpha, d_{S^k_\alpha})$ such that
\[
S^k_\alpha:=\{1,\ldots,N(k)\},\  \lambda_{d_{S_\alpha^k}}(S_\alpha^k)\in [1,\theta]\cap \mathbb{Q}, \ d_{S_\alpha^k}(i,j)\in \mathbb{Q},\ |d_{S_\alpha^k}(i,j)-d_Y(x^i,x^j)|\leq\frac{1}{\alpha}, \,1\leq i,j\leq N(k).
\]
Consider the diagonal  sequence  $\mathcal {Y}_k:=(S^k_k,d_{S^k_k})\in \mathcal {N}^\theta$. A standard argument (cf. Petersen \cite[Example 56, p.295]{PP})
 yields
$d^{\theta}_{GH}(\mathcal {Y}_k,\mathcal {Y})\leq d^{\theta}_{GH}(\mathcal {Y}_k,S^k)+d^{\theta}_{GH}( S^k,\mathcal {Y})\leq \frac{2}{k}\rightarrow 0$,
which concludes the proof.
\end{proof}

In the following, we present several examples to show the advantage of the $\theta$-Gromov-Hausdorff topology. Note that every irreversible metric can be symmetrized to be a reversible metric and moreover, one can easily show the following result by Lemma \ref{inprotantisometry}.
\begin{proposition}\label{convergnoncmpamesa}
If $(X_i,d_i)_i$ converges to $(X,d)$ in the $\theta$-Gromov-Hausdorff topology, then the sequence of symmetrized spaces $(X_i,\hat{d}_i)_i$ converges to $(X,\hat{d})$ in the $1$-Gromov-Hausdorff topology $($i.e., the original Gromov-Hausdorff topology$).$
\end{proposition}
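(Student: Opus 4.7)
The plan is to transfer $\varepsilon$-isometries between the original spaces to $\varepsilon$-isometries between the symmetrized ones, losing only a multiplicative constant that depends on $\theta$. First I would invoke Lemma \ref{inprotantisometry} to rephrase $\theta$-Gromov-Hausdorff convergence $(X_i, d_i) \to (X, d)$ as the existence of maps $f_i: X_i \to X$ with $\dis(f_i) \to 0$ (measured with $d_i$ and $d$) and $X \subset \overline{[f_i(X_i)]^{\varepsilon_i}}$ for some $\varepsilon_i \to 0$. The reversible case of the same lemma will then reduce the desired conclusion to showing that each $f_i$ is a $\delta_i$-isometry from $(X_i, \hat{d}_i)$ to $(X, \hat{d})$ with $\delta_i \to 0$.

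For the distortion estimate, for any $x, x' \in X_i$ the triangle-style inequality
$$
|\hat{d}(f_i(x), f_i(x')) - \hat{d}_i(x,x')| \leq \tfrac{1}{2}|d(f_i(x), f_i(x')) - d_i(x,x')| + \tfrac{1}{2}|d(f_i(x'), f_i(x)) - d_i(x',x)| \leq \dis(f_i)
$$
is immediate from the definition of the symmetrization, so the distortion of $f_i$ with respect to $(\hat{d}_i,\hat{d})$ is no larger than with respect to $(d_i,d)$.

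For the coverage part, given $y \in X$ and $\eta > 0$, I can choose $x \in X_i$ with $d(f_i(x), y) < \varepsilon_i + \eta$. Since $\lambda_d(X) \leq \theta$ by assumption, one gets $d(y, f_i(x)) \leq \theta(\varepsilon_i + \eta)$, and hence $\hat{d}(f_i(x), y) \leq \tfrac{1+\theta}{2}(\varepsilon_i + \eta)$. Letting $\eta \to 0^+$ produces $X \subset \overline{[f_i(X_i)]^{(1+\theta)\varepsilon_i/2}}$ in the symmetrized metric. Combining the two steps, $f_i$ is a $\max\{\dis(f_i),\tfrac{1+\theta}{2}\varepsilon_i\}$-isometry between the reversible spaces $(X_i, \hat{d}_i)$ and $(X, \hat{d})$, and the right-hand side tends to zero; Lemma \ref{inprotantisometry} in its reversible form ($\theta=1$) then yields convergence in the original Gromov-Hausdorff topology.

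The main technical point I expect to check is the quantitative equivalence in Lemma \ref{inprotantisometry} in both directions for $\theta$-metric spaces; once that is in hand, the two short estimates above do all the work. An alternative, even more direct route that avoids the lemma is to note that if $d'$ is a $\theta$-admissible metric on the disjoint union $X_i \sqcup X$ realizing (nearly) $d^\theta_{GH}(\mathcal{X}_i,\mathcal{X})$, then its symmetrization $\hat{d}'$ is a reversible admissible metric (the triangle inequality is preserved under averaging) restricting to $\hat{d}_i$ and $\hat{d}$, and the same reversibility trick gives $\hat{d}'_H(X_i, X) \leq \tfrac{1+\theta}{2}\, d'_H(X_i, X)$; passing to the infimum yields the clean quantitative bound $d^1_{GH}((X_i,\hat{d}_i),(X,\hat{d})) \leq \tfrac{1+\theta}{2}\, d^\theta_{GH}((X_i,d_i),(X,d))$, from which the proposition follows at once.
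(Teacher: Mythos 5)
Your argument is correct and follows essentially the paper's intended route: the paper proves this proposition precisely by appealing to Lemma \ref{inprotantisometry}, converting $\theta$-Gromov-Hausdorff convergence into $\epsilon$-isometries, observing they remain approximate isometries for the symmetrized metrics (with the coverage radius inflated by the factor $\tfrac{1+\theta}{2}$ via the reversibility bound), and applying the lemma again with $\theta=1$. Your alternative argument, symmetrizing a $\theta$-admissible metric on $X_i\sqcup X$ to get the quantitative bound $d^1_{GH}\leq \tfrac{1+\theta}{2}\,d^\theta_{GH}$, is also valid and even a bit cleaner, but it is not needed beyond the first route.
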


One may expect  to hold the reverse of the above statement and if so, to replace the $\theta$-Gromov-Hausdorff topology by the $1$-Gromov-Hausdorff topology in order to study the convergence.  Unfortunately, the following example shows that it is usually impossible  because the $1$-Gromov-Hausdorff topology is ``rougher" than the $\theta$-Gromov-Hausdorff topology.

\begin{example}\label{symmetricexample}
Let $\mathbb{T}^n=\mathbb{S}\times\cdots\times\mathbb{S}$ be an $n(\geq 2)$-flat tours. Define a sequence of Randers-Berwald metrics $F_i$ on $\mathbb{T}^n$ by
\[
F_i:=\alpha+\beta_i:=\left\{
\begin{array}{lll}
\sqrt{(dt^1)^2+\cdots+(dt^n)^2} + \frac12 dt^1, && \text{ if $i$ is even},\\
\\
\sqrt{(dt^1)^2+\cdots+(dt^n)^2} + \frac13 dt^1, && \text{ if $i$ is odd},
\end{array}
\right.
\]
where $\alpha$ is the standard Riemannian metric on $\mathbb{T}^n$ and $(t_1,\ldots,t_n)\in (0,2\pi)\times \cdots\times (0,2\pi)$ is a  local coordinate system of $\mathbb{T}^n$. In particular, $F_i$'s are globally defined on $\mathbb{T}^n$.

Let $d_i$ and $d_\alpha$ be the metrics induced by $F_i$ and $\alpha$, respectively.
 Note that the geodesics of $(\mathbb{T}^n,F_i)$ are the same as those of $(\mathbb{T}^n,\alpha)$, see e.g. Bao, Chern and Shen  \cite{BCS}, and the  symmetrized  metric of $d_i$, say $\hat{d}_i$, is exactly $d_\alpha$.
 Therefore, the sequence of symmetrized spaces $(\mathbb{T}^n,\hat{d}_i)_i$ converges to $(\mathbb{T}^n,d_\alpha)$ under the $1$-Gromov-Hausdorff topology.

 On the other hand, due to (\ref{Randersnormreveruniform}), each $(\mathbb{T}^n,d_i)$ is a compact $3$-metric space. However,
 for any $\theta\geq 3$, the sequence $(\mathbb{T}^n,{d}_i)_i$ is divergent under the $\theta$-Gromov-Hausdorff topology, since the odd and even sequences converge to different limits.
\end{example}

By Definition \ref{GromoHasdrdelta}, it is easy to check that different $\theta$-Gromov-Hausdorff topologies are compatible in the following sense.
\begin{proposition}
If a sequence $(\mathcal {X}_i)_i$ is convergent in the $\theta_1$-Gromov-Hausdorff topology, then it must converge to the same limit in the $\theta_2$-Gromov-Hausdorff topology for any $\theta_2\geq \theta_1$.
\end{proposition}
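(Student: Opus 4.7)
The plan is to prove the stronger pointwise inequality $d^{\theta_2}_{GH}(\mathcal{X},\mathcal{Y})\le d^{\theta_1}_{GH}(\mathcal{X},\mathcal{Y})$ on $\mathcal{M}^{\theta_1}\times\mathcal{M}^{\theta_1}$ whenever $\theta_2\ge\theta_1$, from which the statement follows at once by the squeeze principle.

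First, I would check that the assertion is well posed by observing $\mathcal{M}^{\theta_1}\subset\mathcal{M}^{\theta_2}$: if $(X,d)$ is a compact $\theta_1$-metric space, then $\lambda_d(X)\le\theta_1\le\theta_2$, so $(X,d)\in\mathcal{M}^{\theta_2}$. In particular, each $\mathcal{X}_i$ and the putative limit $\mathcal{X}$ live in $\mathcal{M}^{\theta_2}$, so the $\theta_2$-Gromov-Hausdorff distance between them is defined. Next, the main observation: any $\theta_1$-admissible metric $d$ on a disjoint union $X_1\sqcup X_2$ is automatically $\theta_2$-admissible. Indeed, the axioms of an irreversible metric and the restrictions $d|_{X_i}=d_i$ do not depend on $\theta$, while $\lambda_d(X_1\sqcup X_2)\le\theta_1\le\theta_2$ gives the reversibility bound required by Definition \ref{GromoHasdrdelta} at level $\theta_2$. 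Hence the set of admissible metrics enlarges with $\theta$, and taking infima of $d_H(X_1,X_2)$ over a larger class can only decrease the value, yielding
\[
d^{\theta_2}_{GH}(\mathcal{X}_1,\mathcal{X}_2)\;\le\;d^{\theta_1}_{GH}(\mathcal{X}_1,\mathcal{X}_2),\qquad \forall\,\mathcal{X}_1,\mathcal{X}_2\in\mathcal{M}^{\theta_1}.
\]

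Applying this inequality to the convergent sequence, from $d^{\theta_1}_{GH}(\mathcal{X}_i,\mathcal{X})\to 0$ I would immediately deduce $d^{\theta_2}_{GH}(\mathcal{X}_i,\mathcal{X})\to 0$, which is convergence to $\mathcal{X}$ in the $\theta_2$-Gromov-Hausdorff topology. For the claim that the limit is the \emph{same} space, I would invoke Proposition \ref{Gromov-Haudsdiscpro}(iii) at level $\theta_2$: it guarantees that $d^{\theta_2}_{GH}$ separates isometry classes, so a $\theta_2$-Gromov-Hausdorff limit is unique up to isometry, and this unique limit must coincide with $\mathcal{X}$.

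There is genuinely no obstacle: the argument is purely tautological, resting on the monotonicity of the admissibility condition in the parameter $\theta$, together with the metric property of $d^{\theta_2}_{GH}$. No constructions of approximating isometries or quantitative estimates are needed beyond what is already packaged into the two cited statements.
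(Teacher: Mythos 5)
Your proof is correct, and it is exactly the argument the paper has in mind: the paper gives no explicit proof, remarking only that the compatibility "is easy to check" from Definition \ref{GromoHasdrdelta}, and the intended check is precisely your observation that every $\theta_1$-admissible metric is $\theta_2$-admissible, so $d^{\theta_2}_{GH}\le d^{\theta_1}_{GH}$ on $\mathcal{M}^{\theta_1}\subset\mathcal{M}^{\theta_2}$. The appeal to Proposition \ref{Gromov-Haudsdiscpro}(iii) for uniqueness of the limit is a harmless extra; convergence to the same $\mathcal{X}$ already follows from the distance inequality.
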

A natural question arises at this point: can we eliminate the  uniformly upper bound $\theta$ for  reversibilities? The following example presents a negative answer.
\begin{example}[A flaw Gromov-Hausdorff distance]\label{flawgroha}
Let $\mathcal {M}^\infty$ be a collection of compact forward metric space with finite reversibilities.
Let $\mathcal {X}_i:=(X_i,d_i)$, $i=1,2$ be two elements in $\mathcal {M}^\infty$. An $\infty$-admissible metric $d$  on the disjoint union $X_1\sqcup X_2$ is a metric with finite reversibility such that  $ d|_{X_i}=d_i, i=1,2$.
The $\infty$-{Gromov-Hausdorff} distance between
$\mathcal {X}_1$ and $\mathcal {X}_2$ is defined as
\[
d^\infty_{GH}(\mathcal {X}_1,\mathcal {X}_2):=\inf\{d_H(X_1,X_2)|\,\infty\text{-admissible
metrics on }X_1\sqcup X_2 \}.
\]
Although $d^\infty_{GH}$ satisfies all the properties in Proposition \ref{Gromov-Haudsdiscpro},
 the space $\left( \mathcal {M}^\infty, d_{GH}^\infty\right)$ is not complete. In fact, the limit of a Cauchy sequence in this space might be not  {an}  irreversible metric space.

For instance, consider  a sequence of Finsler metrics $F_i=\alpha+e^{-\frac{1}{i}}dt^1$, $i\in \mathbb{N},$ globally defined on an $n(\geq 2)$-flat torus $\mathbb{T}^n$,
where $\alpha$ and $t^1$ are  as in Example \ref{symmetricexample}.
Let $d_i$ denote the metric induced by $F_i$. Thus, $\mathcal {X}_i:=(\mathbb{T}^n,d_i)$, $i\in\mathbb{N}$ is a sequences of compact $\theta_i$-metric space, where
$\theta_i=({e^{{1}/{i}}+1})/({e^{{1}/{i}}-1})\rightarrow \infty$ as $i\to \infty$. Furthermore, the property of Berwald-Randers metrics implies
\[
|d_{i+1}(p,q)-d_{i}(p,q)|\leq 2\pi \left[ e^{-\frac{1}{i+1}}- e^{-\frac{1}{i}}\right]=:\epsilon_i, \text{ for any }p,q\in \mathbb{T}^n.
\]
Hence, we can define an $\infty$-admissible metric $d$ on $M_i\sqcup M_{i+1}$; more precisely, for $p\in M_i$ and $q\in M_{i+1}$,
\begin{align*}
d(p,q):=\underset{x\in \mathbb{T}^n}{\min} \left\{
d_i(p,x)+d_{i+1}(x,q)\right\}+\epsilon_i,\
d(q,p):=\underset{x\in \mathbb{T}^n}{\min}\left\{
d_{i+1}(q,x)+d_i(x,p)\right\}+\epsilon_i,
\end{align*}
which implies $d^\infty_{GH}(\mathcal {X}_i,\mathcal {X}_{i+1})\leq \epsilon_i$ and hence,
 $(\mathcal {X}_i)_i$ is a Cauchy sequence with respect to $d^\infty_{GH}$.
Obviously, $d_i\rightarrow d_\infty$, which is induced by $F_\infty=\alpha+dt^1$. However, $d_\infty$ is not a  metric because there exist two different points $p,q\in \mathbb{T}^n$ with $d_\infty(p,q)=0$ and $d_\infty(q,p)>0$.
\end{example}

From above, the $\theta$-Gromov-Hausdorff metric is an optimal tool to study the convergence of irreversible metric spaces. In particular, the collection of compact $\theta$-geodesic spaces is closed in $\left(\mathcal {M}^{\theta},d^\theta_{GH}\right)$ (cf. Shen and Zhao \cite[Theorem 5.2]{SZ}).
\begin{theorem}[\cite{SZ}]\label{lenghtclsoedghtopo}
The limit of a sequence of compact
 $\theta$-geodesic spaces under the $\theta$-Gromov-Hausdorff topology  is  a compact $\theta$-geodesic space as well.
\end{theorem}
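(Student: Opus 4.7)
The plan is as follows. Since $\left(\mathcal{M}^\theta_\sim, d^\theta_{GH}\right)$ is complete (Theorem \ref{imporstcompacthteorem}), the limit $\mathcal{X}=(X,d)$ is automatically a compact $\theta$-metric space, so by Proposition \ref{metricsigeodesic} it suffices to produce, for each pair $x,y\in X$, a minimal geodesic from $x$ to $y$. I would use the classical midpoint method, adapted to the irreversible setting: first show that every pair admits an \emph{exact forward midpoint} $m\in X$, i.e.\ $d(x,m)=d(m,y)=\tfrac12 d(x,y)$, and then iterate dyadically to assemble a constant-speed minimal geodesic, exploiting that compactness gives simultaneous forward and backward completeness (Proposition \ref{forwardandbackwardrelation}, Theorem \ref{compactequvitheorem}).

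For the midpoint existence, fix $x,y\in X$ and a sequence $\epsilon_k\downarrow 0$ with $d^\theta_{GH}(\mathcal{X}_{i_k},\mathcal{X})<\epsilon_k$. By Definition \ref{GromoHasdrdelta} I obtain a $\theta$-admissible metric $D_k$ on $Y_k:=X_{i_k}\sqcup X$ whose $D_k$-Hausdorff distance is $<\epsilon_k$. Choose $x_k,y_k\in X_{i_k}$ which are $D_k$-close to $x,y$ in both directions (this costs a factor at most $\theta$, by the admissibility condition $\lambda_{D_k}(Y_k)\leq\theta$). Since each $\mathcal{X}_{i_k}$ is $\theta$-geodesic, a constant-speed minimal geodesic $\gamma_k:[0,1]\to X_{i_k}$ from $x_k$ to $y_k$ exists, and its midpoint $m_k:=\gamma_k(1/2)$ satisfies $d_{i_k}(x_k,m_k)=d_{i_k}(m_k,y_k)=\tfrac12 d_{i_k}(x_k,y_k)$. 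Transfer $m_k$ back to a point $\tilde m_k\in X$ via $D_k$. The triangle inequality on $(Y_k,D_k)$ then forces
\[
\bigl|d(x,\tilde m_k)-\tfrac12 d(x,y)\bigr|+\bigl|d(\tilde m_k,y)-\tfrac12 d(x,y)\bigr|\leq C(\theta)\,\epsilon_k.
\]
By sequential compactness of $X$ (Theorem \ref{compactequvitheorem}) and continuity of $d$ on $\mathcal{T}_+\times\mathcal{T}_+$ (Theorem \ref{topologychara}/(i)), a subsequential limit $m$ of $(\tilde m_k)_k$ is the desired exact midpoint.

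With a midpoint map at hand, I would construct $\gamma$ on the dyadic rationals of $[0,1]$ by repeated midpointing, and verify by induction on the dyadic level that $d(\gamma(s),\gamma(t))=(t-s)\,d(x,y)$ for all dyadic $s\leq t$; the point is that the equality $d(x,m)+d(m,y)=d(x,y)$ forces every subsequent midpoint to sit on this straight-line filtration via the triangle inequality. Consequently $\gamma$ is $d(x,y)$-Lipschitz on the dyadics, hence also Lipschitz for the symmetrized metric $\hat d$ (Remark \ref{unfromequaive}), and extends continuously to $[0,1]$ by completeness of the compact $\hat d$-metric space (Theorem \ref{topologychara}/(ii)). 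The extension preserves $d(\gamma(s),\gamma(t))=(t-s)d(x,y)$, so $L_d(\gamma)=d(x,y)$ and $\gamma$ is the desired minimal geodesic.

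The principal obstacle is controlling the asymmetry through the gluings. In the reversible case the admissible metric $D_k$ makes forward and backward distances coincide, but here only the bound $\lambda_{D_k}(Y_k)\leq\theta$ is available, so every transfer between $X_{i_k}$ and $X$ pays a factor up to $\theta$ in the backward direction. This is precisely why the statement is restricted to a single $\theta$: if $\theta$ were allowed to blow up along the sequence, the forward/backward comparison inside $Y_k$ would fail and the approximate midpoint estimate above would break down, consistent with the pathology exhibited in Example \ref{flawgroha}. Once this bookkeeping is in place, the dyadic reconstruction proceeds mechanically as in the reversible case of Burago, Burago and Ivanov.
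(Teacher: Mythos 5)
Your proof is correct and follows essentially the same route the paper relies on: the statement itself is cited to \cite{SZ}, but the paper's own proof of the noncompact analogue (Proposition \ref{lenthcompactnoncompact}, via Theorem \ref{mdistrctilylength}) is exactly your midpoint-transfer argument through a $\theta$-admissible metric, followed by the dyadic reconstruction. The only cosmetic remark is that you could shortcut the dyadic step by citing Theorem \ref{mdistrctilylength} directly, and that the midpoint of a (not necessarily constant-speed) minimal geodesic can be produced by the intermediate value theorem applied to $t\mapsto L_d(\gamma,0,t)$.
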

Owing to Theorem \ref{geodesicexistencethe}, a compact
 $\theta$-length space is   a  compact $\theta$-geodesic space and vice versa. Hence, the collection of compact $\theta$-length spaces is also closed in $\left(\mathcal {M}^{\theta},d^\theta_{GH}\right)$.

 We recall a Gromov type compactness theorem (cf.   Zhao \cite[Theorem 2.6]{Z3}), which is an irreversible version of Petersen \cite[Corollary 8]{PP2}.

\begin{proposition}[\cite{Z3}]\label{capcityprecompact}Given a decreasing function $N:(0,1)\rightarrow \mathbb{N}$,
the collection
\[
\mathscr{C}(N):=\left\{\left.(X,d)\in \mathcal {M}^{\theta}\right|\Cov_X(\epsilon)\leq N(\epsilon), \text{ for any } \epsilon\in (0,1)\right\}
\]
is compact in the $\theta$-Gromov-Hausdorff topology.
\end{proposition}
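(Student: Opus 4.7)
The plan is to establish precompactness and closedness of $\mathscr{C}(N)$ inside the Polish space $\left(\mathcal{M}^\theta_\sim,d^\theta_{GH}\right)$ from Theorem \ref{imporstcompacthteorem}; since that ambient space is complete, a closed totally bounded subset is automatically compact, so it is enough to show that every sequence in $\mathscr{C}(N)$ admits a $d^\theta_{GH}$-convergent subsequence whose limit again lies in $\mathscr{C}(N)$.

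For precompactness I would follow the classical Gromov strategy, adapted to the $\theta$-setting. First, every $X\in\mathscr{C}(N)$ has uniformly bounded diameter: cover $X$ by $N(1/2)$ forward $(1/2)$-balls, chain their centers through the triangle inequality, and use $\lambda_d(X)\leq\theta$ to control the reverse distances, yielding $\diam(X)\leq D_0=D_0(\theta,N(1/2))$. Given a sequence $\mathcal{X}_i=(X_i,d_i)\in\mathscr{C}(N)$, I pick for each $k\in\mathbb{N}$ a forward $(1/k)$-net $S_i^k\subset X_i$ with $|S_i^k|\leq N(1/k)$, arranged so that $S_i^1\subset S_i^2\subset\cdots$. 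All pairwise distances across $\bigcup_k S_i^k$ lie in $[0,\theta D_0]$, so Cantor's diagonal procedure extracts a subsequence along which every pairwise distance converges; the limiting values define a pseudo-$\theta$-metric on a countable index set which, after identification and completion, produces a compact $\theta$-metric space $\mathcal{X}_\infty$. A standard $\epsilon$-net verification then yields $d^\theta_{GH}(\mathcal{X}_{i_k},\mathcal{X}_\infty)\to 0$.

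For closedness, assume $\mathcal{X}_i\to\mathcal{X}=(X,d)$ in the $\theta$-Gromov-Hausdorff topology with $\mathcal{X}_i\in\mathscr{C}(N)$, and fix $\epsilon\in(0,1)$. For each $\delta\in(0,\epsilon/2)$ pick a $\theta$-admissible metric on $X_i\sqcup X$ realizing $d_H(X_i,X)<\delta$ for $i$ large. Starting from a forward $(\epsilon-2\delta)$-cover of $X_i$ by at most $N(\epsilon-2\delta)$ centers $\{x_j^i\}$ and replacing each by a point $y_j^i\in X$ at admissible distance $<\delta$, the triangle inequality in the admissible $\theta$-metric forces $\{B^+_{y_j^i}(\epsilon)\}$ to be a genuine forward $\epsilon$-cover of $X$, giving $\Cov_X(\epsilon)\leq N(\epsilon-2\delta)$ for every such $\delta$.

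The principal obstacle is the final passage from this family of inequalities to the sharp endpoint bound $\Cov_X(\epsilon)\leq N(\epsilon)$ at the (at most countably many) jump discontinuities of the integer-valued decreasing function $N$, where $\inf_{\delta>0}N(\epsilon-2\delta)=N(\epsilon^-)$ can strictly exceed $N(\epsilon)$. I would resolve this by extracting along a sequence $\delta_n\to 0^+$ a convergent limit of candidate centers $y_j^{(n)}\to y_j\in X$ using the compactness of $X$, passing the strict estimates $d(y_j^{(n)},x)<\epsilon$ to the limit via the continuity of $d$ under $\mathcal{T}_+$ granted by Theorem \ref{topologychara}(i), and combining this with a careful parameter adjustment (or equivalently, the left-continuity of $\Cov_X$ on a compact space) to recover the integer-valued bound at $\epsilon$ itself. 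The precompactness half, by contrast, is routine once the diameter and net cardinalities have been uniformly controlled.
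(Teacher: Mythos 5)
The paper offers no argument of its own here (it quotes \cite{Z3}, an irreversible version of Petersen's compactness criterion), so your proposal has to stand on its own; its overall architecture -- diagonal extraction over $\epsilon$-nets inside the complete space $(\mathcal{M}^\theta_\sim,d^\theta_{GH})$ of Theorem \ref{imporstcompacthteorem}, plus closedness by transferring covers through $\theta$-admissible metrics -- is indeed the standard route, and your transfer estimate $\Cov_X(\epsilon)\leq N(\epsilon-2\delta)$ is correct. The first genuine gap is the uniform diameter bound. ``Chaining the centers through the triangle inequality'' only works in a length (or at least chain-connected) space: in a general compact $\theta$-metric space the centers of a covering family need not be anywhere near each other. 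Concretely, a two-point space with mutual distance $R\geq 1$ has $\Cov_X(\epsilon)=2$ for every $\epsilon\in(0,1)$, so as soon as $N\geq 2$ on $(0,1)$ the class $\mathscr{C}(N)$ as written contains spaces of arbitrarily large diameter and no bound $D_0(\theta,N(1/2))$ can exist; without it the precompactness half of your argument (and the statement in its literal form) breaks down on this subfamily. The bound $\diam\leq D$ must therefore be imported as a hypothesis rather than derived -- exactly as in Petersen's formulation and as is available in every place the paper invokes the proposition (Lemma \ref{doublingwithprecompact} and Theorems \ref{basisprecomp}, \ref{compactmeasuredgromvhaus} all carry an explicit diameter bound).

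The second gap is the endpoint step you yourself flag, whose proposed repair does not close. From $\Cov_X(\epsilon)\leq N(\epsilon-2\delta)$ for all small $\delta$ you only obtain $\Cov_X(\epsilon)\leq\lim_{s\uparrow\epsilon}N(s)$, which exceeds $N(\epsilon)$ at a jump of $N$; taking limits of the candidate centers upgrades the strict bounds $d(y_j^{(n)},x)<\epsilon$ only to $d(y_j,x)\leq\epsilon$, i.e.\ a cover by \emph{closed} forward $\epsilon$-balls, which with the paper's open-ball definition of $\Cov$ need not be converted into $N(\epsilon)$ open balls; and left-continuity of $\Cov_X$ (which does hold on a compact space, by a Lebesgue-number argument using the continuity of $d$ from Theorem \ref{topologychara}) acts on the wrong side -- it relates $\Cov_X(\epsilon)$ to radii slightly \emph{below} $\epsilon$, whereas your information sits at those radii already. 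In fact membership in $\mathscr{C}(N)$ can genuinely fail in the limit at a jump point: the two-point spaces with distance $\frac12-\frac1i$ lie in $\mathscr{C}(N)$ for $N=2$ on $(0,\frac12)$ and $N=1$ on $[\frac12,1)$, yet their limit, two points at distance $\frac12$, has $\Cov_X(\frac12)=2>N(\frac12)$. So this step cannot be completed as stated: either settle for precompactness together with the (correct) conclusion $\Cov_X(\epsilon)\leq N(s)$ for all $s<\epsilon$, or set the covering numbers up with closed balls/non-strict inequalities, which is how the cited formulation is meant to be read.
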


Theorem \ref{Ohtacomparsiontheorem} together with (\ref{capvscov}), Proposition \ref{capcityprecompact} and (\ref{reversibdindentiy}) furnishes the following result immediately.
\begin{theorem}\label{basisprecomp}Given $N\in [2,\infty)$, $\theta\in [1,\infty)$, $D\in (0,\infty)$ and $K\in \mathbb{R}$, the collection of closed
 Finsler metric-measure manifolds $(M,F,\mathfrak{m})$ with
 \[
 \dim(M)\leq N,\ \lambda_{F}(M)\leq \theta,\ \mathbf{Ric}_N\geq K, \  \diam(M)\leq D
 \]
 is pre-compact in the $\theta$-Gromov-Hausdorff topology. Here, $\dim(M)$ denotes the dimension of $M$.
 \end{theorem}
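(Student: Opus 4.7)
The strategy is to reduce the statement to an application of the capacity-based precompactness criterion in Proposition \ref{capcityprecompact}. To apply it, I need two things simultaneously for every Finsler metric-measure manifold $(M,F,\mathfrak{m})$ in the given class: a uniform bound on the reversibility of the induced distance space $(M,d_F)$, and a uniform bound on the covering numbers $\Cov_M(\epsilon)$ by a function depending only on the parameters $N,\theta,D,K$ and $\epsilon$.

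For the reversibility bound, the hypothesis $M$ is closed (hence $(M,F)$ is forward complete and $M$ is convex in itself) and $\lambda_F(M)\leq \theta$ allow me to invoke the identity (\ref{reversibdindentiy}) from Theorem \ref{reversibifinslerdd}/(ii): $\lambda_{d_F}(M)=\lambda_F(M)\leq\theta$. Combined with $\diam(M,d_F)\leq D$ and compactness of $M$, this shows $(M,d_F)\in\mathcal{M}^{\theta}$, so every member of our collection is a legitimate element of the ambient space on which the $\theta$-Gromov-Hausdorff topology is defined.

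For the covering bound, fix $\epsilon\in(0,1)$ and let $\{B^+_{x_i}(\epsilon/(2\theta))\}_{i=1}^{\Ca_M(\epsilon/\theta)}$ be a maximal disjoint family of forward balls realizing the capacity. Since $\diam(M)\leq D$, every such ball is contained in $\overline{B^+_{x_i}(D+1)}=M$, so summing volumes gives
\[
\Ca_M(\epsilon/\theta)\cdot \min_{i}\mathfrak{m}[B^+_{x_i}(\epsilon/(2\theta))]\leq \mathfrak{m}[M]\leq \max_{i}\mathfrak{m}[B^+_{x_i}(D+1)].
\]
The Bishop-Gromov inequality from Theorem \ref{Ohtacomparsiontheorem}, applied at each $x_i$ with the hypothesis $\mathbf{Ric}_N\geq K$, yields
\[
\frac{\mathfrak{m}[B^+_{x_i}(D+1)]}{\mathfrak{m}[B^+_{x_i}(\epsilon/(2\theta))]}\leq \frac{\int_0^{D+1}\mathfrak{s}_{K,N}(t)^{N-1}\,\ddd t}{\int_0^{\epsilon/(2\theta)}\mathfrak{s}_{K,N}(t)^{N-1}\,\ddd t}=:C(K,N,D,\theta,\epsilon).
\]
Since $\mathfrak{m}[M]$ cancels in the ratio, substitution gives $\Ca_M(\epsilon/\theta)\leq C(K,N,D,\theta,\epsilon)$. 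The comparison (\ref{capvscov}) then yields
\[
\Cov_M(\epsilon)\leq \Ca_M(\epsilon/\theta)\leq C(K,N,D,\theta,\epsilon)=:N(\epsilon),
\]
a bound that depends only on the prescribed parameters and $\epsilon$, not on the particular manifold.

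With the function $N(\epsilon)$ in hand (made nonincreasing in $\epsilon$ if necessary), the collection of distance spaces $(M,d_F)$ arising from our Finsler metric-measure manifolds lies inside the compact class $\mathscr{C}(N)\subset\mathcal{M}^{\theta}$ of Proposition \ref{capcityprecompact}. Precompactness in the $\theta$-Gromov-Hausdorff topology follows immediately. The only real work here is justifying the uniform capacity bound via Bishop-Gromov; the reversibility identity and the capacity-covering comparison are essentially cosmetic, and no obstacle is expected beyond the standard care needed when comparing forward balls of different radii under an asymmetric metric.
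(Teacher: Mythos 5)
Your proposal is correct and follows essentially the same route as the paper, whose proof consists precisely of combining the reversibility identity (\ref{reversibdindentiy}), Ohta's Bishop--Gromov comparison (Theorem \ref{Ohtacomparsiontheorem}), the capacity--covering inequality (\ref{capvscov}), and the capacity precompactness criterion (Proposition \ref{capcityprecompact}); your observation that the total volume cancels in the Bishop--Gromov ratio (so no volume bounds are needed) is exactly the point that makes this work. The only detail to keep in mind is that when $K>0$ the comparison is stated for radii $R\leq\pi\sqrt{(N-1)/K}$, so one should replace $D+1$ by $\min\{D+1,\pi\sqrt{(N-1)/K}\}$ (using $\diam(M)\leq\pi\sqrt{(N-1)/K}$), a routine adjustment the paper also leaves implicit.
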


\begin{remark}\label{precompactnessfail}The uniform upper bound $\theta$ in Theorem \ref{basisprecomp} is necessary. In view of Example  \ref{flawgroha}, the Finsler manifolds $(M_i,F_i):=(\mathbb{T}^n,F_i)$ endowed with the Holmes-Thompson measure satisfy $\dim=n$, $\mathbf{Ric}_N=0$, $\diam\leq 2(n+1)\pi$ but $\lambda_{F_i}(M_i)\nearrow \infty$ as $i\to \infty$.
\end{remark}

Following Lott and Villani \cite{LV}, we use $\epsilon$-isometry (see Definition \ref{epsilonapproximation}/(iii)) to study the convergence of forward metric-measure spaces. Firstly, we point out that every $\epsilon$-isometry admits an  approximate inverse.
\begin{proposition}\label{deltanprooxima} Let $(X,d_X)$ and $(Y,d_Y)$ be two compact $\theta$-metric spaces.
Given  an $\epsilon$-isometry  $f:X\rightarrow Y$, there exists a $(2+\theta)\epsilon$-isometry $f_r: Y\rightarrow X$. In particular, we have
\[
d_X(f_r\circ f(x),x)\leq 2\epsilon, \ d_Y(f\circ f_r(y),y)\leq \epsilon, \ \forall\,x\in X,\ y\in Y.
\]
\end{proposition}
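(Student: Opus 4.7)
The plan is to construct the approximate inverse $f_r$ pointwise using the density clause in the definition of $\epsilon$-isometry, then verify the two pointwise bounds and finally bound $\dis f_r$ by playing the triangle inequality off the uniform reversibility bound $\theta$. The main subtlety will be the asymmetry: when I use the triangle inequality in $Y$ to interchange $f(x)$ and $y$, I get terms like $d_Y(y, f(x))$ which I only control by swapping to $d_Y(f(x), y)$ at the cost of a factor $\theta$ (valid since $\lambda_{d_Y}(Y) \leq \theta$). This is precisely where the constant $(2+\theta)$ will arise.

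First I would define $f_r: Y \to X$ by choosing, for each $y \in Y$, any point $f_r(y) \in X$ with $d_Y(f(f_r(y)), y) \leq \epsilon$. Such a choice exists because $Y \subset \overline{[f(X)]^\epsilon}$ together with compactness yields $\inf_{x} d_Y(f(x), y) \leq \epsilon$; one picks a minimizing (or almost-minimizing, absorbing negligible slack) $x$ via the axiom of choice. By construction the estimate $d_Y(f \circ f_r(y), y) \leq \epsilon$ is immediate. For the other pointwise bound, with $x' := f_r(f(x))$ I have $d_Y(f(x'), f(x)) \leq \epsilon$ by construction, so $\dis f \leq \epsilon$ gives $d_X(x', x) \leq d_Y(f(x'), f(x)) + \epsilon \leq 2\epsilon$.

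Next I would show $\dis f_r \leq (2+\theta)\epsilon$. Fix $y, y' \in Y$ and set $x := f_r(y)$, $x' := f_r(y')$. For the upper bound on $d_X(x, x')$, apply $\dis f \leq \epsilon$ and the triangle inequality in $Y$:
\[
d_X(x, x') \leq d_Y(f(x), f(x')) + \epsilon \leq d_Y(f(x), y) + d_Y(y, y') + d_Y(y', f(x')) + \epsilon.
\]
The first term is $\leq \epsilon$; for the third term, $d_Y(y', f(x')) \leq \theta\, d_Y(f(x'), y') \leq \theta \epsilon$ by the $\theta$-reversibility. This yields $d_X(x, x') \leq d_Y(y, y') + (2+\theta)\epsilon$. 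The lower bound is symmetric:
\[
d_Y(y, y') \leq d_Y(y, f(x)) + d_Y(f(x), f(x')) + d_Y(f(x'), y') \leq \theta\epsilon + \bigl(d_X(x, x') + \epsilon\bigr) + \epsilon,
\]
giving $d_X(x, x') \geq d_Y(y, y') - (2+\theta)\epsilon$. Combining the two gives $\dis f_r \leq (2+\theta)\epsilon$.

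Finally I would verify the density condition $X \subset \overline{[f_r(Y)]^{(2+\theta)\epsilon}}$. For every $x \in X$, set $y := f(x)$; then the second pointwise estimate already proved gives $d_X(f_r(y), x) \leq 2\epsilon < (2+\theta)\epsilon$, so $x \in [f_r(Y)]^{(2+\theta)\epsilon}$. Hence $f_r$ is a $(2+\theta)\epsilon$-isometry, completing the proof. The whole argument is routine once the asymmetry is handled by inserting the factor $\theta$ at the single place where a backward-direction distance appears.
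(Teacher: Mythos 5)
Your proposal is correct and follows essentially the same route as the paper: define $f_r(y)$ as a point $x_y$ with $d_Y(f(x_y),y)\leq\epsilon$ (available from $Y\subset\overline{[f(X)]^\epsilon}$), derive the two pointwise bounds, and estimate $\dis f_r$ via the triangle inequality, paying the factor $\theta$ exactly once on the backward-direction term $d_Y(y',f\circ f_r(y'))$. The only cosmetic difference is that the paper bounds $|d_X(f_r(y),f_r(y'))-d_Y(y,y')|$ in one absolute-value chain while you split it into two one-sided inequalities; the content and constants are identical.
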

\begin{proof}
Given any $y\in Y$, there exists $x_y\in X$ with $d_Y(f(x_y),y)\leq \epsilon$. Define $f_r: Y\rightarrow X$ by $f_r(y):=x_y$.
Then the triangle inequality yields
\begin{align*}
&|d_X(f_r(y), f_r(y'))-d_Y(y,y')|\\
\leq& |d_X(f_r(y), f_r(y'))-d_Y(f\circ f_r(y), f\circ f_r(y'))|+|d_Y(f\circ f_r(y), f\circ f_r(y'))-d_Y(y,y')|\\
\leq &\epsilon+\max\left\{d_Y(f\circ f_r(y), y)+d_Y(y',f\circ f_r(y')),\,d_Y(y,f\circ f_r(y))+d_Y(f\circ f_r(y'), y')\right\}\leq (2+\theta)\epsilon,
\end{align*}
which implies $\dis f_r\leq (2+\theta)\epsilon$.
A similar argument furnishes $d_X(f_r\circ f(x),x)\leq 2\epsilon$ for any $x\in X$, which indicates $X\subset \overline{[f_r(Y)]^{(2+\theta)\epsilon}}$. Hence, $f_r$ is a $(2+\theta)\epsilon$-isometry, which concludes the proof.
\end{proof}

The relation between $\epsilon$-isometry and the $\theta$-Gromov-Hausdorff convergence is as follows.

\begin{lemma}\label{inprotantisometry}
Given $\mathcal {X}=(X,d_X),\mathcal {Y}=(Y,d_Y)\in \mathcal {M}^\theta$, we have

\begin{itemize}
\item[(i)]  if $d_{GH}^\theta(\mathcal {X},\mathcal {Y})<\epsilon$, then there exists a $(1+\theta)\epsilon$-isometry $f:\mathcal {X}\rightarrow \mathcal {Y}$;

\smallskip

\item[(ii)] if there is an $\epsilon$-isometry $f:\mathcal {X}\rightarrow \mathcal {Y}$, then $d_{GH}^\theta(\mathcal {X},\mathcal {Y})\leq2\epsilon$.
\end{itemize}
\end{lemma}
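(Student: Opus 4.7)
Both parts rely on the standard technique of converting between admissible metrics on $X\sqcup Y$ and $\epsilon$-approximations between $X$ and $Y$, with the key new ingredient being careful bookkeeping of the reversibility $\theta$.

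For (i), I would unwind the definition of $d_{GH}^{\theta}$: from $d_{GH}^{\theta}(\mathcal{X},\mathcal{Y})<\epsilon$ I obtain a $\theta$-admissible metric $d$ on $X\sqcup Y$ with $d_H(X,Y)<\epsilon$. For each $x\in X$, I use $X\subset Y^{\epsilon}$ (with respect to $d$) to pick $f(x)\in Y$ such that $d(f(x),x)<\epsilon$. The reversibility bound then gives $d(x,f(x))\le \theta\, d(f(x),x)<\theta\epsilon$. For $x,x'\in X$, the triangle inequality in $(X\sqcup Y,d)$ yields
\[
d_Y(f(x),f(x')) \le d(f(x),x)+d_X(x,x')+d(x',f(x')) \le d_X(x,x')+(1+\theta)\epsilon,
\]
and symmetrically $d_X(x,x')\le d_Y(f(x),f(x'))+(1+\theta)\epsilon$, so $\dis f\le (1+\theta)\epsilon$. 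The density condition $Y\subset\overline{[f(X)]^{(1+\theta)\epsilon}}$ follows from $Y\subset X^{\epsilon}$: for $y\in Y$ pick $x\in X$ with $d(x,y)<\epsilon$, then $d_Y(f(x),y)\le d(f(x),x)+d(x,y)<2\epsilon\le(1+\theta)\epsilon$ since $\theta\ge 1$.

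For (ii), given an $\epsilon$-isometry $f:X\to Y$, I would build a $\theta$-admissible metric $d$ on $X\sqcup Y$ by the classical gluing formula
\[
d(x,y):=\inf_{x'\in X}\bigl\{d_X(x,x')+d_Y(f(x'),y)\bigr\}+\epsilon,\qquad d(y,x):=\inf_{x'\in X}\bigl\{d_Y(y,f(x'))+d_X(x',x)\bigr\}+\epsilon,
\]
together with $d|_{X}=d_X$ and $d|_{Y}=d_Y$. Positivity across $X$ and $Y$ is forced by the $+\epsilon$ offset. The reversibility bound $\lambda_d(X\sqcup Y)\le\theta$ for mixed pairs follows by taking the infimum after applying $d_X(x,x')\le\theta d_X(x',x)$ and $d_Y(f(x'),y)\le \theta d_Y(y,f(x'))$ inside the infimum defining $d(x,y)$.

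The triangle inequality splits into several mixed cases; the two substantial ones are $a\in X, b\in Y, c\in X$ (and its mirror $Y\text{-}X\text{-}Y$), where I would use $\dis f\le\epsilon$ to estimate $d_Y(f(x'),f(x''))\ge d_X(x',x'')-\epsilon$ so that the $2\epsilon$ from the two offsets absorbs the loss and one recovers $d_X(a,c)\le d(a,b)+d(b,c)$; the other mixed cases are direct from the infimum. Finally, $d_H(X,Y)\le 2\epsilon$ follows because $d(x,f(x))\le\epsilon$ gives $X\subset Y^{\epsilon}$, while for $y\in Y$ the density hypothesis $Y\subset\overline{[f(X)]^{\epsilon}}$ produces $x'\in X$ with $d_Y(f(x'),y)\le\epsilon$, yielding $d(x',y)\le 2\epsilon$ and hence $Y\subset X^{2\epsilon}$. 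The main obstacle to watch is ensuring that the $+\epsilon$ offset is large enough both to enforce the triangle inequality in the $X\text{-}Y\text{-}X$ case and to keep the reversibility bound at exactly $\theta$ rather than something larger; the estimate above shows that a single offset of $\epsilon$ suffices on both counts, which is why the final bound is $2\epsilon$.
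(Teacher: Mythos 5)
Your proposal is correct and follows essentially the same route as the paper: in (i) the same choice of $f$ from a $\theta$-admissible metric with $d_H(X,Y)<\epsilon$, and in (ii) the identical gluing metric with offset $\epsilon$, yielding $\dis f\le(1+\theta)\epsilon$ and $d_H(X,Y)\le 2\epsilon$ respectively. The extra verifications you spell out (the $X$-$Y$-$X$ triangle case via $\dis f\le\epsilon$ and the reversibility bound obtained inside the infimum) are exactly the details the paper leaves as "easy to check," and they are correct.
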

\begin{proof} (i) Choose
 a $\theta$-admissible metric $d$ on $X\sqcup Y$ such that $d_H(Y,X)<\epsilon$.
By this fact, one can define a map $f:X\rightarrow Y$ with $d(f(x),x)<\epsilon$ for any $x\in X$. Then the triangle inequality yields
\begin{align*}
|d(f(x_1),f(x_2))-d(x_1,x_2)|<(1+\theta)\epsilon\Longrightarrow \dis f\leq (1+\theta)\epsilon.
\end{align*}
On the other hand, for any $y\in Y$, there exists $x_y\in X$ such that $d(x_y,y)<\epsilon$, which implies
\[
d(f(x_y),y)\leq d(f(x_y),x_y)+d(x_y,y)<2\epsilon\Longrightarrow Y\subset \overline{[f(X)]^{(1+\theta)\epsilon}}.
\]
Thus, $f:X\rightarrow Y$ is a $(1+\theta)\epsilon$-isometry.

\smallskip

(ii)  Given an $\epsilon$-isometry $f:\mathcal {X}\rightarrow \mathcal {Y}$, we can define a $\theta$-admissible metric $d$ on $X\sqcup Y$ as
\[
d(x,y):=\inf_{x'\in X}\left[ d(x,x')+d(f(x'),y) \right]+\epsilon,\ d(y,x):=\inf_{x'\in X}\left[ d(y,f(x'))+d(x',x) \right]+\epsilon.
\]
Since $Y\subset \overline{[f(X)]^\epsilon}$, it is easy to check that $X\subset \overline{Y^\epsilon}$ and $Y\subset \overline{X^{2\epsilon}}$ in $(X\sqcup Y,d)$. Thus, Statement (ii) follows from
$d_{GH}^\theta(\mathcal {X},\mathcal {Y})\leq d_H(X,Y)\leq 2\epsilon$.
\end{proof}


Since the  convergence of  compact $\theta$-metric spaces can be equivalently defined by  $\epsilon$-isometries,
we can introduce the following definition.
\begin{definition}\label{measureconvergence} Let $(X_i,d_i,\mu_i)$, $i\in \mathbb{N}$ and $(X,d,\mu)$ be compact $\theta$-metric-measure spaces. We say that  $(X_i,d_i,\mu_i)_i$ {\it converges to}
$(X,d,\mu)$ {\it in the measured  $\theta$-Gromov-Hausdorff topology} if there are $\epsilon_i$-isometries $f_i:X_i\rightarrow X$, which are Borel maps, such that $\lim_{i\rightarrow \infty}\epsilon_i=0$ and
$\lim_{i\rightarrow \infty}(f_i)_\sharp \mu_i=\mu$
in the weak topology of measures.
\end{definition}

An equivalent convergence   is defined by the Gromov-Hausdorff-Prokhorov distance. See Theorem \ref{GHPequvialecnn} below.
On the other hand, the globally doubling property is preserved in the  measured  $\theta$-Gromov-Hausdorff topology, whose proof will be given in Subsection \ref{GHPTOPOLOGY}.
\begin{lemma}\label{DoublingcontrollconverinGromovHameameaser}
Let $(X_i,d_i,\mu_i)_i$ be a sequence of compact $\theta$-metric-measure spaces converging to a compact $\theta$-metric-measure space $(X_\infty,d_\infty,\mu_\infty)$ in the measured $\theta$-Gromov-Hausdorff topology. If each $\mu_i$ is globally doubling for a uniform constant $L$, so is $\mu_\infty$.
\end{lemma}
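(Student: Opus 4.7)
The plan is to transfer the doubling inequality from each $(X_i,d_i,\mu_i)$ to the limit space by pulling a point $x\in X_\infty$ back to $X_i$ via the approximate inverse provided by Proposition~\ref{deltanprooxima}, pushing the resulting ball inequality forward via $(f_i)_\sharp\mu_i$, and then invoking weak convergence through the Portmanteau theorem. Fix $x\in X_\infty$ and $r>0$, and for each $i$ set $x_i:=f_i^r(x)\in X_i$, where $f_i^r$ is the $(2+\theta)\epsilon_i$-isometry from Proposition~\ref{deltanprooxima}; in particular $d_\infty(f_i(x_i),x)\leq \epsilon_i$.

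The first step is the set sandwich: using $\dis f_i\leq \epsilon_i$ together with $d_\infty(f_i(x_i),x)\leq \epsilon_i$, one checks that for every $\rho>2\epsilon_i$,
\[
\overline{B^+_{x_i}(\rho-2\epsilon_i)}\;\subset\; f_i^{-1}\!\left(\overline{B^+_x(\rho)}\right)\;\subset\; \overline{B^+_{x_i}(\rho+2\epsilon_i)}.
\]
Taking $\mu_i$-measures and using the definition of pushforward gives $\mu_i[\overline{B^+_{x_i}(\rho-2\epsilon_i)}]\leq (f_i)_\sharp\mu_i[\overline{B^+_x(\rho)}]\leq \mu_i[\overline{B^+_{x_i}(\rho+2\epsilon_i)}]$. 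Applying the hypothesized doubling $\mu_i[\overline{B^+_{x_i}(2s)}]\leq L\mu_i[\overline{B^+_{x_i}(s)}]$ with $s=r+\epsilon_i$ (so that $2s\geq 2r+2\epsilon_i$) and chaining both sandwiches produces, for any $\delta>0$ and all $i$ with $\epsilon_i<\delta/2$,
\[
(f_i)_\sharp\mu_i\!\left[\overline{B^+_x(2r-\delta)}\right]\;\leq\; L\cdot (f_i)_\sharp\mu_i\!\left[\overline{B^+_x(r+\delta)}\right].
\]

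Now I would apply the Portmanteau theorem: since $B^+_x(2r-\delta)$ is open (by Theorem~\ref{topologychara}/(i)) and $\overline{B^+_x(r+\delta)}$ is closed, the left side satisfies $\mu_\infty[B^+_x(2r-\delta)]\leq \liminf_i (f_i)_\sharp\mu_i[B^+_x(2r-\delta)]$, while the right side satisfies $\limsup_i (f_i)_\sharp\mu_i[\overline{B^+_x(r+\delta)}]\leq \mu_\infty[\overline{B^+_x(r+\delta)}]$. Passing to $\liminf$ and $\limsup$ in the displayed inequality yields $\mu_\infty[B^+_x(2r-\delta)]\leq L\,\mu_\infty[\overline{B^+_x(r+\delta)}]$ for every $\delta\in(0,2r)$. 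Finiteness of $\mu_\infty$ on $X_\infty$ (it is compact and $\mu_\infty$ is locally finite) allows one to let $\delta\to 0^+$ by monotone convergence from below on the left and from above on the right, giving $\mu_\infty[B^+_x(2r)]\leq L\,\mu_\infty[\overline{B^+_x(r)}]$. A final trivial step applies this at radius $r+\eta$ for $\eta>0$, uses $\overline{B^+_x(2r)}\subset B^+_x(2r+2\eta)$, and takes $\eta\to 0^+$ to convert the open ball on the left into the closed ball, giving the sought inequality $\mu_\infty[\overline{B^+_x(2r)}]\leq L\,\mu_\infty[\overline{B^+_x(r)}]$.

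The main delicate point is the mismatched direction of the Portmanteau inequalities for open versus closed sets, which forces the two-layer $\delta,\eta$-perturbation: doubling on $\mu_i$ naturally yields a bound between closed balls, but weak convergence only gives $\limsup$-control on closed sets and $\liminf$-control on open sets, so one cannot directly sandwich $\mu_\infty$-measures of closed balls from both sides. The perturbation trick, combined with local finiteness of $\mu_\infty$ (which prevents measure from ``escaping'' across the sphere boundary), resolves this.
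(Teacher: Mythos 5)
Your argument is correct in substance, but it follows a genuinely different route from the paper: the paper deduces the lemma from Theorem \ref{GHPequvialecnn} (equivalence of the measured $\theta$-Gromov-Hausdorff topology with the $\theta$-Gromov-Hausdorff-Prokhorov topology) together with Lemma \ref{Doublingcontrollconver}, whose proof runs the ball-transfer argument inside a $\theta$-admissible metric on the disjoint union $X_i\sqcup X_\infty$, where the Prokhorov distance hands you the inequality $\mu_\infty[A]\leq (\Phi_i)_\sharp\mu_i[A^{\epsilon_i}]+\epsilon_i$ directly and no open/closed bookkeeping is needed. You instead work straight from the $\epsilon_i$-isometries of Definition \ref{measureconvergence}, pull the center back with the approximate inverse of Proposition \ref{deltanprooxima}, and pass to the limit via Portmanteau; this buys independence from the GHP machinery at the cost of the two-layer $\delta,\eta$-perturbation you describe, whereas the paper's route reuses machinery it needs anyway for Theorem \ref{polishtheorem}. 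Two small points to fix or acknowledge. First, your sandwich constant is slightly wrong in the irreversible setting: from $d_\infty(f_i(x_i),x)\leq\epsilon_i$ you only get $d_\infty(x,f_i(x_i))\leq\theta\epsilon_i$, so the left inclusion should read $\overline{B^+_{x_i}\bigl(\rho-(1+\theta)\epsilon_i\bigr)}\subset f_i^{-1}\bigl(\overline{B^+_x(\rho)}\bigr)$ (the right inclusion with $2\epsilon_i$ is fine); this is harmless since $\theta$ is fixed, but the threshold becomes $\epsilon_i<\delta/(2+\theta)$ rather than $\delta/2$ — note the paper's Lemma \ref{Doublingcontrollconver} carries exactly this $(2+\theta)\epsilon_i$ enlargement. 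Second, your $\delta\to 0^+$ and $\eta\to 0^+$ limits implicitly identify $\bigcap_{\delta>0}\overline{B^+_x(r+\delta)}$ with $\overline{B^+_x(r)}$, i.e.\ they use the closed-ball reading $\overline{B^+_x(r)}=\{y:\,d(x,y)\leq r\}$; this matches the convention the paper itself uses (e.g.\ in the proof of Theorem \ref{compactmeasuredgromvhaus}), so it is acceptable, but it is worth stating.
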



\begin{theorem}\label{compactmeasuredgromvhaus}
Given $D,L>0$ and $0<m\leq M$,  let $\mathscr{C}$ be a collection of compact $\theta$-metric-measure spaces $\mathcal {X}=(X,d,\mu)$ with
\[
\diam(\mathcal {X})\leq D,\ \Do(\mathcal {X})\leq L,\ m\leq \mu[\mathcal {X}]\leq M.
\]
Then $\mathscr{C}$ is pre-compact in the measured $\theta$-Gromov-Hausdorff topology. In particular, any weak cluster space $(X_\infty,d_\infty,\mu_\infty)$ satisfies $\supp\mu_\infty=X_\infty$, where $\supp\mu_\infty$ is the support of $\mu_\infty$.
\end{theorem}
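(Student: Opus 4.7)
The argument combines the uniform covering bound yielded by the doubling hypothesis with Gromov-type precompactness of the underlying metric spaces, and then extracts a weak limit of the pushforward measures via Prokhorov's theorem.

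\textbf{Step 1 (precompactness on the metric side).} Lemma \ref{doublingwithprecompact}, applied with the given bounds $\theta, D, L, m, M$, provides a decreasing function $N:(0,1)\to\mathbb{N}$ such that $\Cov_X(\varepsilon)\le N(\varepsilon)$ for every $\mathcal{X}\in\mathscr{C}$ and every $\varepsilon\in(0,1)$. Hence $\mathscr{C}$ is contained in the class $\mathscr{C}(N)$ of Proposition \ref{capcityprecompact}, and I extract a subsequence (not relabeled) converging in the $\theta$-Gromov-Hausdorff topology to some compact $\theta$-metric space $(X_\infty,d_\infty)$. Lemma \ref{inprotantisometry} then supplies $\epsilon_i$-isometries $f_i:X_i\to X_\infty$ with $\epsilon_i\to 0$. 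Choosing a finite forward $\epsilon_i$-net of $X_i$ (which exists by Theorem \ref{compactequvitheorem}) and redefining $f_i$ to be piecewise constant on the resulting Borel Voronoi cells, I arrange $f_i$ to be Borel measurable without spoiling the $\epsilon_i$-isometry property (up to harmlessly doubling $\epsilon_i$).

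\textbf{Step 2 (weak limit of measures and full support).} Set $\mu_i':=(f_i)_\sharp\mu_i$, finite Borel measures on the compact space $X_\infty$ (compact with respect to the symmetrized metric $\hat d_\infty$ by Theorem \ref{topologychara}/(ii)), with total mass $\mu_i'[X_\infty]=\mu_i[X_i]\in[m,M]$. Since bounded families of Borel measures on a compact metric space are automatically tight, Prokhorov's theorem yields a further subsequence weakly converging to a finite Borel measure $\mu_\infty$ on $X_\infty$; testing against the continuous function $1$ gives $\mu_\infty[X_\infty]\in[m,M]$. By Definition \ref{measureconvergence}, the chosen subsequence converges to $(X_\infty,d_\infty,\mu_\infty)$ in the measured $\theta$-Gromov-Hausdorff topology. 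For the support claim, Lemma \ref{DoublingcontrollconverinGromovHameameaser} transfers the uniform doubling constant $L$ to $\mu_\infty$. Fix $x\in X_\infty$ and $r>0$; since $\diam(X_\infty)\le D$, every $y\in X_\infty$ satisfies $d_\infty(x,y)\le D$, so $X_\infty\subset\overline{B^+_x(2^k r)}$ for any integer $k$ with $2^k r>D$. Iterating the doubling inequality $k$ times,
\[
0<m\le\mu_\infty[X_\infty]\le\mu_\infty\bigl[\overline{B^+_x(2^k r)}\bigr]\le L^k\,\mu_\infty\bigl[\overline{B^+_x(r)}\bigr],
\]
which forces $\mu_\infty[\overline{B^+_x(r)}]>0$, hence $x\in\supp\mu_\infty$ and $\supp\mu_\infty=X_\infty$.

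\textbf{Main obstacle.} The only genuinely delicate point is the simultaneous control of the $\epsilon_i$-isometry structure and Borel measurability, which is needed for $\mu_i'$ to be a bona fide Borel measure. This is handled by the net-plus-Voronoi discretisation in Step 1 (exploiting that forward and backward $\epsilon$-nets coexist on a compact forward metric space, by Proposition \ref{forwardandbackwardrelation} and Theorem \ref{compactequvitheorem}). Everything else — precompactness, the weak-limit extraction, and the propagation of doubling to $\mu_\infty$ — is a direct application of the earlier lemmas combined with Prokhorov's theorem on a compact space.
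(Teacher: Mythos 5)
Your proof is correct and follows essentially the same route as the paper: Lemma \ref{doublingwithprecompact} plus Proposition \ref{capcityprecompact} and Lemma \ref{inprotantisometry} for $\theta$-Gromov-Hausdorff precompactness, a Prokhorov-type extraction of a weak limit of the pushforward measures (the paper normalizes to probability measures and invokes Proposition \ref{ProtheoremGromHauss}, while you apply Prokhorov directly to the mass-bounded measures on the compact limit — an immaterial difference), and Lemma \ref{DoublingcontrollconverinGromovHameameaser} together with iterated doubling to force $\supp\mu_\infty=X_\infty$. Your extra care about Borel measurability of the $\epsilon_i$-isometries via a net/Voronoi discretisation is a harmless refinement (the distortion penalty is controlled by $\theta$, not exactly a factor $2$, but this changes nothing).
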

\begin{proof}
Choose an arbitrary sequence $(X_i,d_i,\mu_i)_i$ in $\mathscr{C}$. Owing to Lemma \ref{doublingwithprecompact}, Proposition \ref{capcityprecompact} and Lemma \ref{inprotantisometry}, by passing a subsequence, we can assume
 \begin{itemize}
 \item[(1)]  $(X_i,d_i)_i$ converges to a compact $\theta$-metric space $(X_\infty,d_\infty)$ in the $\theta$-Gromov-Hausdorff topology;

 \item[(2)]  there exists a sequence of $\epsilon_i$-isometries $f_i:X_i\rightarrow X_\infty$ with $\lim_{i\rightarrow \infty}\epsilon_i=0$;

 \item[(3)] $(\mu[X_i])_i$ is  convergent to a positive constant $\Xi\in [m,M]$.

 \end{itemize}
Set $\nu_i:=\mu_i/\mu_i[X_i]$. By Proposition \ref{ProtheoremGromHauss}, we may suppose that $({f_i}_\sharp)\nu_i$ converges weakly to a probability measure $\nu\in P(X_\infty)$. Thus, one gets
\[
\lim_{i\rightarrow \infty}\int_X g{\ddd}({f_i}_\sharp\mu_i)=\lim_{i\rightarrow \infty}\mu_i[X_i]\int_X g{\ddd}({f_i}_\sharp\nu_i)=\Xi\lim_{i\rightarrow \infty}\int_X g{\ddd}({f_i}_\sharp\nu_i)= \int_X g{\ddd}(\Xi\nu),\ \forall\,g\in C(X_\infty),
\]
which implies ${f_i}_\sharp\mu_i\rightarrow \Xi\nu=:\mu_\infty$ in the weak topology.

It remains to show $\supp\mu_\infty=X_\infty$. Suppose by contradiction that $\mathcal {O}:=X_\infty\backslash \supp \mu_\infty\neq\emptyset$.
Since $\mathcal {O}$ is a non-empty open subset, we can choose $x\in \mathcal {O}$ and a small $r>0$ such that $B^+_x(r)\subset \mathcal {O}$. Obviously,
\[
\overline{B^+_x(r/2)}=\{y\in X:\, d(x,y)\leq r/2\}\subset B^+_x(r)\subset \mathcal {O}.
\]
Owing to $\mathcal {O}\subset X_\infty\backslash \text{Spt}\mu_\infty$, we have $\mu_\infty[\mathcal {O}]=0$ and hence, $\mu_\infty\left[\overline{B^+_x(r/2)}\right]=0$.
 On the other hand, it follows from Lemma \ref{DoublingcontrollconverinGromovHameameaser} that $\mu_\infty$ is $L$-doubling. By choosing $N:=\lceil\log_2\frac{2D}{r} \rceil$ we have
 \[
0<\Xi=\mu_\infty[X_\infty]=\mu_\infty\left[  \overline{B^+_x(D)} \right]\leq L^N \cdot\mu_\infty\left[\overline{B^+_x(r/2)}\right]=0,
\]
which is a contradiction. Therefore, $\supp\mu_\infty=X_\infty$.
\end{proof}

\begin{theorem}\label{RiccompactprecompactFinser}Let $\mathfrak{m}$ denote either the Busemann-Hausdorff measure or the Holmes-Thompson measure. Thus, for any
$N\in [2,\infty)$, $\theta\in [1,\infty)$, $D\in (0,\infty)$ and $K\in \mathbb{R}$, the collection of closed
 Finsler metric-measure manifolds $(M,F,\mathfrak{m})$ with
 \[
 \dim(M)\leq N,\ \Lambda_{F}(M)\leq \theta^2,\ \mathbf{Ric}\geq K, \  \diam(M)\leq D\tag{3.1}\label{Ricnormmeasureprecompact},
 \]
 is pre-compact in the measured $\theta$-Gromov-Hausdorff topology.
\end{theorem}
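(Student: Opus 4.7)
The plan is to verify the three uniform hypotheses of Theorem~\ref{compactmeasuredgromvhaus}---diameter, doubling constant, and total mass---and then to handle the absence of a uniform \emph{lower} mass bound by a dichotomy argument. First, the $\theta$-metric-measure structure comes for free: the basic inequality $\Lambda_F(M)\geq \lambda_F^2(M)$ (recorded just after the definition of the uniformity constant) yields $\lambda_F(M)\leq\theta$, and identity \eqref{reversibdindentiy} gives $\lambda_{d_F}(M)=\lambda_F(M)\leq\theta$. The diameter bound is part of the hypothesis.

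For the doubling constant and the upper mass bound I would invoke the Finslerian volume comparison of Theorem~\ref{normalRiccicompari}. For any $r>0$ with $2r\leq D$ the ratio estimate gives
\[
\frac{\mathfrak{m}[B^+_x(2r)]}{\mathfrak{m}[B^+_x(r)]}\leq \theta^{4N}\cdot 2^N\cdot e^{2D\sqrt{(N-1)|K|}}=:L,
\]
and for $2r>D$ we have $B^+_x(2r)=M$ and the same theorem with $R=D$ yields a comparable bound. Taking the maximum furnishes a uniform doubling constant $L=L(N,\theta,K,D)$, so $\Do(\mathcal{X})\leq L$. The total mass bound is immediate from the same result with $r=D$ (noting that Myers' theorem makes $D\leq\pi\sqrt{(N-1)/K}$ automatic when $K>0$):
\[
\mathfrak{m}[M]\leq \vol(\mathbb{S}^{N-1})\,\theta^{2N}\int_0^D \mathfrak{s}_{K,N}(t)^{N-1}\,dt=:M_0.
\]

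The main obstacle is the absence of a uniform lower bound $m>0$ on $\mathfrak{m}[M]$: the stated constraints permit the total volume to become arbitrarily small (as in a thin torus), so Theorem~\ref{compactmeasuredgromvhaus} cannot be applied directly to the whole class. I would circumvent this by a dichotomy. Given an arbitrary sequence $(M_i,F_i,\mathfrak{m}_i)$ in the collection, extract a subsequence along which $\mathfrak{m}_i[M_i]\to\Xi\in[0,M_0]$. If $\Xi>0$, then $\mathfrak{m}_i[M_i]\geq\Xi/2$ for all large $i$ and Theorem~\ref{compactmeasuredgromvhaus} applies directly with $m:=\Xi/2$. If $\Xi=0$, the volume doubling above still yields a \emph{mass-free} covering bound: iterated doubling implies $\mathfrak{m}_i[B^+_x(\varepsilon/(2\theta))]\geq L^{-k(\varepsilon)}\,\mathfrak{m}_i[M_i]$ with $k(\varepsilon):=\lceil\log_2(2\theta D/\varepsilon)\rceil$, so $\Ca_{M_i}(\varepsilon/\theta)\leq L^{k(\varepsilon)}$ independently of the total mass. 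Proposition~\ref{capcityprecompact} combined with \eqref{capvscov} then produces a $\theta$-GH-convergent subsequence $(M_i,d_{F_i})\to(X_\infty,d_\infty)$ with $\varepsilon_i$-isometries $f_i$, while the pushforwards $(f_i)_\sharp\mathfrak{m}_i$, having vanishing total mass, tend weakly to the zero measure on $X_\infty$. In both cases a convergent subsequence in the measured $\theta$-Gromov-Hausdorff topology is produced, establishing the asserted pre-compactness.
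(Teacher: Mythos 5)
Your proposal is correct and follows essentially the same route as the paper: uniform upper mass and doubling bounds via Theorem \ref{normalRiccicompari}, then a dichotomy on whether the total masses stay bounded away from zero (apply Theorem \ref{compactmeasuredgromvhaus} with \eqref{reversibdindentiy}) or tend to zero (extract a $\theta$-Gromov--Hausdorff convergent subsequence and note the pushforward measures converge weakly to the null measure). Your mass-free covering bound simply makes explicit the ``modification to the proof of Theorem \ref{compactmeasuredgromvhaus}'' that the paper invokes without detail, so nothing further is needed.
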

\begin{proof}
Let $(M_i,F_i,\mathfrak{m}_i)_i$ be   a sequence  satisfying (\ref{Ricnormmeasureprecompact}). Theorem \ref{normalRiccicompari} implies
\[
\mathfrak{m}_i[M_i]\leq \vol(\mathbb{S}^{N-1})\,\theta^{2N}\int^D_0\mathfrak{s}_{-|K|,N}^{N-1}(t){\ddd}t,\quad \Do(M_i)\leq 2^N \theta^{4N}e^{D\sqrt{(N-1)|K|}}.
\]
If $\inf_i\mathfrak{m}_i[M_i]>0$, then the theorem follows from  Theorem \ref{compactmeasuredgromvhaus} and (\ref{reversibdindentiy}) directly. If $\inf_i\mathfrak{m}_i[M_i]=0$, by a modification to the proof of Theorem \ref{compactmeasuredgromvhaus}, we may assume that $(M_i,d_{F_i})_i$   converges to a compact $\theta$-metric space $(X_\infty, d_\infty)$ by means of $\epsilon_i$-isometries $f_i:M_i\rightarrow X_\infty$ with $\lim_{i\rightarrow \infty}\epsilon_i=0$ and  $\lim_{i\rightarrow \infty}\mathfrak{m}_i[M_i]=0$. Thus,  for any $g\in C(X_\infty)$, we have
\[
\lim_{i\rightarrow \infty}\left|\int_{X_\infty} g{\ddd}({f_i}_\sharp\mathfrak{m}_i)\right|\leq\max_{x\in X_\infty}|g(x)|\lim_{i\rightarrow \infty}\mathfrak{m}_i[M_i]=0,
\]
which implies that $(f_i)_{\sharp}\mathfrak{m}_i$ converges weakly to a null measure $\mathfrak{m}_\infty$. That is, $(M_i,d_{F_i},\mathfrak{m_i})$ converges to $(X_\infty, d_\infty,\mathfrak{m}_\infty)$ in the measured $\theta$-Gromov-Hausdorff topology.
\end{proof}

A similar argument together with Theorem \ref{Ohtacomparsiontheorem} yields the following result.
\begin{theorem}\label{RicNmeasurecompact}
Given $D,V\in (0,\infty)$, $\theta\in [1,\infty)$, $N\in [2,\infty)$ and $K\in \mathbb{R}$, the collection of closed
 Finsler metric-measure manifolds $(M,F,\mathfrak{m})$ with
 \[
 \dim(M)\leq N,\ \lambda_{F}(M)\leq \theta,\ \mathbf{Ric}_N\geq K, \  \diam(M)\leq D,\ \mathfrak{m}[M] \leq V
 \]
 is pre-compact  in the measured $\theta$-Gromov-Hausdorff topology.
\end{theorem}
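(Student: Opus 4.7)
The plan is to run the proof of Theorem \ref{RiccompactprecompactFinser} essentially verbatim, but replacing the Ricci comparison Theorem \ref{normalRiccicompari} with Ohta's weighted comparison Theorem \ref{Ohtacomparsiontheorem}, which is tailored exactly to a lower bound on $\mathbf{Ric}_N$ (rather than $\mathbf{Ric}$ together with a uniformity constant). The control of the reversibility of $d_{F_i}$ comes from the identity $\lambda_{d_{F_i}}(M_i)=\lambda_{F_i}(M_i)\le\theta$ provided by (\ref{reversibdindentiy}).

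The concrete steps I would carry out are as follows. First, let $(M_i,F_i,\mathfrak{m}_i)_i$ be an arbitrary sequence in the collection. Each $(M_i,d_{F_i})$ is a compact $\theta$-metric space by (\ref{reversibdindentiy}), with $\diam\le D$ and $\mathfrak{m}_i[M_i]\le V$. Apply Theorem \ref{Ohtacomparsiontheorem} with the bounds $\dim(M_i)\le N$ and $\mathbf{Ric}_N\ge K$: for every $x\in M_i$ and $0<r\le R\le D$ one obtains
\[
\frac{\mathfrak{m}_i[B^+_x(R)]}{\mathfrak{m}_i[B^+_x(r)]}\le e^{R\sqrt{(N-1)|K|}}\left(\frac{R}{r}\right)^{\!N},
\]
so that $\Do(M_i,d_{F_i},\mathfrak{m}_i)\le L:=2^{N}e^{D\sqrt{(N-1)|K|}}$, a uniform doubling constant. (If $K>0$ one also gets $\diam\le\pi\sqrt{(N-1)/K}$ for free, which is compatible with the $\le D$ hypothesis.)

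Second, split into two cases according to whether the total masses degenerate. If $m:=\inf_i \mathfrak{m}_i[M_i]>0$, then the whole sequence sits inside the class $\mathscr{C}$ of Theorem \ref{compactmeasuredgromvhaus} with parameters $D$, $L$, $m$ and $M:=V$, and pre-compactness in the measured $\theta$-Gromov-Hausdorff topology follows immediately. If instead $\inf_i\mathfrak{m}_i[M_i]=0$, pass to a subsequence with $\mathfrak{m}_i[M_i]\to 0$. By Theorem \ref{basisprecomp} (whose hypotheses are in force, since $\mathbf{Ric}_N\ge K$ controls $\Cov_{M_i}(\epsilon)$ through Theorem \ref{Ohtacomparsiontheorem} and (\ref{capvscov})), one can assume $(M_i,d_{F_i})$ converges to a compact $\theta$-metric space $(X_\infty,d_\infty)$ via $\epsilon_i$-isometries $f_i$ with $\epsilon_i\to 0$. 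Then for every $g\in C(X_\infty)$,
\[
\left|\int_{X_\infty} g\,{\ddd}((f_i)_\sharp\mathfrak{m}_i)\right|\le \|g\|_\infty\,\mathfrak{m}_i[M_i]\longrightarrow 0,
\]
so $(f_i)_\sharp\mathfrak{m}_i\to \mathfrak{m}_\infty\equiv 0$ weakly, and $(M_i,d_{F_i},\mathfrak{m}_i)\to (X_\infty,d_\infty,0)$ in the measured $\theta$-Gromov-Hausdorff topology.

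I do not expect any serious obstacle here; everything is already packaged in the earlier results. The only genuine point of care is the degenerate mass case, which is precisely why one cannot simply quote Theorem \ref{compactmeasuredgromvhaus} (whose lower mass bound $m>0$ would be violated); this is handled exactly as in the proof of Theorem \ref{RiccompactprecompactFinser} by producing a zero limit measure over the metric limit extracted from Theorem \ref{basisprecomp}.
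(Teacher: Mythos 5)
Your proposal is correct and matches the paper's intended argument: the paper proves this result precisely by rerunning the proof of Theorem \ref{RiccompactprecompactFinser} with Theorem \ref{Ohtacomparsiontheorem} supplying the doubling bound from $\mathbf{Ric}_N\geq K$, the hypothesis $\mathfrak{m}[M]\leq V$ supplying the upper mass bound, and the same dichotomy on $\inf_i\mathfrak{m}_i[M_i]$ with a null limit measure in the degenerate case. (Only a cosmetic remark: your doubling constant should carry $e^{2D\sqrt{(N-1)|K|}}$ rather than $e^{D\sqrt{(N-1)|K|}}$ since $R=2r$ can reach $2D$, which changes nothing in the argument.)
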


\subsection{Gromov-Hausdorff topology II: noncompact spaces}

We begin with the following example.
\begin{example}\label{preex} Let $\mathbf{e}$ be a fixed unit vector in $\mathbb{R}^n$ and define a sequence of Randers metrics $F_i=\alpha+\beta_i$
 on the open unit ball $\mathbb{B}^n\subset \mathbb{R}^n$ by
\[
\alpha:=\frac{\sqrt{\|y\|^2-(\|x\|^2\|y\|^2-\langle x,y\rangle^2)}}{1-\|x\|^2},\quad \beta_i:=\frac{\langle x,y\rangle}{1-\|x\|^2}+\frac{\langle a_i,y\rangle}{1+\langle a_i,x\rangle},\quad a_i=\frac{\mathbf{e}}{i^2+1}.
\]
Set $\mathbb{B}_\mathbf{0}(r):=\{x\in \mathbb{R}^n|\,\|x\|<r\}$. A direct  calculation together with (\ref{Randersnormreveruniform}) yields
\[
\lambda_{F_i}\left(\overline{\mathbb{B}_\mathbf{0}(r)}\right)= \frac{1+\sqrt{1-\frac{(1-{r}^2)(1-\|a_i\|^2)}{(1+\|a_i\| {r})^2}}}{1-\sqrt{1-\frac{(1-{r}^2)(1-\|a_i\|^2)}{(1+\|a_i\| {r})^2}}}\leq \frac{1-\sqrt{1-\frac{3(1-{r}^2)}{(2+{r})^2}}}{1-\sqrt{1+\frac{3(1-{r}^2)}{(2+{r})^2}}}=:\Upsilon(r),\ 0<r<1.
\]
Since $F_i$ is projectively flat, we have $\mathbb{B}_\mathbf{0}\left( \frac{e^r-1}{e^r+\|a_i\|} \right)\subset B^+_{\mathbf{0}}(r)\subset \mathbb{B}_\mathbf{0}\left( \frac{e^r-1}{e^r-\|a_i\|}\right)$, which implies
\[
\lambda_{F_i}\left( \overline{B^+_\mathbf{0}(r)}\right)\leq\Upsilon\left(  \frac{e^r-1}{e^r-1/2} \right)  =:\Theta(r).\tag{3.2}\label{new3.1}
\]
Thus,   Theorem \ref{reversibifinslerdd} implies that for each $i$, $\mathcal {X}_i:=(X_i,\star_i,d_i):=(\mathbb{B}_\mathbf{0}(1),\mathbf{0},d_{F_i})$ is a pointed forward $\Theta$-metric/geodesic space.
Despite $\lambda_{d_i}(X_i)=\infty$, it is easy to see that $d_i\rightarrow d_\infty$, where $d_\infty$ is the metric induced by $F_\infty=\alpha+\beta_\infty$ with $\beta_\infty=\frac{\langle x,y\rangle}{1-\|x\|^2}$. In particular,   $(\mathbb{B}_\mathbf{0}(1),\mathbf{0},d_\infty)$ is also a forward pointed $\Theta$-metric/geodesic space.
\end{example}

Inspired by this example,
a sequence of noncompact forward metric spaces could approach a ``limit  space" even if their reversibilities are infinite. Hence, we introduce the following definition, which serves as an irreversible version of Burago, Burago and Ivanov \cite[Definition 8.1.1]{DYS}.

\begin{definition}\label{noncomapctGHdef} Given a nondecreasing function $\Theta:(0,\infty)\rightarrow [1,\infty)$, let $\mathcal {M}^{\Theta}_*$ be the {\it collection of forward boundedly compact pointed forward $\Theta$-metric spaces}.
A sequence $(X_i,\star_i,d_i)_i \subset \mathcal {M}^{\Theta}_*$ is said to
  {\it converge to} $(X,\star,d)\in\mathcal {M}^{\Theta}_*$ {\it in the  forward $\Theta$-Gromov-Hausdorff topology} if   for every $r > 0$ and $\epsilon> 0$, there exists a natural number $N=N(r,\epsilon)$ such that for
every $i > N$ there is a (not necessarily continuous) map $f_i : \overline{B^+_{\star_i}(r)}\rightarrow X$
satisfying
\[
f_i(\star_i)=\star, \ \ \dis f_i\leq \epsilon, \ \ \overline{B^+_\star(r-\epsilon)}\subset \left[ f_i\left( \overline{B^+_{\star_i}(r)} \right) \right]^\epsilon.\tag{3.3}\label{noncfincazse}
\]
In particular, $(X,\star,d)$ is called the  {\it forward $\Theta$-Gromov-Hausdorff limit} of  $(X_i,\star_i,d_i)_i$.
\end{definition}

\begin{remark}\label{remarkcompleteseparted}
 If $(X,\star,d)\in \mathcal {M}^{\Theta}_*$, then it is separable and forward complete (see Proposition \ref{forwardandbackwardrelation}/(iii)).
Owing to Theorem \ref{topologychara}/(ii), the corresponding  symmetrized space $(X,\star,\hat{d})$ is a locally compact Polish space. 
\end{remark}

\begin{remark}   $f_i : \overline{B^+_{\star_i}(r)}\rightarrow X$ in Definition \ref{noncomapctGHdef} is similar to a pointed $\epsilon$-isometry. In fact,
\[
\overline{B^+_\star(r-\epsilon)}\subset \left[ f_i\left( \overline{B^+_{\star_i}(r)} \right) \right]^\epsilon\subset\overline{B^+_\star(r+2\epsilon)}.\tag{3.4}\label{noncompactballconverge}
\]
\end{remark}

 Definition \ref{noncomapctGHdef} is a very weak way to define the convergence of noncompact spaces (compared with Theorem \ref{newnoncp} below).
However, the following result shows that Definition \ref{noncomapctGHdef} is well-defined and compatible with the compact case, whose proof will be given in Appendix \ref{appendixGro-Hauscon}.
\begin{proposition}\label{welldefinednoncompactGHCONVER} Suppose that $(X_i,\star_i,d_i)$, $i\in \mathbb{N}$, $(X,\star,d)$ and $(X,'\star',d')$ belong to $\mathcal {M}^{\Theta}_*$. The following statements are true:

\begin{itemize}
\item[(i)] Let $(X,\star,d)$ and $(X,'\star',d')$ be two forward $\Theta$-Gromov-Hausdorff limits of a sequence $(X_i,\star_i,d_i)_i$. Then $(X,\star,d)$ is pointed isometric to $(X,'\star',d')$.

\item[(ii)] If $(X_i,\star_i,d_i)_i$ converges in the forward $\Theta$-Gromov-Hausdorff topology, then it converges to the same limit in  the forward ${\Theta'}$-Gromov-Hausdorff sense for any nondecreasing function $\Theta'\geq \Theta$.

\item[(iii)] If $(X_i,\star_i,d_i)_i$ converges to $(X,\star,d)$ in the forward $\Theta$-Gromov-Hausdorff topology and satisfies $\diam(X_i)\leq D$, then $(X_i,d_i)_i$ converges to $(X,d)$ in the $\theta$-Gromov-Hausdorff topology for any $\theta\geq \Theta(D)$.

\smallskip

\item[(iv)] If $(X_i,d_i)_i\subset \mathcal {M}^\theta$ converges to $(X,d)\in \mathcal {M}^\theta$ in the $\theta$-Gromov-Hausdorff topology, then there exist points $\star_i\in X_i$ and $\star\in X$ such that $(X_i,\star_i,d_i)_i$ converges to $(X,\star,d)$ in the forward $\Theta$-Gromov-Hausdorff topology for any nondecreasing function $\Theta\geq \theta$.
\end{itemize}
\end{proposition}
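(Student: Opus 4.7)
The plan is to dispatch the four assertions in an order that isolates the single genuinely technical step: (ii) and (iv) are essentially formal, (iii) is a mild bridge between the pointed and compact settings, and all of the content lives in (i).

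For (ii), observe that the approximation condition \eqref{noncfincazse} does not refer to $\Theta$ at all; only the ambient class $\mathcal{M}^\Theta_*$ does, and $\mathcal{M}^\Theta_*\subset\mathcal{M}^{\Theta'}_*$ whenever $\Theta'\geq\Theta$. Hence the same maps $f_i$ witness forward $\Theta'$-GH convergence. For (iv), Lemma \ref{inprotantisometry}/(i) supplies $\epsilon_i$-isometries $g_i:X_i\to X$ with $\epsilon_i\to 0$; fixing any $\star\in X$, I pick $\star_i\in X_i$ with $d_X(g_i(\star_i),\star)<\epsilon_i$ and redefine $f_i(\star_i):=\star$, leaving $f_i=g_i$ elsewhere, which inflates the distortion by at most $(1+\theta)\epsilon_i$. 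Since each $X_i$ and $X$ is compact, every ball of radius $r$ larger than the diameter exhausts the space, so \eqref{noncfincazse} holds for all $r$ with the constant $\Theta\equiv\theta$; part (ii) then upgrades this to any nondecreasing $\Theta\geq\theta$.

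For (iii), fix $r>D$. Remark \ref{forwardpointspaceandbackwardones}/(a) forces each $X_i$ to be a $\Theta(D)$-metric space and $\overline{B^+_{\star_i}(r)}=X_i$. A standard passage to the limit in \eqref{noncompactballconverge} yields $\diam(X)\leq D$ as well, so $X\in\mathcal{M}^\theta$ for $\theta\geq\Theta(D)$. The maps $f_i:X_i\to X$ then satisfy the definition of an $\epsilon$-isometry (Definition \ref{epsilonapproximation}/(iii)), and Lemma \ref{inprotantisometry}/(ii) yields $d^\theta_{GH}(X_i,X)\leq 2\epsilon$.

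For (i), let $(X,\star,d)$ and $(X',\star',d')$ be two forward $\Theta$-GH limits, with approximating maps $f_i:\overline{B^+_{\star_i}(R_i)}\to X$ and $f'_i:\overline{B^+_{\star_i}(R_i)}\to X'$ for some $R_i\to\infty$ and $\epsilon_i\to 0$. I first mimic Proposition \ref{deltanprooxima} locally to construct approximate right inverses $\tilde f_i:\overline{B^+_\star(R_i-\epsilon_i)}\to\overline{B^+_{\star_i}(R_i)}$ and $\tilde f'_i$; on any fixed ball $\overline{B^+_\star(r)}$ the distortion is of order $\Theta(r')\epsilon_i$ for some $r'$ depending only on $r$, and the local reversibility bound makes the construction work despite $\lambda_d(X)$ possibly being infinite. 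Setting $\psi_i:=f'_i\circ\tilde f_i$, on any fixed ball $\overline{B^+_\star(r)}$ the distortion of $\psi_i$ is then $O(\Theta(r')\epsilon_i)$ and so tends to zero. Choosing a countable dense subset $\{x_k\}\subset X$, the images $(\psi_i(x_k))_i$ eventually lie in a fixed compact closed forward ball of $X'$ (Proposition \ref{forwardandbackwardrelation}/(ii) combined with forward bounded compactness), and a diagonal extraction yields a subsequence along which $\psi_i(x_k)\to\Psi(x_k)$ for every $k$. Uniform vanishing of the distortion forces $\Psi$ to preserve distances exactly on $\{x_k\}$, whence it extends by uniform continuity to an isometric embedding $\Psi:X\to X'$ with $\Psi(\star)=\star'$. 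Running the symmetric construction yields $\Phi:X'\to X$, and tracking $\Phi\circ\Psi$ against the near-identities $f_i\circ\tilde f_i\approx\mathrm{id}_X$ and $\tilde f'_i\circ f'_i\approx\mathrm{id}_{X_i}$ forces $\Phi\circ\Psi=\mathrm{id}_X$, and symmetrically $\Psi\circ\Phi=\mathrm{id}_{X'}$, so $\Psi$ is the desired pointed isometry. The main obstacle I anticipate is this diagonal extraction: I must simultaneously control the maps $\psi_i$ on ever larger balls, ensure the limit map is independent of the subsequence chosen at each stage, and verify that the reversibility factors $\Theta(R_i)$ blowing up with $R_i$ do not contaminate the local estimates; what rescues the argument is that on any fixed ball of radius $r$ only $\Theta(r')$ for a fixed $r'$ enters, making the local error $O(\Theta(r')\epsilon_i)\to 0$.
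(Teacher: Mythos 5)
Your treatment of (i)--(iii) is correct and follows essentially the same route as the paper: for (i) you compose the approximating maps with locally constructed approximate inverses (the paper's use of Proposition \ref{deltanprooxima} together with \eqref{noncompactballconverge}), extract a limit by a Cantor diagonal argument over a countable dense set using forward bounded compactness, and the only cosmetic difference is that you obtain surjectivity through the mutually inverse pair $\Phi\circ\Psi=\id$, $\Psi\circ\Phi=\id$ (which requires extracting both limits along a common subsequence, a detail you should state), whereas the paper arranges the approximating maps $f_{r,\epsilon}$ to be surjective onto sets sandwiched between balls and reads off ball-by-ball surjectivity of the limit; your (ii) and (iii) coincide with the paper's arguments.

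The one genuine gap is in (iv). After normalizing so that $f_i(\star_i)=\star$, you claim that \eqref{noncfincazse} ``holds for all $r$'' because balls of radius larger than the diameter exhaust the spaces. That justification only covers $r$ beyond $\max\{\diam(X_i),\diam(X)\}$, while Definition \ref{noncomapctGHdef} quantifies over \emph{every} $r>0$; for small $r$ the covering condition $\overline{B^+_\star(r-\epsilon)}\subset\bigl[f_i\bigl(\overline{B^+_{\star_i}(r)}\bigr)\bigr]^\epsilon$ is not automatic from global $\epsilon$-density of $f_i(X_i)$, because the approximating preimage $\tilde{x}$ of a point $x\in\overline{B^+_\star(r-\epsilon)}$ could a priori lie outside $\overline{B^+_{\star_i}(r)}$. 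This is exactly where the paper spends its effort in (iv): using $f_i(\star_i)=\star$, $\dis f_i\leq\epsilon_i$ and the $\theta$-reversibility one estimates $d_i(\star_i,\tilde{x})\leq d(\star,x)+d(x,f_i(\tilde{x}))+(1+\theta)\epsilon_i<r$, so that $\tilde{x}$ does lie in $\overline{B^+_{\star_i}(r)}$ once $\epsilon$ dominates a fixed multiple of $\epsilon_i$. The statement you want is true and the fix is this short triangle-inequality computation, but as written your argument does not establish the small-radius case, which is the heart of part (iv).
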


Now we continue to study the convergence in the model Example \ref{preex}.

\begin{proposition}\label{preex2}
Let $ (X_i,\star_i,d_i):=(\mathbb{B}^n,\mathbf{0},d_i)$ and $(X_\infty,\star_\infty,d_\infty):=(\mathbb{B}^n,\mathbf{0},d_\infty)$ be as in Example \ref{preex} and let $\Theta(r)$  be as in \eqref{new3.1}. Then  $ (X_i,\star_i,d_i)_i$ converges to $(X_\infty,\star_\infty,d_\infty)$ in the forward $\Theta$-Gromov-Hausdorff topology.
\end{proposition}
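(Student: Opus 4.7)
The plan is to take $f_i:\overline{B^+_{\mathbf 0}(r)}\to X_\infty$ as the restriction of the identity map on $\mathbb{B}^n$, so the normalization $f_i(\star_i)=\star_\infty$ is automatic. With this choice the two remaining conditions in \eqref{noncfincazse} collapse to a single statement: for $i$ large, $d_i$ and $d_\infty$ should be uniformly close on one Euclidean compact subset of $\mathbb{B}^n$ that contains both $\overline{B^+_{\star_i}(r)}$ and $\overline{B^+_{\star_\infty}(r-\epsilon)}$.

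First I would localize forward balls to a fixed Euclidean set. Since $\|a_i\|=1/(i^2+1)\le 1/2$ for every $i$, the derivation leading to \eqref{new3.1} combined with the projective flatness of $F_i$ and $F_\infty$ yields the uniform inclusion
\[
\overline{B^+_{\mathbf 0}(r)}\subset \overline{\mathbb{B}_{\mathbf 0}(R)},\qquad R:=\frac{e^r-1}{e^r-1/2}<1,
\]
for $d_i$ (every $i$) and also for $d_\infty$. The core step is then uniform convergence $d_i\to d_\infty$ on $\overline{\mathbb{B}_{\mathbf 0}(R)}\times \overline{\mathbb{B}_{\mathbf 0}(R)}$. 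By projective flatness, any two points $x_1,x_2$ of this ball are joined by a common straight-line minimal geodesic $\gamma(t)=(1-t)x_1+tx_2$ in both $(\mathbb{B}^n,F_i)$ and $(\mathbb{B}^n,F_\infty)$, so $d_i(x_1,x_2)$ and $d_\infty(x_1,x_2)$ are explicit line integrals of $F_i$ and $F_\infty$ along the same segment. Using $F_i-F_\infty=\langle a_i,y\rangle/(1+\langle a_i,x\rangle)$, subtraction gives
\[
|d_i(x_1,x_2)-d_\infty(x_1,x_2)|\le \frac{\|a_i\|\,|x_2-x_1|}{1-\|a_i\|R}\le \frac{2R\|a_i\|}{1-\|a_i\|R}\xrightarrow[i\to\infty]{}0,
\]
uniformly on $\overline{\mathbb{B}_{\mathbf 0}(R)}\times \overline{\mathbb{B}_{\mathbf 0}(R)}$.

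With this uniform convergence in hand, the verification of \eqref{noncfincazse} is routine. Given $r,\epsilon>0$, pick $N$ so that the above error bound is below $\epsilon/2$ for every $i>N$. Then $\dis f_i\le \epsilon/2<\epsilon$; and any $y\in\overline{B^+_{\star_\infty}(r-\epsilon)}$ (the case $\epsilon\ge r$ gives an empty or degenerate ball and is vacuous) satisfies $d_\infty(\mathbf 0,y)\le r-\epsilon$, hence $d_i(\mathbf 0,y)<r-\epsilon/2<r$ by uniform closeness, so $y=f_i(y)\in f_i(\overline{B^+_{\star_i}(r)})$ with $d_\infty(f_i(y),y)=0<\epsilon$. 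The only substantive difficulty in this proposal is the uniform convergence of the distance functions; its simplicity here rests squarely on projective flatness, which converts distance into an explicit line integral of the Randers norm. Without this structural feature, proving uniform convergence under perturbation of the $1$-form part of a Randers metric would require a significantly more delicate comparison of genuine minimal geodesics.
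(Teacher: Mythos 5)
Your proposal is correct and follows essentially the same route as the paper: the paper also takes $f_i=\id|_{\overline{B^+_{\star_i}(r)}}$, uses projective flatness (equivalently, that $F_i-F_\infty$ is an exact $1$-form) to get the uniform closeness $|d_i(p,q)-d_\infty(p,q)|\leq 2/i^2$ — stated globally on $\mathbb{B}^n$ rather than localized to $\overline{\mathbb{B}_{\mathbf 0}(R)}$ — and then verifies \eqref{noncfincazse} via explicit ball inclusions with $\epsilon_i=\ln\bigl(\frac{i^2+2}{i^2+1}\bigr)$. Your derivation of the ball-containment condition directly from the distance closeness (rather than from the explicit Euclidean descriptions of the forward balls) is only a cosmetic difference.
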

\begin{proof}
Since the metrics $F_i$ and $F_\infty$ are projectively flat,  a direct calculation yields
\[
\left| d_i(p,q)-d_\infty(p,q)  \right|  \leq  \frac{2}{i^2},\ \forall\,p,q\in \mathbb{B}^n.\tag{3.5}\label{dismodefunk}
\]
Moreover, for each $r>0$, we have
\[
\overline{\mathbb{B}_\textbf{0}\left( \frac{e^r-1}{e^r+\|a_i\|}  \right)}\subset\overline{B^+_{\star_i}(r)},\quad   \overline{B^+_{\star_\infty}(r)}=\overline{\mathbb{B}_\textbf{0}\left( \frac{e^r-1}{e^r}  \right)}.\tag{3.6}\label{dis2modefunk}
\]
Now consider the identity map $f_i=\id|_{\overline{B^+_{\star_i}(r)}}:\overline{B^+_{\star_i}(r)}\rightarrow X_\infty$. Thus, (\ref{dismodefunk}) and (\ref{dis2modefunk}) imply
\[
f_i(\star_i)=\star_\infty, \quad \dis f_i\leq  \frac{2}{i^2}, \quad  \overline{B^+_{\star_\infty}(r-\epsilon_i)} \subset \left[f_i\left( \overline{B^+_{\star_i}(r)} \right)\right]^{\epsilon_i}, \quad \epsilon_i=\ln\left( \frac{i^2+2}{i^2+1} \right),
\]
which concludes the proof.
\end{proof}

It is well-known that every tangent space is a tangent cone of a Riemannian manifold (cf. Gromov \cite{GLP}). This result remains valid in the Finsler setting. More precisely,
let $T_\star M$ be the tangent space of a Finsler manifold $(M,F)$ at $\star$. The distance from   $y_1\in T_\star M$ to $y_2\in T_\star M$ is defined by $F_\star(y_1,y_2):=F(\star,y_2-y_1)$. Then
$(T_\star M,0,F_\star)$ is a forward boundedly compact pointed $\lambda_F(\star)$-metric space. Now we have the following result.

\begin{proposition}\label{tanggespacecone}
Let $(M,F)$ be a forward complete Finsler manifold and let $\star$ be a fixed point in $M$. For each $k\in \mathbb{N}$, set $(X_k,\star_k, d_k):=(M,\star,k\cdot d_F)$. Thus
$(X_k,\star_k, d_k)_k$ converges to $(T_\star M,0,F_\star)$ in the  forward $\Theta$-Gromov-Hausdorff topology, where $\Theta(r):=\lambda_F(B^+_\star(r))$.
\end{proposition}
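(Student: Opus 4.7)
The strategy is to construct approximate pointed isometries $f_k:\overline{B^+_{\star_k}(r)}\to T_\star M$ via $k\exp_\star^{-1}$ and to exploit that the derivative of $\exp_\star$ at $0$ is the identity. Reversibility is scale-invariant, so the forward $\Theta$-metric space structure of each $(X_k,\star_k,d_k)$ follows from Theorem \ref{reversibifinslerdd} applied to $(M,\star,d_F)$ together with the observation $\overline{B^+_{\star_k}(r)}=\overline{B^+_\star(r/k)}\subset \overline{B^+_\star(r)}$; the limit $(T_\star M,0,F_\star)$ is trivially a pointed forward $\lambda_F(\star)$-metric space, and $\lambda_F(\star)\leq\Theta(r)$ because $\star\in B^+_\star(r)$. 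Forward bounded compactness on both sides is clear.

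Fix $r>0$ and $\epsilon>0$. For $k$ so large that $r/k$ is less than the injectivity radius at $\star$, $\exp_\star$ is a diffeomorphism from the closed $F(\star,\cdot)$-ball of radius $r/k$ onto $\overline{B^+_{\star_k}(r)}\subset M$, so I can define
\[
f_k:\overline{B^+_{\star_k}(r)}\longrightarrow T_\star M,\qquad f_k(x):=k\,\exp_\star^{-1}(x),
\]
which satisfies $f_k(\star_k)=0$. The crucial ingredient is the uniform asymptotic
\[
\sup\bigl\{\,\bigl|F(\star,y_2-y_1)-d_F(\exp_\star y_1,\exp_\star y_2)\bigr|\ :\ F(\star,y_i)\leq r/k\,\bigr\}=o(1/k)\qquad(k\to\infty). \tag{$\ast$}
\]
Substituting $y_i=f_k(x_i)/k$ and multiplying by $k$ converts this into $\dis f_k\to0$. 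For the covering condition of Definition \ref{noncomapctGHdef}, I take $v\in\overline{B^+_0(r-\epsilon)}\subset T_\star M$; for $k$ large, $x_v:=\exp_\star(v/k)$ lies in $\overline{B^+_{\star_k}(r)}$ by the same asymptotic, and since $t\mapsto\exp_\star(tv/k)$ is the unique minimal $d_F$-geodesic from $\star$ to $x_v$, one has $f_k(x_v)=v$ exactly. Hence $\overline{B^+_0(r-\epsilon)}\subseteq f_k\bigl(\overline{B^+_{\star_k}(r)}\bigr)$, which trivially gives \eqref{noncfincazse}.

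The main obstacle is to establish $(\ast)$, a uniform quantitative refinement of the Busemann--Mayer formula \eqref{BM-1}. My plan is to work in a coordinate chart $(x^i)$ centered at $\star$, in which $\exp_\star(y)=y+O(\|y\|^2)$. Uniform continuity of the positively $1$-homogeneous function $(x,v)\mapsto F(x,v)/F(\star,v)$ on a compact neighborhood of $\{\star\}\times S_\star M$ yields, for each $\eta>0$, a coordinate neighborhood $U_\eta\ni\star$ on which $(1-\eta)F(\star,v)\leq F(x,v)\leq(1+\eta)F(\star,v)$ for every $v\neq0$. Integrating along the Euclidean straight line and along a minimal $d_F$-geodesic joining $\exp_\star y_1$ and $\exp_\star y_2$---both of which remain inside $U_\eta$ once $r/k$ is small enough---sandwiches $d_F(\exp_\star y_1,\exp_\star y_2)$ between $(1\pm\eta)F(\star,y_2-y_1)+O(\|y\|^2)$; letting $\eta\to0$ together with $\|y\|^2=O(r^2/k^2)=o(1/k)$ yields $(\ast)$.
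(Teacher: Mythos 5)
Your argument is correct and is essentially the paper's proof: the same map $f_k=k\exp_\star^{-1}$ on $\overline{B^+_{\star_k}(r)}$, the same verification of \eqref{noncfincazse} with the covering condition obtained exactly from radial geodesics, and the same key input, namely the uniform convergence $k\,d_F\left(\exp_\star\left(\frac{y_1}{k}\right),\exp_\star\left(\frac{y_2}{k}\right)\right)\to F_\star(y_1,y_2)$ on compact balls of $T_\star M$. The only difference is that the paper simply cites Busemann--Mayer \cite[p.186]{BM} for this uniform asymptotic, whereas you re-derive it via coordinate estimates; that extra work is sound but not needed.
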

\begin{proof} Set $\mathcal {B}^+_0(r):=\{y\in T_\star M|\, F_\star(0,y)<r\}$, $B^+_\star(r):=\{x\in M|\, d_F(\star,x)<r\}$ and $B^+_{\star_k}(r):=\{x\in X_k|\, d_k(\star,x)<r\}$. Choose
a small $R_0>0$ such that $\exp_\star:\overline{\mathcal {B}^+_0(R_0)}\rightarrow \overline{B^+_\star(R_0)}$ is a $C^1$-coordinate system (cf. Bao, Chern and Shen \cite[p.126]{BCS}).
According to Busemann and Mayer \cite[p.186]{BM}, we have the uniform convergence
\[
\lim_{k\rightarrow\infty} d_k\left( \exp_\star\left( \frac{y_1}{k} \right),\exp_\star\left( \frac{y_2}{k} \right) \right)=F_\star(y_1, y_2),\ \forall \,y_1,y_2\in \overline{\mathcal {B}^+_0(R_0)}.
\]
Thus, for any $r>0$ and any $\epsilon>0$, there exists $N=N(r,\epsilon)$ such that for any $k>N$,
\[
\overline{ {B}^+_{\star_k}(r)}\subset \overline{B^+_\star(R_0)}, \  \left| F_\star (f_k(x_1), f_k(x_2))-d_k(x_1,x_2)\right|<\epsilon,\ \forall\, x_1,x_2\in \overline{{B}^+_{\star_k}(r)},
\]
where $f_k: \overline{{B}^+_{\star_k}(r)}\rightarrow T_\star M$ is a map defined by $f_k(x):=k\cdot\exp^{-1}_\star(x)$. Clearly, $f_k(\star)=0$. Moreover, for any $y\in \overline{ \mathcal {B}^+_0(r-\epsilon)}$, we have $f_k(x_y)=y$, where
$x_y:=\exp_\star(\frac{y}{k})\in \overline{{B}^+_{\star_k}(r)}$, which implies $\overline{\mathcal {B}^+_0(r-\epsilon)}\subset f_k\left(\overline{{B}^+_{\star_k}(r)}  \right)$. Thus, the result immediately follows by Definition \ref{noncomapctGHdef}.
\end{proof}

Moreover, we also have a Gromov type pre-compactness theorem in the noncompact case, whose proof is given in Appendix \ref{appendixGro-Hauscon}.
\begin{theorem}\label{noncompactprecompact}
Let $\mathscr{C}\subset \mathcal {M}^{\Theta}_*$
be a class satisfying
for every $r>0$ and every $\epsilon>0$, there exists a natural number
$N=N(r,\epsilon)$ such that $\Cov_{\overline{B^+_\star(r)}}(\epsilon)\leq N(r,\epsilon)$ for all
$(X,\star,d)\in \mathscr{C}$.
 Then $\mathscr{C}$ is pre-compact in the forward $\Theta$-Gromov-Hausdorff topology.
\end{theorem}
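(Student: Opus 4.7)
The plan is to reduce to the compact case ball by ball, invoking Proposition \ref{capcityprecompact} (the compact $\theta$-Gromov-Hausdorff analogue), and then to assemble the limits via a Cantor diagonal argument, in close analogy with the proof of the classical pointed Gromov precompactness theorem. Let $\mathcal{X}_i = (X_i, \star_i, d_i)_i \subset \mathscr{C}$ be a sequence. For each $n \in \mathbb{N}$, set $B_n^i := \overline{B^+_{\star_i}(n)}$. By forward bounded compactness and Remark \ref{forwardpointspaceandbackwardones}(a), each $B_n^i$ is a compact $\Theta(n)$-metric space, and the covering hypothesis gives $\Cov_{B_n^i}(\epsilon) \leq N(n,\epsilon)$ uniformly in $i$. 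Proposition \ref{capcityprecompact} then yields, for each fixed $n$, a subsequence along which $(B_n^i)_i$ converges in the $\Theta(n)$-Gromov-Hausdorff topology to some compact $\Theta(n)$-metric space $(Y^n, \rho^n)$.

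A standard diagonal argument extracts a single subsequence (still denoted $i$) along which this convergence holds simultaneously for every $n$. By Lemma \ref{inprotantisometry}, I obtain $\epsilon_i^n$-isometries $f_n^i : B_n^i \to Y^n$ with $\epsilon_i^n \to 0$ as $i \to \infty$. Setting $\eta_i^n := f_n^i(\star_i)$ and passing to a further subsequence (using compactness of $Y^n$), I may assume $\eta_i^n \to \eta^n$ and treat the $f_n^i$ as pointed approximate isometries with common target base point $\eta^n$. Since $B_n^i$ embeds isometrically into $B_{n+1}^i$, the restriction $f_{n+1}^i|_{B_n^i}$ is an approximate isometric embedding of $B_n^i$ into $Y^{n+1}$ that, in the limit $i \to \infty$, determines a pointed isometric embedding $(Y^n, \eta^n) \hookrightarrow (Y^{n+1}, \eta^{n+1})$. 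One more diagonal pass allows me to identify $Y^n$ with its image inside $Y^{n+1}$, producing a chain $Y^1 \subset Y^2 \subset \cdots$ with $\rho^{n+1}|_{Y^n} = \rho^n$ and a common base point $\eta$.

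Define $Y := \bigcup_n Y^n$ with the metric $d^\infty$ extending the $\rho^n$'s, and base point $\eta$. For any $y_1, y_2 \in \overline{B^+_\eta(r)}$ and $\delta>0$, there is $n>r$ with $y_1, y_2 \in Y^n$; their preimages $y_{j,i}$ under $f_n^i$ satisfy $d_i(\star_i, y_{j,i}) < r + \delta$ eventually, so
\[
d_i(y_{2,i}, y_{1,i}) \leq \Theta(r+\delta)\, d_i(y_{1,i}, y_{2,i}),
\]
and passing to the limit followed by $\delta \to 0^+$ yields $\lambda_{d^\infty}(\overline{B^+_\eta(r)}) \leq \Theta(r)$ (after replacing $\Theta$ by its right-continuous envelope if necessary, which does not enlarge the class $\mathcal{M}^\Theta_*$). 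Forward bounded compactness of $(Y, \eta, d^\infty)$ follows since $\overline{B^+_\eta(r)} \subset Y^{\lceil r\rceil + 1}$, which is compact by construction. Finally, given $r, \epsilon > 0$, choosing any $n > r + \epsilon$ and $i$ large enough that $\epsilon_i^n < \epsilon$, the restriction of $f_n^i$ to $\overline{B^+_{\star_i}(r)}$ fulfills the three requirements in \eqref{noncfincazse} of Definition \ref{noncomapctGHdef}, confirming the forward $\Theta$-Gromov-Hausdorff convergence.

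The main obstacle is not any individual estimate but the careful orchestration of the two nested diagonal extractions so as to preserve three structures simultaneously: the identification of base points $\star_i \mapsto \eta$, the compatibility of the inclusions $Y^n \subset Y^{n+1}$, and the reversibility bound $\Theta(r)$ in the limit (which is subtle when $\Theta$ is merely nondecreasing, not right-continuous). The first two require matching countable dense subsets of each $Y^n$ across $n$ — an exercise analogous to Burago, Burago and Ivanov \cite[Theorem 8.1.10]{DYS}, but one must take care that the $\Theta(n)$-admissible distances used at level $n$ glue into a single forward metric on $Y$, and this is where Lemma \ref{inprotantisometry} (together with Proposition \ref{deltanprooxima} for approximate inverses) is used repeatedly.
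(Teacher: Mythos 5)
Your construction follows essentially the same route as the paper's proof: reduce to the balls $\overline{B^+_{\star_i}(n)}$, which are compact $\Theta(n)$-metric spaces, apply the compact precompactness result, diagonalize over radii, assemble the nested limit spaces $Y^n$ into $Y=\bigcup_n Y^n$, and then verify membership in $\mathcal{M}^\Theta_*$ and convergence via pointed approximate isometries (the paper does this with pointed $\Theta(r)$-Gromov--Hausdorff limits $Y_r$ of the balls and the "direct but tedious" inclusion $B^+_\star(r-\delta)\subset Y_r$). The assembly issues you flag at the end (compatibility of base points and of the embeddings $Y^n\hookrightarrow Y^{n+1}$, and the inclusion $\overline{B^+_\eta(r)}\subset Y^{\lceil r\rceil+1}$) are exactly the points the paper also leaves at sketch level, so I have no objection there.

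The one step that does not work as written is your verification of the reversibility bound for the limit. Approximating $y_1,y_2\in\overline{B^+_\eta(r)}$ by points with $d_i(\star_i,y_{j,i})<r+\delta$ only yields $d^\infty(y_2,y_1)\le \Theta(r^+)\,d^\infty(y_1,y_2)$ with $\Theta(r^+)=\lim_{\delta\to0^+}\Theta(r+\delta)$, and your parenthetical remedy --- replace $\Theta$ by its right-continuous envelope, "which does not enlarge the class" --- is an unproven claim; whether $\mathcal{M}^{\Theta^+}_*=\mathcal{M}^\Theta_*$ depends on reading $\overline{B^+_\star(r)}$ as the closure of the open ball and itself requires an argument, so as stated you have only shown that the limit lies in $\mathcal{M}^{\Theta^+}_*$, which is weaker than the precompactness claimed (the limit must again be a pointed forward $\Theta$-metric space). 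The fix is short and is in effect what the paper's argument does: first take $y_1,y_2$ in the \emph{open} ball $B^+_\eta(r)$; since $d_i(\star_i,y_{j,i})\to d^\infty(\eta,y_j)<r$, the approximating points lie in $\overline{B^+_{\star_i}(r)}$ for all large $i$, so the constant $\Theta(r)$ itself applies and passes to the limit; then extend the inequality $d^\infty(y_2,y_1)\le\Theta(r)\,d^\infty(y_1,y_2)$ from the open ball to $\overline{B^+_\eta(r)}$ using the continuity of $d^\infty$ under $\mathcal{T}_+\times\mathcal{T}_+$ (Theorem \ref{topologychara}). With this replacement your argument delivers $Y\in\mathcal{M}^\Theta_*$ with the sharp bound, and no modification of $\Theta$ is needed.
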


Moreover,
the collection of forward geodesic spaces is still closed under such a convergence, whose proof is postponed to Appendix \ref{appendixGro-Hauscon}.

\begin{proposition}\label{lenthcompactnoncompact}
 Suppose that  $(X_i,\star_i,d_i)_i\subset \mathcal {M}^\Theta_\star$ converges to
$(X,\star,d)\in \mathcal {M}^\Theta_\star$  in the pointed
forward $\Theta$-Gromov-Hausdorff topology. If each $(X_i,\star_i,d_i)$ is a forward geodesic space, so is $(X,\star,d)$.
\end{proposition}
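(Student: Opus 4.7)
The plan is to show $(X,\star,d)$ admits a minimal geodesic between any two points, which by Proposition \ref{metricsigeodesic} is equivalent to being a forward geodesic space. The strategy is to first construct an exact midpoint for any pair $p,q\in X$ via a lift-and-limit argument through the approximating sequence, then iterate dyadically and extend by forward completeness of $(X,d)$ (a consequence of forward bounded compactness via Proposition \ref{forwardandbackwardrelation}(iii)).

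\textbf{Step 1 (Lifting $p,q$).} Fix $p,q\in X$ and set $D:=d(p,q)$. Choose $R>d(\star,p)+D+1$ and then a radius $r>R+1$. By Definition \ref{noncomapctGHdef}, for all $i$ sufficiently large there exist maps $f_i:\overline{B^+_{\star_i}(r)}\to X$ with $f_i(\star_i)=\star$, $\dis f_i\leq \epsilon_i\to 0$, and $\overline{B^+_\star(r-\epsilon_i)}\subset \bigl[f_i\bigl(\overline{B^+_{\star_i}(r)}\bigr)\bigr]^{\epsilon_i}$. Pick $p_i,q_i\in \overline{B^+_{\star_i}(r)}$ with $d(f_i(p_i),p)\leq\epsilon_i$ and $d(f_i(q_i),q)\leq\epsilon_i$; continuity of $d$ on $\mathcal T_+\times\mathcal T_+$ (Theorem \ref{topologychara}(i)) combined with $\dis f_i\leq \epsilon_i$ gives $|d_i(p_i,q_i)-D|\leq 3\epsilon_i$.

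\textbf{Step 2 (Midpoint of $p,q$).} Since each $(X_i,\star_i,d_i)$ is a forward geodesic space, Proposition \ref{metricsigeodesic} produces a constant-speed minimal geodesic $\gamma_i:[0,1]\to X_i$ from $p_i$ to $q_i$. By the triangle inequality $d_i(\star_i,\gamma_i(t))\leq d_i(\star_i,p_i)+d_i(p_i,q_i)\leq R$, so $\gamma_i([0,1])\subset \overline{B^+_{\star_i}(r)}$ and $f_i$ is defined on it. Set $m_i:=\gamma_i(1/2)$, giving $d_i(p_i,m_i)=d_i(m_i,q_i)=d_i(p_i,q_i)/2$. The estimate $d(\star,f_i(m_i))\leq d_i(\star_i,m_i)+\epsilon_i\leq R+1$ places $(f_i(m_i))_i$ inside the compact ball $\overline{B^+_\star(R+1)}$, so along a subsequence $f_i(m_i)\to m\in X$ in $\mathcal T_+$. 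Passing to the limit in the inequalities $d(f_i(p_i),f_i(m_i))\leq d_i(p_i,m_i)+\epsilon_i$ and $d(f_i(m_i),f_i(q_i))\leq d_i(m_i,q_i)+\epsilon_i$ yields
\[
d(p,m)\leq D/2, \qquad d(m,q)\leq D/2.
\]
The triangle inequality $D\leq d(p,m)+d(m,q)$ then forces both to be equalities, so $m$ is an exact midpoint of $p,q$.

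\textbf{Step 3 (Dyadic extension).} Iterate Step 2: recursively construct $\gamma(k/2^n)\in X$ so that $\gamma(0)=p$, $\gamma(1)=q$, and $\gamma((2k-1)/2^{n+1})$ is an exact midpoint of $\gamma((k-1)/2^n)$ and $\gamma(k/2^n)$. A straightforward induction shows $d(\gamma(k/2^n),\gamma(l/2^n))=D(l-k)/2^n$ for all $k\leq l$. Since the constructed points all lie in $\overline{B^+_\star(R)}$, on which reversibility is bounded by $\Theta(R)$, we also have $d(\gamma(t),\gamma(s))\leq \Theta(R)\,D(t-s)$, so $\gamma$ is uniformly $\hat d$-Lipschitz on dyadic rationals. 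By Theorem \ref{topologychara}(ii) and completeness of $(X,\hat d)$, $\gamma$ extends uniquely to a continuous map $\gamma:[0,1]\to X$, and by continuity the equality $d(\gamma(s),\gamma(t))=D(t-s)$ persists for all $0\leq s\leq t\leq 1$. A direct length computation via Proposition \ref{basisessentially2}(i),(ii) gives $L_d(\gamma,s,t)=D(t-s)$, so $\gamma$ is a minimal geodesic from $p$ to $q$, completing the proof.

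\textbf{Main obstacle.} The delicate point is a coordinated choice of radii ensuring simultaneously that (a) $p_i,q_i$ lie in the domain $\overline{B^+_{\star_i}(r)}$ of $f_i$, (b) the entire minimizer $\gamma_i$ also lies there, and (c) the pushed-forward midpoints $f_i(m_i)$ lie in a \emph{fixed} compact forward ball of $X$ so that a convergent subsequence may be extracted. This bookkeeping relies crucially on the uniform reversibility control supplied by $\Theta$ on fixed-radius balls; without it, forward geodesics could escape the domain of $f_i$ or fail to produce a limit midpoint in $X$.
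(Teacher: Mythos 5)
Your proof is correct and follows essentially the same route as the paper's: the paper also reduces the statement to producing a midpoint for each pair of points (via Theorem \ref{mdistrctilylength}, whose dyadic construction you simply re-prove in your Step 3), lifts the two points to $X_i$, takes the geodesic midpoint there, and transfers it back, extracting the limit by compactness of a fixed forward ball. One cosmetic remark: the bound $|d_i(p_i,q_i)-D|\leq 3\epsilon_i$ in Step 1 should carry a $\Theta$-dependent factor (reversing $d(f_i(q_i),q)\leq \epsilon_i$ costs the reversibility of the ambient ball), but since that factor is uniformly bounded on the fixed ball the error still tends to zero and nothing in the argument is affected.
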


\begin{remark}\label{geodeisclenghequv}
In view of Theorems \ref{geodesicexistencethe} \& \ref{HopfRinowth}, a   forward boundedly compact forward geodesic  space is always
  a  forward complete locally compact forward length space and vice versa. Hence, the above result also implies the compactness of the collection of forward length spaces in $\mathcal {M}^\Theta_\star$.
\end{remark}

In what follows, we use another method to study the convergence of pointed spaces.

\begin{definition}
Let $\mathcal {X}_i:=(X_i,\star_i,d_i)$, $i=1,2$ be two compact pointed $\theta$-metric spaces. The {\it pointed
$\theta$-Gromov-Hausdorff distance}  between $\mathcal {X}_i$, $i=1,2$ is
defined by
\begin{align*}
d^{\,\theta}_{pGH}(\mathcal {X}_1,\mathcal {X}_2):=\inf\left\{\left.d_H(X_1,X_2)+\frac{d(\star_1,\star_2)+d(\star_2,\star_1)}{2}\right|\,\theta\text{-admissible
metrics on }X_1\sqcup X_2 \right\}.
\end{align*}
\end{definition}

By the proof of Lemma \ref{inprotantisometry}/(i) and Proposition \ref{Ascloghtipo}, one can show the following result.
\begin{proposition}
 The pointed
$\theta$-Gromov-Hausdorff distance is a reversible pseudo-metric on the collection of compact pointed $\theta$-metric spaces. In particular, $d^{\,\theta}_{pGH}(\mathcal {X}_1,\mathcal {X}_2)=0$ if and only if they are pointed isometric.
\end{proposition}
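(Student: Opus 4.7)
The plan is to verify the three pseudo-metric axioms and then the characterization of pointed isometry separately. Non-negativity is immediate from the definition. Symmetry follows at once: $d_H(X_1,X_2)$ is symmetric in its arguments because the defining condition ``$X_1\subset X_2^\epsilon$ and $X_2\subset X_1^\epsilon$'' is manifestly symmetric, and the pointed correction $\tfrac{1}{2}(d(\star_1,\star_2)+d(\star_2,\star_1))$ is symmetric by construction. For the triangle inequality, given $\theta$-admissible metrics $d_{12}$ on $X_1\sqcup X_2$ and $d_{23}$ on $X_2\sqcup X_3$ that witness $d^{\,\theta}_{pGH}(\mathcal{X}_1,\mathcal{X}_2)$ and $d^{\,\theta}_{pGH}(\mathcal{X}_2,\mathcal{X}_3)$ up to some $\epsilon>0$, I would adapt the gluing construction from Example~\ref{flawgroha}: set
\[
d_{13}(x_1,x_3) := \inf_{x_2\in X_2}\bigl[d_{12}(x_1,x_2)+d_{23}(x_2,x_3)\bigr]+\epsilon,
\]
with $d_{13}(x_3,x_1)$ defined analogously and $d_{13}|_{X_i\times X_i}=d_i$. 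The shift $+\epsilon$ enforces strict positivity across the disjoint union, the triangle inequality is a routine case split, and $\lambda_{d_{13}}\leq\theta$ follows from the elementary fact that $(a+\epsilon)/(b+\epsilon)\leq\theta$ whenever $a\leq\theta b$ and $\theta\geq 1$. A direct estimate then bounds $d_H(X_1,X_3)$ and $\tfrac{1}{2}[d_{13}(\star_1,\star_3)+d_{13}(\star_3,\star_1)]$ by the sum of the witness quantities of $d_{12}$ and $d_{23}$ plus $O(\epsilon)$; letting $\epsilon\downarrow 0$ yields the triangle inequality.

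For the ``if'' direction of the characterization, suppose $f:X_1\to X_2$ is a pointed isometry. For each $\epsilon>0$, I define a metric $d_\epsilon$ on $X_1\sqcup X_2$ by
\[
d_\epsilon(x_1,x_2):=d_1(x_1,f^{-1}(x_2))+\epsilon,\qquad d_\epsilon(x_2,x_1):=d_1(f^{-1}(x_2),x_1)+\epsilon,
\]
and by $d_1, d_2$ on the diagonal blocks. Triangle inequality follows from triangle inequality in $d_1$ together with the isometry property of $f$, and $\lambda_{d_\epsilon}\leq\theta$ follows from the same $(a+\epsilon)/(b+\epsilon)\leq\theta$ trick used above. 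Since $f(\star_1)=\star_2$, one has $d_H(X_1,X_2)\leq\epsilon$ and $d_\epsilon(\star_1,\star_2)=d_\epsilon(\star_2,\star_1)=\epsilon$, yielding $d^{\,\theta}_{pGH}(\mathcal{X}_1,\mathcal{X}_2)\leq 2\epsilon\to 0$.

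For the converse, assume $d^{\,\theta}_{pGH}(\mathcal{X}_1,\mathcal{X}_2)=0$ and pick $\theta$-admissible metrics $d_n$ on $X_1\sqcup X_2$ whose witness quantities $\alpha_n$ tend to $0$. Mimicking the proof of Lemma~\ref{inprotantisometry}/(i), I would construct a pointed map $f_n:X_1\to X_2$ by setting $f_n(\star_1):=\star_2$ and, for $x\neq\star_1$, choosing $f_n(x)\in X_2$ with $d_n(f_n(x),x)<\alpha_n$; unfolding the triangle inequality and invoking $\lambda_{d_n}\leq\theta$ on the appearing backward distances yields $\dis f_n\leq(1+\theta)\alpha_n$. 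Choose a countable dense subset $S=\{x^k\}_{k\geq 0}\subset X_1$ with $x^0=\star_1$; by the compactness of $X_2$ and a diagonal extraction, pass to a subsequence along which $f_n(x^k)\to\phi(x^k)\in X_2$ for every $k$ (this is the Arzel\`a-Ascoli-type statement of Proposition~\ref{Ascloghtipo}). Since $\dis f_n\to 0$, the limit $\phi|_S$ is distance preserving; uniform continuity with respect to the symmetrized metric $\hat{d}$ (Theorem~\ref{topologychara}) then lets it extend to an isometric embedding $\phi:X_1\to X_2$ with $\phi(\star_1)=\star_2$. The symmetric argument on the other side yields a pointed isometric embedding $\psi:X_2\to X_1$; a standard density/compactness check shows $\psi\circ\phi=\id_{X_1}$ and $\phi\circ\psi=\id_{X_2}$, so $\phi$ is the desired pointed isometry.

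The delicate technical step is the distortion estimate $\dis f_n\leq(1+\theta)\alpha_n$ in the last paragraph: because the admissible metrics $d_n$ are only $\theta$-reversible rather than symmetric, one must unfold the triangle inequality carefully and insert the bound $\lambda_{d_n}\leq\theta$ exactly at the point where a backward distance is forced to appear. This is precisely the bookkeeping that drives Lemma~\ref{inprotantisometry}/(i). A secondary point to watch is that the $+\epsilon$ shifts employed in the gluing constructions of the first two paragraphs really do preserve $\theta$-reversibility — without them the resulting disjoint-union metric could fail strict positivity and hence fail the irreversible-metric axioms of Definition~\ref{generalsapcedef}.
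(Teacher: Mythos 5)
Your proposal is correct and follows essentially the route the paper intends: it glues admissible metrics (with a small $+\epsilon$ shift, exactly as in Lemma \ref{inprotantisometry}/(ii)) for the pseudo-metric axioms, and for the vanishing-distance characterization it builds pointed almost-isometries as in the proof of Lemma \ref{inprotantisometry}/(i) and passes to a limit isometry by the Arzel\`a--Ascoli/diagonal argument underlying Proposition \ref{Ascloghtipo}. The only cosmetic deviations are that the base-point bound gives $d_n(\star_2,\star_1)<2\alpha_n$ rather than $\alpha_n$ (harmless constant) and that the final identity $\psi\circ\phi=\id$ is most cleanly obtained either via approximate inverses or via the standard fact that a distance-preserving self-map of a compact space is surjective, both of which are routine.
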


\begin{theorem}\label{newnoncp}
Suppose that $(X_i,\star_i,d_i)_i$ and  $(X,\star,d)$ belong to $\mathcal {M}^{\Theta}_*$.
The following statements hold:
\begin{itemize}
\item[(i)] The sequence $(X_i,\star_i,d_i)_i$ converges to
$(X,\star,d)$  in the pointed
forward $\Theta$-Gromov-Hausdorff topology if
for all $r>0$,
\[
\lim_{i\rightarrow\infty}d^{\,\Theta(r)}_{pGH}\left((\overline{B^+_{\star_i}(r)},\star_i),(\overline{B^+_{\star}(r)},\star)\right)=0.\tag{3.8}\label{lengtmarked}
\]

\item[(ii)]
Suppose that $(X_i,\star_i,d_i)$'s are forward geodesic spaces. Then  $(X_i,\star_i,d_i)_i$ converges to
$(X,\star,d)$  in the pointed
forward $\Theta$-Gromov-Hausdorff topology if and only if   (\ref{lengtmarked}) is valid
for all $r>0$.
\end{itemize}
\end{theorem}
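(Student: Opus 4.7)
The plan is to establish a pointed analog of Lemma \ref{inprotantisometry}, namely: for compact pointed $\theta$-metric spaces, $d^{\,\theta}_{pGH}(\mathcal{X}_1, \mathcal{X}_2) < \epsilon$ is (up to constants depending on $\theta$) equivalent to the existence of a pointed $\epsilon$-isometry $f : X_1 \to X_2$ with $f(\star_1) = \star_2$. Once this dictionary is in hand, both statements follow by reading it in opposite directions on the closed balls $\overline{B^+_{\star_i}(r)}$ and $\overline{B^+_{\star}(r)}$. The direction $\Rightarrow$ of (ii) will require the forward geodesic hypothesis because Definition \ref{noncomapctGHdef} only supplies maps on balls of some \emph{larger} radius $r' > r$, whose images have to be pushed back into balls of the \emph{original} radius.

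\textbf{Pointed dictionary and proof of (i).} Given a $\theta$-admissible metric $\rho$ on $X_1 \sqcup X_2$ with Hausdorff distance $< \epsilon$ and $\rho(\star_1,\star_2)+\rho(\star_2,\star_1) < 2\epsilon$, assign to each $x \in X_1$ a point $f(x) \in X_2$ with $\rho(f(x), x) < \epsilon$, and then override $f(\star_1) := \star_2$. Using $\lambda_\rho \leq \theta$, a routine triangle inequality estimate yields $\dis f \leq C(\theta)\epsilon$ and $X_2 \subset [f(X_1)]^{C(\theta)\epsilon}$; the converse mirrors Lemma \ref{inprotantisometry}/(ii), building an admissible metric out of a pointed $\epsilon$-isometry in the standard way. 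To prove (i) from the hypothesis, fix $r, \epsilon > 0$ and choose $i$ so large that $d^{\,\Theta(r)}_{pGH}((\overline{B^+_{\star_i}(r)},\star_i),(\overline{B^+_\star(r)},\star)) < \epsilon/C(\Theta(r))$; the dictionary then produces $f_i : \overline{B^+_{\star_i}(r)} \to \overline{B^+_\star(r)} \subset X$ with $f_i(\star_i) = \star$, $\dis f_i \leq \epsilon$, and $\overline{B^+_\star(r)} \subset [f_i(\overline{B^+_{\star_i}(r)})]^\epsilon$. The weaker inclusion required in Definition \ref{noncomapctGHdef} is immediate. Since no geodesic structure was used, this also proves $\Leftarrow$ in (ii).

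\textbf{Proof of $\Rightarrow$ in (ii).} Assume forward $\Theta$-Gromov-Hausdorff convergence and the geodesic hypothesis; by Proposition \ref{lenthcompactnoncompact}, the limit $(X,\star,d)$ is forward geodesic too. Fix $r>0$ and $\delta>0$, and apply Definition \ref{noncomapctGHdef} with radius $r':=r+\delta$ and tolerance $\epsilon:=\delta/10$ to get $f_i:\overline{B^+_{\star_i}(r+\delta)}\to X$. Since $\dis f_i \leq \epsilon$, $f_i$ sends $\overline{B^+_{\star_i}(r)}$ into $\overline{B^+_\star(r+\epsilon)}$; using the forward geodesic structure on $X$, I project each such image along a minimal geodesic from $\star$ onto $\overline{B^+_\star(r)}$, obtaining $g_i:\overline{B^+_{\star_i}(r)}\to\overline{B^+_\star(r)}$ with $d(g_i(x),f_i(x))\leq\epsilon$. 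Dually, any $y \in \overline{B^+_\star(r)}$ has a preimage $x' \in \overline{B^+_{\star_i}(r+\delta)}$ with $d(f_i(x'), y) < \epsilon$; if $x'$ lies outside $\overline{B^+_{\star_i}(r)}$, I replace it by the point $x$ on a minimal geodesic (in $X_i$) from $\star_i$ to $x'$ at distance exactly $r$ from $\star_i$, so $d_i(x,x')\leq\delta$ and $d(f_i(x),y) = O(\delta)$. From these two facts, the admissible metric
\[
\rho(x,y):=\inf_{x'\in\overline{B^+_{\star_i}(r)}}\bigl[d_i(x,x')+d(g_i(x'),y)\bigr]+C\delta, \quad \rho(y,x):=\inf_{x'}\bigl[d(y,g_i(x'))+d_i(x',x)\bigr]+C\delta,
\]
on $\overline{B^+_{\star_i}(r)}\sqcup\overline{B^+_\star(r)}$ has Hausdorff distance $O(\delta)$, and $\rho(\star_i,\star),\rho(\star,\star_i)\leq C\delta$. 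Sending $\delta\to 0$ yields (\ref{lengtmarked}).

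\textbf{Main obstacle.} The key technical point is the verification that $\lambda_\rho\leq \Theta(r)$ on the whole disjoint union and not merely $\Theta(r+\delta)$: the cross-reversibility estimate $\rho(x,y)\leq\Theta(r)\rho(y,x)$ must be derived by combining $\lambda_{d_i}(\overline{B^+_{\star_i}(r)})\leq\Theta(r)$ with $\lambda_d(\overline{B^+_\star(r)})\leq\Theta(r)$ through an asymmetric chain of triangle inequalities, and one has to check that the additive constant $C\delta$ does not spoil the multiplicative bound (it does not, because $(\Theta(r)-1)C\delta\geq 0$). This is the point where the irreversibility makes the argument noticeably more delicate than the reversible counterpart, and it is what forces the projection step to land in $\overline{B^+_\star(r)}$ exactly rather than in $\overline{B^+_\star(r+\epsilon)}$.
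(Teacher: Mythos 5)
Your proposal is correct and follows essentially the same route as the paper: statement (i) and the $\Leftarrow$ direction of (ii) come from the Lemma \ref{inprotantisometry}-type dictionary between admissible metrics and pointed $\epsilon$-isometries, and the $\Rightarrow$ direction of (ii) uses Proposition \ref{lenthcompactnoncompact} together with the geodesic structure to absorb the radius slack inherent in Definition \ref{noncomapctGHdef}. The only cosmetic difference is that the paper compares $\overline{B^+_{\star_i}(r)}$ with the shrunken ball $\overline{B^+_\star(r-\epsilon)}$ and then concludes via $d^{\Theta(r)}_{pGH}\bigl(\overline{B^+_\star(r-\epsilon)},\overline{B^+_\star(r)}\bigr)\leq\epsilon$ and the triangle inequality, whereas you enlarge the source radius to $r+\delta$ and project radially onto the exact $r$-balls before building the admissible metric.
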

\begin{proof} Statement (i) directly  follows by the proof of Lemma \ref{inprotantisometry}/(i). We now show (ii); since the ``$\Leftarrow$" part follows by (i), it remains to prove the ``$\Rightarrow$" part.
Using the same argument of  Lemma \ref{inprotantisometry}/(ii), for every $r > 0$ and $\epsilon> 0$, there exists a natural $N=N(r,\epsilon)$ such that for
every $i > N$, we have
\[
d^{\Theta(r)}_{pGH}\left( \overline{B^+_{\star_i}(r)}, \overline{B^+_\star(r-\epsilon)} \right)<2\epsilon.
\]
On the other hand, Proposition \ref{lenthcompactnoncompact} implies that $(X,d)$ is a forward geodesic  space and hence,
\[
d^{\Theta(r)}_{pGH}\left( \overline{B^+_\star(r-\epsilon)}, \overline{B^+_\star(r)}\right)\leq \epsilon,
\]
which together with the triangle inequality for $d^{\Theta(r)}_{p GH}$ concludes the proof.
\end{proof}

Note that (\ref{lengtmarked}) is  a sufficient but not necessary condition if the reference spaces are not forward geodesic spaces. However,
in the reversible case, it is frequently used to define the convergence in many references (cf. \cite{PP,Vi}) just for convenience.
For the same reason, we utilize (\ref{lengtmarked}) to define the convergence of pointed forward metric-measure spaces.




\begin{definition}\label{noncompactconvergedefi}
Let $\mathcal {X}_i:=(X_i,\star_i,d_i,\mu_i)$, $i\in \mathbb{N}$ and $\mathcal {X}:=(X,\star,d,\mu)$ be  forward boundedly compact pointed forward $\Theta$-metric-measure spaces.  The sequence  $(\mathcal {X}_i)_i$ is said to {\it converge to $\mathcal {X}$ in the pointed measured forward $\Theta$-Gromov-Hausdorff topology} if there are sequences $R_i\rightarrow \infty$ and $\epsilon_i\rightarrow 0$ and measured pointed $\epsilon_i$-isometrics $f_i:\overline{B^+_{\star_i}(R_i)}\rightarrow  \overline{B^+_{\star}(R_i)}$ such that $(f_i)_\sharp  \mu_i\rightarrow \mu$, where the convergence is in the weak-$*$ topology (i.e., convergence against compactly supported continuous test functions).
\end{definition}
\begin{remark}Owing to Theorem \ref{newnoncp}/(i), the pointed measured forward $\Theta$-Gromov-Hausdorff convergence indicates the pointed forward $\Theta$-Gromov-Hausdorff convergence.
\end{remark}

The Cantor diagonal argument together with Theorem \ref{compactmeasuredgromvhaus}, Lemma  \ref{inprotantisometry} and Proposition \ref{noncompactmesurecon} yields the following result.
\begin{theorem}\label{noncomapctdoubling} Given two constants $0<m \leq M$ and a positive function $L(r)$,
let  $\mathscr{C}$ be a collection of pointed forward $\Theta$-metric-measure spaces $(X,\star,d,\mu)\in \mathcal {M}^{\Theta}_*$ satisfying
\[
 m\leq \mu\left[\overline{B^+_\star(1)}\right]\leq M,\quad \Do \left(\overline{B^+_\star(r)}\right)\leq L(r), \ \forall\,r>0.
\]
Then $\mathscr{C}$ is pre-compact in the pointed measured forward $\Theta$-Gromov-Hausdorff topology. In particular, any weak cluster space $(X_\infty,d_\infty,\mu_\infty)$ satisfies $\supp\mu_\infty=X_\infty$.
\end{theorem}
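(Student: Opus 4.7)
The plan is to combine a ball-by-ball application of the compact case (Theorem~\ref{compactmeasuredgromvhaus}) with a diagonal argument, using Lemma~\ref{inprotantisometry} to pass between the measured $\theta$-Gromov-Hausdorff and $\epsilon$-isometry formulations, and Proposition~\ref{noncompactmesurecon} to glue the ball limits into a single pointed measure limit. Concretely, fix a sequence $\mathcal{X}_i=(X_i,\star_i,d_i,\mu_i)\in\mathscr{C}$ and, for each integer $k\geq 1$, consider the restricted spaces $\mathcal{Y}^k_i:=(\overline{B^+_{\star_i}(k)},d_i,\mu_i\lfloor_{\overline{B^+_{\star_i}(k)}})$. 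By Remark~\ref{forwardpointspaceandbackwardones}(a) each $\mathcal{Y}^k_i$ is a compact $\Theta(k)$-metric-measure space of diameter at most $(1+\Theta(k))k$. The lower mass bound is immediate since $\overline{B^+_{\star_i}(1)}\subset\overline{B^+_{\star_i}(k)}$ gives $\mu_i[\mathcal{Y}^k_i]\geq m$, while iterating the doubling hypothesis $\Do(\overline{B^+_{\star_i}(k)})\leq L(k)$ from radius $1$ up to $k$ yields an upper bound $\mu_i[\mathcal{Y}^k_i]\leq L(k)^{\lceil\log_2 k\rceil+1}M$. Thus, for every fixed $k$, the family $\{\mathcal{Y}^k_i\}_i$ fits the hypotheses of Theorem~\ref{compactmeasuredgromvhaus} and is pre-compact in the measured $\Theta(k)$-Gromov-Hausdorff topology.

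Next I would run the standard Cantor diagonal: extract successive subsequences so that $\mathcal{Y}^k_i$ converges to some compact $\Theta(k)$-metric-measure space $\mathcal{Y}^k_\infty=(Y^k_\infty,\star^k_\infty,d^k_\infty,\mu^k_\infty)$ as $i\to\infty$, and then take the diagonal. For each $k$ Lemma~\ref{inprotantisometry} then supplies Borel pointed $\epsilon_i^k$-isometries $f^k_i:\overline{B^+_{\star_i}(k)}\to Y^k_\infty$ with $\epsilon_i^k\to 0$ and $(f^k_i)_\sharp(\mu_i\lfloor_{\overline{B^+_{\star_i}(k)}})\to\mu^k_\infty$ weakly. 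The limits $\mathcal{Y}^k_\infty$ are consistent in $k$: the restriction of the larger limit to the ball of radius $k$ must be pointed isometric to $\mathcal{Y}^k_\infty$ and the restricted measures must agree. This is exactly the content (or a routine consequence) of Proposition~\ref{noncompantmesurecon}; it allows us to assemble a single pointed forward $\Theta$-metric-measure space $(X_\infty,\star_\infty,d_\infty,\mu_\infty)$ whose ball of radius $k$ is canonically identified with $\mathcal{Y}^k_\infty$. Setting $R_i:=\min_k\{k:\epsilon_i^k\leq 1/k\text{ for all }i\geq N_k\}\to\infty$ and $f_i:=f^{R_i}_i$ produces the pointed $\epsilon_i$-isometries required by Definition~\ref{noncompactconvergedefi}, with $(f_i)_\sharp\mu_i\to\mu_\infty$ in the weak-$*$ topology against compactly supported continuous test functions.

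For the support statement, I would argue as in the final paragraph of the proof of Theorem~\ref{compactmeasuredgromvhaus}: doubling passes to the limit on each ball by Lemma~\ref{DoublingcontrollconverinGromovHameameaser} applied within each $\mathcal{Y}^k_\infty$, so $\mu_\infty$ satisfies $\mu_\infty[\overline{B^+_x(2s)}]\leq L(k)\,\mu_\infty[\overline{B^+_x(s)}]$ whenever the ball of radius $2s$ around $x$ lies inside $\overline{B^+_{\star_\infty}(k)}$. If $\mu_\infty[B^+_x(r)]=0$ for some $x\in X_\infty$ and $r>0$, then iterating the doubling inequality forces $\mu_\infty[\overline{B^+_x(R)}]=0$ for every $R$, which contradicts $\mu_\infty[\overline{B^+_{\star_\infty}(1)}]\geq m>0$ once $R$ is large enough for $\overline{B^+_{\star_\infty}(1)}\subset\overline{B^+_x(R)}$. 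Hence $\supp\mu_\infty=X_\infty$.

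The main obstacle I expect is the compatibility step where ball-wise limits are stitched into a single pointed measure space: one has to check that the restriction of $\mathcal{Y}^{k+1}_\infty$ to the ball of radius $k$ really coincides (as a pointed measure space) with $\mathcal{Y}^k_\infty$, which requires controlling boundary mass and ensuring the doubling bound $L(k)$ is honest under restriction; this is precisely what Proposition~\ref{noncompantmesurecon} is designed to handle. Everything else -- the ball estimates, the compact pre-compactness, the diagonal argument, and the support argument via doubling -- is a direct transcription of techniques already deployed for Theorems~\ref{compactmeasuredgromvhaus} and~\ref{noncompactprecompact}.
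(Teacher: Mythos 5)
Your proposal follows essentially the same route as the paper, whose proof is precisely the Cantor diagonal argument combined with Theorem \ref{compactmeasuredgromvhaus}, Lemma \ref{inprotantisometry} and Proposition \ref{noncompactmesurecon}. The only small remark: Proposition \ref{noncompactmesurecon} is most naturally invoked at the very end, to extract the limit measure from the pushed-forward (unrestricted) measures once the diagonal pointed $\epsilon_i$-isometries are in place, which sidesteps the boundary-mass/consistency issue you flag rather than resolving it ball by ball.
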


Similar to the compact case (see Theorems \ref{RiccompactprecompactFinser}-\ref{RicNmeasurecompact}), one can easily derive the following result from Theorem \ref{noncomapctdoubling}.
\begin{theorem}
Given any $N\in [2,\infty)$, $V\in (0,\infty)$ and $K\in \mathbb{R}$, the following classes are pre-compact in the pointed measured forward $\Theta$-Gromov-Hausdorff topology:

\begin{itemize}
\item[(1)] the collection of pointed forward complete Finsler metric-measure manifolds $(M,\star,F,\mathfrak{m})$ with
\[
\dim(M)\leq N,\quad \mathbf{Ric}\geq K,\quad  \Lambda_F({B^+_\star(r)})\leq \Theta^2(r),\ \forall\,r>0,
\]
and $\mathfrak{m}$ is either the Busemann-Hausdorff measure or the Holmes-Thompson measure;

\item[(2)] the collection of pointed forward complete
 Finsler metric-measure manifolds $(M,\star,F,\mathfrak{m})$ with
 \[
 \dim(M)\leq N,\quad \mathbf{Ric}_N\geq K,\quad  \mathfrak{m}\left[{B^+_\star(1)}\right] \leq V,\quad \lambda_{F}({B^+_\star(r)})\leq \Theta(r),\   \forall\,r>0.
 \]
 \end{itemize}
\end{theorem}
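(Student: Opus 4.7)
The plan is to reduce the statement to Theorem \ref{noncomapctdoubling}, with an auxiliary appeal to Theorem \ref{noncompactprecompact} to absorb the degenerate case in which the measures collapse. First I verify that every $(M,\star,d_F)$ in the given class belongs to $\mathcal{M}^{\widetilde{\Theta}}_*$ for a common nondecreasing function $\widetilde{\Theta}$ depending only on $\Theta$. In case (2) the hypothesis $\lambda_F(B^+_\star(r))\leq \Theta(r)$ combined with Theorem \ref{reversibifinslerdd}/(i) yields the explicit choice $\widetilde{\Theta}(r):=\Theta(2r+\Theta(r)r)$; in case (1) the pointwise inequality $\lambda_F\leq \Lambda_F^{1/2}$ reduces matters to case (2). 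Forward bounded compactness of $(M,d_F)$ follows from forward completeness and Theorem \ref{HopfRinowth}, so the underlying metric spaces genuinely lie in $\mathcal{M}^{\widetilde{\Theta}}_*$.

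Next, applying Theorem \ref{Ohtacomparsiontheorem} in case (2) and Theorem \ref{normalRiccicompari} in case (1), I produce a uniform upper bound $\mathfrak{m}[\overline{B^+_\star(1)}]\leq M$ (supplied directly by $\mathfrak{m}[B^+_\star(1)]\leq V$ in case (2) and by the volume comparison under $\mathbf{Ric}\geq K$, $\Lambda_F\leq \Theta^2$ in case (1)) together with a positive function $L(r)$ with $\Do(\overline{B^+_\star(r)})\leq L(r)$ for every member of the class. The technical subtlety is that the hypotheses on $\lambda_F$ or $\Lambda_F$ are only available on $\star$-centered balls, whereas the volume comparison needs control at each centre $x\in\overline{B^+_\star(r)}$. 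I bridge this by the inclusion $\overline{B^+_x(2\rho)}\subset \overline{B^+_\star(r+\widetilde{\Theta}(r)r+2\rho)}$, which transports the $\star$-centered bound to any $x$ appearing in the doubling estimate, producing $L(r)$ as an explicit function of $r$, $K$, $N$ and $\Theta(\cdot)$.

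Given a sequence $(M_i,\star_i,F_i,\mathfrak{m}_i)_i$ in the class, set $v_i:=\mathfrak{m}_i[\overline{B^+_{\star_i}(1)}]$. If $\liminf_i v_i>0$, a subsequence has $v_i\in[m,M]$ with $m>0$ and Theorem \ref{noncomapctdoubling} directly supplies a convergent subsequence in the pointed measured forward $\widetilde{\Theta}$-Gromov-Hausdorff topology. Otherwise $v_i\to 0$; the doubling estimate from Step 2 then forces $\mathfrak{m}_i[\overline{B^+_{\star_i}(r)}]\to 0$ for every $r>0$. The covering bound furnished by Lemma \ref{doublingwithprecompact} (applied locally on each $\overline{B^+_{\star_i}(r)}$) together with Theorem \ref{noncompactprecompact} then extracts a pointed forward $\widetilde{\Theta}$-Gromov-Hausdorff convergent subsequence of the underlying metric spaces with limit $(X_\infty,\star_\infty,d_\infty)$, realised by $\epsilon_i$-isometries $f_i:\overline{B^+_{\star_i}(R_i)}\to \overline{B^+_{\star_\infty}(R_i)}$; for any compactly supported continuous $g$ the estimate $|\int g\,\ddd((f_i)_\sharp\mathfrak{m}_i)|\leq \|g\|_\infty\,\mathfrak{m}_i[\overline{B^+_{\star_i}(R_i)}]\to 0$ shows that the pushforwards converge weak-$*$ to the zero measure, yielding the required limit $(X_\infty,\star_\infty,d_\infty,0)$. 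The main obstacle throughout is the propagation of the $\star$-centred hypotheses to arbitrary centres carried out in Step 2; once that is secured the rest is a routine Cantor-diagonal adaptation of the compact-case arguments of Theorems \ref{RiccompactprecompactFinser}--\ref{RicNmeasurecompact}.
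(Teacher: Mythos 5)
Your proposal is correct and follows essentially the same route as the paper, whose proof is only sketched: one verifies the mass and doubling hypotheses of Theorem \ref{noncomapctdoubling} via the Finsler comparison theorems (Theorems \ref{Ohtacomparsiontheorem} and \ref{normalRiccicompari}) and treats the possible collapse of the measures exactly as in the compact case of Theorem \ref{RiccompactprecompactFinser}, where the pushforwards converge to the null measure. Your additional care — the enlargement to $\widetilde{\Theta}$ via Theorem \ref{reversibifinslerdd}/(i) and the transport of the $\star$-centred bounds to arbitrary centres — merely makes explicit what the paper leaves implicit, so no substantive deviation or gap is present.
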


\subsection{Gromov-Hausdorff-Prokhorov topology}\label{GHPTOPOLOGY}
In this subsection, we investigate the convergence of forward metric-measure spaces by another equivalent method.
\begin{definition}[Prokhorov distance]
Given a forward metric space $(X,d)$,  let   $\mathfrak{M}(X)$ denote   the collection of (totally) finite
Borel measures on $X$.
For any  $\mu,\nu\in \mathfrak{M}(X)$, define the {\it Prokhorov distance} between them as
\[
d_P(\mu,\nu):=\inf\left\{ \epsilon>0|\, \mu(A)\leq \nu(A^\epsilon)+\epsilon,\  \nu(A)\leq \mu(A^\epsilon)+\epsilon,\ \forall\,A\subset X \text{ is closed} \right\}.
\]
\end{definition}

A standard argument (cf.\,Daley and Vere-Jones \cite[pp.\,398]{DV}) combined with
Theorem \ref{topologychara}/(ii) yields the following result.
\begin{proposition}\label{measureweakconvergence}
Let $(X,d)$ be a forward complete separable forward metric space. Thus,
\begin{itemize}
	\item[(i)]  $(\mathfrak{M}(X),d_P)$ is a complete separable reversible metric space;

\item[(ii)]  a sequence $(\mu_i)_i\subset \mathfrak{M}(X)$ satisfies $d_P(\mu_i,\mu)\rightarrow 0$ if and only if  $\mu_i$ converges weakly to $\mu$.
\end{itemize}
\end{proposition}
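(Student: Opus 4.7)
The strategy is to port the classical proof of Prokhorov's theorem (as in Daley and Vere-Jones) to the asymmetric setting by exploiting Theorem~\ref{topologychara}/(ii), which identifies the forward topology $\mathcal{T}_+$ with the topology $\hat{\mathcal{T}}$ induced by the symmetrized metric $\hat{d}$. Consequently the notions of closed set, bounded continuous function, and weak convergence of Borel measures on $(X,d)$ and on $(X,\hat{d})$ all coincide; moreover Proposition~\ref{forwardandbackwardrelation}/(i), together with the separability of $(X,d)$, ensures that $(X,\hat{d})$ is itself a complete separable reversible metric space, to which the classical Prokhorov theorem applies directly.

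The crucial technical fact needed to bridge the two metrics is the one-sided Lipschitz estimate
\[
|d(A,x) - d(A,y)| \leq \max\{d(x,y),\, d(y,x)\} \leq 2\hat{d}(x,y), \qquad \forall\, A \subset X,
\]
which shows that $x \mapsto d(A,x)$ is $2$-Lipschitz with respect to $\hat{d}$, and hence continuous in $\mathcal{T}_+$. This yields standard cutoff functions $f_{A,\epsilon}(x) := \max\{0,\,1 - \epsilon^{-1} d(A,x)\}$ that are bounded continuous on $X$ and satisfy $\mathbf{1}_A \leq f_{A,\epsilon} \leq \mathbf{1}_{A^\epsilon}$. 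With these in hand, part (ii) follows by the usual Portmanteau-style bookkeeping: if $\mu_i \rightharpoonup \mu$ weakly, then $\limsup_i \mu_i(A) \leq \int f_{A,\epsilon}\, d\mu \leq \mu(A^\epsilon)$ for every closed $A$, and a finite forward-cover argument for a compact carrier of $\mu$ converts this into $d_P(\mu_i,\mu)<\epsilon$ for large $i$; conversely, $d_P(\mu_i,\mu) \to 0$ directly yields $\limsup_i \mu_i(A) \leq \mu(A)$ after letting $\epsilon \downarrow 0$ (using $A = \bigcap_{\epsilon>0} A^\epsilon$ for $\mathcal{T}_+$-closed $A$, itself a consequence of $\mathcal{T}_+ = \hat{\mathcal{T}}$ combined with the Lipschitz estimate above), plus $\mu_i(X) \to \mu(X)$, which together are equivalent to weak convergence. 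The same limiting argument shows $d_P(\mu,\nu) = 0 \Rightarrow \mu = \nu$, completing the verification that $d_P$ is a reversible metric.

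For part (i), separability is established as in the reversible case: finite rational combinations of Dirac masses at points of a countable $\hat{d}$-dense subset of $X$ form a countable $d_P$-dense set in $\mathfrak{M}(X)$, via approximation on sufficiently fine forward-ball partitions. Completeness then reduces, through the equivalence of $d_P$-convergence with weak convergence proved in (ii), to showing that any $d_P$-Cauchy sequence $(\mu_i)$ is uniformly tight on the Polish space $(X,\hat{d})$; the standard construction -- for each $k$ find a finite union of small forward balls capturing mass $\geq \mu_i(X) - 2^{-k}$ in every $\mu_i$, using the Cauchy property plus the individual tightness of each $\mu_i$ on $(X,\hat{d})$ -- then lets us invoke Prokhorov's classical compactness theorem on $(X,\hat{d})$ to extract a weakly convergent subsequence, whose limit must be the $d_P$-limit of the whole sequence. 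The main obstacle I anticipate is bookkeeping rather than conceptual: every quantitative inequality in the definition of $d_P$ is phrased in terms of forward $\epsilon$-neighborhoods $A^\epsilon$, whereas the structural properties (continuity, completeness, tightness) are naturally available only through $\hat{d}$; the Lipschitz estimate above is the single mechanism that systematically transfers between the two, and once it is applied the classical proof carries over with no genuinely new ingredient.
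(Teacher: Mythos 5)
Your overall strategy (symmetrize via Theorem \ref{topologychara}/(ii) and port the Daley--Vere-Jones argument, with the $2$-Lipschitz estimate for $x\mapsto d(A,x)$ as the bridge) is exactly the route the paper intends, and most of the bookkeeping is fine. There is, however, one step that fails as written: the claim that $A=\bigcap_{\epsilon>0}A^{\epsilon}$ for every $\mathcal{T}_+$-closed $A$, which you use both to get $\limsup_i\mu_i(A)\le\mu(A)$ and to deduce $d_P(\mu,\nu)=0\Rightarrow\mu=\nu$. Your Lipschitz estimate only shows that $\bigcap_{\epsilon>0}A^{\epsilon}=\{x:\ d(A,x)=0\}$ is a closed set containing $A$; it does not show the reverse inclusion. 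The point is an orientation mismatch: $x\in\bigcap_{\epsilon}A^{\epsilon}$ means there are $a_i\in A$ with $d(a_i,x)\to0$, whereas $\mathcal{T}_+$-closedness of $A$ only absorbs points $x$ with $d(x,a_i)\to0$. These two conditions are equivalent only when the relevant points stay in a region of bounded reversibility; when $\Theta$ is unbounded the $a_i$ may escape every forward ball around $\star$, and then $d(a_i,x)\to0$ is compatible with $d(x,a_i)\to\infty$. A minimal illustration: a countable space $\{p\}\cup\{q_i\}$ with $d(q_i,p)\to0$ but $d(p,q_i)\to\infty$ (and $d(q_i,q_j):=d(q_i,p)+d(p,q_j)$) is a forward complete, separable pointed forward $\Theta$-metric space in which $A=\{q_i\}_i$ is closed, yet $p\in\bigcap_{\epsilon}A^{\epsilon}\setminus A$. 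So the "letting $\epsilon\downarrow0$ over closed sets" part of your Portmanteau bookkeeping is not justified in this generality.

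The gap is repairable without changing the overall approach, by running Portmanteau through open sets instead of closed ones. For open $G$ put $G_\delta:=\{x\in G:\ \inf_{y\in X\setminus G}d(x,y)\ge\delta\}$; the function $x\mapsto\inf_{y\notin G}d(x,y)$ satisfies the same two-sided Lipschitz bound (with the arguments in the other order), so $G_\delta$ is closed, $(G_\delta)^{\delta}=\bigcup_{a\in G_\delta}B^+_a(\delta)\subset G$, and $G_\delta\uparrow G$ because the forward balls $B^+_x(r)$ form a neighborhood base at $x$ in $\mathcal{T}_+$. Applying the defining inequality of $d_P$ with $A=G_\delta$ gives $\mu(G_\delta)\le\mu_i(G)+\epsilon_i$ for large $i$, hence $\liminf_i\mu_i(G)\ge\mu(G)$ for all open $G$; together with $\mu_i(X)\to\mu(X)$ (take $A=X$) this is the Portmanteau criterion, so $d_P(\mu_i,\mu)\to0$ does imply weak convergence. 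The same device with a single pair shows $d_P(\mu,\nu)=0\Rightarrow\mu(G)=\nu(G)$ for all open $G$, hence $\mu=\nu$. Your converse direction (weak convergence $\Rightarrow d_P\to0$) via the cutoffs $f_{A,\epsilon}$, the tightness construction, and the separability/completeness arguments in (i) are sound once this substitution is made.
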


\begin{proposition}\label{dghpproperties}
Let $\mathscr{M}^\theta$ be the collection of  compact $\theta$-metric-measure spaces. Given $\mathcal {X}_i:=(X_i,d_i,\mu_i)\in \mathscr{M}^\theta$, $i=1,2$, the {\it $\theta$-Gromov-Hausdorff-Prokhorov distance} between them is defined by
\[
d_{GHP}^\theta(\mathcal {X}_1,\mathcal {X}_2):=\inf\left\{ d_H(X_1,X_2)+d_P\left((\Phi_1)_{\sharp}\mu_1,(\Phi_2)_{\sharp}\mu_2\right)| \,\text{$\theta$-admissible
metrics on }X_1\sqcup X_2 \right\},
\]
where $\Phi_i: X_i\rightarrow X_1\sqcup X_2$, $i=1,2$ are the natural isometric embeddings. Then we have:
\begin{itemize}
\item[(i)] $d_{GHP}^\theta$ is a reversible pseudo-metric on $\mathscr{M}^\theta$, i.e., $d_{GHP}^\theta$ is non-negative, reversible and satisfies the triangle inequality;

\item[(ii)] if $\mathcal {X}_i:=(X_i,d_i,\mu_i)\in\mathscr{M}^\theta $, $i=1,2$ satisfies $d_{GHP}^\theta(\mathcal {X}_1,\mathcal {X}_2)=0$ if and only if there exists an isometry $f:X_1\rightarrow X_2$ with $f_\sharp \mu_1=\mu_2$.

\end{itemize}
\end{proposition}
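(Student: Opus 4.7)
The plan is to verify the three pseudo-metric axioms for part (i), then treat both directions of (ii). Non-negativity is immediate from $d_H\geq 0$ and $d_P\geq 0$. Reversibility of $d_{GHP}^\theta$ follows because the set of $\theta$-admissible metrics on $X_1\sqcup X_2$ is unchanged under swapping the two factors, while both the defining condition of $d_H(A,B)$ (requiring $A\subset B^\epsilon$ and $B\subset A^\epsilon$ simultaneously) and that of $d_P(\mu,\nu)$ (two symmetric inequalities) are manifestly invariant under $A\leftrightarrow B$ and $\mu\leftrightarrow\nu$.

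The triangle inequality is the substantive part of (i), and I would proceed by the standard gluing argument. Given $\mathcal{X}_i=(X_i,d_i,\mu_i)\in\mathscr{M}^\theta$ for $i=1,2,3$ and $\epsilon>0$, I pick $\theta$-admissible metrics $d_{12}$ on $X_1\sqcup X_2$ and $d_{23}$ on $X_2\sqcup X_3$ realizing the respective $d_{GHP}^\theta$ values up to $\epsilon$, and extend them to a single metric on $X_1\sqcup X_2\sqcup X_3$ by
\[
d(x_1,x_3):=\inf_{y\in X_2}\bigl[d_{12}(x_1,y)+d_{23}(y,x_3)\bigr],\quad d(x_3,x_1):=\inf_{y\in X_2}\bigl[d_{23}(x_3,y)+d_{12}(y,x_1)\bigr]
\]
for $x_1\in X_1$, $x_3\in X_3$, while setting $d|_{X_i\sqcup X_j}=d_{ij}$. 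A routine check confirms non-degeneracy and the triangle inequality; applying $\lambda_{d_{12}},\lambda_{d_{23}}\leq\theta$ inside the infimum yields $\lambda_d(X_1\sqcup X_2\sqcup X_3)\leq\theta$. Embedding all three spaces into this common $\theta$-metric space and applying the ordinary triangle inequalities for Hausdorff and Prokhorov distance gives
\[
d_H(X_1,X_3)\leq d_H(X_1,X_2)+d_H(X_2,X_3),\qquad d_P(\mu_1,\mu_3)\leq d_P(\mu_1,\mu_2)+d_P(\mu_2,\mu_3),
\]
and summing, followed by $\epsilon\to 0$, completes the proof of (i).

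For part (ii), the easy direction $(\Leftarrow)$ uses the isometry $f:X_1\to X_2$ with $f_\sharp\mu_1=\mu_2$ to define, for each $\epsilon>0$, the $\theta$-admissible metric $d(x_1,x_2):=d_2(f(x_1),x_2)+\epsilon/2$, $d(x_2,x_1):=d_2(x_2,f(x_1))+\epsilon/2$ on $X_1\sqcup X_2$; one verifies $d_H(X_1,X_2)\leq\epsilon/2$ and, using $f(A\cap X_1)\subset A^{\epsilon/2}\cap X_2$ together with $f_\sharp\mu_1=\mu_2$, also $d_P((\Phi_1)_\sharp\mu_1,(\Phi_2)_\sharp\mu_2)\leq\epsilon/2$, so $d_{GHP}^\theta(\mathcal{X}_1,\mathcal{X}_2)\leq\epsilon$. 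For the nontrivial direction $(\Rightarrow)$, the vanishing $d_{GHP}^\theta=0$ forces $d_{GH}^\theta=0$, so Proposition \ref{Gromov-Haudsdiscpro}(iii) furnishes an isometry $f:X_1\to X_2$. To upgrade this to $f_\sharp\mu_1=\mu_2$, I fix a sequence of $\theta$-admissible metrics $d_n$ on $X_1\sqcup X_2$ with $d_H^{(n)}(X_1,X_2)+d_P^{(n)}((\Phi_1)_\sharp\mu_1,(\Phi_2)_\sharp\mu_2)<\epsilon_n\to 0$, and from each extract a pointed $O(\epsilon_n)$-isometry $f_n:X_1\to X_2$ via the recipe in the proof of Lemma \ref{inprotantisometry}. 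Since $d_n(x,f_n(x))<\epsilon_n$ for all $x\in X_1$, the glued Prokhorov bound transfers to $d_{P,d_2}((f_n)_\sharp\mu_1,\mu_2)<C\epsilon_n$ on $X_2$ alone, so $(f_n)_\sharp\mu_1\to\mu_2$ weakly. A diagonal extraction of a subsequence of $f_n$ that converges on a countable dense subset of $X_1$, extended by the uniform distortion control $\dis f_n\to 0$, produces a limiting isometry which (by the uniqueness part of Proposition \ref{Gromov-Haudsdiscpro}(iii), up to an initial identification) agrees with $f$; the bounded convergence theorem applied to $\int\varphi\circ f_n\,d\mu_1$ for continuous $\varphi$ then gives $(f_n)_\sharp\mu_1\to f_\sharp\mu_1$ weakly, forcing $f_\sharp\mu_1=\mu_2$.

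The main obstacle will be the last paragraph's extraction step: producing a pointwise-convergent subsequence of the (possibly discontinuous) approximate isometries $f_n$ whose limit coincides with the given isometry $f$, in a setting where the asymmetry of $d_n$ prevents a direct application of classical Arzel\`a-Ascoli. I plan to circumvent this by working with the symmetrized metrics $\hat d_n$, which by Theorem \ref{topologychara}(ii) induce the same topology and are equivalent to $\hat d_1,\hat d_2$ up to factors bounded in terms of $\theta$, thereby reducing the compactness and convergence steps to the classical reversible setting and then transferring the conclusion $f_\sharp\mu_1=\mu_2$ back to the original (asymmetric) framework.
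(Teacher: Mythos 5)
Your proposal follows essentially the same route as the paper: the paper treats (i) and the easy direction of (ii) as routine, and for $d_{GHP}^\theta=0\Rightarrow$ isometry it takes a sequence of $\theta$-admissible metrics $d^\alpha$ with vanishing cost, extracts $(1+\theta)\epsilon_\alpha$-isometries $f_\alpha$ via the proof of Lemma \ref{inprotantisometry}/(i) satisfying $d^\alpha(f_\alpha(x),x)\leq\epsilon_\alpha$, transfers the Prokhorov bound to get $(f_\alpha)_\sharp\mu_1\rightharpoonup\mu_2$, and then invokes the symmetrization-based Ascoli result (Proposition \ref{Ascloghtipo}) to pass to a uniformly convergent subsequence whose limit is an isometry pushing $\mu_1$ to $\mu_2$ — exactly your plan, including the reduction to the reversible setting via $\hat d$.

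One small correction: Proposition \ref{Gromov-Haudsdiscpro}/(iii) asserts only the existence of an isometry, not its uniqueness, so you cannot argue that the limit of the $f_n$ "agrees with" a pre-chosen isometry $f$. This step is also unnecessary: the limit map is itself an isometry (since $\dis f_n\to 0$ and the image becomes dense), and taking it as your $f$ yields $f_\sharp\mu_1=\mu_2$ directly from the weak convergence, which is precisely how the paper concludes.
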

\begin{proof}
Since  (i) is easy to verify, we only focus to  (ii).  If $d_{GHP}^\theta(\mathcal {X}_1,\mathcal {X}_2)=0$,
 there exist  a sequence $(\epsilon_\alpha)_\alpha$ and a sequence of $\theta$-admissible metrics $(d^\alpha)_\alpha$ on $X_1\sqcup X_2$ such that
 \[
 d^\alpha_H(X_1,X_2)+d^\alpha_P\left((\Phi_1)_{\sharp}\mu_1,(\Phi_2)_{\sharp}\mu_2\right)<\epsilon_\alpha\rightarrow 0, \text{ as }\alpha\rightarrow \infty.\tag{3.9}\label{tiangleGPH}
 \]
 For each $\alpha$, the proof of Lemma \ref{inprotantisometry}/(i) furnishes  a  $(1+\theta)\epsilon_\alpha$-isometry $f_\alpha:X_1\rightarrow X_2$ with $d^\alpha(f_\alpha(x),x)\leq \epsilon_\alpha$ for any $x\in X_1$, which implies
  \[
 d^\alpha_P\left((\Phi_2\circ f_\alpha)_\sharp\mu_1,(\Phi_1)_\sharp\mu_1\right)\leq C(\theta)\,\epsilon_\alpha,
 \]
 where $C(\theta)$ is a finite constant only dependent on $\theta$. This inequality combined with
 (\ref{tiangleGPH}) yields
$\lim_{\alpha\rightarrow\infty}(f_\alpha)_\sharp \mu_1=\mu_2$ (see Proposition \ref{measureweakconvergence}/(ii)). On the other hand, by Proposition \ref{Ascloghtipo}, one can assume that $f_\alpha$ converges uniformly to an isometry $f:X_1\rightarrow X_2$ and hence, $f_\sharp\mu_1=\mu_2$.
\end{proof}

\begin{theorem}\label{GHPequvialecnn}
Let $\mathcal {X}_i:=(X_i,d_i,\mu_i)$, $i\in \mathbb{N}$ and $\mathcal {X}:=(X,d,\mu)$ be compact $\theta$-metric-measure spaces. Then $(\mathcal {X}_i)_i$ converges to
$\mathcal {X}$ in the measured  $\theta$-Gromov-Hausdorff topology if and only if
\[
\lim_{i\rightarrow \infty}d_{GHP}^\theta(\mathcal {X}_i,\mathcal {X})=0.
\]
\end{theorem}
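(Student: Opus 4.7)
The plan is to prove both implications by passing between the two descriptions via $\epsilon$-isometries, using Lemma \ref{inprotantisometry} as the bridge and Proposition \ref{measureweakconvergence}/(ii) to translate between Prokhorov-distance convergence and weak convergence. The key observation is that, whenever $d^i$ is a $\theta$-admissible metric on $X_i \sqcup X$ and $f_i : X_i \to X$ is an approximate inverse (i.e. $d^i(\Phi_i(x),\Phi(f_i(x)))$ is small uniformly in $x$), pushing a measure $\mu_i$ from $X_i$ into $X$ via $f_i$ moves it by a controlled Prokhorov distance: for any closed $A \subset X_i \sqcup X$, the inclusions $\Phi_i(A \cap X_i) \subset (\Phi \circ f_i)(X_i)^{\epsilon_i}$ and the symmetric version give
\[
d_P^{d^i}\bigl((\Phi_i)_\sharp \mu_i, (\Phi \circ f_i)_\sharp \mu_i\bigr) \leq \epsilon_i.
\]

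For the ``if'' direction, fix a sequence of $\theta$-admissible metrics $d^i$ on $X_i \sqcup X$ realizing $d^\theta_{GHP}(\mathcal{X}_i,\mathcal{X}) < \epsilon_i \to 0$. The construction inside the proof of Lemma \ref{inprotantisometry}/(i) produces, for each $x \in X_i$, some point $f_i(x) \in X$ with $d^i(\Phi_i(x), \Phi(f_i(x))) < \epsilon_i$, and yields $\dis f_i \leq (1+\theta)\epsilon_i$ and $X \subset \overline{[f_i(X_i)]^{(1+\theta)\epsilon_i}}$. To make $f_i$ Borel (a subtle but not serious point), I would fix a countable dense set $\{y_k\}\subset X$ and, for every $x$, let $f_i(x)$ be the first $y_k$ with $d^i(\Phi_i(x),\Phi(y_k)) < \epsilon_i$; this is a Borel selection. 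The displayed Prokhorov estimate above, combined with the triangle inequality and $d_P^{d^i}((\Phi_i)_\sharp\mu_i,\Phi_\sharp\mu) < \epsilon_i$, gives $d_P^{d^i}((\Phi\circ f_i)_\sharp\mu_i,\Phi_\sharp\mu) \leq 2\epsilon_i$. Both measures live on $\Phi(X)$, and $d^i$ restricted to $X$ is $d$, so $d_P^d((f_i)_\sharp\mu_i,\mu) \to 0$; Proposition \ref{measureweakconvergence}/(ii) then gives weak convergence $(f_i)_\sharp\mu_i \to \mu$, i.e., measured $\theta$-Gromov-Hausdorff convergence.

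For the ``only if'' direction, start from Borel $\epsilon_i$-isometries $f_i : X_i \to X$ with $(f_i)_\sharp\mu_i \rightharpoonup \mu$. Applying the construction in the proof of Lemma \ref{inprotantisometry}/(ii), define on $X_i \sqcup X$ the $\theta$-admissible metric
\[
d^i(x,y) := \inf_{x'\in X_i}\bigl[d_i(x,x') + d(f_i(x'),y)\bigr] + \epsilon_i, \quad d^i(y,x) := \inf_{x'\in X_i}\bigl[d(y,f_i(x')) + d_i(x',x)\bigr] + \epsilon_i,
\]
for $x \in X_i$, $y \in X$. Taking $x' = x$ shows $d^i(\Phi_i(x),\Phi(f_i(x))) \leq \epsilon_i$ and similarly in the reverse direction; hence $d_H^{d^i}(X_i, X) \leq 2\epsilon_i$ and, by the key observation, $d_P^{d^i}((\Phi_i)_\sharp\mu_i, (\Phi\circ f_i)_\sharp\mu_i) \leq \epsilon_i$. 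On the other hand, since $\mathcal{T}_+ = \mathcal{T}_-$ on the compact space $X$ (Theorem \ref{topologychara}), Proposition \ref{measureweakconvergence}/(ii) converts weak convergence $(f_i)_\sharp\mu_i \rightharpoonup \mu$ on $X$ into $d_P^d((f_i)_\sharp\mu_i,\mu) \to 0$, which embeds as $d_P^{d^i}((\Phi\circ f_i)_\sharp\mu_i,\Phi_\sharp\mu) \to 0$. Combining via the triangle inequality for $d_P^{d^i}$ (Proposition \ref{dghpproperties}/(i)) yields $d_P^{d^i}((\Phi_i)_\sharp\mu_i,\Phi_\sharp\mu) \to 0$, and altogether $d^\theta_{GHP}(\mathcal{X}_i,\mathcal{X}) \to 0$.

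The main obstacles I expect are bookkeeping rather than conceptual: (a) correctly handling the asymmetry of $d^i$ when verifying the Prokhorov comparison between $(\Phi_i)_\sharp \mu_i$ and $(\Phi\circ f_i)_\sharp \mu_i$ (the $\epsilon$-neighborhoods in Definition of $d_P$ are one-sided, but the triangle inequality $d^i(\Phi_i(x),\Phi(f_i(x))) \leq \epsilon_i$ needed for the set-theoretic inclusion $A \cap \Phi_i(X_i) \subset (\Phi \circ f_i \circ \Phi_i^{-1})^{-1}(A^{\epsilon_i})$ works in both directions thanks to how $d^i$ was built); and (b) ensuring the Borel measurability of the $(1+\theta)\epsilon_i$-isometry produced from a $\theta$-admissible metric, which is why I would use the explicit first-hit selection against a countable dense net rather than an arbitrary choice.
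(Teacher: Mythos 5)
Your proposal is correct and follows essentially the same route as the paper: both directions convert between $\theta$-admissible metrics and $\epsilon$-isometries via (the proof of) Lemma \ref{inprotantisometry}, compare $(\Phi_i)_\sharp\mu_i$ with $(\Phi\circ f_i)_\sharp\mu_i$ in Prokhorov distance, and translate between Prokhorov and weak convergence through Proposition \ref{measureweakconvergence}/(ii). The only quibble is quantitative: in the ``if'' direction the admissible metric is arbitrary, so only the forward bound $d^i(\Phi_i(x),\Phi(f_i(x)))<\epsilon_i$ is available and the displayed Prokhorov estimate should read $C(\theta)\epsilon_i$ rather than $\epsilon_i$ (exactly as in the paper), which does not affect the limit.
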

\begin{proof}
In view of Theorem \ref{imporstcompacthteorem}, Lemma \ref{inprotantisometry}/(i) and Proposition \ref{ProtheoremGromHauss}, one can prove
 the ``$\Leftarrow$"  part by a modification of the proof of Proposition \ref{dghpproperties}. Hence, we only show the
``$\Rightarrow$" part.
Let $f_i:X_i\rightarrow X$ be a sequence of $\epsilon_i$-isometries satisfying $\lim_{i\rightarrow\infty}\epsilon_i=0$ and $\lim_{i\rightarrow\infty}(f_i)_\sharp\mu_i=\mu$. By repeating the proof of Lemma \ref{inprotantisometry}/(ii), one can define a $\theta$-admissible metric $d^i$ on $X_i\sqcup X$ such that
\[
d^i_H(X_i,X)\leq 2\epsilon_i,\ d^i(f_i(x),x)=\epsilon_i, \ \forall\,x\in X_i.\tag{3.10}\label{approximiateballtocontrol}
\]
Let $\Phi_i:X_i\hookrightarrow X_i\sqcup X$ and $\Phi:X\hookrightarrow X_i\sqcup X$ denote the natural isometric embeddings. Thus, the second equality in (\ref{approximiateballtocontrol}) yields
\begin{align*}
d^i_{P}\left((\Phi_i)_\sharp\mu_i,(\Phi\circ f_i)_\sharp\mu_i\right)\leq C(\theta)\,\epsilon_i,\tag{3.11}\label{orginalmeasureandpushmeasure}
\end{align*}
where $C(\theta)$ is a finite constant only dependent on $\theta$.
Since $(f_i)_{\sharp}\mu_i\rightarrow \mu$ weakly, for each $\delta>0$, there exists $I>0$ such that
for $i>I$, $d^i_{P}(\Phi_\sharp({f_i})_\sharp \mu_i,\Phi_\sharp\mu)<\delta$, which
together with (\ref{orginalmeasureandpushmeasure}) and (\ref{approximiateballtocontrol})  yields  $\lim_{i\rightarrow \infty}d^\theta_{GHP}(\mathcal {X}_i,\mathcal {X})= 0$.
\end{proof}

Define a equivalent relation $\sim$ on $\mathscr{M}^\theta $ by
$(X_1,d_1,\mu_1)\sim (X_2,d_2,\mu_2)$ if and only if there exists an isometry $f:X_1\rightarrow X_2$ with $f_\sharp\mu_1=\mu_2$.
Let $\mathscr{M}^\theta_\sim$ be the   quotient set of $\mathscr{M}^\theta$ by $\sim$.
\begin{theorem}\label{polishtheorem}
 $(\mathscr{M}^\theta_\sim,d^\theta_{GHP})$ is a Polish space.
\end{theorem}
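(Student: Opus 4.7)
The plan is to upgrade Theorem \ref{imporstcompacthteorem} by incorporating the measure component, using Proposition \ref{dghpproperties} to know we already have a reversible pseudo-metric, and using the Prokhorov-distance machinery of Proposition \ref{measureweakconvergence}. I would verify three things in order: (a) $d^\theta_{GHP}$ descends to a genuine reversible metric on $\mathscr{M}^\theta_\sim$, (b) this metric is complete, and (c) it is separable. Step (a) is immediate from Proposition \ref{dghpproperties}: non-negativity, reversibility and triangle inequality hold on $\mathscr{M}^\theta$ itself, and the quotient identifies exactly the null pairs by part (ii) of that proposition.

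For completeness, let $(\mathcal{X}_i)_i = (X_i,d_i,\mu_i)_i$ be a Cauchy sequence in $(\mathscr{M}^\theta_\sim, d^\theta_{GHP})$. Since $d^\theta_{GH}(\mathcal{X}_i,\mathcal{X}_j) \leq d^\theta_{GHP}(\mathcal{X}_i,\mathcal{X}_j)$, Theorem \ref{imporstcompacthteorem} furnishes a limit $(X,d) \in \mathcal{M}^\theta_\sim$ with $d^\theta_{GH}(\mathcal{X}_i,(X,d)) \to 0$. By Lemma \ref{inprotantisometry}/(i), pick $\epsilon_i$-isometries $f_i : X_i \to X$ with $\epsilon_i \to 0$. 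My plan is to show that $\nu_i := (f_i)_\sharp \mu_i$ is Cauchy in $(\mathfrak{M}(X),d_P)$. Given $i,j$, pick a $\theta$-admissible metric on $X_i \sqcup X_j$ that almost realizes $d^\theta_{GHP}(\mathcal{X}_i,\mathcal{X}_j)$; compose with the embeddings provided by the $\epsilon_i$- and $\epsilon_j$-isometries into $X$ (these generate auxiliary $\theta$-admissible metrics on $X_i \sqcup X$ and $X_j \sqcup X$ as in Lemma \ref{inprotantisometry}/(ii)), and use the triangle inequality for $d_P$ in the resulting ambient space $X_i \sqcup X_j \sqcup X$ to bound $d_P(\nu_i,\nu_j) \leq d^\theta_{GHP}(\mathcal{X}_i,\mathcal{X}_j) + C(\theta)(\epsilon_i+\epsilon_j)$. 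Since $(X,d)$ is a compact $\theta$-metric space, $(\mathfrak{M}(X),d_P)$ is a complete reversible metric space by Proposition \ref{measureweakconvergence}, so $\nu_i \to \mu$ for some $\mu \in \mathfrak{M}(X)$. Finally, combining the admissible metric between $X_i$ and $X$ (of diameter $\lesssim \epsilon_i$) with the bound $d_P(\nu_i,\mu) \to 0$ yields $d^\theta_{GHP}(\mathcal{X}_i,(X,d,\mu)) \to 0$.

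For separability, I would enlarge the countable dense set $\mathcal{N}^\theta$ of Theorem \ref{imporstcompacthteorem} to incorporate rational atomic measures. Concretely, let $\mathcal{N}^\theta_\mu$ consist of triples $(X,d,\mu)$ where $(X,d) \in \mathcal{N}^\theta$ and $\mu = \sum_{x \in X} q_x \delta_x$ with $q_x \in \mathbb{Q}_{\geq 0}$; this is countable. Given $\mathcal{Y} = (Y,d_Y,\mu_Y) \in \mathscr{M}^\theta$ and $k \in \mathbb{N}$, take the finite forward $(1/k)$-net $S^k = \{x^1,\dots,x^{N(k)}\}$ and the rational approximant $(S^k_\alpha, d_{S^k_\alpha})$ from the proof of Theorem \ref{imporstcompacthteorem}. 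Partition $Y$ into Borel pieces $A_i^k$ with $A_i^k \subset \overline{B^+_{x^i}(1/k)}$ and $x^i \in A_i^k$, set $p_i^k := \mu_Y(A_i^k)$, and pick rationals $q_i^k$ with $\sum_i |p_i^k - q_i^k| \leq 1/k$. The atomic measure $\nu_k := \sum_i q_i^k \delta_i$ on $S^k_k$ then satisfies, using the standard $\theta$-admissible metric linking $S^k_k$ into $Y$ (diameter $\leq 2/k$ plus the coordinate-matching error $1/k$), the Prokhorov bound $d_P((\Phi_k)_\sharp \nu_k, (\Phi_Y)_\sharp \mu_Y) \leq C/k$. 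This gives $d^\theta_{GHP}(\mathcal{Y}_k,\mathcal{Y}) \to 0$ as in the proof of Theorem \ref{imporstcompacthteorem}.

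The main obstacle is the completeness step: although the projection to $\mathcal{M}^\theta_\sim$ converges abstractly, to compare the push-forwards $\nu_i, \nu_j$ on a \emph{common} compact space $X$ one must carefully splice three $\theta$-admissible metrics on $X_i \sqcup X_j$, $X_i \sqcup X$ and $X_j \sqcup X$ into a single $\theta$-admissible metric on the triple union, so that the triangle inequality for $d_P$ can be applied; irreversibility makes this splicing more delicate than in the reversible setting, but it follows the same pattern used in the proofs of Lemma \ref{inprotantisometry} and Proposition \ref{dghpproperties}.
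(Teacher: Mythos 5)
Your metric-space and separability steps match the paper (Proposition \ref{dghpproperties} for the metric on the quotient, and the same rational-net-plus-rational-atomic-measure construction diagonalized as in Theorem \ref{imporstcompacthteorem}), but the completeness step has a genuine gap. You fix $\epsilon_i$-isometries $f_i:X_i\to X$ supplied by Lemma \ref{inprotantisometry}/(i) and claim $d_P\bigl((f_i)_\sharp\mu_i,(f_j)_\sharp\mu_j\bigr)\leq d^\theta_{GHP}(\mathcal{X}_i,\mathcal{X}_j)+C(\theta)(\epsilon_i+\epsilon_j)$. No splicing of admissible metrics can give this, because the $f_i$ are not canonical: two $\epsilon$-isometries from $X_i$ to $X$ may differ by (a map close to) a nontrivial isometry of $X$, and the Prokhorov distance does not forgive that drift. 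Concretely, take the constant sequence $\mathcal{X}_i=(\mathbb{S}^1,d_{\mathbb{S}^1},\delta_p)$, which is trivially $d^\theta_{GHP}$-Cauchy; Lemma \ref{inprotantisometry} allows $f_i$ to be an arbitrary rotation $R_i$, and $(f_i)_\sharp\delta_p=\delta_{R_i(p)}$ need not be $d_P$-Cauchy on $X=\mathbb{S}^1$. So ``$\nu_i$ is Cauchy in $(\mathfrak{M}(X),d_P)$'' is false for an arbitrary choice of the $f_i$; the obstacle is not the three-way gluing of admissible metrics you flag, but the non-uniqueness of the almost isometries themselves.

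The repair is to abandon the Cauchy argument for the pushforwards and argue by compactness, which is essentially the paper's sketch: since the sequence is $d^\theta_{GHP}$-Cauchy, it suffices to produce a convergent subsequence. The total masses $\mu_i[X_i]$ converge (apply the Prokhorov bound to the closed set $A=X_i\sqcup X_j$), so after discarding the trivial case of vanishing mass you may normalize and invoke Proposition \ref{ProtheoremGromHauss} to extract a subsequence with $(f_i)_\sharp\mu_i\rightharpoonup\mu$ on the GH-limit $X$ given by Theorem \ref{imporstcompacthteorem}; this is exactly measured $\theta$-Gromov--Hausdorff convergence of the subsequence, and Theorem \ref{GHPequvialecnn} converts it into $d^\theta_{GHP}$-convergence, after which Cauchy plus a convergent subsequence gives convergence of the whole sequence. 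Alternatively you could make your original plan work by choosing the $f_i$ coherently (e.g.\ compatible with near-optimal admissible metrics between consecutive terms), but that requires an extra construction you have not supplied.
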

\begin{proof}[Sketch of the proof]   Proposition \ref{dghpproperties} implies that $(\mathscr{M}^\theta_\sim,d^\theta_{GHP})$ is a reversible metric space. Moreover, thanks to  Theorem \ref{GHPequvialecnn}, Theorem \ref{imporstcompacthteorem} and
 Proposition \ref{ProtheoremGromHauss}, it is not hard to check the completeness.
 It remains to show that $(\mathscr{M}^\theta_\sim,d^\theta_{GHP})$ is separated. Given $\mathcal {Y}=(Y,d,\mu)\in \mathscr{M}^\theta$, for each $k\in \mathbb{N}$, one can choose a finite forward $1/k$-net $S^k:=\{x^i\}_{i=1}^{N(k)}$ and a sequence of Borel subsets $\{B^i\}_{i=1}^{N(k)}\subset Y$ such that $x^i\in B^i$, $Y=\sqcup_iB^i$ and $\diam(B^i)\leq k^{-1}$ for all $i$ (cf.\,Villani \cite[p.105]{Vi}).

Define a finite Borel measure $\mu^k$ on $S^k$ by $\mu^k:=\sum_{i=1}^{N(k)} \mu(B^i)\, \delta_{x^i}$, where $\delta_{x^i}$ is  the Dirac measure at $x^i$. Thus, $d_P(\mu^k,\mu)<1/k$.
On the other hand, given fixed $k\in \mathbb{N}$, for each $\alpha\in \mathbb{N}$, define a compact $\theta$-metric-measure space $(S^k_\alpha, d_{S^k_\alpha},\mu^k_\alpha)$ such that $(S^k_\alpha, d_{S^k_\alpha})$ is the one defined in the proof of Theorem \ref{imporstcompacthteorem} and
\[
\mu^k_\alpha=\sum_{i=1}^{N(k)} b_i\, \delta_{x^i},\ b_i\in \mathbb{Q}, \ \left|b_i-\mu(B^i)\right|<\frac1{\alpha\, N(k)}, \text{ for }1\leq i\leq N(k).
\]
By considering the diagonal sequence $\mathcal {Y}_k:=(S^k_k, d_{S^k_k},\mu^k_k)$, one has $\lim_{k\rightarrow \infty}d^\theta_{GHP}(\mathcal {Y},\mathcal {Y}_k)=0$.
\end{proof}

\begin{lemma}\label{Doublingcontrollconver}
Let $\mathcal {X}_i:=(X_i,d_i,\mu_i)$, $i\in \mathbb{N}$ be a sequence in   $\mathscr{M}^\theta$ converging to a space $\mathcal {X}_\infty:=(X_\infty,d_\infty,\mu_\infty)$ $\in \mathscr{M}^\theta$ in the $\theta$-Gromov-Hausdorff-Prokhorov sense. If there exists a constant $L>0$ such that $\Do(\mathcal {X}_i)\leq L$ for all $i$, then  $\Do(\mathcal {X}_\infty)\leq L$ as well.
\end{lemma}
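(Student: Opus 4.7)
The plan is to transfer the doubling inequality from each $\mathcal{X}_i$ to $\mathcal{X}_\infty$ via the approximate isometries guaranteed by the $\theta$-Gromov-Hausdorff-Prokhorov convergence, combined with the Portmanteau theorem for weak convergence. By Theorem \ref{GHPequvialecnn}, the hypothesis $d^\theta_{GHP}(\mathcal{X}_i,\mathcal{X}_\infty)\to 0$ is equivalent to measured $\theta$-Gromov-Hausdorff convergence, so I obtain Borel $\epsilon_i$-isometries $f_i:X_i\to X_\infty$ with $\epsilon_i\to 0$ and $(f_i)_\sharp\mu_i\rightharpoonup\mu_\infty$ in the weak topology. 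Fix $x_\infty\in X_\infty$ and $r>0$; using the near-surjectivity $X_\infty\subseteq\overline{[f_i(X_i)]^{\epsilon_i}}$ I select centers $x_i\in X_i$ with $d_\infty(f_i(x_i),x_\infty)\leq\epsilon_i$.

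Next, combining $\dis f_i\leq\epsilon_i$, the triangle inequality in $X_\infty$, and the reversibility bound $\lambda_{d_\infty}(X_\infty)\leq\theta$, I establish the two inclusions
\[
f_i^{-1}\bigl(B^+_{x_\infty}(s)\bigr)\subseteq B^+_{x_i}\bigl(s+(1+\theta)\epsilon_i\bigr),\qquad f_i\bigl(B^+_{x_i}(s)\bigr)\subseteq\{z\in X_\infty:d_\infty(x_\infty,z)\leq s+2\epsilon_i\},
\]
valid for every $s>0$. Applying these to the doubling hypothesis $\mu_i(\overline{B^+_{x_i}(2r')})\leq L\,\mu_i(\overline{B^+_{x_i}(r')})$ with a suitably chosen auxiliary radius $r'$ and pushing forward, I deduce that for arbitrary $r_1<2r_2$ and all sufficiently large $i$,
\[
(f_i)_\sharp\mu_i\bigl(B^+_{x_\infty}(r_1)\bigr)\leq L\,(f_i)_\sharp\mu_i\bigl(\{z\in X_\infty:d_\infty(x_\infty,z)\leq r_2\}\bigr).
\]

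Passing $i\to\infty$ via the Portmanteau theorem (lower semi-continuity on the open set for the left-hand side, upper semi-continuity on the closed set for the right-hand side) yields $\mu_\infty(B^+_{x_\infty}(r_1))\leq L\,\mu_\infty(\{d_\infty(x_\infty,\cdot)\leq r_2\})$ for every $r_1<2r_2$. Taking $r_1\nearrow 2r$ with $r_2=r$ gives $\mu_\infty(B^+_{x_\infty}(2r))\leq L\,\mu_\infty(\{d_\infty(x_\infty,\cdot)\leq r\})$, and a further perturbation $r\mapsto r+\delta$ with $\delta\searrow 0$, using continuity from above of $\rho\mapsto\mu_\infty(\{d_\infty(x_\infty,\cdot)\leq\rho\})$, strengthens this to
\[
\mu_\infty\bigl(\{d_\infty(x_\infty,\cdot)\leq 2r\}\bigr)\leq L\,\mu_\infty\bigl(\{d_\infty(x_\infty,\cdot)\leq r\}\bigr),\qquad\forall r>0.
\]

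The main obstacle will be upgrading this inequality — phrased in terms of the sublevel sets $\{d_\infty(x_\infty,\cdot)\leq\rho\}$ — to the precise form required by Definition \ref{doulbidefin}, which uses the closures of the open forward balls $\overline{B^+_{x_\infty}(\rho)}$, since in general one only has the inclusion $\overline{B^+_{x_\infty}(\rho)}\subseteq\{d_\infty(x_\infty,\cdot)\leq\rho\}$. Since $\mu_\infty$ is finite, the exceptional set $J:=\{\rho>0:\mu_\infty(\{d_\infty(x_\infty,\cdot)=\rho\})>0\}$ is at most countable; for every $r\notin J\cup\tfrac12 J$ the three sets $B^+_{x_\infty}(r)$, $\overline{B^+_{x_\infty}(r)}$ and $\{d_\infty(x_\infty,\cdot)\leq r\}$ share the same $\mu_\infty$-mass (and similarly at $2r$), so the desired inequality $\mu_\infty(\overline{B^+_{x_\infty}(2r)})\leq L\,\mu_\infty(\overline{B^+_{x_\infty}(r)})$ holds on a cocountable dense set of radii. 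The inequality at the remaining radii would then follow by approximating from the generic set together with the monotonicity of $\rho\mapsto\mu_\infty(\overline{B^+_{x_\infty}(\rho)})$.
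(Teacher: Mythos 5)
Your argument is sound in substance and arrives at the same estimate as the paper, but by a different technical implementation. The paper never passes to the $\epsilon_i$-isometries of Theorem \ref{GHPequvialecnn}: it works directly with the $\theta$-admissible metrics $d^i$ on $X_i\sqcup X_\infty$ coming from $d^\theta_{GHP}(\mathcal{X}_i,\mathcal{X}_\infty)\to0$, chooses $x_i$ with $d^i(x_i,x)\le\epsilon_i$, and transfers the doubling bound by two applications of the defining inequality of the Prokhorov distance (mass of a closed set is at most the mass of its $\epsilon$-enlargement plus $\epsilon$), inflating radii by $O(\epsilon_i)$ exactly as you do, and then lets $\epsilon_i\to0$. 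Your route instead converts to measured $\theta$-Gromov-Hausdorff convergence, pushes the doubling inequality forward through the ball inclusions under $f_i$, and invokes the Portmanteau theorem (open set on the left, closed set on the right), followed by the $r_1\nearrow 2r$ and $\delta\searrow0$ limits. The mechanism is the same at heart -- radius inflation by $O(\epsilon_i)$ plus a monotone limit in the radius -- and your weak-convergence bookkeeping is handled correctly; the paper's version simply avoids Portmanteau because the Prokhorov inequality already applies to closed sets with enlarged radii.

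Concerning your final paragraph: the sublevel-set inequality $\mu_\infty[\{d_\infty(x,\cdot)\le 2r\}]\le L\,\mu_\infty[\{d_\infty(x,\cdot)\le r\}]$ is precisely what the paper's own limiting step produces too (the decreasing intersection of the sets $\overline{B^+_x(r+(2+\theta)\epsilon_i)}$ is $\{d_\infty(x,\cdot)\le r\}$, not the closure of the open ball), and the paper uses $\overline{B^+_x(r)}$ and $\{d(x,\cdot)\le r\}$ interchangeably elsewhere (see the proof of Theorem \ref{compactmeasuredgromvhaus}); under that convention your proof is complete before the last paragraph, which is then superfluous. If one insists on the strict reading of $\overline{B^+_x(r)}$ as the topological closure of the open ball, however, your proposed repair at the exceptional radii does not go through as stated: approximating $r$ from below within the generic set only gives $\mu_\infty[B^+_x(2r)]\le L\,\mu_\infty[\overline{B^+_x(r)}]$ (the mass of $\overline{B^+_x(2r)}$ lying on the sphere $\{d_\infty(x,\cdot)=2r\}$ is lost), while approximating from above degrades the right-hand side back to $\{d_\infty(x,\cdot)\le r\}$, and monotonicity cannot recombine the two. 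What does follow cheaply in the strict reading is $\mu_\infty[\overline{B^+_x(2r)}]\le L^2\,\mu_\infty[\overline{B^+_x(r)}]$, by inserting $\{d_\infty(x,\cdot)\le r\}\subset\{d_\infty(x,\cdot)<2r\}$ and using the sublevel bound once more in the limit from below; obtaining the constant $L$ itself genuinely relies on the closed-ball convention -- a point the paper glosses in exactly the same way.
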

\begin{proof}By passing a subsequence,  for each $i\in \mathbb{N}$, there is a $\theta$-admissible $d^i$ on $X_i\sqcup X_\infty$ such that
\[
d^i_H(X_i,X_\infty)+d^i_P((\Phi_{i})_\sharp\mu_i,(\Phi_{\infty})_\sharp\mu_\infty)\leq \epsilon_i,\ \lim_{i\rightarrow \infty}\epsilon_i=0,
\]
where $\Phi_{i}:X_i\rightarrow X_i\sqcup X_\infty$ and $\Phi_{\infty}:X_\infty\rightarrow X_i\sqcup X_\infty$ are the isometric embeddings.
 Given $x\in X_\infty$, choose $x_i\in X_i$ with $d^i(x_i,x)\leq\epsilon_i$. Thus, for any $r>0$ we have
 \begin{align*}
 &\mu_\infty\left[\overline{B^+_x(2r)}\cap X_\infty\right]=(\Phi_\infty)_\sharp \mu_\infty\left[\overline{B^+_x(2r)}\right]\leq  (\Phi_i)_\sharp \mu_i\left[\overline{B^+_{x_i}(2r+2\epsilon_i)}\right]+\epsilon_i\\
 =& \mu_i\left[\overline{B^+_{x_i}(2r+2\epsilon_i)}\cap X_i\right]+\epsilon_i\leq L \,\mu_i\left[\overline{B^+_{x_i}(r+\epsilon_i)}\cap X_i\right]+\epsilon_i=L \,(\Phi_i)_\sharp\mu_i\left[\overline{B^+_{x_i}(r+\epsilon_i)}\right]+\epsilon_i\\
 \leq &L\, (\Phi_i)_\sharp\mu_i\left[\overline{B^+_{x}(r+(1+\theta)\epsilon_i)}\right]+\epsilon_i\leq L (\Phi_\infty)_\sharp\mu_\infty\left[\overline{B^+_{x}(r+(2+\theta)\epsilon_i)}\right]+(1+L)\epsilon_i\\
 =& L \, \mu_\infty\left[\overline{B^+_{x}(r+(2+\theta)\epsilon_i)}\cap X_\infty\right]+(1+L)\epsilon_i,
 \end{align*}
 which implies $\Do(\mathcal {X}_\infty)\leq L$.
\end{proof}

\begin{proof}[Proof of Lemma \ref{DoublingcontrollconverinGromovHameameaser}]
The statement immediately follows by Theorem \ref{GHPequvialecnn} and Lemma \ref{Doublingcontrollconver}.
\end{proof}



By a method similar to the one used in Abraham, Delmas and Hoscheit \cite{ADH} (or Khezeli \cite{AK}), one can extend the Gromov-Hausdorff-Prokhorov distance to pointed forward $\Theta$-metric-measure spaces. In particular,
the corresponding  topology is equivalent to the measured forward $\Theta$-Gromov-Hausdorff topology. We leave the formulation of such statements to the
interested reader.



\section{Optimal transport on forward metric spaces}\label{optimaltarsn-0}

\subsection{Wasserstein spaces}
\begin{definition}
Let $(X,d)$ be a forward boundedly compact forward metric space and let $P(X)$ denote the collection of Borel probability measures.
Given $p\in [1,\infty)$ and $\mu,\nu\in P(X)$, the {\it Wasserstein distance of order $p$ from $\mu$ to $\nu$} is defined as
\[
W_p(\mu,\nu):=\left(  \inf_{\pi\in \Pi(\mu,\nu) } \int_{X\times X} d(x,y)^p {\ddd}{\pi}(x,y)                  \right)^{\frac1p},
\]
where $\Pi(\mu,\nu)$ is the collection of transference plans/couplings from $\mu$ to $\nu$.
\end{definition}

\begin{theorem}\label{lowercontinuouspfWp} Let $(X,d)$ be a forward boundedly compact forward metric space. Given $p\in [1,\infty)$ and $\mu,\nu\in P(X)$, there exists a coupling $\tilde{\pi}\in \Pi(\mu,\nu)$ such that
\[
W_p(\mu,\nu)=\left(\int_{X\times X} d(x,y)^p {\ddd}\tilde{\pi}(x,y)                  \right)^{\frac1p}.
\]
Such a $\tilde{\pi}$ is called an optimal transference plan (or optimal coupling) from $\mu$ to $\nu$ (with respect to $d^p$).
\end{theorem}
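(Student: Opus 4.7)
The plan is to follow the classical Kantorovich existence argument, adapted to the forward setting, exploiting the fact that the topology $\mathcal{T}_+$ coincides with the topology $\hat{\mathcal{T}}$ induced by the reversible metric $\hat{d}$. First I would invoke Remark \ref{remarkcompleteseparted} (together with Theorem \ref{topologychara}/(ii)) to assert that $(X,\hat{d})$ is a locally compact Polish space, so every Borel probability measure on $X$ is inner regular by compact sets (Ulam's theorem). In particular, $\mu$ and $\nu$ are tight: given $\varepsilon>0$, pick compact $K_1,K_2\subset X$ with $\mu(X\setminus K_1),\nu(X\setminus K_2)<\varepsilon/2$.

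Next I would prove tightness of $\Pi(\mu,\nu)$ in $P(X\times X)$. For any $\pi\in \Pi(\mu,\nu)$ and the compact set $K_1\times K_2\subset X\times X$ (compact in the product topology), the marginal bound
\[
\pi\bigl((K_1\times K_2)^{c}\bigr)\le \pi(K_1^{c}\times X)+\pi(X\times K_2^{c})=\mu(K_1^{c})+\nu(K_2^{c})<\varepsilon
\]
is independent of $\pi$. By Prokhorov's theorem on the Polish space $X\times X$, $\Pi(\mu,\nu)$ is relatively compact in the weak topology. Closedness of $\Pi(\mu,\nu)$ follows from testing the marginal conditions against $C_b(X)$, so $\Pi(\mu,\nu)$ is weakly compact. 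If $W_p(\mu,\nu)=\infty$ there is nothing to prove, so I may assume the infimum is finite and choose a minimizing sequence $(\pi_n)_n\subset \Pi(\mu,\nu)$; by compactness, extract a subsequence $\pi_{n_k}\rightharpoonup \tilde{\pi}\in \Pi(\mu,\nu)$.

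The crux is then the lower semi-continuity of the cost functional $\pi\mapsto \int_{X\times X} d(x,y)^p\,\mathrm{d}\pi$. Here I would use Theorem \ref{topologychara}/(i): $d$ is continuous on $(X\times X,\mathcal{T}_+\times \mathcal{T}_+)$, hence $d^{p}$ is a continuous, non-negative function on the product Polish space. For each $N\in\mathbb{N}$, the truncation $c_N:=d^{p}\wedge N$ is continuous and bounded, so $\pi\mapsto \int c_N\,\mathrm{d}\pi$ is continuous under weak convergence; by monotone convergence, $\int d^{p}\,\mathrm{d}\pi=\sup_N\int c_N\,\mathrm{d}\pi$ is lower semi-continuous as a supremum of continuous functionals. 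Applying this to the minimizing sequence yields
\[
\int_{X\times X} d^{p}\,\mathrm{d}\tilde\pi \le \liminf_{k\to\infty}\int_{X\times X} d^{p}\,\mathrm{d}\pi_{n_k}=W_p(\mu,\nu)^{p},
\]
so $\tilde\pi$ is optimal.

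The main obstacle I expect is verifying the topological prerequisites (continuity of $d$ under $\mathcal{T}_+\times \mathcal{T}_+$ and the Polish character of $X$), since the asymmetry of $d$ makes these non-automatic; however, both are already supplied by Theorem \ref{topologychara} and Remark \ref{remarkcompleteseparted}, after which the rest of the argument is the standard direct method of the calculus of variations with no genuinely new input from irreversibility.
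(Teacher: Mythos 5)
Your proof is correct and follows essentially the same route as the paper: the paper reduces the statement to the same two topological facts (Polishness of $(X,\hat{d})$ via Theorem \ref{topologychara}/(ii) and Remark \ref{remarkcompleteseparted}, and continuity of $d^{p}$ on the product) and then cites Villani's Theorem 4.1, whereas you simply write out that cited argument (tightness of $\Pi(\mu,\nu)$, Prokhorov, closedness of the marginal constraints, and lower semi-continuity of the cost via bounded truncations). No gap; the only difference is that you reprove the black-box existence result instead of quoting it.
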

\begin{proof} Theorem \ref{topologychara}/(ii) shows that
 the symmetrized space $(X,\hat{d})$  is a Polish space. Since $d^{\,p}:X\times X\rightarrow \mathbb{R}$ is a continuous function under the product topology of $(X,\hat{d})$ (i.e., Theorem \ref{topologychara}), the statement directly follows by Villani \cite[Theorem 4.1]{Vi}.
\end{proof}

A standard argument (cf.\,Villani \cite[Theorem 7.3]{V}) together with the H\"older inequality furnishes the following result immediately.
\begin{theorem}\label{wassdistacbasisthe}Given a
  forward boundedly compact pointed forward metric space $(X,\star,d)$, define
\[
P_p(X):=\left\{ \mu\in P(X):\, \int_X \hat{d}(\star,x)^p {\ddd}\mu(x)<\infty  \right\}.
\]
Then $P_p(X)$ is independent of the choice of $\star$. Moreover,
$(P_p(X),W_p)$ is an  irreversible metric space, i.e., for any $\mu,\nu,\upsilon\in P_p(X)$,
\begin{itemize}
\item[(i)]  $W_p(\mu,\nu)$ is finite$;$

\item[(ii)] $W_p(\mu,\nu)\geq 0,$ with equality if and only if $\mu=\nu;$

\item[(iii)] $W_p(\mu,\nu)\leq W_p(\mu,\upsilon)+W_p(\upsilon,\nu);$

\end{itemize}
Furthermore, for any $1\leq q\leq p$, there hold $W_q\leq W_p$ and  $P_p(X)\subset P_q(X)$.
\end{theorem}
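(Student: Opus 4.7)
The plan is to transplant the standard reversible argument (Villani \cite[Theorem 7.3]{V}) to the irreversible setting by systematically exploiting the symmetrized metric $\hat d$ from Theorem \ref{topologychara}/(ii), which makes $(X,\hat d)$ a Polish space, and the two‐sided bound $\hat d\le d\vee\overleftarrow d\le 2\hat d$ (hence $d(x,y)\le 2\hat d(x,y)$). Observe first that this inequality together with $(a+b)^p\le 2^{p-1}(a^p+b^p)$ and the triangle inequality for $\hat d$ gives, for any two reference points $\star,\star'\in X$,
\[
\hat d(\star',x)^p\le 2^{p-1}\bigl(\hat d(\star',\star)^p+\hat d(\star,x)^p\bigr),
\]
which makes the $p$-th moment condition independent of the choice of $\star$; this proves that $P_p(X)$ is well-defined. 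The same inequality applied to $d(x,y)\le 2(\hat d(x,\star)+\hat d(\star,y))$ shows that the product coupling $\mu\otimes\nu$ has finite $d^p$-cost whenever $\mu,\nu\in P_p(X)$, which yields assertion (i).

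For (ii), non-negativity is obvious; and if $W_p(\mu,\nu)=0$, Theorem \ref{lowercontinuouspfWp} furnishes an optimal $\tilde\pi\in\Pi(\mu,\nu)$ supported on $\{d=0\}=\{x=y\}$ (the diagonal), so the two marginals coincide, i.e.\ $\mu=\nu$. Conversely, if $\mu=\nu$, the pushforward of $\mu$ under $x\mapsto(x,x)$ is an admissible coupling of cost $0$.

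For the triangle inequality (iii), I would use the gluing lemma in the Polish space $(X,\hat d)$: given optimal transference plans $\pi_{12}\in\Pi(\mu,\upsilon)$ and $\pi_{23}\in\Pi(\upsilon,\nu)$ supplied by Theorem \ref{lowercontinuouspfWp}, one constructs $\pi\in P(X^3)$ with prescribed two-dimensional marginals $\pi_{12}$ and $\pi_{23}$. Its $(1,3)$-marginal lies in $\Pi(\mu,\nu)$, and Minkowski's inequality in $L^p(\pi)$ applied to the (nonsymmetric but nonnegative) functional $(x,y,z)\mapsto d(x,z)\le d(x,y)+d(y,z)$ yields
\[
W_p(\mu,\nu)\le\Bigl(\int d(x,z)^p\,d\pi\Bigr)^{1/p}\le\Bigl(\int d(x,y)^p\,d\pi_{12}\Bigr)^{1/p}+\Bigl(\int d(y,z)^p\,d\pi_{23}\Bigr)^{1/p}=W_p(\mu,\upsilon)+W_p(\upsilon,\nu).
\]
Finally, for $1\le q\le p$, Jensen/Hölder applied to the probability measure $\pi$ gives $(\int d^q\,d\pi)^{1/q}\le(\int d^p\,d\pi)^{1/p}$; taking the infimum over $\pi\in\Pi(\mu,\nu)$ yields $W_q\le W_p$, and the bound $\hat d(\star,x)^q\le 1+\hat d(\star,x)^p$ shows $P_p(X)\subset P_q(X)$.

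The only delicate point is to make sure that the gluing lemma (which is classical for Polish spaces) applies here. This is where invoking $\hat d$ is essential: $(X,\hat d)$ is Polish by Theorem \ref{topologychara}/(ii) and Remark \ref{remarkcompleteseparted}, the Borel $\sigma$-algebras induced by $d$ and $\hat d$ coincide (since both induce the topology $\mathcal T_+$), so $P(X)$ is the same object in either viewpoint, and the disintegration/gluing constructions used in Villani's proof go through verbatim. All other steps are direct adaptations using the comparison $d\le 2\hat d$.
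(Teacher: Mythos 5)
Your argument is correct and is essentially the paper's proof: the paper simply invokes the standard argument of Villani \cite[Theorem 7.3]{V} together with the H\"older inequality, and your write-up spells out exactly that route (optimal couplings from Theorem \ref{lowercontinuouspfWp}, gluing in the Polish space $(X,\hat d)$ with the same Borel structure via Theorem \ref{topologychara}/(ii), Minkowski for the triangle inequality, Jensen/H\"older for $W_q\le W_p$, and the comparison $d\le 2\hat d$ for finiteness and the independence of $\star$). No gaps; nothing further is needed.
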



In the sequel, we set $\Theta^\infty:=\sup_{x\in X}\Theta(d(\star,x))$; moreover,  $\Theta^\infty$ is said to be {\it concave} if it is finite.

\begin{lemma}\label{reversbilityofW}
Given $1\leq q\leq p<\infty$ and $(X,\star,d)\in \mathcal {M}^\Theta_*$, if $\Theta^{\frac{qp}{p-q}}$ is a concave function, then
\[
 W_{q}(\nu,\mu)\leq  \Theta\bigg(  W_p(\delta_\star,\mu)+W_p(\mu,\nu) \bigg)\,W_p(\mu,\nu),\ \forall\,\mu,\nu\in P_p(X).
 \]
\end{lemma}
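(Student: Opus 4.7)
The plan is to symmetrize the transport problem by flipping an optimal coupling for $W_p(\mu,\nu)$ and then controlling the reversed distance pointwise via $\Theta$. More precisely, invoke Theorem \ref{lowercontinuouspfWp} to pick an optimal coupling $\pi\in\Pi(\mu,\nu)$ with $W_p(\mu,\nu)^p=\int d(x,y)^p\,d\pi(x,y)$, and let $\check{\pi}\in\Pi(\nu,\mu)$ be its pushforward under $(x,y)\mapsto(y,x)$. Since $\check{\pi}$ is admissible for $W_q(\nu,\mu)$, we obtain
\[
W_q(\nu,\mu)^q\leq\int d(y,x)^q\,d\pi(x,y).
\]
For each $(x,y)$, the triangle inequality yields $x,y\in\overline{B^+_\star(r(x,y))}$ with $r(x,y):=d(\star,x)+d(x,y)$, so the defining property of a pointed forward $\Theta$-metric space (Definition \ref{thetametricspace}) gives the pointwise reversibility bound
\[
d(y,x)\leq\Theta\bigl(d(\star,x)+d(x,y)\bigr)\,d(x,y).
\]

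Next, in the main case $1\leq q<p$, apply H\"older's inequality with conjugate exponents $p/q$ and $p/(p-q)$:
\[
\int d(y,x)^q\,d\pi\leq\left(\int\Theta\bigl(d(\star,x)+d(x,y)\bigr)^{\frac{qp}{p-q}}d\pi\right)^{\!\frac{p-q}{p}}\!\left(\int d(x,y)^p\,d\pi\right)^{\!\frac{q}{p}}.
\]
The second factor is $W_p(\mu,\nu)^q$. For the first factor, Jensen's inequality applied to the concave function $\Theta^{qp/(p-q)}$ acting on the random variable $Z(x,y):=d(\star,x)+d(x,y)$ with law $\pi$ yields
\[
\int\Theta(Z)^{\frac{qp}{p-q}}d\pi\leq\Theta\!\left(\int Z\,d\pi\right)^{\!\frac{qp}{p-q}}.
\]
Raising to the power $(p-q)/p$ and combining, we reach
\[
W_q(\nu,\mu)\leq\Theta\!\left(\int Z\,d\pi\right)W_p(\mu,\nu).
\]

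It only remains to estimate $\int Z\,d\pi$. Since $\pi$ has marginal $\mu$ in the first coordinate, $\int d(\star,x)\,d\pi=\int d(\star,x)\,d\mu(x)$, and H\"older's inequality (with $\mu$ a probability measure) gives $\int d(\star,x)\,d\mu\leq\bigl(\int d(\star,x)^p d\mu\bigr)^{1/p}=W_p(\delta_\star,\mu)$; similarly $\int d(x,y)\,d\pi\leq W_p(\mu,\nu)$. Hence $\int Z\,d\pi\leq W_p(\delta_\star,\mu)+W_p(\mu,\nu)$, and the monotonicity of $\Theta$ yields the claim. The borderline case $p=q$ (where the exponent $qp/(p-q)$ degenerates) is handled separately: then the convention from the paragraph preceding the lemma forces $\Theta^{\infty}<\infty$, and the pointwise bound $d(y,x)\leq\Theta^\infty d(x,y)$ on $\mathrm{spt}\,\pi$ directly yields $W_p(\nu,\mu)\leq\Theta^\infty W_p(\mu,\nu)$, which dominates the stated inequality. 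The only subtle step is the correct direction of Jensen's inequality (concavity forces $E[\phi(Z)]\leq\phi(E[Z])$) together with ensuring that the resulting exponent $(p-q)/p\cdot qp/(p-q)=q$ matches the H\"older calibration; everything else is a bookkeeping of marginals.
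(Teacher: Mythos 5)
Your argument is essentially identical to the paper's proof: you take an optimal coupling for $W_p(\mu,\nu)$, flip it, use the pointwise bound $d(y,x)\leq \Theta\bigl(d(\star,x)+d(x,y)\bigr)\,d(x,y)$, split by H\"older with exponents $p/q$ and $p/(p-q)$, apply Jensen to the concave function $\Theta^{qp/(p-q)}$, and bound $\int [d(\star,x)+d(x,y)]\,d\pi$ through the marginals by $W_p(\delta_\star,\mu)+W_p(\mu,\nu)$ — exactly the steps in the paper, so the main case $q<p$ is fine. One caution about your closing aside on $p=q$: the bound $W_p(\nu,\mu)\leq \Theta^{\infty}\,W_p(\mu,\nu)$ is \emph{weaker}, not stronger, than the stated inequality, since $\Theta^{\infty}=\sup_{x}\Theta(d(\star,x))$ may well exceed $\Theta\bigl(W_p(\delta_\star,\mu)+W_p(\mu,\nu)\bigr)$, so it does not ``dominate'' the claim; the paper's proof itself only treats $q<p$, so this does not affect the core argument, but the sentence as written is logically reversed.
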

\begin{proof}
Let $\tilde{\pi}$ be an optimal transference plan from $\mu$ to $\nu$ with respect to $d^{\,p}$ and set $\breve{\pi}({\ddd}y{\ddd}x):=\tilde{\pi}({\ddd}x{\ddd}y)$. The H\"older inequality together with Theorem \ref{wassdistacbasisthe} then yields
\begin{align*}
\int_{X\times X}\left[d(\star,x)+d(x,y)\right] {\ddd}\tilde{\pi}(x,y)
\leq W_1(\delta_\star,\mu)+\left(\int_{X\times X}d(x,y)^p {\ddd}\tilde{\pi}(x,y)\right)^{1/p}\leq W_p(\delta_\star,\mu)+W_p(\mu,\nu).
\end{align*}

On the other hand,
since $d(y,x)\leq \Theta(d(\star,x)+d(x,y))\,d(x,y)$ for all $x,y\in X$,  Jensen's inequality combined with the above inequality yields
\begin{eqnarray*}
W_{q}(\nu,\mu)^q&\leq& \int_{X\times X}d(y,x)^{q}{\ddd}\breve{\pi}(y,x)\leq \int_{X\times X} \Theta^q(d(\star,x)+d(x,y))\,d(x,y)^q\, {\ddd}\tilde{\pi}(x,y)\\&
\leq &\left( \int_{X\times X}\Theta^{\frac{qp}{p-q}}(d(\star,x)+d(x,y)) {\ddd}\tilde{\pi}(x,y)\right)^\frac{p-q}{p}\left(\int_{X\times X}d^p(x,y) {\ddd}\tilde{\pi}(x,y)\right)^{\frac qp}\\
&\leq &\left( \Theta^{\frac{qp}{p-q}}\left(\int_{X\times X}\left[d(\star,x)+d(x,y)\right]{\ddd}\tilde{\pi}(x,y)\right) \right)^\frac{p-q}{p}  W_p(\mu,\nu)^q\\&\leq& \Theta^q\Big( W_p(\delta_\star,\mu)+W_p(\mu,\nu)\Big)\,W_p(\mu,\nu)^q,
\end{eqnarray*}
which concludes the proof.
\end{proof}

 In the sequel we always endow $P_p(X)$ with the forward topology $\mathscr{T}_p$ induced by $W_p$.

\begin{theorem}\label{wasstherforqs}
Given $1\leq q\leq p<\infty$ and $(X,\star,d)\in \mathcal {M}^\Theta_*$, if $\Theta^{\frac{qp}{p-q}}$ is a concave function, then
 \begin{itemize}

 \item[(i)] $(P_p(X),W_p)$ is a separable irreversible metric space. Moreover, any probability measure can be approximated by a sequence of probability measures with finite support (w.r.t. $\mathscr{T}_p$);

\item[(ii)] any forward Cauchy sequence in  $(P_p(X),W_p)$ is convergent in $(P_s(X),W_s)$ for all $s\in [1,q];$

\item[(iii)] for all $s\in [1,q]$, $W_s$ is continuous on $(P_p(X),W_p)$, i.e., if $(\mu_k)_k\subset P_p(X)$ (resp., $(\nu_k)_k\subset P_p(X)$) converges to $\mu\in P_p(X)$ (resp., $\nu\in P_p(X)$) under $\mathscr{T}_p$, then
\[
\lim_{k,l\rightarrow \infty}W_s(\mu_k,\nu_l)=W_s(\mu,\nu).
\]
\end{itemize}
\end{theorem}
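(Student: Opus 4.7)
The plan is to transfer everything to the symmetrized metric space $(X,\hat d)$, which by Theorem \ref{topologychara}(ii) is a locally compact Polish space, and to exploit that by construction $P_p(X)=P_p^{\hat d}(X)$ since the moment condition in Theorem \ref{wassdistacbasisthe} is formulated in terms of $\hat d$. Two elementary tools drive the argument: the pointwise inequality $d\leq 2\hat d$, which yields $W_p^d(\mu,\nu)\leq 2 W_p^{\hat d}(\mu,\nu)$, and Lemma \ref{reversbilityofW}, whose concavity hypothesis on $\Theta^{qp/(p-q)}$ is precisely what allows backward Wasserstein distances to be bounded by forward ones. Assertion (i) follows at once: the standard Wasserstein theory on the Polish space $(X,\hat d)$ gives separability of $(P_p^{\hat d}(X),W_p^{\hat d})$ together with density of finitely supported probability measures over a countable dense subset of $X$ (see, e.g., Villani \cite{V}); these properties transfer to the coarser forward topology $\mathscr T_p$ via $W_p^d\leq 2 W_p^{\hat d}$.

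For (ii), let $(\mu_k)\subset P_p(X)$ be forward Cauchy in $W_p^d$. The forward Cauchy property and the triangle inequality give $M:=\sup_k W_p(\delta_\star,\mu_k)<\infty$, so for $i<j$ with $W_p(\mu_i,\mu_j)<1$ (valid for $i$ large), Lemma \ref{reversbilityofW} produces $W_q(\mu_j,\mu_i)\leq \Theta(M+1)\,W_p(\mu_i,\mu_j)$. Combined with $W_s(\mu_i,\mu_j)\leq W_p(\mu_i,\mu_j)$ for $s\leq q\leq p$, this yields that $(\mu_k)$ is Cauchy in both the forward and backward directions with respect to $W_s^d$. Using the pointwise bound $\hat d^{\,s}\leq \tfrac12(d^{\,s}+\overleftarrow d^{\,s})$ (convexity of $t^s$), together with the uniform comparison $\overleftarrow d\leq \Theta(R)\,d$ on any forward ball $\overline{B^+_\star(R)}$ and tail control coming from the finite $p$-th moments of the $\mu_i$'s, I then deduce that $(\mu_k)$ is $W_s^{\hat d}$-Cauchy. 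The completeness of $(P_s^{\hat d}(X),W_s^{\hat d})$ (standard for Polish base spaces) produces a limit $\mu\in P_s^{\hat d}(X)=P_s(X)$ with $W_s^{\hat d}(\mu_k,\mu)\to 0$, and $W_s^d(\mu_k,\mu)\leq 2W_s^{\hat d}(\mu_k,\mu)\to 0$ completes the proof.

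For (iii), the irreversible triangle inequality delivers
\[
|W_s(\mu_k,\nu_l)-W_s(\mu,\nu)|\leq W_s(\mu_k,\mu)+W_s(\mu,\mu_k)+W_s(\nu_l,\nu)+W_s(\nu,\nu_l).
\]
The forward terms $W_s(\mu_k,\mu)$ and $W_s(\nu_l,\nu)$ are bounded by $W_p(\mu_k,\mu)\to 0$ and $W_p(\nu_l,\nu)\to 0$ respectively; the backward terms are controlled by Lemma \ref{reversbilityofW}, giving $W_s(\mu,\mu_k)\leq W_q(\mu,\mu_k)\leq \Theta\bigl(W_p(\delta_\star,\mu_k)+W_p(\mu_k,\mu)\bigr)W_p(\mu_k,\mu)\to 0$ (since $W_p(\delta_\star,\mu_k)$ stays bounded by the convergence of $\mu_k$), and likewise for $\nu$. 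The main technical obstacle is the Cauchy-to-convergence step in (ii): one must carefully relate the three a priori distinct Wasserstein structures $W_s^d$, its symmetrization $\hat W_s^d$, and the symmetric $W_s^{\hat d}$, justifying their local equivalence via the reversibility function $\Theta$ and truncation to bounded balls; once this comparison is in place, the standard Polish-space completeness closes the argument.
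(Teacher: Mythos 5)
Your overall strategy -- transfer everything to the symmetrized space $(X,\hat d)$ and invoke the classical Wasserstein theory over the Polish space $(X,\hat d)$ -- is genuinely different from the paper's: the paper proves (i) by an explicit quantization argument (truncate to a compact set with small $p$-moment tail, push $\mu$ onto a finite forward $\varepsilon$-net taken from a countable dense set, and control the coefficient error via Theorem \ref{totallvacontwp}), and proves (ii) through Corollary \ref{backwardconvergence}, i.e.\ tightness of forward Cauchy sequences (Lemma \ref{tightCauchy}), Prokhorov's theorem and lower semicontinuity of $W_p$ under weak convergence (Lemma \ref{lowrconunityc}), before upgrading to two-sided $W_s$-convergence with Lemma \ref{reversbilityofW}. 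Your part (i) is correct and arguably cleaner: since $P_p(X)$ is defined by $\hat d$-moments it coincides with the classical $P_p$ of $(X,\hat d)$, and $d\le 2\hat d$ gives $W_p\le 2W_p^{\hat d}$, so density of finitely supported measures for $W_p^{\hat d}$ transfers to the coarser forward topology $\mathscr T_p$.

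The genuine gap is in (ii), precisely at the step you flag. Knowing that $(\mu_k)$ is Cauchy in both the forward and backward senses for $W_s$ does \emph{not} yield $W_s^{\hat d}$-Cauchyness via $\hat d^{\,s}\le\tfrac12(d^{\,s}+\overleftarrow d^{\,s})$: integrating this against a $d^{\,s}$-optimal coupling $\pi$ from $\mu_i$ to $\mu_j$ leaves the term $\int d(y,x)^s\,{\ddd}\pi$, and $W_s(\mu_j,\mu_i)^s$ is a \emph{lower} bound for it; the backward-Cauchy information pertains to a different (reverse-optimal) coupling and cannot be inserted here. Likewise, bare truncation to forward balls plus ``tail control from $p$-th moments'' is not obviously sufficient when $\Theta$ is unbounded. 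The estimate that does work -- and is exactly where the concavity hypothesis is consumed -- is obtained by rerunning the proof of Lemma \ref{reversbilityofW} with $\hat d$ in place of $\overleftarrow d$: take the $d^{\,p}$-optimal coupling (Theorem \ref{lowercontinuouspfWp}), use $\hat d(x,y)\le\tfrac{1+\Theta(d(\star,x)+d(x,y))}{2}\,d(x,y)\le \Theta\bigl(d(\star,x)+d(x,y)\bigr)d(x,y)$, then H\"older with exponents $\tfrac{p}{p-s},\tfrac{p}{s}$ and Jensen with the concavity of $\Theta^{\frac{sp}{p-s}}$ (which follows from that of $\Theta^{\frac{qp}{p-q}}$ since $s\le q$), giving $W_s^{\hat d}(\mu_i,\mu_j)\le \Theta(M+1)\,W_p(\mu_i,\mu_j)$ for large $i<j$, where $M=\sup_k W_p(\delta_\star,\mu_k)$. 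Once this comparison is in place, your plan closes: completeness of the classical $W_s^{\hat d}$-Wasserstein space over the Polish space $(X,\hat d)$ produces $\mu\in P_s(X)$, and $W_s\le 2W_s^{\hat d}$ with $W_s^{\hat d}$ symmetric yields convergence in both directions, matching (indeed slightly strengthening the route of) the paper's conclusion. Finally, in (iii) you have the orientation of $\mathscr T_p$-convergence reversed: in the forward topology $\mu_k\to\mu$ means $W_p(\mu,\mu_k)\to0$, so the backward terms $W_s(\mu,\mu_k)\le W_p(\mu,\mu_k)$ are the trivial ones, and Lemma \ref{reversbilityofW} is needed for the forward terms $W_s(\mu_k,\mu)$ -- the opposite of what you wrote; the fix is only a relabeling, but in this irreversible setting the bookkeeping of directions is the substance of the statement.
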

\begin{proof}(i) The proof is similar to that of Villani \cite[Theorem 6.18]{Vi} and hence, we just give the sketch.
Due to Theorem \ref{wassdistacbasisthe}, it suffices to prove the separability.  Let
$\mathcal {D}$ be a dense sequence in $X$ (since $X$ is separable), and let $\mathcal {P}$ be the collection of probability measures that can be written as $\sum_j b_j \delta_{x_j}$, $b_j\in \mathbb{Q}$ and the $x_j$'s are finitely many elements in $\mathcal {D}$. Now we show that $ \mathcal {P}$ is dense in $P_p(X)$, i.e., given any $\mu\in P_p(X)$, for any $\epsilon\in (0,1)$, there is  $\nu\in \mathcal {P}$ such that
$W_p( \mu, \nu)\leq\epsilon$.

For convenience, let $\epsilon:=\varepsilon/(1+2^{1/p})$.
Since $\int_X\hat{d}(\star,x)^p{\ddd}\mu(x)$ is finite, there exists a compact set $K\subset X$ such that
$\int_{X\setminus K} d(x,\star)^p{\ddd}\mu(x)\leq \varepsilon^p$. The compactness of  $\overline{K^1}=\{x\in X|\,d(K,x)\leq 1\}$ implies $\theta:=\sup_{x\in \overline{K^1}}  \Theta(d(\star,x))<\infty$. Owing to the density of $\mathcal {D}$, one can
cover $K$ by a finite family of forward balls $B^+_{x_k}(\varepsilon/\theta)$, $1\leq k\leq N$, with centers $x_k\in \mathcal {D}\cap \overline{K^1}$. Set
\[
B'_k:=B^+_{x_k}(\varepsilon/\theta)\backslash \bigcup_{j<k}B^+_{x_j}(\varepsilon/\theta).
\]
and define a function $f$ on $X$ by $f\left( B'_k\cap K \right):=\{x_k\}$ and $f\left( X\backslash K \right):=\{x_0\}:=\{\star\}$. Thus, for any $x\in K$, we have $f(x)\in K^1$ and hence, $d(x,f(x))\leq \theta d(f(x),x)<\varepsilon$.
Then an easy argument as in Villani \cite[p.105]{Vi} yields  $W_p(\mu,f_\sharp\mu)\leq 2^{1/p}\varepsilon$.
Since $f_\sharp \mu=\sum_{k=0}^N a_k\delta_{x_k}$,
we see that $\mu$ can be approximated by a finite combination of Dirac masses.
On the other hand, Theorem \ref{totallvacontwp} yields
\begin{align*}
W_p\left( f_\sharp \mu,\, \sum_{k=0}^Nb_k\delta_{x_k} \right)=W_p\left( \sum_{k=0}^N a_k\delta_{x_k},\, \sum_{k=0}^Nb_k\delta_{x_k} \right)
\leq 4\max_{0\leq k\leq N}\hat{d}(x_0,x_k)\left(\sum_{k=0}^N|a_k-b_k|^{\frac1p}\right).
\end{align*}
Hence, we choose $b_k\in \mathbb{Q}$ such that $|a_k-b_k|$ is small enough, then $W_p( f_\sharp \mu,\, \sum_{k=0}^Nb_k\delta_{x_k} )\leq \varepsilon$. By choosing $\nu:=\sum_{k=0}^Nb_k\delta_{x_k}$, we obtain $W_p(\mu,\nu)\leq W_p(\mu,f_\sharp \mu,)+W_p(f_\sharp \mu,\nu)\leq \epsilon$.

\smallskip

 (ii) The assumption implies that $\Theta^{\frac{sp}{p-s}}$ is  concave for any $s\in [1,q]$.
Let $(\mu_k)_k$ be a forward Cauchy sequence in $P_p(X)$. Corollary \ref{backwardconvergence} yields a   measure $\mu\in P(X)$ with $\lim_{k\rightarrow \infty}W_p(\mu_k,\mu)=0$. Hence, for any $\varepsilon\in (0,1)$, there exists $N=N(\varepsilon)>0$ such that for any $k>N$, $W_p(\mu_{N+1},\mu_k)<\varepsilon$ and $W_p(\mu_k,\mu)<\varepsilon$.
 Moreover, Theorem \ref{wassdistacbasisthe} and Lemma \ref{reversbilityofW} furnish $W_s(\mu_k,\mu)\leq W_p(\mu_k,\mu)$ and
\begin{align*}
W_s(\mu,\mu_k)&\leq \Theta \left( W_p(\delta_\star,\mu_{N+1})+W_p(\mu_{N+1},\mu_k)+W_p(\mu_k,\mu) \right)\,W_p(\mu_k,\mu)\\
&\leq \Theta \left( W_p(\delta_\star,\mu_{N+1})+2 \right)\,W_p(\mu_k,\mu).
\end{align*}
Therefore, we have
\[
\lim_{k\rightarrow\infty}W_s(\mu_k,\mu)=0=\lim_{k\rightarrow\infty}W_s(\mu,\mu_k).\tag{4.1}\label{5.5}
\]
Recall $(\mu_k)_k\subset P_s(X)$ (see Theorem \ref{wassdistacbasisthe}). Thus, by the triangle inequality for $W_s$ and (\ref{5.5}), one can easily check that both $W_s(\mu,\delta_\star)$ and $W_s(\delta_\star,\mu)$ are finite, which implies $\int_X \hat{d}(\star,x)^s {\ddd}\mu(x)<\infty$, i.e.,
 $\mu\in P_s(X)$.

 \smallskip

 (iii)
  Let $(\mu_k)_k, (\nu_k)_k$ be two sequences in $P_p(X)$ with $W_p(\mu,\mu_k)\rightarrow 0$ and $W_p(\nu,\nu_k)\rightarrow 0$. Since $\mu,\nu\in P_p(X)$, a similar argument  to that of  (\ref{5.5}) yields
 \[
 \lim_{k\rightarrow\infty}W_s(\mu_k,\mu)=0=\lim_{k\rightarrow\infty}W_s(\mu,\mu_k),\ \lim_{k\rightarrow\infty}W_s(\nu_k,\nu)=0=\lim_{k\rightarrow\infty}W_s(\nu,\nu_k).
 \]
 The proof is complete by using the triangle inequality for $W_s$.
 \end{proof}

\begin{corollary}\label{compactwasserin}
Suppose that $(X, d)$ is a forward boundedly compact $\theta$-metric space. Thus, for any $p\in [1,\infty)$, the following statements hold:
\begin{itemize}
\item[(i)] $(P_p(X),W_p)$ is a separable complete $\theta$-metric space;

\item[(ii)] if $(X,d)$ is compact, then   $(P_p(X),W_p)=(P(X),W_p)$ is a compact $\theta$-metric space satisfying $\diam(P(X),W_p)=\diam(X,d)$, in which case $i:(X,d)\rightarrow (P(X),W_p)$, $x \mapsto \delta_x$ is an isometric embedding.

\end{itemize}
\end{corollary}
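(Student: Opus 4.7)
The plan is to leverage the fact that when $(X,d)$ is a forward boundedly compact $\theta$-metric space we have $\Theta \equiv \theta$ constant, so the concavity hypotheses of Theorems \ref{wassdistacbasisthe}--\ref{wasstherforqs} become trivial and all estimates degenerate to $\theta$-factors. For (i), Theorem \ref{wassdistacbasisthe} already supplies that $(P_p(X),W_p)$ is an irreversible metric space. To control the reversibility, take any $\mu,\nu \in P_p(X)$ and an optimal transference plan $\tilde\pi$ from $\mu$ to $\nu$ (exists by Theorem \ref{lowercontinuouspfWp}); using $d(y,x) \le \theta\, d(x,y)$ pointwise yields
\[
W_p(\nu,\mu)^p \le \int_{X\times X} d(y,x)^p\,{\ddd}\tilde\pi(x,y) \le \theta^p \int_{X\times X} d(x,y)^p\,{\ddd}\tilde\pi = \theta^p W_p(\mu,\nu)^p,
\]
so $\lambda_{W_p}(P_p(X)) \le \theta$. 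Thus $(P_p(X),W_p)$ is a $\theta$-metric space.

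For separability and completeness, the idea is to compare $W_p$ to the Wasserstein metric $\widehat{W}_p$ built from the symmetrized distance $\hat d$ of \eqref{symmmetricde}. By Theorem \ref{topologychara}/(ii) and Proposition \ref{forwardandbackwardrelation}, $(X,\hat d)$ is a locally compact, complete, separable (in fact, Heine--Borel) reversible metric space; the elementary inequalities $\tfrac12 d \le \hat d \le \tfrac{1+\theta}{2}\, d$ show that at the Wasserstein level $\tfrac12 W_p \le \widehat{W}_p \le \tfrac{1+\theta}{2}\, W_p$. Classical Wasserstein theory on the Polish space $(X,\hat d)$ gives that $(P_p(X),\widehat{W}_p)$ is a complete separable metric space, and these properties transfer to $(P_p(X),W_p)$ through the comparability above. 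Alternatively, for $p>1$ one may directly invoke Theorem \ref{wasstherforqs}/(i)--(ii) with any $q\in[1,p)$ (the hypothesis $\Theta^{qp/(p-q)}$ concave is trivially satisfied), and patch the case $p=1$ by the density of Dirac combinations with rational weights (this is already the content of the proof of Theorem \ref{wasstherforqs}/(i), which goes through verbatim when $\Theta\equiv\theta$).

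For (ii), compactness implies $\diam(X,d)<\infty$, so $\int_X \hat d(\star,x)^p\,{\ddd}\mu(x) \le \diam(X,\hat d)^p < \infty$ for every $\mu\in P(X)$, hence $P_p(X)=P(X)$. The diameter bound $W_p(\mu,\nu) \le \diam(X,d)$ is immediate from any coupling. For the reverse direction and the isometric embedding, note that $\Pi(\delta_x,\delta_y)=\{\delta_{(x,y)}\}$, so $W_p(\delta_x,\delta_y)=d(x,y)$; taking the supremum over $x,y\in X$ yields $\diam(P(X),W_p)\ge\diam(X,d)$ and also establishes that $i\colon x\mapsto\delta_x$ is an isometric embedding. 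Finally, compactness of $(P(X),W_p)$ follows because $(X,\hat d)$ is a compact Polish space, so $P(X)$ is compact in the weak topology, which coincides with the topology induced by $\widehat{W}_p$ on a compact base space; comparability of $W_p$ with $\widehat{W}_p$ then transfers the compactness.

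The main obstacle I anticipate is not the formal estimate but the careful verification that completeness (and the associated weak convergence/tightness machinery) survives the lack of symmetry of $d$. The workaround is exactly the symmetrization trick: once one recognizes that $\hat d$ and $d$ generate the same topology and the same $P_p$-class, the classical Polish space $W_p$-completeness argument for $(X,\hat d)$ can be imported essentially for free, avoiding any duplication of the Prokhorov-tightness/lower-semicontinuity argument in the forward/backward asymmetric setting.
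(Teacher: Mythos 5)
Your argument is correct, but it takes a different route from the paper's for the substantive parts. For (i), the paper simply quotes Lemma \ref{reversbilityofW} (with $q=p$, which is admissible since $\Theta\equiv\theta$ makes $\Theta^{\infty}$ finite, hence ``concave'' in the paper's convention) for the reversibility bound and Theorem \ref{wasstherforqs}/(i)--(ii) for separability and completeness; you instead prove the bound $W_p(\nu,\mu)\leq \theta\,W_p(\mu,\nu)$ directly from an optimal plan and then import completeness and separability from classical Wasserstein theory on the Polish space $(X,\hat d)$ via the two-sided comparison $\tfrac12 W_p\leq \widehat{W}_p\leq \tfrac{1+\theta}{2}W_p$, which is legitimate precisely because $\theta$ is a global constant here (for non-constant $\Theta$ this uniform comparison fails, which is why the paper develops the asymmetric tightness machinery of Corollary \ref{backwardconvergence} and Theorem \ref{wasstherforqs}). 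For (ii), the paper proves forward total boundedness by an explicit finite net of quantized discrete measures and invokes Theorem \ref{compactequvitheorem}/(iii), whereas you transfer compactness of $(P(X),\widehat{W}_p)$ (weak topology on a compact base, Prokhorov) through the same comparability; both are fine, and your route is shorter, while the paper's net construction reuses the estimates of Theorem \ref{wasstherforqs}/(i) and is closer in spirit to what is needed elsewhere. One small caveat: your parenthetical fallback ``invoke Theorem \ref{wasstherforqs} with any $q\in[1,p)$'' does not by itself give completeness of $(P_p(X),W_p)$, since part (ii) of that theorem then only yields convergence in $W_s$ for $s\leq q<p$; you need $q=p$, which is exactly what the constancy of $\Theta$ permits.
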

\begin{proof} Statement (i) directly follows by  Theorem \ref{wasstherforqs} and Lemma \ref{reversbilityofW}.
As for (ii), obviously $P_p(X)=P(X)$. Now we show $(P(X),W_p)$ is forward totally bounded, i.e., for any $\epsilon>0$, there exists a finite forward $\epsilon$-net of $(P(X),W_p)$. In order to do this, choose a finite forward $\varepsilon/\theta$-net $(x_k)_{k=1}^N$ of $X$, where $\varepsilon:=\epsilon/(1+2^{1/p})$. Now we define a finite subset $\mathcal {D}$ of $P(X)$ by
  \[
  \mathcal {D}:=\left\{\left.\sum_{k=1}^N b_k \delta_{x_k}\right|\, b_k\in\left\{0,\eta,2\eta,\ldots,\lceil 1/\eta\rceil \eta \right\}, \ \sum_{k=1}^Nb_k=1\right\},\quad {\eta:=\left(\frac{\varepsilon}{4\theta N\diam(X,d)}\right)^p.}
  \]
 An  argument similar to that of Theorem \ref{wasstherforqs}/(i) yields that $\mathcal {D}$ is
   a finite forward $\epsilon$-net   of $P(X)$. Thus, the compactness of $(P(X),W_p)$ follows from Theorem \ref{compactequvitheorem}/(iii). Moreover, it follows from $W_p(\delta_x,\delta_y)=d(x,y)$ that $\diam(P(X),W_p)\geq \diam(X,d)$ and $i:(X,d)\rightarrow (P(X),W_p)$ is an isometric embedding. On the other hand,
 for any $\mu,\nu\in P(X)$, it is easy to verify $W_p(\mu,\nu)\leq \diam(X,d)$,
which implies $\diam(P(X),W_p)=\diam(X,d)$.
\end{proof}

The same (simpler) argument as in Villani \cite[Theorem 7.12]{V} yields the following result.

\begin{proposition}\label{weaktopologyvswstatopo}
Let $(X,d)$ be a compact $\theta$-metric space. Then the weak topology of $P(X)$ coincides with the topology induced by the Wasserstein metric $W_p$ for any $p\in [1,\infty)$; equivalently, given $(\mu_k)_k\subset P(X)$ and $\mu\in P(X)$, $W_p(\mu,\mu_k)\rightarrow 0$ as $k\rightarrow \infty$ if and only if $(\mu_k)_k$ converges weakly to $\mu$.
\end{proposition}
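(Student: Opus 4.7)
The plan is to reduce the statement to the classical (reversible) Wasserstein theorem by passing through the symmetrized metric $\hat{d}(x,y):=\frac{1}{2}(d(x,y)+d(y,x))$. First, since $(X,d)$ is a compact $\theta$-metric space, both $d(x,y)\leq\theta d(y,x)$ and $d(y,x)\leq\theta d(x,y)$ hold, so averaging gives
\[
\frac{2}{1+\theta}\,\hat{d}(x,y)\,\leq\, d(x,y)\,\leq\, 2\,\hat{d}(x,y),\qquad \forall\,x,y\in X.
\]
By Theorem \ref{topologychara}/(ii), $\hat{d}$ generates the same topology on $X$ as $d$, so $(X,\hat{d})$ is a compact reversible metric space carrying the same Borel structure, the same $P(X)$, and (tautologically) the same weak topology.

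Next, let $\hat{W}_p$ denote the Wasserstein distance of order $p$ on $P(X)$ associated with $\hat{d}$. For any coupling $\pi\in\Pi(\mu,\nu)$, raising the pointwise bi-Lipschitz inequality above to the $p$-th power and integrating against $\pi$ yields
\[
\Bigl(\tfrac{2}{1+\theta}\Bigr)^{\!p}\int_{X\times X}\hat{d}^p\,{\ddd}\pi\ \leq\ \int_{X\times X} d^p\,{\ddd}\pi\ \leq\ 2^p\int_{X\times X}\hat{d}^p\,{\ddd}\pi.
\]
Taking the infimum over $\pi\in\Pi(\mu,\nu)$ on all three sides, and using that the same family $\Pi(\mu,\nu)$ of couplings is relevant for both $W_p$ and $\hat{W}_p$ (Theorem \ref{lowercontinuouspfWp} provides optimal plans in both settings, since $(X,\hat d)$ is Polish and so is $(X,d)$ by Theorem \ref{topologychara}), we obtain
\[
\tfrac{2}{1+\theta}\,\hat{W}_p(\mu,\nu)\ \leq\ W_p(\mu,\nu)\ \leq\ 2\,\hat{W}_p(\mu,\nu).
\]
In particular, for any sequence $(\mu_k)_k\subset P(X)$ and $\mu\in P(X)$, the condition $W_p(\mu,\mu_k)\to 0$ is equivalent to $\hat{W}_p(\mu,\mu_k)\to 0$ (and, by symmetry of $\hat{W}_p$, also to $\hat{W}_p(\mu_k,\mu)\to 0$).

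Finally, invoke the classical theorem (e.g.\ Villani \cite[Theorem 7.12]{V}) on the compact reversible metric space $(X,\hat{d})$: $\hat{W}_p$ metrizes the weak topology on $P(X)$. Combining this with the previous equivalence gives the desired characterization. The only non-trivial ingredient is the bi-Lipschitz comparison of $d$ and $\hat{d}$; once this is in hand, everything else is a direct transcription of the reversible proof. I do not anticipate any serious obstacle beyond verifying the constants, since compactness kills the distinction between forward and backward balls and makes all relevant Wasserstein quantities simultaneously finite.
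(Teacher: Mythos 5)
Your proof is correct, but it proceeds differently from the paper. The paper disposes of this proposition in one line, by redoing the classical compact-space argument of Villani [Theorem 7.12 of \emph{Topics in Optimal Transportation}] directly for the irreversible cost $d^p$ (compactness making the moment conditions and finiteness issues trivial). You instead prove a genuine two-sided comparison: since $\lambda_d(X)\leq\theta$ globally, $\frac{2}{1+\theta}\,\hat d\leq d\leq 2\,\hat d$, and since couplings depend only on the marginals, taking infima over $\Pi(\mu,\nu)$ gives $\frac{2}{1+\theta}\,\hat W_p\leq W_p\leq 2\,\hat W_p$; you then quote the reversible metrization theorem on $(X,\hat d)$ as a black box, using Theorem \ref{topologychara}/(ii) to identify the weak topologies. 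Both routes are sound; yours has the advantage of isolating exactly where the finite reversibility $\theta$ enters (the bi-Lipschitz equivalence of $W_p$ and $\hat W_p$, which is also why the topology statement, not just sequential convergence, follows at once), and it is in the same spirit as other reductions to the symmetrized space used elsewhere in the paper (e.g.\ Lemma \ref{W1KANrUBDIS} and Lemma \ref{compactgeodeisc}), whereas the paper's one-line adaptation avoids introducing $\hat W_p$ altogether. A minor remark: the appeal to Theorem \ref{lowercontinuouspfWp} for optimal plans is not needed, since comparing infima of pointwise-comparable cost integrals already yields the sandwich inequality.
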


\subsection{Displacement interpolation}
In this subsection, $(X,d)$ always denotes a   forward boundedly compact forward geodesic  space, while $C([s,t];X)$ denotes the set of curves from $[s,t]$ to $X$, where $0\leq s<t\leq 1$. And $C([s,t];X)$ is equipped with the uniform topology (see Definition \ref{unformconverge}).

In particular, owing to Remark \ref{geodeisclenghequv} and Theorem \ref{geodesicexistencethe}, our assumption on the reference space $(X,d)$ coincides with the usual assumption in the reversible case (cf.\,Villani \cite[Section 7]{Vi}).



\begin{theorem}\label{turespapcelagranceaction}
Let $(X,\star,d)\in \mathcal {M}^\Theta_*$  be a   pointed forward geodesic space. Given $p\in [1,\infty)$, for any $0\leq s<t\leq 1$, define a  functional $\mathcal {A}^{s,t}_p$ on  $C([s,t];X)$ as
\[
\mathcal {A}^{s,t}_p(\gamma):=\sup_{N\in \mathbb{N}}\sup_{s\leq t_0\leq t_1\leq \cdots \leq t_N=t}\sum_{i=0}^{N-1}\frac{d(\gamma(t_i),\gamma(t_{i+1}))^p}{(t_{i+1}-t_i)^{p-1}},
\]
and define a function $c^{s,t}$ on $X\times X$ as
\[
c^{s,t}(x,y)=\frac{d(x,y)^p}{(t-s)^{p-1}}.
\]
Then $ \mathcal {A}^{0,1}_p$ is a Lagrangian action and $c^{s,t}$'s are the responding cost functions,
i.e.,
\begin{itemize}
\item[(i)] $ \mathcal {A}^{s,t}_p$'s are lower semicontinuous (w.r.t. the  uniform convergence)  and satisfy
\[
 \mathcal {A}^{t_1,t_2}_p+\mathcal {A}^{t_2,t_3}_p=\mathcal {A}^{t_1,t_3}_p,\  \forall\,0\leq t_1< t_2< t_3\leq 1;\tag{4.2}\label{addivity}
\]

\item[(ii)] for any $x,y\in X$, there holds
\[
c^{s,t}(x,y):=\inf\left\{\mathcal {A}^{s,t}(\gamma)\,|\, \gamma\in C([s,t];X) \text{ with } \gamma(s)=x,\,\gamma(t)=y\right\}.
\]
In particular, a curve $\gamma\in C([s,t];X)$ is an  action-minimizing curve $($i.e., it minimizes $\mathcal {A}^{s,t}_p$ among all curves having the same endpoints$)$ if and only if
\begin{itemize}
\item it is a minimal geodesic when $p=1;$

\item  it is a constant-speed minimal geodesic when $p>1;$
\end{itemize}

\smallskip

\item[(iii)] for any curve $\gamma\in C([s,t];X)$, there holds
\[
\mathcal {A}^{s,t}_p(\gamma)=\sup_{N\in \mathbb{N}}\sup_{s=t_0\leq t_1\leq \cdots \leq t_N=t} \sum_{i=0}^{N-1} c^{t_i,t_{i+1}}(\gamma(t_i),\gamma(t_{i+1})).
\]

\end{itemize}
Moreover,  the action is coercive, i.e.,
\begin{itemize}
\item[(a)] $\inf_{s<t}\inf_{\gamma\in C([s,t];X)}\mathcal {A}^{s,t}(\gamma)>-\infty$;

\smallskip

\item[(b)] if $s<t$ are two intermediate times, and $K_s,K_t\subset X$ are any compact sets such that $c^{s,t}(x,y)<\infty$ for all $x\in K_s, y\in K_t$, then $\Gamma^{s,t}_{K_s\rightarrow K_t}$, the set of action-minimizing curves staring in $K_s$ at time $s$ and ending in $K_t$ at time $t$, is compact and nonempty in the uniform topology.
\end{itemize}
\end{theorem}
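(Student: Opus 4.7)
The plan is to follow the classical template from Villani's treatment of Lagrangian actions, carefully checking that the irreversible setting does not cause problems because all the quantities involved ($d^p$, the length $L_d$, etc.) depend only on the forward distance and its already-established properties (continuity of $d$ under $\mathcal{T}_+$ from Theorem 2.2, the Arzelà--Ascoli theorem 2.14, and the existence of minimal geodesics guaranteed by the forward geodesic assumption).

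For (i), I would observe that for each fixed partition $s=t_0\leq \cdots \leq t_N=t$ the map
\[
\gamma\longmapsto \sum_{i=0}^{N-1}\frac{d(\gamma(t_i),\gamma(t_{i+1}))^p}{(t_{i+1}-t_i)^{p-1}}
\]
is continuous in the forward uniform topology $\mathfrak{T}_X$ (by Theorem 2.2, $d$ is continuous under $\mathcal{T}_+\times\mathcal{T}_+$). Since $\mathcal{A}^{s,t}_p$ is the supremum of this family over all partitions, it is lower semicontinuous. Additivity (4.2) is immediate from the definition: any partition of $[t_1,t_3]$ may be refined to include $t_2$, so the sup over partitions of $[t_1,t_3]$ decomposes into sups over partitions of $[t_1,t_2]$ and $[t_2,t_3]$. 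For (iii), the identity follows by the same refinement argument, since $c^{t_i,t_{i+1}}(\gamma(t_i),\gamma(t_{i+1}))$ is itself the infimum of $\mathcal{A}^{t_i,t_{i+1}}_p$ among curves joining those endpoints, and any admissible family of sub-curves concatenates into a single admissible curve for $\mathcal{A}^{s,t}_p$.

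For (ii), the key inequality is Jensen's inequality with weights $\mu_i:=(t_{i+1}-t_i)/(t-s)$ applied to the convex function $u\mapsto u^p$ on $[0,\infty)$: writing $u_i:=d(\gamma(t_i),\gamma(t_{i+1}))/(t_{i+1}-t_i)$,
\[
\sum_{i=0}^{N-1}\frac{d(\gamma(t_i),\gamma(t_{i+1}))^p}{(t_{i+1}-t_i)^{p-1}}
=(t-s)\sum_i \mu_i u_i^p\geq (t-s)\Bigl(\sum_i \mu_i u_i\Bigr)^p
=\frac{\bigl(\sum_i d(\gamma(t_i),\gamma(t_{i+1}))\bigr)^p}{(t-s)^{p-1}}\geq \frac{d(x,y)^p}{(t-s)^{p-1}},
\]
where the last step uses Proposition 2.7(i) (generalized length inequality) through the triangle inequality. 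Taking the supremum over partitions gives $\mathcal{A}^{s,t}_p(\gamma)\geq c^{s,t}(x,y)$. For the reverse inequality and the characterization, I would pick a constant-speed minimal geodesic $\gamma$ from $x$ to $y$ (existing by Proposition 2.11 together with the forward geodesic assumption): by constant-speed and the minimal-geodesic property, for every partition the two Jensen steps are equalities, giving $\mathcal{A}^{s,t}_p(\gamma)=c^{s,t}(x,y)$. The case $p=1$ is straightforward since any minimal geodesic (not necessarily constant speed) realises the infimum. For $p>1$, the equality case of Jensen forces $u_i$ to be constant on every partition, hence $\gamma$ has constant speed.

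Property (a) is immediate since $\mathcal{A}^{s,t}_p\geq 0$. The main work is (b). I would first use (ii) to identify $\Gamma^{s,t}_{K_s\to K_t}$ with the set of constant-speed minimal geodesics (or just minimal geodesics if $p=1$, which then I reparametrise to constant speed) joining a point of $K_s$ to a point of $K_t$. Non-emptiness follows from the forward geodesic hypothesis applied to any pair $(x,y)\in K_s\times K_t$. For compactness, the compactness of $K_s$ and the continuity of $\hat d$ produce a uniform bound $D:=\sup\{d(x,y):x\in K_s,\,y\in K_t\}<\infty$, so each $\gamma\in\Gamma^{s,t}_{K_s\to K_t}$ has $L_d(\gamma)\leq D$ and in particular $\gamma([s,t])\subset\overline{B^+_{K_s}(D)}$; by forward bounded compactness this closed forward ball is compact, so the Arzelà--Ascoli theorem (Theorem 2.14) applied inside it yields a uniformly convergent subsequence. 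The limit curve lies in $\overline{B^+_{K_s}(D)}$, has its endpoints in $K_s$ and $K_t$ (by closedness), and is still action-minimizing by the lower semicontinuity proved in (i). The main obstacle here is confirming that Theorem 2.14 is applicable; this is where forward bounded compactness is genuinely used to reduce to a compact ambient space, and where one has to be careful that the Lipschitz bound $d(\gamma(a),\gamma(b))\leq (D/(t-s))(b-a)$ available from the constant-speed property suffices to produce equicontinuity in the forward topology (which, by Remark 2.6, coincides with the uniform topology of $\hat d$).
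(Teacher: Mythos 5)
Your proposal is correct, and for parts (ii), (iii), (a) and the skeleton of (b) it follows essentially the paper's own route: your Jensen-plus-triangle-inequality display is exactly the paper's estimate (4.3)/(4.4), minimizers are identified and constant-speed minimal geodesics produced in the same way, and in (b) you bound the lengths, confine all minimizers to a compact forward ball and invoke the Arzel\`a--Ascoli theorem (Theorem \ref{Arzela-Ascoli Theorem}). Where you genuinely diverge is (i): the paper proves lower semicontinuity by a quantitative argument --- fixing a near-optimal partition and controlling each term of the perturbed sum through the reversibility bound $\theta=\max_{s\le\tau\le t}\Theta\bigl(d(\star,\gamma(\tau))+1\bigr)$ (its estimates (4.5)--(4.6)), with a separate case when $\mathcal{A}^{s,t}_p(\gamma)=\infty$ --- whereas you observe that each fixed-partition functional is continuous for uniform convergence because $d$ is jointly continuous under $\mathcal{T}_+\times\mathcal{T}_+$ (Theorem \ref{topologychara}), and a supremum of continuous functionals is lower semicontinuous. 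This soft argument is shorter, needs no case distinction for infinite action, and concentrates all the irreversibility issues in Theorem \ref{topologychara}; it is a legitimate and arguably cleaner trade. Likewise, in (b) you conclude that the uniform limit is again a minimizer via lower semicontinuity together with continuity of $d$ at the endpoints, replacing the paper's appeal to Proposition \ref{shorttoshort}; the two are equivalent. Two points should be made explicit in a final write-up. First, the ``immediate'' additivity \eqref{addivity} does use the monotonicity of partition sums under refinement, i.e.\ the two-point case of your Jensen computation (the paper's (4.4)); state that inequality before invoking refinements (also note that (iii) is literally the definition of $\mathcal{A}^{s,t}_p$, since $c^{t_i,t_{i+1}}(\gamma(t_i),\gamma(t_{i+1}))$ is the corresponding partition term, so no detour through (ii) is needed there). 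Second, your parenthetical for $p=1$ in (b) (``reparametrise to constant speed'') does not by itself yield compactness of the set of \emph{all} $p=1$ minimizers, which are minimal geodesics with arbitrary parametrization and need not be equicontinuous; here you are no worse off than the paper, whose proof rests on the parametrization convention built into Theorem \ref{Arzela-Ascoli Theorem}, but the remark should either be justified or dropped, keeping the clean argument for $p>1$ where minimizers are constant-speed and the Lipschitz bound gives equicontinuity.
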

\begin{proof} (iii) is obvious.
  We first show
(ii). Given any $\gamma\in C([s,t];X)$, for any partition $\{t_i\}_{i=0}^N$ of $[s,t]$,
the convexity of $|\cdot|^p$ together with the triangle inequality yields
\begin{align*}
\left(\frac{d(\gamma(s),\gamma(t))}{t-s}\right)^p&\leq \left(\sum_{i=0}^{N-1}\frac{t_{i+1}-t_i}{t-s}\,\frac{d(\gamma(t_i),\gamma(t_{i+1}))}{t_{i+1}-t_i}\right)^p\leq \sum_{i=0}^{N-1} \frac{t_{i+1}-t_i}{t-s}\left(\frac{d(\gamma(t_i),\gamma(t_{i+1}))}{t_{i+1}-t_i}\right)^p\\
&=\frac{1}{t-s}\sum_{i=0}^{N-1}\frac{d(\gamma(t_i),\gamma(t_{i+1}))^p}{(t_{i+1}-t_i)^{p-1}}\leq \frac{\mathcal {A}^{s,t}_p(\gamma)}{t-s},\tag{4.3}\label{realtrinC}
\end{align*}
which  implies that the action-minimizing curve is a constant-speed minimal geodesic if $p>1$ (resp., a minimal geodesic if $p=1$). Since such a curve always exists,
 $\inf{\mathcal {A}^{s,t}_p(\gamma)}=\frac{d(x,y)^p}{(t-s)^{p-1}}=c^{s,t}(x,y)$, where the infimum is taken over all curves $\gamma\in C([s,t];X)$ with $\gamma(s)=x$ and $\gamma(t)=y$.   Thus  (ii) follows.  In particular,  (\ref{realtrinC}) implies
\[
c^{s,t}(\gamma(t_0),\gamma(t_N))\leq \sum_{i=0}^{N-1} c^{t_i,t_{i+1}}(\gamma(t_i),\gamma(t_{i+1})).\tag{4.4}\label{triangleineqlC}
\]

Now we show (i).  Let $(\gamma_\alpha)_\alpha\subset C([s,t];X)$ be a sequence of paths convergent uniformly to $\gamma\in C([s,t];X)$. Set $\theta:=\max_{s\leq \tau\leq t}\Theta\big(d(\star,\gamma(\tau))+1\big)<\infty$.
Given $\varepsilon\in (0,1)$, there exists $J_1=J_1(\varepsilon)$ such that $\rho(\gamma,\gamma_\alpha)=\max_{s\leq \tau\leq t}d(\gamma(\tau),\gamma_\alpha(\tau))<\varepsilon$  for any $\alpha>J_1$.

Firstly we suppose $\mathcal {A}^{s,t}_p(\gamma)<\infty$. Thus
  (iii) yields  a partition $Y=\{t_i\}_{i=0}^N$ of $[s,t]$ for $\gamma$ such that $t_i\neq t_{i+1}$ and $\mathcal {A}^{s,t}_p(\gamma)-\Sigma^p(Y)<\varepsilon$, where $\Sigma^p(Y):=\sum_{i=0}^{N-1} c^{t_i,t_{i+1}}(\gamma(t_i),\gamma(t_{i+1}))$. Also set
$L:=\max_{0\leq i\leq N-1}d(\gamma(t_i),\gamma(t_{i+1}))<\infty$.
Since $Y$ is fixed, there exists $J_2=J_2(\varepsilon)(>J_1)$ such that
\[
\rho(\gamma,\gamma_\alpha)=\max_{s\leq \tau\leq t}d(\gamma(\tau),\gamma_\alpha(\tau))<\varepsilon \frac{\min_{1\leq i\leq N-1}(t_{i+1}-t_i)^{p-1}}{2^{p}N(1+\theta)(L+(1+{\theta}))^{p-1}}=:\epsilon,\ \text{ for any }\alpha>J_2.\tag{4.5}\label{contrallmesp}
\]
Since $d(\gamma_\alpha(\tau),\gamma(\tau))\leq \theta d(\gamma(\tau),\gamma_\alpha(\tau))$ for $\tau\in[s,t]$ and $\alpha>J_2$, the triangle inequality yields
\[
  d(\gamma_\alpha(t_i),\gamma_\alpha(t_{i+1}))\leq d(\gamma(t_i),\gamma(t_{i+1}))+(1+{\theta})\epsilon\leq L+(1+{\theta})\epsilon=:M<\infty,\ \forall \,0\leq i\leq N.\tag{4.6}\label{contralldrj}
  \]
Let   $\Sigma^p_\alpha(Y):=\sum_{i=0}^{N-1} c^{t_i,t_{i+1}}(\gamma_\alpha(t_i),\gamma_\alpha(t_{i+1}))$. Owing to $\epsilon\leq\varepsilon\ll L<M$,
    it follows from the triangle inequality, (\ref{contralldrj}) and (\ref{contrallmesp}) that for  $\alpha>J_2$,
\begin{align*}
\mathcal {A}^{s,t}_p(\gamma)\leq \Sigma^p(Y)+\varepsilon\leq {\Sigma}^p_\alpha(Y)+\sum_{i=0}^{N-1} \frac{2^pM^{p-1}(1+{\theta})\epsilon}{(t_{i+1}-t_i)^{p-1}}+\varepsilon\leq \mathcal {A}^{s,t}_p(\gamma_\alpha)+2\varepsilon,
\end{align*}
which implies  the lower semicontinuity of $\mathcal {A}^{s,t}_p$. Secondly, if $\mathcal {A}^{s,t}_p(\gamma)=\infty$,  choose $Y$ with $\Sigma^p(Y)>1/\varepsilon$. Then the same argument as above yields
$\mathcal {A}^{s,t}_p(\gamma_\alpha)\geq \Sigma^p_\alpha(Y)\geq 1/\varepsilon-\varepsilon$. Therefore, $\mathcal {A}^{s,t}_p$ is always lower semicontinuous, while (\ref{addivity}) follows directly by   (\ref{triangleineqlC}) and (iii).

We now prove  the action is coercive. It suffices to show that
   $\Gamma^{s,t}_{K_s\rightarrow K_t}$ is compact in the uniform topology. Given a minimal geodesic $\gamma$ from $K_s$ to $K_t$,
   its length satisfies
   \[
   L_d(\gamma)=d(\gamma(s),\gamma(t))\leq \max_{x\in K_s,y\in K_t}d(x,y)<\infty.
   \]
   Hence,
  one can
choose a finite constant $R>0$ such that all the minimal geodesics from $K_s$ to $K_t$ are contained in the compact ball $\overline{B^+_{\star}(R)}$. In view of (ii), the compactness follows immediately by  Theorem \ref{Arzela-Ascoli Theorem} and Proposition \ref{shorttoshort}.
\end{proof}

\begin{remark}\label{reversiblelagrage}
It follows from Theorem \ref{turespapcelagranceaction}, Remark \ref{unfromequaive},
Theorem \ref{topologychara}/(ii) and Remark \ref{remarkcompleteseparted}  that
$\mathcal {A}^{0,1}$ is a coercive Lagrange action and   $c^{s,t}$ is the cost function on the Polish space $(X,\hat{d})$.
\end{remark}

 \begin{remark}\label{Finslerspeed}
 Let $(X,d)$ be a forward metric space and let $\gamma:[0,1]\rightarrow X$ be an {\it absolutely continuous curve}, i.e., there exists $f\in L^1([0,1])$ such that
 $d(\gamma(s),\gamma(t))\leq \int^t_s f(\tau)d\tau$  for any $0\leq s\leq t\leq 1$.
Thus a suitable modification to the proof
of Ambrosio, Gigli and Savar\'e \cite[Theorem 1.1.2]{AGS} furnishes that the  following limit (called {\it speed} or {\it metric derivative}) always exists for $\mathscr{L}$-a.e. $t\in [0,1]$,
 \[
v_\gamma(t):=\underset{\epsilon\rightarrow 0^+}{\lim} \frac{d(\gamma(t),\gamma(t+\epsilon))}{\epsilon}=\underset{\epsilon\rightarrow 0^+}{\lim}\frac{d(\gamma(t-\epsilon),\gamma(t))}{\epsilon}.
\]
 Moreover, by the same method as employed in Burago, Burago and Ivanov  \cite[Theorem 2.7.6]{DYS}, one can show
\[
L_d(\gamma)=\int^1_0 v_\gamma(\tau) {\ddd}\tau,\quad \mathcal {A}^{s,t}_p(\gamma)=\int^t_s [v_\gamma(\tau)]^p {\ddd}\tau.
\]
If $(X,d)$ is induced by a Finsler manifold and $\gamma$ is piecewise smooth, (\ref{BM-1}) yields $v_\gamma(t)=F(\dot{\gamma}(t))$.
\end{remark}



The following result follows from Remark \ref{reversiblelagrage} and Villani \cite[Proposition 7.16]{Vi}/(vi) immediately.
\begin{proposition}\label{basispropofLargprop} Given a forward boundedly compact forward geodesic space $(X,d)$, let $\mathcal {A}^{0,1}$ be the Lagrange action and let $c^{s,t}$ be the cost function as in Theorem \ref{turespapcelagranceaction}. Then
for all times $s < t$, there exists a Borel map $S_{s\rightarrow t} : X \times X \rightarrow
C([s, t];X)$  such that for all $x, y \in X$, $S_{s\rightarrow t}(x, y)$ belongs to $\Gamma^{s,t}_{x\rightarrow y}$.
\end{proposition}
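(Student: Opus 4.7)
\textbf{Plan of proof for Proposition \ref{basispropofLargprop}.} The goal is a Borel measurable selection of an action-minimizing curve joining any two points, and my plan is to reduce the statement to the reversible Polish framework and apply the measurable selection theory developed in Villani \cite[Proposition 7.16/(vi)]{Vi}.

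First, I would set up the reversible framework. By Theorem \ref{topologychara}/(ii), the forward topology on $X$ coincides with the topology induced by the symmetrized metric $\hat d$, and by Remark \ref{remarkcompleteseparted}, $(X,\hat d)$ is a locally compact Polish space (since $(X,d)$ is forward boundedly compact, it belongs to $\mathcal{M}^\Theta_*$). Next, by Remark \ref{unfromequaive}, the uniform topology $\mathfrak{T}_X$ on $C([s,t];X)$ agrees with the uniform topology induced by $\hat d$, so $C([s,t];X)$ is also a Polish space. Then Remark \ref{reversiblelagrage} (together with the lower semicontinuity in Theorem \ref{turespapcelagranceaction}/(i) and the coercivity condition (a)--(b)) certifies that $\mathcal{A}^{0,1}_p$ is a coercive Lagrangian action and that the functions $c^{s,t}$ are the associated cost functions, all in the sense of the classical (reversible) framework on the Polish space $(X,\hat d)$.

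Second, I would verify that the set-valued map $(x,y)\mapsto \Gamma^{s,t}_{x\to y}$ has the properties needed for measurable selection. Fix $s<t$. By Theorem \ref{turespapcelagranceaction}/(ii), every such $\Gamma^{s,t}_{x\to y}$ consists of (constant-speed) minimal geodesics from $x$ to $y$; by the coercivity condition (b) applied with singletons $K_s=\{x\}$, $K_t=\{y\}$, the set $\Gamma^{s,t}_{x\to y}$ is nonempty and compact in $C([s,t];X)$. Moreover, the graph
\[
\mathrm{Gr}:=\bigl\{(x,y,\gamma)\in X\times X\times C([s,t];X)\,\big|\,\gamma(s)=x,\ \gamma(t)=y,\ \mathcal{A}^{s,t}_p(\gamma)=c^{s,t}(x,y)\bigr\}
\]
is closed: the endpoint maps $\gamma\mapsto(\gamma(s),\gamma(t))$ are continuous, $\mathcal{A}^{s,t}_p$ is lower semicontinuous (Theorem \ref{turespapcelagranceaction}/(i)), and $c^{s,t}$ is continuous as $d$ is continuous on $(X,\hat d)\times(X,\hat d)$. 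Hence the multifunction $(x,y)\mapsto\Gamma^{s,t}_{x\to y}$ has closed graph in a Polish space with nonempty compact values.

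Third, I would invoke the Kuratowski--Ryll-Nardzewski measurable selection theorem (as used in Villani \cite[Proposition 7.16/(vi)]{Vi}): a closed-graph, nonempty-valued multifunction from a Polish space into a Polish space with compact (hence complete separable) values admits a Borel measurable selector. This directly produces the required Borel map $S_{s\to t}:X\times X\to C([s,t];X)$ with $S_{s\to t}(x,y)\in\Gamma^{s,t}_{x\to y}$.

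The only potential obstacle is a bookkeeping one: making sure that ``Polish'' and ``continuous/l.s.c.'' always refer to the symmetrized metric $\hat d$, so that the hypotheses of Villani's proposition are literally satisfied in their reversible formulation. Once Theorem \ref{topologychara}/(ii), Remark \ref{unfromequaive} and Remark \ref{reversiblelagrage} have been recorded, this bookkeeping is automatic and no new analytic input is required.
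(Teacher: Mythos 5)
Your proposal is correct and follows essentially the same route as the paper: the paper's proof consists precisely of noting (via Remark \ref{reversiblelagrage}, which rests on Theorem \ref{topologychara}/(ii) and Remark \ref{unfromequaive}) that $\mathcal{A}^{0,1}$ is a coercive Lagrangian action with cost functions $c^{s,t}$ on the Polish space $(X,\hat{d})$, and then invoking Villani's Proposition 7.16/(vi) for the Borel selection. The extra detail you supply (compact nonempty values, closed graph, measurable selection theorem) just unpacks what that citation delivers, so no new input is needed.
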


\begin{definition}
Given a forward boundedly compact forward geodesic space $(X,d)$, let the Lagrangian action $\mathcal {A}^{0,1}$ and the cost function $c^{s,t}$ be as in Theorem \ref{turespapcelagranceaction}. Denote by $\Gamma(X)$  the {\it set of constant-speed minimal geodesics in $X$ (defined on $[0,1]$)}.
\begin{itemize}

\item  A {\it dynamical transference plan} $\Pi$ is a probability measure on $C([0,1];X)$ and the {\it evaluation} at time $t$ is a map $e_t:C([0,1];X)\rightarrow X$ defined as $e_t (\gamma):=\gamma(t)$.

\smallskip

\item  Given two measures $\mu_0,\mu_1\in P(X)$, a {\it dynamical optimal transference plan} of $(\mu_0,\mu_1)$ is a probability measure $\Pi$ on $\Gamma(X)$ such that $\pi=(e_0,e_1)_\sharp \Pi$ is an optimal transference plan from $\mu_0$ to $\mu_1$ with respect to $c^{0,1}(=d^{\,p})$.
If $\Pi$ is a dynamical optimal transference plan, then $\mu_t:=(e_t)_{\sharp}\Pi$, $0\leq t\leq 1$ is called a {\it displacement interpolation}.
\end{itemize}
\end{definition}

\begin{remark}\label{existencedynamicaloptimal}  Given  any two measures $\mu_0,\mu_1\in P(X)$, let $\pi$ be an optimal transference plan from $\mu_0$ to $\mu_1$ (w.r.t. $c^{0,1}=d^p$) and
 $\Pi:=S_{\sharp}\pi$, where  $S:=S_{0\rightarrow 1}:X\times X\rightarrow \Gamma(X)\subset C([0,1];X)$ is the map  in Proposition \ref{basispropofLargprop}. It follows from  $(e_0,e_1)\circ S=\id_{X\times X}$ that $\Pi$ is a dynamical optimal transference plan of $(\mu_0,\mu_1)$ with $\pi=(e_0,e_1)_\sharp\Pi$.
\end{remark}

\begin{remark}\label{continumut}Let $\mu_t=(e_t)_{\sharp}\Pi$, $0\leq t \leq 1$ be a displacement interpolation. Then
the dominated convergence theorem  implies that $(\mu_t)_{0\leq t\leq 1}$ is continuous in $P(X)$ (w.r.t. the weak topology).
\end{remark}

In the following, we view $P(X)$ as a reversible metric space $(P(X),d_P)$. On account of Proposition \ref{measureweakconvergence}, it is a {complete  separable reversible} metric space equipped with the weak topology. Endow
$C([0,1];P(X))$ with the uniform topology induced by $d_P$ and equip $P(C([0,1];X))$ with the weak topology. Then we have the following result, whose proof  is given in Appendix \ref{optimaltarsn}.

\begin{lemma}\label{compactgeodeisc}
Let $(X,\star,d)\in \mathcal {M}^\Theta_\star$ be a forward geodesic space.  Then the map
\[
\mathfrak{E}:P(C([0,1];X))\rightarrow C([0,1];P(X)), \quad \Pi\mapsto ((e_t)_\sharp \Pi)_{0\leq t\leq 1}
\]
is continuous.
\end{lemma}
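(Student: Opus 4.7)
The first task is to verify that $\mathfrak{E}$ is well-defined: for each $\Pi \in P(C([0,1];X))$, the curve $t \mapsto (e_t)_\sharp\Pi$ should be continuous from $[0,1]$ into $(P(X), d_P)$. By Proposition \ref{measureweakconvergence}/(ii), $d_P$-convergence coincides with weak convergence, so it suffices to check that $t \mapsto \int \phi(\gamma(t))\,d\Pi(\gamma)$ is continuous for every bounded continuous $\phi: X \to \mathbb{R}$; this is exactly the dominated convergence argument used in Remark \ref{continumut}, applied to arbitrary $\Pi$ instead of a displacement interpolation.

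For the continuity of $\mathfrak{E}$, suppose $\Pi_n \to \Pi$ weakly in $P(C([0,1];X))$; the goal is $\sup_{t \in [0,1]} d_P((e_t)_\sharp\Pi_n,(e_t)_\sharp\Pi) \to 0$. The strategy is a tightness-plus-equicontinuity argument. By Theorem \ref{topologychara}/(ii) and Remark \ref{remarkcompleteseparted}, the symmetrized space $(X,\hat{d})$ is Polish, so $C([0,1];X)$ endowed with $\hat{\rho}(\alpha,\beta) := \sup_t \hat{d}(\alpha(t),\beta(t))$ is Polish as well. Prokhorov's theorem applied to the weakly convergent sequence $(\Pi_n)$ gives, for every $\epsilon > 0$, a compact set $K \subset C([0,1];X)$ with $\Pi_n(K^c), \Pi(K^c) < \epsilon$ for all $n$. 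By the Arzel\`a--Ascoli theorem in $(X,\hat d)$, $K$ is equicontinuous, so there is $\delta > 0$ such that $\hat{d}(\gamma(s),\gamma(t)) < \epsilon$ whenever $\gamma \in K$ and $|s-t| < \delta$.

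Next I would partition $[0,1]$ into intervals of length less than $\delta$ with endpoints $0 = t_0 < t_1 < \cdots < t_N = 1$. Since each $e_{t_i}$ is continuous, weak convergence of $\Pi_n$ gives $(e_{t_i})_\sharp\Pi_n \to (e_{t_i})_\sharp\Pi$ weakly, hence in $d_P$; by finiteness of the partition there is $N_0$ with $d_P((e_{t_i})_\sharp\Pi_n,(e_{t_i})_\sharp\Pi) < \epsilon$ for all $n \geq N_0$ and all $i$. For an arbitrary $t \in [0,1]$, pick $t_i$ with $|t-t_i| < \delta$; using the coupling $(e_{t_i}, e_t)_\sharp\Pi_n$ and the equicontinuity on $K$, one estimates $d_P((e_{t_i})_\sharp\Pi_n,(e_t)_\sharp\Pi_n) \lesssim \epsilon$ (working with the Prokhorov metric built from $\hat d$, which metrizes the same weak topology as $d_P$), and the same bound holds with $\Pi$ in place of $\Pi_n$. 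The triangle inequality for $d_P$ then yields $d_P((e_t)_\sharp\Pi_n,(e_t)_\sharp\Pi) = O(\epsilon)$ uniformly in $t$ for $n \geq N_0$, which is what we want.

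\textbf{Main obstacle.} The delicate step is upgrading the trivial pointwise-in-$t$ weak convergence of $(e_t)_\sharp\Pi_n$ to uniform convergence in $t$. This requires a common modulus of continuity for the curves $t \mapsto (e_t)_\sharp\Pi_n$ independent of $n$, which in turn comes from the tightness of $\{\Pi_n\} \cup \{\Pi\}$ together with Arzel\`a--Ascoli. Some care is needed because $d_P$ is built from the asymmetric $d$ while equicontinuity is naturally formulated with the symmetric $\hat d$; passing through $\hat d$ and using Proposition \ref{measureweakconvergence}/(ii) to translate between $\hat{d}_P$-convergence and $d_P$-convergence (both being equivalent to weak convergence on the Polish space $(X,\hat d)$) is the cleanest way around this technicality.
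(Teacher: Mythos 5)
Your argument is correct, but it takes a genuinely different route from the paper. The paper's proof is a two-line reduction to the reversible setting: it equips $C([0,1];P(X))$ with the uniform topology coming from the Wasserstein distance $\widehat{W}_1$ of the symmetrized space $(X,\hat{d})$, quotes Villani's continuity statement for the evaluation map $\Pi\mapsto((e_t)_\sharp\Pi)_t$ in that topology, and then transfers back to the $d_P$-uniform topology via Lemma \ref{uniformtopologythesame} (two reversible metrics inducing the same topology on $P(X)$ induce the same uniform topology on $C([0,1];P(X))$). You instead prove the key continuity directly: tightness of $\{\Pi_n\}\cup\{\Pi\}$ on the Polish space $(C([0,1];X),\hat{\rho})$ via Prokhorov, equicontinuity of the compact set $K$ via Arzel\`a--Ascoli, pointwise weak convergence at a finite time grid, and a Prokhorov-metric estimate plus triangle inequality to get uniformity in $t$. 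In effect you are reproving the Villani ingredient rather than citing it; your version is self-contained and yields explicit uniform estimates, while the paper's is shorter but leans on the external reference together with Lemma \ref{uniformtopologythesame}.

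One point in your write-up is justified too loosely: you pass from bounds in the Prokhorov metric built from $\hat{d}$ to bounds in $d_P$ by saying that both metrize the weak topology (Proposition \ref{measureweakconvergence}/(ii)). Topological equivalence alone does not upgrade to equivalence of \emph{uniform-in-$t$} convergence, so as stated this step is incomplete. It is easily repaired in either of two ways: (a) note the uniform metric comparison $d_P\leq 2\,\hat{d}_P$, which follows from $d\leq 2\hat{d}$ since the forward $\epsilon$-enlargement with respect to $\hat{d}$ of a closed set is contained in its $2\epsilon$-enlargement with respect to $d$ (equivalently, run your coupling estimate directly in $d_P$, using $d(\gamma(t_i),\gamma(t))\leq 2\hat{d}(\gamma(t_i),\gamma(t))$ on $K$); or (b) invoke the paper's Lemma \ref{uniformtopologythesame} on $P(X)$, which applies precisely because the limit curve $t\mapsto(e_t)_\sharp\Pi$ is continuous — a fact you have already established in your well-definedness step. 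With either fix the proof is complete.
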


\begin{theorem}\label{lengspacewass} Given a pointed forward geodesic space $(X,\star,d)\in \mathcal {M}^\Theta_\star$, let $(P_p(X),W_p)$ be the Wasserstein space defined as in Theorem \ref{wassdistacbasisthe} for some $p>1$.
Thus, given any two measures $\mu_0,\mu_1\in P_p(X)$, and a continuous curve $(\mu_t)_{0\leq t\leq 1}$ valued in $P(X)$, the following properties are equivalent:
\begin{itemize}
\item[(i)] $\mu_t$ is a displacement interpolation, i.e., there exists a dynamical optimal transference plan $\Pi$ with
\[
\mu_t=(e_t)_\sharp\Pi, \quad 0\leq t\leq 1.\tag{4.7}\label{conmut}
\]
\item[(ii)] $(\mu_t)_{0\leq t\leq 1}$ is a constant-speed minimal geodesic from $\mu_0$ to $\mu_1$ in the space $(P_p(X),W_p)$, i.e.,
\[
W_p(\mu_s,\mu_t)=(t-s)\,W_p(\mu_0,\mu_1), \text{ for any }0\leq s\leq t\leq 1.\tag{4.8}\label{minimalwpgeodes}
\]
\end{itemize}

\smallskip

In particular, if $\mathcal {K}_0,\mathcal {K}_1\subset P_p(X)$ are two compact subsets of $P(X)$, then the set of dynamical optimal transference plans $\Pi$ with $(e_0)_\sharp \Pi\in \mathcal {K}_0$  and $(e_1)_\sharp \Pi\in \mathcal {K}_1$ is compact and nonempty (w.r.t. the weak topology of $P(C([0,1];X))$), and the set of constant-speed minimal geodesics $(\mu_t)_{0\leq t\leq 1}$ such that $\mu_0\in \mathcal {K}_0$ and $\mu_1\in \mathcal {K}_1$ is compact and nonempty (w.r.t. the uniform topology on $C([0,1];P(X))$).

\end{theorem}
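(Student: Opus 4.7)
The plan is to mirror the classical argument from the reversible setting, taking as main inputs the characterization of action-minimizing curves (Theorem \ref{turespapcelagranceaction}), the Borel selection $S_{s\to t}$ from Proposition \ref{basispropofLargprop}, and the continuity of the evaluation functor $\mathfrak{E}$ from Lemma \ref{compactgeodeisc}. Beyond the forward boundedly compactness of $(X,d)$ and the fact that the forward cost $c^{s,t}(x,y)=d(x,y)^p/(t-s)^{p-1}$ already encodes the irreversibility, no symmetry of $d$ is needed.

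For the direction (i)$\Rightarrow$(ii), I would take $\pi_{s,t}:=(e_s,e_t)_\sharp\Pi$ as a coupling of $\mu_s$ and $\mu_t$; since $\Pi$-a.e.\ $\gamma\in\Gamma(X)$ satisfies $d(\gamma(s),\gamma(t))=(t-s)\,d(\gamma(0),\gamma(1))$, the estimate
\[
W_p(\mu_s,\mu_t)^p \le \int d(\gamma(s),\gamma(t))^p\,{\ddd}\Pi(\gamma) = (t-s)^p\,W_p(\mu_0,\mu_1)^p
\]
is immediate, and the triangle inequality for $W_p$ (Theorem \ref{wassdistacbasisthe}) forces equality at every intermediate time, yielding (\ref{minimalwpgeodes}).

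For (ii)$\Rightarrow$(i), the plan is to build dyadic approximations. For each $N\in\mathbb{N}$, let $\pi_i^N$ be an optimal transference plan from $\mu_{i/N}$ to $\mu_{(i+1)/N}$ (existence by Theorem \ref{lowercontinuouspfWp}), use the gluing lemma iteratively to assemble $\pi^N\in P(X^{N+1})$ whose consecutive two-marginals are the $\pi_i^N$, and push it forward by the map sending $(x_0,\ldots,x_N)$ to the concatenation of the selected minimal geodesics $S_{i/N\to(i+1)/N}(x_i,x_{i+1})$, obtaining $\Pi^N\in P(C([0,1];X))$. By (\ref{minimalwpgeodes}) applied on each subinterval, the expected action under $\Pi^N$ equals $W_p(\mu_0,\mu_1)^p$, so tightness of $(\Pi^N)$ follows from tightness of $\mu_0,\mu_1$ together with the coerciveness in item (b) of Theorem \ref{turespapcelagranceaction}. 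Extracting $\Pi^N \rightharpoonup \Pi$ along a subsequence, lower semicontinuity of $\mathcal{A}^{0,1}_p$ combined with the bound $\int d(\gamma(0),\gamma(1))^p\,{\ddd}\Pi \ge W_p(\mu_0,\mu_1)^p$ coming from optimality and from inequality (\ref{realtrinC}) forces $\Pi$-a.e.\ curve to be a constant-speed minimal geodesic and $(e_0,e_1)_\sharp\Pi$ to be an optimal coupling of $\mu_0,\mu_1$; hence $\Pi$ is a dynamical optimal transference plan.

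The final compactness assertion then follows from the same tightness/coerciveness machinery applied to plans with endpoint marginals in $\mathcal{K}_0\times\mathcal{K}_1$, weak-closedness of the set of optimal plans from lower semicontinuity of the transport cost, and the companion statement for constant-speed minimal geodesics in $P(X)$ from applying the continuous functor $\mathfrak{E}$ of Lemma \ref{compactgeodeisc}. I expect the main obstacle in (ii)$\Rightarrow$(i) to be the verification of $(e_t)_\sharp\Pi=\mu_t$ for \emph{every} $t\in[0,1]$, not merely at dyadic rationals: this requires combining continuity of $t\mapsto\mu_t$ in $W_p$ (Theorem \ref{wasstherforqs}(iii) and Proposition \ref{weaktopologyvswstatopo}) with the fact that $\Pi$-a.s.\ the curve $t\mapsto\gamma(t)$ is $W_p(\mu_0,\mu_1)$-Lipschitz on $[0,1]$, so that the full family of marginals is pinned down by its restriction to dyadic times.
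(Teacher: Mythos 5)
Your proposal is correct in substance, but it follows a genuinely different route from the paper. The paper does not redo the gluing construction at all: by Remark \ref{reversiblelagrage} the action $\mathcal{A}^{0,1}_p$ and the costs $c^{s,t}$ of Theorem \ref{turespapcelagranceaction} are a coercive Lagrangian action with its cost functions on the Polish symmetrization $(X,\hat d)$, so the paper cites Villani's Theorem 7.21 as a black box to get the equivalence ``$(\mu_t)$ is a displacement interpolation $\Leftrightarrow$ $C^{t_1,t_2}(\mu_{t_1},\mu_{t_2})+C^{t_2,t_3}(\mu_{t_2},\mu_{t_3})=C^{t_1,t_3}(\mu_{t_1},\mu_{t_3})$ for all intermediate times,'' where $C^{s,t}=W_p^p/(t-s)^{p-1}$; the only thing actually proved in the paper is the elementary equivalence of this cost-additivity with \eqref{minimalwpgeodes} (convexity of $|\cdot|^p$ plus the directed triangle inequality), and the final compactness claim is again Villani's theorem plus Remark \ref{existencedynamicaloptimal} and Lemma \ref{compactgeodeisc}. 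You instead re-prove the hard implication (ii)$\Rightarrow$(i) from scratch by dyadic gluing, selection via $S_{s\to t}$, tightness, and lower semicontinuity of the action --- essentially reconstructing the proof of Villani's Theorem 7.21 in the irreversible setting. What your route buys is self-containedness (no appeal to the symmetrization trick or to the external theorem); what the paper's route buys is brevity and an automatic treatment of the compactness statement. One place where your sketch should be tightened: coercivity item (b) of Theorem \ref{turespapcelagranceaction} concerns compactness of the set of \emph{action-minimizing} curves between compact sets, which is not what the curves under $\Pi^N$ are (they are only piecewise geodesic); the tightness of $(\Pi^N)$ should instead be obtained from Markov's inequality on the action together with the Hölder bound $L_d(\gamma,s,t)\leq \mathcal{A}^{s,t}_p(\gamma)^{1/p}(t-s)^{1-1/p}$, which confines curves starting in a compact set with bounded action to a compact forward ball and makes them forward equicontinuous, so that Theorem \ref{Arzela-Ascoli Theorem} (together with forward bounded compactness) applies; with this adjustment, and your dyadic-plus-continuity argument to identify $(e_t)_\sharp\Pi=\mu_t$ at all times, the argument goes through.
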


\begin{proof} Given $\mu_1,\mu_2\in P_p(X)$, for any $0\leq s\leq t\leq 1$, let $C^{s,t}(\mu_1,\mu_2)$ be the optimal transport cost from $\mu_1$ to $\mu_2$, that is,
\begin{align*}
C^{s,t}(\mu_1,\mu_2):=\inf_{\pi\in \Pi(\mu_1,\mu_2)}\int_{X\times X} c^{s,t}(x,y)d\pi(x,y)=\frac{W_p(\mu_1,\mu_2)^p}{(t-s)^{p-1}},
\end{align*}
where $c^{s,t}(x,y)$ is defined as in Theorem \ref{turespapcelagranceaction}. In particular,
$C^{0,1}(\mu_1,\mu_2)=W_p(\mu_1,\mu_2)^p<\infty$.  Let $(\mu_t)_{0\leq t\leq 1}$ be a continuous path from $\mu_0$ to $\mu_1$ in $P(X)$ (w.r.t. the weak topology).
Now it follows from  Remark \ref{remarkcompleteseparted} and
Villani \cite[Theorem 7.21]{Vi} that the following statements are equivalent:

\smallskip

(a) $\mu_t$ is a displacement interpolation;

\smallskip

(b) for any three intermediate times $t_1<t_2<t_3$ in $[0,1]$, one has
\[
C^{t_1,t_2}(\mu_{t_1},\mu_{t_2})+C^{t_2,t_3}(\mu_{t_2},\mu_{t_3})=C^{t_1,t_3}(\mu_{t_1},\mu_{t_3}).\tag{4.9}\label{intermedminaltogeodesic}
\]

Hence,
in order to show that  (i) is equivalent  to (ii), it suffices to prove
that (ii)$\Leftrightarrow$(b).  The ``$\Rightarrow$" part is obvious. For
  the ``$\Leftarrow$" part,
the convexity of $|\cdot|^p$ and the triangle inequality yield
\begin{align*}
\left(\frac{ W_p(\mu_{t_1},\mu_{t_3})}{t_3-t_1}  \right)^p \leq \left( \frac{W_p(\mu_{t_1},\mu_{t_2})+W_p(\mu_{t_2},\mu_{t_3})}{t_3-t_1} \right)^p\leq \frac{1}{t_3-t_1}\left(\frac{W_p(\mu_{t_1},\mu_{t_2})^p}{(t_2-t_1)^{p-1}}+\frac{W_p(\mu_{t_2},\mu_{t_3})^p}{(t_3-t_2)^{p-1}}\right),
\end{align*}
which together with (\ref{intermedminaltogeodesic})
then furnishes
\[
W_p(\mu_{t_1},\mu_{t_2})+W_p(\mu_{t_2},\mu_{t_3})=W_p(\mu_{t_1},\mu_{t_3}),\quad \frac{W_p(\mu_{t_1},\mu_{t_2})}{t_2-t_1}=\frac{W_p(\mu_{t_2},\mu_{t_3})}{t_3-t_2}.
\]
Hence, we obtain $W_p(\mu_s,\mu_t)=(t-s)\, W_p(\mu_0,\mu_1)$, which together with the triangle inequality of $W_p$ implies $\mu_t\in P_p(X)$ for any $t\in [0,1]$. Therefore,
(b) follows.

We now show the second part of the theorem.
Let $\mathcal {K}_0,\mathcal {K}_1\subset P_p(X)$ be two compact subsets of $P(X)$. Theorem \ref{wassdistacbasisthe}/(i) implies $C^{0,1}(\mu_0,\mu_1)<+\infty$ for all $\mu_0\in \mathcal {K}_0$, $\mu_1\in \mathcal {K}_1$. Hence,
Remark \ref{remarkcompleteseparted} together with
Villani \cite[Theorem 7.21]{Vi}  again furnishes that the set of dynamical optimal transference plans $\Pi$ with $(e_0)_\sharp \Pi\in \mathcal {K}_0$  and $(e_1)_\sharp \Pi\in \mathcal {K}_1$ is compact. The non-emptyness follows by Remark \ref{existencedynamicaloptimal}.
 Since the continuous image of a compact and nonempty set is compact and nonempty,  it follows from Lemma \ref{compactgeodeisc} that the set of constant-speed minimal geodesics $(\mu_t)_{0\leq t\leq 1}$ such that $\mu_0\in \mathcal {K}_0$ and $\mu_1\in \mathcal {K}_1$ is compact and nonempty  as well.
\end{proof}


\begin{corollary}\label{lengthpropWASSER}
Let $(X,d)$ be a compact $\theta$-geodesic space. Then for each $p\in [1,\infty)$, the Wasserstein space $(P(X),W_p)$ is a compact $\theta$-geodesic space as well.
\end{corollary}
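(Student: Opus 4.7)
Compactness and the $\theta$-metric structure come for free: by Corollary~\ref{compactwasserin}/(ii) applied to the compact $\theta$-geodesic space $(X,d)$, we have $P_p(X) = P(X)$, $(P(X), W_p)$ is a compact $\theta$-metric space, and $\diam(P(X),W_p) = \diam(X,d)$. Invoking Proposition~\ref{metricsigeodesic}, it therefore suffices to exhibit, for any $\mu_0, \mu_1 \in P(X)$, a minimal geodesic joining them in $(P(X), W_p)$; in fact I will produce a constant-speed minimal geodesic via displacement interpolation, through a construction that is uniform in $p \in [1,\infty)$.

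Fix once and for all a Borel selector $S : X \times X \to \Gamma(X)$ of constant-speed minimal geodesics of $(X,d)$, obtained by applying Proposition~\ref{basispropofLargprop} to the Lagrangian $\mathcal{A}^{0,1}_{p'}$ with some auxiliary exponent $p' > 1$; Theorem~\ref{turespapcelagranceaction}/(ii) guarantees that the corresponding action-minimizers are precisely the constant-speed minimal geodesics. For the given exponent $p$, let $\pi \in \Pi(\mu_0, \mu_1)$ be an optimal coupling for the cost $d^{\,p}$ (Theorem~\ref{lowercontinuouspfWp}), set $\Pi := S_\sharp \pi \in P(C([0,1];X))$, and define the candidate curve $\mu_t := (e_t)_\sharp \Pi$; by Remark~\ref{continumut}, $(\mu_t)_{0\leq t\leq 1}$ is continuous in $P(X)$.

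Since $(e_s, e_t)_\sharp \Pi$ is a coupling of $(\mu_s, \mu_t)$ and $d(\gamma(s), \gamma(t)) = (t-s)\, d(\gamma(0), \gamma(1))$ for every $\gamma \in \supp\Pi$, we have
\[
W_p(\mu_s, \mu_t)^p \;\leq\; \int_{\Gamma(X)} d(\gamma(s), \gamma(t))^p\, d\Pi(\gamma) \;=\; (t-s)^p \int_{X \times X} d(x,y)^p\, d\pi(x,y) \;=\; (t-s)^p\, W_p(\mu_0, \mu_1)^p.
\]
Combined with the triangle inequality $W_p(\mu_0,\mu_1) \leq W_p(\mu_0,\mu_s) + W_p(\mu_s, \mu_t) + W_p(\mu_t, \mu_1)$ at every $0 \leq s \leq t \leq 1$, this forces equality throughout, giving $W_p(\mu_s, \mu_t) = (t-s)\,W_p(\mu_0, \mu_1)$ and hence that $(\mu_t)_{0 \leq t \leq 1}$ is the desired constant-speed minimal geodesic. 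The argument is essentially an assembly of pieces already in place; the one mild subtlety I would flag is the deliberate choice of an exponent $p' > 1$ for the Borel selector even when treating $p = 1$, since for $p = 1$ the action-minimizers of $\mathcal{A}^{s,t}_1$ are merely minimal geodesics and need not have constant speed, whereas the displacement-interpolation computation above crucially uses the constant-speed identity.
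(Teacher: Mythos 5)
Your proof is correct, and it takes a somewhat different route from the paper's. The paper also reduces the problem, via Corollary \ref{compactwasserin} and Proposition \ref{metricsigeodesic}, to exhibiting a minimal geodesic between any $\mu_0,\mu_1\in P(X)$, but then it splits by exponent: for $p>1$ it simply invokes Theorem \ref{lengspacewass} (the full equivalence between displacement interpolations and constant-speed $W_p$-geodesics, whose proof rests on Villani's intermediate-times machinery), and for $p=1$ it sketches ``a standard argument combined with Remark \ref{existencedynamicaloptimal}'' using the constant-speed identity $d(e_s\gamma,e_t\gamma)=(t-s)\,d(e_0\gamma,e_1\gamma)$. You instead give a single construction uniform in $p\in[1,\infty)$: push a $d^{\,p}$-optimal coupling forward through a Borel selector of constant-speed minimal geodesics, compute $W_p(\mu_s,\mu_t)\le (t-s)\,W_p(\mu_0,\mu_1)$, and upgrade to equality via the triangle inequality. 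What your version buys is (a) independence from Theorem \ref{lengspacewass}, so the argument is more elementary and self-contained, and (b) an explicit fix of the point the paper glosses over at $p=1$: Remark \ref{existencedynamicaloptimal} tacitly assumes the selector of Proposition \ref{basispropofLargprop} lands in $\Gamma(X)$, whereas for the cost $d^{\,1}$ the action-minimizers are only minimal geodesics, not necessarily of constant speed; your choice of an auxiliary exponent $p'>1$ for the selector (with Theorem \ref{turespapcelagranceaction}/(ii) guaranteeing constant speed) closes that gap cleanly, at essentially no extra cost. What the paper's route buys is brevity for $p>1$ and consistency with the displacement-interpolation formalism it has already set up. Two cosmetic remarks: the constant-speed identity holds for $\Pi$-a.e.\ $\gamma$ (the image of the selector), which is all the integral computation needs, and the identity $W_p(\mu_s,\mu_t)=(t-s)W_p(\mu_0,\mu_1)$ should be read, via partition sums, as giving $L_{W_p}(\mu_\cdot,a,b)=W_p(\mu_a,\mu_b)$ on every subinterval, which is exactly the paper's definition of a (constant-speed) minimal geodesic; both points are immediate and do not affect the validity of your argument.
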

\begin{proof}  Due to Corollary \ref{compactwasserin} and Proposition \ref{metricsigeodesic}, it suffices to show that for any $\mu_0,\mu_1\in P(X)$, there is a geodesic from $\mu_0$ to $\mu_1$. The case when $p>1$ follows directly by
     Theorem \ref{lengspacewass}. On the other hand,
if $\gamma(t)$, $t\in[0,1]$ is a constant-speed minimal geodesic in $X$, we have
$d(e_s(\gamma),e_t(\gamma))=(t-s)\,d(e_0(\gamma),e_1(\gamma))$ for any $[s,t]\subset[0,1]$.
Based on this fact,
the case $p=1$ follows by a standard argument combined with Remark \ref{existencedynamicaloptimal}.
\end{proof}



\begin{definition}
Given a forward boundedly compact forward geodesic space $(X, d)$, let $\Pi\in P(\Gamma(X))$ be a dynamical  transference plan {with respect to $c=d^{\,p}$}. For each $t\in (0,1)$, define the {\it associated kinetic energy} $\varepsilon_t$ (which is a measure on $X$) by
\[
\varepsilon_t:=(e_t)_\sharp \left( \frac{L^2_d}{2}\Pi \right),
\]
where $L_d$ is the length defined by Definition \ref{dfelength}.
If $\varepsilon_t$ is absolutely continuous w.r.t.  $\mu_t:=(e_t)_\sharp \Pi$, define the {\it speed field} $|v|(t,x)$ by
\[
|v|(t,x):=\sqrt{2\frac{{\ddd} \varepsilon_t}{{\ddd}\mu_t}}.
\]
\end{definition}

\begin{example}[Compact case]\label{compactkinetic} Let $(X,d)$ be a compact $\theta$-geodesic space. Thus there holds
\begin{align*}
\varepsilon_t\leq \frac{\diam^2(X)}{2}(e_t)_{\sharp}\Pi=\frac{\diam^2(X)}{2}\mu_t,
\end{align*}
 which implies $ |v|(t,x)\leq \diam(X)$.
 Hence, $|v|(t,\cdot)$ is well-defined and is bounded by $\diam(X)$  (up to modification on a set of zero $\mu_t$-measure).
\end{example}

\begin{example}[Finsler case] Let $(M,d_F)$ be a  forward geodesic space induced by a forward complete Finsler manifold $(M,F)$. Assume that the geodesics in  $\supp\Pi$ do not cross at intermediate times; by a similar argument to Villani \cite[Corollary 8.2, Theorem 8.5]{Vi}, one can show that this case will happen when $\Pi$ is an dynamical optimal transference plan (w.r.t. $c=d^{\,2}$). Then for each $t\in (0,1)$ and $x\in M$, there is at most one geodesic $\gamma$ with $\gamma(t)=x$. Thus,
\begin{align*}
\varepsilon_t({\ddd}x)=\left(\frac{F^2(\dot{\gamma}(t))}{2}  \right)[(e_t)_\sharp\Pi]({\ddd}x)=\left(\frac{F^2(\dot{\gamma}(t))}{2}  \right)\mu_t({\ddd}x).
\end{align*}
Thus, $|v|(t,x)$ is exactly $F(\dot{\gamma}(t))$, the speed at time $t$ and position $x$, see Remark \ref{Finslerspeed}.
\end{example}

\begin{theorem}\label{vectorfieldscontrol}
Let $(X,d)$ be a compact $\theta$-geodesic space, let $\Pi\in P(\Gamma(X))$ be a dynamical optimal transference plan with respect to $c=d^2$, let $(\mu_t)_{0\leq t\leq 1}$ be the associated displacement interpolation and let $|v|=|v|(t,x)$ be the associated speed field. Thus, for each $t\in (0,1)$, one can modify $|v|(t,\cdot)$ on a $\mu_t$-negligible set in such a way that for all $x,y\in X$,
\[
\left||v|(t,x)-|v|(t,y)   \right|\leq  \sqrt{\frac{3\theta(1+\theta)\diam(X)}{2t(1-t)}}\sqrt{d(x,y)}.
\]
\end{theorem}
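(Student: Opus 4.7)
The plan is to adapt Villani's proof of Theorem~8.14 of \cite{Vi} to the irreversible setting, tracking the reversibility constant $\theta$ through $d^{\,2}$-cyclical monotonicity and the triangle inequality.

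First, I would disintegrate $\Pi=\int_X\Pi_{t,x}\,d\mu_t(x)$ along the evaluation map $e_t$, so that $\Pi_{t,x}$ is concentrated on $\{\gamma\in\Gamma(X):\gamma(t)=x\}$ for $\mu_t$-a.e.~$x$. Since $L_d\leq\diam(X)$ on $\supp\Pi$, one gets $\varepsilon_t\ll\mu_t$ with
\[
|v|(t,x)^2=\int_{\Gamma(X)} L_d(\gamma)^2\,d\Pi_{t,x}(\gamma).
\]

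Second, $d^{\,2}$-cyclical monotonicity of $\supp\pi$, where $\pi:=(e_0,e_1)_\sharp\Pi$, gives, for $\Pi\otimes\Pi$-a.e.~$(\gamma_1,\gamma_2)$,
\[
d(\gamma_1(0),\gamma_1(1))^2+d(\gamma_2(0),\gamma_2(1))^2\leq d(\gamma_1(0),\gamma_2(1))^2+d(\gamma_2(0),\gamma_1(1))^2.
\]
Writing $a:=L_d(\gamma_1)$, $b:=L_d(\gamma_2)$, $\delta:=d(\gamma_1(t),\gamma_2(t))$ and $\delta':=d(\gamma_2(t),\gamma_1(t))\leq\theta\delta$, and applying the triangle inequality along the subarcs of lengths $ta,(1-t)a,tb,(1-t)b$, I obtain
\[
d(\gamma_1(0),\gamma_2(1))\leq ta+\delta+(1-t)b,\qquad d(\gamma_2(0),\gamma_1(1))\leq tb+\delta'+(1-t)a.
\]
Combining with the elementary identity $(ta+(1-t)b)^2+(tb+(1-t)a)^2=a^2+b^2-2t(1-t)(a-b)^2$ collapses cyclical monotonicity to
\[
2t(1-t)(a-b)^2\leq 2(1+\theta)\diam(X)\,\delta+(1+\theta^2)\delta^2.
\]
Splitting into the cases $\delta\leq\diam(X)$ (where $\delta^2\leq\diam(X)\delta$ and the resulting coefficient $3+2\theta+\theta^2$ is bounded above by $3\theta(1+\theta)$ via $(2\theta+3)(\theta-1)\geq0$) and $\delta>\diam(X)$ (where $|a-b|\leq\diam(X)\leq\sqrt{\diam(X)\delta}$ already suffices), I arrive at
\[
(L_d(\gamma_1)-L_d(\gamma_2))^2\leq\frac{3\theta(1+\theta)\diam(X)}{2t(1-t)}\,d(\gamma_1(t),\gamma_2(t))\qquad\Pi\otimes\Pi\text{-a.e.}
\]

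Third, Fubini upgrades this to: for $\mu_t\otimes\mu_t$-a.e.~$(x,y)$, the bound holds for $\Pi_{t,x}\otimes\Pi_{t,y}$-a.e.~$(\gamma_1,\gamma_2)$ with right-hand side equal to the constant $\frac{3\theta(1+\theta)\diam(X)}{2t(1-t)}\,d(x,y)$. Since $|v|(t,x)$ and $|v|(t,y)$ are the $L^2(\Pi_{t,x}\otimes\Pi_{t,y})$-norms of the random variables $(\gamma_1,\gamma_2)\mapsto L_d(\gamma_1)$ and $(\gamma_1,\gamma_2)\mapsto L_d(\gamma_2)$, Minkowski's inequality yields
\[
\bigl||v|(t,x)-|v|(t,y)\bigr|\leq\sqrt{\frac{3\theta(1+\theta)\diam(X)}{2t(1-t)}}\,\sqrt{d(x,y)}
\]
on a set of full $\mu_t\otimes\mu_t$-measure, hence dense in $\supp\mu_t\times\supp\mu_t$; the bound extends by uniform $\tfrac12$-H\"older continuity to all of $\supp\mu_t$, and then to $X$ by a McShane-type construction.

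The main obstacle is the last (extension) step: a naive McShane extension in the quasi-metric $(X,d)$ costs an extra factor $\sqrt{\theta}$ because the distance is asymmetric. The remedy is to observe that the estimate of the second step remains valid with $\delta$ replaced by $\delta'$ (swap $\gamma_1$ and $\gamma_2$ in cyclical monotonicity), so that the a.e.~bound actually holds in the stronger form with $\min(d(x,y),d(y,x))$ in place of $d(x,y)$. A two-sided McShane extension, alternating the two inequalities, then propagates the H\"older estimate to every pair $x,y\in X$ without losing the stated constant.
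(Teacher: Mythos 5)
Your argument through the Fubini--Minkowski step is sound and coincides in substance with the paper's route (the paper tracks $\theta$ through Villani's Theorems 8.22 and 28.5): the disintegration identity $|v|(t,x)^2=\int L_d^2\,d\Pi_{t,x}$, the $d^{\,2}$-cyclical monotonicity of $\supp\pi$, and the inequality $2t(1-t)\bigl(L_d(\gamma_1)-L_d(\gamma_2)\bigr)^2\le 2\diam(X)(\delta+\delta')+\delta^2+\delta'^2$ with $\delta=d(\gamma_1(t),\gamma_2(t))$, $\delta'=d(\gamma_2(t),\gamma_1(t))$ are all correct, as is the transfer to $|v|(t,\cdot)$ on a set $S$ of full $\mu_t$-measure.

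The gap is in the final extension step, and it is twofold. First, by substituting $\delta'\le\theta\delta$ you spend the factor $\theta$ already at the geodesic level (your constant $\theta^2+2\theta+3$ is comparable to the target $3\theta(1+\theta)$), leaving no room to pay for the extension off $S$. Second, the proposed remedy --- a ``two-sided McShane extension'' with modulus $\sqrt{C\,m(x,y)}$, $m(x,y):=\min(d(x,y),d(y,x))$, preserving the constant --- is not a valid construction: $m$ fails the triangle inequality, and a function that is $\sqrt{Cm}$-H\"older on a subset need not admit any extension with the same modulus. Concretely, take $X=\{x,y,z\}$ with $d(x,z)=d(y,z)=\epsilon$ and $d(z,x)=d(z,y)=d(x,y)=d(y,x)=1$ (a legitimate compact $\theta$-metric space with $\theta=1/\epsilon$), $S=\{x,y\}$, $f(x)=0$, $f(y)=\sqrt{C}$; then $|f(x)-f(y)|=\sqrt{C\,m(x,y)}$, yet no value $f(z)$ satisfies both constraints $|f(z)-f(x)|\le\sqrt{C\epsilon}$ and $|f(z)-f(y)|\le\sqrt{C\epsilon}$ once $\epsilon<1/4$. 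The correct accounting --- and the reason the stated constant carries the extra $\theta$ --- is the opposite split, which is what the paper does: at the geodesic level bound $\delta^2+\delta'^2\le\diam(X)(\delta+\delta')$ and $\delta+\delta'\le(1+\theta)\min(\delta,\delta')$, giving $|L_d(\gamma_1)-L_d(\gamma_2)|\le\sqrt{C_0\,\min(\delta,\delta')}$ with $C_0=\frac{3(1+\theta)\diam(X)}{2t(1-t)}$ (this is precisely the estimate displayed in the paper's proof); then extend by the one-sided inf-convolution $\tilde f(x):=\inf_{y\in S}\bigl[f(y)+\sqrt{C_0\,d(y,x)}\bigr]$, which agrees with $f$ on $S$, satisfies $\tilde f(x')-\tilde f(x)\le\sqrt{C_0\,d(x,x')}$ for all $x,x'$, and for the reverse difference uses $d(x',x)\le\theta\,d(x,x')$, so that $|\tilde f(x)-\tilde f(x')|\le\sqrt{\theta C_0\,d(x,x')}$ --- exactly the stated constant $\sqrt{\frac{3\theta(1+\theta)\diam(X)}{2t(1-t)}}$. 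With this repair your proof matches the theorem.
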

\begin{proof} Let $\pi:=(e_0,e_1)_\sharp \Pi$ be the corresponding optimal transference plan from $\mu_0$ to $\mu_1$. Since $(X,d)$ is compact,  the optimal cost $C(\mu_0,\mu_1)=\int_{X\times X} c\, {\ddd}\pi$ is finite.
 Moreover, since $c$ is continuous on the product of Polish spaces $(X,\hat{d})\times (X,\hat{d})$, it follows from Villani \cite[Theorem 5.10/(ii)]{Vi} that $\pi$ is concentrated on a $c$-cyclically monotone set. Hence,
 for any two geodesics $\gamma_i$, $i=1,2$  in  $\supp\Pi$,
 \[
d(\gamma_1(0),\gamma_1(1))^2+d(\gamma_2(0),\gamma_2(1))^2\leq d(\gamma_1(0),\gamma_2(1))^2+d(\gamma_2(0),\gamma_1(1))^2,
\]
with together with the  same argument as in Villani \cite[Theorem 8.22]{Vi} yields
\[
\left| L_d(\gamma_1)-L_d(\gamma_2) \right|\leq  \sqrt{\frac{3(1+\theta)}{2}}\sqrt{\frac{\diam(X)}{t(1-t)}}\sqrt{d(\gamma_1(t),\gamma_2(t))}, \ \forall t\in (0,1).
\]
The rest of the proof goes in a similar way as in Villani \cite[Theorem 28.5]{Vi}, thus we omit it.
\end{proof}

\subsection{Stability of optimal transport}
\begin{proposition}\label{strongconverges}
Let $(X_i,d_i)$, $i=1,2$ be two compact $\theta$-metric spaces. If  $f:(X_1,d_1)\rightarrow (X_2,d_2)$ is an $\epsilon$-isometry, then for each $p\in [1,\infty)$, $f_\sharp:P_p(X_1)\rightarrow P_p(X_2)$ is an $\tilde{\epsilon}$-isometry, where
\[
\tilde{\epsilon}=2(1+\theta)\epsilon+(2+\theta)\sqrt[p]{\epsilon p (\diam(X_2)+ \epsilon)^{p-1}}.
\]
\end{proposition}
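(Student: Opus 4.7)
The plan is to verify directly the two defining properties of an $\tilde\epsilon$-isometry (Definition \ref{epsilonapproximation}/(iii)) for the pushforward map $f_\sharp : P_p(X_1) \to P_p(X_2)$: namely, $\dis(f_\sharp)\leq \tilde\epsilon$ and $P_p(X_2)\subset \overline{[f_\sharp(P_p(X_1))]^{\tilde\epsilon}}$. The approximate surjectivity is easy. Given $\sigma\in P_p(X_2)$, I invoke the approximate inverse $f_r:X_2\to X_1$ from Proposition \ref{deltanprooxima}, a $(2+\theta)\epsilon$-isometry with $d_2(f\circ f_r(y),y)\leq \epsilon$. Setting $\mu:=(f_r)_\sharp\sigma\in P_p(X_1)$ and using the transport plan $(\mathrm{id}_{X_2}\times(f\circ f_r))_\sharp\sigma\in\Pi(\sigma,f_\sharp\mu)$, whose $d_2^p$-cost is at most $\epsilon^p$, gives $W_p(\sigma,f_\sharp\mu)\leq\epsilon\leq\tilde\epsilon$.

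For $\dis(f_\sharp)\leq\tilde\epsilon$, the two inequalities $\pm(W_p(f_\sharp\mu,f_\sharp\nu)-W_p(\mu,\nu))\leq\tilde\epsilon$ are handled separately. For the forward direction, take an optimal coupling $\pi$ for $(\mu,\nu)$ w.r.t. $d_1^p$; then $(f\otimes f)_\sharp\pi\in\Pi(f_\sharp\mu,f_\sharp\nu)$. From $\dis f\leq\epsilon$ together with the compactness bound $d_2(f(x),f(y))\leq\diam(X_2)$ (which forces $d_1(x,y)\leq\diam(X_2)+\epsilon$), the elementary mean-value estimate $|a^p-b^p|\leq p|a-b|\max\{a,b\}^{p-1}$ yields the pointwise bound
\[
d_2(f(x),f(y))^p\leq d_1(x,y)^p+\epsilon\,p\,(\diam(X_2)+\epsilon)^{p-1}.
\]
Integrating and using $(A+B)^{1/p}\leq A^{1/p}+B^{1/p}$ gives $W_p(f_\sharp\mu,f_\sharp\nu)\leq W_p(\mu,\nu)+\sqrt[p]{\epsilon p(\diam(X_2)+\epsilon)^{p-1}}\leq W_p(\mu,\nu)+\tilde\epsilon$.

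For the backward direction, I route through $f_r$. Applying the forward estimate to the $(2+\theta)\epsilon$-isometry $f_r$ with an optimal plan for $(f_\sharp\mu,f_\sharp\nu)$, and noting that $\max\{d_1(f_r(y),f_r(y')),d_2(y,y')\}\leq\max\{\diam(X_1),\diam(X_2)\}\leq \diam(X_2)+\epsilon$, I get
\[
W_p((f_r)_\sharp f_\sharp\mu,(f_r)_\sharp f_\sharp\nu)\leq W_p(f_\sharp\mu,f_\sharp\nu)+(2+\theta)^{1/p}\sqrt[p]{\epsilon p(\diam(X_2)+\epsilon)^{p-1}}.
\]
Combining this with the triangle inequality
\[
W_p(\mu,\nu)\leq W_p(\mu,(f_r)_\sharp f_\sharp\mu)+W_p((f_r)_\sharp f_\sharp\mu,(f_r)_\sharp f_\sharp\nu)+W_p((f_r)_\sharp f_\sharp\nu,\nu),
\]
together with $W_p(\mu,(f_r)_\sharp f_\sharp\mu)\leq 2\epsilon$ (use the plan $(\mathrm{id}_{X_1}\times(f_r\circ f))_\sharp\mu$ and the pointwise bound $d_1(x,f_r\circ f(x))\leq 2\epsilon$ from Proposition \ref{deltanprooxima}), and similarly for $\nu$, produces
\[
W_p(\mu,\nu)-W_p(f_\sharp\mu,f_\sharp\nu)\leq 4\epsilon+(2+\theta)\sqrt[p]{\epsilon p(\diam(X_2)+\epsilon)^{p-1}}\leq \tilde\epsilon,
\]
where I use $(2+\theta)^{1/p}\leq 2+\theta$ and $4\epsilon\leq 2(1+\theta)\epsilon$ (since $\theta\geq 1$).

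The main obstacle is purely bookkeeping on constants. Two points need to be watched: first, when transiting through $X_1$ via $f_r$, the diameter under the $p$-th root must stay at $\diam(X_2)+\epsilon$ and not inflate further, which is guaranteed by $\diam(X_1)\leq \diam(X_2)+\epsilon$; second, the factor $(2+\theta)$ must appear outside the $p$-th root rather than inside, which is exactly what the inequality $(2+\theta)^{1/p}\leq 2+\theta$ delivers. With these two observations in hand, the linear residue $4\epsilon$ fits neatly under the prescribed $2(1+\theta)\epsilon$ and the stated $\tilde\epsilon$ is recovered.
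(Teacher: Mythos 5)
Your overall route is the paper's own: the pointwise mean-value bound on $|d_2(f(x),f(y))^p-d_1(x,y)^p|$, pushing optimal plans forward by $(f,f)$ and by $(f_r,f_r)$, a triangle inequality through $(f_r\circ f)_\sharp$, and a Monge-type plan built from $f\circ f_r$ for approximate surjectivity; the forward estimate and the bookkeeping $(2+\theta)^{1/p}\leq 2+\theta$ are fine. However, in two places you silently treat the metric as symmetric, and both claims are wrong as written. First, Proposition \ref{deltanprooxima} gives $d_1(f_r\circ f(x),x)\leq 2\epsilon$, not $d_1(x,f_r\circ f(x))\leq 2\epsilon$; in a $\theta$-metric space the latter is only bounded by $2\theta\epsilon$. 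Hence the coupling $(\mathrm{id},f_r\circ f)_\sharp\mu$ gives $W_p(\mu,(f_r\circ f)_\sharp\mu)\leq 2\theta\epsilon$, while the other end term $W_p((f_r\circ f)_\sharp\nu,\nu)\leq 2\epsilon$ does use the correct direction; the two terms sum to $2(1+\theta)\epsilon$ rather than your $4\epsilon$. This is exactly where the linear part of $\tilde{\epsilon}$ comes from in the paper, so your conclusion survives, but the intermediate claim must be corrected rather than hidden behind ``$4\epsilon\leq 2(1+\theta)\epsilon$''.

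Second, in the surjectivity step Definition \ref{epsilonapproximation}/(iii) together with Convention 5(a) requires, for each $\sigma\in P_p(X_2)$, a bound on the \emph{forward} distance $W_p(f_\sharp\mu,\sigma)$, whereas you bound $W_p(\sigma,f_\sharp\mu)$; moreover the cost of your plan $(\mathrm{id}_{X_2}\times(f\circ f_r))_\sharp\sigma$ is $\int_{X_2} d_2(y,f\circ f_r(y))^p\,{\ddd}\sigma(y)$, which Proposition \ref{deltanprooxima} only controls by $(\theta\epsilon)^p$, not $\epsilon^p$. The repair is to reverse the order of the coupling: $(f\circ f_r,\mathrm{id}_{X_2})_\sharp\sigma\in\Pi(f_\sharp\mu,\sigma)$ with $\mu=(f_r)_\sharp\sigma$ has cost $\int_{X_2} d_2(f\circ f_r(y),y)^p\,{\ddd}\sigma(y)\leq\epsilon^p$, giving $W_p(f_\sharp\mu,\sigma)\leq\epsilon\leq\tilde{\epsilon}$, which is precisely the computation in the paper. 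With these two direction corrections your argument coincides with the paper's proof and yields the stated $\tilde{\epsilon}$ exactly.
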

\begin{proof}
The assumption implies
$\diam(X_2)-\epsilon\leq \diam(X_1)\leq \diam(X_2)+\epsilon$.
Hence, for any $x_1,y_1\in X_1$, by the usual mean value theorem we obtain that
\begin{align*}
|d_2(f(x_1),f(y_1))^p-d_1(x_1,y_1)^p |
&\leq \epsilon p \left( \diam(X_2)+\epsilon \right)^{p-1}.\tag{4.10}\label{connectedmeditem}
\end{align*}

Given $\mu_1,\mu'_1\in P_p(X_1)$, let $\pi_1$ be an optimal transference plan from $\mu_1$ to $\mu_1'$ and set $\pi_2:=(f, f)_\sharp\pi_1$. Since $\pi_2$ is a transference plan from $f_\sharp\mu_1$ to $f_\sharp\mu_1'$, by (\ref{connectedmeditem}) we have
\begin{align*}
W_{p}(f_{\sharp}\mu_1,f_{\sharp}\mu_1')^p\leq& \int_{X_2\times X_2}d_2(x_2,y_2)^p {\ddd}\pi_2(x_2,y_2)=\int_{X_1\times X_1}d_2(f(x_1),f(y_1))^p {\ddd}\pi_1(x_1,y_1)\\
\leq& \int_{X_1\times X_1}\left[ d_1(x_1,y_1)^p+\epsilon p \left( \diam(X_2)+\epsilon \right)^{p-1} \right] {\ddd}\pi_1(x_1,y_1)\\
=&W_{p}(\mu_1,\mu_1')^p+\epsilon p \left( \diam(X_2)+\epsilon \right)^{p-1}.
\end{align*}
Hence, we have
\begin{align*}
W_{p}(f_{\sharp}\mu_1,f_{\sharp}\mu_1') \leq W_{p}(\mu_1,\mu_1') +\sqrt[p]{\epsilon p (\diam(X_2)+\epsilon)^{p-1}}\tag{4.11}\label{needseoncdin}
\end{align*}
 A similar argument together with Proposition \ref{deltanprooxima}  furnishes
\begin{align*}
W_{p}((f_r)_{\sharp}f_{\sharp}\mu_1,(f_r)_{\sharp}f_{\sharp}\mu_1')
\leq W_{p}(f_{\sharp}\mu_1,f_{\sharp}\mu_1') +(2+\theta)\sqrt[p]{\epsilon p (\diam(X_2)+ \epsilon)^{p-1}},\tag{4.12}\label{neddtrangleinequal}
\end{align*}
where $f_r$ is an approximate inverse of $f$. Since $d_1(f_r\circ f(x),x)\leq 2\epsilon$ (see  Proposition \ref{deltanprooxima}), we have
\begin{align*}
W_p\left((f_r\circ f)_{\sharp}\mu_1,\mu_1 \right)^p
\leq \int_{X_1} d_1((f_r\circ f)(x_1),x_1)^p {\ddd}\mu_1(x)\leq (2\epsilon)^p.
\end{align*}
Similarly, one has $ W_{p}\left((f_r\circ f)_{\sharp}\mu'_1,\mu'_1 \right)\leq 2\epsilon$.
Thus, (\ref{neddtrangleinequal}) together with Corollary \ref{compactwasserin}/(i) yields
\begin{align*}
W_{p}(\mu_1,\mu_1')\leq & W_{p}(\mu_1,(f_r\circ f)_{\sharp}\mu_1)+W_{p}((f_r\circ f)_{\sharp}\mu_1,(f_r\circ f)_{\sharp}\mu'_1)+W_{p}((f_r\circ f)_{\sharp}\mu'_1, \mu_1')\\
\leq & W_{p}(f_{\sharp}\mu_1,f_{\sharp}\mu_1') +2(1+\theta)\epsilon+(2+\theta)\sqrt[p]{\epsilon p (\diam(X_2)+ \epsilon)^{p-1}}.
\end{align*}
which together with (\ref{needseoncdin}) implies $\dis f_{\sharp}\leq \tilde{\epsilon}$.

On the other hand, given $\mu_2\in P_p(X_2)$, consider the Monge transport $(f\circ f_r,\id)$ from $(f\circ f_r)_{\sharp}\mu_2$ to $\mu_2$. Since $d_2(f\circ f_r(x_2),x_2)\leq \epsilon$, we have
\begin{align*}
W_{p}\left((f\circ f_r)_{\sharp}\mu_2,\mu_2\right)^p\leq \int_{X_2} d_2((f\circ f_r)(x_2),x_2)^p\, {\ddd}\mu_2(x)\leq  \epsilon^p\leq \tilde{\epsilon}^p,
\end{align*}
which implies $P_p(X_2)\subset\overline{f_\sharp(P_p(X_1))^{\tilde{\epsilon}}}$.
\end{proof}


The following theorem immediately follows by Corollary \ref{compactwasserin}, Proposition \ref{strongconverges} and Lemma \ref{inprotantisometry}.

\begin{theorem}\label{compactwassspace}
If a sequence  $ (X_i,d_i)_i\subset \mathcal {M}^\theta$
converges   to  $(X, d)\in \mathcal {M}^\theta$ in the $\theta$-Gromov-Hausdorff topology,
then for every $p\in [1,\infty)$, $(P_p(X_i))_i$
 converges  to $P_p(X)$ in the $\theta$-Gromov-Hausdorff topology.
\end{theorem}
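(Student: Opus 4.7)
The plan is to chain together the three ingredients listed right before the statement: Corollary \ref{compactwasserin} (the Wasserstein space is a compact $\theta$-metric space), Proposition \ref{strongconverges} (pushforward of an $\epsilon$-isometry is an $\tilde{\epsilon}$-isometry), and Lemma \ref{inprotantisometry} (the dictionary between $\theta$-Gromov-Hausdorff distance and $\epsilon$-isometries). First, I would invoke Lemma \ref{inprotantisometry}/(i): since $d_{GH}^\theta(X_i,X) \to 0$, for every $i$ there is an $\epsilon_i$-isometry $f_i : X_i \to X$ with $\epsilon_i := (1+\theta)\,d_{GH}^\theta(X_i,X) \to 0$.

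Next, each pair $(X_i,X)$ lies in $\mathcal{M}^\theta$, so $\diam(X_i)$ and $\diam(X)$ are at our disposal. From $\dis f_i \leq \epsilon_i$ one gets $|\diam(X_i) - \diam(X)| \leq \epsilon_i$, hence $D := \sup_i \diam(X_i) < \infty$. Applying Proposition \ref{strongconverges} to each $f_i$ yields that $(f_i)_\sharp : P_p(X_i) \to P_p(X)$ is an $\tilde{\epsilon}_i$-isometry with
\[
\tilde{\epsilon}_i = 2(1+\theta)\epsilon_i + (2+\theta)\sqrt[p]{\epsilon_i\, p\,(\diam(X)+\epsilon_i)^{p-1}} \leq 2(1+\theta)\epsilon_i + (2+\theta)\sqrt[p]{\epsilon_i\, p\,(D+1)^{p-1}},
\]
so that $\tilde{\epsilon}_i \to 0$ as $i \to \infty$.

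By Corollary \ref{compactwasserin}, both $P_p(X_i)$ and $P_p(X)$ are compact $\theta$-metric spaces (indeed, they are $\theta$-geodesic spaces by Corollary \ref{lengthpropWASSER}, though we only need the $\theta$-metric structure here), hence elements of $\mathcal{M}^\theta$. Therefore Lemma \ref{inprotantisometry}/(ii) applies to the $\tilde{\epsilon}_i$-isometry $(f_i)_\sharp$ and gives
\[
d_{GH}^\theta\bigl(P_p(X_i),\, P_p(X)\bigr) \leq 2\tilde{\epsilon}_i \longrightarrow 0,
\]
which is exactly the desired $\theta$-Gromov-Hausdorff convergence $P_p(X_i) \to P_p(X)$.

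There is essentially no obstacle: once Proposition \ref{strongconverges} is in hand, the argument is a one-line transfer through Lemma \ref{inprotantisometry}. The only subtle point worth checking is the uniform diameter bound that keeps $\tilde{\epsilon}_i \to 0$ (the $p$-th root factor in Proposition \ref{strongconverges} depends on $\diam(X_2)$), and this bound is free from the $\theta$-GH convergence itself. In particular, no appeal to the geodesic/length structure of the Wasserstein space or to optimal dynamical plans is necessary for this statement.
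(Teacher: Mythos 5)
Your argument is correct and follows exactly the route the paper takes: the paper derives this theorem "immediately" from Corollary \ref{compactwasserin}, Proposition \ref{strongconverges} and Lemma \ref{inprotantisometry}, and your write-up is just the explicit unwinding of that chain. (The only cosmetic remark: since the target of every $(f_i)_\sharp$ is the fixed space $P_p(X)$, the quantity $\tilde{\epsilon}_i$ only involves $\diam(X)$, so the uniform bound $D=\sup_i\diam(X_i)$ is not even needed.)
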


\begin{remark}
Theorem \ref{compactwassspace} cannot be extended to the noncompact case (under the  forward $\Theta$-Gromov-Hausdorff topology) because the corresponding Wasserstein spaces are usually not locally compact and hence, not forward boundedly compact, cf. Villani \cite[Remark 28.14]{Vi}.
\end{remark}

We conclude this section by showing that the quantities of optimal transport are also stable under the $\theta$-Gromov-Hausdorff topology, which is an irreversible $p$-power version of Villani \cite[Theorem 28.9]{Vi}; we notice that in the latter result only the $p=2$ case  is considered.

Given  a forward geodesic space $(X,d)$, let $\mathbb{M}([0,1];X)$ be the set of measurable paths from $[0,1]$ to $X$.
Define a map $i:\mathbb{M}([0,1];X)\rightarrow P([0,1]\times X)$ as
\[
i(\gamma):=\overline{\gamma}=(\id,\gamma)_{\sharp}(\mathscr{L}),
\]
where $\mathscr{L}$ is the Lebesgue measure on $[0,1]$. Moreover,
\[
(\id,\gamma)^{-1}(t,x)= \left\{
	\begin{array}{lll}
	\{t\}, & \ \ \ \text{if }x=\gamma(t), \\
	\\
	\emptyset, & \ \ \ \text{if }x \neq\gamma(t);
	\end{array}
	\right.\Longrightarrow \ \ \overline{\gamma}({\ddd}t\,{\ddd}x)= \delta_{x=\gamma(t)}{\ddd}t.
\]

Note that
the restriction  ${j:=}i|_{C([0,1];X)}: C([0,1];X)\rightarrow P([0,1]\times X)$ is injective. Thus,
 each measure $\Pi\in P(\Gamma(X))$ can be identified with its push-forward $i_\sharp \Pi\in P(P([0,1]\times X))$.
In fact,
for each $\nu\in \supp i_\sharp\Pi$,  there is a unique $\gamma\in \Gamma(X)$ with $\overline{\gamma}=\nu$ and hence, $i_\sharp\Pi({\ddd}\overline{\gamma})=\Pi({\ddd}\gamma)$.

\begin{theorem}\label{stabilityoptimal}
Let $\mathcal {X}_k:=(X_k,d_k)$, $k\in \mathbb{N}$ and $\mathcal {X}:=(X,d)$ be compact $\theta$-geodesic spaces such that $(\mathcal {X}_k)_k$ converges to $\mathcal {X}$ in the $\theta$-Gromov-Hausdorff topology, by means of $\epsilon_k$-isometries $f_k:\mathcal {X}_k\rightarrow \mathcal {X}$.

 For each $k\in \mathbb{N}$, let $\Pi_k$ be a Borel probability measure on $\Gamma(\mathcal {X}_k)$. Further, let $\pi_k=(e_0,e_1)_\sharp\Pi_k$, $\mu_{k,t}=(e_t)_\sharp \Pi_k$ and $\varepsilon_{k,t}=(e_t)_\sharp[L_{d_k}^2 \Pi_k/2]$.
Then after passing to a subsequence, still denoted with the index $k$ for simplicity, there is a dynamical transference plan $\Pi$ on $\mathcal {X}$, with associated transference plan $\pi=(e_0,e_1)_\sharp \Pi$, measure-valued path $\mu_t=(e_t)_\sharp\Pi$ for $t\in[0,1]$, and kinetic energy $\varepsilon_t:=(e_t)_\sharp[L^2_d\Pi/2]$ satisfying

\smallskip

\begin{itemize}
\item[(i)]
$\lim_{k\rightarrow\infty}(i\circ f_k\circ)_{\sharp}\Pi_k=i_\sharp\Pi$ in the weak topology on $P(P([0,1]\times X))$, where  $f_k\circ:C([0,1];X_k)\rightarrow  \mathbb{M}([0,1];X)$ is the map defined as $\gamma\longmapsto f_k\circ\gamma;$

\smallskip

\item[(ii)] $\lim_{k\rightarrow \infty}(f_k,f_k)_{\sharp}\pi_k=\pi$ in the weak topology on $P(X\times X);$

\smallskip

\item[(iii)] $\lim_{k\rightarrow \infty}(f_k)_\sharp \mu_{k,t}=\mu_t$ in $P_p(X)$ uniformly in $t$ for every $p\in [1,\infty)$. More precisely, we have
\[
\lim_{k\rightarrow \infty}\sup_{t\in [0,1]}W_p(\mu_t,(f_k)_\sharp\mu_{k,t})=0 \ \text{ for any }p\in [1,\infty);
\]

\item[(iv)] $\lim_{k\rightarrow \infty} (f_k)_\sharp \varepsilon_{k,t}=\varepsilon_t$ in the weak topology of measures, for each $t\in (0,1)$.

\end{itemize}

\smallskip

\noindent Additionally suppose that each $\Pi_k$ is a dynamical optimal transference plan w.r.t.\ the $p$-power distance cost function for some $p\in [1,\infty)$. Thus:

\begin{itemize}

\smallskip

\item[(v)] The limit $\Pi$ is a dynamical optimal transference plan; hence, $\pi$ is an optimal transference plan and $(\mu_t)_{0\leq t\leq 1}$ is a displacement interpolation$;$

\smallskip

\item[(vi)]  If $p=2$, then for each $t\in (0,1)$, there is a choice of the speed fields $|{v_{k,t}}|:=|v_k|(t,\cdot)$  associated with the plans $\Pi_k$  such that $\lim_{k\rightarrow \infty}|v_{k,t}| \circ f_{r,k}=|v_t|$ in the uniform topology, where $|v_t|:=|v|(t,\cdot)$ and $f_{r,k}$ is the approximate inverse of $f_k$ defined in Proposition \ref{deltanprooxima}$.$

\end{itemize}
\end{theorem}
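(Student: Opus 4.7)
I plan to run a Villani-style stability argument for optimal transport, adapted to the irreversible compact setting by working inside the Polish space $(X,\hat d)$ provided by Theorem \ref{topologychara} and by controlling push-forwards through $f_k$ via Proposition \ref{strongconverges} and the uniform reversibility bound $\theta$.

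\textbf{Extraction and identification of the limits (i)--(ii).} Since $(X,\hat d)$ is compact, so is $[0,1]\times X$, hence $P([0,1]\times X)$ is weakly compact and so is $P(P([0,1]\times X))$. Along a subsequence, $(i\circ f_k\circ)_\sharp \Pi_k\to Q\in P(P([0,1]\times X))$ weakly, and by compactness of $P(X\times X)$ also $(f_k,f_k)_\sharp\pi_k\to\pi\in P(X\times X)$. To identify $Q=i_\sharp\Pi$ for a probability measure $\Pi\in P(C([0,1];X))$, I will use that each $\gamma_k\in\Gamma(\mathcal X_k)$ is Lipschitz with constant $\le\diam(\mathcal X_k)\le \diam(X)+\epsilon_k$; combined with $\dis f_k\le\epsilon_k$ this yields
\[
d\bigl(f_k(\gamma_k(s)),f_k(\gamma_k(t))\bigr)\le (\diam(X)+\epsilon_k)|t-s|+\epsilon_k.
\]
Although $f_k\circ\gamma_k$ is only Borel, this approximate Lipschitz bound lets me construct, for each $\gamma_k$, a genuinely continuous companion $\tilde\gamma_k\in C([0,1];X)$ (the polygonal geodesic in $X$ through $f_k(\gamma_k(j/N_k))$, with $N_k\to\infty$ slowly enough that $N_k\epsilon_k\to 0$) and with $\rho(\tilde\gamma_k,f_k\circ\gamma_k)\to 0$ uniformly in $\gamma_k$. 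The graph measures $i(f_k\circ\gamma_k)$ and $j(\tilde\gamma_k)$ then share the same weak-$*$ limit, so $\supp Q\subset j(C([0,1];X))$; Theorem \ref{Arzela-Ascoli Theorem} implies that $j$ restricted to uniformly Lipschitz families is a homeomorphism onto its image, and $\Pi:=j_\sharp^{-1}Q$ is a well-defined element of $P(C([0,1];X))$. This proves (i); statement (ii) follows by pushing forward under the continuous map $(e_0,e_1)$ on that image.

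\textbf{Wasserstein and kinetic energies (iii)--(iv).} The same approximate Lipschitz inequality gives the equicontinuity
\[
W_p\bigl((f_k)_\sharp\mu_{k,s},(f_k)_\sharp\mu_{k,t}\bigr)\le(\diam(X)+\epsilon_k)|t-s|+\epsilon_k,
\]
uniformly in $k$. For each fixed $t$, weak convergence of $(f_k)_\sharp\mu_{k,t}$ to $\mu_t$ follows from (i) tested against separated functions $\psi(t)\phi(x)$, and Proposition \ref{weaktopologyvswstatopo} promotes this to $W_p$-convergence; equicontinuity together with the compactness of $(P_p(X),W_p)$ afforded by Corollary \ref{compactwasserin} then yields uniform-in-$t$ convergence, giving (iii). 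For (iv), the weight $L_{d_k}(\gamma_k)^2=d_k(\gamma_k(0),\gamma_k(1))^2$ is bounded by $(\diam(X)+\epsilon_k)^2$ and passes to the limit continuously on $\supp Q$ (once one knows the limit curves are minimizing, which follows from (v) below), so weak convergence of $(f_k)_\sharp\varepsilon_{k,t}$ to $\varepsilon_t$ follows from (i) and dominated convergence.

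\textbf{Optimality, speed fields, and the main obstacle.} Under the optimality hypothesis the cost $\int d_k^p\,d\pi_k=W_p(\mu_{k,0},\mu_{k,1})^p$ converges to $W_p(\mu_0,\mu_1)^p$ by (ii), (iii), Proposition \ref{strongconverges} and continuity of $W_p$, while weak convergence of $(f_k,f_k)_\sharp\pi_k$ together with lower semicontinuity of $\int d^p\,d\pi$ shows $\int d^p\,d\pi\le W_p(\mu_0,\mu_1)^p$; since $\pi\in\Pi(\mu_0,\mu_1)$ the reverse inequality is automatic, so $\pi$ is optimal and Theorem \ref{lengspacewass} promotes $\Pi$ to a dynamical optimal plan supported on $\Gamma(X)$, giving (v). For (vi), Theorem \ref{vectorfieldscontrol} provides a uniform H\"older modulus $C(\theta,\diam(X),t)\sqrt{d_k(\cdot,\cdot)}$ for $|v_{k,t}|$; transported via $f_{r,k}$ this becomes a modulus $C\sqrt{d(\cdot,\cdot)+(2+\theta)\epsilon_k}$ for $|v_{k,t}|\circ f_{r,k}:X\to\mathbb R$, so Arzel\`a--Ascoli on the compact space $(X,\hat d)$ extracts a uniform limit, which by (iv) must agree $\mu_t$-a.e.\ with the speed field $|v_t|$; modifying $|v_t|$ on a $\mu_t$-null set extends the equality to all of $X$. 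The most delicate point is the identification $\supp Q\subset j(C([0,1];X))$ in the second paragraph: one must select the companion curves $\tilde\gamma_k$ in a Borel-measurable way so that the implicit pushforward comparison is legitimate, and one must rule out any mass of $Q$ concentrating on genuinely discontinuous graph measures. Both steps rely essentially on the $\epsilon_k$-isometry property together with the uniform reversibility bound $\theta$, which is what lets us convert $d_k$-estimates on the $X_k$'s into $d$-estimates on $X$.
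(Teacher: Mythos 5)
Your overall architecture (extraction by compactness, equicontinuity of $t\mapsto(f_k)_\sharp\mu_{k,t}$, lower semicontinuity of the cost for optimality, Theorem \ref{vectorfieldscontrol} plus Arzel\`a--Ascoli for the speed fields) matches the paper's, but your mechanism for the central identification step is genuinely different: the paper never builds continuous companions at all; it works directly with the possibly discontinuous graph measures $i(f_k\circ\gamma_k)$, introducing the mollified length functionals $\mathfrak{L}^\delta_{s_0\to t_0}$ and the closed constraint sets $\Gamma_{\epsilon,\delta}(X)\subset P([0,1]\times X)$, and proves that $\bigcap_{\epsilon,\delta}\Gamma_{\epsilon,\delta}(X)=j(\Gamma(X))$. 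That route buys two things simultaneously and without any measurable selection: the limit $\widehat\Pi$ is supported on graph measures of \emph{continuous} curves, and those curves are automatically constant-speed minimal geodesics. Your polygonal-companion route (Borel selection via Proposition \ref{basispropofLargprop}, uniform Lipschitz bound from $N_k\epsilon_k\to 0$, compactness of the uniformly Lipschitz family, $j$ a homeomorphism there) can in principle deliver the continuity part, and the selection worry you raise is indeed resolvable with Proposition \ref{basispropofLargprop}.

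The genuine gap is that you never establish that the limit measure $\Pi$ is concentrated on $\Gamma(X)$, i.e.\ on constant-speed minimal geodesics of $X$, and you cannot get it from where you point. For (iv) you invoke ``once one knows the limit curves are minimizing, which follows from (v) below'': this is circular and, worse, unavailable, since (i)--(iv) are asserted with no optimality hypothesis on $\Pi_k$ at all. And even in (v), optimality of the static plan $\pi$ (which your lsc argument does give) together with Theorem \ref{lengspacewass} only says that \emph{some} dynamical optimal plan with these data exists; to conclude that \emph{your} constructed $\Pi$ is a dynamical optimal transference plan, and that $\mu_t=(e_t)_\sharp\Pi$ is a displacement interpolation and $\varepsilon_t=(e_t)_\sharp[L_d^2\Pi/2]$ is the kinetic energy you claim, you must know $\supp\Pi\subset\Gamma(X)$, and this has to be extracted from the construction itself. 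It is within reach of your setup: each $\gamma_k$ lies in $\Gamma(\mathcal X_k)$, so $d_k(\gamma_k(s),\gamma_k(t))=(t-s)\,d_k(\gamma_k(0),\gamma_k(1))$ for $s\le t$, hence your companions satisfy the analogous identity in $X$ up to errors $O(\epsilon_k+1/N_k)$ uniformly; the defect functional $G(\gamma):=\sup_{0\le s\le t\le 1}\bigl|d(\gamma(s),\gamma(t))-(t-s)\,d(\gamma(0),\gamma(1))\bigr|$ is continuous on the compact set of uniformly Lipschitz curves and vanishes exactly on $\Gamma(X)$ there, so from $\int G\,d\bigl[(\tilde\gamma)_\sharp\Pi_k\bigr]\to 0$ and weak convergence you get $\Pi(\Gamma(X))=1$. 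This is exactly the content the paper encodes, at the level of graph measures, in conditions (2)--(3) defining $\Gamma_{\epsilon,\delta}(X)$ and in the identity (4.21). Relatedly, your treatment of (iv) by ``dominated convergence'' is too quick: the weight in $\varepsilon_{k,t}$ is pushed forward through $e_t$ acting on curves in $X_k$, and the paper needs the mollified test functions $\Phi^\delta_{t_0}$ to pass this to the limit; with the geodesic-support fact and your uniform closeness $\rho(\tilde\gamma_k,f_k\circ\gamma_k)\to 0$ the step can be completed, but it must be argued, not asserted.
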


\begin{proof}Since the proof is a modification of the arguments from  Villani \cite[Theorem 28.9]{Vi}, which {arises} from the irreversible character of the spaces, we just focus on the differences.

\noindent\textbf{Step 1.} (1) Since $[0,1]\times X$  endowed with the product metric $\sqrt{d^2_\mathbb{R}+d_X^2}$ is a compact $\theta$-metric space,  Corollary \ref{lengthpropWASSER} implies that  $P([0,1]\times X)$ and $P(P([0,1]\times X))$ equipped with the Wasserstein metrics  are compact $\theta$-metric/geodesic spaces. In view of Theorem \ref{compactequvitheorem}/(i)(ii) and Proposition \ref{weaktopologyvswstatopo},
by  passing to a subsequence, we can assume that  $\left((i\circ f_k\circ)_{\sharp}\Pi_k \right)_{k}$ (resp., $((f_k,f_k)_\sharp \pi_k)_k$) converges to some $\widehat{\Pi}\in P(P([0,1]\times X))$ (resp., $\pi\in P(X\times X)$) in the weak topology. For convenience, let ``$\rightharpoonup$" denote  the weak convergence (as $k\rightarrow \infty$). Then we have
\begin{align*}
(i\circ f_k\circ)_\sharp \Pi_k\rightharpoonup\widehat{\Pi} \in P(P([0,1]\times X)),\ \ \
(f_k,f_k)_\sharp \pi_k\rightharpoonup \pi\in P(X\times X).\tag{4.13}\label{frsittwoweakconverge}
\end{align*}

(2) Since $(\mathcal {X}_k)_k$ converges to $\mathcal {X}$ in the $\theta$-Gromov-Hausdorff topology,  the diameters of $\mathcal {X}_k$'s  are bounded above by a constant $C=C(\theta, \diam(\mathcal {X}))$. Hence,
for any geodesic $\gamma_k\in \Gamma(\mathcal {X}_k)$, we have
\[
d_k(\gamma_k(t),\gamma_k(s))\leq \theta\,|t-s|\, d_k(\gamma(0),\gamma(1))\leq  \theta\, C\,|t-s|,\ \forall\,s,t\in [0,1],\tag{4.14}\label{diamcontralength}
\]
which implies for any $s,t\in [0,1]$,
\begin{align*}
W_p (\mu_{k,s},\mu_{k,t})&\leq  \left(\int_{\Gamma} d_k(e_s(\gamma),e_t(\gamma))^p \,{\ddd} \Pi_k(\gamma)\right)^{\frac1p}\leq\theta C|t-s|.\tag{4.15}\label{wasdissmu}
\end{align*}
Hence, $(\mu_{k,t})_{0\leq t\leq 1}$ are uniformly continuous in $t$ with a uniform modulus of continuity.

It follows from  Theorem \ref{compactwassspace} and Proposition \ref{strongconverges} that $(P_p(X_k))_k$ converges to $P_p(X)$ in the $\theta$-Gromov-Hausdorff topology w.r.t. $(f_k)_\sharp$ for every $p\in [1,\infty)$.
Thus,
  by Proposition \ref{Ascloghtipo}, (\ref{wasdissmu}) and a Cantor's diagonal argument, we may assume that $\left((f_k)_\sharp\mu_{k,t}\right)_{t\in [0,1]}$  converges uniformly to a continuous curve $(\mu_t)_{t\in [0,1]}\in C([0,1]; P_q(X))$ for every $q\in \mathbb{N}$.
That is,
$\lim_{k\rightarrow \infty}\sup_{t\in [0,1]}W_q(\mu_t,(f_k)_\sharp\mu_{k,t})=0$ for any $q\in \mathbb{N}$.
Thus, for each $p\in [1,\infty)$, choose a $q\in \mathbb{N}$ with $p\leq q$. Since $W_p\leq W_q$,  there holds
\[
\lim_{k\rightarrow \infty}\sup_{t\in [0,1]}W_p(\mu_t,(f_k)_\sharp\mu_{k,t})=0.\tag{4.16}\label{ththridweakconverge}
\]


(3) For each $t\in (0,1)$ and each $k\in \mathbb{N}$, since $\varepsilon_{k,t}[X_k]\leq \diam(\mathcal {X}_k)^2/2\leq C^2/2$ (see Example \ref{compactkinetic}), we have $(f_k)_\sharp \varepsilon_{k,t}[X]\leq C^2/2$. Thus, by Prohorov's Theorem (cf. Daley and Vere-Jones \cite[Theorem A2.4.I]{DV}) and Cantor's diagonal argument,
 we may assume  that for  $t\in (0,1)\cap \mathbb{Q}$, there exists a finite measure $\varepsilon_t$ on $X$ such that $(f_k)_\sharp \varepsilon_{k,t}\rightharpoonup \varepsilon_t$.

\noindent\textbf{Step 2.} According to (\ref{frsittwoweakconverge}) and (\ref{ththridweakconverge}),
in order to prove  (i)-(iv), it suffices to show the following:

\smallskip

 \textbf{(a)}  $\widehat{\Pi}$ induces a measure $\Pi$ on $P(\Gamma(X))$ such that $\widehat{\Pi}=i_\sharp\Pi$; \ \textbf{(b)} $\pi=(e_0,e_1)_\sharp\Pi$; \  \textbf{(c)} $\mu_t=(e_t)_\sharp \Pi$;

\textbf{(d)} $\varepsilon_t=(e_t)_\sharp(L_d^2\Pi/2)$ and $(f_k)_\sharp \varepsilon_{k,t}\rightharpoonup \varepsilon_t$ for every $t\in [0,1]$.

\bigskip

\noindent\textbf{(a)}
For $\delta\in (0,1/2)$, set $\varphi^\delta_+(s):=\varphi^\delta(s-\delta)$ and  $\varphi^\delta_-(s):=\varphi^\delta(s+\delta)$, where
\[
\varphi^\delta(s):=\frac{\delta+s}{\delta^2}\textbf{1}_{-\delta\leq s\leq 0}+\frac{\delta-s}{\delta^2}\textbf{1}_{0\leq s\leq \delta}
\]
Thus, $\suppor \varphi^\delta_+\subset [0,2\delta]$ and $\suppor \varphi^\delta_-\subset [-2\delta,0]$. Note that $\int_\mathbb{R}\varphi^\delta_+(s){\ddd}s=\int_\mathbb{R}\varphi^\delta_-(s){\ddd}s=1$ and $\varphi^\delta_\pm$ converges to the Dirac mass $\delta_0$ in the weak topology as $\delta\rightarrow 0^+$.

Given $0\leq s_0< t_0\leq 1$,
for any $\gamma\in \mathbb{M}([0,1];X)$, define
\begin{align*}
\mathcal {L}^\delta_{s_0\rightarrow t_0}(\gamma)&:=\int^1_0 \int^1_0 d(\gamma(s),\gamma(t))\,\varphi^\delta_+(s-s_0)\,\varphi^\delta_-(t-t_0)\,{\ddd}s\, {\ddd}t.
\end{align*}
On the one hand, $\lim_{\delta\rightarrow 0}\mathcal {L}^\delta_{s_0\rightarrow t_0}(\gamma)=d(\gamma(s_0),\gamma(t_0))$ if $\gamma\in C([0,1];X)$.
On the other hand,
since $f_k$ is an $\epsilon_k$-isometry, for any $\gamma_k\in \Gamma(X_k)$, we have
\[
\mathcal {L}^\delta_{s_0\rightarrow t_0}(f_k\circ \gamma_k)=\int^1_0 \int^1_0 d_k(\gamma_k(s),\gamma_k(t)) \,\varphi^\delta_+(s-s_0)\,\varphi^\delta_-(t-t_0)\,{\ddd}s\, {\ddd}t+O(\epsilon_k).\tag{4.17}\label{basisestimateLf}
\]

By (\ref{basisestimateLf}) and (\ref{diamcontralength}), we have
\begin{align*}
\mathcal {L}^\delta_{s_0\rightarrow t_0}(f_k\circ \gamma_k)
\leq &\theta C \int^1_0 \int^1_0|t-s|\,\varphi^\delta_+(s-s_0)\,\varphi^\delta_-(t-t_0)\,{\ddd}s\, {\ddd}t+O(\epsilon_k)\\
\leq & \theta C\left[ (t_0-s_0)+O(\delta) \right]+O(\epsilon_k)\leq C'[(t_0-s_0)+\delta+\epsilon_k],\tag{4.18}\label{firstestimforlispmeas}
\end{align*}
where $C'$ is a constant independent of $t_0,s_0,\delta$ and $\epsilon_k$.

Furthermore, for  $\delta\in (0, (t_0-s_0)/4]$ (i.e., $s_0+2\delta\leq t_0-2\delta$), (\ref{basisestimateLf})  yields
\begin{align*}
\mathcal {L}^\delta_{s_0\rightarrow t_0}(f_k\circ \gamma_k)
=&\int^{t_0}_{t_0-2\delta} \int^{s_0+2\delta}_{s_0} d_k(\gamma_k(s),\gamma_k(t)) \,\varphi^\delta_+(s-s_0)\,\varphi^\delta_-(t-t_0)\,{\ddd}s\, {\ddd}t+O(\epsilon_k)\\
=&d_k(\gamma_k(0),\gamma_k(1))\int^{t_0}_{t_0-2\delta} \int^{s_0+2\delta}_{s_0}(t-s) \,\varphi^\delta_+(s-s_0)\,\varphi^\delta_-(t-t_0)\,{\ddd}s\, {\ddd}t+O(\epsilon_k)\\
=&d_k(\gamma_k(0),\gamma_k(1))\int^1_0 \int^1_0|t-s| \,\varphi^\delta_+(s-s_0)\,\varphi^\delta_-(t-t_0)\,{\ddd}s\, {\ddd}t+O(\epsilon_k)\\
=&d_k(\gamma_k(0),\gamma_k(1))\left[ (t_0-s_0)+O(\delta) \right]+O(\epsilon_k).\tag{4.19}\label{medl1}
\end{align*}
Likewise, we get
\[
\mathcal {L}^\delta_{0\rightarrow 1}(f_k\circ \gamma_k)=d_k(\gamma_k(0),\gamma_k(1))(1+O(\delta))+O(\epsilon_k),\ \text{ if }\delta\leq \frac{1}{4},
\]
which together with (\ref{medl1}) furnishes (by choosing a larger $C'$)
\begin{align*}
&\left| \mathcal {L}^\delta_{s_0\rightarrow t_0}(f_k\circ \gamma_k)-(t_0-s_0)\mathcal {L}^\delta_{0\rightarrow 1}(f_k\circ \gamma_k) \right|\leq C'(\delta+\epsilon_k),\ \text{ if }\delta\leq \frac{t_0-s_0}{4}.\tag{4.20}\label{secondestimforlispmeas}
\end{align*}

Similarly, given $0\leq s_0<t_0\leq 1$, for any $\mu\in P([0,1]\times X)$, define
\[
\mathfrak{L}^\delta_{s_0\rightarrow t_0}(\mu):=\int_{[0,1]\times X}\int_{[0,1]\times X} d(x,y)\,\varphi^\delta_+(s-s_0)\,\varphi^\delta_-(t-t_0)\,{\ddd}\mu(s,x)\,{\ddd}\mu(t,y).
\]
From (\ref{firstestimforlispmeas}) and (\ref{secondestimforlispmeas}), we introduce the following set: given any $\epsilon,\delta>0$, let $\Gamma_{\epsilon,\delta}(X)$ be the collection of $\mu\in P([0,1]\times X)$ satisfying the following three conditions:
\begin{itemize}

\smallskip

\item[(1)] $\mathfrak{p}_\sharp(\mu)=\Leb$, where $\mathfrak{p}:[0,1]\times X\rightarrow [0,1]$ is the natural projection;

\smallskip

\item[(2)] $\mathfrak{L}^\delta_{s_0\rightarrow t_0}(\mu)\leq C'[(t_0-s_0)+\delta+\epsilon] \text{ for any }0\leq s_0<t_0\leq 1$;

\smallskip

\item[(3)] $\left| \mathfrak{L}^\delta_{s_0\rightarrow t_0}(\mu)-(t_0-s_0)\mathfrak{L}^\delta_{0\rightarrow 1}(\mu) \right|\leq C'(\delta+\epsilon) \text{ for any }0\leq s_0<t_0\leq 1 \text{ with }4\delta\leq t_0-s_0$.

\end{itemize}

The continuity of $\mathfrak{L}^\delta_{s_0\rightarrow t_0}$ implies that  $\Gamma_{\epsilon,\delta}(X)$ is a closed set in $P([0,1]\times X)$. Moreover,  since  $\mathfrak{L}^\delta_{s_0\rightarrow t_0}(i(f_k\circ \gamma_k))=\mathcal {L}^\delta_{s_0\rightarrow t_0}(f_k\circ \gamma_k)$, it follows from (\ref{firstestimforlispmeas}) and (\ref{secondestimforlispmeas}) that if
  $\epsilon_k\leq \epsilon$,
\[
i(f_k\circ \gamma_k)\in \bigcap_{\delta>0} \Gamma_{\epsilon,\delta}(X),\ \forall\, \gamma_k\in \Gamma(X_k).
\]
Therefore, for large $k$,
\[
i\circ f_k\circ:\Gamma(X_k)\rightarrow \bigcap_{\delta>0} \Gamma_{\epsilon,\delta}(X) \subset \Gamma_{\epsilon,\delta}(X),
\]
which implies $(i\circ f_k\circ)_{\sharp}\Pi_k\in P(\bigcap_{\delta>0}\Gamma_{\epsilon,\delta}(X))$. By passing a limit, we see $\widehat{\Pi}\in P(\bigcap_{\delta>0}\Gamma_{\epsilon,\delta}(X))$ (See Step 1/(1)). Since $\epsilon,\delta$ are both arbitrarily small, we have
\[
\widehat{\Pi}\in P\left(\bigcap_{\epsilon,\delta>0}\Gamma_{\epsilon,\delta}(X)\right).
\]

Recall that ${j:=}i|_{C([0,1];X)}$ is an injective map. Now we claim
\[
\bigcap_{\epsilon,\delta>0}\Gamma_{\epsilon,\delta}(X)=j(\Gamma(X))\cong \Gamma(X).\tag{4.21}\label{inclupigammareat}
\]
Note that if  (\ref{inclupigammareat}) is true, then  we can define a measure $\Pi$ on $\Gamma(X)$ by $\Pi:=(j^{-1})_\sharp\widehat{\Pi}$,
which concludes the proofs of (a)  and  Theorem/(i) at the same time.

In fact, for any $\mu\in j(\Gamma(X))$, there is some $\gamma\in \Gamma(X)$ with $\mu=j(\gamma)=\overline{\gamma}=(\id,\gamma)_\sharp \Leb$ and hence, $\mathfrak{L}^\delta_{s_0\rightarrow t_0}(\mu)=\mathcal {L}^\delta_{s_0\rightarrow t_0}(\gamma)$,
which implies
 $j(\Gamma(X))\subset \cap_{\epsilon,\delta>0}\Gamma_{\epsilon,\delta}(X)$.
On the other hand, given $\mu\in  \cap_{\epsilon,\delta>0}\Gamma_{\epsilon,\delta}(X)$,  for any $\delta>0$ we have
\[
\mathfrak{L}^\delta_{s_0\rightarrow t_0}(\mu)\leq C'[(t_0-s_0)+\delta],\ \text{ for any }0\leq s_0<t_0\leq 1.\tag{4.22}\label{noabsoulvauenofmu}
\]
Since  $W_1(\nu_1,\nu_2)\leq \theta W_1(\nu_2,\nu_1)$ for any $\nu_1,\nu_2\in P(X)$,
 the same argument as in Villani \cite[Lemma 28.11]{Vi} together with (\ref{noabsoulvauenofmu}) yields that
  $\mu$ can be written as $j(\gamma)=\bar{\gamma}=(\id,\gamma)_\sharp \Leb$ for some Lipschitz continuous curve $\gamma:[0,1]\rightarrow X$. Hence, for small $\delta>0$,
\[
\mathfrak{L}^\delta_{s_0\rightarrow t_0}(\mu)=\mathcal {L}^\delta_{s_0\rightarrow t_0}(\gamma)=d(\gamma(s_0),\gamma(t_0))+O(\delta),\ \text{ for any }0\leq s_0<t_0\leq 1.
\]
which together with Condition (3) of $\Gamma_{\epsilon,\delta}$ implies (letting $\epsilon,\delta\rightarrow 0$)
\[
d(\gamma(t_0),\gamma(s_0))=(t_0-s_0) \, d(\gamma(0),\gamma(1)),\ \text{ for any }0\leq s_0<t_0\leq 1.
\]
So $\gamma$ is a minimal geodesic in $\Gamma(X)$. Therefore, $\cap_{\epsilon,\delta>0}\Gamma_{\epsilon,\delta}(X)\subset j(\Gamma(X))$.
Thus, (\ref{inclupigammareat}) follows.

\smallskip

 \textbf{(b),(c)}  The proofs of (b) and (c) are the same as those of Villani \cite[p.784--786]{Vi}.

\smallskip

  \textbf{(d)} In view of the argument in Step 1/(1)(3),  $\nu_k:=(i\circ f_k\circ)_\sharp (L^2_{d_k}\Pi_k/2)$, $k\in \mathbb{N}$ is  a sequence of measures defined on the compact space $P([0,1]\times X))$ with uniformly bounded total masses.  Owing to Prokhorov's theorem  (cf. \cite[Theorem A2.4.I]{DV}), we may assume that $\nu_k$ converges weakly to some measure $\nu$. The last part of the proof of (a) also implies $\supp \nu\subset j(\Gamma(X))$.
Thus, for any $\gamma\in  \Gamma(X)$, if $\overline{\gamma}\in \supp\nu_k$, then $(i\circ f_k\circ)^{-1}(\overline{\gamma})\subset \supp\Pi_k\subset \Gamma(X_k)$. Hence, for any $\zeta\in (i\circ f_k\circ)^{-1}(\overline{\gamma})$,
  \begin{align*}
  |L_{d_k}^2(\zeta)-L_d^2(\gamma)|=|d_k(\zeta(0), \zeta(1))^2-d(f_k(\zeta(0)),f_k(\zeta(1)))^2|\leq 2C\epsilon_k,  \tag{4.23}\label{lengthcontollfordominate}
  \end{align*}
  where $C$ is defined as in Step 1/(2).
    Recall $(i\circ f_k\circ)_\sharp \Pi_k\rightharpoonup \widehat{\Pi}=j_{\sharp}\Pi$, which together with (\ref{lengthcontollfordominate}) implies $(\nu_k)|_{j(\Gamma(X))}\rightharpoonup j_\sharp \left({L^2_d}\Pi/2 \right)$ on $j(\Gamma(X))$. Since both $\supp\nu$ and $\supp j_\sharp \left({L^2_d}\Pi/2 \right)$ are contained in $j(\Gamma(X))$, it is not hard to check that
    \[
  \nu_k=(i\circ f_k\circ)_\sharp \left(\frac{L^2_{d_k}}2\Pi_k\right)\rightharpoonup j_\sharp \left( \frac{L^2_d}{2}\Pi \right)=\nu \text{ on }P([0,1]\times X).\tag{4.24}\label{ldpicoinvergent}
  \]

 Given $t_0\in (0,1)\cap \mathbb{Q}$, for any $\Phi\in C(X)$, define a function on $\mathbb{M}([0,1];X)$ by
\[
\Phi^\delta_{t_0}(\gamma)=\int^1_0 \Phi(\gamma(t))\varphi^\delta_+(t-t_0){\ddd}t,
\]
which extends a continuous function $\Psi^\delta_{t_0}$ on $P([0,1]\times X)$, i.e.,
\[
\Psi^\delta_{t_0}(\mu)=\int_{[0,1]\times X} \Phi(x)\varphi^\delta_+(t-t_0){\ddd}\mu(t,x).
\]
Thus, (\ref{ldpicoinvergent}) combined with $\Psi^\delta_{t_0}(i\circ\gamma)=\Phi^\delta_{t_0}(\gamma)$
  yields
\begin{align*}
&\lim_{k\rightarrow\infty}\int_{\Gamma(X_k)}\Psi^\delta_{t_0}(i\circ f_k\circ \gamma_k)\,{\ddd}\left(\frac{L_{d_k}^2}{2}\Pi_k\right)(\gamma_k)
=\lim_{k\rightarrow\infty}\int_{P([0,1]\times X)}\Psi^\delta_{t_0}(\mu)\,{\ddd}\nu_k(\mu)\\
=&\int_{j(\Gamma(X))}\Psi^\delta_{t_0}(\mu)\,{\ddd}\nu(\mu)=\int_{\Gamma(X)}\Psi^\delta_{t_0}(i\circ\gamma)\,{\ddd}\left( \frac{L^2_{d}}2 \Pi\right)(\gamma)=\int_{\Gamma(X)}\Phi^\delta_{t_0}(\gamma)\,{\ddd}\left( \frac{L^2_{d}}2 \Pi\right)(\gamma).\tag{4.25}\label{partcfirsteq}
\end{align*}
Moreover, since $\lim_{\delta\rightarrow 0}\Phi^\delta_{t_0}(\gamma)=\Phi(e_{t_0} (\gamma))$ for any $\gamma\in \Gamma(X)$, the dominated convergence theorem yields
\begin{align*}
\lim_{\delta\rightarrow 0}\int_{\Gamma(X)}\Phi^\delta_{t_0}(\gamma)\,{\ddd}\left( \frac{L^2_{d}}2 \Pi\right)(\gamma)
=\int_X \Phi(x)\, {\ddd}\left[(e_{t_0})_\sharp\frac{L^2_d}{2}\Pi\right](x).\tag{4.26}\label{partcseconeq}
\end{align*}

On the other hand, the uniform continuity of $\Phi$ implies $\sup_{\gamma_k\in \Gamma(X_k)}| \Phi^\delta_{t_0}(f_k(\gamma_k(t)))-\Phi(f_k(\gamma_k(t_0)))|\rightarrow 0$ as $\delta\rightarrow 0$ and $k\rightarrow \infty$. Since $(f_k)_\sharp \varepsilon_{k,t_0}\rightharpoonup \varepsilon_{t_0}$, one has
\begin{align*}
&\lim_{k\rightarrow\infty,\delta\rightarrow0}\int_{\Gamma(X_k)}\Psi^\delta_{t_0}(i\circ f_k\circ \gamma_k)\,{\ddd}\left(\frac{L_{d_k}^2}{2}\Pi_k\right)(\gamma_k)=\lim_{k\rightarrow\infty,\delta\rightarrow0}\int_{\Gamma(X_k)}\Phi^\delta_{t_0}( f_k\circ \gamma_k )\,{\ddd}\left(\frac{L_{d_k}^2}{2}\Pi_k\right)(\gamma_k)\\
=&\lim_{k\rightarrow\infty}\int_{\Gamma(X_k)}\Phi(  f_k\circ e_{t_0}\circ \gamma_k )\,{\ddd}\left(\frac{L_{d_k}^2}{2}\Pi_k\right)(\gamma_k)=\lim_{k\rightarrow\infty}\int_X\Phi(x)\,{\ddd} (f_k)_\sharp\varepsilon_{k,t_0}(x)= \int_X\Phi(x)\,{\ddd} \varepsilon_{t_0}(x),
\end{align*}
which together with (\ref{partcfirsteq}) and (\ref{partcseconeq})   yields
\[
\int_X \Phi(x) \,{\ddd}\left[(e_{t_0})_\sharp\frac{L^2_d}{2}\Pi\right](x)=\int_X\Phi(x)\,{\ddd}\varepsilon_{t_0}(x).
\]
Hence, $\varepsilon_{t_0}=(e_{t_0})_\sharp [{L^2_d}\Pi/{2}]$.  Therefore,  Step 1/(3) implies  for $t\in (0,1)\cap \mathbb{Q}$,
 \[
 (f_k)_\sharp \varepsilon_{k,t}\rightharpoonup(e_{t})_\sharp [{L^2_d}\Pi/{2}]=:\varepsilon_t.\tag{4.27}\label{uniformlyeplstit}
 \]
We claim that (\ref{uniformlyeplstit}) holds
    for each $t\in (0,1)$. Suppose by contradiction that for some $t_0\in (0,1)$, $(f_k)_\sharp \varepsilon_{k,t_0}$ does not converge weakly to  $\varepsilon_{t_0}$. Thus, by the same argument as Step 1/(3),  one could choose a subsequence $((f_{k_l})_\sharp\varepsilon_{k_l,t_0})_l$ convergent weakly to some measure $\tilde{\varepsilon}_{t_0}(\neq \varepsilon_{t_0})$. However,  using the same argument as above, one could get
$\tilde{\varepsilon}_{t_0}=(e_{t_0})_\sharp (L^2_d\Pi/2)=\varepsilon_{t_0}$, which is a contradiction.

\smallskip

\noindent \textbf{Step 3.}  (v) follows by the same method used in Villani \cite[p.786-787]{Vi} together with Proposition \ref{strongconverges} and Statement (iii), while (vi) follows by a similar argument as in Villani \cite[p.786]{Vi} together with Theorem \ref{vectorfieldscontrol} and Proposition \ref{Ascloghtipo}.
\end{proof}

In view of Corollary \ref{compactwasserin}, the structure of Wasserstein spaces over {an}  irreversible space of finite reversibility is similar to the one over a reversible space, in which case
one can consider Strum's $\mathbb{D}$-distance (cf. \cite{Sturm-1}). In particular, the equivalence between $\mathbb{D}$-convergence and $W_2$-convergence (cf. Ambrosio, Gigli and Savar\'e  \cite[Proposition 2.7]{AGS2}) remains valid. We leave the formulation of such statements to the
interested reader.

\section{Synthetic Ricci curvature on forward metric-measure spaces}\label{Ricccurv}

\subsection{Weak ${\rm{CD}}(K,N)$ spaces}

\begin{definition}\label{dispconvexfirst}
Let $(X,d,\nu)$ be a locally compact forward metric-measure space, where $\nu$ is locally finite; let $U$ be a continuous convex function with $U(0)=0$. Given a measure $\mu$ on $X$ with compact support, let
$\mu=\rho \nu+\mu_s$
be the Lebesgue decomposition of $\mu$ with resect to $\nu$, i.e., the absolutely continuous part $\rho \nu$ and singular part $\mu_s$. Then

\begin{itemize}

\item the integral functional $U_\nu$, with nonlinearity $U$ and reference measure $\nu$, is defined as
\[
U_\nu(\mu):=\int_X U(\rho(x))\,{\ddd}\nu(x)+U'(\infty)\,\mu_s[X],
\]
where $U'(\infty):=\lim_{r\rightarrow \infty}U(r)/r$.

\smallskip

\item given $\pi\in P(X\times X)$ and  a measurable function $\beta:X\times X\rightarrow (0,\infty]$, define the integral functional $U^\beta_{\pi,\nu}$, with nonlinearity $U$, reference measure $\nu$, coupling $\pi$ and distortion coefficient $\beta$ as
\[
U^\beta_{\pi,\nu}(\mu):=\int_{X\times X}U\left( \frac{\rho(x)}{\beta(x,y)} \right)\beta(x,y)\pi({\ddd}y|x)\nu({\ddd}x)+U'(\infty)\,\mu_s[X],
\]
 where $\pi({\ddd}y|x)$ denotes the disintegration of $\pi$ with respect to $\mu$  (i.e., $\pi({\ddd}x {\ddd}y)=\mu({\ddd}x)\,\pi({\ddd}y|x)$), which is a family of probability measures on $X$.
\end{itemize}
\end{definition}

\begin{remark}\label{firstUv} First, if $\beta\equiv1$, then $U^\beta_{\pi,\nu}(\mu)=U_\nu(\mu)$. Second, if $\mu$ is absolutely continuous w.r.t. $\nu$ (i.e., $\mu_s=0$), then Villani \cite[Lemma 29.6]{Vi} furnishes
\begin{align*}
U^{\beta}_{\pi,\nu}(\mu)=\int_{X\times X}U\left(  \frac{\rho(x)}{\beta(x,y)}\right)\frac{\beta(x,y)}{\rho(x)}\pi({\ddd}x{\ddd}y),
\end{align*}
where we use the convention $U(0)/0=U'(0)\in [-\infty,\infty)$ and $U(\infty)/\infty=U'(\infty)\in (-\infty,\infty]$.
\end{remark}


\begin{definition}\label{defbeta} Let $(X,d)$ be a forward metric space.
\begin{itemize}

\item[(1)] Given $N\in (1,\infty)$, $K\in \mathbb{R}$ and $t\in (0,1]$,  define
\[
\beta^{(K,N)}_t(x,y):=\left\{
	\begin{array}{lll}
	& \infty, & \ \ \ \text{if } K>0 \text{ and } d(x,y)\geq\pi\sqrt{\frac{N-1}{K}}, \\
	\\
	&\left(\frac{\mathfrak{s}_{K,N}(td(x,y))}{t\mathfrak{s}_{K,N}(d(x,y))}\right)^{N-1}, & \ \ \ \text{if it is well-defined},
	\end{array}
	\right.
\]
where $\mathfrak{s}_{K,N}(t)$ is defined as in (\ref{skdefinition}). For $t=0$, set $\beta^{(K,N)}_0(x,y)=1$.

\smallskip

\item[(2)]
For $N\in\{1,\infty\}$ and $t\in [0,1]$, define
\[
\beta^{(K,1)}_t(x,y)=\lim_{N\downarrow 1}\beta^{(K,N)}_t(x,y),\ \ \beta^{(K,\infty)}_t(x,y)=\lim_{N\uparrow \infty}\beta^{(K,N)}_t(x,y).
\]
In particular, if $t\in (0,1]$, then
\[
\beta^{(K,1)}_t(x,y)=\left\{
	\begin{array}{lll}
	& \infty, & \ \ \ \text{if } K>0, \\
	\\
	&1, & \ \ \ \text{if }K\leq 0,
	\end{array}
	\right.
\ \ \ \ \ \ \ \beta^{(K,\infty)}_t(x,y)=e^{\frac{K}{6}(1-t^2)d(x,y)^2}.
\]
\end{itemize}
\end{definition}

\begin{remark}\label{limitdefinitionofbeta}Since $\beta^{(K,N)}_t(x,y)$ is usually  asymmetric, we introduce a notation $\breve{\beta}^{(K,N)}_t:=\beta^{(K,N)}_t\circ\mathcal {E}$, where $\mathcal {E}(x,y):=(y,x)$. Similarly, for
 a probability measure $\pi\in P(X\times X)$, set $\breve{\pi}=\mathcal {E}_\sharp \pi$, i.e., the probability measure obtained
from $\pi$ by exchanging $x$ and $y$.
Moreover, given a  continuous convex function $U$ with $U(0)=0$, set
\[
U^{\beta_t^{(K,N)}}_{\pi,\nu}(\mu):=\lim_{N'\downarrow N}U^{\beta_t^{(K,N')}}_{\pi,\nu}(\mu),\quad U^{\breve{\beta}_t^{(K,N)}}_{\breve{\pi},\nu}(\mu):=\lim_{N'\downarrow N}U^{\breve{\beta}_t^{(K,N')}}_{\breve{\pi},\nu}(\mu),
\]
if
 one of the  following threshold cases happens:

\smallskip

 \ \ \   (1) $N=1$;\ \ \ (2) $K>0$, $N\in (1,\infty)$ and $\diam(X)=\pi\sqrt{\frac{N-1}{K}}$.



 \end{remark}

\begin{definition}[{Villani \cite{Vi}}]\label{DC-N-definition}  Given $N\in [1,\infty]$, the class $DC_N$ is defined as the set of continuous convex functions $U:\mathbb{R}_+\rightarrow \mathbb{R}$ twice continuously  differentiable on $(0,\infty)$ with $U(0)=0$, and satisfies any one of the following equivalent
differential conditions:

\smallskip

(i) $p_2+p/N\geq 0$, where $p(r):=rU'(r)-U(r)$ and $p_2(r):=rp'(r)-p(r)$;

\smallskip

(ii) $p(r)/r^{1-1/N}$ is a nondecreasing function in $r$;

\smallskip

(iii) \[
u(\delta):=\left\{
	\begin{array}{lll}
	&\delta^NU(\delta^{-N}) \text{ for }\delta>0, & \ \ \ \text{ if } N<\infty, \\
	\\
	&e^\delta U(e^{-\delta}) \text{ for }\delta\in \mathbb{R}, & \ \ \ \text{ if } N=\infty,
	\end{array}
	\right.\text{ is a convex function of }\delta;
\]

\end{definition}

In this section we focus on the Wasserstein distance
of order $2$. Due to Remark \ref{existencedynamicaloptimal},
an optimal transference plan $\pi$ (with respect to $d^2$) is always associated with a displacement interpolation $(\mu_t)_{0\leq t\leq 1}$ because there is a dynamical optimal transference plan $\Pi$ such that $\mu_t=(e_t)_\sharp\Pi$ and $\pi=(e_0,e_1)_\sharp\pi$.


\begin{definition}\label{weakcdknspace}
Given $K\in \mathbb{R}$ and $N\in[1,\infty]$, a forward boundedly compact, $\sigma$-finite, forward geodesic-measure space $(X,d,\nu)$ is said to {\it satisfy the weak curvature-dimension condition/bound ${\CD}(K,N)$}, or to {\it be a weak ${\CD}(K,N)$ space}, if
for any two $\mu_0,\mu_1\in P_c(X,\nu)$, there exists a displacement interpolation $(\mu_t)_{0\leq t\leq 1}$ and an associated optimal transference plan $\pi$ from $\mu_0$  to $\mu_1$ (w.r.t. $d^2$) such that for all $U\in DC_N$ and for all $t\in [0,1]$,
\[
U_\nu(\mu_t)\leq (1-t)U^{\beta_{1-t}^{(K,N)}}_{\pi,\nu}(\mu_0)+t\,U^{\breve{\beta}_{t}^{(K,N)}}_{\breve{\pi},\nu}(\mu_1),\ \forall\,t\in[0,1].\tag{5.1}\label{weakcdkncon}
\]
Here, $P_c(X,\nu)$ is the collection of compactly supported probability measures $\mu$ with $\supp\mu\subset \supp\nu$.
\end{definition}

In the  Finslerian case, the following  result is valid.
\begin{theorem}[Ohta \cite{O,O1}]\label{OTARic}For an $n(\geq 2)$-dimensional forward complete Finsler metric-measure manifold $(M,F,\mathfrak{m})$, the corresponding forward geodesic-measure space $(M,d_F,\mathfrak{m})$ is a weak ${\CD}(K,N)$ space for some $N\in [n,\infty]$ if and only if
$\mathbf{Ric}_N(y)\geq K$ for any  $y\in SM$.
\end{theorem}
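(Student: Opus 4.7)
The plan is to establish the equivalence via the Finslerian Brenier--McCann theory of optimal transport, reducing the displacement-convexity inequality (\ref{weakcdkncon}) to a pointwise Jacobian estimate along geodesics, which in turn is dictated by Jacobi-field comparison driven by $\mathbf{Ric}_{N}$.

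First, for the ``if'' direction (Ricci bound $\Rightarrow$ CD condition), I would proceed in four steps. Step 1: invoke the Finslerian Monge solution (Ohta's adaptation of Brenier--McCann) to represent an optimal coupling $\pi$ from $\mu_0=\rho_0\mathfrak{m}$ to $\mu_1$ for the cost $d_{F}^{2}$ by a Borel map $T(x)=\exp_{x}(\nabla\varphi(x))$, with $\varphi$ a $c$-concave potential, and set $T_{t}(x):=\exp_{x}(t\nabla\varphi(x))$, $\mu_{t}:=(T_{t})_{\sharp}\mu_{0}$. Step 2: write the Monge--Amp\`ere identity $\rho_{0}(x)=\rho_{t}(T_{t}(x))\,J_{t}(x)$, where $J_{t}(x)$ is the Jacobian of $T_{t}$ with respect to $\mathfrak{m}$; because $\mathfrak{m}$ differs from the Riemannian volume $\mathrm{vol}_{g_{\dot\gamma}}$ along $\gamma(t):=T_{t}(x)$ by the distortion $\tau$, one gets $J_{t}(x)=e^{\tau(\dot\gamma(0))-\tau(\dot\gamma(t))}\cdot\det_{g_{\dot\gamma(0)}}(dT_{t})$. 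Step 3: along each such geodesic, compute $(\log J_{t})''$ and apply the AM--GM/trace inequality to the Jacobi tensor; the Riemannian-like second variation contributes $-\mathbf{Ric}(\dot\gamma)$, the derivatives of $\tau$ produce $-\tfrac{d}{dt}\mathbf{S}(\dot\gamma(t))$, and the mixed term is absorbed using $2ab\le a^{2}/(N-n)+(N-n)b^{2}$, yielding
\begin{equation*}
\bigl(J_{t}(x)^{1/N}\bigr)''+\frac{\mathbf{Ric}_{N}(\dot\gamma(t))}{N}\,J_{t}(x)^{1/N}\leq 0,
\end{equation*}
which is precisely why the weighted Ricci curvature is defined as in (\ref{defRicN}). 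Step 4: integrating this Sturm--Liouville inequality against the comparison function $\mathfrak{s}_{K,N}$ gives the pointwise bound $J_{t}(x)\geq \beta^{(K,N)}_{1-t}(x,T(x))\,[\text{boundary term at }t=1]^{N}/\cdots$, and its symmetric counterpart at $t$ near $1$; combining with the characterisation of $DC_{N}$ in Definition \ref{DC-N-definition}(iii) (convexity of $u$), the change of variables $y=T(x)$ converts $U_{\mathfrak{m}}(\mu_{t})$ into an integral against $\mu_{0}$ that is pointwise $\leq (1-t)$ times the $\beta$-integrand at $t=0$ plus $t$ times the $\breve\beta$-integrand at $t=1$, yielding (\ref{weakcdkncon}).

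For the ``only if'' direction, I would localise and take an infinitesimal limit. Fix $x_{0}\in M$ and a unit vector $v\in S_{x_{0}}M$, let $\gamma$ be the geodesic with $\dot\gamma(0)=v$, and consider $\mu_{0}^{\varepsilon}$, $\mu_{1}^{\varepsilon}$ uniform on small ``tube'' neighbourhoods of $\gamma(-\varepsilon)$ and $\gamma(\varepsilon)$ of transverse scale $\delta\ll\varepsilon$, normalised so that both have mass one. The optimal coupling is essentially translation along $\gamma$, so $\mu_{1/2}^{\varepsilon}$ concentrates near $x_{0}$. Test (\ref{weakcdkncon}) with $U(r)=Nr(1-r^{-1/N})\in DC_{N}$ (the R\'enyi-type entropy) and expand both sides to order $\varepsilon^{2}$: the left side picks up a factor involving $\mathbf{Ric}_{N}(v)$ coming from the second-order behaviour of the Jacobian of the exponential map (with $\mathfrak{m}$-distortion), while the right side produces $K$ through the Taylor expansion of $\beta^{(K,N)}_{1/2}$. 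Comparing leading coefficients forces $\mathbf{Ric}_{N}(v)\geq K$.

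The main obstacle is Step 3: executing the Jacobi-field comparison cleanly in the Finsler context, because the reference metric $g_{\dot\gamma(t)}$ varies with $t$ and the connection is only almost-metric. One must use Ohta's covariant formulation of the second variation along Finsler geodesics and split the Hessian of $\varphi$ into a radial and transverse part to apply AM--GM in the correct dimension $N-n$; this is precisely where the S-curvature correction and the $\mathbf{S}^{2}/(N-n)$ term in (\ref{defRicN}) become indispensable, and mishandling the boundary case $N=n$ (forcing $\mathbf{S}\equiv 0$ on supports) or $N=\infty$ (dropping the trace correction) breaks the chain of estimates. The remaining routine points---measurability of $\nabla\varphi$ off a negligible set, avoiding the cut locus which is $\mathfrak{m}$-null, and handling the singular part $\mu_{s}$ via the $U'(\infty)$ convention---follow Ohta's framework essentially as in the reversible case.
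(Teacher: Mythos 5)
The paper does not prove this statement at all: Theorem \ref{OTARic} is quoted directly from Ohta's works \cite{O,O1}, so there is no internal proof to compare against. Your outline (Finslerian Brenier--McCann transport maps, the Monge--Amp\`ere identity with the distortion $\tau$, the concavity estimate for $J_t^{1/N}$ via Jacobi-field comparison and the $2ab\le a^2/(N-n)+(N-n)b^2$ absorption producing $\mathbf{Ric}_N$, integration against $\mathfrak{s}_{K,N}$ to get the distortion coefficients, and the converse by localizing along a geodesic and Taylor-expanding the R\'enyi entropy) is essentially the argument of those cited papers, so it is the right route; only minor imprecision remains in Step 4, where the conclusion should be stated as the concavity inequality $J_t(x)^{1/N}\ge (1-t)\,\beta^{(K,N)}_{1-t}(x,T(x))^{1/N}J_0(x)^{1/N}+t\,\beta^{(K,N)}_{t}(x,T(x))^{1/N}J_1(x)^{1/N}$ rather than the garbled bound you wrote.
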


According to Villani \cite{Vi}, a function $U:\mathbb{R}_+\rightarrow \mathbb{R}_+$ satisfies {\it at most polynomial growth}, i.e., there exists a constant $C>0$ with
\[
r U'(r)\leq C\left( U(r)+r  \right),\ \forall\,r>0.\tag{5.2}\label{mostpoly}
\]
Particularly, $U$ grows at most polynomially if one of the following condition holds
\begin{itemize}
\item[(1)] $U$ is Lipschitz;

\item[(2)] $U$ is twice differentiable and behaves at infinity like $a r\log r+b r$;

\item[(3)] $U$ is twice differentiable and behaves at infinity like a polynomial.

\end{itemize}

\begin{proposition}\label{CDKNstringent}
Given $K\in \mathbb{R}$ and $N\in[1,\infty]$, let  $(X,d,\nu)$  be forward boundedly compact, $\sigma$-finite, forward geodesic-measure space. The following statements hold:

\smallskip

\noindent{\rm(i)} for $K'\geq K$ and $N'\leq N$, if $(X,d,\nu)$ is a weak ${\CD}(K',N')$ space, it is also a weak ${\CD}(K,N)$ space;

\smallskip

\noindent{\rm(ii)} $(X,d,\nu)$ is a weak ${\CD}(K,N)$ space if and only if  \eqref{weakcdkncon} is valid for those nonnegative $U\in DC_N$ with:
\begin{itemize}

\item  $U$ is Lipschitz if $N<\infty$;

\item  $U$ is locally Lipschitz and $U(r)=a r\log r+b r$ ($a\geq 0$, $b\in \mathbb{R}$) for $r$ large enough if $N=\infty$.
\end{itemize}

\smallskip

\noindent{\rm(iii)} $(X,d,\nu)$ is a weak ${\CD}(K,N)$ space if and only if \eqref{weakcdkncon} holds for $\mu_0,\mu_1\in P_c(X,\nu)$ which are absolutely continuous (w.r.t. $\nu$) with continuous densities.

 \end{proposition}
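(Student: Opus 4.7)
I would prove the three parts in order, noting that the statements are modeled on their reversible counterparts (e.g.\ Villani, Chapter 30) and that the irreversibility enters only through the pair $(\beta,\breve\beta)$, whose roles are completely symmetric.

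For (i), the first observation is that convexity of $U$ with $U(0)=0$ forces $p(r):=rU'(r)-U(r)\geq 0$ (since $r\mapsto U(r)/r$ is nondecreasing). From this, one sees that $DC_N\subseteq DC_{N'}$ whenever $N'\leq N$: the condition $p_2+p/N'\geq 0$ is weaker than $p_2+p/N\geq 0$. Next, differentiating the map $\beta\mapsto\beta\,U(r/\beta)$ yields derivative $-p(r/\beta)\leq 0$, so $U^\beta_{\pi,\nu}(\mu)$ is \emph{nonincreasing} in $\beta$ (the singular term $U'(\infty)\mu_s[X]$ is independent of $\beta$). Finally, the explicit form of $\mathfrak{s}_{K,N}$ (and the fact that $t\mapsto\mathfrak{s}_{K,N}(tr)/\mathfrak{s}_{K,N}(r)$ is monotone in the right directions in $K$ and $N$) shows that $\beta^{(K,N)}_t$ is nondecreasing in $K$ and nonincreasing in $N$. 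Combining these three facts, if $K'\geq K$ and $N'\leq N$, then $\beta^{(K',N')}_{1-t}\geq\beta^{(K,N)}_{1-t}$ and $\breve\beta^{(K',N')}_t\geq\breve\beta^{(K,N)}_t$, so the $\CD(K',N')$ bound dominates the $\CD(K,N)$ bound pointwise in $U\in DC_N\subseteq DC_{N'}$, giving the desired implication.

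For (ii), the ``only if'' direction is tautological; for ``if'', the idea is to approximate an arbitrary nonnegative $U\in DC_N$ from below by the restricted class. When $N<\infty$, cut off the superlinear growth to obtain $U_M\in DC_N$ that is Lipschitz and converges monotonically up to $U$ as $M\to\infty$. When $N=\infty$, use the characterization that $\delta\mapsto e^\delta U(e^{-\delta})$ is convex: replace $U$ for $r$ large by the affine tangent $a_M r\log r+b_M r$ dictated by convexity of that parameterization, producing $U_M\in DC_\infty$ with the required asymptotic form. In both cases apply \eqref{weakcdkncon} to $U_M$ and invoke the monotone convergence theorem on each integrand appearing in $U_\nu(\mu_t)$, $U^{\beta_{1-t}^{(K,N)}}_{\pi,\nu}(\mu_0)$, and $U^{\breve\beta_t^{(K,N)}}_{\breve\pi,\nu}(\mu_1)$; since $\supp\mu_0\cup\supp\mu_1$ is compact and $\pi$ has compact support in $X\times X$, the relevant densities and coefficients are uniformly bounded away from the problematic regions, so the passage to the limit is routine.

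For (iii), the ``only if'' direction is again tautological. For ``if'', given $\mu_0,\mu_1\in P_c(X,\nu)$ with $\supp\mu_i\subseteq\supp\nu$, build approximating sequences $(\mu_0^k),(\mu_1^k)$ that are absolutely continuous with continuous densities and converge weakly to $\mu_0,\mu_1$ with supports in a fixed compact set: convolve with continuous kernels supported in small symmetrized balls (using Theorem \ref{topologychara}/(ii) to work in the Polish space $(X,\hat d)$) and renormalize against $\nu$. By hypothesis, for each $k$ there is an optimal plan $\pi^k$ and a displacement interpolation $(\mu^k_t)$ satisfying \eqref{weakcdkncon}. Theorem \ref{lengspacewass} together with Theorem \ref{stabilityoptimal} supplies, along a subsequence, weak limits $\pi^k\rightharpoonup\pi$, $\breve\pi^k\rightharpoonup\breve\pi$ and pointwise weak convergence $\mu^k_t\rightharpoonup\mu_t$, where $\pi$ is optimal from $\mu_0$ to $\mu_1$ and $(\mu_t)$ is the associated displacement interpolation. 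One then passes to the limit in \eqref{weakcdkncon} using lower semicontinuity of $\mu\mapsto U_\nu(\mu)$ on the left and upper semicontinuity of the distortion functionals on the right (the latter following from Villani's joint upper semicontinuity of $(\pi,\mu)\mapsto U^\beta_{\pi,\nu}(\mu)$, whose proof carries over verbatim since $\beta^{(K,N)}_{1-t}$ is a continuous function on the compact support of the plans).

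The main obstacle will be step (iii): specifically, controlling the singular contribution $U'(\infty)\mu_s[X]$ in the limit $k\to\infty$, since $\mu_0^k$ is absolutely continuous but $\mu_0$ need not be. This is handled by the standard trick of approximating $U$ itself by a sequence of nonlinearities with $U'(\infty)=+\infty$ unless $U$ is linear (so that any ``escape of mass'' to singularity is penalized in the limit), which is precisely compatible with the reduction allowed by (ii). The asymmetry between $\beta$ and $\breve\beta$ plays no essential role because the stability from Theorem \ref{stabilityoptimal} applies equally to $\pi^k$ and $\breve\pi^k=\mathcal E_\sharp\pi^k$, and the integrand structure in $U^\beta_{\pi,\nu}$ and $U^{\breve\beta}_{\breve\pi,\nu}$ is identical after the exchange $\mathcal E(x,y)=(y,x)$.
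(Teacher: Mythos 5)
Your overall route is the same as the paper's: for (i) and (ii) the monotonicity of $DC_N$ and of $\beta^{(K,N)}_t$ together with Villani's Propositions 29.10 and 29.12 (in (ii) you should still record the reduction from sign-changing to nonnegative $U\in DC_N$, obtained by adding a linear function, which shifts all three terms of \eqref{weakcdkncon} by the same constant); and for (iii), mollify $\mu_0,\mu_1$ against $\nu$ to get continuous densities, apply the hypothesis to the approximations, pass to the limit in the plans and interpolations by stability of optimal transport, and conclude by semicontinuity. This is exactly the paper's scheme.

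Two steps, however, are not correct as written. First, there is no ``joint upper semicontinuity'' of $(\pi,\mu)\mapsto U^\beta_{\pi,\nu}(\mu)$; the functional is only \emph{lower} semicontinuous in general. What the limit passage needs is the one-sided inequality $\limsup_k U^\beta_{\pi_k,\nu}(\mu_{k,i})\leq U^\beta_{\pi,\nu}(\mu_i)$ along the \emph{specific} kernel-regularized sequence $\mu_{k,i}=\rho_{k,i}\nu$ (a Jensen-type property of the mollification, Lemma \ref{continuonUvUpi}/(iii)), and that lemma requires $U$ nonnegative with at most polynomial growth \eqref{mostpoly}; so invoking (ii) first is not a convenience but the step that makes the lemma applicable. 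Second, the ``main obstacle'' you single out is not an obstacle: in that $\limsup$ inequality the singular part of $\mu_i$ contributes $U'(\infty)(\mu_i)_s[X]$ to the right-hand (larger) side, so no mass needs to be recovered, and your proposed remedy --- replacing $U$ by nonlinearities with $U'(\infty)=+\infty$ --- is both unnecessary and incompatible with the Lipschitz class you have just reduced to via (ii). Finally, Theorem \ref{stabilityoptimal} is stated for \emph{compact} $\theta$-geodesic spaces, so before quoting it you must localize to a compact set containing the supports and all connecting geodesics (the paper's Lemma \ref{compactmetrc}) and, as the paper notes, the restricted space need not itself be geodesic, so the stability argument has to be rerun on that set rather than cited verbatim.
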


\begin{proof} (i) and (ii) follow from the same arguments as in Villani \cite[Propositions 29.10 \& 29.12]{Vi}, respectively. For (iii), it suffices to show the ``$\Leftarrow$" part.
Given $\mu_0,\mu_1\in P_c(X,\nu)$,  owing to (ii), it is enough to show
that   there exists a displacement interpolation $(\mu_t)_{0\leq t\leq 1}$ and an associated optimal  transference plan  $\pi$ of $(\mu_0,\mu_1)$ such that for all nonnegative $U\in DC_N$ with (\ref{mostpoly}),
\[
U_\nu(\mu_t)\leq (1-t)U^{\beta_{1-t}^{(K,N)}}_{\pi,\nu}(\mu_0)+t\,U^{\breve{\beta}_{t}^{(K,N)}}_{\breve{\pi},\nu}(\mu_1),\ \forall\,t\in[0,1].\tag{5.3}\label{growpolycdkn}
\]
We notice that a similar proof has been presented by Villani \cite[Corollary 29.23]{Vi} in the compact case. Here, by taking advantage of Lemma \ref{continuonUvUpi} and Theorem \ref{stabilityoptimal}, we are going to prove (iii) in the forward boundedly compact setting.
%
Since  such a space $X$ might be  noncompact, our strategy {is} to construct a compact metric-measure space $\mathfrak{K}$ with suitable properties so that the argument can be carried out  on $\mathfrak{K}$.

Now let $\mathcal {K}$ be a compact set and  $\epsilon$ be a small positive number as in Lemma \ref{compactmetrc}.
On the compact forward metric-measure space $\mathfrak{K}:=(\mathcal {K},d|_\mathcal {K},\nu|_\mathcal {K})$, since $\supp\mu_i\subset \mathcal {K}\cap\supp\nu=\supp \nu|_\mathcal {K}$ for $i=0,1$,
Lemma \ref{continuonUvUpi}/(iii) yields two sequences of probability measures of continuous density $\mu_{k,i}=\rho_{k,i}\nu|_\mathcal {K}$ converging weakly in $P(\mathcal {K})$ to $\mu_i|_\mathcal {K}=\mu_i$ for $i=0,1$. Moreover,  Remark \ref{suppregularing} together with Lemma \ref{compactmetrc} yields a small $\epsilon>0$ such that (by passing to a subsequence)
\[
\suppor \rho_{k,i}\subset  ({\supp \mu_i}|_{\mathcal {K}})^{\epsilon} =({\supp \mu_i})^{\epsilon}\subset  \mathcal {K}, \text{ for all }k,i.\tag{5.4}\label{supporersetrealtion}
\]
Now we naturally extend $\mu_{k,i}$ on $X$ by setting $\mu_{k,i}[X\backslash\mathcal {K}]=0$. Thus, $\mu_{k,i}\in P_c(X,\nu)$   are absolutely continuous (w.r.t. $\nu$) with continuous densities.

By Remark \ref{limitdefinitionofbeta}, we may assume that $\beta^{(K,N)}_{t}$ is continuous. On $(X,d,\nu)$,
for arbitrary $k\in \mathbb{N}$, the assumption furnishes  a displacement interpolation $(\mu_{k,t})_{0\leq t\leq 1}$ and an associated optimal transference plan $\pi_k\in P(X\times X)$ of $(\mu_{k,0},\mu_{k,1})$ such that for any  $U\in DC_N$,
\[
U_\nu(\mu_{k,t})\leq (1-t)U^{\beta_{1-t}^{(K,N)}}_{\pi_k,\nu}(\mu_{k,0})+t\,U^{\breve{\beta}_{t}^{(K,N)}}_{\breve{\pi}_k,\nu}(\mu_{k,1}),\ \forall\,t\in [0,1].\tag{5.5}\label{continscur1}
\]

Due to (\ref{supporersetrealtion}) and the construction of $\mathcal {K}$, it is easy to check that for each $k$, the quantities $(\mu_{k,i})_{i=0,1}$, $\pi_k$ and $(\mu_{k,t})_{0\leq t\leq 1}$ still satisfy Lemma \ref{compactmetrc}/(iii)-(v). In particular, $\pi_k$ and $\mu_{k,t}$ can be viewed as the optimal transference plan of $(\mu_{k,0},\mu_{k,1})$ and associated displacement interpolation defined on the compact forward metric space  $\mathfrak{K}$ (i.e., Lemma \ref{compactmetrc}/(v) and Definition \ref{metrcdefol}). Although  $\mathfrak{K}$ may be not a forward geodesic space, the same argument as in Theorem \ref{stabilityoptimal} ($\mathcal {X}_k=\mathcal {X}=\mathfrak{K}$, $f_k=\id$) together with the construction of $\mathcal {K}$ still furnishes
 (by passing to a subsequence) that
\begin{itemize}
\item[(a)]  $(\pi_k)_k$ converges weakly to an optimal transference plan $\pi$ of coupling $({\mu_0}|_\mathcal {K},{\mu_1}|_\mathcal {K})=({\mu_0},{\mu_1})$;

\item[(b)] $(\mu_{k,t})_k$ converges weakly to $\mu_{t}$, which is a displacement interpolation with respect to $\pi$.
\end{itemize}
The weak convergence in (a) (resp., (b)) is defined on $P(\mathcal {K}\times \mathcal {K})$ (resp., $P(\mathcal {K})$). However, in view of
 \[
 \supp\mu_{k,i}\subset \mathcal {K}, \ \supp \mu_t\subset  \mathcal {K},\ \supp \pi_k\subset  \mathcal {K}\times \mathcal {K},
 \]
both (a) and (b) remain valid on $P(X\times X)$ and $P(X)$, respectively.
Then Lemma \ref{compactmetrc}/(vi) indicates that $\pi$ is also an optimal transference plan on $(X,d)$ and $(\mu_{t})_{0\leq t\leq 1}$ is the associated displacement interpolation.

On the other hand, note that $\supp\mu_{k,i}\subset \supp \nu|_\mathcal {K}$ and hence, $\pi_k\subset \supp \nu|_\mathcal {K}\times \supp \nu|_\mathcal {K}$. For any nonnegative $U\in DC_N$ with (\ref{mostpoly}),
by considering the compact forward metric-measure space $\mathfrak{K}$ and recalling the construction of $\mu_{k,i}$,
Lemma \ref{continuonUvUpi}/(i)(iii) gives
\begin{align*}
U_{\nu}(\mu_{t})=U_{\nu|_\mathcal {K}}(\mu_{t})\leq \underset{k\rightarrow \infty}{\lim\inf}\, U_{\nu|_\mathcal {K}}(\mu_{k,t})=\underset{k\rightarrow \infty}{\lim\inf}\, U_{\nu}(\mu_{k,t}),\tag{5.6}\label{firstweakcontrol}\\
 \underset{k\rightarrow \infty}{\lim\sup}\,U^{\beta^{(K,N)}_{1-t}}_{\pi_k,\nu}(\mu_{k,0})\leq U^{\beta^{(K,N)}_{1-t}}_{\pi,\nu}(\mu_{0}),\ \ \underset{k\rightarrow \infty}{\lim\sup}\,U^{\breve{\beta}^{(K,N)}_{t}}_{\breve{\pi}_k,\nu}(\mu_{k,1})\leq U^{\breve{\beta}^{(K,N)}_{t}}_{\breve{\pi},\nu}(\mu_{1}),\tag{5.7}\label{secondweakcontrol}
\end{align*}
which together with (\ref{continscur1}) yield (\ref{growpolycdkn}).
\end{proof}

A subset $A$ of a   forward geodesic space  $(X,d)$ is said to be {\it totally convex} if every minimal geodesic whose endpoints belong to $A$ is entirely contained in $A$.
The same argument as in Villani \cite[Proposition 30.1]{Vi} together with the proof of Lemma \ref{compactmetrc} yields the following result.
\begin{proposition}\label{cosntmupcd}
Let $(X,d,\nu)$ be a weak ${\CD}(K,N)$ space. Then
\begin{itemize}
\item[(i)] if $A\subset X$ is a totally convex closed subset, then $(A,d|_A,\nu|_A)$ is also a weak ${\CD}(K,N)$ space;

\item[(ii)] for any $\alpha>0$, $(X,d,\alpha \nu)$ is a weak ${\CD}(K,N)$ space;

\item[(iii)] for any $\alpha>0$, $(X,\alpha d,\nu)$ is a weak ${\CD}(\alpha^{-2}K,N)$ space.

\end{itemize}
\end{proposition}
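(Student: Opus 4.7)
The plan is to verify the three statements in turn, each by reducing to the weak $\CD(K,N)$ condition on $(X,d,\nu)$ after a suitable change of data, following the pattern of Villani \cite[Proposition 30.1]{Vi}.

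\textbf{Part (i).} First I would check that $(A,d|_A,\nu|_A)$ is a forward boundedly compact, $\sigma$-finite forward geodesic-measure space: forward bounded compactness passes to closed subsets, $\sigma$-finiteness of $\nu|_A$ is inherited, and total convexity of $A$ together with Theorem \ref{geodesicexistencethe} (applied through forward bounded compactness of $A$) shows that any two points of $A$ are joined by a minimal geodesic lying entirely in $A$. Given $\mu_0,\mu_1\in P_c(A,\nu|_A)$, extend them by zero to obtain elements of $P_c(X,\nu)$ and apply the weak $\CD(K,N)$ condition on $(X,d,\nu)$ to produce a displacement interpolation $(\mu_t)_{0\leq t\leq 1}$ and an associated optimal transference plan $\pi$ satisfying \eqref{weakcdkncon}. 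Because $A$ is totally convex and closed, every minimal geodesic between points of $\supp\mu_0\cup\supp\mu_1\subset A$ is contained in $A$; consequently $\supp\pi\subset A\times A$ and $\supp\mu_t\subset A$ for all $t$. Since the Lebesgue decompositions of $\mu_t$ with respect to $\nu$ and with respect to $\nu|_A$ agree on $A$, the integral functionals $U_\nu, U^{\beta^{(K,N)}_{1-t}}_{\pi,\nu}, U^{\breve{\beta}^{(K,N)}_t}_{\breve{\pi},\nu}$ coincide with their $\nu|_A$-versions, and \eqref{weakcdkncon} restricts to the analogous inequality on $(A,d|_A,\nu|_A)$.

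\textbf{Part (ii).} For $\mu=\rho\nu+\mu_s$ one has $\mu=(\rho/\alpha)(\alpha\nu)+\mu_s$, so
\[
U_{\alpha\nu}(\mu)=\alpha\int_X U(\rho/\alpha)\,{\ddd}\nu+U'(\infty)\,\mu_s[X]=\widetilde{U}_\nu(\mu),
\]
where $\widetilde{U}(r):=\alpha\,U(r/\alpha)$. Definition \ref{DC-N-definition} is stable under this rescaling (verify via criterion (iii): $\widetilde{u}(\delta)=\delta^N\widetilde{U}(\delta^{-N})=\alpha\,\delta^N U(\delta^{-N}/\alpha)$ is still convex in $\delta$, and similarly for $N=\infty$), so $\widetilde{U}\in DC_N$ whenever $U\in DC_N$. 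The same identity holds for the coupling functionals $U^{\beta}_{\pi,\alpha\nu}(\mu)=\widetilde{U}^{\beta}_{\pi,\nu}(\mu)$ using Remark \ref{firstUv}. Applying the weak $\CD(K,N)$ inequality on $(X,d,\nu)$ to $\widetilde{U}$ with the interpolation and coupling associated to $(\mu_0,\mu_1)$ therefore yields \eqref{weakcdkncon} on $(X,d,\alpha\nu)$ for $U$.

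\textbf{Part (iii).} The key observation is the identity $\mathfrak{s}_{\alpha^{-2}K,N}(\alpha s)=\alpha\,\mathfrak{s}_{K,N}(s)$ for every $s\geq 0$ (check directly from \eqref{skdefinition} in each sign of $K$). It follows that the distortion coefficients are invariant under the combined rescaling:
\[
\beta^{(\alpha^{-2}K,N)}_{t,\alpha d}(x,y)=\left(\frac{\mathfrak{s}_{\alpha^{-2}K,N}(t\alpha d(x,y))}{t\,\mathfrak{s}_{\alpha^{-2}K,N}(\alpha d(x,y))}\right)^{N-1}=\beta^{(K,N)}_{t,d}(x,y),
\]
and similarly for $\breve{\beta}$. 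Moreover, the geodesic structure is unchanged up to reparameterization: a curve is a (constant-speed) minimal geodesic for $d$ if and only if it is so for $\alpha d$, and the identity map is an isometry between the Wasserstein spaces of order $2$ over $(X,d)$ and over $(X,\alpha d)$ up to the overall scaling $W_2^{\alpha d}=\alpha\,W_2^{d}$, so displacement interpolations are preserved. Hence a displacement interpolation and optimal transference plan realizing \eqref{weakcdkncon} on $(X,d,\nu)$ simultaneously realize the corresponding inequality on $(X,\alpha d,\nu)$ with $K$ replaced by $\alpha^{-2}K$.

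The main obstacle is Part (i): one must be careful that the $\CD$-inequality produced on $X$ by the assumption truly restricts to $A$, which requires both that $\supp\pi\subset A\times A$ and $\supp\mu_t\subset A$ (forced by total convexity) \emph{and} that the singular parts as well as the densities behave correctly under the restriction $\nu\rightsquigarrow\nu|_A$. Parts (ii) and (iii) are essentially algebraic consequences of the definitions once the behaviour of $DC_N$ and of $\mathfrak{s}_{K,N}$ under rescaling is pinned down.
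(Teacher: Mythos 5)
Your proposal is correct and follows essentially the same route as the paper, whose proof simply invokes the argument of Villani [Proposition 30.1] together with the restriction/extension mechanism of Lemma \ref{compactmetrc}; your treatment of (i) via total convexity forcing $\supp\pi\subset A\times A$ and $\supp\mu_t\subset A$, of (ii) via the substitution $\widetilde{U}(r)=\alpha U(r/\alpha)\in DC_N$, and of (iii) via the scaling identity $\mathfrak{s}_{\alpha^{-2}K,N}(\alpha s)=\alpha\,\mathfrak{s}_{K,N}(s)$ is exactly the content of that citation. No gaps.
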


\begin{theorem}\label{supporsetcdkn}
A forward boundedly compact, $\sigma$-finite, forward geodesic-measure space $(X,d,\nu)$ is a weak ${\CD}(K,N)$ space if and only if $(\supp\nu,d|_{\supp\nu},\nu|_{\supp\nu})$ is a weak ${\CD}(K,N)$ space.
\end{theorem}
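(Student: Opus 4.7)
The plan is to prove each implication separately, with the key bridge being an \emph{entropy argument} that confines displacement interpolations to $\supp\nu$.

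The implication $\Leftarrow$ is essentially a compatibility check. First, observe that $P_c(X,\nu)=P_c(\supp\nu,\nu|_{\supp\nu})$, since any $\mu\in P_c(X,\nu)$ has $\supp\mu\subset\supp\nu$ by definition. For points $x,y\in\supp\nu$ the distance $d|_{\supp\nu}$ coincides with $d$, so the length of a curve contained in $\supp\nu$ is the same in both structures; consequently, a minimal geodesic of $(\supp\nu,d|_{\supp\nu})$ is also a minimal geodesic of $(X,d)$. Any coupling of $\mu_0,\mu_1\in P_c(X,\nu)$ is supported in $\supp\nu\times\supp\nu$, so $W_2^X(\mu_0,\mu_1)=W_2^{\supp\nu}(\mu_0,\mu_1)$ and an optimal plan on one side is optimal on the other. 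The displacement interpolation and optimal plan furnished by $\CD(K,N)$ on $\supp\nu$ therefore serve equally on $X$, and all integrals in \eqref{weakcdkncon} against $\nu|_{\supp\nu}$ equal those against $\nu$ because the measures live in $\supp\nu$.

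For the implication $\Rightarrow$, the key ingredient is: whenever $\mu_0,\mu_1\in P_c(X,\nu)$ are absolutely continuous w.r.t.\ $\nu$ with bounded densities and bounded support, any displacement interpolation $(\mu_t)$ supplied by $\CD(K,N)$ on $X$ satisfies $\supp\mu_t\subset\supp\nu$ for all $t$. To see this, note that $\CD(K,N)$ implies $\CD(K,\infty)$ by Proposition \ref{CDKNstringent}/(i), and $U(r)=r\log r$ belongs to $DC_\infty$ with $U'(\infty)=\infty$. A direct computation gives
\[
U^{\beta^{(K,\infty)}_{1-t}}_{\pi,\nu}(\mu_0)=\int \rho_0\log\rho_0\, d\nu-\tfrac{K(1-(1-t)^2)}{6}\int d(x,y)^2\rho_0(x)\,\pi(dx\,dy),
\]
and similarly for the $\mu_1$ term; both are finite thanks to the bounded support assumption. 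Since $U'(\infty)=\infty$, the only way the left-hand side $U_\nu(\mu_t)$ of \eqref{weakcdkncon} can remain finite is for the singular part of $\mu_t$ to vanish, giving $\supp\mu_t\subset\supp\nu$. By Fubini in the time variable and continuity of geodesics (together with closedness of $\supp\nu$), the associated dynamical optimal plan $\Pi$ is concentrated on minimal geodesics \emph{entirely contained} in $\supp\nu$.

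Two consequences follow. First, to verify that $(\supp\nu,d|_{\supp\nu})$ is a forward geodesic space, fix $x,y\in\supp\nu$ and take $\mu_0^{(r)}:=\mathbf{1}_{B^+_r(x)}\nu/\nu[B^+_r(x)]$ and $\mu_1^{(r)}:=\mathbf{1}_{B^+_r(y)}\nu/\nu[B^+_r(y)]$, which converge weakly to $\delta_x,\delta_y$ as $r\to 0$. The entropy argument yields plans $\Pi^{(r)}$ concentrated on $\Gamma(\supp\nu)$; Theorem \ref{Arzela-Ascoli Theorem} (applied inside a fixed compact forward ball containing the supports) combined with Prokhorov's theorem extracts a weak limit $\Pi$ concentrated on minimal geodesics from $x$ to $y$, and closedness of $\Gamma(\supp\nu)$ in $\Gamma(X)$ under the uniform topology forces $\supp\Pi\subset\Gamma(\supp\nu)$. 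Forward bounded compactness and $\sigma$-finiteness of $(\supp\nu,d|_{\supp\nu},\nu|_{\supp\nu})$ are immediate. Second, to verify the CD inequality on $\supp\nu$, Proposition \ref{CDKNstringent}/(iii) reduces the problem to $\mu_0,\mu_1$ absolutely continuous with continuous densities; invoking $\CD(K,N)$ on $X$ and the entropy argument yields $(\mu_t),\pi,\Pi$ with $\Pi$ concentrated on $\Gamma(\supp\nu)$, which serves as a dynamical optimal plan in $(\supp\nu,d|_{\supp\nu})$, and the integrals in \eqref{weakcdkncon} against $\nu$ and $\nu|_{\supp\nu}$ agree. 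The main obstacle is the entropy argument: verifying finiteness of the distorted functionals $U^{\beta}_{\pi,\nu}$ on the right-hand side and exploiting $U'(\infty)=\infty$ to rule out singular mass; coupled with the Fubini step that upgrades ``$\supp\mu_t\subset\supp\nu$ for every $t$'' to ``$\Pi$-a.e.\ geodesic is contained in $\supp\nu$'', this forms the technically delicate core of the proof.
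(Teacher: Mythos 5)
Your argument is correct and rests on the same two pillars as the paper's proof of Theorem \ref{supporsetcdkn}: the entropy argument (finiteness of $H_\nu(\mu_t)$ for $H(r)=r\log r\in DC_N$, together with $H'(\infty)=\infty$, forces $\mu_t\ll\nu$, hence $\supp\mu_t\subset\supp\nu$), and the dense-times/Fubini upgrade, using continuity of geodesics and closedness of $\supp\nu$, to ``$\Pi$-a.e.\ geodesic lies entirely in $\supp\nu$'' --- exactly the paper's claims (C-1)--(C-3), which simultaneously give the geodesic property of $\supp\nu$ and the transfer of \eqref{weakcdkncon}. Where you genuinely differ is the scaffolding: the paper approximates general endpoints by continuous-density measures on $X$ via regularizing kernels (Lemma \ref{continuonUvUpi}/(iii)), passes the plans and interpolations to the limit by the stability argument of Theorem \ref{stabilityoptimal}, and concludes with the semicontinuity estimates \eqref{firstweakcontrol}--\eqref{secondweakcontrol}; you avoid this limit passage in the $\Rightarrow$ direction by first proving that $(\supp\nu,d|_{\supp\nu},\nu|_{\supp\nu})$ is a forward boundedly compact, $\sigma$-finite forward geodesic space and then invoking Proposition \ref{CDKNstringent}/(iii) \emph{on the support space}, so that only continuous-density endpoints must be treated, and for these the CD$(K,N)$ condition on $X$ plus entropy confinement applies directly with no approximation. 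This is legitimate and arguably cleaner (the approximation machinery is used only once, inside the already-proved Proposition \ref{CDKNstringent}/(iii)), at the price of having to establish the geodesic property first, which you do by replacing the paper's regularized Dirac masses with normalized restrictions of $\nu$ to small forward balls; both devices work, and your compactness extraction via Theorem \ref{Arzela-Ascoli Theorem} and Prokhorov is sound. Two minor points: in your displayed formula the factor $\rho_0(x)$ should not appear, since $\pi$ already has first marginal $\rho_0\nu$ (the correct expression is $\int_X\rho_0\log\rho_0\,d\nu-\tfrac{K(1-(1-t)^2)}{6}\int_{X\times X} d(x,y)^2\,\pi(dx\,dy)$), though this does not affect the finiteness you need; and when you say ``CD$(K,N)$ implies CD$(K,\infty)$'' you should note, as the paper does implicitly, that the \emph{same} interpolation and plan satisfy the $H$-inequality (because $H\in DC_N$ and the distortion coefficients are bounded on the bounded supports involved), otherwise the confinement would concern a possibly different interpolation than the one carrying the $DC_N$ inequalities.
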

\begin{proof} Since the ``$\Leftarrow$" part is direct, we only show the ``$\Rightarrow$" part.
Assume that $(X,d,\nu)$ is a weak ${\CD}(K,N)$ space and  $\mu_0,\mu_1\in P_c(X,\nu)$. Since $(X,d)$ could be noncompact,
 we will use a similar strategy to that in the proof of Proposition \ref{CDKNstringent}/(iii) and consider a compact forward metric-measure space $\mathfrak{K}:=(\mathcal {K},d|_\mathcal {K},\nu|_\mathcal {K})$, where $\mathcal {K}$ is  defined as in Lemma \ref{compactmetrc}.


For each $i=0,1$, the same argument as in the proof of Proposition \ref{CDKNstringent}/(iii) furnishes a sequence of probability measures $(\mu_{k,i})_{k}$ converging weakly to $\mu_i$ in $P(X)$
    such that
\begin{itemize}
\item $\suppor\rho_{k,i}\subset\mathcal {K}$ and $\mu_{k,i}=\rho_{k,i}\nu$ is a probability measure with continuous density $\rho_{k,i}$ on $X$;

\item for any sequence $(\pi_{k})_k$ converging weakly to $\pi$ in $P(\mathcal {K}\times \mathcal {K})$    such that $\pi_k$ admits $\mu_{k,i}$ as first marginal   and $\supp \pi_k\subset \supp \nu|_\mathcal {K}\times \supp \nu|_\mathcal {K}$, for any continuous positive function $\beta$ on $X\times X$
and for any nonnegative $U\in DC_N$ with (\ref{mostpoly}),
there holds
\[
\underset{k\rightarrow \infty}{\lim\sup}\,U^\beta_{\pi_k,\nu}(\mu_{k,i})\leq U^\beta_{\pi,\nu}(\mu_i).
\]
\end{itemize}

Since $(X,d,\nu)$ is a weak ${\CD}(K,N)$ space,  for each $k$, there exists an optimal transference plan $\pi_k$ of $(\mu_{k,0},\mu_{k,1})$  and an associated displacement interpolation
  $(\mu_{k,t})_{0\leq t\leq 1}$ such that for any nonnegative $U\in DC_N$ with (\ref{mostpoly}),
\[
U_\nu(\mu_{k,t})\leq (1-t) U^{\beta^{(K,N)}_{1-t}}_{\pi_k,\nu}(\mu_{k,0})+t U^{\breve{\beta}^{(K,N)}_t}_{\breve{\pi}_k,\nu}(\mu_{k,1}),\ \forall\,t\in[0,1].\tag{5.8}\label{needlimitcdkn}
\]
Furthermore, since $\supp\pi_k\subset\mathcal {K}\times \mathcal {K}$, it follows from the construction of $\mathcal {K}$ (see Lemma \ref{compactmetrc}) that $\mu_{k,i}$, $\pi_k$ and $\mu_{k,t}$ can be viewed as corresponding (optimal) quantities defined on $\mathfrak{K}$. Particularly, they are compactly supported in $\mathcal {K}$.
The same argument as in the proof of Proposition \ref{CDKNstringent}/(iii) yields  (by passing to a subsequence) that
\begin{itemize}
\item[(a)] $(\pi_k)_k$ converges weakly in $P(X\times X)$ to an optimal transference plan $\pi$ of coupling $({\mu_0},{\mu_1})$;

\item[(b)] $(\mu_{k,t})_k$ converges weakly  to $\mu_{t}$ in $P(X)$, which is a displacement interpolation with respect to $\pi$.
\end{itemize}
Now we claim that the following  statements are true:
\begin{itemize}
\item[(C-1)] for all $k$ and all $t\in [0,1]$, there holds
\[
\supp\pi\cup\supp\pi_k\subset  \supp \nu|_\mathcal {K} \times \supp \nu|_\mathcal {K},\ \supp\mu_{k,t}\cup\supp\mu_{t}\subset \supp \nu|_\mathcal {K};\tag{5.9}\label{relationshipofsuppot}
\]
\item[(C-2)]  $(\supp\nu, d|_{\supp\nu})$ is a forward geodesic space;

\smallskip

\item[(C-3)] on $(\supp\nu, d|_{\supp\nu},\nu)$, $\pi$ is still an optimal transference plan of $(\mu_0,\mu_1)$ and $(\mu_t)_{0\leq t\leq 1}$ is the associated displacement interpolation.

\end{itemize}

\smallskip If all the claims are true, then by (C-2) and (C-3),   there exists a displacement interpolation $(\mu_t)_{0\leq t\leq 1}$ and an associated optimal  transference plan  $\pi$ of $(\mu_0,\mu_1)$ on the forward boundedly compact, $\sigma$-finite, forward geodesic-measure space $(\supp\nu, d|_{\supp\nu},\nu)$. On the other hand,  since  $\mathfrak{K}$ is compact, (C-1) together with Lemma \ref{continuonUvUpi} again furnishes (\ref{firstweakcontrol}) and (\ref{secondweakcontrol}), which together with (\ref{needlimitcdkn}) yield
\[
U_\nu(\mu_t)\leq (1-t)U^{\beta_{1-t}^{(K,N)}}_{\pi,\nu}(\mu_0)+tU^{\breve{\beta}_{t}^{(K,N)}}_{\breve{\pi},\nu}(\mu_1).
\]
This concludes the proof; hence, it remains to show (C-1)--(C-3).

\noindent \textbf{(C-1).} The first part of (\ref{relationshipofsuppot}) follows directly by (a) and $\supp\pi_k\subset \supp\nu|_\mathcal {K} \times \supp \nu|_\mathcal {K}$ directly. For the second part,
since $H(r):=r\log r\in DC_N$ satisfies (\ref{mostpoly}), Proposition \ref{CDKNstringent}/(i)  and  (\ref{needlimitcdkn})  imply
\[
H_\nu(\mu_{k,t})\leq (1-t) H^{\beta^{(K,\infty)}_{1-t}}_{\pi_k,\nu}(\mu_{k,0})+t H^{\breve{\beta}^{(K,\infty)}_t}_{\breve{\pi}_k,\nu}(\mu_{k,1}),\tag{5.10}\label{Hvcdkn}
\]
which together with a direct calculation furnishes
\begin{align*}
H_\nu(\mu_{k,t})\leq (1-t)H_\nu(\mu_{k,0})+tH_\nu(\mu_{k,1})-K\frac{t(1-t)}{2}W_2(\mu_{k,0},\mu_{k,1})^2.
\end{align*}
Recall that $\mu_{k,i}=\rho_{k,i}\nu$, $i=0,1$ are the probability measures with continuous densities supported in the compact set $\mathcal {K}$ and hence, $H_\nu(\mu_{k,t})$ is finite for any $t\in [0,1]$ and $k\in \mathbb{N}$. Since $H'(\infty)=\infty$, the definition of
$H_\nu(\mu_{k,t})$ implies that
$\mu_{k,t}$ is absolutely continuous with respect to $\nu$. Thus, $\supp\mu_{k,t}\subset\supp \nu\cap \mathcal {K}=\supp \nu|_\mathcal {K}$, which together with (b) implies  the second part of (\ref{relationshipofsuppot}).

\smallskip

\noindent \textbf{(C-2).}
Given any two points $x_0,x_1\in \supp \nu$, consider two probability measures $\mu_i:=\delta_{x_i}$, $i=0,1$, which are compactly supported in $\supp\nu$. In view of (a), (b) and (C-1), there is a geodesic $(\mu_t)_{0\leq t\leq 1}$ in $(P_2(X),W_2)$ which is supported in $\supp\nu$ for any $t\in [0,1]$. Let $\Pi$ be the associated dynamical optimal transference plan. For a fixed $t_0\in [0,1]$, set
$A_{t_0}:=\{\gamma\in \Gamma_{x_0\rightarrow x_1}|\, \gamma(t_0)\notin \supp\nu\}$, where $\Gamma_{x_0\rightarrow x_1}$ is the set of constant-speed minimal geodesics from $x_0$ to $x_1$ in $X$.  Thus, $\Pi\left[A_{t_0}  \right]=\mu_{t_0}[X\backslash \supp \nu]=0$.

Choose a countable dense sequence $(t_\alpha)_\alpha\subset [0,1]$ and set $A:=\{\gamma\in \Gamma_{x_0\rightarrow x_1}|\,\exists t_\alpha \text{ such that }\gamma(t_\alpha)\notin \supp\nu\}$.
Thus, $ {\Pi}[A]\leq \sum_\alpha {\Pi}[A_{t_\alpha}]=0$.
On the other hand,  if a curve $\gamma\in \Gamma_{x_0\rightarrow x_1}$ satisfies $\gamma(t_0)\notin\supp\nu$ for some $t_0\in [0,1]$,  there exists some $t_\alpha$ such that $\gamma(t_\alpha)\notin \supp\nu$ because $\supp\nu$ is closed.
Hence,
$A=\{\gamma\in \Gamma_{x_0\rightarrow x_1}|\,\exists t\in [0,1] \text{ such that }\gamma(t)\notin \supp\nu\}$.

From above, $\Pi[\Gamma_{x_0\rightarrow x_1}\backslash A]=1$ and hence,
there is at least one $\gamma\in \Gamma_{x_0\rightarrow x_1}$ such that $\gamma([0,1])\subset \supp\nu$, which implies  (C-2).

\smallskip

\noindent \textbf{(C-3).}
From (C-1) and (C-2), $\pi$ is an optimal transference plan of $({\mu_0},{\mu_1})$ on the forward geodesic space $(\supp\nu,d|_{\supp\nu})$. Thus, it suffices to find a dynamical optimal transference plan $\widetilde{\Pi}$ on $\Gamma(\supp \nu)$ such that $\pi=(e_0,e_1)_\sharp \widetilde{\Pi}$ and $\mu_t=(e_t)_{\sharp}\widetilde{\Pi}$.

Recall that $(\mu_t)_{0\leq t\leq 1}$ is a displacement interpolation of $\pi$ on $X$. Hence, there exists a dynamical optimal transference plan ${\Pi}$ on $X$ such that $\mu_t=(e_t)_\sharp{\Pi}$. It follows from the same argument as in (C-2)   that
$A:=\{\gamma\in \Gamma(X)|\,\exists t\in [0,1] \text{ such that }\gamma(t)\notin \supp\nu\} \text{ is ${\Pi}$-negligible}$.
Then the proof is done by setting $\widetilde{\Pi}:={\Pi}|_{\Gamma(X)\backslash A}$. \end{proof}

\subsection{Diameter and volume control}\label{Riccappli}

\begin{theorem}\label{diametercontrol}
If $(X,d,\nu)$ is a weak ${\CD}(K,N)$ space with $K>0$ and $N\in (1,\infty)$, then
\[
\diam(\supp \nu)\leq \pi\sqrt{\frac{N-1}{K}}.
\]
\end{theorem}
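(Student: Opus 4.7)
The plan is to argue by contradiction along the standard $N$-R\'enyi entropy Bonnet--Myers strategy (in the spirit of Sturm and Lott--Villani), adapted to the asymmetric setting. Assume $\diam(\supp\nu) > D^{*} := \pi\sqrt{(N-1)/K}$ and pick $x_0, x_1 \in \supp\nu$ with $d(x_0, x_1) > D^{*}$. By the continuity of $d$ (Theorem \ref{topologychara}/(i)), I choose $r > 0$ so small that $d(x,y) > D^{*}$ for every $(x,y) \in \overline{B^+_{x_0}(r)} \times \overline{B^+_{x_1}(r)}$. Since $x_i \in \supp\nu$ and $\nu$ is locally finite, the numbers $m_i := \nu[\overline{B^+_{x_i}(r)}]$ are strictly positive and finite, so the probability measures $\mu_i := m_i^{-1}\,\nu|_{\overline{B^+_{x_i}(r)}}$ lie in $P_c(X,\nu)$ and have constant densities $\rho_i \equiv m_i^{-1}$ on their supports.

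\textbf{Testing the $\CD(K,N)$ inequality with the $N$-R\'enyi functional.} I will use $U(r) := -N r^{1-1/N}$. A direct check shows $U \in DC_N$: it is convex with $U(0) = 0$, $U'(\infty) = 0$, and $u(\delta) = \delta^N U(\delta^{-N}) = -N\delta$ is convex. Definition \ref{weakcdknspace} then yields an optimal plan $\pi$ (w.r.t.\ $d^2$) and an associated dynamical plan $\Pi \in P(\Gamma(X))$ with $\mu_t = (e_t)_\sharp \Pi$ satisfying
\begin{equation*}
U_\nu(\mu_t) \leq (1-t)\, U^{\beta^{(K,N)}_{1-t}}_{\pi,\nu}(\mu_0) + t\, U^{\breve{\beta}^{(K,N)}_{t}}_{\breve{\pi},\nu}(\mu_1), \qquad t \in [0,1].
\end{equation*}
Fix any $t \in (0,1)$. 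Since $\supp \pi \subset \overline{B^+_{x_0}(r)} \times \overline{B^+_{x_1}(r)}$, one has $d(x,y) > D^{*}$ on $\supp\pi$, and Definition \ref{defbeta} gives $\beta^{(K,N)}_{1-t}(x,y) \equiv +\infty$ there. Using the integral representation of Remark \ref{firstUv} (valid because $\mu_0$ is absolutely continuous), the integrand reduces to $-N\rho_0(x)^{-1/N}\beta^{(K,N)}_{1-t}(x,y)^{1/N}$, which is identically $-\infty$ on $\supp\pi$ because $\rho_0^{-1/N} = m_0^{1/N}$ is a strictly positive constant; hence $U^{\beta^{(K,N)}_{1-t}}_{\pi,\nu}(\mu_0) = -\infty$, and symmetrically $U^{\breve{\beta}^{(K,N)}_t}_{\breve{\pi},\nu}(\mu_1) = -\infty$.

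\textbf{Finiteness of the LHS and contradiction.} It remains to show $U_\nu(\mu_t) > -\infty$. Each $\gamma \in \supp \Pi$ is a constant-speed minimal geodesic joining $\overline{B^+_{x_0}(r)}$ to $\overline{B^+_{x_1}(r)}$, hence of $d$-length uniformly bounded by some $L < \infty$; therefore $\supp \mu_t = e_t(\supp \Pi) \subset \overline{B^+_{x_0}(L)}$, which is compact by the forward bounded compactness of $X$, and so $V := \nu[\overline{B^+_{x_0}(L)}] < \infty$. Since $U'(\infty) = 0$ kills any singular part of $\mu_t$, H\"older's inequality with exponents $N/(N-1)$ and $N$ applied to $\rho_t\,\mathbf{1}_{\overline{B^+_{x_0}(L)}}$ yields
\begin{equation*}
\int \rho_t^{1-1/N} d\nu \leq \Bigl(\int \rho_t\, d\nu\Bigr)^{1-1/N} V^{1/N} \leq V^{1/N},
\end{equation*}
so $U_\nu(\mu_t) \geq -N V^{1/N} > -\infty$, contradicting the right-hand side being $-\infty$. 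The main technical point, and the one I expect to be the most delicate, is this LHS finiteness: it requires trapping $\supp \mu_t$ in a bounded set (via forward bounded compactness) and $\nu$-finiteness of that set (via local finiteness of $\nu$). The irreversibility of $d$ intervenes only through the asymmetry of $\beta^{(K,N)}_{1-t}$ versus $\breve{\beta}^{(K,N)}_{t}$, and this is already absorbed in Definition \ref{weakcdknspace}.
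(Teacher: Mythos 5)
Your proof is correct and follows essentially the same route as the paper's: contradiction via two uniform probability measures on small forward balls around two far-apart points of $\supp\nu$, testing the weak $\CD(K,N)$ inequality with the R\'enyi-type nonlinearity $U(r)=-Nr^{1-1/N}$, whose distorted terms are $-\infty$ because $\beta^{(K,N)}$ is infinite beyond the critical distance $\pi\sqrt{(N-1)/K}$. The only (harmless) difference is the lower bound on $U_\nu(\mu_t)$: the paper invokes Jensen's inequality, while you trap $\supp\mu_t$ in a compact forward ball of finite $\nu$-measure and apply H\"older there — a slightly more careful rendering of the same estimate.
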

\begin{proof}
Suppose by contradiction that there would exist   $x_0,x_1\in \supp \nu$ such that $d(x_0,x_1)>D_{K,N}:=\pi\sqrt{{(N-1)}/{K}}$. Choose a small $r>0$ such that  $d(x,y)>D_{K,N}$ for all $x\in B^+_{x_0}(r)$ and $y\in B^+_{x_1}(r)$.
Take $\mu_i:=\rho_i\nu$ for $i=0,1$, where $\rho_i(x):= {\textbf{1}_{B^+_{x_i}(r)}}/{\nu\left[  B^+_{x_i}(r)\right]}$.

Let $\pi$ be an optimal transference plan of  $(\mu_0,\mu_1)$ and let $(\mu_t)_{0\leq t\leq 1}$ be the associated  displacement interpolation.
 Thus,  for any $U\in DC_N$,
Remark \ref{firstUv}  yields
\begin{align*}
U^{\beta^{(K,N)}_{1-t}}_{\pi,\nu}(\mu_0)
=\int_{B^+_{x_0}(r) \times B^+_{x_1}(r)}U\left( \frac{\rho_0(x)}{\beta^{(K,N)}_{1-t}(x,y)} \right)\frac{\beta^{(K,N)}_{1-t}(x,y)}{\rho_0(x)} \pi({\ddd}x{\ddd}y)=U'(0).
\end{align*}
Similarly,
$U^{\breve{\beta}^{(K,N)}_{t}}_{\breve{\pi},\nu}(\mu_1)=U'(0)$.
Hence, (\ref{weakcdkncon}) furnishes $U_\nu(\mu_t)\leq U'(0)$.

Choose $U(r)=-r^{1-1/N}$. Since $U'(0)=-\infty$, the above inequality yields $U_\nu(\mu_t)=-\infty$. On the other hand,
set $\mu_t=\rho_t \nu+(\mu_t)_s$. Then Jensen's inequality yields
\begin{align*}
U_\nu(\mu_t)&=\int_X U(\rho_t(x)){\ddd}\nu(x)+U'(\infty)(\mu_t)_s[X]=\int_X U(\rho_t(x)){\ddd}\nu(x)\geq U\left( \int_X \rho_t(x){\ddd}\nu(x) \right)\\
&=-\left( \int_X \rho_t(x){\ddd}\nu(x)  \right)^{1-1/N}\geq -1,
\end{align*}
which is a contradiction. Therefore, the theorem follows.
\end{proof}


\begin{theorem}[Brunn-Minkowski inequality]\label{BrunnMinkowineq}
Given
$K\in \mathbb{R}$ and $N\in [1,\infty]$, let $(X,d,\nu)$ be a
weak ${\CD}(K,N)$ space.
For any  compact subsets $A_0,A_1\subset\supp \nu$ and $t\in (0,1)$, denote by $[A_0,A_1]_t$ the set of all $t$-barycenters of $A_0$ and $A_1$, which is the set of $y\in X$ that can be written as $\gamma(t)$, where $\gamma$ is a minimal, constant-speed geodesic with $\gamma(0)\in A_0$ and $\gamma(1)\in A_1$.

\smallskip

\begin{itemize}
\item[(i)]   If $N<\infty$, then
\begin{align*}
\nu\left[ [A_0,A_1]_t \right]^{\frac1N}\geq (1-t)\left[ \inf_{(x,y)\in A_0\times A_1}\beta^{(K,N)}_{1-t}(x,y)^{\frac1N}  \right] \nu[A_0]^{\frac1N}+t\left[ \inf_{(x,y)\in A_0\times A_1}\beta^{(K,N)}_{t}(x,y)^{\frac1N}  \right] \nu[A_1]^{\frac1N}.
\end{align*}
Moreover, if  $K\geq 0$, then
\[
\nu\left[ [A_0,A_1]_t \right]^{\frac1N}\geq (1-t)\,\nu[A_0]^{\frac1N}+t\,\nu[A_1]^{\frac1N}.
\]

\smallskip

\item[(ii)] If $N=\infty$, then
\begin{align*}
\log\left( \frac{1}{\nu\left[ [A_0,A_1]_t \right]} \right)\leq& (1-t)\log\left( \frac{1}{\nu\left[ A_0 \right]} \right)+t\log\left( \frac{1}{\nu\left[ A_1 \right]} \right)\\
&+\frac{t(1-t)}{2}\left[K_-\max_{(x,y)\in A_0\times A_1}d(x,y)^2-K_+\min_{(x,y)\in A_0\times A_1}d(x,y)^2 \right],
\end{align*}
where $K_+:=\max\{K,0\}$ and $K_-:=-\min\{K, 0\}$.
\end{itemize}
\end{theorem}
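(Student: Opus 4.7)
The plan is to apply the weak $\CD(K,N)$ condition to suitably normalized uniform probability measures supported on $A_0$ and $A_1$, with a carefully chosen nonlinearity $U\in DC_N$, and then extract the Brunn–Minkowski bound by unpacking the distorted functionals on the right-hand side of (5.1). First, I would dispose of the degenerate cases $\nu[A_0]=0$ or $\nu[A_1]=0$ (trivial). Otherwise set
\[
\mu_i:=\frac{\nu|_{A_i}}{\nu[A_i]}\in P_c(X,\nu),\qquad i=0,1,
\]
and invoke Definition 5.3 to obtain an optimal transference plan $\pi$ of $(\mu_0,\mu_1)$ and an associated displacement interpolation $(\mu_t)_{0\le t\le 1}$, coming from a dynamical optimal plan $\Pi$ on $\Gamma(X)$. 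Since $\Pi$-a.e.\ geodesic satisfies $\gamma(0)\in A_0$ and $\gamma(1)\in A_1$, we have $\supp\mu_t\subset [A_0,A_1]_t$ and $\supp\pi\subset A_0\times A_1$ for all $t\in[0,1]$.

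For part (i) with $N<\infty$, I would pick $U(r):=-N r^{1-1/N}$; via Definition 5.2/(iii), $u(\delta)=-N\delta$ is convex, so $U\in DC_N$, with $U'(\infty)=0$. Writing $\mu_t=\rho_t\nu+(\mu_t)_s$, Hölder's inequality on $[A_0,A_1]_t$ yields
\[
U_\nu(\mu_t)=-N\!\!\int_{[A_0,A_1]_t}\!\!\!\rho_t^{1-1/N}\,\dd\nu\;\ge\;-N\,\nu\!\left[[A_0,A_1]_t\right]^{1/N}.
\]
For the right-hand side of (5.1), using $\rho_0=\mathbf 1_{A_0}/\nu[A_0]$ and Remark 5.1 together with the identity $U(a/b)\,b/a=-N(a/b)^{-1/N}$, a direct computation gives
\[
U^{\beta_{1-t}^{(K,N)}}_{\pi,\nu}(\mu_0)=-N\,\nu[A_0]^{1/N}\!\!\int_{A_0\times A_1}\!\!\beta_{1-t}^{(K,N)}(x,y)^{1/N}\,\dd\pi(x,y)\le-N\,\nu[A_0]^{1/N}\inf_{A_0\times A_1}\beta_{1-t}^{(K,N)}(x,y)^{1/N}.
\]
The symmetric computation for $U^{\breve\beta_t^{(K,N)}}_{\breve\pi,\nu}(\mu_1)$, after unfolding $\breve\pi=\mathcal E_\sharp\pi$ and $\breve\beta_t(x,y)=\beta_t(y,x)$, produces the analogous bound with $\nu[A_1]^{1/N}\inf\beta_t^{(K,N)}(x,y)^{1/N}$. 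Plugging these three inequalities into (5.1) and dividing by $-N$ yields the first inequality of (i). The sharper statement for $K\ge 0$ follows because the concavity of $\mathfrak s_{K,N}$ on $[0,\pi\sqrt{(N-1)/K}]$ (identity for $K=0$, sine for $K>0$) forces $\beta_t^{(K,N)}\ge 1$; note also that $d(x,y)\le \diam\supp\nu\le\pi\sqrt{(N-1)/K}$ by Theorem 5.13, so the $\infty$-clause in Definition 5.2 does not intervene.

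For part (ii) with $N=\infty$, I would take $U(r)=r\log r\in DC_\infty$ (so $U'(\infty)=\infty$); Jensen's inequality for the convex function $r\mapsto r\log r$ applied to $\rho_t$ on $[A_0,A_1]_t$ gives $U_\nu(\mu_t)\ge -\log\nu\!\left[[A_0,A_1]_t\right]$. Using $U(a/b)b/a=\log a-\log b$ together with $\log\beta^{(K,\infty)}_{1-t}(x,y)=\frac{K}{6}\,t(2-t)\,d(x,y)^2$ and $\log\breve\beta_t^{(K,\infty)}(x,y)=\frac{K}{6}(1-t^2)\,d(y,x)^2$, the right-hand side of (5.1) becomes
\[
(1-t)\log\tfrac{1}{\nu[A_0]}+t\log\tfrac{1}{\nu[A_1]}-\tfrac{K}{6}\,t(1-t)\!\!\int_{A_0\times A_1}\!\!\!\!\bigl[(2-t)\,d(x,y)^2+(1+t)\,d(y,x)^2\bigr]\dd\pi(x,y).
\]
Since the coefficients $(1-t)t(2-t)$ and $t(1-t)(1+t)$ sum to $3\,t(1-t)$, splitting the integrand according to the sign of $K$ and bounding each distance squared from above or below by its extremum on $A_0\times A_1$ collapses the bracket into the desired $\frac{t(1-t)}{2}[K_-\max d^2-K_+\min d^2]$ form; here the irreversibility is absorbed by applying the $\max/\min$ of $d$ over ordered pairs of $A_0\times A_1$ and its swap, which the stated formula encodes by the single symbol $d(x,y)^2$.

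The step I expect to be most delicate is precisely this last one in (ii): in the reversible case the two integrals collapse immediately to $\int d^2\dd\pi=W_2(\mu_0,\mu_1)^2$, whereas here $d(x,y)\ne d(y,x)$ keeps them genuinely distinct, and one must verify that the quoted bound is meant to be read with $\max/\min$ taken over the appropriate set of ordered pairs (equivalently, that $\max_{A_0\times A_1}d(x,y)^2$ on the right can be interpreted as dominating both $d(x,y)^2$ and $d(y,x)^2$ for $(x,y)\in\supp\pi$). The remaining steps are routine, modulo the observation that $\supp\pi\subset A_0\times A_1$ and $\supp\mu_t\subset[A_0,A_1]_t$ allow one to localize all integrals.
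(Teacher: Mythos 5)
Your part (i) is correct and is essentially the paper's own argument: take $U(r)=-r^{1-1/N}$ (your factor $N$ is harmless), localize using $\supp\pi\subset A_0\times A_1$ and $\supp\mu_t\subset[A_0,A_1]_t$, apply Jensen on $[A_0,A_1]_t$, and use $\beta^{(K,N)}_t\ge 1$ when $K\ge 0$.

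The gap is in part (ii), in exactly the step you flagged as delicate. When you unfold $U^{\breve{\beta}^{(K,\infty)}_{t}}_{\breve{\pi},\nu}(\mu_1)$, both the coefficient and the plan are swapped: $\breve{\beta}_t=\beta_t\circ\mathcal{E}$ and $\breve{\pi}=\mathcal{E}_\sharp\pi$, so the generic point of $\breve{\pi}$ lies in $A_1\times A_0$ and $\breve{\beta}_t$ evaluated there equals $\beta_t$ at the corresponding point of $A_0\times A_1$. The two swaps cancel, and one gets
\[
U^{\breve{\beta}^{(K,\infty)}_{t}}_{\breve{\pi},\nu}(\mu_1)=H_\nu(\mu_1)-\frac{K}{6}\,(1-t^2)\int_{X\times X}d(x,y)^2\,d\pi(x,y),
\]
with the \emph{forward} distance $d(x,y)$ for $(x,y)\in A_0\times A_1$, not $d(y,x)^2$ as in your intermediate formula. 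Hence the bracket collapses exactly as in the reversible case: $(1-t)t(2-t)+t(1-t^2)=3t(1-t)$ yields $-\frac{K}{2}t(1-t)W_2(\mu_0,\mu_1)^2$ (since $\pi$ is optimal for $d^2$), and the stated bound follows from $\min_{A_0\times A_1}d^2\le W_2(\mu_0,\mu_1)^2\le\max_{A_0\times A_1}d^2$ together with $K=K_+-K_-$; this is precisely how the paper proceeds, via the entropy inequality \eqref{hinequlityCD} obtained as in claim (C-1) of the proof of Theorem \ref{supporsetcdkn}. With your integrand containing $(1+t)\,d(y,x)^2$, the final collapse genuinely fails: in an irreversible space the backward distance $d(y,x)$, $(x,y)\in A_0\times A_1$, need not be dominated by $\max_{A_0\times A_1}d(x,y)$ nor bounded below by $\min_{A_0\times A_1}d(x,y)$, and the reinterpretation of the max/min ``over swapped pairs'' that you propose is neither needed nor consistent with the statement. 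Once the evaluation of $\breve{\beta}_t$ against $\breve{\pi}$ is corrected, your argument coincides with the paper's.
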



\begin{proof}\textbf{(i)}
Set
$\mu_i:=\rho_i\nu:= {\textbf{1}_{A_i}}\nu/{\nu[A_i]}$ for $i=0,1$.
Thus,  there exists  a displacement interpolation $(\mu_t)_{0\leq t\leq 1}$ and an associated optimal transference plan $\pi$ of coupling $(\mu_0,\mu_1)$ such that for any $U\in DC_N$,
\[
U_\nu(\mu_t)\leq (1-t) U^{\beta^{(K,N)}_{1-t}}_{\pi,\nu}(\mu_0)+tU^{\breve{\beta}^{(K,N)}_{t}}_{\breve{\pi},\nu}(\mu_1),\ \forall\,t\in[0,1].\tag{5.11}\label{oldCDK}
\]
As before, let $\mu_t=\rho_t \nu+(\mu_t)_s$. For $U(r)=-r^{1-1/N}$,  (\ref{oldCDK}) together with Remark \ref{firstUv} yields
\begin{align*}
\int_X \rho_t(x)^{1-\frac1N}\nu({\ddd}x)\geq (1-t)\int_{X\times X}\rho^{-\frac1N}_0(x)\beta^{(K,N)}_{1-t}(x,y)^{\frac1N}\pi({\ddd}x{\ddd}y)+t\int_{X\times X}\rho^{-\frac1N}_1(y)\beta^{(K,N)}_{t}(x,y)^{\frac1N}\pi({\ddd}x{\ddd}y).
\end{align*}


Since $\pi$ is supported in $A_0\times A_1$ and has marginals $\mu_0$ and $\mu_1$, the above inequality furnishes
\begin{align*}
&\int_X \rho_t(x)^{1-\frac1N}\nu({\ddd}x)\\
\geq &(1-t) \left[ \inf_{(x,y)\in A_0\times A_1}\beta^{(K,N)}_{1-t}(x,y)^{\frac1N}  \right]\int_X\rho^{-\frac1N}_0(x)\mu_0({\ddd}x)+t \left[ \inf_{(x,y)\in A_1\times A_0}\beta^{(K,N)}_{t}(x,y)^{\frac1N}  \right]\int_X\rho^{-\frac1N}_1(y)\mu_1({\ddd}y)\\
=&(1-t)\left[ \inf_{(x,y)\in A_0\times A_1}\beta^{(K,N)}_{1-t}(x,y)^{\frac1N}  \right] \nu[A_0]^{\frac1N}+t\left[ \inf_{(x,y)\in A_0\times A_1}\beta^{(K,N)}_{t}(x,y)^{\frac1N}  \right]\nu[A_1]^{\frac1N},\tag{5.12}\label{firstbmineq}
\end{align*}
where we used
$$\int_X\rho_i^{-\frac{1}{N}}(x)\mu_i({\ddd}x)=\int_X\rho_i^{1-\frac{1}{N}}(x)\nu({\ddd}x)=\nu(A_i)^\frac{1}{N},\ i=0,1.$$
On the other hand, owing to $\supp\mu_t\subset [A_0,A_1]_t$, we have
\[
\int_X\rho_t {\ddd}\nu =\int_{[A_0,A_1]_t}\rho_t {\ddd}\nu \leq 1,
\]
which together with  Jensen's inequality yields
\begin{align*}
\int_X \rho^{1-\frac1N}_t {\ddd}\nu=&\nu[[A_0,A_1]_t]\int_{[A_0,A_1]_t}\rho^{1-\frac1N}_t \frac{{\ddd}\nu}{\nu[[A_0,A_1]_t]}\leq \nu[[A_0,A_1]_t]\left(\int_{[A_0,A_1]_t}\rho_t \frac{{\ddd}\nu}{\nu[[A_0,A_1]_t]}  \right)^{1-\frac1N}\\
=&\nu[[A_0,A_1]_t]^{\frac1N}\left( \int_{[A_0,A_1]_t}\rho_t{\ddd}\nu \right)^{1-\frac1N}\leq \nu[[A_0,A_1]_t]^{\frac1N}.\tag{5.13}\label{estimateofmu-bm}
\end{align*}
Moreover, if  $N<\infty$ and $K\geq 0$, then
\[
\inf_{(x,y)\in A_0\times A_1}\beta^{(K,N)}_t(x,y)\geq \inf_{(x,y)\in A_0\times A_1}\beta^{(0,N)}_t(x,y)=1,\tag{5.14}\label{betaestimate-bm}
\]
as $\beta^{(K,N)}_t$ is nondecreasing in $K$.
Now (i) follows from (\ref{firstbmineq})--(\ref{betaestimate-bm}) immediately.

 \smallskip

 \noindent \textbf{(ii)} Let $\mu_i$, $i=0,1$ and $(\mu_t)_{0\leq t\leq 1}$ be as in (i). By considering $H(r):=r\log r$, a similar argument to (C-1) in the proof
 of Theorem \ref{supporsetcdkn} yields
  \begin{align*}
H_\nu(\mu_{t})\leq(1-t)\log\left( \frac{1}{\nu[A_0]} \right)+ t \log\left( \frac{1}{\nu[A_1]} \right)-K\frac{t(1-t)}{2}W_2(\mu_{0},\mu_{1})^2,\tag{5.15}\label{hinequlityCD}
\end{align*}
and particularly,
 $\mu_t$ is absolutely continuous with respect to $\nu$ for any $t\in [0,1]$, i.e., $\mu_t=\rho_t {\ddd}\nu$.

On the one hand, since $K=K_+-K_-$, we have
\[
-K\, W_2(\mu_0,\mu_1)^2\leq  K_-\max_{(x,y)\in A_0\times A_1}d(x,y)^2-K_+\min_{(x,y)\in A_0\times A_1}d(x,y)^2.\tag{5.16}\label{W2disestimate}
\]
On the other hand, one has
 \[
 1=\mu_t[X]=\int_X \rho_t {\ddd}\nu=\int_{[A_0,A_1]_t} \rho_t {\ddd}\nu,
 \]
 which together with Jensen's inequality yields
 \begin{align*}
 H_\nu[\mu_t]=&\nu\left[[A_0,A_1]_t\right] \int_{[A_0,A_1]_t} \rho_t \log \rho_t \frac{{\ddd}\nu}{\nu[[A_0,A_1]_t]}\\
 \geq& \nu\left[[A_0,A_1]_t\right] \left( \int_{[A_0,A_1]_t} \frac{\rho_t {\ddd}\nu}{\nu[[A_0,A_1]_t]}\right)\log\left( \int_{[A_0,A_1]_t} \frac{\rho_t {\ddd}\nu}{\nu[[A_0,A_1]_t]}\right)=\log\left( \frac{1}{\nu[[A_0,A_1]_t]} \right).
 \end{align*}
 Now this inequality together with (\ref{hinequlityCD}) and (\ref{W2disestimate}) furnishes (ii).
 \end{proof}

A similar argument to Villani \cite[Corollary 30.9]{Vi} yields the following result.

\begin{corollary}\label{nodiractnoatom}
Given  $K\in \mathbb{R}$ and $N\in[1,\infty]$, if  $(X,d,\nu)$ is a weak ${\CD}(K,N)$ space, then either $\nu$ is a Dirac mass, or $\nu$ has no atom (i.e., no single point contains a positive mass).
\end{corollary}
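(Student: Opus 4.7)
Plan: We argue by contradiction. Suppose that $\nu$ is not a Dirac mass and that $\nu$ has an atom at $x_0$, i.e., $m := \nu(\{x_0\}) > 0$. Then there exists $x_1 \in \supp\nu$ with $x_1 \neq x_0$; set $L := d(x_0, x_1) > 0$. The strategy is to apply the Brunn-Minkowski inequality (Theorem \ref{BrunnMinkowineq}) with $A_0 = \{x_0\}$ and $A_1$ a small forward ball around $x_1$; by exploiting the rigid radial structure of the barycenter set $[A_0, A_1]_t$ when $A_0$ is a singleton, we will produce many pairwise disjoint sets of uniformly positive $\nu$-mass lying inside a fixed compact forward ball, contradicting the local finiteness of $\nu$.

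For the setup, after re-pointing at $x_0$ (Remark \ref{forwardpointspaceandbackwardones}), the reversibility on $\overline{B^+_{x_0}(L+1)}$ is bounded by some finite $\theta_0$. For $r \in (0, L/(2\theta_0))$, let $A_1^{(r)} := \overline{B^+_{x_1}(r)} \cap \supp\nu$, a compact subset of $\supp\nu$ with $\nu(A_1^{(r)}) > 0$. For any $y \in A_1^{(r)}$ the reversibility bound gives $d(y, x_1) \leq \theta_0 r$, and the triangle inequality yields $L - \theta_0 r \leq d(x_0, y) \leq L + r$. Hence for any $z \in [\{x_0\}, A_1^{(r)}]_t$, writing $z = \gamma(t)$ for a constant-speed minimal geodesic $\gamma$ from $x_0$ to some $y \in A_1^{(r)}$, one has $d(x_0, z) = t\,d(x_0, y) \in [t(L - \theta_0 r),\, t(L + r)]$. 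Consequently, for any two times $t_i < t_{i+1}$ with $t_i(L + r) < t_{i+1}(L - \theta_0 r)$, the sets $[\{x_0\}, A_1^{(r)}]_{t_i}$ and $[\{x_0\}, A_1^{(r)}]_{t_{i+1}}$ are disjoint. Setting $\alpha_r := (L + r)/(L - \theta_0 r) > 1$ and $t_i := \tfrac{1}{4}\alpha_r^{\,i-1}$, this yields $N_r$ pairwise disjoint slices with $t_i \in [1/4, 1/2]$, where $N_r \sim L/(2r)$ as $r \to 0^+$.

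For $N \in [1,\infty)$, Theorem \ref{BrunnMinkowineq}/(i) gives
\[
\nu\bigl([\{x_0\}, A_1^{(r)}]_{t_i}\bigr)^{1/N} \geq (1 - t_i)\,\beta_0^{1/N}\,m^{1/N},
\]
where $\beta_0 > 0$ is a uniform lower bound of $\beta^{(K,N)}_{1-t}(x_0,y)$ over $(t,y) \in [1/4, 1/2] \times \overline{B^+_{x_1}(1)}$, using continuity of $\beta^{(K,N)}$ together with Theorem \ref{diametercontrol} to stay away from singularities when $K > 0$. Thus each slice has $\nu$-mass at least $c\,m$ for a constant $c > 0$ independent of $r$ and $i$, and their disjoint union lies in the compact ball $\overline{B^+_{x_0}(L + 1)}$ of finite $\nu$-measure; yet the total mass is at least $N_r\,c\,m \to \infty$ as $r \to 0^+$, contradiction. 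For $N = \infty$, the multiplicative form in Theorem \ref{BrunnMinkowineq}/(ii) gives $\nu([\{x_0\}, A_1^{(r)}]_{t_i}) \geq m^{1-t_i}\,\nu(A_1^{(r)})^{t_i}\,e^{-C}$ with $C$ depending only on $K$ and the diameter $L+1$; one closes the argument by taking $r$ along a subsequence for which $\nu(A_1^{(r)})$ remains bounded below by a fixed positive constant (such a subsequence exists because $\nu$ is not a Dirac mass, so one can select nested $A_1^{(r)}$ capturing a fixed fraction of the mass of any neighbourhood of $x_1$).

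The main technical obstacle is the asymmetry of $d$: the triangle inequality only gives $d(x_0, y) \geq L - d(y, x_1)$, which involves the reverse distance $d(y, x_1)$ rather than $d(x_1, y)$; this is converted using the reversibility bound $\theta_0$ coming from the pointed $\Theta$-structure on $(X, x_0, d)$. Secondary care is needed in the threshold cases of Remark \ref{limitdefinitionofbeta} ($N = 1$, and $K > 0$ with $\diam(\supp\nu) = \pi\sqrt{(N-1)/K}$), where Theorem \ref{diametercontrol} ensures $d(x_0, y) < \pi\sqrt{(N-1)/K}$ so that the relevant $\beta$-coefficients stay bounded away from both $0$ and $\infty$ on the compact product $\{x_0\} \times A_1^{(r)}$.
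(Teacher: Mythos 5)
Your strategy for $N<\infty$ is correct and genuinely different from the paper's. The paper (following Villani's Corollary 30.9) keeps the second set $A_1$ \emph{fixed}, lets $t\to 0$, and derives the contradiction from continuity from above of $\nu$ on the shrinking punctured balls $B^+_{x_0}(tD)\setminus\{x_0\}$; you instead shrink $A_1$ and stack $N_r\to\infty$ pairwise disjoint barycenter slices inside a fixed compact ball, contradicting finiteness of $\nu$ there. For $N<\infty$ this works because the per-slice bound $\bigl((1-t_i)\beta_0^{1/N}m^{1/N}\bigr)^N$ from Theorem \ref{BrunnMinkowineq}/(i) does not involve $\nu(A_1^{(r)})$ at all. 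Two small points there: with $t_i=\tfrac14\alpha_r^{i-1}$ the consecutive annuli share the endpoint $t_i(L+r)$, so take the ratio strictly larger than $\alpha_r$ to get honest disjointness; and for $K>0$ there is no singularity issue to worry about, since $\beta^{(K,N)}_s\ge 1$ wherever defined (Theorem \ref{diametercontrol} only gives ``$\le$'', not ``$<$'', but for a lower bound that is harmless).

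The $N=\infty$ branch, however, has a genuine gap. There the per-slice bound is $m^{1-t_i}\,\nu(A_1^{(r)})^{t_i}e^{-C}$, and you propose to take $r\to 0$ along a subsequence on which $\nu(A_1^{(r)})$ stays bounded below. No such subsequence exists unless $x_1$ is itself an atom: $A_1^{(r)}=\overline{B^+_{x_1}(r)}\cap\supp\nu$ decreases to $\{x_1\}$ as $r\downarrow 0$, so $\nu(A_1^{(r)})\downarrow\nu(\{x_1\})=0$ whenever $x_0$ is the only atom (e.g.\ $\nu=\tfrac12\delta_{x_0}+\tfrac12(\text{nonatomic part})$), and ``selecting nested sets capturing a fixed fraction of the mass of a neighbourhood of $x_1$'' cannot help because those neighbourhood masses themselves tend to $\nu(\{x_1\})=0$. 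With $t_i\in[1/4,1/2]$ your total mass is only of order $N_r\,m^{1/2}\nu(\overline{B^+_{x_1}(r)})^{1/2}$, and a weak $\CD(K,\infty)$ space carries no a priori lower bound such as $\nu(B^+_{x_1}(r))\ge c\,r^2$, so no contradiction follows. The clean repair is the paper's route, which works uniformly in $N$: fix $A_1$ (compact, of positive measure, with $\inf_{y\in A_1}d(x_0,y)>0$) and let $t\to 0$; then Theorem \ref{BrunnMinkowineq}/(ii) gives $\nu\left[[\{x_0\},A_1]_t\right]\ge m^{1-t}\nu[A_1]^{t}e^{-\frac{t(1-t)}{2}K_-(L+1)^2}\to m>0$, while $[\{x_0\},A_1]_t\subset B^+_{x_0}(t(L+1))\setminus\{x_0\}$, whose measure tends to $0$ by continuity from above on a ball of finite measure. (Your slicing could in principle be salvaged by letting the times live in a window $[\delta(r),2\delta(r)]$ with $\delta(r)\to0$ chosen so small, in terms of $\log\bigl(1/\nu(\overline{B^+_{x_1}(r)})\bigr)$, that $\nu(\overline{B^+_{x_1}(r)})^{2\delta(r)}\ge\sqrt{r}$, but the fixed-$A_1$ argument is much simpler.)
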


\begin{theorem}[Bishop-Gromov comparison theorem]\label{BishopGromovcomp}
Given $K\in \mathbb{R}$ and $N\in [1,\infty]$, let $(X,d,\nu)$ be a weak ${\CD}(K,N)$ space and let $x_0\in \supp \nu$. Thus
\begin{itemize}

\smallskip

\item[(i)] $\nu[B^+_{x_0}(r)]=\nu{\left[\overline{B^+_{x_0}(r)}\right]}$ for any $r>0;$

\item[(ii)] If $N<\infty$, then $f_{x_0}(r):=\frac{\nu[B_{x_0}^+(r)]}{\int^r_0\mathfrak{s}^{N-1}_{K,N}(t){\ddd}t}$ is  a nonincreasing function$;$

\smallskip

\item[(iii)]
If $N=\infty$, then for any fixed $\delta>0$, the following statements hold:
\begin{itemize}

\smallskip

\item[(a)] there exists a constant $C_1=C_1\left(K_-,\delta, \lambda_d(\overline{B^+_{x_0}(\delta)}), \nu{\left[{B^+_{x_0}(\delta)}\right]},\nu{\left[{B^+_{x_0}(2\delta)}\right]}\right)$  such that
\[
\nu{\left[ {B^+_{x_0}(r)} \right]}\leq e^{C_1 r}e^{K_-\frac{r^2}2},\ \forall\,r\geq \delta,\tag{5.17}\label{estkinftyvolume}
\]
where $\lambda_d(\cdot)$ is the reversibility  in Definition \ref{reversibilitydef}$;$

\smallskip

\item[(b)] provided $K\geq 0$, there exists a constant $C_2=C_2\left(K,\delta,\nu{\left[{B^+_{x_0}(\delta)}\right]},\nu{\left[{B^+_{x_0}(2\delta)}\right]}\right)$  such that
\[
  \nu{\left[ {B^+_{x_0}(r+\delta)}\backslash {B^+_{x_0}(r)} \right]}\leq e^{C_2r}e^{-K\frac{r^2}{2}},\ \forall\,r\geq \delta;\tag{5.18}\label{gromov-bishp1}
\]

\item[(c)] {for} any $K'<K$,
\[
\int_X e^{\frac{K'}{2}d(x_0,x)^2} d\nu(x)<\infty.\tag{5.19}\label{gromov-bishp2}
\]

\end{itemize}
\end{itemize}
\end{theorem}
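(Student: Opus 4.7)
The plan is to deduce all three statements from the Brunn--Minkowski inequality (Theorem \ref{BrunnMinkowineq}) combined with Corollary \ref{nodiractnoatom}, which guarantees that $\nu$ is either a Dirac mass---in which case every conclusion is trivial---or has no atom. I assume throughout that $\nu$ has no atom, so that singletons are $\nu$-negligible.

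For (ii), the main content, I would apply Theorem \ref{BrunnMinkowineq}(i) with $A_0:=\overline{B^+_{x_0}(\epsilon)}$ and $A_1:=\overline{B^+_{x_0}(R)}\setminus B^+_{x_0}(r)$ for $0<\epsilon\ll r<R$. Using the forward triangle inequality together with the finite reversibility $\lambda:=\lambda_d(\overline{B^+_{x_0}(R+1)})$, for every minimal geodesic $\gamma$ from $x\in A_0$ to $y\in A_1$ one verifies that
\[
tr-(1-t)\lambda\epsilon\;\le\; d(x_0,\gamma(t))\;\le\; tR+(1+t\lambda)\epsilon,
\]
so $[A_0,A_1]_t$ sits inside an annulus of approximate inner and outer radii $tr$ and $tR$. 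Substituting into Brunn--Minkowski and letting $\epsilon\to 0^+$---legitimate because $\nu$ has no atom---the $(1-t)\,\nu[A_0]^{1/N}$ term drops out, and the infimum of $\beta_t^{(K,N)}(x,y)^{1/N}$ over $A_0\times A_1$ reduces to an infimum over $s=d(x_0,y)\in (r,R]$ of an explicit expression in $\mathfrak{s}_{K,N}(ts)/\mathfrak{s}_{K,N}(s)$. The upshot is a sharp annular comparison. A standard Riemann-sum argument---slicing $[0,R]$ into thin annuli, applying the annular estimate on each slice, and letting the mesh tend to zero---converts this into the monotonicity of $f_{x_0}(r)$.

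Statement (i) is then a corollary of (ii): $r\mapsto\nu[B^+_{x_0}(r)]$ is always left-continuous by monotone convergence on open balls, and $\int_0^r\mathfrak{s}_{K,N}^{N-1}$ is continuous, so $f_{x_0}$ is left-continuous; monotonicity then forces $f_{x_0}(r^+)\le f_{x_0}(r)=f_{x_0}(r^-)$, while the reverse inequality $f_{x_0}(r^+)\ge f_{x_0}(r)$ is immediate from $\overline{B^+_{x_0}(r)}\subset B^+_{x_0}(r')$ for $r'>r$. Combining gives equality and hence (i); for $N=\infty$ one uses the entropic form of Theorem \ref{BrunnMinkowineq}(ii) in place of the polynomial one. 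For (iii), I would apply Theorem \ref{BrunnMinkowineq}(ii) with $A_0:=\overline{B^+_{x_0}(\delta)}$ and $A_1:=\overline{B^+_{x_0}(r+\delta)}\setminus B^+_{x_0}(r)$ at a time $t\asymp \delta/r$ chosen so that $[A_0,A_1]_t\subset \overline{B^+_{x_0}(2\delta)}$; this yields a recursive bound on $\nu[B^+_{x_0}(r+\delta)\setminus B^+_{x_0}(r)]$ involving $\nu[B^+_{x_0}(2\delta)]$ and the Gaussian factor from $\beta^{(K,\infty)}_t=e^{K(1-t^2)d(x,y)^2/6}$. Iterating in steps of $\delta$ yields (a) and (b); (c) then follows by summing (a) or (b) geometrically and observing that convergence holds whenever $K'<K$.

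The principal obstacle will be the asymmetry between $\beta^{(K,N)}_t(x,y)$ and $\breve{\beta}^{(K,N)}_t(x,y)=\beta^{(K,N)}_t(y,x)$, which depend on $d(x,y)$ and $d(y,x)$ respectively. When $A_0$ collapses to $x_0$ and $A_1$ hugs a forward sphere $\{d(x_0,\cdot)=s\}$, the distance $d(x_0,y)$ is directly controlled but $d(y,x_0)$ must be estimated via the reversibility $\lambda_d(\overline{B^+_{x_0}(R)})$, which is finite by the pointed forward $\Theta$-metric structure. This asymmetry has no counterpart in the reversible setting and explains the explicit $\lambda_d$-dependence of the constants in (iii)(a); the proof then amounts to tracking this reversibility factor uniformly through every estimate and verifying that the limit $\epsilon\to 0$ and the Riemann-sum passage remain valid in the irreversible setting.
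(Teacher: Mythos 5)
Your proposal follows essentially the same route as the paper: both deduce the theorem from the Brunn--Minkowski inequality (Theorem \ref{BrunnMinkowineq}) applied with $A_0$ concentrated at or near $x_0$ and $A_1$ a forward ball or thin annulus, controlling the barycenter set $[A_0,A_1]_t$ and the distortion coefficients via the triangle inequality and the local reversibility $\lambda_d$, invoking the Dirac/no-atom alternative of Corollary \ref{nodiractnoatom}, and summing the annular bounds geometrically for (iii)(c). The differences are cosmetic: the paper proves (i) first via the entropic ($N=\infty$) case (exactly your fallback), uses $A_0=\{x_0\}$ with a Dirac mass and the one-sided derivative plus the monotone L'Hospital rule where you use a shrinking ball $\overline{B^+_{x_0}(\epsilon)}$ and a Riemann-sum passage, and obtains (iii)(a)--(b) by a single application with $t=\delta/(2r)$ rather than iterating in steps of $\delta$.
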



\begin{proof}In view of  Theorem \ref{supporsetcdkn},
we may suppose $\supp\nu=X$. Since the proof is trivial if $\nu$ is a Dirac mass, due to Corollary \ref{nodiractnoatom} we may assume that $\nu$  has no atom.

\smallskip

\noindent \textbf{(i)}  According to Proposition \ref{CDKNstringent}/(i), it suffices to consider the case of  $N=\infty$.
Given $r>0$, for any small $\epsilon>0$, set $A_0:=\overline{B^+_{x_0}(\epsilon)}$ and $A_1:=\overline{B^+_{x_0}(r)}$. Thus,
$[A_0,A_1]_t\subset \overline{B^+_{x_0}\left(tr+(1-t)\epsilon\right)}=:U_t$, for
$t\in (0,1)$.
Note that $U_{t_1}\subset U_{t_2}$ if $t_1\leq t_2$ and $\cup_{t\in (0,1)} U_t=B^+_{x_0}(r)$, which yields
\[
\nu{\left[ B^+_{x_0}(r)  \right]}=\lim_{t\rightarrow 1^-}\nu{\left[ U_t \right]}\geq \lim_{t\rightarrow 1^-}\nu{\left[[A_0,A_1]_t\right]}.\tag{5.20}\label{ballesitmate}
\]
On the other hand, $\sup_{(x,y)\in A_0\times A_1}d(x,y)\leq (1+\lambda)r$, where $\lambda:=\lambda_d(\overline{B^+_{x_0}(r)})<\infty$. Since $\nu[A_0]>0$, Theorem \ref{BrunnMinkowineq}/(ii) implies
\begin{align*}
\log\nu{\left[[A_0,A_1]_t\right]}\geq (1-t)\log\nu[A_0]+t\log\nu[A_1]-\frac{K_-}{2}t(1-t)(1+\lambda)^2r^2,\ \forall\,t\in (0,1),
\end{align*}
which together with (\ref{ballesitmate}) furnishes $\nu{\left[ B^+_{x_0}(r) \right]}\geq\nu{\left[ \overline{B^+_{x_0}(r)} \right]}$.

\noindent \textbf{(ii)}
Set
\[
A_0:=\{x_0\}, \ \ A_1:=\overline{B^+_{x_0}(r+\epsilon)}\setminus B^+_{x_0}(r),\ \  \mu_0:=\delta_{x_0},\ \ \mu_1=\frac{\textbf{1}_{A_1}}{\nu[A_1]}\nu.
\]
For any $t\in (0,1)$, it is easy to check that $[A_0,A_1]_t\subset \overline{B^+_{x_0}(t(r+\epsilon))}\backslash B^+_{x_0}(tr)$ and
\[
\inf_{(x,y)\in A_0\times A_1}\beta^{K,N}_t(x,y)\geq \left\{
	\begin{array}{lll}
	& \left( \frac{\mathfrak{s}_{K,N}(t(r+\epsilon))}{t\,\mathfrak{s}_{K,N}(r+\epsilon)} \right)^{N-1}, & \ \ \ \text{if } K\geq 0, \\
	\\
	&\left( \frac{\mathfrak{s}_{K,N}(t(r-\epsilon))}{t\,\mathfrak{s}_{K,N}(r-\epsilon)} \right)^{N-1}, & \ \ \ \text{if } K<0.
	\end{array}
	\right.
\]
Suppose $K\geq 0$.
Theorem \ref{BrunnMinkowineq}/(i) furnishes
\begin{align*}
\nu\left[ \overline{B^+_{x_0}(t(r+\epsilon))}\backslash B^+_{x_0}(tr)  \right]^\frac{1}{N}\geq \nu\left[ [A_0,A_1]_t \right]^\frac{1}{N}\geq t\left( \frac{\mathfrak{s}_{K,N}(t(r+\epsilon))}{t\,\mathfrak{s}_{K,N}(r+\epsilon)} \right)^{\frac{N-1}N}\nu\left[ A_1 \right]^{\frac1N},
\end{align*}
which together with (i) implies
\begin{align*}
\frac{\phi(tr+t\epsilon)-\phi(tr)}{t\epsilon\,\mathfrak{s}^{N-1}_{K,N}(t(r+\epsilon))}\geq \frac{\phi(r+\epsilon)-\phi(r)}{\epsilon\,\mathfrak{s}^{N-1}_{K,N}(r+\epsilon)},
\end{align*}
where $\phi(r):=\nu[B^+_{x_0}(r)]$.
By letting   $\epsilon\rightarrow 0^+$, the function ${\left(\frac{d^+}{dr}\phi\right)(r)}/{\mathfrak{s}^{N-1}_{K,N}(r)}$ is nonincreasing in $r$, which
together with the monotonic L'Hospital rule yields (i). The case of $K<0$ follows by a similar argument.

\smallskip

\noindent\textbf{(iii)} (a)
Set
\[
A_0:=\overline{B^+_{x_0}\left( \delta \right)},\ A_1:=\overline{B_{x_0}^+(r)},\ t:=\frac{\delta}{2r}\leq \frac{1}{2}.
\]
It is not hard to check that $[A_0,A_1]_t\subset \overline{B^+_{x_0}(2\delta)}$ and $\sup_{(x,y)\in A_0\times A_1}d(x,y)\leq r+\lambda \delta$, where $\lambda:=\lambda_d(\overline{B^+_{x_0}(\delta)})$. Thus,
Theorem \ref{BrunnMinkowineq}/(ii) yields
\begin{eqnarray*}
\log\left( \frac{1}{\nu\left[\overline{B^+_{x_0}(2\delta)}\right]} \right)&\leq& \log\left( \frac{1}{\nu[[A_0,A_1]_t]} \right)\\
&\leq& \left(1- \frac{\delta}{2r} \right)\log\left( \frac{1}{\nu[A_0]} \right)+\frac{\delta}{2r}\log\left( \frac{1}{\nu[A_1]} \right)+ \frac{K_-}{2}\frac{\delta}{2r}\left(1- \frac{\delta}{2r} \right)\left(r+{\lambda}\delta\right)^2,
\end{eqnarray*}
which together with (i) implies
\begin{align*}
\log\nu{\left[B^+_{x_0}(r)\right]}=\log \nu{\left[A_1\right]}\leq a +b r+\frac{K_-}2 r^2,
\end{align*}
where $a$ and $b$ are the constants only dependent of $K_-$, $\delta$, $\lambda$, $\nu{\left[{B^+_{x_0}(\delta)}\right]}$ and $\nu{\left[{B^+_{x_0}(2\delta)}\right]}$.
Thus,
 (\ref{estkinftyvolume}) follows immediately.

  \smallskip

  (b) Let
\[
A_0:=\overline{B^+_{x_0}\left(  {\delta}  \right)},\ A_1:=\overline{B_{x_0}^+(r+\delta)}\backslash {B_{x_0}^+(r)},\ t:=\frac{\delta}{2r}\leq \frac{1}{2}.
\]
  Since $K=K_+$ and $\inf_{(x,y)\in A_0\times A_1}d(x,y)\geq r-\delta$, a similar argument to that of (a)   yields
 \[
 \log \nu{\left[A_1\right]}\leq \frac{a}{r}+b+br-\frac{K}2r^2\leq \frac{a}{\delta}+b+br-\frac{K}2r^2,
 \]
 where   $a$ and $b$ are the constants only dependent of $K$, $\delta$, $\nu{\left[ {B^+_{x_0}(\delta)}\right]}$ and $\nu{\left[ {B^+_{x_0}(2\delta)}\right]}$. Thus,
 (\ref{gromov-bishp1}) follows.

\smallskip

{(c) If $K\leq 0$, we have
$K'+K_-=K'-K<0$ and hence,
 (\ref{estkinftyvolume}) ($\delta=1$, $r=i\in \mathbb{N}$)  furnishes
\begin{align*}
&\int_X e^{\frac{K'}2d(x_0,x)^2}{\ddd}\nu(x)\leq e^{\frac{K'}2 0^2}\nu{\left[ {B^+_{x_0}(1)} \right]}+\sum_{i=2}^\infty e^{\frac{K'}{2}(i-1)^2}\nu{\left[ {B^+_{x_0}(i)}\backslash {B^+_{x_0}(i-1)} \right]}\\
\leq&  \nu{\left[  {B^+_{x_0}(1)} \right]}+\sum_{i=2}^\infty e^{\frac{K'}{2}(i-1)^2}e^{Ci}e^{\frac{K_-}2 i^2}=\nu{\left[ {B^+_{x_0}(1)} \right]}+\sum_{i=2}^\infty e^{\frac{(K'-K)}2i^2+(C-K')i+\frac{K'}2}<\infty.
\end{align*}
If $K>0$, the inequality immediately follows by a similar argument and \eqref{gromov-bishp1}, respectively.}
\end{proof}

In view of Definition \ref{doulbidefin}, Theorem \ref{BishopGromovcomp}/(ii) furnishes the following result in a direct way.
\begin{corollary}\label{doublingCDKN}
If $(X,d,\nu)$ is a weak ${\CD}(K,N)$ space with $K\in \mathbb{R},N<\infty$ and $X=\supp\nu$, then $(X,d,\nu)$ is  almost doubling  with a constant $L=L(r)=L(K,N,r)$ depending only on $K,N,r$. In particular, if $\diam(X)\leq D$ then $(X,d,\nu)$ is globally doubling with a constant $L=L(K,N,D)$.
\end{corollary}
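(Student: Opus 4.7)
The plan is to read off the almost-doubling constant directly from the Bishop--Gromov comparison of Theorem \ref{BishopGromovcomp}/(ii), and then to bound that constant uniformly in $r$ under the diameter assumption. First, I would invoke Theorem \ref{supporsetcdkn} to reduce to the case $X=\supp\nu$, so that the Bishop--Gromov conclusion applies at every point of $X$. Fixing an arbitrary $x\in X$ and using the monotonicity of $f_x(r):=\nu[B^+_x(r)]/\int_0^r\mathfrak{s}_{K,N}^{N-1}(t)\,{\ddd}t$, the elementary inequality $f_x(2r)\le f_x(r)$ immediately yields
\[
\frac{\nu[B^+_x(2r)]}{\nu[B^+_x(r)]}\;\le\; L(K,N,r)\;:=\;\frac{\int_0^{2r}\mathfrak{s}_{K,N}^{N-1}(t)\,{\ddd}t}{\int_0^{r}\mathfrak{s}_{K,N}^{N-1}(t)\,{\ddd}t}.
\]
To match Definition \ref{doulbidefin}, which is phrased with closed balls, I would then use Theorem \ref{BishopGromovcomp}/(i) to replace open balls by closed ones without changing the measures, which yields the almost-doubling property with a constant depending only on $K$, $N$, and $r$.

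For the global statement under $\diam(X)\le D$, the plan is to split into two ranges of $r$. On the compact interval $(0,D/2]$ the explicit function $r\mapsto L(K,N,r)$ is continuous (with finite limit $2^{N}$ as $r\to 0^{+}$), so it is bounded by some constant $L_1=L_1(K,N,D)$. For $r>D/2$ one has $2r>D\ge d(x,y)$ for every $y\in X$, so $\overline{B^+_x(2r)}=X$; a second application of the monotonicity of $f_x$ to the pair of radii $(D/2,D)$, combined with $\overline{B^+_x(D)}=X$, will furnish a lower bound of the form
\[
\nu\!\left[\overline{B^+_x(D/2)}\right]\;\ge\;\frac{\int_0^{D/2}\mathfrak{s}_{K,N}^{N-1}(t)\,{\ddd}t}{\int_0^{D}\mathfrak{s}_{K,N}^{N-1}(t)\,{\ddd}t}\,\nu[X],
\]
hence a second uniform bound $L_2=L_2(K,N,D)$ on the doubling ratio. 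Taking $\max\{L_1,L_2\}$ then produces a global doubling constant of the required form.

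The argument is essentially bookkeeping once Theorem \ref{BishopGromovcomp} is in hand; the only mild technical point I expect to watch out for is the case $K>0$, where the integrals defining $L(K,N,r)$ must be truncated at $\pi\sqrt{(N-1)/K}$ for $\mathfrak{s}_{K,N}^{N-1}$ to be unambiguously defined. This will be harmless because Theorem \ref{diametercontrol} already forces $\diam(X)\le\pi\sqrt{(N-1)/K}$, so in each relevant range either both closed balls coincide with $X$ (in which case the ratio equals $1$) or $r$ lies in an interval on which the integrand is well defined and strictly positive, and the same continuity/monotonicity argument applies verbatim.
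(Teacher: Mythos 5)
Your argument is correct and follows essentially the same route as the paper, which treats the corollary as an immediate consequence of Theorem \ref{BishopGromovcomp}/(ii) (monotonicity of $f_x$) together with part (i) to pass between open and closed balls, plus Theorem \ref{diametercontrol} to handle the truncation when $K>0$; your case split at $r=D/2$ is just the bookkeeping needed to make the global constant explicit. The initial appeal to Theorem \ref{supporsetcdkn} is superfluous, since the hypothesis already assumes $X=\supp\nu$, but it does no harm.
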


It should be noticed that Theorems \ref{diametercontrol}--\ref{BishopGromovcomp} together with Theorem \ref{OTARic} furnish Myers' theorem, the Brunn-Minkowski inequality and the Bishop-Gromov theorem in the Finsler setting obtained by Ohta \cite{O,O1}. Moreover, using similar arguments as in Bacher \cite{Ba}, Lott and Villani \cite{LV}, Sturm \cite{Sturm-1,Sturm-2} and Villani \cite{Vi}, one can extend the Borell-Brascamp-Lieb inequality to the irreversible setting; see Ohta \cite{O1,O2,Ot} for the Finsler versions of these inequalities.

\subsection{Stability of weak ${\rm{CD}}(K,N)$ spaces}\label{staofRicc}
For convenience of the presentation,   ``$\nu_k\rightharpoonup\nu$" is used  to denote that a sequence of measures $(\nu_k)_k$ converges weakly to $\nu$ while ``$\mu\ll\nu$" is used to denote  that $\mu$ is absolutely continuous with respect to $\nu$.

\begin{theorem}\label{CDknstabilitycompact}
Given $K\in \mathbb{R}$, $N\in [1,\infty]$ and $\theta\in [1,\infty)$, the  weak curvature-dimension bound ${\CD}(K,N)$ is stable in the measured $\theta$-Gromov-Hausdorff topology. That is,
suppose that $(X_k,d_k,\nu_k)_k$ is a sequence of compact $\theta$-geodesic-measure spaces satisfying  the weak curvature-dimension bound ${\CD}(K,N)$. If  $(X_k,d_k,\nu_k)_k$ converges  in the measured $\theta$-Gromov-Hausdorff topology, then
the limit space, say $(X,d,\nu)$, also  satisfies the weak curvature-dimension bound ${\CD}(K,N)$.
\end{theorem}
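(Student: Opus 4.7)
\medskip

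\noindent\textbf{Proof proposal.} The plan is to mirror the reversible-case stability argument of Lott-Villani and Sturm, while separately tracking the two asymmetric distortion coefficients $\beta^{(K,N)}_{1-t}$ and $\breve{\beta}^{(K,N)}_t$ that appear in the irreversible weak CD inequality \eqref{weakcdkncon}. By Proposition \ref{CDKNstringent}/(iii) it suffices to verify the inequality on the limit space $(X,d,\nu)$ for $\mu_0,\mu_1\in P_c(X,\nu)$ absolutely continuous with continuous densities, and by Theorem \ref{supporsetcdkn} we may restrict attention to $\supp\nu$. Let $f_k:X_k\to X$ denote $\epsilon_k$-isometries underlying the measured $\theta$-Gromov-Hausdorff convergence, so that $(f_k)_\sharp\nu_k\rightharpoonup\nu$ and $\epsilon_k\to 0$.

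First, construct discrete approximations $\mu_{k,i}\in P_c(X_k,\nu_k)$ with $\mu_{k,i}\ll\nu_k$ and $(f_k)_\sharp \mu_{k,i}\rightharpoonup \mu_i$ for $i=0,1$, whose densities with respect to $\nu_k$ are controlled by the (uniformly bounded) densities of $\mu_i$. A standard device is to fix a Borel partition $\{A_{k,j}\}_j$ of $X_k$ with $\nu_k$-positive cells of $d_k$-diameter $\leq\epsilon_k$ and set $\mu_{k,i}|_{A_{k,j}}:=\mu_i(f_k(A_{k,j}))\,\nu_k|_{A_{k,j}}/\nu_k[A_{k,j}]$. By the weak ${\CD}(K,N)$ hypothesis on each $X_k$, choose a displacement interpolation $(\mu_{k,t})_{0\leq t\leq 1}$ on $X_k$ and an associated optimal plan $\pi_k$ of $(\mu_{k,0},\mu_{k,1})$ (with dynamical lift $\Pi_k\in P(\Gamma(X_k))$) satisfying
\[
U_{\nu_k}(\mu_{k,t}) \leq (1-t)\, U^{\beta^{(K,N)}_{1-t}}_{\pi_k,\nu_k}(\mu_{k,0}) + t\, U^{\breve{\beta}^{(K,N)}_t}_{\breve{\pi}_k,\nu_k}(\mu_{k,1})
\]
for every $U\in DC_N$ and $t\in[0,1]$.

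Next, apply the stability of optimal transport (Theorem \ref{stabilityoptimal}) along $(f_k)$: after extracting a subsequence, $(f_k,f_k)_\sharp\pi_k\rightharpoonup\pi$ for an optimal transference plan $\pi$ of $(\mu_0,\mu_1)$ on $(X,d)$, and $(f_k)_\sharp\mu_{k,t}\rightharpoonup\mu_t$ uniformly in $t$, with $(\mu_t)_{0\leq t\leq 1}$ a displacement interpolation associated to $\pi$. By continuity of the exchange map $\mathcal{E}$, the reversed couplings satisfy $(f_k,f_k)_\sharp \breve{\pi}_k \rightharpoonup \breve{\pi}$. By Proposition \ref{CDKNstringent}/(ii), it suffices to establish the CD inequality for nonnegative Lipschitz $U\in DC_N$ (or the locally Lipschitz log-type functions when $N=\infty$). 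The lower semi-continuity of $U_\nu$ under joint weak convergence of measure and reference gives $U_\nu(\mu_t)\leq \liminf_{k\to\infty} U_{\nu_k}(\mu_{k,t})$; the upper semi-continuity of the distorted functional (Lemma \ref{continuonUvUpi}) yields
\[
\limsup_{k\to\infty} U^{\beta^{(K,N)}_{1-t}}_{\pi_k,\nu_k}(\mu_{k,0}) \leq U^{\beta^{(K,N)}_{1-t}}_{\pi,\nu}(\mu_0),
\]
and analogously for the $\breve{\beta}$-term. Passing to the limit $k\to\infty$ in the CD inequality for $X_k$ then yields the required inequality for $(X,d,\nu)$.

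The main obstacle will be the semi-continuity inputs in the irreversible setting. The coefficient $\beta^{(K,N)}_{1-t}(x,y)$ is asymmetric and, for $K>0$, blows up at $d(x,y)=\pi\sqrt{(N-1)/K}$; the diameter bound from Theorem \ref{diametercontrol} (applied uniformly using $(\overline{B^+_{x_k}(\cdot)})$-reversibility $\leq\theta$) keeps us away from this singularity, so that $\beta^{(K,N)}_{1-t}$ is bounded and continuous on $\supp\pi_k\subset X_k\times X_k$ uniformly in $k$. The $L^\infty$-bounds on the approximating densities $\rho_{k,i}={\ddd}\mu_{k,i}/{\ddd}\nu_k$, inherited from the continuity of $\rho_i$, combined with the global doubling property preserved under convergence (Lemma \ref{DoublingcontrollconverinGromovHameameaser}, via Corollary \ref{doublingCDKN} applied on $X_k$), should give the uniform integrability needed to upgrade weak convergence of $\pi_k$ to convergence of the distorted integrals. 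The $\breve{\beta}$-term introduces no genuinely new difficulty since it is obtained by precomposing with the continuous involution $\mathcal{E}$.
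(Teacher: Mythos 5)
Your overall architecture (reduce via Proposition \ref{CDKNstringent}/(iii) and Theorem \ref{supporsetcdkn}, approximate the data, invoke the CD inequality on each $X_k$, pass to the limit through Theorem \ref{stabilityoptimal}, and finish with semicontinuity) is the right skeleton, but the decisive step is unsupported. You claim that Lemma \ref{continuonUvUpi} gives
$\limsup_{k\to\infty} U^{\beta^{(K,N)}_{1-t}}_{\pi_k,\nu_k}(\mu_{k,0}) \leq U^{\beta^{(K,N)}_{1-t}}_{\pi,\nu}(\mu_0)$
for your partition-built approximations $\mu_{k,i}$ on the \emph{varying} spaces $(X_k,\nu_k)$. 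Lemma \ref{continuonUvUpi}/(iii) is not such a statement: it is formulated on a \emph{fixed} compact space with a \emph{fixed} reference measure $\nu$, and the limsup inequality holds only for the specific mollified sequence $\mu_k=\rho_k\nu$ produced by the regularizing kernels of Lemma \ref{Regukern} (cf.\ Remark \ref{suppregularing}); since $U$ is convex, these functionals are generically lower (not upper) semicontinuous, so no general u.s.c.\ principle is available. The paper resolves exactly this point with a two-parameter scheme: first mollify $\mu_i$ on the limit space $X$ to get continuous densities $\rho_{\alpha,i}$ (this is where Lemma \ref{continuonUvUpi}/(iii) legitimately enters, as $\alpha\to\infty$ on the fixed space $(X,\nu)$), and then transplant to $X_k$ by setting $\mu_{k,\alpha,i}:=(\rho_{\alpha,i}\circ f_k)\nu_k/Z_{k,\alpha,i}$. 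Because the density on $X_k$ is a fixed continuous function composed with $f_k$, the distorted functional can be rewritten (via Remark \ref{firstUv}) as $\int_{X\times X} v\bigl(\rho_{\alpha,0}(y_0)/(Z_{k,\alpha,0}\beta(y_0,y_1))\bigr)\,{\ddd}\bigl[(f_k,f_k)_\sharp\pi_{k,\alpha}\bigr]$, so the limit in $k$ is an exact limit obtained from weak convergence of the plans together with $Z_{k,\alpha,i}\to1$ and the uniform continuity of $\beta$; only afterwards does one take $\limsup_{\alpha}$ using Lemma \ref{continuonUvUpi}/(iii). Your single-step discretization $\mu_{k,i}|_{A_{k,j}}:=\mu_i(f_k(A_{k,j}))\,\nu_k|_{A_{k,j}}/\nu_k[A_{k,j}]$ destroys this structure (besides the technical problems that $\{f_k(A_{k,j})\}_j$ is neither a Borel partition of $X$ nor mass-normalized), so the limsup you need has no justification; the appeal to doubling/uniform integrability does not repair it.

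A second, smaller but genuine gap: your claim that Theorem \ref{diametercontrol} ``keeps us away from the singularity'' of $\beta^{(K,N)}$ when $K>0$ is false in the threshold case, since the theorem only gives $\diam(\supp\nu_k)\leq\pi\sqrt{(N-1)/K}$ and equality can occur, where $\beta^{(K,N)}_t$ blows up; likewise $N=1$ with $K>0$ gives $\beta^{(K,1)}_t\equiv\infty$. These cases must be treated separately, as the paper does, by running the argument with the bounded coefficients $\beta^{(K,N')}_t$ for $N'>N$ (or $N'>1$) and letting $N'\downarrow N$ via Remark \ref{limitdefinitionofbeta} and Proposition \ref{CDKNstringent}/(i). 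Finally, your lower-semicontinuity step for $U_\nu(\mu_t)$ should be routed through push-forwards and the contraction principle, i.e.\ $U_\nu(\mu_{\alpha,t})\leq\liminf_k U_{(f_k)_\sharp\nu_k}((f_k)_\sharp\mu_{k,\alpha,t})\leq\liminf_k U_{\nu_k}(\mu_{k,\alpha,t})$ using parts (i) and (ii) of Lemma \ref{continuonUvUpi}, since part (i) compares measures on one fixed space rather than across the $X_k$.
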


\begin{proof} It follows by  Definition \ref{measureconvergence}, Lemma \ref{inprotantisometry} and Theorem \ref{lenghtclsoedghtopo} that $(X,d,\nu)$ is a compact $\theta$-geodesic-measure space.
Due to Theorem \ref{supporsetcdkn}, we may assume $\supp \nu=X$.
Then Definition \ref{measureconvergence} yields a sequence of measurable functions $f_k:X_k\rightarrow X$ such that
\begin{itemize}
\item  $f_k:(X_k,d_k)\rightarrow (X,d)$ is an $\epsilon_k$-isometry, with $\epsilon_k\rightarrow 0$;
\item  $((f_k)_\sharp \nu_k)_k$ converges weakly to $\nu$.
\end{itemize}

Now we check that $(X,d,\nu)$ is a weak ${\CD}(K,N)$ space. According to Proposition \ref{CDKNstringent}/(iii),    it suffices to show that  (\ref{weakcdkncon}) holds for any two probability measures $\mu_i$, $i=0,1$ on $X$ satisfying $\mu_i=\rho_i\nu$ and $\rho_i\in C(X)$.

Owing to Lemma \ref{continuonUvUpi}/(iii), for each $i=0,1$, there is a sequence of probability measures $({\mu}_{\alpha,i})_\alpha$ such that ${\mu}_{\alpha,i}=\rho_{\alpha,i}\nu$ with continuous densities $\rho_{\alpha,i}$ and  ${\mu}_{\alpha,i}\rightharpoonup  \mu_i$ (as $\alpha\rightarrow \infty$). Moreover,  Remark \ref{suppregularing} combined with Lemma \ref{Regukern} ($\mathcal {K}=X$) implies
\[
0\leq \rho_{\alpha,i}(x)= \int_X \mathscr{K}_{1/\alpha}(x,y)\rho_{i}(y){\ddd}\nu(y)\leq \max_X\rho_i<\infty, \text{ for }i=0,1.\tag{5.21}\label{densityestimiate}
\]
For each $k\in \mathbb{N}$, define two probability measures on $X_k$ by
\[
\mu_{k,\alpha,i}:=\frac{(\rho_{\alpha,i}\circ f_k)\nu_k}{Z_{k,\alpha,i}},\ Z_{k,\alpha,i}=\int_{X_k} (\rho_{\alpha,i}\circ f_k){\ddd}\nu_k, \text{ for }i=0,1.
\]
Then  (\ref{densityestimiate}) together with
$(f_k)_\sharp \nu_k\rightharpoonup \nu$  implies
\[
Z_{k,\alpha,i}\rightarrow 1,\ (f_k)_\sharp \mu_{k,\alpha,i}\rightharpoonup \mu_{\alpha,i},\ \text{ as $k\rightarrow \infty$}.\tag{5.22}\label{connectmu01}
\]

For each $k$ and each $\alpha$, since $(X_k,d_k,\nu_k)$ is a  weak ${\CD}(K,N)$ space and  $\mu_{k,\alpha,i}\ll \nu_k$ for $i=0,1$, there is a Wasserstein geodesic $(\mu_{k,\alpha,t})_{0\leq t\leq 1}$ jointing $\mu_{k,\alpha,0}$ to $\mu_{k,\alpha,1}$ associated with an optimal transference plan $\pi_{k,\alpha}$ of coupling $(\mu_{k,\alpha,0},\mu_{k,\alpha,1})$  such that for all $U\in DC_N$ and $t\in [0,1]$,
\[
U_{\nu_k}(\mu_{k,\alpha,t})\leq (1-t) U^{\beta^{(K,N)}_{1-t}}_{\pi_{k,\alpha},\nu_k}(\mu_{k,\alpha,0})+tU^{\breve{\beta}^{(K,N)}_{t}}_{\breve{\pi}_{k,\alpha},\nu_k}(\mu_{k,\alpha,1}).\tag{5.23}\label{goingtolimiXkcase}
\]
Let $\Pi_{k,\alpha}$ denote the associated dynamical optimal transference plan of $\pi_{k,\alpha}$. Thus, $\mu_{k,\alpha,t}=(e_t)_\sharp\Pi_{k,\alpha}$ and $\pi_{k,\alpha}=(e_0,e_1)_\sharp\Pi_{k,\alpha}$. For each $\alpha$,
by Theorem \ref{stabilityoptimal} and a Cantor's diagonal argument, up to extraction of a subsequence in $k$, there is a dynamical optimal transference plan $\Pi_\alpha$ on $\Gamma(X)$ such that
\begin{itemize}

\item[(i)]   $\lim_{k\rightarrow \infty}(f_k,f_k)_{\sharp}\pi_{k,\alpha}=(e_0,e_1)_\sharp\Pi_\alpha=:\pi_\alpha$ in the weak topology on $P(X\times X)$;

\smallskip

\item[(ii)] $\lim_{k\rightarrow \infty}(f_k)_\sharp \mu_{k,\alpha,t}=(e_t)_\sharp\Pi_\alpha=:\mu_{\alpha,t}$ in $P_2(X)$ uniformly in $t$. More precisely,
\[
\lim_{k\rightarrow \infty}\sup_{t\in [0,1]}W_2(\mu_{\alpha,t},(f_k)_\sharp\mu_{k,\alpha,t})=0.
\]

\item[(iii)]   $\pi_\alpha$ is an optimal transference plan and $(\mu_{\alpha,t})_{0\leq t\leq 1}$ is a displacement interpolation which are associated with the dynamical optimal transference plan   $\Pi_\alpha$.
\end{itemize}

It follows from Corollary \ref{lengthpropWASSER} that $(P(X),W_2)$ is a compact $\theta$-geodesic space.
In view of Theorem \ref{lengspacewass}, by passing to a subsequence, one can find a dynamical
optimal transference plan $\Pi$ such that $\Pi_\alpha\rightharpoonup \Pi$ and hence,
\[
\pi_\alpha \rightharpoonup \pi:=(e_0,e_1)_\sharp \Pi,\quad \mu_{\alpha,t}\rightharpoonup \mu_t:=(e_t)_\sharp \Pi, \text{ for each $t\in [0,1]$}.
\]
In particular, $\pi$ is an optimal transference plan from $\mu_0$ to $\mu_1$ and $\mu_t$ is the associated displacement interpolation.
It remains to pass to the limit ($k,\alpha\rightarrow \infty$) in (\ref{goingtolimiXkcase}) and show
\[
U_{\nu}(\mu_{t})\leq (1-t) U^{\beta^{(K,N)}_{1-t}}_{\pi,\nu}(\mu_{0})+tU^{\breve{\beta}^{(K,N)}_{t}}_{\breve{\pi},\nu}(\mu_{1}).\tag{5.24}\label{Xcdkncondition}
\]

According to Proposition \ref{CDKNstringent}/(ii), we only need to check that (\ref{Xcdkncondition}) holds for nonnegative $U\in DC_N$ with (\ref{mostpoly}).
We first assume that $\beta^{(K,N)}_t$ is continuous and bounded.
Thus, Lemma \ref{continuonUvUpi}/(i)(ii)  yield
\begin{align*}
U_\nu(\mu_{t})\leq \underset{\alpha\rightarrow\infty}\liminf \,U_\nu(\mu_{\alpha,t})\leq \underset{\alpha\rightarrow\infty}\liminf\left[\underset{k\rightarrow \infty}{\lim\inf}\, U_{(f_k)_\sharp\nu_k}((f_k)_\sharp\mu_{k,\alpha,t})\right]\leq \underset{\alpha\rightarrow\infty}\liminf\left[\underset{k\rightarrow \infty}{\lim\inf}\,U_{ \nu_k}(\mu_{k,\alpha,t})\right].\tag{5.25}\label{xcdfisrtpart}
\end{align*}
On the other hand, set $\beta(x,y):=\beta^{(K,N)}_{1-t}(x,y)$. Note that
\[
\lim_{k\rightarrow \infty}\sup_{x,y\in X_k} \left| d_k(x,y)-d(f_k(x),f_k(y)) \right|=0.
\]
Thus, the uniform boundedness of $\diam(X_k)$ implies the uniform continuity of $\beta$, that is, for any $\varepsilon>0$, there exists $N=N(\varepsilon)>0$ such that if $k>N$, then
\[
\sup_{x,y\in X_k}\left|\beta(f_k(x),f_k(y))-\beta(x,y)\right|<\varepsilon,
\]
which together with the continuity and boundness of $U$ implies
\[
\beta(f_k(x),f_k(y))\,U\left(\frac{\varrho_{k,\alpha,0}(x)}{\beta(f_k(x),f_k(y))}\right)\rightrightarrows\beta(x,y)\,U\left(\frac{\varrho_{k,\alpha,0}(x)}{\beta(x,y)}\right),\text{ for }i=0,1,
\]
where $\varrho_{k,\alpha,0}(x)\nu_k(x):=\frac{\rho_{\alpha,0}\circ f_k(x) }{Z_{k,\alpha,0}} \nu_k(x)=\mu_{k,\alpha,0}(x)$.
Thus, by setting $\pi_{k,\alpha}({\ddd}x{\ddd}y)=\mu_{k,\alpha,0}({\ddd}x)\pi_{k,\alpha}({\ddd}y|x)$, one has
\begin{align*}
\lim_{k\rightarrow\infty} \int_{X_k\times X_k}\left|\beta(f_k(x),f_k(y))\,U\left(\frac{\varrho_{k,\alpha,0}(x)}{\beta(f_k(x),f_k(y))}\right) -\beta(x,y)\,U\left(\frac{\varrho_{k,\alpha,0}(x)}{\beta(x,y)}\right)\right|\pi_{k,\alpha}({\ddd}y|x)\nu_{k}({\ddd}x)=0.\tag{5.26}\label{Uuniformconverge}
\end{align*}
Let $v(r):=U(r)/r$.
By Remark \ref{firstUv}, one has
\begin{align*}
&\int_{X_k\times X_k}\beta(f_k(x),f_k(y))U\left(\frac{\varrho_{k,\alpha,0}(x)}{\beta(f_k(x),f_k(y))}\right)\pi_{k,\alpha}({\ddd}y|x)\nu_k({\ddd}x)\\
=&\int_{X_k\times X_k} v\left(\frac{\rho_{\alpha,0}\circ f_k(x)}{Z_{k,\alpha,0}\,\beta(f_k(x),f_k(y))}\right)\pi_{k,\alpha}({\ddd}x{\ddd}y)=\int_{X\times X}v\left(\frac{\rho_{\alpha,0}(y_0)}{Z_{k,\alpha,0}\,\beta(y_0,y_1)}\right){\ddd}\left[(f_k,f_k)_\sharp \pi_{k,\alpha}\right](y_0,y_1).\tag{5.27}\label{vfkfkpi}
\end{align*}
Recall that $Z_{k,a,0}\rightarrow 1$ and $(f_k,f_k)_\sharp \pi_{k,\alpha}\rightharpoonup \pi_\alpha$ (as $k\rightarrow \infty$), which together with (\ref{Uuniformconverge}) and (\ref{vfkfkpi})  furnishes
\begin{align*}
&\underset{k\rightarrow\infty}{\lim}U^{\beta^{(K,N)}_{1-t}}_{\pi_{k,\alpha},\nu_k}(\mu_{k,\alpha,0}) =\underset{k\rightarrow\infty}{\lim} \int_{X_k\times X_k}\beta(x,y)\,U{\left(\frac{\varrho_{k,\alpha,0}(x)}{\beta(x,y)}\right)}\pi_{k,\alpha}({\ddd}y|x)\nu_k(dx)\\
=&\underset{k\rightarrow\infty}{\lim}\int_{X\times X}v\left(\frac{\rho_{\alpha,0}(y_0)}{Z_{k,\alpha,0}\,\beta(y_0,y_1)}\right){\ddd}\left[(f_k,f_k)_\sharp \pi_{k,\alpha}\right](y_0,y_1)=\int_{X\times X}v\left( \frac{\rho_{\alpha,0}(y_0)}{\beta(y_0,y_1)} \right)\pi_\alpha({\ddd}y_0{\ddd}y_1)\\
=&U^{\beta^{(K,N)}_{1-t}}_{\pi_\alpha,\nu}(\mu_{\alpha,0}).
\end{align*}
Moreover, this inequality combined with Lemma \ref{continuonUvUpi}/(iii) yields
\[
\underset{\alpha\rightarrow\infty}{\lim\sup}\underset{k\rightarrow\infty}{\lim}U^{\beta^{(K,N)}_{1-t}}_{\pi_{k,\alpha},\nu_k}(\mu_{k,\alpha,0})=\underset{\alpha\rightarrow\infty}{\lim\sup}\,U^{\beta^{(K,N)}_{1-t}}_{\pi_\alpha,\nu}(\mu_{\alpha,0})\leq U^{\beta^{(K,N)}_{1-t}}_{\pi,\nu}(\mu_{0}).  \tag{5.28}\label{suplim1}
\]
Similarly, we have
\[
\underset{\alpha\rightarrow\infty}{\lim\sup}\underset{k\rightarrow\infty}{\lim} U^{\breve{\beta}^{(K,N)}_{t}}_{\breve{\pi}_{k,\alpha},\nu_k}(\mu_{k,\alpha,1})\leq U^{\breve{\beta}^{(K,N)}_{t}}_{\breve{\pi},\nu}(\mu_1),
\]
which together with  (\ref{goingtolimiXkcase}), (\ref{xcdfisrtpart}) and (\ref{suplim1}) furnishes (\ref{Xcdkncondition}).

Note that $\beta^{(K,N)}_t$ is continuous and bounded if one of the  following conditions holds: (a) $K\leq 0$ and $N>1$;
(b) $K>0$ and $N=\infty$;
(c) $K>0$, $1<N<\infty$ and $\sup_k \diam(X_k)<D_{N,K}=\pi\sqrt{(N-1)/K}$.

Now we consider the case when $K\leq 0$ and $N=1$.  Proposition \ref{CDKNstringent}/(i) together with the above argument yields that for any fixed $N'>1$, there is a Wasserstein geodesic $(\mu_t)_{0\leq t\leq 1}$ and an associated optimal transference plan $\pi$ of $(\mu_0,\mu_1)$ such that for any $U\in DC_N$ and $t\in [0,1]$,
\[
U_\nu(\mu_t)\leq (1-t) U^{\beta^{(K,N')}_{1-t}}_{\pi,\nu}(\mu_0)+t\,U^{\breve{\beta}^{(K,N')}_{t}}_{\breve{\pi},\nu}(\mu_1).
\]
In view of  Remark \ref{limitdefinitionofbeta}, by letting $N'\downarrow 1$, we get (\ref{Xcdkncondition}) again. If $K>0$, $1<N<\infty$ and $\sup \diam(X_k)=D_{K,N}$, we can apply a similar argument, introducing again the bounded coefficient $\beta^{(K,N')}_t$ for $N'>N$ and then passing to the limit as $N'\downarrow N$.
\end{proof}


\begin{theorem}\label{stablecdknnoncompact}Given $K\in \mathbb{R}$ and $N\in [1,\infty]$, for any nondecreasing function $\Theta(r)\geq 1$, the  weak curvature-dimension condition ${\CD}(K,N)$ is stable in the pointed measured forward $\Theta$-Gromov-Hausdorff topology. That is,
let $(X_k,\star_k,d_k,\nu_k)_k$ be a sequence of forward boundedly compact, $\sigma$-finite, pointed  forward $\Theta$-geodesic-measure spaces satisfying  the weak curvature-dimension condition ${\CD}(K,N)$. If $(X_k,\star_k,d_k,\nu_k)_k$
 converges in the pointed measured forward $\Theta$-Gromov-Hausdorff topology, then the limit space, say $(X,\star,d,\nu)$, satisfies  the weak curvature-dimension condition ${\CD}(K,N)$ as well.
 \end{theorem}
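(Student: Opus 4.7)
The strategy is to localize the problem to a sufficiently large closed forward ball around $\star$, on which the pointed measured forward $\Theta$-Gromov--Hausdorff convergence reduces to the compact measured $\theta$-Gromov--Hausdorff convergence already handled in Theorem~\ref{CDknstabilitycompact}. By Proposition~\ref{CDKNstringent}/(iii) it suffices to verify \eqref{weakcdkncon} for $\mu_0,\mu_1\in P_c(X,\nu)$ with continuous densities $\rho_0,\rho_1$. Fix $r_0>0$ so that $\supp\mu_0\cup\supp\mu_1\subset\overline{B^+_\star(r_0)}$, and set $R:=r_0\bigl(2+\Theta(r_0)\bigr)$. For any constant-speed minimal geodesic $\gamma$ with $\gamma(0)\in\supp\mu_0$, $\gamma(1)\in\supp\mu_1$, the reversibility bound in $\overline{B^+_\star(r_0)}$ gives $d(\gamma(0),\gamma(1))\le d(\gamma(0),\star)+d(\star,\gamma(1))\le r_0\Theta(r_0)+r_0$, so $d(\star,\gamma(t))\le R$ for every $t\in[0,1]$. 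Consequently any optimal plan $\pi$ of $(\mu_0,\mu_1)$ is supported in $\overline{B^+_\star(r_0)}^{\,\times 2}$ and any associated displacement interpolation lives inside the compact ball $\overline{B^+_\star(R)}$.

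By Definition~\ref{noncompactconvergedefi}, fix pointed $\epsilon_k$-isometries $f_k:\overline{B^+_{\star_k}(R_k)}\to\overline{B^+_\star(R_k)}$ with $R_k\to\infty$, $\epsilon_k\to 0$, and $(f_k)_\sharp\nu_k\rightharpoonup\nu$ in the weak-$*$ sense. For $k$ large enough that $R_k\ge R+1$, define
\[
\mu_{k,i}:=\frac{(\rho_i\circ f_k)\,{\ddd}\nu_k}{Z_{k,i}},\qquad Z_{k,i}:=\int_{X_k}(\rho_i\circ f_k)\,{\ddd}\nu_k,\qquad i=0,1,
\]
where $\rho_i\circ f_k$ is extended by zero outside the domain of $f_k$. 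Boundedness and continuity of $\rho_i$ together with the weak-$*$ convergence yield $Z_{k,i}\to 1$ and $(f_k)_\sharp\mu_{k,i}\rightharpoonup\mu_i$ in $P(X)$; moreover $\supp\mu_{k,i}\subset\overline{B^+_{\star_k}(r_0+\epsilon_k)}$ because $f_k(x)\in\overline{B^+_\star(r_0)}$ forces $d_k(\star_k,x)\le r_0+\epsilon_k$. The weak $\CD(K,N)$ property of $X_k$ furnishes an optimal plan $\pi_k$ of $(\mu_{k,0},\mu_{k,1})$ and an associated displacement interpolation $(\mu_{k,t})_{0\le t\le 1}$ satisfying \eqref{weakcdkncon} inside $X_k$ for every $U\in DC_N$. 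Using again the reversibility bound $\lambda_{d_k}(\overline{B^+_{\star_k}(r_0+\epsilon_k)})\le\Theta(r_0+\epsilon_k)$ and the triangle inequality as above, one has $\supp\pi_k\subset\overline{B^+_{\star_k}(r_0+\epsilon_k)}^{\,\times 2}$ and $\supp\mu_{k,t}\subset\overline{B^+_{\star_k}(R+1)}\subset\overline{B^+_{\star_k}(R_k)}$ for all large $k$, so every relevant measure stays within the domain of $f_k$.

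The restrictions of $f_k$ to $\overline{B^+_{\star_k}(R+1)}$ are $\epsilon_k$-isometries into $\overline{B^+_\star(R+1)}$; on this compact forward ball the reversibility is at most $\theta:=\Theta(R+1)$, and by Proposition~\ref{welldefinednoncompactGHCONVER}/(iii) together with Theorem~\ref{GHPequvialecnn} the restricted spaces converge in the measured $\theta$-Gromov--Hausdorff topology. I can then replay the proof of Theorem~\ref{CDknstabilitycompact} verbatim on this compact stage: Theorem~\ref{stabilityoptimal} (applied to the compact geodesic restrictions, whose geodesicity on the needed pairs of endpoints follows from the support control above) extracts a subsequence along which $(f_k,f_k)_\sharp\pi_k\rightharpoonup\pi$ and $(f_k)_\sharp\mu_{k,t}\rightharpoonup\mu_t$ for every $t$, with $\pi$ an optimal plan and $(\mu_t)$ a displacement interpolation between $\mu_0$ and $\mu_1$ in $X$ (here Proposition~\ref{lenthcompactnoncompact} guarantees that $X$ itself is a forward geodesic space). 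Lemma~\ref{continuonUvUpi} combined with the uniform continuity of $\beta^{(K,N)}_t$ on the compact product $\overline{B^+_\star(R+1)}^{\,\times 2}$, and the same monotone approximation in $N'\downarrow N$ used in the compact proof to cover the threshold cases ($N=1$, or $K>0$ with $\diam=\pi\sqrt{(N-1)/K}$), then transfers the $X_k$-inequality to the limit, yielding \eqref{weakcdkncon} for $(\mu_0,\mu_1)$.

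The main obstacle, and the place where the argument genuinely departs from the compact and reversible cases, is the uniform support control: the maps $f_k$ are defined only on \emph{forward} balls $\overline{B^+_{\star_k}(R_k)}$, so every support appearing in the construction must be confined to such a forward ball, uniformly in $k$. Forward distance from $\star_k$ is not symmetric with backward distance, and this asymmetry is what forces the use of the hypothesis $\lambda_{d_k}(\overline{B^+_{\star_k}(r)})\le\Theta(r)$ to convert a backward estimate $d(\gamma(0),\gamma(1))\le d(\gamma(0),\star)+d(\star,\gamma(1))$ into the forward bound needed to apply $f_k$. Once this localization is in place the proof reduces to the compact case, but the localization itself is the technically delicate step.
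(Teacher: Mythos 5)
Your proposal follows essentially the same route as the paper's proof: reduce to compactly supported continuous densities, use the $\Theta$-reversibility bound to confine all optimal plans and displacement interpolants to one fixed large forward ball, pull the densities back through the pointed $\epsilon_k$-isometries $f_k$, and then run the compact stability argument of Theorem~\ref{CDknstabilitycompact} on the restricted balls. The only deviations are cosmetic: the paper inserts an extra kernel-regularization layer indexed by $\alpha$ (exactly as in its compact proof, to which you defer anyway), and your containment $\supp\mu_{k,t}\subset\overline{B^+_{\star_k}(R+1)}$ needs a slightly larger radius, e.g. $(r_0+1)\bigl(2+\Theta(r_0+1)\bigr)$, since $\Theta(r_0+\epsilon_k)$ need not be close to $\Theta(r_0)$ for a merely nondecreasing $\Theta$ -- neither point affects the argument.
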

\begin{proof}
It follows by Definition \ref{noncompactconvergedefi}, Theorem \ref{newnoncp}/(ii) and Proposition \ref{lenthcompactnoncompact}  that  $(X,\star,d,\nu)$ is a forward boundedly compact, $\sigma$-finite,   pointed forward $\Theta$-geodesic-measure space.
We can assume  $\supp\nu=X$.
Let $\mu_i=\rho_i\nu$, $i=0,1$ be two compactly supported measure on $X$ with continuous densities. Choose $R>0$ such that  $\suppor\rho_i\subset \overline{B^+_\star(R)}$ for $i=0,1$. Thus, for each $i$, Lemma \ref{continuonUvUpi}/(iii) together with Remark \ref{suppregularing} furnishes  a sequence of probability measures ${\mu}_{\alpha,i}=\rho_{\alpha,i}\nu$, $\alpha\in \mathbb{N}$ with continuous densities  such that
\[
\suppor\rho_{\alpha,i}\subset \overline{B^+_{\star}\left(R+\frac12\right)} \text{ for each }\alpha\ \  {\rm and }\ \ \ {\mu}_{\alpha,i}\rightharpoonup \mu_{i} \text{ in }P\left(\overline{B^+_{\star}\left(R+\frac12\right)}\right).
\]
Obviously, these properties imply that  ${\mu}_{\alpha,i}\rightharpoonup \mu_{i}$ in $P(\overline{B^+_{\star}\left(2(R+1)\,\Theta(R+1)\right)})$.

On the other hand, for each $k\in \mathbb{N}$, one can choose a pointed $\epsilon_k$-isometry $f_k: \overline{B^+_{\star_k}\left(R_k\right)}\rightarrow \overline{B^+_{\star}\left(R_k\right)}$ such that $\epsilon_k\rightarrow 0$, $R_k\rightarrow \infty$ and $(f_k)_{\sharp}\nu_k$ converges to $\nu$ in the weak-$*$ topology.
As in the proof of Theorem  \ref{CDknstabilitycompact}, set
 \[
\mu_{k,\alpha,i}:=\frac{(\rho_{\alpha,i}\circ f_k)\nu_k}{Z_{k,\alpha,i}},\ Z_{k,\alpha,i}=\int_{X_k} (\rho_{\alpha,i}\circ f_k){\ddd}\nu_k, \text{ for }i=0,1.
\]
Since $f_k$ is a pointed $\epsilon_k$-isometry, there exists $K_1\geq 0$ such that
\[
\supp\mu_{k,\alpha,i}\subset \overline{B^+_{\star_k}\left(R+1\right)}, \text{ for any }k>K_1 \text{ and for }i=0,1.
\]
Let $(\mu_{k,\alpha,t})_{0\leq t\leq 1}$ be a Wasserstein geodesic
associated with an optimal transference plan $\pi_{k,\alpha}$ of coupling $(\mu_{k,\alpha,0},\mu_{k,\alpha,1})$ such that for
all $U\in DC_N$ and $t\in [0,1]$,
\[
U_{\nu_k}(\mu_{k,\alpha,t})\leq (1-t) U^{\beta^{(K,N)}_{1-t}}_{\pi_{k,\alpha},\nu_k}(\mu_{k,\alpha,0})+t\,U^{\breve{\beta}^{(K,N)}_{t}}_{\breve{\pi}_{k,\alpha},\nu_k}(\mu_{k,\alpha,1}).
\]
Note that every minimal geodesic  from $\supp \mu_{k,\alpha,0}$ to $\supp \mu_{k,\alpha,1}$ in $X_k$ is contained in the ball $\overline{B^+_{\star_k}\left(2(R+1)\,\Theta(R+1)\right)}$. Hence, $\supp\mu_{k,\alpha,t}\subset\overline{B^+_{\star_k}\left(2(R+1)\,\Theta(R+1)\right)}$ for each $t\in [0,1]$. Based on this observation,
  the  argument in the proof of Theorem \ref{CDknstabilitycompact} can be applied to the sequence of compact metric-measure spaces
\[
\left(\overline{B^+_{\star_k}\left(2(R+1)\,\Theta(R+1)\right)}, d_k|_{\overline{B^+_{\star_k}\left(2(R+1)\,\Theta(R+1)\right)}},\nu_k|_{\overline{B^+_{\star_k}\left(2(R+1)\,\Theta(R+1)\right)}}\right)_k
\]
 which converges  to   $\left(\overline{B^+_{\star}\left(2(R+1)\,\Theta(R+1)\right)},d|_{\overline{B^+_{\star}\left(2(R+1)\,\Theta(R+1)\right)}},\nu|_{\overline{B^+_{\star}\left(2(R+1)\,\Theta(R+1)\right)}}\right)$ in the measured $\Theta(2(R+1)\,\Theta(R+1))$-Gromov-Hausdorff topology.
\end{proof}

\begin{theorem}\label{compactnessdoubletheCD}
{\rm (i)} Given $\theta,N\in[1,\infty)$, $K\in \mathbb{R}$, $D\in(0,\infty)$ and $0<m\leq M<\infty$, let $\CDD(\theta,K,N,D,m,M)$ be the collection of all compact $\theta$-geodesic-measure spaces $(X,d,\nu)$ satisfying the weak curvature-dimension bound ${\CD}(K,N)$, together with $\diam(X)\leq D$, $m\leq \nu[X]\leq M$ and $\supp \nu=X$. Then $\CDD(\theta,K,N,D,m,M)$ is compact in the measured $\theta$-Gromov-Hausdorff topology.

{\rm (ii)}   Given $K\in \mathbb{R}$, $N\in[1,\infty)$ and  $0<m\leq M<\infty$, for any nondecreasing function $\Theta(r)\geq 1$, let $\pCDD(\Theta,K,N,m,M)$ be the collection of all  forward boundedly compact,  $\sigma$-finite, pointed forward $\Theta$-geodesic-measure spaces satisfying the weak  curvature-dimension bound ${\CD}(K,N)$, together with $m\leq \nu[B^+_\star(1)]\leq M$ and $\supp \nu=X$. Then  $\pCDD(\Theta,K,N,m,M)$ is compact in the measured forward $\Theta$-Gromov-Hausdorff topology.
\end{theorem}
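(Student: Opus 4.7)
The proof combines precompactness with closedness of the class under the relevant topology, using the stability results already established.

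For part (i), the plan is to first verify precompactness of $\CDD(\theta,K,N,D,m,M)$ via Theorem \ref{compactmeasuredgromvhaus}. That result requires uniform bounds on diameter, total mass, and the doubling constant $\Do(\mathcal{X})$. The diameter and mass bounds are built into the definition, so the key point is the uniform doubling bound. This is supplied by Corollary \ref{doublingCDKN}: since every $(X,d,\nu)$ in the class satisfies $\CD(K,N)$ with $\supp\nu=X$ and $\diam(X)\leq D$, we obtain $\Do(\mathcal{X})\leq L(K,N,D)$, a constant independent of the particular space. Hence any sequence in $\CDD(\theta,K,N,D,m,M)$ has a subsequence converging in the measured $\theta$-Gromov-Hausdorff topology to some compact $\theta$-metric-measure space $(X_\infty,d_\infty,\nu_\infty)$ with $\supp\nu_\infty=X_\infty$.

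Next comes the closedness step: one must check that the limit object lies in $\CDD(\theta,K,N,D,m,M)$. The $\theta$-metric property of $(X_\infty,d_\infty)$ follows from Theorem \ref{lenghtclsoedghtopo} together with Definition \ref{GromoHasdrdelta}, and the forward geodesic structure is preserved by the same theorem (combined with Theorem \ref{geodesicexistencethe}). The weak $\CD(K,N)$ condition passes to the limit by Theorem \ref{CDknstabilitycompact}. For the quantitative constraints: the diameter bound $\diam(X_\infty)\leq D$ is immediate from the definition of $d^\theta_{GH}$ and Lemma \ref{inprotantisometry} (the diameter is continuous under $\theta$-Gromov-Hausdorff convergence of compact spaces); the mass bounds $m\leq \nu_\infty[X_\infty]\leq M$ follow since $(f_i)_\sharp\nu_i\rightharpoonup \nu_\infty$ on the compact set $X_\infty$, so $\nu_\infty[X_\infty]=\lim_i \nu_i[X_i]\in [m,M]$; and $\supp \nu_\infty=X_\infty$ is the final conclusion of Theorem \ref{compactmeasuredgromvhaus}.

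For part (ii), the strategy is parallel but uses the noncompact machinery. Precompactness follows from Theorem \ref{noncomapctdoubling}, whose hypotheses require a uniform lower/upper bound $m\leq \nu[B^+_\star(1)]\leq M$ (given) and a uniform control $\Do(\overline{B^+_\star(r)})\leq L(r)$. The latter is obtained by applying Corollary \ref{doublingCDKN} to the totally convex ball $\overline{B^+_\star(r)}$: thanks to Proposition \ref{cosntmupcd}/(i), this ball inherits the weak $\CD(K,N)$ condition, is a compact $\Theta(r)$-geodesic-measure space of diameter at most $2r\,\Theta(r)$, and hence admits a doubling constant $L(r)=L(K,N,r,\Theta(r))$ independent of the space. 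Closedness in the pointed measured forward $\Theta$-Gromov-Hausdorff topology is then obtained by combining Proposition \ref{lenthcompactnoncompact} (closedness of forward geodesic spaces), Theorem \ref{stablecdknnoncompact} (stability of $\CD(K,N)$), Theorem \ref{noncomapctdoubling} (which gives $\supp\nu_\infty=X_\infty$), together with elementary arguments showing that the ball-mass bounds $m\leq \nu_\infty[B^+_\star(1)]\leq M$ pass to the limit via the weak-$*$ convergence of $(f_k)_\sharp\nu_k$ on each compact ball.

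The main obstacle is verifying that $m\leq \nu_\infty[B^+_\star(1)]$ is preserved; since weak-$*$ convergence generally only gives lower semicontinuity on open sets and upper semicontinuity on closed sets, one must combine this with a boundary mass estimate. The Bishop--Gromov comparison (Theorem \ref{BishopGromovcomp}) applied to each $(X_k,d_k,\nu_k)$ provides uniform control on the mass of thin annuli $B^+_{\star_k}(1+\varepsilon)\setminus B^+_{\star_k}(1-\varepsilon)$, which transfers to the limit and shows $\nu_\infty[\partial B^+_\star(1)]=0$; standard portmanteau arguments then yield $\nu_\infty[B^+_\star(1)]=\lim_k \nu_k[B^+_{\star_k}(1)]\in[m,M]$, completing the proof.
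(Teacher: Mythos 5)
Your proposal follows essentially the same route as the paper: the paper's proof of (i) is precisely the combination of Corollary \ref{doublingCDKN}, Theorem \ref{compactmeasuredgromvhaus}, Theorem \ref{lenghtclsoedghtopo} and Theorem \ref{CDknstabilitycompact}, and of (ii) the combination of Corollary \ref{doublingCDKN}, Theorem \ref{noncomapctdoubling}, Proposition \ref{lenthcompactnoncompact} and Theorem \ref{stablecdknnoncompact}, i.e.\ exactly your ``precompactness from uniform doubling plus closedness from stability'' scheme, with the quantitative bounds passing to the limit as you describe.

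One step in your part (ii), however, is not correct as written: you obtain the uniform bound $\Do\left(\overline{B^+_\star(r)}\right)\leq L(r)$ by calling $\overline{B^+_\star(r)}$ a \emph{totally convex} set and invoking Proposition \ref{cosntmupcd}/(i). Forward balls are in general not totally convex -- a minimal geodesic joining two points of $\overline{B^+_\star(r)}$ may leave the ball -- so the restriction $\left(\overline{B^+_\star(r)},d,\nu|\right)$ need not inherit the weak ${\CD}(K,N)$ condition, and this detour is both unjustified and unnecessary. The intended (and paper's) argument applies Corollary \ref{doublingCDKN} directly to the whole space $(X,d,\nu)$: since $N<\infty$ and $\supp\nu=X$, the space is almost doubling with a constant $L=L(K,N,r)$ depending only on $K,N$ and the scale $r$, which is exactly the radius-dependent doubling control required in the hypothesis of Theorem \ref{noncomapctdoubling}. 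Your additional portmanteau/annulus argument for the persistence of $m\leq\nu_\infty[B^+_\star(1)]\leq M$ is extra care beyond what the paper records and is sound in spirit.
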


\begin{proof} (i) follows by Corollary \ref{doublingCDKN}, Theorem  \ref{compactmeasuredgromvhaus}, Theorem \ref{lenghtclsoedghtopo} and Theorem \ref{CDknstabilitycompact}
while (ii) follows by Corollary \ref{doublingCDKN}, Theorem \ref{noncomapctdoubling}, Proposition \ref{lenthcompactnoncompact} and Theorem \ref{stablecdknnoncompact}.
\end{proof}

\begin{example}
It may happen that a sequence of weak $\CD(K,\infty)$ spaces $(X_k,d_k,\nu_k)_k$ with $\supp \nu_k=X_k$ converges in the measured Gromov-Hausdorff sense to a forward geodesic-measure space $(X,d,\nu)$ such that $\supp\nu$ is strictly smaller than $X$. For example, consider a sequence of Finsler metric-measure manifolds
\[
 (X_k,F_k,\nu_k)=\left(  \mathbb{R}^n, \|\cdot\|+\frac1{2^{k}} \langle \cdot,{\ddd}x^1\rangle, \frac{\exp\left( -k\|x\|^2 \right){\ddd}x}{Z_k}         \right),\ k\in \mathbb{N},
\]
where $Z_k$ is a normalizing constant.

According to Bao, Chern and Shen \cite{BCS}, each $F_k$ is a Randers-Berwald metric, thus geodesics of $F_k$ are also geodesics of $\|\cdot\|$ and vice-versa. Hence, if $t\mapsto \gamma_y(t)$, $t\in [0,\infty)$, is a constant-speed geodesic of $F_k$ with $\dot{\gamma}_y(0)=y=(y^1,\ldots,y^n)$, we have
$\gamma_y(t)=\gamma_y(0)+ty$,
which implies
\[
\det g_{ij}(\dot{\gamma}_y(t))=\left( \frac{\|y\|+\frac{1}{2^k} y^1}{\|y\|} \right)^{n+1}=\text{const}.
\]
On the other hand,  since
\[
{\ddd}\nu_k=\frac{\exp\left( -k\|x\|^2 \right)}{Z_k} {\ddd}x^1\cdots {\ddd}x^n=:\sigma(x) {\ddd}x^1\cdots {\ddd}x^n,
\]
the distortion is
\[
\tau(\dot{\gamma}_y(t))=\log \frac{\sqrt{\det g_{ij}(\dot{\gamma}_y(t))}}{\sigma({\gamma}_y(t))}=\text{const.}+k\langle \gamma_y(0)+ty,\gamma_y(0)+ty  \rangle.
\]
Accordingly, we obtain
\begin{align*}
\mathbf{S}(\dot{\gamma}(t))=\frac{d}{dt}\tau(\dot{\gamma}(t))=2k\langle y,\gamma_y(0)+ty\rangle,\quad
\frac{d}{dt}\mathbf{S}(\dot{\gamma}(t))=2k\|y\|^2>0.
\end{align*}
Note that $(X_k,F_k)$ is a Minkowski space and hence, $\mathbf{Ric}\equiv0$. Owing to (\ref{defRicN}), we have $\mathbf{Ric}_\infty>0$. Let  $\mathcal {X}_k:=(X_k,\mathbf{0},d_{k},\nu_k)$ be the associated $3$-pointed forward geodesic-measure space (see (\ref{reversibdindentiy}) and (\ref{Randersnormreveruniform})).
It follows from Theorem \ref{OTARic} that  $\mathcal {X}_k$ is a weak $\CD(0,\infty)$ space with $\supp\nu_k=X_k$.

Let $\mathcal {X}_\infty:=(\mathbb{R}^n,\mathbf{0},d_{\mathbb{R}^n},\delta_{\mathbf{0}})$. Now consider the identity $f_k:=\id:\mathcal {X}_k\rightarrow \mathcal {X}_\infty$.
Let $t\mapsto \gamma(t)$, $t\in [0,1],$ be an arbitrary straight line contained in the Euclidean ball $\overline{\mathbb{B}_{\mathbf{0}}(R)}$, which is also a minimal geodesic of $\mathcal {X}_k$. Obviously,
\[
|d_k(\gamma(0),\gamma(1))-d_\infty(f_k(\gamma(0)),f_k(\gamma(1)))|\leq \frac{R}{2^{k-1}}.
\]
By choosing $R=2^{\frac{k-1}2}$ and slightly modifying the map $f_k$, it is not hard to see that $\mathcal {X}_k$ converges to $\mathcal {X}_\infty$ in the pointed
measured forward $3$-Gromov-Hausdorff topology (see Definition \ref{noncompactconvergedefi}). In particular, the limit measure is supported in the point $\mathbf{0}$.
\end{example}

Similarly to the Riemannian case (e.g. \cite{Lo1,LV,Vi}), the weighted Ricci curvature introduced by Ohta and Sturm\cite{Ot} is also stable in the Finsler setting.

\begin{theorem}
Given $K\in \mathbb{R}$, $N\in [2,\infty]$ and $\theta\in[1,\infty)$ $($resp., a nondecreasing function $\Theta(r)\geq 1$$),$ if a sequence of forward complete  Finsler metric-measure manifolds satisfying $\mathbf{Ric}_N\geq K$ converges to a forward complete Finsler metric-measure manifold in the measured  $\theta$-Gromov-Hausdorff topology $($resp.,
the pointed measured forward $\Theta$-Gromov-Hausdorff topology$),$ then the limit also satisfies $\mathbf{Ric}_N\geq K$.
\end{theorem}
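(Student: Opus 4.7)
The plan is to reduce the statement to the stability results for the synthetic curvature-dimension bound, which are already available in both the compact and pointed noncompact settings, and to bridge the Finsler and the metric-measure viewpoints via Ohta's characterization.

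First I would recast each Finsler manifold $(M_k,F_k,\mathfrak{m}_k)$ as a forward geodesic-measure space $(M_k,d_{F_k},\mathfrak{m}_k)$. By Theorem \ref{reversibifinslerdd}, the forward completeness of $(M_k,F_k)$ makes $(M_k,\star_k,d_{F_k})$ a pointed forward $\Theta$-geodesic space, and by Theorem \ref{HopfRinowth} it is forward boundedly compact; the smooth positive measure $\mathfrak{m}_k$ is $\sigma$-finite. Thus each member of the sequence lies in the ambient class in which Definition \ref{weakcdknspace} is formulated. Applying Theorem \ref{OTARic} translates the assumption $\mathbf{Ric}_N\geq K$ (for $N\in[n,\infty]$, where $n=\dim M_k$; the hypothesis $N\geq 2$ together with $N\geq n$ allows us to invoke this equivalence in every dimension) into the weak curvature-dimension condition $\CD(K,N)$ for $(M_k,d_{F_k},\mathfrak{m}_k)$.

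Next I would invoke the stability of $\CD(K,N)$ established earlier. In the compact measured $\theta$-Gromov-Hausdorff case, Theorem \ref{CDknstabilitycompact} shows that the limit $(X_\infty,d_\infty,\nu_\infty)$ is a compact $\theta$-geodesic-measure space still satisfying $\CD(K,N)$. In the pointed measured forward $\Theta$-Gromov-Hausdorff case, Theorem \ref{stablecdknnoncompact} yields the analogous conclusion for the pointed limit. By assumption the limit is itself a forward complete Finsler metric-measure manifold $(M_\infty,F_\infty,\mathfrak{m}_\infty)$ (together with its induced metric-measure structure), so the $\CD(K,N)$ bound passes to $(M_\infty,d_{F_\infty},\mathfrak{m}_\infty)$.

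Finally I would apply the ``only if'' direction of Theorem \ref{OTARic} to $(M_\infty,F_\infty,\mathfrak{m}_\infty)$: since the associated forward geodesic-measure space is a weak $\CD(K,N)$ space, we conclude $\mathbf{Ric}_N(y)\geq K$ for every $y\in SM_\infty$, as desired. The main point to be careful about is the compatibility of the convergence of Finsler structures with the convergence of the underlying metric-measure spaces; but this compatibility is built into the assumption that convergence takes place in the (pointed) measured $\theta$- or $\Theta$-Gromov-Hausdorff topology, so no further work is required. The hardest conceptual step is really the already-proved stability result for $\CD(K,N)$; once it is in hand, the Finsler stability is a short corollary via Theorem \ref{OTARic}, and no delicate PDE or curvature computation on the Finsler side is needed.
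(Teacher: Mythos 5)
Your proposal is correct and follows essentially the same route as the paper: the paper's proof is exactly the combination of Theorem \ref{OTARic} (to translate $\mathbf{Ric}_N\geq K$ into the weak $\CD(K,N)$ condition and back) with the stability Theorems \ref{CDknstabilitycompact} and \ref{stablecdknnoncompact}. The additional checks you spell out (forward bounded compactness via Theorem \ref{HopfRinowth}, $\sigma$-finiteness, and the dimensional compatibility in applying Theorem \ref{OTARic}) are sensible bookkeeping that the paper leaves implicit.
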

\begin{proof}
The result directly follows by Theorems \ref{CDknstabilitycompact}, \ref{stablecdknnoncompact} and \ref{OTARic}.
\end{proof}

\subsection{Functional aspects of weak ${\rm{CD}}(K,N)$ spaces}
\label{section5-4} In this section we shall present some analytic properties of weak $\CD(K,N)$ irreversible spaces by  extending their reversible counterparts (cf. Lott-Villani \cite{LV}, Strum \cite{Sturm-1,Sturm-2} and Villani \cite{Vi}) and from the Finsler setting (cf. Ohta\cite{O,O1,O2}).
We also refer to  Ambrosio, Gigli and Savar\'e \cite{AGS,AGS2,AGS3}, Cavalletti and Mondino \cite{CM},  Gigli, Mondino and Savar\'e \cite{GMS} for recent developments of the geometric analysis on reversible metric-measure spaces.

In the sequel,  we use $P^{ac}_c(X,\nu)$ to denote the set of compactly supported probability measures on $X$ which are absolutely continuous with respect to $\nu$.

\begin{definition}\label{Lipschitzcontinuous} Let $(X,d)$ be a forward metric space. A function $f:X\rightarrow \mathbb{R}$ is said to be {\it Lipschitz continuous} if there exists a constant $C>0$ such that
\[
f(y)-f(x)\leq C\, d(x,y),\ \forall\,x,y\in X,
\]
in which case  $f$ is  called a {\it $C$-Lipschitz function}. A function  {$f:X\rightarrow \mathbb{R}$} is said to be {\it locally Lipschitz continuous} if for every $x\in X$ there exists a neighborhood $U$ of $x$ such that $f|_U$ is Lipschitz continuous.
\end{definition}

\begin{remark}\label{Lisppropt} The property of a Lipschitz function in the irreversible case is similar to the usual one. Indeed,
let $(X,\star,d)$ be a pointed forward $\Theta$-metric space and let $f$ be a $C$-Lipschitz function; thus,
\[
|f(x)-f(y)|\leq C \, \Theta\left(d(\star,x)+d(x,y)\right)\,d(x,y),\ \forall\,x,y\in X.
\]
\end{remark}

\begin{definition}
Let $(X,d)$ be a forward metric space and let $f:X\rightarrow \mathbb{R}$ be a continuous function.  If $x$ is not an isolated point of $X$, the {\it norm of gradient} $|\nabla f|(x)$ at $x\in X$ is defined as
\[
|\nabla f|(x):=\limsup_{y\rightarrow x}\frac{|f(y)-f(x)|}{d(x,y)}.
\]

\end{definition}
\begin{remark}\label{gradremarks}Due to Remark \ref{Lisppropt}, the norm  of gradient of a locally Lipschitz function is always finite. Furthermore,
if a forward metric space $(M,d_F)$ is induced by a forward complete Finsler manifold $(M,F)$, then for every $x\in M$ one has $|\nabla f|(x)=\max\{F^*(\pm df|_{x})\}$; this notion is more convenient in the present setting than $F^*(df)$, see e.g.\ the study of Heisenberg-Pauli-Weyl uncertainty principles and Hardy inequalities in generic Finsler structures, cf. Huang, Krist\'aly and Zhao \cite{HKZ} and Zhao \cite{Z4}.
A slightly finer notion is
\[
|\nabla^- f|(x):=\limsup_{y\rightarrow x}\frac{[f(y)-f(x)]_-}{d(x,y)}.
\]
 Clearly, $|\nabla^- f|\leq |\nabla f|$.
\end{remark}


\begin{definition}
	
	Let $(X,d,\nu)$ be a forward metric-measure space.
	Let $U:\mathbb{R}_+\rightarrow \mathbb{R}$ be a continuous convex function which is twice continuously differentiable on $(0,\infty)$ and let $\mu\in P_c^{ac}(X,\nu)$ with locally  Lipschitz density $\rho$. Define
	\[
	I_{U,\nu}(\mu):=\int_X \frac{|\nabla p(\rho)|^2}{\rho}{\ddd}\nu=\int_X \rho U''(\rho)^2 |\nabla \rho|^2{\ddd}\nu,\quad I^-_{U,\nu}(\mu) :=\int_X \rho U''(\rho)^2 |\nabla^- \rho|^2{\ddd}\nu,
	\]
	where $p(r)=rU'(r)-U(r)$.
	In particular, the  {\it Fisher information} is defined as
	\[
	I_\nu(\mu):=\int_X \frac{|\nabla\rho|^2}{\rho}{\ddd}\nu,\quad I^-_\nu(\mu):=\int_X \frac{|\nabla^-\rho|^2}{\rho}{\ddd}\nu.
	\]
\end{definition}

\begin{theorem}[Distorted HWI inequality]\label{HWIDISTOR} Let $(X,d,\nu)$ be a weak ${\CD}(K,N)$ space for some $K\in \mathbb{R}$ and $N\in (1,\infty]$. Given $\mu_0,\mu_1\in P^{ac}_c(X,\nu)$ such that $\mu_0$ has a Lipschitz density $\rho_0$,
there exists an optimal transference plan $\pi$ of coupling $(\mu_0,\mu_1)$ such that for any $U\in DC_N$,
\begin{align*}
		U_\nu(\mu_0)\leq& U^{\breve{\beta}^{(K,N)}_0}_{\breve{\pi},\nu}(\mu_1)+\int_{X\times X}p(\rho_0(x))\,\left[\left.\frac{d}{dt}\right|_{t=1}\beta^{(K,N)}_t(x,y)\right]\,\pi({\ddd}y|x)\nu({\ddd}x)\\
		&+\int_{X\times X}U''(\rho_0(x))\,|\nabla^- \rho_0(x)|\,d(x,y)\,\pi({\ddd}x{\ddd}y), \tag{5.29}\label{HWIDIST7.5}
	\end{align*}
	where $\pi({\ddd}x{\ddd}y)=:\pi({\ddd}y|x)\mu_0({\ddd}x)$.
	Moreover,
	\begin{itemize}
		
		\item[(i)] if $K=0$ and $U_\nu(\mu_1)<\infty$, then
		\[
		U_\nu(\mu_0)-U_\nu(\mu_1)\leq \int_{X\times X}U''(\rho_0(x))|\nabla^- \rho_0(x)|\,d(x,y)\,\pi({\ddd}x{\ddd}y)\leq W_2(\mu_0,\mu_1)\sqrt{I_{U,\nu}^-(\mu_0)};
		\]
		
		\item[(ii)] if $N=\infty$ and $U_\nu(\mu_1)<\infty$, then
		\begin{align*}
			U_\nu(\mu_0)-U_\nu(\mu_1)&\leq \int_{X\times X}U''(\rho_0(x))|\nabla^- \rho_0(x)|\,d(x,y)\,\pi({\ddd}x{\ddd}y)-\frac{K_{\infty,U}}2W_2(\mu_0,\mu_1)^2\\
			&\leq W_2(\mu_0,\mu_1)\sqrt{I_{U,\nu}^-(\mu_0)}-\frac{K_{\infty,U}}2W_2(\mu_0,\mu_1)^2,
		\end{align*}
		where
		\begin{align*}K_{N,U}:=\inf_{r>0}\frac{K p(r)}{r^{1-1/N}}=\left\{
			\begin{array}{lll}
				K\underset{r\rightarrow 0}{\lim}\frac{p(r)}{r^{1-1/N}}, & \text{ if }&K>0,\\
				0, & \text{ if }&K=0,\\
				K\underset{r\rightarrow \infty}{\lim}\frac{p(r)}{r^{1-1/N}},  & \text{ if }&K<0.
			\end{array}
			\right.
		\end{align*}
	\end{itemize}
\end{theorem}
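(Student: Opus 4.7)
My plan is to derive \eqref{HWIDIST7.5} by taking the right-derivative of the weak $\CD(K,N)$ inequality at $t=0^+$. Applying Definition \ref{weakcdknspace} to the pair $(\mu_0,\mu_1)$ produces a displacement interpolation $(\mu_t)_{0\le t\le 1}$ and an associated optimal transference plan $\pi$ such that
\[
U_\nu(\mu_t)\le F(t):=(1-t)\,U^{\beta_{1-t}^{(K,N)}}_{\pi,\nu}(\mu_0)+t\,U^{\breve{\beta}_t^{(K,N)}}_{\breve{\pi},\nu}(\mu_1),\qquad t\in[0,1].
\]
Because $\beta_1^{(K,N)}\equiv 1$ and $\mu_0\ll\nu$, the value $F(0)$ coincides with $U_\nu(\mu_0)$, so the nonnegative function $G(t):=F(t)-U_\nu(\mu_t)$ vanishes at $t=0$.

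The algebraic core is the explicit computation of $F'(0^+)$. Writing $U^{\beta_{1-t}^{(K,N)}}_{\pi,\nu}(\mu_0)=\int h(\beta_{1-t}(x,y))\,\pi(dy|x)\,\nu(dx)$ with $h(\beta):=\beta\,U(\rho_0(x)/\beta)$, a direct calculation gives $h'(\beta)=U(\rho_0/\beta)-(\rho_0/\beta)U'(\rho_0/\beta)$, so the defining identity $p(r)=rU'(r)-U(r)$ yields $h'(1)=-p(\rho_0)$. Combining this with $\partial_t\beta_{1-t}=-\partial_s|_{s=1-t}\beta_s$, the product rule applied to $(1-t)$, and the continuity identity $\tfrac{d}{dt}|_{t=0}[t\,U^{\breve{\beta}_t^{(K,N)}}_{\breve{\pi},\nu}(\mu_1)]=U^{\breve{\beta}_0^{(K,N)}}_{\breve{\pi},\nu}(\mu_1)$ produces
\[
F'(0^+)=-U_\nu(\mu_0)+\int p(\rho_0(x))\,\tfrac{d}{dt}\big|_{t=1}\beta_t^{(K,N)}(x,y)\,\pi(dy|x)\,\nu(dx)+U^{\breve{\beta}_0^{(K,N)}}_{\breve{\pi},\nu}(\mu_1).
\]
Since $G\ge 0$ with $G(0)=0$, dividing by $t>0$ and letting $t\to 0^+$ forces $-F'(0^+)\le\limsup_{t\to 0^+}(U_\nu(\mu_0)-U_\nu(\mu_t))/t$; substituting the above expression for $F'(0^+)$ reduces \eqref{HWIDIST7.5} to the upper-gradient estimate
\[
\limsup_{t\to 0^+}\frac{U_\nu(\mu_0)-U_\nu(\mu_t)}{t}\le \int_{X\times X} U''(\rho_0(x))\,|\nabla^-\rho_0(x)|\,d(x,y)\,\pi(dx\,dy).
\]

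Proving this last estimate will be the main obstacle. Heuristically, for a Monge-type transport $T_t(x)=\gamma_x(t)$ with Jacobian $J_t$ satisfying $\rho_t(T_t(x))J_t(x)=\rho_0(x)$, the second-order Taylor expansion of $h$ around $J=1$ (using $h''(1)=\rho_0^2 U''(\rho_0)$), the smooth-case identity $\nabla p(\rho_0)=\rho_0 U''(\rho_0)\nabla\rho_0$, the first-order geodesic bound $d(x,T_t(x))\le t\,d(x,y)$, and the upper-gradient inequality $\rho_0(x)-\rho_0(T_t(x))\le|\nabla^-\rho_0(x)|\,d(x,T_t(x))+o(t)$ combine to yield the desired rate. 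Rigorously in the purely metric setting one must approximate $\rho_0$ by its regularised versions (Lemmas \ref{continuonUvUpi} and \ref{compactmetrc}), exploit convexity of $U$ along the geodesics selected by the dynamical optimal plan, and pass to the limit using lower semicontinuity of $U_\nu$. Parts (i) and (ii) then follow from \eqref{HWIDIST7.5} by algebraic specialisation: for $K=0$, since $\beta_t^{(0,N)}\equiv 1$ the distortion integral vanishes and $U^{\breve{\beta}_0^{(0,N)}}_{\breve{\pi},\nu}(\mu_1)=U_\nu(\mu_1)$, so Cauchy-Schwarz in $L^2(\pi)$ using $\int U''(\rho_0)^2|\nabla^-\rho_0|^2\,d\pi=I^-_{U,\nu}(\mu_0)$ and $\int d(x,y)^2\,d\pi=W_2(\mu_0,\mu_1)^2$ concludes. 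For $N=\infty$ I would use $\beta_t^{(K,\infty)}(x,y)=e^{K(1-t^2)d(x,y)^2/6}$ to obtain $\partial_t|_{t=1}\beta_t^{(K,\infty)}=-Kd(x,y)^2/3$, Taylor-expand $U^{\breve{\beta}_0^{(K,\infty)}}_{\breve{\pi},\nu}(\mu_1)$ against $U_\nu(\mu_1)$ via $h'(1)=-p(\rho_1)$, and invoke the definition of $K_{\infty,U}$ to produce the $-\tfrac{K_{\infty,U}}{2}W_2(\mu_0,\mu_1)^2$ correction before applying Cauchy-Schwarz.
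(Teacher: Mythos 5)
Your plan follows essentially the same route as the paper: differentiate the displacement convexity inequality at $t=0^+$, recognize that $\frac{d}{d\beta}\big[\beta U(\rho_0/\beta)\big]\big|_{\beta=1}=-p(\rho_0)$ produces the distortion term, and reduce \eqref{HWIDIST7.5} to the estimate $\limsup_{t\to0^+}t^{-1}\big[U_\nu(\mu_0)-U_\nu(\mu_t)\big]\le\int U''(\rho_0(x))|\nabla^-\rho_0(x)|\,d(x,y)\,\pi({\ddd}x{\ddd}y)$, with (i) and (ii) obtained by specialization and Cauchy--Schwarz. So the architecture is not the issue.

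The genuine gap is exactly at the step you yourself call the main obstacle. Your proposed mechanism for the upper-gradient estimate rests on a Monge map $T_t$, a Jacobian identity $\rho_t(T_t(x))J_t(x)=\rho_0(x)$, and a second-order Taylor expansion; none of these objects exist in a weak $\CD(K,N)$ space. Optimal plans need not be induced by maps, there is no change-of-variables formula, and the interpolant densities $\rho_t$ carry no regularity, so the expansion around $J=1$ has no meaning. The fallback you sketch (``regularize $\rho_0$, convexity along geodesics, lower semicontinuity'') does not identify the actual argument and cannot be checked. The paper closes this step by running the argument of Villani's Theorem 20.1 (cited for (5.36)), which avoids maps and Jacobians entirely: it works with the dynamical plan $\Pi$, the one-sided convexity inequality $U(\rho_t)-U(\rho_0)\ge U'(\rho_0)(\rho_t-\rho_0)$, the Lipschitz continuity of $\rho_0$ (which is where $|\nabla^-\rho_0|$ enters), and the bound $d(\gamma(0),\gamma(t))\le t\,d(\gamma(0),\gamma(1))$ along the geodesics charged by $\Pi$. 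Without this (or an equivalent purely metric argument) your proof of \eqref{HWIDIST7.5} is incomplete. Two lesser points: (a) you compute $F'(0^+)$ by formal differentiation under the integral and by asserting continuity of $t\mapsto U^{\breve\beta_t}_{\breve\pi,\nu}(\mu_1)$ at $t=0$; the paper instead uses the convexity bound $[\beta U(r/\beta)-U(r)]/t\le p(r/\beta)(1-\beta)/t$ together with monotone/dominated convergence (integrability via Villani Thm.\ 17.28), and it needs a separate limiting argument ($N'\downarrow N$, the paper's Step 2) for the case $K>0$, $N<\infty$, $\diam(\supp\nu)=\pi\sqrt{(N-1)/K}$, where $\beta^{(K,N)}$ is unbounded --- a case your sketch never addresses; (b) in part (ii), ``Taylor-expand'' should be replaced by the convexity inequality for $\delta\mapsto e^{\delta}U(e^{-\delta})$, since only a one-sided first-order bound is legitimate there.
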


\begin{proof}[Sketch of proof] Since the proof is similar to the reversible case (cf. Villani \cite[Theorem 20.10]{Vi}), we just outline it. Obviously, $\rho_0(\log\rho_0)_+$, $\rho_1(\log\rho_1)_+$, $\rho_0U'(\rho_0)$, $p^2(\rho_0)/\rho_0\in L^1(\nu)$, and  the weak curvature-dimension condition ${\CD}(K,N)$ yields an optimal transference plan $\pi$ of coupling $(\mu_0,\mu_1)$ associated with a displacement interpolation $(\mu_t)_{0\leq t\leq 1}$ such that (\ref{weakcdkncon}) holds for all $U\in DC_N$. Thus, by repeating the proof of (C-1) in Theorem \ref{supporsetcdkn}, one can easily show $\mu_t=\rho_t\nu\in P^{ac}_c(X,\nu)$ for each $t\in [0,1]$.
	For convenience, we also set
\[
\beta_t(x,y):=\beta^{(K,N)}_t(x,y),\quad \beta(x,y):=\lim_{t\rightarrow 0}\beta_t(x,y), \quad \beta'(x,y):=\left.\frac{d}{dt}\right|_{t=1}\beta_t(x,y).
\]
	The proof  is divided into three steps.
	
	\smallskip
	
	\noindent\textbf{Step 1.  Suppose that $\beta,\beta'$ are bounded.}
	Since $\mu_t=\rho_t\nu$, the weak  ${\CD}(K,N)$ condition (\ref{weakcdkncon}) furnishes
	\begin{align*}
		&\int_{X\times X} U\left( \frac{\rho_0(x)}{\beta_{1-t}(x,y)} \right)\beta_{1-t}(x,y)\pi({\ddd}y|x)\nu({\ddd}x)\leq \int_{X\times X}U\left( \frac{\rho_1(y)}{\beta_t(x,y)} \right)\beta_t(x,y)\pi({\ddd}x|y)\nu({\ddd}y)\\
		+&\int_{X\times X}\left[ \frac{U\left( \frac{\rho_0(x)}{\beta_{1-t}(x,y)} \right)\beta_{1-t}(x,y)-U(\rho_0(x))}{t} \right]\pi({\ddd}y|x)\nu({\ddd}x)-\frac1t\int_X \left[ U(\rho_t(x))-U(\rho_0(x)) \right]\nu({\ddd}x),\tag{5.30}\label{imHWI7.6}
	\end{align*}
	where $\pi({\ddd}x{\ddd}y)=\mu_0({\ddd}x)\pi({\ddd}y|x)=\pi({\ddd}x|y)\mu_1({\ddd}y)$.
	We need to pass to the limit as $t\rightarrow 0$.

	\smallskip
	
	\textbf{$1^{\bf st}$ term of (\ref{imHWI7.6}):} If $K=0$, then $\beta_t\equiv 1$ and hence,
	\begin{align*}
		&\int_{X\times X} U\left( \frac{\rho_0(x)}{\beta_{1-t}(x,y)} \right)\beta_{1-t}(x,y)\pi({\ddd}y|x)\nu({\ddd}x)=\int_{X\times X}U(\rho_0){\ddd}\nu=U_\nu(\mu_0).
	\end{align*}
	If $K>0$, then $t\mapsto \beta_t(x,y)$ is a decreasing function and $\beta_1=1$; since $r\mapsto U(r)/r$ is  nondecreasing, we have
	\[
	-U_-\left(\frac{\rho_0(x)}{\beta(x,y)}\right)\beta(x,y)\leq U\left(\frac{\rho_0(x)}{\beta(x,y)}\right)\beta(x,y)\leq U\left(\frac{\rho_0(x)}{\beta_{1-t}(x,y)}\right)\beta_{1-t}(x,y)\nearrow U(\rho_0(x)) \text{ as }t\textcolor[rgb]{1.00,0.00,0.00}{\searrow} 0.
	\]
	Since $\mu_0\in P^{ac}_c(X,\nu)$, it follows by
	Villani \cite[Theorem 17.28]{Vi} that $U_-(\rho_0/\beta)\beta$ is integrable. Thus,  the Lebesgue dominated convergence theorem yields
	\[
	\lim_{t\rightarrow0}\int_{X\times X}U\left(\frac{\rho_0}{\beta_{1-t}(x,y)}\right)\beta_{1-t}(x,y)\pi({\ddd}y|x)\nu({\ddd}x)= \int_{X}U(\rho_0){\ddd}\nu.\tag{5.31}\label{frsittermest7.7}
	\]
	The case of $K<0$ follows in a similar manner.

	\smallskip
	
	\textbf{$2^{\bf nd}$ term of (\ref{imHWI7.6}):} The process is almost the same as  in the 1$^{\rm st}$ term estimate. In fact,
	Villani \cite[Theorem 17.8]{Vi} together with the Lebesgue dominated convergence theorem furnishes
	\[
	\lim_{t\rightarrow 0} \int_{X\times X}U\left( \frac{\rho_1(y)}{\beta_t(x,y)} \right)\beta_t(x,y)\pi({\ddd}x|y)\nu({\ddd}y)= U^{\breve{\beta}^{(K,N)}_0}_{\breve{\pi},\nu}(\mu_1).\tag{5.32}\label{sedondtermestimhwi}
	\]

	\smallskip
	
	\textbf{$3^{\bf rd}$ term of (\ref{imHWI7.6}):} It suffices to consider the case when $K\neq 0$. Since $b\mapsto bU(r/b)$ is convex  and $\frac{d}{db} (bU(r/b))=-p(r/b)$, we have
	\begin{align*}
		\frac{U\left( \frac{\rho_0(x)}{\beta_{1-t}(x,y)} \right)\beta_{1-t}(x,y)-U(\rho_0(x))}{t}\leq p\left( \frac{\rho_0(x)}{\beta_{1-t}(x,y)} \right)\,\left(\frac{1-\beta_{1-t}(x,y)}t\right).\tag{5.33}\label{Hwithirdestimate}
	\end{align*}
	Note that $p$ is nondecreasing, since $p'(r)= r U''(r)\geq 0$. If $K>0$, then $\beta_{1-t}\geq 1$ is decreasing as $t\searrow0$, which implies
	\[
	0=p(0)\leq p\left( \frac{\rho_0(x)}{\beta_{1-t}(x,y)} \right)\nearrow p(\rho_0)\text{ as }t\searrow 0.\tag{5.34}\label{contalldom1}
	\]
	Moreover, consider the nonnegative function $f(t,x,y):=\left|\frac{1-\beta_{1-t}(x,y)}{t}\right|$ for $t\in (0,1]$ and its limit $f(0,x,y):=|\beta'(x,y)|$. A continuity argument on $f$ and the boundedness of $\beta,\beta'$ imply the existence of a small number $\delta>0$ and a constant $S>0$ such that
	\[
	0\leq f(t,x,y)\leq \max\left\{|\beta'(x,y)|+1,\, \left|\frac{1-\beta(x,y)}{\delta}\right|   \right\}\leq S<+\infty.
	\]
The latter fact together with (\ref{contalldom1}) and the Lebesgue dominated convergence theorem yields
	\[
	\lim_{t\rightarrow 0}\int_{X\times X} p\left( \frac{\rho_0(x)}{\beta_{1-t}(x,y)} \right) \left(\frac{1-\beta_{1-t}(x,y)}{t}\right)\pi({\ddd}y|x)\nu({\ddd}x)=\int_{X\times X}p(\rho_0(x))\beta'(x,y)\pi({\ddd}y|x)\nu({\ddd}x).
	\]
	Thus, relation (\ref{Hwithirdestimate}) immediately furnishes
	\begin{align*}
		\limsup_{t\rightarrow 0}\int_{X\times X}\frac{U\left( \frac{\rho_0(x)}{\beta_{1-t}(x,y)} \right)\beta_{1-t}(x,y)-U(\rho_0(x))}{t}\pi({\ddd}y|x)\nu({\ddd}x)\leq \int_{X\times X}p(\rho_0(x))\beta'(x,y)\pi({\ddd}y|x)\nu({\ddd}x).\tag{5.35}\label{finallythirdHWI}
	\end{align*}

	If $K<0$, then $ \beta_{1-t}(x,y)\leq 1$ is increasing as $t\searrow 0$ and hence,
	\begin{align*}
		0\leq  p\left( \frac{\rho_0(x)}{\beta_{1-t}(x,y)} \right)\leq p\left( \frac{\rho_0(x)}{\beta(x,y)}\right).
	\end{align*}
By a similar argument as  above  one can easily show that (\ref{finallythirdHWI}) holds again.

	\smallskip
	
	\textbf{$4^{\bf th}$ term of (\ref{imHWI7.6}):} {The same argument as in Villani \cite[Theorem 20.1]{Vi} furnishes}
	\begin{align*}
		\limsup_{t\rightarrow0}-\frac1t\int_X \left[ U(\rho_t(x))-U(\rho_0(x)) \right]\nu({\ddd}x)
		\leq \int_{X\times X} U''(\rho_0(x))\,|\nabla^-\rho_0 |(x)\, d(x,y)\,\pi({\ddd}x{\ddd}y).\tag{5.36}\label{forthtermHWI}
	\end{align*}
Therefore, (\ref{HWIDIST7.5}) follows by  (\ref{imHWI7.6})-(\ref{sedondtermestimhwi}), (\ref{finallythirdHWI})        and (\ref{forthtermHWI}), respectively.

	\bigskip
	
	\noindent\textbf{Step 2. Relaxation of the assumptions on $\beta$.} If $N<\infty$, or  $K\leq 0$, or  $K>0$ with $\diam(M)<D_{K,N}:=\pi\sqrt{{(N-1)}/{K}}$, both $\beta$ and $\beta'$ are bounded. Therefore, the aforementioned argument remains valid. Hence, the only problem is the case when $K>0$ and $\diam(M)=D_{K,N}$. Thus, for any $N'>N$, Step 1 yields
	\begin{align*}
		U_\nu(\mu_0)\leq& U^{\breve{\beta}^{(K,N')}_0}_{\breve{\pi},\nu}(\mu_1)+\int_{X\times X}p(\rho_0(x))  \left[\left.\frac{d}{dt}\right|_{t=1}\beta_t^{(K,N')}(x,y)\right] \pi({\ddd}y|x)\nu({\ddd}x)\\
		&+\int_{X\times X}U''(\rho_0(x))\,|\nabla^- \rho_0(x)|\,d(x,y)\,\pi({\ddd}x{\ddd}y).\tag{5.37}\label{HWIFORSTEP1}
	\end{align*}
	In view of Remark \ref{limitdefinitionofbeta}, we obtain
	\[
	U^{\breve{\beta}^{(K,N')}_0}_{\breve{\pi},\nu}(\mu_1)\rightarrow U^{\breve{\beta}^{(K,N)}_0}_{\breve{\pi},\nu}(\mu_1), \text{ as $N'\searrow N$}.\tag{5.38}\label{fiststepesc}
	\]
	On the other hand, $(\beta^{(K,N')})'_1:=\left.\frac{d}{dt}\right|_{t=1}\beta_t^{(K,N')}(x,y)$ is decreasing as $N'\searrow N$ and hence,
	\[
	\int_{X\times X}p(\rho_0(x))\, (\beta^{(K,N')})'_1(x,y) \,\pi({\ddd}y|x)\nu({\ddd}x)\rightarrow \int_{X\times X}p(\rho_0(x))\, (\beta^{(K,N)})'_1(x,y) \,\pi({\ddd}y|x)\nu({\ddd}x),
	\]
	which together with (\ref{fiststepesc}) and (\ref{HWIFORSTEP1}) implies (\ref{HWIDIST7.5}).
	
	\bigskip

	\noindent\textbf{Step 3.} In this step, we prove Statements (i)\&(ii). For (i), since $K=0$, one has $\beta=1$ and $\beta'=0$. Hence, $U^{\breve{\beta}^{(K,N)}_0}_{\breve{\pi},\nu}(\mu_1)=U_\nu(\mu_1)$.
	Moreover,
	the Cauchy-Schwarz   inequality furnishes
	\begin{align*}
		&\int_{X\times X}U''(\rho_0(x))\,|\nabla^-\rho_0(x)|\,d(x,y)\,\pi({\ddd}x{\ddd}y)\\
		\leq &\sqrt{\int_{X\times X}d(x,y)^2\pi({\ddd}x{\ddd}y)}\sqrt{\int_{X\times X}U''(\rho_0(x))^2|\nabla^- \rho_0(x)|^2\pi({\ddd}x{\ddd}y)}=W_2(\mu_0,\mu_1)\sqrt{I_{U,\nu}^-(\mu_0)}.\tag{5.39}\label{caswinHWI}
	\end{align*}
	Thus, (i) follows directly by  (\ref{HWIDIST7.5}).

	\smallskip
	
	(ii) For $N=\infty$, let $u(\delta):=U(e^{-\delta})e^{\delta}$, which is a convex function with $u'(\delta)=-e^{\delta}p(e^{-\delta})$. Hence,
	\[
	U(e^{-\delta_2})e^{\delta_2}\leq U(e^{-\delta_1})e^{\delta_1}+e^{\delta_2}p(e^{-\delta_2})(\delta_1-\delta_2),
	\]
	which together with $\delta_1:=\log(\frac1{\rho_1(y)})$, $\delta_2:=\log(\frac{\beta(x,y)}{\rho_1(y)})$ and $\beta(x,y):=e^{\frac{K}{6}d(x,y)^2}$ yields
	\begin{align*}
		U\left( \frac{\rho_1(y)}{\beta(x,y)}  \right)\frac{\beta(x,y)}{\rho_1(y)}\leq  \frac{U(\rho_1(y))}{\rho_1(y)}-\frac{\beta(x,y)}{\rho_1(y)}p\left( \frac{\rho_1(y)}{\beta(x,y)}\right)\frac{K}6d(x,y)^2\leq \frac{U(\rho_1(y))}{\rho_1(y)}-\frac{K_{\infty,U}}{6}d(x,y)^2.
	\end{align*}
	Hence, by Remark \ref{firstUv}, we get
	\begin{align*}
		U^{ \breve{\beta}}_{\breve{\pi},\nu}(\mu_1)\leq
		\int_X U(\rho_1(y))\nu({\ddd}y)-\frac{K_{\infty,U}}{6}\int_{X\times X}d(x,y)^2\pi({\ddd}x{\ddd}y)=U_\nu(\mu_1)-\frac{K_{\infty,U}}{6} W_2(\mu_0,\mu_1)^2.\tag{5.40}\label{HwiIIfirstex}
	\end{align*}
	On the other hand, since $\beta'(x,y)= -\frac{K}3d(x,y)^2$, we have
	\begin{align*}
		&\int_{X\times X}p(\rho_0(x))\beta'(x,y)\pi({\ddd}y|x)\nu({\ddd}x)=-\int_{X\times X}\frac{K}3\frac{p(\rho_0(x))}{\rho_0(x)}d(x,y)^2\pi({\ddd}x{\ddd}y)\leq-\frac{K_{\infty,U}}3W_2(\mu_0,\mu_1)^2,
	\end{align*}
	which combined with (\ref{HwiIIfirstex}) and (\ref{HWIDIST7.5}) implies
	\begin{align*}
		U_\nu(\mu_0)-U_\nu(\mu_1)&\leq \int_{X\times X}U''(\rho_0(x))|\nabla^- \rho_0(x)|d(x,y)\pi({\ddd}x{\ddd}y)-\frac{K_{\infty,U}}2W_2(\mu_0,\mu_1)^2.
	\end{align*}
	The latter inequality together with (\ref{caswinHWI}) then yields (ii).
\end{proof}

\begin{theorem}[HWI and log-Sobolev inequalities]\label{HWIinequality} Let $(X,d,\nu)$ be a weak ${\CD}(K,\infty)$ space for some $K\in \mathbb{R}$ and let $\mu_i\in P_c(X;\nu)$, $i=0,1$. If $\mu_0=\rho_0\nu$ has a Lipschitz density $\rho_0$, then
	\begin{align*}
		H_\nu(\mu_0)\leq H_\nu(\mu_1)+ W_2(\mu_0,\mu_1)\sqrt{I^-_{\nu}(\mu_0)}-\frac{K}2W_2(\mu_0,\mu_1)^2,
	\end{align*}
	where $H(r):=r\log r$. Moreover, the following statements are true:
	\begin{itemize}
		
		\item  there holds
		\[
		H_\nu(\mu_0)\leq W_2(\mu_0,\nu)\sqrt{I^-_{\nu}(\mu_0)}-\frac{K}2W_2(\mu_0,\nu)^2;
		\]

		\item  if  $K>0$, then
		\[
		H_\nu(\mu_0)\leq \frac{I_\nu^-(\mu_0)}{2K},\quad \int_X f^2{\ddd}\nu\leq \frac{1}{K}\int_X |\nabla^-f|^2{\ddd}\nu,
		\]
		where  $f:\supp\nu\rightarrow \mathbb{R}$ is a Lipschitz function with $\displaystyle\int_X f{\ddd}\nu=0$.
	\end{itemize}
\end{theorem}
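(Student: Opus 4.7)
The plan is to derive all four assertions from the distorted HWI inequality of Theorem~\ref{HWIDISTOR}/(ii) by specializing to the Boltzmann entropy $U(r) = H(r) = r\log r$. First I would verify that $H \in DC_\infty$: the function $u(\delta) = e^\delta H(e^{-\delta}) = -\delta$ is affine and hence convex, while $H \in C^\infty((0,\infty))$ is convex with $H(0) = 0$. Direct computation gives $p(r) = rH'(r) - H(r) = r$, whence $K_{\infty,H} = \inf_{r>0} K p(r)/r = K$, and $H''(r) = 1/r$ yields
\[
I^-_{H,\nu}(\mu_0) = \int_X \rho_0\, H''(\rho_0)^2 |\nabla^-\rho_0|^2\, d\nu = \int_X \frac{|\nabla^-\rho_0|^2}{\rho_0}\, d\nu = I^-_\nu(\mu_0),
\]
and similarly $H''(\rho_0)\,|\nabla^-\rho_0| = |\nabla^-\rho_0|/\rho_0$. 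Plugging $U = H$ into Theorem~\ref{HWIDISTOR}/(ii) then produces the main HWI inequality directly. When $\mu_1$ is not absolutely continuous, $H_\nu(\mu_1) = +\infty$ and the inequality is trivial, so one may restrict to the case $\mu_1 \in P_c^{ac}(X,\nu)$ with $H_\nu(\mu_1) < \infty$, matching the hypotheses of Theorem~\ref{HWIDISTOR}.

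The second bullet follows by specializing $\mu_1 = \nu$ (with $\nu$ assumed to be a probability measure), since then $H_\nu(\nu) = \int \log 1\, d\nu = 0$. If $\nu$ is not compactly supported I would approximate by $\nu_n := \nu|_{\overline{B^+_{x_0}(n)}}/\nu[\overline{B^+_{x_0}(n)}]$; the Gaussian moment bound of Theorem~\ref{BishopGromovcomp}/(iii)(c) ensures $W_2(\mu_0,\nu_n) \to W_2(\mu_0,\nu)$ and $H_\nu(\nu_n) \to 0$, so the bound passes to the limit. The log-Sobolev inequality (first half of the third bullet) is then a consequence of this via Young's inequality: writing $b := \sqrt{I^-_\nu(\mu_0)}$ and $x := W_2(\mu_0,\nu) \geq 0$,
\[
H_\nu(\mu_0) \leq bx - \tfrac{K}{2}x^2 \leq \sup_{x \in \mathbb{R}}\bigl(bx - \tfrac{K}{2}x^2\bigr) = \frac{b^2}{2K} = \frac{I^-_\nu(\mu_0)}{2K}.
\]

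For the Poincar\'e inequality, the plan is a standard linearization: apply log-Sobolev to the perturbed probability measure $\mu_\epsilon := (1 + \epsilon f)\nu$ for small $\epsilon > 0$ (after truncating $f$ to a bounded range supported on a large ball so that $\mu_\epsilon \in P_c^{ac}(X,\nu)$ with positive Lipschitz density). The condition $\int f\, d\nu = 0$ guarantees $\mu_\epsilon$ is a probability measure. Expanding $(1+z)\log(1+z) = z + z^2/2 + O(z^3)$ gives
\[
H_\nu(\mu_\epsilon) = \tfrac{\epsilon^2}{2}\int_X f^2\, d\nu + O(\epsilon^3),
\]
while the identity $|\nabla^-(1+\epsilon f)| = \epsilon |\nabla^-f|$ (valid for $\epsilon > 0$) yields
\[
I^-_\nu(\mu_\epsilon) = \epsilon^2 \int_X \frac{|\nabla^-f|^2}{1+\epsilon f}\, d\nu = \epsilon^2 \int_X |\nabla^-f|^2\, d\nu + O(\epsilon^3).
\]
Dividing the inequality $H_\nu(\mu_\epsilon) \leq I^-_\nu(\mu_\epsilon)/(2K)$ by $\epsilon^2$ and letting $\epsilon \to 0^+$ produces the Poincar\'e inequality; the truncation of $f$ is then removed by monotone/dominated convergence.

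The main obstacle will be making these approximation steps rigorous when $\supp\nu$ is unbounded or $f$ is unbounded: one has to verify continuity of $W_2$, $H_\nu$, and $I^-_\nu$ under the truncations $\nu_n$ and $f_n$. This relies on the Gaussian-type moment bounds for weak $\CD(K,\infty)$ spaces with $K>0$ (Theorem~\ref{BishopGromovcomp}/(iii)(c)) together with dominated convergence, and is technical but standard.
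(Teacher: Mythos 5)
Your proposal is correct and follows essentially the same route as the paper: specialize Theorem \ref{HWIDISTOR}/(ii) to $U=H$ (so $p(r)=r$, $K_{\infty,H}=K$, $I^-_{H,\nu}=I^-_\nu$), reduce to $H_\nu(\mu_1)<\infty$ (hence $\mu_1$ absolutely continuous), take $\mu_1=\nu$ for the second bullet, optimize the quadratic for the log-Sobolev bound, and linearize $\mu_\varepsilon=(1+\varepsilon f)\nu$ for the Poincar\'e inequality. The truncation/approximation details you add for non-compact $\supp\nu$ and unbounded $f$ are points the paper passes over silently, but they do not change the argument.
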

\begin{proof}
	Without loss of generality, we may assume  $H_\nu(\mu_1)<\infty$.  Since $\mu_0,\mu_1\in P_c(X;\nu)\subset P_2(X)$, Theorem \ref{wassdistacbasisthe} yields $W_2(\mu_0,\mu_1)<\infty$; thus, the assumption guarantees a displacement interpolation $(\mu_t)_{0\leq t\leq 1}$ associated with an optimal transference plan $\pi$ of coupling $(\mu_0,\mu_1)$ such that
	\[
	H_\nu(\mu_t)\leq (1-t)H_\nu(\mu_0)+t H_\nu(\mu_1)-\frac{K}2 t(1-t)W_2(\mu_0,\mu_1)^2<+\infty.
	\]
	It follows by the proof of (C-1) {in} Theorem \ref{supporsetcdkn} that $\mu_t$ is absolutely continuous w.r.t. $\nu$. In particular, $\mu_i\in P^{ac}_c(M,\nu)$, $i=0,1$. All statements  - except the last, Poincar\'e inequality - directly follow by Theorem \ref{HWIDISTOR} and Cauchy-Schwarz inequality, respectively.
	
	In order to show the last inequality, consider the probability measure $\mu_\varepsilon=\rho_\varepsilon \nu=(1+\varepsilon f)\nu$ for $\varepsilon>0$, where $f:\supp\nu\rightarrow \mathbb{R}$ is a Lipschitz function with $\displaystyle\int_X f{\ddd}\nu=0$. The Taylor expansion yields
	\begin{align*}
		H_\nu(\mu_\varepsilon)&=\int_X \rho_\varepsilon \log \rho_\varepsilon {\ddd}\nu=\frac{\varepsilon^2}2\int_X {f^2}{\ddd}\nu+o(\varepsilon^2),\ I_\nu^-(\mu_\varepsilon)=\int_X \frac{|\nabla^-\rho_\varepsilon|^2}{\rho_\varepsilon}{\ddd}\nu=\int_X \frac{\varepsilon^2 |\nabla^-f|^2}{1+\varepsilon f}{\ddd}\nu.
	\end{align*}
It remains to let $\varepsilon\rightarrow 0$ in  $H_\nu(\mu_\varepsilon)\leq \frac{I_\nu^-(\mu_\varepsilon)}{2K}$, which implies the required Poincar\'e inequality.
\end{proof}

\begin{remark}
	In order to avoid further technicalities, Theorems \ref{HWIDISTOR}-\ref{HWIinequality} are presented only for measures  $\mu_i\in P^{ac}_c(X;\nu)$, $i=0,1$. We notice however that  a suitable modification of the proof
	allows to handle more general measures.
\end{remark}


\begin{theorem}[Sobolev inequality in weak ${\CD}(K,N)$ spaces]\label{theorem-Sob} Let $(X,\star,d,\nu)$ be a pointed forward $\Theta$-geodesic-measure space which satisfies the weak dimension-curvature bound ${\CD}(K,N)$ for some $K<0$ and $N\in (1,\infty)$. Thus, for any $R>0$, there exist constants $A=A(K,N,R,\Theta(R))$ and  $B=B(K,N,R,\Theta(R))$ such that for any Lipschitz function $f$ supported in $B^+_\star(R)$,
	\[
	\|f\|_{L^{\frac{N}{N-1}}(\nu)}\leq A \|\nabla^- |f|\|_{L^1(\nu)}+B\|f\|_{L^1(\nu)}\leq A \|\nabla f\|_{L^1(\nu)}+B\|f\|_{L^1(\nu)}.\tag{5.41}\label{L1sobolevineq}
	\]
	Moreover, if $\Theta$ is a constant function, then \eqref{L1sobolevineq} remains valid for any Lipschitz function supported in any forward $R$-ball.
\end{theorem}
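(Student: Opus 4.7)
The plan is to apply the distorted HWI inequality (Theorem \ref{HWIDISTOR}, inequality \eqref{HWIDIST7.5}) with the $DC_N$-function $U(r) := -Nr^{1-1/N}$ to a judiciously chosen pair of measures. The second inequality in \eqref{L1sobolevineq} is immediate from the pointwise bound $|\nabla^-|f|| \leq |\nabla f|$, itself a consequence of $[|f|(y)-|f|(x)]_- \leq |f(y)-f(x)|$ (reverse triangle inequality), so I may assume $f \geq 0$. If $V := \nu[B^+_\star(R)] = 0$ the inequality is trivial; otherwise set $q := N/(N-1)$, $g := f^q$, and define
\[
\mu_0 := \frac{g\,\nu}{\|g\|_{L^1(\nu)}},\qquad \mu_1 := V^{-1}\mathbf 1_{B^+_\star(R)}\,\nu.
\]
Since $\overline{B^+_\star(R)}$ is compact by forward bounded compactness of $X$, both $\mu_0,\mu_1\in P^{ac}_c(X,\nu)$; moreover $\mu_0$ has a Lipschitz density because $f$ is a nonnegative Lipschitz function and $q>1$.

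For $U(r)=-Nr^{1-1/N}$ one has $p(r):=rU'(r)-U(r)=r^{1-1/N}$ and $U''(r)=\tfrac{N-1}{N}r^{-1-1/N}$. Direct computation gives
\[
U_\nu(\mu_0)= -N\,\|f\|_{L^1}/\|f\|_{L^q},\qquad U^{\breve\beta_0^{(K,N)}}_{\breve\pi,\nu}(\mu_1)= U_\nu(\mu_1) = -NV^{1/N},
\]
the second identity using $\beta_0^{(K,N)}\equiv 1$ (Definition \ref{defbeta}). The optimal transference plan $\pi$ provided by Theorem \ref{HWIDISTOR} is supported in $\overline{B^+_\star(R)}\times\overline{B^+_\star(R)}$, and the triangle inequality combined with the reversibility bound $\lambda_d(\overline{B^+_\star(R)})\leq \Theta(R)$ gives $d(x,y)\leq D := (1+\Theta(R))R$ for every $(x,y)\in \mathrm{supp}\,\pi$. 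Because $K<0$, differentiation of the explicit formula for $\beta^{(K,N)}_t$ shows that $\beta'(x,y):=\left.\frac{d}{dt}\right|_{t=1}\beta^{(K,N)}_t(x,y)$ is nonnegative on $\mathrm{supp}\,\pi$ and bounded above by a constant $C'(K,N,D)$.

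These estimates let me bound the two correction terms in \eqref{HWIDIST7.5}. Since $\pi({\ddd}y|x)$ is a probability kernel,
\[
\int p(\rho_0)\,\beta'(x,y)\,\pi({\ddd}y|x)\,\nu({\ddd}x) \leq C'\int \rho_0^{(N-1)/N}\,{\ddd}\nu = C'\,\|f\|_{L^1}/\|f\|_{L^q}.
\]
For the gradient term I use the convex chain-rule inequality $|\nabla^-(\phi\circ h)|\leq \phi'(h)\,|\nabla^- h|$ (valid for any convex increasing $\phi$, a one-line consequence of $\phi(a)-\phi(b)\leq \phi'(a)(a-b)$ for $a\geq b$) with $\phi(r)=r^q$; combining with $\rho_0 = f^q/\|g\|_{L^1}$ and simplifying algebraically yields $\rho_0 U''(\rho_0)|\nabla^-\rho_0|\leq \|f\|_{L^q}^{-1}|\nabla^- f|$, whence by $d(x,y)\leq D$ on $\mathrm{supp}\,\pi$,
\[
\int U''(\rho_0)|\nabla^-\rho_0|\,d(x,y)\,\pi({\ddd}x{\ddd}y) \leq D\,\|\nabla^- f\|_{L^1(\nu)}/\|f\|_{L^q(\nu)}.
\]
Substituting all four computations into \eqref{HWIDIST7.5} and multiplying through by $\|f\|_{L^q}/N$ produces $V^{1/N}\|f\|_{L^q}\leq (1+C'/N)\|f\|_{L^1}+(D/N)\|\nabla^- f\|_{L^1}$, which is the desired inequality with $A := D/(NV^{1/N})$ and $B:=(N+C')/(NV^{1/N})$.

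The main obstacle is maintaining uniform control over the distortion coefficients $\beta^{(K,N)}_t(x,y)$ and $\beta'(x,y)$ for $K<0$, since they grow without bound as $d(x,y)\to\infty$; the choice of $\mu_1$ as the normalized restriction of $\nu$ to the same ball $B^+_\star(R)$ confines the transport to a region of reversibility-corrected diameter at most $(1+\Theta(R))R$, so that all constants depend only on $K$, $N$, $R$, $\Theta(R)$ (together with the given measure $\nu$). Finally, if $\Theta\equiv\theta$ is a constant function, then Remark \ref{forwardpointspaceandbackwardones}/(a) guarantees that $(X,x_0,d)$ is a pointed forward $\theta$-metric space for every $x_0\in X$ with the same $\theta$, so the preceding argument applies verbatim with $\star$ replaced by any $x_0\in X$, yielding \eqref{L1sobolevineq} for Lipschitz functions supported in an arbitrary forward $R$-ball.
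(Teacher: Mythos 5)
Your proposal follows essentially the same route as the paper's own proof: apply the distorted HWI inequality \eqref{HWIDIST7.5} with the R\'enyi-type nonlinearity $U(r)=-Nr^{1-1/N}$ (the paper uses $U_N(r)=Nr(1-r^{-1/N})$, which differs only by a linear term and gives the same $p$, $U''$), to $\mu_0\propto |f|^{N/(N-1)}\nu$ and $\mu_1$ the normalized restriction of $\nu$ to $B^+_\star(R)$, and control all distortion terms through the reversibility-corrected diameter bound $d(x,y)\le (1+\Theta(R))R$ on $\supp\pi$. Your algebra for $U_\nu(\mu_0)$, the $\beta'$-term and the gradient term is correct (your chain-rule inequality $|\nabla^-(f^q)|\le qf^{q-1}|\nabla^- f|$ is a clean substitute for the paper's identity \eqref{graduN}), and making the constants depend on $V=\nu[B^+_\star(R)]$ is unavoidable and consistent with the paper, which hides this dependence by normalizing $\nu[B^+_\star(R)]=1$ via Proposition \ref{cosntmupcd}/(ii).

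The one step you should not gloss over is the identity $U^{\breve{\beta}^{(K,N)}_0}_{\breve{\pi},\nu}(\mu_1)=U_\nu(\mu_1)$ ``since $\beta^{(K,N)}_0\equiv 1$''. Although Definition \ref{defbeta} literally sets $\beta^{(K,N)}_0=1$, the coefficient actually produced by the proof of Theorem \ref{HWIDISTOR} (see the limit in its second-term estimate), and the one the paper itself uses at this point of the Sobolev proof in \eqref{sol1-key}, is $\lim_{t\to 0}\beta^{(K,N)}_t(x,y)=\bigl(d(x,y)/\mathfrak{s}_{K,N}(d(x,y))\bigr)^{N-1}$, which for $K<0$ is $\le 1$; replacing it by $1$ makes the right-hand side smaller, i.e.\ asserts a stronger inequality than what the HWI proof establishes. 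The repair is one line and does not change your architecture: on $\supp\pi$ one has $d(x,y)\le (1+\Theta(R))R$, hence this coefficient is bounded below by a positive constant $c=c(K,N,(1+\Theta(R))R)$, so the first right-hand term is at most $-Nc^{1/N}V^{1/N}$, and $c^{-1/N}$ gets absorbed into $A$ and $B$ — exactly the role of the constant $L_1$ in the paper's estimate \eqref{soli2}. With that adjustment your argument, including the constant-$\Theta$ case via Remark \ref{forwardpointspaceandbackwardones}/(a), is correct.
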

\begin{proof}We just prove the first inequality in (\ref{L1sobolevineq}), since  the second one immediately follows by the inequalities $|\nabla^-|f||\leq |\nabla|f||\leq |\nabla f|.$
Due to Proposition \ref{cosntmupcd}/(ii), we can assume  $\nu[B^+_\star(R)]=1$ (otherwise, replace $\nu$ by  $\tilde{\nu}:=\nu/\nu[B^+_\star(R)]$).
Given  a non-zero $C$-Lipschitz function $f$ supported in $B^+_\star(R)$, we have
\[
|f(x)|-|f(y)|\leq |f(x)-f(y)|\leq C \cdot \Theta(R) \cdot d(x,y),\ \forall\,x,y\in X,
\]
which together with $\max_{x\in X}|f(x)|<\infty$ implies that $|f|^{\frac{N}{N-1}}(x)$ is a Lipschitz function. In particular, a direct calculation yields
\[
\left|\nabla^-|f|^{\frac{N}{N-1}}\right|(x)=\frac{N}{N-1}\,|f(x)|^{\frac1{N-1}}\,|\nabla^-|f||(x).\tag{5.42}\label{graduN}
\]
Now set $\mu_i:=\rho_i\nu$ for $i=0,1$, where
	\[
	\rho_0(x):=\frac{|f|^{\frac{N}{N-1}}(x)}{Z},\ \ \ Z:=\int_X |f|^{\frac{N}{N-1}}(y){\ddd}\nu(y)=\left(\|f\|_{L^{\frac{N}{N-1}}(\nu)}\right)^{\frac{N}{N-1}},\ \ \ \rho_1(x):=\frac{\textbf{1}_{B^+_\star(R)}}{\nu[B^+_\star(R)]}.
	\]
	Thus, $\mu_i\in P^{ac}_c(X;\nu)$ for $i=0,1$ and $\rho_0$ is Lipschitz. Consider $U_N(r):=Nr(1-r^{-\frac1N})\in DC_N$.  According to (\ref{HWIDIST7.5}), we have
	\begin{align*}
		&N-N\int_X \rho_0^{1-\frac1N}(x)\nu({\ddd}x)\leq N-N\int_{X\times X} \rho_1(y)^{1-\frac{1}{N}}\beta(x,y)^{\frac1N}\pi({\ddd}x|y)\nu({\ddd}y)\\
		&+\int_{X\times X}\rho_0(x)^{1-\frac1N}\beta'(x,y)\pi({\ddd}y|x)\nu({\ddd}x)+\frac{N-1}{N}\int_{X\times X} \rho_0(x)^{-1-\frac1N}|\nabla^- \rho_0(x)|\,d(x,y)\,\pi({\ddd}x{\ddd}y).\tag{5.43}\label{sol1-key}
	\end{align*}
	Obviously, $d(x,y)< (1+\Theta(R))R$ for any $x,y\in B^+_\star(R)$.
	Since $\suppor\rho_i\subset B^+_\star(R)$ for $i=0,1$, the quantities $\beta,\beta'$ in (\ref{sol1-key}) are bounded by constants which depend on  $N,K,R,\Theta(R)$. Hence, there exist positive constants $L_i=L_i(N,K,R,\Theta(R))$, $i=1,2$ such that
	\[
	-\int_X \rho_0^{1-\frac1N}(x){\ddd}\nu(x)\leq -L_1\,\nu[B^+_\star(R)]^{\frac1N}+L_2\, \left[ \int_{X} \rho_0^{1-\frac1N}(x){\ddd}\nu(x)+\int_X \rho_0^{-\frac1N}(x)|\nabla^- \rho_0(x)|{\ddd}\nu(x)  \right].\tag{5.44}\label{soli2}
	\]
	An easy computation combined with (\ref{graduN}) yields
	\begin{align*}
		\int_X \rho_0^{1-\frac1N}(x){\ddd}\nu(x)=\frac{\|f\|_{L^1(\nu)}}{\|f\|_{L^{\frac{N}{N-1}}(\nu)}},\quad \int_X \rho_0^{-\frac1N}(x)|\nabla^-\rho_0|(x){\ddd}\nu(x)=\left(\frac{N}{N-1}\right)\frac{\|\nabla^- |f|\|_{L^1(\nu)}}{\|f\|_{L^{\frac{N}{N-1}}(\nu)}},
	\end{align*}
	which together with $\nu[B^+_\star(R)]=1$ and (\ref{soli2})  imply (\ref{L1sobolevineq}).
	
	Furthermore, if $\Theta$ is a constant, then  $d(x,y)< (1+\Theta )R$ for any $x,y$ belonging to any forward $R$-ball. The same argument as above yields the rest of the proof.
\end{proof}

\begin{remark}Instead of $|\nabla f|$, one may expect to control $|\nabla^-|f||$ by $|\nabla^- f|$ in (\ref{L1sobolevineq}); however, this  is usually impossible.
For example, for any fixed $k_1,k_2>0$, consider the function $f:(-\epsilon+x,x+\epsilon)\rightarrow \mathbb{R} $ given by
$
 f(y):=-1+k_1(y-x)  \text{ if }y\geq x,$ and $f(y):=-1+k_2(y-x)  \text{ if }y<x.$
Thus $|\nabla^-|f||(x)=k_1$ and $|\nabla^-f|(x)=k_2$.
\end{remark}

\begin{remark}
	While the last part of Theorem \ref{HWIinequality} establishes a Sobolev inequality on spaces $(X,\star,d,\nu)$ satisfying the weak dimension-curvature bound ${\CD}(K,N)$ for some $K>0$, Theorem \ref{theorem-Sob} deals with its counterpart  $K<0.$ Due to Theorem \ref{diametercontrol}, in the former case $K>0$ we have  $\nu[X]<\infty$. We also notice that Sobolev inequalities have been studied on ${\CD}(0,N)$ spaces by Krist\'aly \cite{Kristaly-GN} (in particular, $\nu[X]=\infty$); in fact, in such a setting, the validity of a Sobolev inequality implies certain topological rigidities characterized as  non-collapsing properties of metric balls.
\end{remark}

\begin{theorem}\label{basissobolelemma}
	Let $(X,d,\nu)$ be a weak ${\CD}(K,N)$ space with some $K>0$ and $N\in (1,\infty)$. If $\nu$ is a probability measure, then for any probability measure $\mu=\rho\nu$ with
	Lipschitz continuous and strictly positive density $\rho$,  one has
	\begin{align*}
		H_{N,\nu}(\mu):=\int_{X}U_{N}(\rho){\ddd}\nu\leq \frac{3}{2K}\left( \frac{N-1}{N}\right)^2\int_{X}\frac{|\nabla^-\rho|^2}{\rho}  \frac{\rho^{-\frac2N}}{1+2\rho^{-\frac1N}} {\ddd}\nu,
	\end{align*}
	where $U_N(r):=Nr(1-r^{-\frac1N})\in DC_N$.
\end{theorem}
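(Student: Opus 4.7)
The strategy is to invoke the distorted HWI inequality from Theorem \ref{HWIDISTOR} with the convex function $U = U_N(r) = Nr(1-r^{-1/N}) \in DC_N$ and with $\mu_0 = \mu = \rho\nu$, $\mu_1 = \nu$. A direct computation using $p_N(r) = r^{1-1/N}$ and $U_N''(r) = \tfrac{N-1}{N}r^{-1-1/N}$ reduces (5.29) to
$$H_{N,\nu}(\mu) \leq U^{\breve{\beta}_0^{(K,N)}}_{\breve{\pi},\nu}(\nu) + \int_{X\times X}\rho^{-1/N}(x)\beta'(x,y)\,d\pi + \frac{N-1}{N}\int_{X\times X}\rho^{-1-1/N}(x)|\nabla^-\rho(x)|\,d(x,y)\,d\pi,$$
where $\beta'(x,y) := \tfrac{d}{dt}\big|_{t=1}\beta_t^{(K,N)}(x,y)$. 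The key observation in this identification is that the factor in the middle term is $\rho^{-1/N}$ rather than $\rho^{1-1/N}$, which comes from the disintegration identity $\pi(dy|x)\nu(dx) = d\pi(x,y)/\rho(x)$ (since $\pi$ has first marginal $\mu = \rho\nu$).

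The core estimates are two sharp, elementary inequalities for the distortion coefficient, parametrised via $u := d(x,y)\sqrt{K/(N-1)} \in (0,\pi)$. First, $\ln(u/\sin u) \geq u^2/6$, which combined with $e^x \geq 1+x$ yields
$$\beta_0^{(K,N)}(x,y)^{1/N} \geq 1 + \frac{K}{6N}d(x,y)^2, \qquad \text{whence}\qquad U^{\breve{\beta}_0^{(K,N)}}_{\breve{\pi},\nu}(\nu) \leq -\frac{K}{6}\int d(x,y)^2\,d\pi.$$
Second, $u\cot u - 1 \leq -u^2/3$ on $(0,\pi)$ gives $\beta'(x,y) \leq -\tfrac{K}{3}d(x,y)^2$. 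Combining these two bounds transforms the HWI inequality into the key intermediate estimate
$$H_{N,\nu}(\mu) \leq -\frac{K}{6}\int d(x,y)^2\bigl(1+2\rho^{-1/N}(x)\bigr)d\pi + \frac{N-1}{N}\int\rho^{-1-1/N}(x)|\nabla^-\rho(x)|\,d(x,y)\,d\pi,$$
and it is precisely here that the factor $1 + 2\rho^{-1/N}$ required by the statement emerges.

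The final step is a pointwise weighted Young inequality $ab \leq \tfrac{a^2}{2A} + \tfrac{Ab^2}{2}$ applied with $a = d(x,y)$, $b = \rho^{-1-1/N}(x)|\nabla^-\rho(x)|$ and the optimal weight
$$A(x) = \frac{3(N-1)}{KN\bigl(1+2\rho^{-1/N}(x)\bigr)},$$
chosen so that the squared-distance part of the Young splitting exactly matches the negative coefficient $\tfrac{K}{6}(1+2\rho^{-1/N})$ from the previous step, thereby cancelling it. Converting the surviving gradient integral from $d\pi$ to $d\nu$ via $\int\varphi(x)d\pi = \int\varphi\rho\,d\nu$ delivers
$$H_{N,\nu}(\mu) \leq \frac{3(N-1)^2}{2KN^2}\int_X \frac{\rho^{-1-2/N}|\nabla^-\rho|^2}{1+2\rho^{-1/N}}\,d\nu = \frac{3}{2K}\left(\frac{N-1}{N}\right)^2\int_X\frac{|\nabla^-\rho|^2}{\rho}\cdot\frac{\rho^{-2/N}}{1+2\rho^{-1/N}}\,d\nu,$$
which is the claimed inequality. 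The main obstacle lies in verifying the two global inequalities for $u/\sin u$ and $u\cot u$ on $(0,\pi)$ (rather than just their small-$u$ Taylor expansions); once these are secured—by showing that the derivatives of the corresponding differences are nonnegative with vanishing value at $u=0$—the rest is a matching exercise between the weight $A(x)$ in the Young step and the negative distortion coefficient, and everything is legitimate because the Lipschitz positivity of $\rho$ guarantees that Theorem \ref{HWIDISTOR} applies and every integrand is finite.
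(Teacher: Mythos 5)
Your proposal is correct and follows essentially the same route as the paper: apply the distorted HWI inequality (Theorem \ref{HWIDISTOR}) with $U_N$, $\rho_0=\rho$, $\rho_1=1$, then use exactly the two elementary estimates $\alpha\cot\alpha-1\leq-\alpha^2/3$ and the $(\alpha/\sin\alpha)^{1-1/N}$ bound (your $\ln(u/\sin u)\geq u^2/6$ plus $e^x\geq1+x$ gives the paper's $-\,(N-1)\alpha^2/6$ with the same constant). Your weighted Young inequality with the optimal $x$-dependent weight $A(x)$ is algebraically identical to the paper's pointwise bound $\sup_{\alpha}\left(A\alpha-B\alpha^2\right)\leq A^2/(4B)$ with $B=\frac{N-1}{6}\left(1+2r^{-1/N}\right)$, so the two arguments coincide in substance.
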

\begin{proof}
	Set $\alpha:=\sqrt{{K}/({N-1})}\,d(x,y)$; Theorem \ref{diametercontrol} implies $\alpha\in [0,\pi]$.
	Let $\rho_0:=\rho$ and $\rho_1:=1$.
	Due to  \eqref{sol1-key} and the expressions of $\beta,\beta'$ (see Villani \cite[p.530]{Vi}), we have
	\[
	H_{N,\nu}(\mu)\leq \int_X \mathcal {H}^{(N,K)}(\rho,|\nabla^- \rho|){\ddd}\nu,\tag{5.44}\label{lemmsov-1}
	\]
	where
	\begin{align*}
		&\mathcal {H}^{(N,K)}(r,g)\\
		=&r\sup_{\alpha\in [0,\pi]}\left[ \frac{N-1}{N}\left({\frac{g}{r^{1+1/N}}}\sqrt{\frac{N-1}{K}}\alpha \right)+ (N-1)\left( \frac{\alpha}{\tan \alpha}-1 \right)r^{-\frac1N}+N\left(1-\left( \frac{\alpha}{\sin \alpha}  \right)^{1-1/N} \right) \right].
	\end{align*}
	Note that $ \frac{\alpha}{\tan \alpha}-1\leq -\frac{\alpha^2}{3}$ and $N\left(1-\left( \frac{\alpha}{\sin \alpha}  \right)^{1-1/N} \right)\leq - {(N-1)}  \frac{\alpha^2}{6}$.
	Hence, we have
	\[
	\mathcal {H}^{(N,K)}(r,g)\leq r\sup_{\alpha\in[0,\pi]}\left( A\alpha-B\alpha^2  \right)\leq r \frac{A^2}{4B},\tag{5.45}\label{lemmsov-2}
	\]
	where $A=\frac{(N-1)^{\frac32}}{N\sqrt{K}}\frac{g}{r^{1+1/N}}$ and $B=\frac{N-1}{6}(1+2r^{-\frac1N})$.
	The result follows by (\ref{lemmsov-1}) and (\ref{lemmsov-2}).
\end{proof}

\begin{remark}
In the limit case $N\rightarrow \infty$, Theorem \ref{basissobolelemma} provides  the logarithmic Sobolev inequality from Theorem \ref{HWIinequality}, i.e., $H_\nu(\mu)\leq \frac{I_\nu^-(\mu)}{2K}$.
\end{remark}

We conclude this section by establishing a Lichnerowicz inequality on  weak $\CD(K,N)$ spaces with $K>0$, improving Theorem \ref{HWIinequality}.

\begin{theorem}[Lichnerowicz inequality in weak ${\CD}(K,N)$ spaces] Let $(X,d,\nu)$ be a weak ${\CD}(K,N)$ space with $K>0$ and $N\in (1,\infty)$. Then, for any Lipschitz function $f:\supp\nu\rightarrow \mathbb{R}$ with $\displaystyle\int_X f{\ddd}\nu=0$, one has
	\[
	\int_X f^2{\ddd}\nu\leq  \frac{N-1}{NK} \int_X  |\nabla^- f|^2{\ddd}\nu.
	\]
\end{theorem}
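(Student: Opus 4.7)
The plan is to derive the Lichnerowicz inequality as the second-order infinitesimal version of the $H_{N,\nu}$-estimate in Theorem \ref{basissobolelemma}, following a standard perturbation argument. First I would normalize the measure: by Theorem \ref{diametercontrol}, $D:=\diam(\supp\nu)\leq\pi\sqrt{(N-1)/K}<\infty$, so Theorem \ref{supporsetcdkn} together with Corollary \ref{nodiractnoatom} and Theorem \ref{BishopGromovcomp} ensures that $\nu[\supp\nu]$ is finite (and strictly positive), and Proposition \ref{cosntmupcd}/(ii) permits rescaling $\nu$ so that $\nu[X]=1$, preserving both the weak $\CD(K,N)$ condition and the conclusion (which is homogeneous in $\nu$). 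Hence we may and shall assume $\nu$ is a probability measure supported on a forward $D$-bounded set.

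Next, fix a Lipschitz function $f:\supp\nu\to\mathbb{R}$ with $\int_X f\,d\nu=0$; since $f$ is Lipschitz on a forward-bounded set, it is bounded, say $|f|\leq M$. For $\varepsilon\in(0,1/(2M))$ define $\rho_\varepsilon:=1+\varepsilon f$, which is Lipschitz and strictly positive, and $\mu_\varepsilon:=\rho_\varepsilon\nu\in P(X)$ by the zero-mean condition. Theorem \ref{basissobolelemma} then gives
\begin{equation*}
H_{N,\nu}(\mu_\varepsilon)\;\leq\;\frac{3}{2K}\left(\frac{N-1}{N}\right)^{\!2}\int_X\frac{|\nabla^-\rho_\varepsilon|^2}{\rho_\varepsilon}\,\frac{\rho_\varepsilon^{-2/N}}{1+2\rho_\varepsilon^{-1/N}}\,d\nu.
\end{equation*}
The key observation is that $|\nabla^-\rho_\varepsilon|(x)=\varepsilon\,|\nabla^- f|(x)$ for $\varepsilon>0$, since adding the constant $1$ does not affect $\nabla^-$ and positive scalar multiplication pulls out directly from the definition of $|\nabla^-|$.

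Now I would Taylor expand both sides in $\varepsilon$. Using $U_N(r)=Nr(1-r^{-1/N})$, a direct computation together with $\int f\,d\nu=0$ yields
\begin{equation*}
H_{N,\nu}(\mu_\varepsilon)=N\int_X\!\bigl[\rho_\varepsilon-\rho_\varepsilon^{\,1-1/N}\bigr]d\nu=\frac{N-1}{2N}\,\varepsilon^2\int_X f^2\,d\nu+O(\varepsilon^3),
\end{equation*}
where the $O(\varepsilon^3)$ error is uniform because $|f|\leq M$ makes the remainder of $(1+\varepsilon f)^{1-1/N}$ bounded by $C(N,M)\varepsilon^3 f^3$. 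For the right-hand side, the integrand
\begin{equation*}
\varepsilon^{-2}\cdot\frac{|\nabla^-\rho_\varepsilon|^2}{\rho_\varepsilon}\,\frac{\rho_\varepsilon^{-2/N}}{1+2\rho_\varepsilon^{-1/N}}=\frac{|\nabla^- f|^2}{\rho_\varepsilon}\,\frac{\rho_\varepsilon^{-2/N}}{1+2\rho_\varepsilon^{-1/N}}
\end{equation*}
is dominated by a constant multiple of $|\nabla^- f|^2$ (itself bounded by the Lipschitz constant of $f$) for $\varepsilon\in(0,1/(2M))$, and converges pointwise to $|\nabla^- f|^2/3$ as $\varepsilon\to0$. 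The dominated convergence theorem gives
\begin{equation*}
\lim_{\varepsilon\to 0}\varepsilon^{-2}\!\int_X\frac{|\nabla^-\rho_\varepsilon|^2}{\rho_\varepsilon}\,\frac{\rho_\varepsilon^{-2/N}}{1+2\rho_\varepsilon^{-1/N}}\,d\nu=\frac{1}{3}\int_X|\nabla^- f|^2\,d\nu.
\end{equation*}
Dividing the inequality from Theorem \ref{basissobolelemma} by $\varepsilon^2$, letting $\varepsilon\to0^+$, and simplifying the constant $\frac{3}{2K}(\frac{N-1}{N})^2\cdot\frac{1}{3}$ against $\frac{N-1}{2N}$ produces exactly
\begin{equation*}
\int_X f^2\,d\nu\leq\frac{N-1}{NK}\int_X|\nabla^- f|^2\,d\nu.
\end{equation*}

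The only nontrivial point is ensuring that the Taylor expansion and the dominated convergence argument are legitimate despite the fact that $|\nabla^-|$ is only a one-sided, upper-semicontinuous gradient rather than a smooth derivative; this is handled cleanly by the positive-homogeneity of $|\nabla^-|$ under positive scaling and constant translation, together with the uniform Lipschitz bound on $f$. No other subtlety enters, as the probability-measure normalization and strict positivity hypotheses of Theorem \ref{basissobolelemma} are automatic from the preparatory step.
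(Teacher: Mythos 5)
Your proposal is correct and follows essentially the same route as the paper: normalize $\nu$ to a probability measure via Theorem \ref{diametercontrol} and Proposition \ref{cosntmupcd}/(ii), apply Theorem \ref{basissobolelemma} to $\mu_\varepsilon=(1+\varepsilon f)\nu$, and Taylor expand both sides to second order in $\varepsilon$ before letting $\varepsilon\to 0$. The extra care you take with the uniform bound $|f|\leq M$, the positive homogeneity of $|\nabla^-|$, and dominated convergence only makes explicit what the paper leaves implicit.
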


\begin{proof}
	In view of Theorem \ref{diametercontrol} and Proposition \ref{cosntmupcd}/(ii), we may assume that $\nu$ is a probability measure (otherwise, replace $\nu$ by $\tilde{\nu}:=\nu/\nu[X]$).
	Let us consider the probability measure $\mu_\varepsilon:=\rho_\varepsilon\nu=(1+\varepsilon f)\nu$, where $\varepsilon>0$ and $f$ is a Lipschitz function with $\displaystyle\int_X f{\ddd}\nu=0$. On the one hand, the Taylor expansion yields
	$$
		H_{N,\nu}(\mu_\varepsilon)=\varepsilon^2\frac{N-1}{2N}\displaystyle\int_X f^2{\ddd}\nu+o(\varepsilon^2).
$$
	On the other hand, we have
	\begin{align*}
	 	\int_{X}\frac{|\nabla^-\rho_\varepsilon|^2}{\rho_\varepsilon} \frac{\rho_\varepsilon^{-\frac2N}}{1+2\rho_\varepsilon^{-\frac1N}} {\ddd}\nu= \frac{\varepsilon^2}{3}\int_X |\nabla^-f|^2{\ddd}\nu+o(\varepsilon^2).
	\end{align*}
	Now the required inequality directly follows by Theorem \ref{basissobolelemma}, letting $\varepsilon \to 0.$
\end{proof}

\appendix



\section{Complementary results for forward metric spaces
}\label{propergenerlengappex}


\begin{proof}[Proof of Theorem \ref{topologychara}] \textbf{(i):} Firstly, we claim that for  any $R>0$ and any $x\in X$, there exists $r>0$ such that $B^+_x (r)\subset B^-_x(R)$.
Thanks to Remark \ref{forwardpointspaceandbackwardones}/(i), it suffices to show the claim for $x=\star$.
Since $\lim_{r\rightarrow 0^+}\Theta(r)r=0$, there exists $r>0$ with $\Theta(r)\cdot r<R$, which implies
\[
B^-_\star(r)\subset B^-_\star\left(\Theta(r)\cdot r\right)\subset B^-_\star(R).\tag{A.1}\label{ballcontain1}
\]
On the other hand, we always have $B^+_\star(r)\subset B^-_\star\left(\Theta(r)\cdot r\right)$,
which together with (\ref{ballcontain1}) furnishes the claim.

Now we prove Statement (i). Given an arbitrary backward ball $  B^-_y(R)$ and any point $x\in B^-_y(R)$,
 set $l:=d(x,y)$ and $\varepsilon:=(R-l)/2$. The triangle inequality yields   $B^-_x(\varepsilon)\subset B^-_y(R)$, while the claim furnishes $r>0$ with $B^+_x(r)\subset B^-_x(\varepsilon)$. Hence, we are done.

\smallskip

\noindent\textbf{(ii):}  For any $x\in X$ and $r>0$,
 it is trivial  $\widehat{B}_x(r)\subset B^+_x(2r)$, where $\widehat{B}_x(r):=\{y\in X| \, \hat{d}(x,y)<r\}$. This observation together with the triangle inequality implies $\mathcal {T}_+\subset \hat{\mathcal{T}}$ directly.

Now we turn to prove $\mathcal {T}_+\supset \hat{\mathcal{T}}$. It suffices to show that
for an arbitrary ball $\widehat{B}_x(r)$ and any $z\in \widehat{B}_x(r)$,  there holds $ B^+_z\left( \varepsilon/(1+\theta)\right)\subset \widehat{B}_x(r)$, where
$l:=\hat{d}(x,z)$, $\varepsilon:=(r-l)/2$ and $\theta:=\Theta(d(\star,x)+2r)$.

In fact, for any $q\in B^+_z\left( \varepsilon/(1+\theta)\right)$, we have
\begin{align*}
d(x,q)\leq d(x,z)+d(z,q)\leq 2l+\frac{\varepsilon}{(1+\theta)}<2r,
\end{align*}
which implies $B^+_z\left( \varepsilon/(1+\theta)\right)\subset B^+_x(2r)$.
On the other hand,  for the same $q$ as above, Remark \ref{forwardpointspaceandbackwardones}/(a) indicates $d(q,z)\leq \theta\, d(z,q)$, which
yields
\[
\hat{d}(z,q)=\frac12\left[ d(z,q)+d(q,z)  \right]\leq \frac{1+\theta}{2} d(z,q)<\frac{\varepsilon}{2}.
\]
Therefore, $\hat{d}(x,q)\leq \hat{d}(x,z)+\hat{d}(z,q)<l+\frac{\varepsilon}{2}<r$, i.e., $q\in \widehat{B}_x(r)$. This completes the proof.
 \end{proof}




In what follows, all the spaces are accessible.

\begin{proposition}\label{boundedlyinstrinct}Let $(X,d)$ be a    forward  metric space. If $(X,d)$ is forward boundedly compact, then for every two points $x,y\in X$ connected by a rectifiable curve there exists a shortest path between $x$ and $y$.
\end{proposition}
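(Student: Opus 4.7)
The plan is to run the classical direct method of the calculus of variations in the forward setting, using the Arzel\`a-Ascoli theorem (Theorem \ref{Arzela-Ascoli Theorem}) together with the lower semicontinuity of the length functional (Proposition \ref{basisessentially2}/(iv)). Fix $x,y\in X$ connected by a rectifiable curve and set
\[
L:=\inf\{L_d(\gamma)\,|\,\gamma\in C([0,1];X),\ \gamma(0)=x,\ \gamma(1)=y\}\in[0,\infty).
\]
Choose a minimizing sequence $(\gamma_n)_n$ with $L_d(\gamma_n)\downarrow L$; without loss of generality assume $L_d(\gamma_n)\leq L+1$ for all $n$.

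First I would reparametrize each $\gamma_n$ to constant speed on $[0,1]$, so that, by Definition \ref{shortpathdef} and Proposition \ref{basisessentially2}/(ii), the reparametrized curve (still denoted $\gamma_n$) is $(L+1)$-Lipschitz in the sense
\[
d(\gamma_n(s),\gamma_n(t))\leq L_d(\gamma_n,s,t)=L_d(\gamma_n)(t-s)\leq (L+1)(t-s),\ \ 0\leq s\leq t\leq 1,
\]
while of course $L_d(\gamma_n)\to L$ is preserved. Next I would confine all $\gamma_n$ to a single compact set: for every $t\in[0,1]$,
\[
d(x,\gamma_n(t))\leq L_d(\gamma_n,0,t)\leq L_d(\gamma_n)\leq L+1,
\]
so $\gamma_n([0,1])\subset \overline{B^+_x(L+1)}=:K$. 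The forward boundedly compact hypothesis guarantees that $K$ is compact, hence $(K,d|_K)$ is a compact forward metric space to which Theorem \ref{Arzela-Ascoli Theorem} applies. Since the curves $\gamma_n$ have uniformly bounded lengths, Theorem \ref{Arzela-Ascoli Theorem} yields a subsequence (still labeled $\gamma_n$) converging uniformly to some continuous curve $\gamma:[0,1]\to K\subset X$; in particular $\gamma(0)=x$ and $\gamma(1)=y$.

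Finally I would conclude via lower semicontinuity. By Proposition \ref{basisessentially2}/(iv), $L_d$ is lower semicontinuous with respect to pointwise (hence uniform) convergence on $C([0,1];X)$, so
\[
L_d(\gamma)\leq \liminf_{n\to\infty} L_d(\gamma_n)=L.
\]
On the other hand $L_d(\gamma)\geq L$ by definition of $L$, so $L_d(\gamma)=L$ and $\gamma$ is the desired shortest path from $x$ to $y$.

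The one step that deserves care is the uniform equicontinuity needed to invoke Arzel\`a-Ascoli. In the reversible case this is automatic from the Lipschitz reparametrization, and the same is true here because the estimate $d(\gamma_n(s),\gamma_n(t))\leq (L+1)(t-s)$ uses only the triangle inequality and the definition of length, and $d$ is continuous under $\mathcal{T}_+\times\mathcal{T}_+$ by Theorem \ref{topologychara}/(i); thus no symmetry of $d$ is required and the classical proof of Theorem \ref{Arzela-Ascoli Theorem} applies on the compact set $K$. Beyond this verification, the argument is routine.
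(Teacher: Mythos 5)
Your proof is correct and follows essentially the same route as the paper's: the paper confines the problem to a compact closed forward ball (centered at the base point $\star$ rather than at $x$) and then invokes the compact-case argument of Burago--Burago--Ivanov, which is exactly the direct method you spell out (constant-speed reparametrization, Theorem \ref{Arzela-Ascoli Theorem}, and lower semicontinuity from Proposition \ref{basisessentially2}/(iv)). The only difference is that you write out the minimizing-sequence argument explicitly instead of citing it, which is a cosmetic rather than substantive deviation.
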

\begin{proof}[Sketch of the proof] Firstly, suppose that $X$ is compact. Thus, thanks to Theorem \ref{Arzela-Ascoli Theorem}, the proof in this case is the same as the one of Burago, Burago and Ivanov  \cite[Proposition 2.5.19]{DYS}. Secondly, assume that $(X,\star,d)$ is a (possibly noncompact) pointed forward $\Theta$-metric space.
Let $\gamma:[0,1]\rightarrow X$ be a rectifiable curve from $x$ to $y$. For any $t\in [0,1]$, the triangle inequality yields
\[
d(\star,\gamma(t))\leq d(\star,x)+d(x,\gamma(t))\leq  d(\star,x)+L_d(\gamma)=:R<\infty.
\]
Thus, $\gamma\subset \overline{B^+_\star(R)}$. Since $\overline{B^+_\star(R)}$ is compact, the proposition follows from the first part of the proof.
\end{proof}


\begin{definition}Let $(X,d)$ be a forward metric space. Given two points $x,y\in X$,
a point $z\in X$ is called a {\it midpoint} between the points $x,y$  if $d(x,z)=d(z,y)=\frac{1}{2}d(x,y)$.
\end{definition}

The relation between midpoints and forward geodesic spaces is as follows.
\begin{theorem}\label{mdistrctilylength}
Let $(X, d)$ be a forward complete   forward metric space. Thus, $(X,d)$ is a forward geodesic space if and only if
for every points $x,y\in X$, there exists a midpoint.
\end{theorem}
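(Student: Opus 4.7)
The forward direction ($\Rightarrow$) is immediate: if $(X,d)$ is a forward geodesic space, then for every $x,y\in X$ Proposition \ref{metricsigeodesic} gives a minimal geodesic $\gamma:[0,1]\to X$ from $x$ to $y$, and $\gamma(1/2)$ is then a midpoint. So the whole content lies in the reverse direction.

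For the converse ($\Leftarrow$), my plan is to carry out the classical dyadic midpoint construction, adapted to the asymmetric setting. Fix $x,y\in X$ and let $\mathbb{D}\subset [0,1]$ denote the dyadic rationals. Put $\gamma(0):=x$, $\gamma(1):=y$, and define $\gamma((2k+1)/2^{n+1})$ inductively as a midpoint between $\gamma(k/2^n)$ and $\gamma((k+1)/2^n)$, using the hypothesis. Since each midpoint $z$ between $u,v$ satisfies the triangle equality $d(u,z)+d(z,v)=d(u,v)$, a straightforward induction on the dyadic level gives
\[
d(\gamma(s),\gamma(t))=(t-s)\,d(x,y),\qquad \forall\,s,t\in\mathbb{D}\cap[0,1],\ s\leq t.
\]
In particular, the image of $\gamma|_{\mathbb{D}}$ is contained in the closed forward ball $\overline{B^+_x(d(x,y))}$.

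The main obstacle is extending $\gamma$ from $\mathbb{D}$ to the whole interval $[0,1]$ in the face of irreversibility: forward completeness recovers limits only of forward Cauchy sequences, but for a dyadic sequence $(t_n)$ approaching a non-dyadic $t$ from the right, the natural estimate above controls only the reverse distances $d(\gamma(t_m),\gamma(t_n))$ for $m<n$, i.e., yields only a backward Cauchy sequence. To bypass this, I would invoke the pointed forward $\Theta$-structure: by Remark \ref{forwardpointspaceandbackwardones}/(a), after translating the basepoint to $x$, the ball $\overline{B^+_x(d(x,y))}$ has finite reversibility bounded by some $\theta=\Theta'(d(x,y))<\infty$. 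Hence on this set $d(p,q)\leq \theta\,d(q,p)$, so any backward Cauchy sequence lying in $\overline{B^+_x(d(x,y))}$ is automatically forward Cauchy. Forward completeness of $(X,d)$ then provides a limit, which a standard diagonal argument shows to be independent of the approximating dyadic sequence; I would set $\gamma(t)$ equal to this common limit.

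Finally, I would pass the identity $d(\gamma(s),\gamma(t))=(t-s)\,d(x,y)$ from $\mathbb{D}$ to all of $[0,1]$ by continuity of $d$ in the forward topology (Theorem \ref{topologychara}/(i)), which yields simultaneously the continuity of the extended $\gamma:[0,1]\to X$ and the fact that it is a constant-speed minimal geodesic from $x$ to $y$. Since $x,y\in X$ are arbitrary, Proposition \ref{metricsigeodesic} concludes that $(X,d)$ is a forward geodesic space. The only nontrivial step is the one flagged above: converting a one-sided Cauchy property into one compatible with the only available (forward) completeness assumption; everything else is routine triangle-inequality bookkeeping.
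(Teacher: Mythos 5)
Your proposal is correct, and the core of it — the dyadic midpoint construction yielding $d(\gamma(s),\gamma(t))=(t-s)\,d(x,y)$ on dyadic rationals, followed by a completion argument — is exactly the paper's proof. The only divergence is in the extension step: the paper simply approximates a non-dyadic $t_0$ by dyadic times $t_i\nearrow t_0$ \emph{from the left}, so that the identity $d(\gamma(t_i),\gamma(t_j))=(t_j-t_i)\,d(x,y)$ for $i\le j$ makes $(\gamma(t_i))_i$ forward Cauchy outright, and forward completeness applies with no appeal to reversibility; you instead consider right-approximating sequences, observe they are only backward Cauchy, and upgrade them to forward Cauchy via the finite reversibility $\lambda_d\bigl(\overline{B^+_x(d(x,y))}\bigr)\le\Theta'(d(x,y))$ obtained from Remark \ref{forwardpointspaceandbackwardones}/(a). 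Your route is valid (the dyadic image does lie in $\overline{B^+_x(d(x,y))}$, and on a set of finite reversibility backward Cauchy implies forward Cauchy, as in Proposition \ref{forwardandbackwardrelation}/(i)), just slightly heavier than necessary; note, though, that the local reversibility bound you introduce is not wasted, since some such control (or the continuity of $d$ from Theorem \ref{topologychara}) is still implicitly needed for well-definedness of $\gamma(t_0)$ and for continuity of the extended curve in the forward topology, points the paper passes over quickly.
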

\begin{proof}
The ``$\Rightarrow$" part is obvious. For the ``$\Leftarrow$" part, given any two points $x,y\in X$, it suffices to construct a path $\gamma:[0,1]\rightarrow X$ from $x$ to $y$ with $\gamma(0)=x$, $\gamma(1)=y$ and $L_d(\gamma)=d(x,y)$.

 By the sandaled augment (cf. Burago, Burago and Ivanov  \cite[p.43]{DYS}), we can define $\gamma$ on every dyadic rational between $0$ and $1$ such that
\[
d(\gamma(a),\gamma(b))=(b-a)\cdot d(x,y), \text{ for every two dyadic rationals $0\leq a\leq b\leq 1$}.\tag{A.2}\label{dyadicnumber}
\]

For a non-dyadic rational $t_0\in I$, choose  a  sequence of dyadic rationales $(t_i)_i$ with $t_i\nearrow t_0$. Thus,  $(\gamma(t_i))_i$ is a forward Cauchy sequence and hence, it converges to some point $p$. We assign it as $\gamma(t_0)$. By (\ref{dyadicnumber}), it is not hard to check that $\gamma(t_0)$ is well-defined. In particular, there holds
\[
d(\gamma(a),\gamma(b))=(b-a)\cdot d(x,y), \ 0\leq a\leq b\leq 1,
\]
which implies $L_d(\gamma)=d(x,y)$.
\end{proof}



According to Definition \ref{shortpathdef}, if $(X,d)$ is a forward length space, $\gamma$ is a shortest path if and only if it is a minimal geodesic. Moreover, Proposition \ref{basisessentially2}/(iv) yields the following result.
\begin{proposition}\label{shorttoshort}
Let $(X, d)$  is a  forward  length space. If $\gamma_i:[0,1]\rightarrow X$ is a sequence of minimal geodesics converging pointwise to  a path $\gamma$ as $i\rightarrow \infty$, then $\gamma$ is also a minimal geodesic.
\end{proposition}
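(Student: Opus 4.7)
The plan is to reduce the statement to the two ingredients already packaged in Proposition~\ref{basisessentially2}: the generalized length inequality (i) and the lower semi-continuity of the length functional (iv), together with continuity of $d$ under $\mathcal{T}_+\times \mathcal{T}_+$ (Theorem~\ref{topologychara}/(i)). Fix an arbitrary closed subinterval $[a,b]\subset[0,1]$; by Definition~\ref{shortpathdef} it suffices to prove the identity $L_d(\gamma,a,b)=d(\gamma(a),\gamma(b))$.

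First I would observe that each restricted curve $\gamma_i|_{[a,b]}$ is itself a minimal geodesic on $[a,b]$, so $L_d(\gamma_i,a,b)=d(\gamma_i(a),\gamma_i(b))$ for every $i$. Since $\gamma_i\to\gamma$ pointwise on $[0,1]$, we also have $\gamma_i|_{[a,b]}\to\gamma|_{[a,b]}$ pointwise. Applying Proposition~\ref{basisessentially2}/(iv) to this restricted sequence gives
\[
L_d(\gamma,a,b)\;\leq\;\liminf_{i\to\infty}L_d(\gamma_i,a,b)\;=\;\liminf_{i\to\infty}d(\gamma_i(a),\gamma_i(b)).
\]
By Theorem~\ref{topologychara}/(i), $d$ is jointly continuous with respect to $\mathcal{T}_+\times\mathcal{T}_+$, so $\gamma_i(a)\to\gamma(a)$ and $\gamma_i(b)\to\gamma(b)$ yield $d(\gamma_i(a),\gamma_i(b))\to d(\gamma(a),\gamma(b))$; hence $L_d(\gamma,a,b)\leq d(\gamma(a),\gamma(b))$.

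The reverse inequality $L_d(\gamma,a,b)\geq d(\gamma(a),\gamma(b))$ is immediate from Proposition~\ref{basisessentially2}/(i). Combining the two gives equality on every $[a,b]\subset[0,1]$, which is exactly the definition of a minimal geodesic. Since a minimal geodesic in a forward length space is a shortest path, the final conclusion of the proposition follows.

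I do not expect any real obstacle here: the argument is a direct three-line combination of the length inequality, lower semi-continuity, and joint continuity of $d$. The only point worth a brief check is that lower semi-continuity is applied to the \emph{restrictions} $\gamma_i|_{[a,b]}$ rather than the full curves; this is harmless because Proposition~\ref{basisessentially2}/(iv) is stated for curves defined on any closed interval and pointwise convergence on $[0,1]$ restricts to pointwise convergence on $[a,b]$. In particular, neither the reversibility function $\Theta$ nor the asymmetry of $d$ enters the argument beyond the continuity guaranteed by Theorem~\ref{topologychara}/(i).
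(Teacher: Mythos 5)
Your argument is correct and follows essentially the same route the paper intends, namely combining Proposition~\ref{basisessentially2}/(i),(iv) with the continuity of $d$ from Theorem~\ref{topologychara}/(i). Working directly on each subinterval $[a,b]$ is a harmless (slightly more self-contained) variant that in fact never uses the forward length-space hypothesis, whereas the paper's phrasing passes through the shortest-path/minimal-geodesic equivalence valid in length spaces.
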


 The following result can be proved by the same method as employed in Burago, Burago and Ivanov \cite[Proposition 2.5.22]{DYS}.
\begin{proposition}\label{compctlacalcomapctboundecompact}Let $(X,d)$ be a forward complete locally compact forward  length space. Then  $(X,d)$ is forward boundedly compact.
\end{proposition}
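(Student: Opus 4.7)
The plan is to mimic the classical Hopf–Rinow argument of Burago–Burago–Ivanov, adapted to the asymmetric forward setting. Fix a basepoint $\star \in X$ and let
\[
R^* := \sup\{R > 0 : \overline{B^+_\star(R)} \text{ is compact}\}.
\]
Local compactness of $X$ combined with the fact that forward balls form a neighborhood basis of $\mathcal{T}_+$ yields a compact neighborhood $K$ of $\star$ containing some $B^+_\star(r_0)$, so $\{y : d(\star,y)\le r\} \subset K$ is compact for every $r<r_0$; hence $R^*>0$. The goal reduces to showing $R^* = \infty$, since Remark \ref{forwardpointspaceandbackwardones}/(a) lets us transfer the conclusion to any basepoint and thereby obtain forward bounded compactness of $(X,d)$. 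I would argue by contradiction, assuming $R^* < \infty$.

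The main step, which I expect to be the most delicate, is to prove that $\overline{B^+_\star(R^*)}$ is itself compact. Forward completeness is immediate: $X$ is forward complete and $\{y : d(\star, y) \le R^*\}$ is closed because $d(\star,\cdot)$ is continuous on $\mathcal{T}_+$ (Theorem \ref{topologychara}/(i)). Forward total boundedness comes from the length-space hypothesis: for any $x \in \overline{B^+_\star(R^*)}$ and any $\eta,\delta>0$, pick a continuous curve $\gamma$ from $\star$ to $x$ with $L_d(\gamma) \le d(\star,x) + \delta$; then by the intermediate value theorem applied to the continuous nondecreasing function $t \mapsto L_d(\gamma,0,t)$ (Proposition \ref{basisessentially2}/(ii)(iii)), choose $y:=\gamma(t_x)$ with $L_d(\gamma,0,t_x)=\min\{R^*-\eta,\,L_d(\gamma)\}$. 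Proposition \ref{basisessentially2}/(i)-(ii) then gives $d(\star,y) \le R^*-\eta$ and $d(y,x) \le \eta + \delta$. Since $R^*-\eta < R^*$, the definition of $R^*$ produces $R' > R^* - \eta$ with $\overline{B^+_\star(R')}$ compact, so $\overline{B^+_\star(R^*-\eta)}$ is compact as well. A finite forward $\epsilon/2$-net of $\overline{B^+_\star(R^*-\eta)}$ combined with the above approximation (for $\eta+\delta < \epsilon/2$) gives a finite forward $\epsilon$-net of $\overline{B^+_\star(R^*)}$, and Theorem \ref{compactequvitheorem}/(iii)$\Rightarrow$(i) upgrades this to compactness.

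Finally, I would enlarge the radius to reach a contradiction. By local compactness, each $z \in \overline{B^+_\star(R^*)}$ admits $r_z>0$ with $\overline{B^+_z(r_z)}$ compact; compactness of $\overline{B^+_\star(R^*)}$ extracts a finite subcover by balls $B^+_{z_i}(r_i/2)$, $i=1,\dots,N$. Set $r := \min_i r_i/4 > 0$. For any $y \in \overline{B^+_\star(R^*+r)}$, a curve $\gamma$ from $\star$ to $y$ with $L_d(\gamma) \le d(\star,y)+r/2$ plus the same intermediate value argument produces $z$ on $\gamma$ with $d(\star,z) \le R^*$ and $d(z,y) \le 3r/2$; then $z \in B^+_{z_i}(r_i/2)$ for some $i$, and the triangle inequality yields $d(z_i,y) \le r_i/2 + 3r/2 \le 7r_i/8 < r_i$, so $y \in \overline{B^+_{z_i}(r_i)}$. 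Hence $\overline{B^+_\star(R^*+r)}$ is a closed subset of the finite union $\bigcup_i \overline{B^+_{z_i}(r_i)}$ of compact sets, contradicting the definition of $R^*$. The hard part throughout is the irreversibility: the length-space construction controls only $d(z,y)$, not $d(y,z)$, so every triangle-inequality composition must be arranged to chain forward distances; both the total-boundedness step and the radius-extension step are orchestrated precisely so that only forward distances are ever added.
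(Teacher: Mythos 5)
Your proof is correct and follows essentially the same route as the paper, which simply invokes the classical Hopf--Rinow-type argument of Burago--Burago--Ivanov \cite[Proposition 2.5.22]{DYS}: take the supremal radius $R^*$ of compact closed forward balls at a basepoint, show $\overline{B^+_\star(R^*)}$ is compact via the length-space approximation and Theorem \ref{compactequvitheorem}, and enlarge the radius by local compactness to force $R^*=\infty$. Your care in chaining only forward distances is exactly the adaptation needed in the irreversible setting, so nothing further is required.
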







\section{Additional results for Gromov-Hausdorff topology
}\label{appendixGro-Hauscon}


\begin{proposition}[Ascoli theorem in the $\theta$-Gromov-Hausdorff topology]\label{Ascloghtipo}
Let $(X_i,d_{X_i})_{i}$ be a sequence of compact $\theta$-metric spaces converging to a compact space $(X,d_X)$ in the $\theta$-Gromov-Hausdorff topology, by means of $\epsilon_i$-isometries $f_i:X_i\rightarrow X$; and let $(Y_i,d_{Y_i})_{i}$ be another sequence of  compact $\theta$-metric spaces converging to a compact space $(Y,d_Y)$ in the $\theta$-Gromov-Hausdorff topology, by means of $\epsilon_i$-isometries $g_i:Y_i\rightarrow Y$. Let $(\alpha_i)_{i\in \mathbb{N}}$ be a sequence of maps $X_i\rightarrow Y_i$ that are asymptotically equicontinuous, in the sense that for every $\varepsilon>0$, there are $\delta=\delta(\varepsilon)>0$ and $N=N(\varepsilon)\in \mathbb{N}$ such that for all $i\geq N$,
\[
d_{X_i}(x,y)\leq \delta\Longrightarrow d_{Y_i}\left(\alpha_i(x),\alpha_i(y)  \right)\leq \varepsilon.
\]
Then after passing to a subsequence, the maps $g_i\circ\alpha_i\circ (f_r)_i:X\rightarrow Y$ converge uniformly to a continuous map $\alpha:X\rightarrow Y$, where $(f_r)_i:X_i\rightarrow X$ is defined in Proposition \ref{deltanprooxima}.
\end{proposition}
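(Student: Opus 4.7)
The plan is to reduce this to a classical Arzel\`a--Ascoli argument by transferring everything to the fixed target pair $(X,Y)$ and then performing a diagonal extraction against a countable dense subset of $X$. Write $\beta_i := g_i \circ \alpha_i \circ (f_r)_i : X \to Y$. The proof will consist of three steps: (1) show that $(\beta_i)_i$ is asymptotically equicontinuous on $X$; (2) extract a subsequence that converges pointwise on a countable dense set via a Cantor diagonal argument; (3) upgrade pointwise convergence on the dense set to uniform convergence on $X$, and verify continuity of the limit.

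For Step (1), I would proceed as follows. Fix $\varepsilon>0$ and let $\delta=\delta(\varepsilon/3)$ and $N_1=N_1(\varepsilon/3)$ be provided by the asymptotic equicontinuity of $(\alpha_i)$. Using Proposition \ref{deltanprooxima}, $(f_r)_i:X\to X_i$ satisfies $\dis (f_r)_i\leq (2+\theta)\epsilon_i$, so for any $x,x'\in X$,
\[
\bigl| d_{X_i}((f_r)_i(x),(f_r)_i(x')) - d_X(x,x') \bigr| \leq (2+\theta)\epsilon_i.
\]
Similarly $\dis g_i\leq \epsilon_i$, hence
\[
\bigl| d_Y(\beta_i(x),\beta_i(x')) - d_{Y_i}(\alpha_i((f_r)_i(x)),\alpha_i((f_r)_i(x'))) \bigr|\leq \epsilon_i.
\]
Choose $N\geq N_1$ large enough that $(2+\theta)\epsilon_i < \delta/2$ and $\epsilon_i<\varepsilon/3$ for $i\geq N$. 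Then $d_X(x,x')\leq \delta/2$ forces $d_{X_i}((f_r)_i(x),(f_r)_i(x'))\leq \delta$, which by hypothesis gives $d_{Y_i}(\alpha_i((f_r)_i(x)),\alpha_i((f_r)_i(x')))\leq \varepsilon/3$, and therefore $d_Y(\beta_i(x),\beta_i(x'))\leq \varepsilon/3+\epsilon_i\leq \varepsilon$. This is exactly asymptotic equicontinuity of $(\beta_i)_i$ (for the forward metric on $X$; by the $\theta$-bound the symmetrized statement follows automatically).

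For Step (2), recall that $(X,d_X)$ is a compact $\theta$-metric space, so by Theorem \ref{topologychara}/(ii) it is separable; pick a countable dense subset $D=\{x_k\}_{k\in\mathbb N}\subset X$. Since $(Y,d_Y)$ is compact, for each fixed $k$ the sequence $(\beta_i(x_k))_i$ has a convergent subsequence in $(Y,\hat d_Y)$. A standard diagonal extraction yields a subsequence, still denoted $(\beta_i)_i$, such that $\beta_i(x_k)\to \alpha(x_k)$ in $Y$ for every $k$. Then for Step (3), a standard $3\varepsilon$-argument combining this pointwise convergence on $D$ with the asymptotic equicontinuity from Step (1) and a finite $D$-cover of $X$ (which exists by compactness) promotes $(\beta_i)_i$ to a uniform Cauchy sequence on $X$; hence $\beta_i$ converges uniformly to a map $\alpha:X\to Y$, which extends from $D$ by uniform continuity. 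Continuity of $\alpha$ is immediate from the asymptotic equicontinuity in the limit.

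The only genuine care needed is accounting for the irreversibility: the estimates above rely on both $\dis (f_r)_i$ and $\dis g_i$ bounding differences of $\theta$-asymmetric distances, and on the $\theta$-bound to pass between forward and backward estimates when applying compactness arguments in $(Y,d_Y)$ and finite covers of $(X,d_X)$. Once one systematically replaces $d$ by its symmetrization $\hat d$ in the metric-space Arzel\`a--Ascoli machinery (using Theorem \ref{topologychara}/(ii), which identifies $\mathcal T_+$ with $\hat{\mathcal T}$), everything proceeds exactly as in Burago--Burago--Ivanov \cite[Thm.~2.5.14]{DYS}, and I expect this bookkeeping to be the main (minor) obstacle rather than any conceptual difficulty.
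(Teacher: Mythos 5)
Your proof is correct, but it follows a different route from the paper. You prove the statement from scratch: you transfer everything to the fixed pair $(X,Y)$ via $(f_r)_i$ and $g_i$, verify asymptotic equicontinuity of $\beta_i:=g_i\circ\alpha_i\circ(f_r)_i$ with explicit distortion bookkeeping ($\dis (f_r)_i\leq(2+\theta)\epsilon_i$, $\dis g_i\leq\epsilon_i$), and then run the classical Arzel\`a--Ascoli machinery (countable dense set, Cantor diagonal, $3\varepsilon$-argument, equicontinuity passing to the limit to get continuity of $\alpha$ even though the $\beta_i$ need not be continuous). The paper instead proves the proposition by a short reduction to the reversible case: it observes that $\theta$-Gromov--Hausdorff convergence of $(X_i,d_{X_i})$ implies Gromov--Hausdorff convergence of the symmetrized spaces $(X_i,\hat d_{X_i})$, that each $f_i$ is a $(1+\theta)\epsilon_i/2$-isometry for the symmetrized metrics with $(f_r)_i$ still an approximate inverse, and then invokes the reversible Ascoli theorem in the Gromov--Hausdorff topology (Villani, Proposition 27.20) together with Theorem \ref{topologychara}/(ii). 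Your approach buys a self-contained argument with explicit constants and makes transparent exactly where irreversibility enters (only through the $\theta$-comparability of $d$ and $\hat d$); the paper's approach buys brevity by outsourcing the diagonal/equicontinuity work to the known reversible statement. The only care point in your write-up, which you already handle correctly, is that uniform convergence and the continuity of the limit must be phrased in a topology where forward and symmetrized estimates are comparable, which the $\theta$-bound and Theorem \ref{topologychara}/(ii) guarantee.
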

\begin{proof}[Sketch of proof] Since $(X_i,d_{X_i})_{i}$ converges to $(X,d_X)$ in the $\theta$-Gromov-Hausdorff topology, then $(X_i,\hat{d}_{X_i})_{i}$ converges to $(X,\hat{d}_X)$ in the $1$-Gromov-Hausdorff topology, where   $\hat{d}_{X_i},\hat{d}_X$ are the symmetrized metrics of $d_{X_i}$ and $d$, respectively.
 Furthermore, $f_i:(X_i,\hat{d}_{X_i})\rightarrow (X,\hat{d}_X)$ is a $ {(1+\theta)\epsilon_i}/{2}$-isometry and $(f_r)_i:(X,\hat{d}_X)\rightarrow (X_i,\hat{d}_{X_i})$ is still an approximate inverse of $f_i$. Now the statement  follows from Villani \cite[Proposition 27.20]{Vi} and Theorem \ref{topologychara}/(ii).
\end{proof}

A similar argument together with Villani \cite[Proposition 27.22]{Vi} furnishes the following result.
\begin{proposition}[Prokhorov theorem in the $\theta$-Gromov-Hausdorff topology]\label{ProtheoremGromHauss} Let $(X_i,d_i)_{i}$ be a sequence of compact $\theta$-metric spaces converging to a compact space $(X,d)$ in the $\theta$-Gromov-Haudsdorff topology, by means of $\epsilon_i$-isometries $f_i:X_i\rightarrow X$. For each $i$, let $\mu_i$ be a probability measure on $X_i$. Then after extraction of a subsequence, $(f_i)_\sharp \mu_i$ converges to a probability measure $\mu$ in the weak topology.
\end{proposition}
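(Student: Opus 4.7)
The plan is to reduce to the reversible version (Villani's Proposition 27.22) via the symmetrization trick already used in Proposition \ref{Ascloghtipo}. Concretely, I would work simultaneously with the original metrics $d_i, d$ and their symmetrizations $\hat{d}_{X_i}, \hat{d}_X$ defined by \eqref{symmmetricde}.

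First, by Theorem \ref{topologychara}/(ii), the forward topology on each $X_i$ coincides with the topology of $(X_i, \hat{d}_{X_i})$, and similarly for $X$. Thus the Borel $\sigma$-algebras agree, each $\mu_i$ is canonically a Borel probability measure on the compact reversible metric space $(X_i, \hat{d}_{X_i})$, and a continuous function on $(X, d)$ is the same thing as a continuous function on $(X, \hat{d}_X)$. Consequently, weak convergence of $(f_i)_\sharp \mu_i$ to $\mu$ in the forward sense is equivalent to weak convergence in the symmetrized sense.

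Next, I would verify that the hypotheses of the classical Prokhorov-type theorem hold for the symmetrized data. Since $(X_i, d_i)_i$ converges to $(X, d)$ in the $\theta$-Gromov-Hausdorff topology by means of $f_i$, the same argument as in the sketch of Proposition \ref{Ascloghtipo} shows that $f_i : (X_i, \hat{d}_{X_i}) \to (X, \hat{d}_X)$ is an $\frac{(1+\theta)\epsilon_i}{2}$-isometry in the classical (reversible) sense, where the bound on $\dis f_i$ with respect to $\hat d$ follows from
\[
|\hat{d}_X(f_i(x), f_i(y)) - \hat{d}_{X_i}(x,y)| \leq \tfrac{1}{2}|d(f_i(x),f_i(y)) - d_i(x,y)| + \tfrac{1}{2}|d(f_i(y),f_i(x)) - d_i(y,x)|,
\]
and the density condition $X \subset \overline{[f_i(X_i)]^{\epsilon_i}}$ (with respect to $d$) implies the analogous condition with respect to $\hat{d}_X$, up to a factor depending on $\theta$. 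In particular, $(X_i, \hat{d}_{X_i})$ converges to $(X, \hat{d}_X)$ in the ordinary $1$-Gromov-Hausdorff topology.

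Having placed the problem in the reversible setting, I would apply Villani's Proposition 27.22 verbatim to the sequence of compact reversible metric-measure spaces $(X_i, \hat{d}_{X_i}, \mu_i)$ with approximating isometries $f_i$. This yields a subsequence along which $(f_i)_\sharp \mu_i$ converges weakly to a probability measure $\mu$ on $(X, \hat{d}_X)$, and translating back via the agreement of continuous functions gives the claim in the forward setting. There is essentially no obstacle here beyond bookkeeping on constants: the entire content is the symmetrization observation plus a direct appeal to the reversible result, exactly as in Proposition \ref{Ascloghtipo}.
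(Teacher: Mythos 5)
Your proposal is correct and follows essentially the same route as the paper: the paper derives this proposition by the same symmetrization argument used for Proposition \ref{Ascloghtipo} (via Theorem \ref{topologychara}/(ii) and the fact that $f_i$ becomes a $\frac{(1+\theta)\epsilon_i}{2}$-isometry for the symmetrized metrics), followed by a direct appeal to Villani's Proposition 27.22.
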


\begin{proof}[Proof of Proposition \ref{welldefinednoncompactGHCONVER}] Since the proof is similar to the reversible case (cf. Burago, Burago and Ivanov  \cite{DYS}), we just give the sketch.
\smallskip

(i)  According to (\ref{noncompactballconverge}) and Proposition \ref{deltanprooxima}, given $r>0$ and $\epsilon>0$, one can find two subsets $Y_{r,\epsilon}\subset X,Y'_{r,\epsilon}\subset X'$ and a map $f_{r,\epsilon}:Y_{r,\epsilon}\rightarrow Y'_{r,\epsilon}$ such that
\begin{align*}
&\overline{B^+_\star(r-\epsilon)}\subset Y_{r,\epsilon}\subset \overline{B^+_\star(r+2\epsilon)},\ \overline{B^+_{\star'}(r-\epsilon)}\subset Y'_{r,\epsilon}\subset \overline{B^+_{\star'}(r+2\epsilon)},\\
&f_{r,\epsilon}(\star)=\star', \ f_{r,\epsilon}(Y_{r,\epsilon})=Y'_{r,\epsilon}, \  \dis f_{r,\epsilon}\leq (3+\Theta(r))\epsilon.
\end{align*}
Using the Cantor diagonal procedure first for $\epsilon\rightarrow 0^+$ and then for $r\rightarrow \infty$, one can end up with a distance-preserving map from a dense  subset of $X$ to $X'$, which extends to a distance-preserving map $f:X\rightarrow X'$ with $f(\star)=\star'$. The properties of $f_{r,\epsilon}$ imply that $f|_{\overline{B_\star(r)}}$ is a subjective map  and therefore, it is an isometry from $\overline{B_\star(r)}$ to $\overline{B_{\star'}(r)}$ for any $r>0$.

\smallskip

(ii) The proof is trivial.

\smallskip

(iii) First we show $\diam(X)\leq D$. In fact, given any $a,b\in X$, there exists $r>1$ such that $a,b\in \overline{B^+_\star(r-1)}$. For any $\epsilon\in (0,1)$,  one can find  $I=I(r,\epsilon)>0$ such that for $i>{I}$,  there is a map $f_i: \overline{B^+_{\star_i}(r)}\rightarrow X$ satisfying
\[
f_i(\star_i)=\star,\quad \text{dis}f_i\leq \epsilon,\quad \overline{B^+_\star(r-\epsilon)}\subset \left[ f_i\left( \overline{B^+_{\star_i}(r)} \right) \right]^\epsilon\subset\left[f_i(X_i)\right]^\epsilon,\tag{B.1}\label{B.1frconver}
\]
which implies $d(a,b)\leq  D+\epsilon$ and hence, $\diam(X)\leq D$. Now choose $r=D+1$ and any $\theta\geq \Theta(D)$. Thus, (\ref{B.1frconver}) together with Lemma \ref{inprotantisometry}/(ii)
furnishes $\lim_{i\rightarrow \infty}d^\theta_{GH}(\mathcal {X}_i,\mathcal {X})=0$.

\smallskip

(iv) Fix an arbitrary point $\star\in X$.
In view of the proof of Lemma \ref{inprotantisometry}/(i), by passing a subsequence, one can choose a sequence of points $\star_i\in X_i$ and a sequence of $(1+\theta)\epsilon_i$-isometries $f_i:X_i\rightarrow X$ such that $f_i(\star_i)=\star$ and  $\epsilon_i\rightarrow 0$.

 Given any $\delta>0$, there is a large $I=I(\delta)$ such that $ (1+3\theta)\epsilon_i<\delta$ for all $i>I$. Given any $i>I$ and $r>0$, for any $x\in \overline{B^+_{\star}(r-\delta)}$, there is $\tilde{x}\in X_i$ such that $d(f_i(\tilde{x}),x)\leq2\epsilon_i$ (see the proof of Lemma \ref{inprotantisometry}/(i)). Moreover, one has
\begin{align*}
d_i(\star_i,\tilde{x})\leq d(f_i(\star_i),f_i(\tilde{x}))+(1+\theta)\epsilon_i=d(\star,f_i(\tilde{x}))+(1+\theta)\epsilon_i\leq d(\star,x)+d(x,f_i(\tilde{x}))+(1+\theta)\epsilon_i<r,
\end{align*}
which implies $\tilde{x}\in \overline{B^+_{\star_i}(r)}$. Thus, the above construction yields $\overline{B^+_{\star}(r-\delta)}\subset \left[ f_i\left(\overline{B^+_{\star_i}(r)}\right)\right]^\delta$. We are done because
$\text{dis}f_i<\delta$ and $f_i(\star_i)=\star$.
\end{proof}

\begin{proof}[Sketch of the proof of Theorem \ref{noncompactprecompact}] Let $(X_i,\star_i,d_i)_i$ be an arbitrary sequence in $\mathscr{C}$. Given $r>0$, by passing a subsequence, a standard construction (cf. Burago, Burago and Ivanov \cite[Theorem 7.4.15]{DYS})
furnishes  a compact pointed $\Theta(r)$-metric space $(Y_r,\star)$  satisfying
\[
\lim_{i\rightarrow \infty}d^{\Theta(r)}_{pGH}\left( (\overline{B^+_{\star_i}(r)},\star_i),(Y_r,\star) \right)=0.\tag{B.2}\label{willdeletebutnowgeive}
\]
Letting $r\rightarrow \infty$ and using the Cantor diagonal argument, one can get {an}  irreversible metric space $Y$.

Note that each $Y_r$ is the completion of a collection of sequences $(x_i)_i$, where $x_i\in \overline{B^+_{\star_i}(r)}$. By this fact, a direct but tedious argument yields    $B^+_\star(r-\delta)\subset Y_r$ for any small $\delta>0$, which implies  $\overline{B^+_\star(r)}\subset \overline{Y_r}=Y_r$ and hence, $Y\in \mathcal {M}^{\Theta}_*$.

For any $\epsilon>0$, the proof of Lemma \ref{inprotantisometry}/(i) together with
 (\ref{willdeletebutnowgeive}) yields an $N(r,\epsilon)>0$ such that for any $i>N(r,\epsilon)$, there exists
 an $\epsilon$-isometry $f_{i}: \overline{B^+_{\star_i}(r)} \rightarrow Y_r\subset Y$ with $f_i(\star_i)=\star$ and
 \[
 \overline{B^+_\star(r-\epsilon)}\subset Y_r\subset \left[f_{i}\left(\overline{B^+_{\star_i}(r)}\right)\right]^\epsilon,
 \]
 which concludes the proof.
 \end{proof}

\begin{proof}[Proof of Proposition \ref{lenthcompactnoncompact}] Owing to Theorem \ref{mdistrctilylength}, it suffices to show that
for any $x,y\in X$, there exists a   midpoint $z$ between $x$ and $y$. In order to do this,
choose a large $R$ such that $x,y\in \overline{B^+_\star(R)}$ and set $\widetilde{R}:=(4+\Theta(R))R$.
Owing to (\ref{noncfincazse}), by passing a subsequence, the same argument in the proof Lemma \ref{inprotantisometry}/(ii) yields a sequence $\epsilon_i\rightarrow 0 $ and a sequence of $\Theta(\widetilde{R})$-admissible metric $\tilde{d}_i$ on $ \overline{B^+_{\star_i}(\widetilde{R})}\sqcup \overline{B^+_\star(\widetilde{R}-\epsilon_i)}$ satisfying
\[
  2(1+\Theta(\widetilde{R}))\epsilon_i+(2+\Theta(R))R<\widetilde{R},\quad
 \tilde{d}_{iH}\left(\overline{B^+_{\star_i}(\widetilde{R})},\overline{B^+_{\star}(\widetilde{R}-\epsilon_i)}\right)+\frac{\tilde{d}_i(\star_i,\star)+\tilde{d}_i(\star,\star_i)}{2}\leq \epsilon_i.
\]
For each $i$,
choose $x_i,y_i\in \overline{B^+_{\star_i}(\widetilde{R})}$ with $\tilde{d}_i(x_i,x)<\epsilon_i$ and $\tilde{d}_i(y_i,y)<\epsilon_i$. Then the triangle inequality yields
\begin{align*}
\tilde{d}_i(x_i,y_i)&\leq \tilde{d}_i(x_i,x)+ {d}(x,y)+\tilde{d}_i(y,y_i)\leq (1+\Theta(\widetilde{R}))\epsilon_i+(1+\Theta(R))R,\\
 \tilde{d}_i(\star_i,x_i)&\leq \tilde{d}_i(\star_i,\star)+{d}(\star,x)+\tilde{d}_i(x,x_i)\leq (1+\Theta(\widetilde{R}))\epsilon_i+R.
\end{align*}
Hence, the shortest path  from $x_i$ to $y_i$ is still contained in $\overline{B^+_{\star_i}(\widetilde{R})}$ and so is the midpoint $z_i$ between $x_i$ and $y_i$. Choose a point $\mathfrak{z}_i\in \overline{B^+_\star(\widetilde{R}-\epsilon_i)}$ such that $\tilde{d}_i(\mathfrak{z}_i,z_i)<\epsilon_i$. By passing a subsequence, one can assume that $(\mathfrak{z}_i)_i$ converges to some point $z\in \overline{B^+_\star(\widetilde{R})}\subset X$. It is not hard to check that $z$ is a midpoint between $x$ and $y$.
\end{proof}

\begin{proposition}\label{noncompactmesurecon}Let $(X_i,\star_i,d_i)_{i}$ be a sequence of pointed forward metric spaces in $\mathcal {M}^{\Theta}_*$ converging to a space $(X,\star,d)\in \mathcal {M}^{\Theta}_*$ in the pointed forward $\Theta$-Gromov-Hausdorff topology, by means of pointed $\epsilon_i$-isometries {$f_i:\overline{B^+_{\star_i}(R_i)}\rightarrow  \overline{B^+_{\star}(R_i)}$ with $\epsilon_i\rightarrow 0$ and $R_i\rightarrow \infty$.} For each $i\in \mathbb{N}$, let $\mu_i$ be a locally finite Borel measure on $X_i$.  Assume that for each $r>0$, there is a finite constant $M=M(r)$ such that $\mu_i[\overline{B^+_{\star_i}(r)}] \leq M$ for every $i\in \mathbb{N}$.
Then, there is a locally finite measure $\mu$ on $X$ such that, up to extraction of subsequence, $(f_i)_\sharp \mu_i\rightarrow\mu$
in the weak-$*$ topology.
\end{proposition}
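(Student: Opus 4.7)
The plan is to exhaust $X$ by the compact forward balls $\overline{B^+_\star(n)}$, extract weakly convergent subsequences of restrictions on each ball, diagonalize over $n$, and glue the limits into a single locally finite Radon measure via Riesz representation. First I would establish the key mass bound. Fix $r>0$ and let $i$ be large enough that $R_i>r+1$ and $\epsilon_i<1$. Since $f_i(\star_i)=\star$ and $\dis f_i\leq \epsilon_i$, any $x$ with $f_i(x)\in\overline{B^+_\star(r)}$ satisfies
\[
d_i(\star_i,x)\leq d(\star,f_i(x))+\epsilon_i\leq r+1,
\]
so $f_i^{-1}(\overline{B^+_\star(r)})\subset \overline{B^+_{\star_i}(r+1)}$. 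Interpreting $(f_i)_\sharp\mu_i$ as a Borel measure on $X$ extended by zero outside the image of $f_i$, the hypothesis gives
\[
\bigl[(f_i)_\sharp\mu_i\bigr]\!\left(\overline{B^+_\star(r)}\right)\leq \mu_i\!\left(\overline{B^+_{\star_i}(r+1)}\right)\leq M(r+1).
\]

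Next I would use compactness. By Theorem~\ref{topologychara}/(ii) and Remark~\ref{remarkcompleteseparted}, $(X,\hat d)$ is a locally compact Polish space and each $\overline{B^+_\star(n)}$ is a compact Polish space under the symmetrized metric. Uniformly bounded Borel measures on a compact Polish space form a sequentially weak-$*$ compact family, so for every $n\in\mathbb{N}$ I can extract a subsequence along which $\bigl((f_i)_\sharp\mu_i\bigr)|_{\overline{B^+_\star(n)}}$ converges weakly to a finite Borel measure $\nu_n$ on $\overline{B^+_\star(n)}$. A standard Cantor diagonal procedure yields a single subsequence, still indexed by $i$, such that this convergence holds simultaneously for all $n$. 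I would then define $\mu$ as follows: any $\phi\in C_c(X)$ has $\supp\phi$ contained in $B^+_\star(n_\phi)$ for some large enough $n_\phi$ (because $d(\star,\cdot)$ is continuous, hence bounded on compact sets), and for every $n\geq n_\phi$
\[
\int_X\phi\,d(f_i)_\sharp\mu_i=\int_{\overline{B^+_\star(n)}}\phi\,d\bigl((f_i)_\sharp\mu_i\bigr)|_{\overline{B^+_\star(n)}}\xrightarrow[i\to\infty]{}\int\phi\,d\nu_n.
\]
In particular the limit is independent of $n\geq n_\phi$, so $\Lambda(\phi):=\int\phi\,d\nu_{n_\phi}$ defines a positive linear functional on $C_c(X)$. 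By the Riesz representation theorem applied to $(X,\hat d)$, $\Lambda$ is represented by a unique locally finite Radon measure $\mu$ on $X$, and the display above is precisely the desired weak-$*$ convergence $(f_i)_\sharp\mu_i\to\mu$.

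The main delicacy will be the consistency between the $\nu_n$'s: weak limits of bounded measures on compact balls can deposit mass on the topological boundary $\partial\overline{B^+_\star(n)}$ in an uncontrolled way, so one cannot simply claim $\nu_m|_{\overline{B^+_\star(n)}}=\nu_n$ for $m>n$. The argument sidesteps this difficulty because test functions $\phi\in C_c(X)$ with $\supp\phi\subset B^+_\star(n_\phi)$ are insensitive to mass sitting on any such boundary, which is exactly why the value $\Lambda(\phi)$ is well-defined and the Riesz-representation step goes through. Local finiteness of $\mu$ then comes for free, since every compact $K\subset X$ lies inside some $\overline{B^+_\star(n)}$ and $\mu(K)\leq \nu_n(\overline{B^+_\star(n)})\leq M(n+1)<\infty$.
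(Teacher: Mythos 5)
Your proof is correct and follows essentially the same route as the paper's: the uniform bound $(f_i)_\sharp\mu_i[\overline{B^+_\star(r)}]\leq M(r+1)$ via the pointed $\epsilon_i$-isometry, weak-$*$ (Alaoglu-type) compactness on each compact forward ball, and a Cantor diagonal extraction in the radius. The only difference is that you spell out the gluing of the ball-wise limits into a single locally finite measure via $C_c$ test functions and the Riesz representation theorem, a step the paper compresses into ``we can find $\mu$''.
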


\begin{proof}
For any fixed $r>0$, {there exists a large $I>0$ such that $r+\epsilon_i<R_i$ and $\epsilon_i<1$ for any $i>I$. Thus,}
\[
(f_i)_\sharp\mu_i\left[ \overline{B^+_\star(r)} \right]=\mu_i\left[(f_i)^{-1}\left( \overline{B^+_\star(r)}\right) \right]\leq \mu_i\left[ \overline{B^+_{\star_i}(r+\epsilon_i)}  \right]\leq M(r+1).
\]
By Alaoglu's theorem, we can assume that $(f_i)_\sharp\mu_i|_{  \overline{B^+_\star(r)} }$ converges to some finite measure $\mu_r$ in the weak-$*$ topology.
By letting $r\rightarrow \infty$ and applying a diagonal extraction, we can find $\mu$.
\end{proof}

\section{Auxiliary properties of the Wasserstein distance
}\label{optimaltarsn}

The same argument as in Villani \cite[Theorem 6.15]{Vi} yields the following result.

\begin{theorem}\label{totallvacontwp}Let $(X,d)$ be a forward boundedly compact forward metric space. Given $p\in [1,\infty)$, for any $x_0\in X$,
we have
\[
W_p(\mu,\nu)\leq 4\left( \int_X \hat{d}(x_0,x)^p d|\mu-\nu|(x)  \right)^{\frac1p},\ \forall\,\mu,\nu\in P_p(X).
\]
\end{theorem}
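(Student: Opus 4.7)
My plan is to adapt the standard argument from Villani \cite[Theorem 6.15]{Vi} by constructing an explicit suboptimal coupling based on the Jordan/Hahn decomposition of $\mu-\nu$, then controlling the asymmetric distance $d$ by the symmetrized distance $\hat{d}$ via the elementary inequality $d(x,y)\leq 2\hat{d}(x,y)$, which is the only place where the irreversibility enters (and which accounts for the constant $4$).

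First I would assume $\mu\neq\nu$ (otherwise the inequality is trivial) and let $\alpha:=\mu\wedge\nu$ denote the common part, setting $\mu':=\mu-\alpha$ and $\nu':=\nu-\alpha$. Then $\mu'$ and $\nu'$ are non-negative and mutually singular, $|\mu-\nu|=\mu'+\nu'$, and $m:=\mu'[X]=\nu'[X]>0$ since $\mu[X]=\nu[X]=1$. I would then define the transference plan
\[
\pi:=(\id,\id)_\sharp \alpha \;+\; \frac{1}{m}\,\mu'\otimes\nu',
\]
which is easily checked to be a coupling of $\mu$ and $\nu$, supported on $\Delta_X\cup(\suppor\mu'\times\suppor\nu')$. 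Since $d(x,x)=0$, the diagonal part contributes nothing, and by definition of $W_p$,
\[
W_p(\mu,\nu)^p \;\leq\; \int_{X\times X} d(x,y)^p\,{\ddd}\pi(x,y) \;=\; \frac{1}{m}\int_{X\times X} d(x,y)^p\,{\ddd}\mu'(x)\,{\ddd}\nu'(y).
\]

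Next I would bound $d(x,y)^p$ using the triangle inequality followed by the symmetrization bound. For any $x,y\in X$, we have $d(x,y)\leq d(x,x_0)+d(x_0,y)\leq 2\hat{d}(x_0,x)+2\hat{d}(x_0,y)$ using $d(\cdot,\cdot)\leq 2\hat{d}(\cdot,\cdot)$ and the symmetry of $\hat{d}$. Hence by the convexity of $r\mapsto r^p$,
\[
d(x,y)^p \;\leq\; 2^p\bigl(\hat{d}(x_0,x)+\hat{d}(x_0,y)\bigr)^p \;\leq\; 2^{2p-1}\bigl(\hat{d}(x_0,x)^p+\hat{d}(x_0,y)^p\bigr).
\]
Substituting this into the integral above and using $\mu'[X]=\nu'[X]=m$ yields
\[
W_p(\mu,\nu)^p \;\leq\; 2^{2p-1}\left[\int_X \hat{d}(x_0,x)^p\,{\ddd}\mu'(x)+\int_X \hat{d}(x_0,y)^p\,{\ddd}\nu'(y)\right]=2^{2p-1}\int_X\hat{d}(x_0,x)^p\,{\ddd}|\mu-\nu|(x).
\]
Taking $p$-th roots gives $W_p(\mu,\nu)\leq 2^{2-1/p}\bigl(\int_X\hat{d}(x_0,x)^p\,{\ddd}|\mu-\nu|\bigr)^{1/p}$, and since $2^{2-1/p}\leq 4$ uniformly in $p\in[1,\infty)$, the claimed bound follows.

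There is no serious obstacle here: the only subtlety is the asymmetry of $d$, which is harmlessly absorbed by the factor $2$ coming from $d\leq 2\hat{d}$ (cf.\ Theorem \ref{topologychara}/(ii)); the finiteness of both integrals is guaranteed by the assumption $\mu,\nu\in P_p(X)$ combined with Theorem \ref{wassdistacbasisthe}. The constant $4$ is not sharp (one actually obtains $2^{2-1/p}$), but it is simultaneously valid for all $p\in[1,\infty)$, which matches the statement.
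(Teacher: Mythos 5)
Your proof is correct and follows essentially the same route the paper intends: the paper simply invokes "the same argument as in Villani [Theorem 6.15]", which is exactly the coupling $(\id,\id)_\sharp(\mu\wedge\nu)+\frac1m\mu'\otimes\nu'$ plus the triangle inequality through $x_0$ that you carry out, with the irreversibility absorbed via $d\leq 2\hat d$ to produce the constant $2^{2-1/p}\leq 4$. No gaps.
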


\begin{definition}
Let $(X,d)$ be a forward metric space  and  set $c:=d^{\,p}$ for some $p\in [1,\infty)$.
 A function $\psi:X\rightarrow \mathbb{R}\cup \{+\infty\}$ is said to be {\it $c$-convex} if it is not identically $+\infty$, and there exists $\zeta: X\rightarrow \mathbb{R}\cup \{\pm \infty\}$ such that
$\psi(x)=\sup_{y\in X}\left( \zeta(y)-c(x,y) \right)$ for any  $x\in X$.
Then its $c$-transform $\psi^c$ is defined by
$\psi^c(y):=\inf_{x\in X}\left( \psi(x)+c(x,y) \right)$ for any $y\in X$.

\end{definition}

\begin{lemma}[Kantorovich-Rubinstein distance]\label{W1KANrUBDIS}Let $(X,d)$ be
  a forward boundedly compact forward metric space. Thus for any $\mu,\nu\in P_1(X)$, we have
\[
W_1(\mu,\nu)=\sup_{\psi \in {\rm{Lip}}_1(X)}\left( \int_X \psi(y)d\nu(y)-\int_X\psi(x)d\mu(x) \right),\tag{C.1}\label{B.1}
\]
where ${\rm{Lip}}_1(X):=\{ f\in C(X): f(y)-f(x)\leq d(x,y),\ \forall\,x,y\in X  \}$.
\end{lemma}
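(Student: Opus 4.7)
The plan is to deduce \eqref{B.1} from the classical Kantorovich duality applied on the Polish symmetrization $(X,\hat d)$, together with a one-sided $c$-transform argument that respects the asymmetry of $d$. By Theorem \ref{topologychara}/(ii), $(X,\hat d)$ is a Polish space, and by Theorem \ref{topologychara}/(i) the cost $c(x,y):=d(x,y)$ is continuous on $(X,\hat d)\times(X,\hat d)$. Hence I would first invoke the classical Kantorovich duality (Villani \cite[Theorem 5.10]{Vi}) to obtain
\[
W_1(\mu,\nu)=\sup_{(\phi,\psi)}\left(\int_X\psi\,d\nu-\int_X\phi\,d\mu\right),
\]
where the supremum runs over all pairs $(\phi,\psi)$ of bounded (or suitably integrable) continuous functions satisfying $\psi(y)-\phi(x)\leq d(x,y)$ for every $x,y\in X$.

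For the easy inequality ``$\leq$'' in \eqref{B.1}, I would test against an arbitrary $\psi\in{\rm Lip}_1(X)$: for any coupling $\pi\in\Pi(\mu,\nu)$,
\[
\int_X\psi\,d\nu-\int_X\psi\,d\mu=\int_{X\times X}\bigl(\psi(y)-\psi(x)\bigr)\,d\pi(x,y)\leq \int_{X\times X}d(x,y)\,d\pi(x,y),
\]
and taking the infimum over $\pi$ yields the bound. The converse inequality is the substantive part: starting from a pair $(\phi,\psi)$ as in the duality, I would introduce the $c$-transform
\[
\tilde\psi(y):=\inf_{x\in X}\bigl(\phi(x)+d(x,y)\bigr).
\]
The constraint $\psi(y)\leq\phi(x)+d(x,y)$ gives $\psi\leq\tilde\psi$, while taking $x=y$ gives $\tilde\psi\leq\phi$. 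A one-line application of the triangle inequality shows
\[
\tilde\psi(y)\leq\inf_{z}\bigl(\phi(z)+d(z,x)+d(x,y)\bigr)=\tilde\psi(x)+d(x,y),
\]
so $\tilde\psi$ obeys precisely the asymmetric Lipschitz condition defining ${\rm Lip}_1(X)$. Continuity of $\tilde\psi$ then follows from the two-sided bound $|\tilde\psi(x)-\tilde\psi(y)|\leq\max\{d(x,y),d(y,x)\}$ combined with the joint continuity of $d$ under $\mathcal T_+\times\mathcal T_+$ (Theorem \ref{topologychara}/(i)). The monotonicity inequalities then give
\[
\int_X\tilde\psi\,d\nu-\int_X\tilde\psi\,d\mu\geq\int_X\psi\,d\nu-\int_X\phi\,d\mu,
\]
so the supremum in \eqref{B.1} dominates the duality value.

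The main obstacle I anticipate is the integrability bookkeeping required to make the $c$-transform argument legitimate, since in the irreversible setting the asymmetry of $d$ must be controlled by the symmetrized metric $\hat d$ that defines the space $P_1(X)$. I would address this by observing that any $\psi\in{\rm Lip}_1(X)$ satisfies both $\psi(y)-\psi(x_0)\leq d(x_0,y)$ and $\psi(x_0)-\psi(y)\leq d(y,x_0)$ for a fixed basepoint $x_0$, hence
\[
|\psi(y)-\psi(x_0)|\leq\max\bigl\{d(x_0,y),d(y,x_0)\bigr\}\leq 2\hat d(x_0,y),
\]
which is $\mu$- and $\nu$-integrable because $\mu,\nu\in P_1(X)$ (Theorem \ref{wassdistacbasisthe}). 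This integrability control also justifies restricting the Kantorovich supremum to admissible pairs in which $\phi$ can be replaced by the (possibly unbounded but integrable) $c$-transform $\tilde\psi$, after the standard truncation-and-monotone-convergence argument from the reversible theory.
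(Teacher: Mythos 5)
Your argument is correct and follows essentially the same route as the paper: both pass to the Polish symmetrization $(X,\hat d)$, invoke Villani's Kantorovich duality for the continuous cost $c=d$, and reduce the dual side to ${\rm Lip}_1(X)$ using integrability from $P_1(X)$. The only difference is cosmetic — you carry out the $c$-transform and the asymmetric Lipschitz/continuity checks explicitly, whereas the paper delegates exactly this identification of $c$-convex functions with ${\rm Lip}_1(X)$ (and $\psi^c=\psi$) to Villani's Particular Case 5.4.
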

\begin{proof}[Sketch of proof] Let $c(x,y):=d(x,y)$, which is a continuous function on the Polish space $(X,\hat{d})$. Thus, Kantorovich duality (cf. Villani \cite[Theorem 5.10/(i)]{Vi}) yields
\[
W_1(\mu,\nu)=\sup_{\{\psi \in L^1(\mu):\,\psi \text{ is $c$-convex} \}}\left( \int_X \psi^c(y)d\nu(y)-\int_X\psi(x)d\mu(x) \right).
\]

 On the other hand, the same argument as in Villani \cite[Particular Case 5.4]{Vi}, one can show the set of $c$-convex functions is exactly $\text{Lip}_1(X)$. In particular, if $\psi\in \text{Lip}_1(X)$, then $\psi^c=\psi$. Since $\mu\in P_1(X)$, it is easy to check $ \text{Lip}_1(X)\subset L^1(\mu)$. Thus, the lemma follows.
\end{proof}

\begin{lemma}\label{tightCauchy}
Let $(X,\star,d)$ be an element in $\mathcal {M}^\Theta_*$  such that $\Theta^{\frac{p}{p-1}}$ is a concave function for some $p\in [1,\infty)$. Every forward Cauchy sequence $(\mu_k)_k$ in $(P_p(X),W_p)$
is tight, i.e., for each $\varepsilon>0$, there exists a compact set $K_\varepsilon\subset X$ such that $\mu_k[X\backslash K_\varepsilon]\leq \varepsilon$ for each $k$.
\end{lemma}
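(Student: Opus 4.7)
The plan is to use the forward Cauchy hypothesis to get a uniform $W_p$-bound on $\mu_k$ from the base point $\delta_\star$, and then to convert that moment bound into tightness by combining Markov's inequality with the forward bounded compactness built into $\mathcal{M}^\Theta_*$. The concavity of $\Theta^{p/(p-1)}$ does not enter the tightness argument itself; it is present because the ambient corollaries in this section need the full strength of Theorem \ref{wasstherforqs}, and I would flag this at the end of the proof.

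First, I would fix the threshold $\varepsilon_0 = 1$ in the forward Cauchy property and extract an index $N$ such that $W_p(\mu_N, \mu_k) \leq 1$ for every $k \geq N$. The triangle inequality of Theorem \ref{wassdistacbasisthe}/(iii) then yields
\[
W_p(\delta_\star, \mu_k) \;\leq\; W_p(\delta_\star, \mu_N) + W_p(\mu_N, \mu_k) \;\leq\; W_p(\delta_\star, \mu_N) + 1 \qquad (k \geq N),
\]
while for the finitely many indices $k < N$ the quantity $W_p(\delta_\star, \mu_k)$ is already finite because $\mu_k \in P_p(X)$. Setting $R := \sup_k W_p(\delta_\star, \mu_k) < \infty$ and observing that the only transference plan between $\delta_\star$ and $\mu_k$ is the product $\delta_\star \otimes \mu_k$, I would conclude that
\[
\int_X d(\star, x)^p \, d\mu_k(x) \;=\; W_p(\delta_\star, \mu_k)^p \;\leq\; R^p \qquad \text{for every } k.
\]

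Next, since $d(\star, \cdot)$ is continuous in the forward topology by Theorem \ref{topologychara}/(i), it is Borel measurable, and Markov's inequality gives
\[
\mu_k\{ x \in X : d(\star, x) \geq r \} \;\leq\; R^p / r^p \qquad \text{for all } r > 0, \, k \in \mathbb{N}.
\]
Continuity of $d(\star, \cdot)$ also implies $B^+_\star(r) \subset \overline{B^+_\star(r)} \subset \{x : d(\star, x) \leq r\}$, so the complement of $\overline{B^+_\star(r)}$ sits inside $\{x : d(\star, x) \geq r\}$ (up to a $\mu_k$-null boundary set, which I would handle by replacing $r$ with $r + \eta$ and sending $\eta \downarrow 0$). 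Hence $\mu_k( X \setminus \overline{B^+_\star(r)} ) \leq R^p / r^p$.

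Finally, because $(X, \star, d) \in \mathcal{M}^\Theta_*$ is forward boundedly compact, the closed forward ball $\overline{B^+_\star(r)}$ is compact for every $r > 0$. Given $\varepsilon > 0$, I would choose $r := R \varepsilon^{-1/p}$ and set $K_\varepsilon := \overline{B^+_\star(r)}$, so that $\mu_k[X \setminus K_\varepsilon] \leq \varepsilon$ uniformly in $k$, which is the required tightness. There is no genuine obstacle; the only technical point worth care is that $W_p$ is irreversible, so the triangle inequality in Step~1 must be applied in the direction stated, from $\delta_\star$ to $\mu_N$ to $\mu_k$, rather than the reverse.
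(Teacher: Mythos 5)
Your proof is correct, and it is genuinely different from (and simpler than) the one in the paper. You exploit the fact that every space in $\mathcal{M}^\Theta_*$ is forward boundedly compact: a uniform bound $\sup_k W_p(\delta_\star,\mu_k)\leq R$, obtained from the forward Cauchy property and the triangle inequality of Theorem \ref{wassdistacbasisthe} (applied in the correct order, as you note), gives a uniform $p$-th moment bound $\int_X d(\star,x)^p\,{\ddd}\mu_k\leq R^p$ since the unique coupling of $\delta_\star$ and $\mu_k$ is the product; Markov's inequality plus compactness of $\overline{B^+_\star(r)}$ then yields tightness with $K_\varepsilon=\overline{B^+_\star(R\varepsilon^{-1/p})}$ (your worry about the boundary is unnecessary, since $X\setminus\overline{B^+_\star(r)}\subset\{x:d(\star,x)\geq r\}$ and $d(\star,\cdot)$ is continuous). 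The paper instead runs the classical Polish-space argument of Villani: tightness of the finite family $\{\mu_1,\dots,\mu_N\}$, finite coverings by small forward balls, a Lipschitz cut-off combined with the Kantorovich--Rubinstein duality (Lemma \ref{W1KANrUBDIS}), and crucially Lemma \ref{reversbilityofW} to convert the forward-Cauchy control of $W_p(\mu_j,\mu_k)$ into control of the reversed quantity $W_1(\mu_k,\mu_j)$ — this is exactly where the concavity of $\Theta^{p/(p-1)}$ enters, so your observation that concavity plays no role in your argument is accurate. What each approach buys: yours is shorter and drops the concavity hypothesis, at the price of using properness of $X$ in an essential way; the paper's argument is heavier but would survive in settings where closed forward balls are not compact, and it keeps the hypotheses aligned with the surrounding results (Lemma \ref{reversbilityofW}, Theorem \ref{wasstherforqs}, Corollary \ref{backwardconvergence}) where the concavity is genuinely needed.
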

\begin{proof}
Since $(\mu_k)_k$ is a forward Cauchy sequence, there exists $M>0$ such that $W_p(\mu_M,\mu_k)<1$ for any $k\geq M$. Hence, the triangle inequality of $W_p$ yields
 a constant $D$ with $W_p(\delta_{\star},\mu_k)\leq D$  for all $k$. Moreover, given $\varepsilon\in (0,1)$ and $\ell\in \mathbb{N}$, there exists $N=N(\varepsilon,\ell)\geq M$ so that $W_p(\mu_N,\mu_k)<2^{-2\ell-1}\varepsilon^2/\Theta(D+1)$ for any $k\geq N$. Therefore, for any $k\in \mathbb{N}$, there is $j\in \{1,\ldots,N\}$ satisfying
\[
W_p(\delta_\star,\mu_j)\leq D,\ W_p(\mu_j,\mu_k)<2^{-2\ell-1}\varepsilon^2/\Theta(D+1)<1.\tag{C.2}\label{controlWptoinetaulty}
\]

On the other hand, since the finite set $\{\mu_1,\ldots,\mu_N\}$ is always tight, there is a compact set $K$ such that $\mu_j[X\backslash K]<2^{-\ell-1}\varepsilon$ for all $j\in \{1,\ldots,N\}$. The compact set $K$ can be covered by a finite number of small balls
\[
K\subset \bigcup_{1\leq i\leq m(\ell)}B^+_{x_i}(2^{-\ell-1}\varepsilon)=:U_\ell.
\]
Now set $U_\ell^\varepsilon:=\{x\in X:\,d(U_\ell,x)<2^{-\ell-1}\varepsilon\}\subset \cup_{1\leq i\leq m(\ell)}B^+_{x_i}(2^{-\ell}\varepsilon)$. Now set
\[
\phi(x):=-\min\left\{0,\  \frac{d(U_\ell,x)}{2^{-\ell-1}\varepsilon}-1 \right\}.
\]
Thus, $\textbf{1}_{U_\ell}\leq \phi\leq \textbf{1}_{U_\ell^\varepsilon}$ and $2^{-\ell-1}\varepsilon\phi$ belongs to $\text{Lip}_1(X)$ (see Lemma \ref{W1KANrUBDIS}). Given $k\in \mathbb{N}$, choose $j\in \{1,\ldots,N\}$ such that $\mu_j$ satisfies (\ref{controlWptoinetaulty}). Then
(\ref{B.1}) together with Lemma \ref{reversbilityofW} and (\ref{controlWptoinetaulty}) furnishes
\begin{align*}
\mu_k\left[  \bigcup_{1\leq i\leq m(\ell)}B^+_{x_i}(2^{-\ell}\varepsilon) \right]&\geq\mu_k[U_\ell^\varepsilon]\geq \int_X \phi d\mu_k=\int_X \phi d\mu_j-\left(\int_X \phi d\mu_j -\int_X \phi d\mu_k  \right)\\
&\geq \int_X \phi d\mu_j-\frac{W_1(\mu_k,\mu_j)}{2^{-\ell-1}\varepsilon}\geq \int_X \phi d\mu_j-\Theta(D+1)\frac{W_p(\mu_j,\mu_k)}{2^{-\ell-1}\varepsilon}\\
&\geq \mu_j[U_\ell]-\Theta(D+1)\frac{W_p(\mu_j,\mu_k)}{2^{-\ell-1}\varepsilon}\geq \mu_j[K]-\Theta(D+1)\frac{W_p(\mu_j,\mu_k)}{2^{-\ell-1}\varepsilon}\geq 1-2^{-\ell}\varepsilon,
\end{align*}
which indicates
\[
\mu_k\left[ X\backslash \bigcup_{1\leq i\leq m(\ell)}B^+_{x_i}(2^{-\ell}\varepsilon) \right]\leq 2^{-\ell}\varepsilon,\ \forall \,k\in \mathbb{N}.
\]
Now set
$K_\varepsilon:=\cap_{1\leq \ell\leq \infty} \cup_{1\leq i\leq m(\ell)} \overline{B^+_{x_i}(2^{-\ell}\varepsilon)}$.
Thus, for any $k\in \mathbb{N}$, we have
\begin{align*}
\mu_k\left[ X\backslash K_\varepsilon \right]=\mu_k\left[ \bigcup_{1\leq \ell\leq \infty} \left(X\backslash\bigcup_{1\leq i\leq m(\ell)} \overline{B^+_{x_i}(2^{-\ell}\varepsilon)} \right) \right]\leq \sum_{\ell=1}^\infty\mu_k\left[ X\backslash \bigcup_{1\leq i\leq m(\ell)}B^+_{x_i}(2^{-\ell}\varepsilon) \right]\leq \sum_{\ell=1}^\infty2^{-\ell}\varepsilon=\varepsilon.
\end{align*}
It remains to show that $K_\varepsilon$ is compact. In fact, for any small $\delta>0$, choose an $\ell>0$ such that $2^{-\ell}\varepsilon<\delta$ and then
\[
K_\varepsilon\subset \bigcup_{1\leq i\leq m(\ell)} \overline{B^+_{x_i}(2^{-\ell}\varepsilon)}\subset\bigcup_{1\leq i\leq m(\ell)} {B^+_{x_i}(\delta)},
\]
 which implies that $K_\varepsilon$ is forward totally bounded. Since $(X,d)$ is forward complete and $K_\varepsilon$ is closed, the compactness of $K_\varepsilon$ follows from Theorem \ref{compactequvitheorem}.
\end{proof}

\begin{lemma}\label{tightsetoptima}
Let $(X,d)$ be a forward boundedly compact forward metric space.

\begin{itemize}
	\item[(1)] The function
	$\mathscr{F}:\pi\mapsto \int_{X\times X}d(x,y)^p {\ddd}\pi(x,y)$
	is lower semicontinuous
	on $P(X\times X)$, equipped {with the weak topology};
		\item[(2)]  Let $\mathcal {P},\mathcal {Q}\subset P(X)$ be two compact subsets (with respect to the weak topology). Then the set of optimal transference plans
		$\pi$ whose marginals respectively belong to $\mathcal {P},\mathcal {Q}$ is itself compact in
		$P(X \times X)$.
\end{itemize}
\end{lemma}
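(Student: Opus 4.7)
By Theorem \ref{topologychara}/(ii), the forward topology on $X$ coincides with the topology induced by the symmetrized metric $\hat{d}$; combined with forward bounded compactness, $(X,\hat{d})$ is a Polish space (Remark \ref{remarkcompleteseparted}), so $X\times X$ carries a Polish product topology with respect to which $c(x,y):=d(x,y)^p$ is continuous and nonnegative. The plan is the standard truncation argument: define $c_N(x,y):=\min\{c(x,y),N\}$, which is bounded and continuous. For $\pi_k\rightharpoonup \pi$ in $P(X\times X)$, the Portmanteau theorem yields $\int c_N\,{\ddd}\pi_k\to \int c_N\,{\ddd}\pi$ for each fixed $N$, and by monotone convergence $\int c\,{\ddd}\pi = \sup_N\int c_N\,{\ddd}\pi$. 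Interchanging $\sup_N$ and $\liminf_k$ then gives $\int c\,{\ddd}\pi\le \liminf_k \int c\,{\ddd}\pi_k$.

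\textbf{Plan for Part (2), relative compactness.} The projections $p_i:X\times X\to X$ $(i=1,2)$ are continuous, hence the pushforward maps $\pi\mapsto (p_i)_\sharp\pi$ are continuous in the weak topology. Since $\mathcal{P},\mathcal{Q}$ are compact in $P(X)$, they are tight by Prokhorov's theorem: for every $\varepsilon>0$ there exist compact $K_1,K_2\subset X$ with $\mu[X\setminus K_1]<\varepsilon/2$ for all $\mu\in\mathcal{P}$ and $\nu[X\setminus K_2]<\varepsilon/2$ for all $\nu\in\mathcal{Q}$. Any transference plan with marginals in $\mathcal{P}\times\mathcal{Q}$ then satisfies $\pi[(X\times X)\setminus (K_1\times K_2)]<\varepsilon$, so the whole collection is tight in $P(X\times X)$; Prokhorov gives relative compactness with respect to the weak topology.

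\textbf{Plan for Part (2), closedness.} Take optimal $\pi_k$ with marginals $\mu_k\in\mathcal{P}, \nu_k\in\mathcal{Q}$, and, after extraction, $\pi_k\rightharpoonup \pi$ weakly. Continuity of projections combined with closedness of $\mathcal{P},\mathcal{Q}$ yields $\mu_k\rightharpoonup \mu\in\mathcal{P}$ and $\nu_k\rightharpoonup \nu\in\mathcal{Q}$, so $\pi\in\Pi(\mu,\nu)$. To show optimality, fix any competitor $\tilde{\pi}\in\Pi(\mu,\nu)$ and construct, via a gluing argument, a sequence $\tilde{\pi}_k\in\Pi(\mu_k,\nu_k)$ with $\tilde{\pi}_k\rightharpoonup \tilde{\pi}$: select optimal $\hat{d}$-couplings $\alpha_k\in\Pi(\mu_k,\mu)$ and $\beta_k\in\Pi(\nu,\nu_k)$ (applying Theorem \ref{lowercontinuouspfWp} on the Polish space $(X,\hat{d})$), which concentrate on the diagonal since the weak topology is compatible with $W_1^{\hat{d}}$ on the compact sets $\mathcal{P},\mathcal{Q}$, and glue them with $\tilde{\pi}$ to obtain plans on $X^4$ whose $(1,4)$-marginals provide the desired $\tilde{\pi}_k$. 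Optimality of $\pi_k$ gives $\int c\,{\ddd}\pi_k\le \int c\,{\ddd}\tilde{\pi}_k$; applying Part (1) on the left yields $\int c\,{\ddd}\pi\le \liminf_k \int c\,{\ddd}\tilde{\pi}_k$.

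\textbf{Main obstacle.} The hard part is the final step $\liminf_k \int c\,{\ddd}\tilde{\pi}_k \le \int c\,{\ddd}\tilde{\pi}$, since weak convergence does not preserve integrals of the unbounded function $c$. The case $\int c\,{\ddd}\tilde{\pi}=\infty$ is trivial, so assume finiteness. The plan is to exploit the gluing structure: the distance $d$ satisfies the triangle inequality, and the $\hat{d}$-couplings $\alpha_k,\beta_k$ concentrate on the diagonal quantitatively (in $W_1^{\hat{d}}$), so $\tilde{\pi}_k$ is close to $\tilde{\pi}$ in a transport sense compatible with $c$; combined with a truncation $c=c_N+(c-c_N)_+$ and a uniform-integrability bound for the tail of $c$ under $\tilde{\pi}_k$ (using $\int c\,{\ddd}\tilde{\pi}<\infty$ and the triangle inequality in the form $d(x',y')^p\le (d(x',x)+d(x,y)+d(y,y'))^p$), one obtains $\int c\,{\ddd}\tilde{\pi}_k \to \int c\,{\ddd}\tilde{\pi}$, closing the argument.
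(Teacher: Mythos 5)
Your part (1) and the tightness/relative-compactness half of part (2) are correct, and your opening reduction to the symmetrized Polish space $(X,\hat d)$ (Theorem \ref{topologychara}/(ii), Remark \ref{remarkcompleteseparted}) is exactly the paper's move; the paper then simply quotes Villani \cite[Lemma 4.3, Corollary 5.21]{Vi}. The divergence is in how you prove that a weak limit $\pi$ of optimal plans $\pi_k$ is again optimal, and there your argument has a genuine gap, precisely at the step you yourself flag as the ``main obstacle''.

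Your recovery-sequence construction needs two things that weak compactness of $\mathcal P,\mathcal Q$ does not provide. First, the claim that the weak topology agrees with $W_1^{\hat d}$ on weakly compact subsets of $P(X)$ is false for noncompact $X$: that identification requires uniformly integrable first moments, which tightness does not give (the measures in $\mathcal P,\mathcal Q$ need not even lie in $P_1(X)$); this is repairable by coupling with respect to the bounded metric $\min\{\hat d,1\}$, which still forces concentration near the diagonal and hence $\tilde\pi_k\rightharpoonup\tilde\pi$. Second, and fatally, the inequality $\limsup_k\int c\,{\ddd}\tilde\pi_k\le\int c\,{\ddd}\tilde\pi$ would require $\int \hat d(x',x)^p\,{\ddd}\alpha_k\to0$ and $\int \hat d(y,y')^p\,{\ddd}\beta_k\to0$, i.e. $W_p$-type convergence of the marginals, or at least uniform integrability of $c$ under the glued plans; neither follows from weak convergence, and finiteness of $\int c\,{\ddd}\tilde\pi$ gives no control on the tails of $\tilde\pi_k$. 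Concretely, on $X=\mathbb R$ with $p=2$, take $\mu_k=(1-k^{-1})\delta_0+k^{-1}\delta_k\rightharpoonup\delta_0=\mu$, $\nu_k=\nu=\delta_0$, $\tilde\pi=\delta_{(0,0)}$: the glued competitor is forced to be $\tilde\pi_k=\mu_k\otimes\delta_0$, whose cost equals $k\to\infty$, not $\int c\,{\ddd}\tilde\pi=0$ (the lemma's conclusion still holds here, so this breaks the method, not the statement). Hence the truncation-plus-triangle-inequality plan cannot close without extra moment hypotheses that the lemma does not assume. The standard escape — and the content of the result the paper cites — abandons the recovery sequence: each $\pi_k$ is concentrated on a $c$-cyclically monotone set, this property passes to the weak limit because $c=d^p$ is continuous on the product of the Polish spaces $(X,\hat d)$, and a $c$-cyclically monotone plan of finite cost for a continuous cost is optimal (Villani \cite[Theorem 5.20, Corollary 5.21]{Vi}). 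Either invoke that result, as the paper does, or reproduce its cyclical-monotonicity argument; your gluing scheme only works under additional uniform $p$-th moment integrability of $\mathcal P$ and $\mathcal Q$.
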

\begin{proof} Let $c:=d^{\,p}$, which is a non-negative continuous function on the Polish space $(X,\hat{d})$.
Thus, (1) and (2) follow immediately  from Villani   \cite[Lemmas 4.3, Corollary 5.21]{Vi} respectively.
\end{proof}

\begin{lemma}\label{lowrconunityc}
Let $(X,d)$ be a  forward boundedly compact forward metric space. If a sequence of probability measures $(\mu_k)_k$ (resp., $(\nu_k)_k$) converges weakly to $\mu$ (resp., $\nu$), then $W_p(\mu,\nu)\leq \underset{k\rightarrow\infty}{\lim\inf}W_p(\mu_k,\nu_k)$ for any $p\in [1,\infty)$.
\end{lemma}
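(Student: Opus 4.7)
\textbf{Proof proposal for Lemma \ref{lowrconunityc}.} The plan is to run the classical weak-compactness argument for Wasserstein lower semicontinuity, but on the symmetrized Polish space $(X,\hat{d})$, and then cash in the two ingredients already recorded in the paper: existence of optimal plans (Theorem \ref{lowercontinuouspfWp}) and lower semicontinuity of the cost functional under weak convergence of couplings (Lemma \ref{tightsetoptima}/(1)). First I would reduce to the case $L:=\liminf_{k\to\infty}W_p(\mu_k,\nu_k)<\infty$; otherwise there is nothing to prove. After passing to a subsequence I may assume $W_p(\mu_k,\nu_k)\to L$. Using Theorem \ref{lowercontinuouspfWp}, pick for each $k$ an optimal transference plan $\pi_k\in\Pi(\mu_k,\nu_k)$, so that $\int_{X\times X}d(x,y)^p\,{\ddd}\pi_k=W_p(\mu_k,\nu_k)^p\to L^p$.

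Next I would establish tightness of $(\pi_k)_k$ in $P(X\times X)$. By Theorem \ref{topologychara}/(ii) and Proposition \ref{forwardandbackwardrelation}, the symmetrized space $(X,\hat{d})$ is a complete separable metric space whose topology coincides with $\mathcal{T}_+$, so the usual Prokhorov theorem applies. The weak convergence $\mu_k\rightharpoonup\mu$ and $\nu_k\rightharpoonup\nu$ on this Polish space therefore forces the families $\{\mu_k\}$ and $\{\nu_k\}$ to be tight: given $\varepsilon>0$ there is a compact $K_\varepsilon\subset X$ with $\mu_k(X\setminus K_\varepsilon)<\varepsilon/2$ and $\nu_k(X\setminus K_\varepsilon)<\varepsilon/2$ for all $k$. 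Since the marginals of $\pi_k$ are $\mu_k$ and $\nu_k$,
\[
\pi_k\bigl((X\times X)\setminus(K_\varepsilon\times K_\varepsilon)\bigr)\leq \mu_k(X\setminus K_\varepsilon)+\nu_k(X\setminus K_\varepsilon)<\varepsilon,
\]
so $(\pi_k)_k$ is tight on the Polish space $(X\times X,\hat{d}\oplus\hat{d})$.

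Applying Prokhorov once more, extract a subsequence (not relabelled) with $\pi_k\rightharpoonup \pi$ for some $\pi\in P(X\times X)$. Because the coordinate projections $X\times X\to X$ are continuous for $\hat{d}$, the pushforwards converge weakly, so the marginals of $\pi$ are $\mu$ and $\nu$; in particular $\pi\in\Pi(\mu,\nu)$. Then Lemma \ref{tightsetoptima}/(1) gives
\[
W_p(\mu,\nu)^p\leq \int_{X\times X}d(x,y)^p\,{\ddd}\pi(x,y)\leq \liminf_{k\to\infty}\int_{X\times X}d(x,y)^p\,{\ddd}\pi_k(x,y)=L^p,
\]
and taking $p$-th roots yields $W_p(\mu,\nu)\leq\liminf_k W_p(\mu_k,\nu_k)$, as required.

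The only nontrivial point, and the one on which I would spend the most care, is the tightness step: it uses that $(X,\hat{d})$ is a Polish space (so that weak convergence of probability measures forces tightness via Prokhorov) and the elementary projection-marginal estimate above to lift tightness from the marginals to the plans. Everything else is a direct invocation of the results already proved in Section \ref{optimaltarsn-0}; in particular, no asymmetry issue arises because $d^{\,p}$ is continuous on $X\times X$ for the product topology induced by $\hat d$, which is exactly the setting of Lemma \ref{tightsetoptima}/(1).
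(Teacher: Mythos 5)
Your proof is correct, and its skeleton is the one the paper uses: take optimal plans $\pi_k$ for $(\mu_k,\nu_k)$, extract a weak limit $\pi$, and apply the lower semicontinuity of the cost functional (Lemma \ref{tightsetoptima}/(1)). The one place where you genuinely diverge is the compactness step. The paper simply notes that $\{\mu,\mu_k\}_k$ and $\{\nu,\nu_k\}_k$ are weakly compact and invokes Lemma \ref{tightsetoptima}/(2) (i.e., Villani's stability of optimal couplings, Corollary 5.21), which yields in one stroke a subsequence $\pi_{k_l}\rightharpoonup\pi$ with $\pi$ itself an \emph{optimal} plan for $(\mu,\nu)$, so that $W_p(\mu,\nu)^p=\int d^p\,{\ddd}\pi$; the remaining subsequence bookkeeping is handled by a contradiction argument rather than by first passing to a liminf-achieving subsequence as you do. You instead prove tightness of $(\pi_k)_k$ by hand (converse Prokhorov on the Polish space $(X,\hat d)$ for the marginal families, plus the projection-marginal estimate) and then only use that the limit $\pi$ lies in $\Pi(\mu,\nu)$, which suffices since you only need the inequality $W_p(\mu,\nu)^p\le\int d^p\,{\ddd}\pi$. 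Your route is a bit longer but more self-contained: it avoids the stability-of-optimality ingredient entirely, at the cost of spelling out the tightness argument; the paper's route is shorter given its lemma inventory and additionally identifies the limit plan as optimal, which is more information than the lemma requires. Both correctly rely on the fact that $(X,\hat d)$ is Polish and that $d^{\,p}$ is continuous for the product topology, so no asymmetry issue arises in either version.
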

\begin{proof}
Let $M:=\{\mu,\mu_k,k\in \mathbb{N}\}$ and $N:=\{\nu,\nu_k,k\in \mathbb{N}\}$ be two compact sets in the weak topology.  Let $\pi_k$ denote an optimal  transference plans from $\mu_k$ to $\nu_k$. Thus,
 Lemma \ref{tightsetoptima}/(2) implies that a subsequence $(\pi_{k_l})_l$ converges weakly to an optimal transference plan $\pi$ from $\mu$ to $\nu$.
Moreover,
thanks to Lemma \ref{tightsetoptima}/(1), we have
\[
W_p(\mu,\nu)^p= \int_{X\times X} d(x,y)^p{\ddd}\pi(x,y)\leq \underset{l\rightarrow \infty}{\lim\inf}\int_{X\times X} d(x,y)^p {\ddd}\pi_{k_l}(x,y)=\underset{l\rightarrow \infty}{\lim\inf}W_p(\mu_{k_l},\nu_{k_l})^p.
\]
Then a standard argument by contradiction furnishes  $W_p(\mu,\nu)^p\leq \underset{k\rightarrow \infty}{\lim\inf}W_p(\mu_k,\nu_k)^p$.
\end{proof}

\begin{corollary}\label{backwardconvergence}
Let $(X,\star,d)$ be an element in $\mathcal {M}^\Theta_*$  such that $\Theta^{\frac{p}{p-1}}$ is a concave function for some $p\in [1,\infty)$. Thus, for any forward Cauchy sequence $(\mu_k)_k$  in $(P_p(X),W_p)$, there exists $\mu\in P(X)$ with $\lim_{k\rightarrow\infty} W_p(\mu_k,\mu)=0$.
\end{corollary}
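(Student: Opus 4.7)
The plan is to combine tightness of the forward Cauchy sequence with Prokhorov compactness in the weak topology and then upgrade the subsequential weak limit to a $W_p$-limit of the full sequence via lower semicontinuity. This follows the classical scheme used for reversible Wasserstein spaces, the twist being that we must exploit the concavity of $\Theta^{p/(p-1)}$ everywhere the reversibility of $d$ is invoked, which is precisely the hypothesis that makes Lemmas \ref{tightCauchy} and \ref{reversbilityofW} available.

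First, I would invoke Lemma \ref{tightCauchy} directly: since $(X,\star,d)\in\mathcal{M}^\Theta_\star$ is forward boundedly compact and $\Theta^{p/(p-1)}$ is concave, the forward Cauchy sequence $(\mu_k)_k\subset(P_p(X),W_p)$ is tight in $P(X)$. The space $(X,\hat{d})$ is a Polish space by Theorem \ref{topologychara}/(ii) and Remark \ref{remarkcompleteseparted}, so Prokhorov's theorem applies and yields a subsequence $(\mu_{k_l})_l$ converging weakly to some $\mu\in P(X)$.

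Next, I would run the standard Cauchy argument to promote this subsequential weak limit into a $W_p$-limit. Fix $\varepsilon>0$ and choose $N=N(\varepsilon)$ so that $W_p(\mu_k,\mu_j)<\varepsilon$ whenever $j\geq k\geq N$. For every $k\geq N$ the weak convergence $\mu_{k_l}\rightharpoonup\mu$ combined with the lower semicontinuity statement of Lemma \ref{lowrconunityc} (applied with the constant sequence $\mu_k$ on one side and $\mu_{k_l}$ on the other) gives
\[
W_p(\mu_k,\mu)\leq \liminf_{l\to\infty}W_p(\mu_k,\mu_{k_l})\leq \varepsilon,
\]
where the last inequality uses that $k_l\geq k\geq N$ for all $l$ large enough. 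Letting $\varepsilon\to 0$ yields $\lim_{k\to\infty}W_p(\mu_k,\mu)=0$, as required.

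The delicate point, and the one where I expect most of the work to concentrate, is verifying the tightness hypothesis of Lemma \ref{tightCauchy}, since that is the only place where the concavity of $\Theta^{p/(p-1)}$ truly bites: one needs to replace $W_1$ bounds by $W_p$ bounds on the forward Cauchy sequence, and for this the asymmetric duality through Lemma \ref{reversbilityofW} is essential to control $W_1(\mu_k,\mu_j)\leq \Theta(W_p(\delta_\star,\mu_j)+W_p(\mu_j,\mu_k))\,W_p(\mu_j,\mu_k)$ uniformly in $k$. Once tightness is granted, the Prokhorov-plus-semicontinuity finish is routine; in particular, I would not claim $\mu\in P_p(X)$ here (that refinement is what Theorem \ref{wasstherforqs}/(ii) is designed to establish), but only the $P(X)$ convergence required by the statement.
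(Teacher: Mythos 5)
Your proposal is correct and follows essentially the same route as the paper: tightness from Lemma \ref{tightCauchy}, Prokhorov's theorem on the symmetrized Polish space, and the lower semicontinuity of Lemma \ref{lowrconunityc} to upgrade the subsequential weak limit, with the correct ordering of arguments so that the forward Cauchy bound $W_p(\mu_k,\mu_{k_l})<\varepsilon$ (for $k_l\geq k\geq N$) applies. The only difference is cosmetic: by applying lower semicontinuity at a general index $k$ rather than only at subsequence indices, you absorb the paper's final triangle-inequality step.
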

\begin{proof}
According to Lemma \ref{tightCauchy}, $(\mu_k)_k$ is tight.  By Theorem \ref{topologychara}/(ii) and Prokhorov's theorem, there exists a subsequence $(u_{k'})_{k'}$ converging weakly to some measure $\mu\in P(X)$. Since $(u_{k'})_{k'}$ is still a forward Cauchy sequence, for  each $\varepsilon>0$, there exists $N_1>0$ such that for $N_1<l'\leq k'$, we have $W_p(\mu_{l'},\mu_{k'})<\varepsilon$, which together with  Lemma \ref{lowrconunityc} yields
$W_p(\mu_{l'},\mu)\leq \underset{k'\rightarrow \infty}{\lim\inf}W_p(\mu_{l'},\mu_{k'})\leq \varepsilon$.

On the other hand,
since $(u_k)_k$ is a forward Cauchy sequence, for any $\varepsilon>0$, there exists $N_2>0$ such that $W_p(\mu_k,\mu_l)<\varepsilon$ for any $N_2<k<l$. Now choose a large $l'>\max\{N_1,N_2\}$, the triangle inequality yields
$W_p(\mu_k,\mu)\leq W_p(\mu_k,\mu_{l'})+W_p(\mu_{l'},\mu)\leq 2\varepsilon$,
which concludes the proof.
\end{proof}

\begin{remark}\label{wassersteindistaceweaktopology}
The limit $\mu$ may not belong to $P_p(X)$. And  $\lim_{k\rightarrow\infty} W_p(\mu_k,\mu)=0$ does not indicate that $\mu_k$ converges to $\mu$ in the forward topology of $P_p(X)$ (i.e., $\lim_{k\rightarrow\infty} W_p(\mu,\mu_k)=0$).
 \end{remark}

\begin{lemma}\label{uniformtopologythesame}
Let $(X,d_\alpha)$, $\alpha=1,2$ be two reversible metric spaces (i.e., $\lambda_{d_\alpha}(X)=1$) such that their metric topologies coincide. Then the uniform topologies of $C([0,1];X)$ induced by $d_1$ and $d_2$ coincide as well.
\end{lemma}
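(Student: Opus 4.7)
By symmetry, it suffices to prove that every sequence $(\gamma_k)_k \subset C([0,1];X)$ converging to some $\gamma \in C([0,1];X)$ uniformly with respect to $d_1$ also converges to $\gamma$ uniformly with respect to $d_2$. The plan is to proceed by contradiction and exploit the sequential compactness of $[0,1]$ together with the hypothesis that the two pointwise topologies on $X$ coincide.

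Concretely, suppose $\rho_1(\gamma_k,\gamma)\to 0$ but $\rho_2(\gamma_k,\gamma) \not\to 0$. Then, up to extracting a subsequence, I can find $\varepsilon_0>0$ and times $t_k \in [0,1]$ with
\[
d_2(\gamma_k(t_k),\gamma(t_k)) \geq \varepsilon_0 \quad \text{for all } k.
\]
Using the compactness of $[0,1]$, I pass to a further subsequence so that $t_k \to t_0$ for some $t_0 \in [0,1]$. The continuity of $\gamma$ (which is the same whether measured in $d_1$ or $d_2$, by the assumption that the two topologies on $X$ agree) ensures $\gamma(t_k) \to \gamma(t_0)$ in both metrics. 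Combined with $d_1(\gamma_k(t_k),\gamma(t_k)) \leq \rho_1(\gamma_k,\gamma)\to 0$, the triangle inequality in $d_1$ yields $\gamma_k(t_k)\to \gamma(t_0)$ in the $d_1$-topology, hence also in the $d_2$-topology by hypothesis.

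Finally, applying the triangle inequality in $d_2$ (here the reversibility of $d_2$ matters, so that $d_2(\gamma(t_0),\gamma(t_k))$ can be used interchangeably with $d_2(\gamma(t_k),\gamma(t_0))$),
\[
d_2(\gamma_k(t_k),\gamma(t_k)) \leq d_2(\gamma_k(t_k),\gamma(t_0)) + d_2(\gamma(t_0),\gamma(t_k)) \longrightarrow 0,
\]
contradicting the lower bound $\varepsilon_0$. Thus $\rho_2(\gamma_k,\gamma)\to 0$, and the uniform topologies coincide.

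The argument is essentially routine; the only subtle point is the step where one converts $d_1$-convergence of the sequence $\gamma_k(t_k)$ into $d_2$-convergence, which is where the assumption that the two metric topologies on $X$ coincide is used in its sequential form. Reversibility of the metrics enters only to ensure that the triangle inequality in $d_2$ can be applied in the ``backwards'' order that appears in the final estimate; without it, the same reasoning would require an additional bound of the form $d_2 \leq C\,\overleftarrow{d}_2$ locally.
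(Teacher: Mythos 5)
Your proof is correct and follows essentially the same route as the paper's: a contradiction argument selecting times $t_k\to t_0$, using continuity of $\gamma$ and the coincidence of the two metric topologies to transfer $d_1$-convergence of $\gamma_k(t_k)$ to $d_2$-convergence, then contradicting the lower bound $\varepsilon_0$. The only cosmetic difference is that you close with a direct $d_2$-triangle estimate, while the paper phrases the same contradiction as "converges in $d_1$ but not in $d_2$."
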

\begin{proof}Set $\rho_\alpha(\gamma_1,\gamma_2):=\max_{0\leq t\leq 1}d_\alpha(\gamma_1,\gamma_2)$, $\alpha=1,2$.
It suffices to show that if a sequence $(\gamma_n)_n\subset C([0,1];X)$ satisfies $\rho_1(\gamma_n,\gamma)\rightarrow0$, then $\rho_2(\gamma_n,\gamma)\rightarrow0$. If not, there exist a (sub)sequence $(\gamma_{n_k})_k$ and a sequence $(t_{n_k})_k\subset [0,1]$ such that $d_2(\gamma_{n_k}(t_{n_k}),\gamma(t_{n_k}))\geq \varepsilon_0$ for some fixed $\varepsilon_0>0$. By passing a subsequence, we may assume that $(t_{n_k})_k$ converges to some $t_0\in [0,1]$. Hence, $\lim_{k\rightarrow \infty}d_\alpha(\gamma(t_0),\gamma(t_{n_k}))=0$ for $\alpha=1,2$.
On the one hand, for large $k$,
\[
d_2(\gamma_{n_k}(t_{n_k}),\gamma(t_0))\geq d_2(\gamma_{n_k}(t_{n_k}),\gamma(t_{n_k}))-d_2(\gamma(t_0),\gamma(t_{n_k}))\geq \varepsilon_0-d_2(\gamma(t_0),\gamma(t_{n_k}))>\varepsilon_0/2.
\]
Hence, $(\gamma_{n_k}(t_{n_k}))_k$ does not converge to $\gamma(t_0)$ under the metric topology. On the other hand,
\[
d_1(\gamma_{n_k}(t_{n_k}),\gamma(t_0))\leq d_1(\gamma_{n_k}(t_{n_k}),\gamma(t_{n_k}))+d_1(\gamma(t_0),\gamma(t_{n_k}))\leq \rho_1(\gamma_{n_k},\gamma)+d_1(\gamma(t_0),\gamma(t_{n_k}))\rightarrow 0.
\]
Thus, $(\gamma_{n_k}(t_{n_k}))_k$ converges to $\gamma(t_0)$ under the same metric topology, which is a contradiction.
\end{proof}

\begin{proof}[Proof of Lemma \ref{compactgeodeisc}]

Let $\widehat{W}_1$ denote the Wasserstein distance on $P(X)$ induced by the symmetrized space $(X,\hat{d})$. Due to Villani \cite[Corollary 6.13]{Vi}, we can assume that the topology on $P(X)$ induced by $\widehat{W}_1$ is the weak topology. Now equip $C([0,1];{P}(X))$ with the uniform topology induced by $\widehat{W}_1$, in which case $\mathfrak{E}:P(C([0,1];X))\rightarrow C([0,1];P(X))$ is continuous (cf.\,Villani \cite[p.136]{Vi}). On the other hand, since
the topologies induced by $d_P$ and $\widehat{W}_1$ on $P(X)$ coincide, Lemma \ref{uniformtopologythesame} implies  the uniform topologies on $C([0,1];P(X))$ induced by these two metrics are the same. Thus, the lemma follows.
 \end{proof}


\section{Qualitative properties of the displacement interpolation
}

\begin{lemma}[Regularizing kernels]\label{Regukern} Let $(X,d,\nu)$ be a forward boundedly compact forward metric-measure space. Let $\mathcal {K}$ be a compact subset of $X$. There exists
a $(\mathcal {K},\nu)$-regularizing kernel $(\mathscr{K}_\epsilon)_{\epsilon>0}$. That is, $(\mathscr{K}_\epsilon)_{\epsilon>0}$ is a family of nonnegative continuous symmetric functions such that for any fixed $\epsilon>0$, there hold

\smallskip

\rm{(i)}  $\int_X \mathscr{K}_\epsilon(x,y)\,\nu({\ddd}y)=1$ for any $x\in \mathcal {K}$;\ \ \ \ \rm{(ii)} $\mathscr{K}_\epsilon(x,y)=0$ if $\min\{d(x,y),d(y,x)\}>\epsilon$.

\end{lemma}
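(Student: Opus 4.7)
The plan is to construct $\mathscr{K}_\epsilon$ from a naive symmetric bump and then to rescale it symmetrically so that both the symmetry and the mass-one condition (i) hold simultaneously. This adapts the classical Villani construction of regularizing kernels to the asymmetric setting, using the proximity $\delta(x,y):=\min\{d(x,y),d(y,x)\}$ as the natural symmetric substitute for the (non-symmetric) metric $d$.

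First, I would note that $\delta$ is continuous and symmetric on $X\times X$ -- continuity of $d$ under $\mathcal{T}_+\times\mathcal{T}_+$ is Theorem \ref{topologychara}/(i) -- with $\delta(x,x)=0$. Picking a continuous cutoff $\psi_\epsilon:[0,\infty)\to[0,1]$ satisfying $\psi_\epsilon(0)=1$ and $\psi_\epsilon\equiv 0$ on $[\epsilon,\infty)$, say $\psi_\epsilon(r):=\max\{0,1-r/\epsilon\}$, I set $\phi_\epsilon(x,y):=\psi_\epsilon(\delta(x,y))$, obtaining a continuous, symmetric, nonnegative bump that is $1$ on the diagonal and satisfies (ii) by construction. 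Then I would examine the mass function $a_\epsilon(x):=\int_X\phi_\epsilon(x,y)\,{\ddd}\nu(y)$. Forward bounded compactness places the $\epsilon$-symmetric-neighbourhoods of points of $\mathcal{K}$ inside a single compact set of finite $\nu$-measure, so dominated convergence yields continuity of $a_\epsilon$ on a neighbourhood of $\mathcal{K}$; under the natural implicit hypothesis $\mathcal{K}\subset\supp\nu$ -- required for (i) to be solvable at all -- the function $a_\epsilon$ is strictly positive on some compact thickening $\mathcal{K}^*$ of $\mathcal{K}$ in the symmetrized metric $\hat{d}$.

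The final step is the symmetric rescaling. The naive choice $\phi_\epsilon(x,y)/a_\epsilon(x)$ satisfies (i) but fails symmetry, and simple averaging destroys (i). The remedy is to seek a positive continuous $h_\epsilon$ on $\mathcal{K}^*$ satisfying $h_\epsilon(x)\int_X\phi_\epsilon(x,y)h_\epsilon(y)\,{\ddd}\nu(y)=1$ for $x\in\mathcal{K}$, and then set $\mathscr{K}_\epsilon(x,y):=h_\epsilon(x)\phi_\epsilon(x,y)h_\epsilon(y)$, extended continuously by $0$ outside $\mathcal{K}^*\times\mathcal{K}^*$. With this ansatz, symmetry is automatic from the symmetry of $\phi_\epsilon$, condition (ii) is inherited from $\phi_\epsilon$, and (i) is exactly the fixed-point relation. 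The existence of such an $h_\epsilon$ is a continuous Sinkhorn--Knopp statement: on the cone of strictly positive continuous functions on $\mathcal{K}^*$, the map $T(h)(x):=\left(\int_X\phi_\epsilon(x,y)h(y)\,{\ddd}\nu(y)\right)^{-1}$ is a contraction in Hilbert's projective metric -- uniform positive lower and upper bounds of $\phi_\epsilon$ and of the $\nu$-mass of symmetric $\epsilon$-neighbourhoods over $\mathcal{K}^*$ provide a finite Birkhoff contraction ratio -- producing a unique positive continuous fixed direction, from which $h_\epsilon$ is obtained by the appropriate normalization (using homogeneity of $T$ of degree $-1$).

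The main obstacle is precisely this Sinkhorn--Knopp step: justifying the contraction estimate in Hilbert's projective metric for an arbitrary locally finite Borel $\nu$ on the possibly inhomogeneous compact set $\mathcal{K}^*$. A cleaner alternative route is discretization: approximate $\nu|_{\mathcal{K}^*}$ by finitely supported measures on finer and finer $1/n$-nets, apply the classical finite-dimensional symmetric Sinkhorn--Knopp theorem to obtain symmetric doubly stochastic matrix kernels, interpolate these to continuous symmetric kernels $\mathscr{K}_\epsilon^{(n)}$ supported in $\{\delta<\epsilon\}$, and extract a subsequential limit using the equicontinuity inherited from $\phi_\epsilon$ together with a uniform $L^\infty$-bound; the limit is the required $\mathscr{K}_\epsilon$, satisfying continuity, symmetry, (i) and (ii).
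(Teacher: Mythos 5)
Your construction of the symmetric bump $\phi_\epsilon(x,y)=\psi_\epsilon(\min\{d(x,y),d(y,x)\})$ and your verification of (ii) are fine, and your observation that (i) forces something like $\mathcal{K}\subset\supp\nu$ is a fair point (the paper's proof implicitly needs $\int_X\phi_i\,d\nu>0$ for the same reason). The genuine gap is the normalization step, which is exactly where you locate the difficulty but do not close it. The Birkhoff--Hopf contraction claim is false as stated: Birkhoff's theorem gives a contraction ratio $\tanh(\Delta/4)<1$ only when the projective diameter $\Delta$ of the image of the positive cone is finite, which requires the kernel to be uniformly positive on the relevant square. Your $\phi_\epsilon$ vanishes whenever $\min\{d(x,y),d(y,x)\}\geq\epsilon$, so unless $\mathcal{K}^*$ has $\delta$-diameter smaller than $\epsilon$ the map $T$ has projective diameter $\infty$ and no strict contraction; "uniform positive lower bounds of $\phi_\epsilon$" simply do not hold. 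The fallback via discretization also does not close the gap: the discrete symmetric scalings exist (positive diagonal gives total support), but the asserted uniform $L^\infty$ bound and equicontinuity of the scaled kernels $\mathscr{K}^{(n)}_\epsilon$ are not established, and the scaling factors can degenerate as the mesh is refined when $\nu$ puts little mass near parts of $\mathcal{K}^*$, so the subsequential limit and the persistence of constraint (i) in the limit are unproved. There is also a secondary problem: extending $h_\epsilon(x)\phi_\epsilon(x,y)h_\epsilon(y)$ by $0$ outside $\mathcal{K}^*\times\mathcal{K}^*$ is discontinuous at points of $\mathcal{K}\times\partial\mathcal{K}^*$ with $\delta(x,y)<\epsilon$, where the product is strictly positive; fixing this needs extra cutoffs woven into the fixed-point equation, which you do not do. Finally, the positivity of the mass function is needed on the whole domain of the fixed-point problem, not just near $\mathcal{K}$, and this can fail for points of $\mathcal{K}^*$ far from $\supp\nu$.

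For comparison, the paper avoids the multiplicative-scaling problem altogether by building the "doubly stochastic" structure additively: cover $\mathcal{K}$ by finitely many forward balls $B^+_{x_i}(\epsilon/(2\theta))$ with $x_i\in\mathcal{K}$ and $\theta:=\lambda_d(\overline{\mathcal{K}^\epsilon})$, take a continuous partition of unity $(\phi_i)$ subordinate to this cover with $\sum_i\phi_i=1$ on $\mathcal{K}$, and set
\[
\mathscr{K}_\epsilon(x,y):=\sum_i\frac{\phi_i(x)\,\phi_i(y)}{\int_X\phi_i\,d\nu}.
\]
Symmetry and continuity are immediate, (i) follows because integrating in $y$ reproduces $\sum_i\phi_i(x)=1$ on $\mathcal{K}$ with no fixed point needed, and (ii) follows from the triangle inequality together with the reversibility bound $\theta$ on $\overline{\mathcal{K}^\epsilon}$ (both $d(x,y)$ and $d(y,x)$ are below $\epsilon$ whenever $x,y$ lie in a common ball of radius $\epsilon/(2\theta)$). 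If you want to salvage your route, the lesson is that a sum of rank-one symmetric pieces, each normalized separately, achieves what the global scaling $h\otimes h$ was meant to do, without any Sinkhorn-type argument.
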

\begin{proof}For any $\epsilon>0$, consider the compact set $\overline{\mathcal {K}^\epsilon}:={\{x\in X:\, d(\mathcal {K},x)\leq \epsilon\}}$. Set $\theta:=\lambda_d(\overline{\mathcal {K}^\epsilon})<\infty$.
 Choose an finite $\epsilon/(2\theta)$-ball-covering of $\mathcal {K}$, say $(B^+_{x_i}(\epsilon/(2\theta)))_{i=1}^{N(\epsilon)}$, such that $x_i\in \mathcal {K}$ for $1\leq i\leq N(\epsilon)$. Thus, $\cup_i B^+_{x_i}(\epsilon/(2\theta))\subset \overline{\mathcal {K}^\epsilon}$.

Now let $(\phi_i)_i$ be a continuous subordinate partition of unity  with $\suppor \phi_i\subset B^+_{x_i}(\epsilon/(2\theta))$, $\suppor\phi_i\cap \mathcal {K}\neq\emptyset$ and $\sum_i\phi_i=1$ on $\mathcal {K}$. We concludes the proof by setting
$\mathscr{K}_\epsilon(x,y):=\sum_i\frac{\phi_i(x)\phi_i(y)}{\int_X\phi_i{\ddd}\nu}$.
\end{proof}

A slight modification of the proof of Villani \cite[Theorem 29.20]{Vi}  provides the following result.
\begin{lemma}\label{continuonUvUpi}
Let $(X,d)$ be a compact forward metric space endowed with a finite measure $\nu$. Then with the notation of Definition \ref{dispconvexfirst}:

\smallskip

\rm{(i)} for every continuous convex function $U:\mathbb{R}_+\rightarrow \mathbb{R}_+$  with $U(0)=0$, $U_\nu(\mu)$ is a weakly lower semi-continuous function of both $\mu$ and $\nu$ in $\mathfrak{M}_+(X)$, where $\mathfrak{M}_+(X)$ denotes the set of finite nonnegative Borel measures on $X$. More precisely, if $(\mu_k)_k$ (resp., $(\nu_k)_k$) converges weakly to $\mu$ (resp., $\nu$), then
\[
U_{\nu}(\mu)\leq \underset{k\rightarrow \infty}{\lim\inf}\,U_{\nu_k}(\mu_k).
\]

\smallskip

\rm{(ii)} for each  continuous convex function $U:\mathbb{R}_+\rightarrow \mathbb{R}_+$  with $U(0)=0$, $U_\nu$ satisfies a contraction principle in both $\mu$ and $\nu$, i.e., if $Y$ is another compact space and   $f:X\rightarrow Y$ is a measurable function, then
\[
U_{f_\sharp \nu}(f_\sharp \mu)\leq U_\nu(\mu).
\]

\smallskip

\rm{(iii)} given a probability measure $\mu\in P(X)$ with $\supp\mu\subset\supp \nu$, there is a sequence $(\mu_k)_k$ of probability measures converging weakly to $\mu$ such that
\begin{itemize}
\item[(1)] each $\mu_k=\rho_k\nu$ has a continuous density $\rho_k$;

\item[(2)] for any sequence $(\pi_k)_k$ converging weakly to $\pi$ in $P(X\times X)$ such that $\pi_k$ admits $\mu_k$ as first marginal   and $\supp \pi_k\subset \supp \nu\times \supp \nu$, there holds
\[
\underset{k\rightarrow \infty}{\lim\sup}\,U^\beta_{\pi_k,\nu}(\mu_k)\leq U^\beta_{\pi,\nu}(\mu),
\]
for every continuous positive function $\beta$ on $X\times X$ and for
every continuous convex function $U:\mathbb{R}_+\rightarrow \mathbb{R}_+$  with $U(0)=0$ and  at most polynomial growth  (see (\ref{mostpoly})).
\end{itemize}
\end{lemma}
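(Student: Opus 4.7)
\medskip

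My plan is to reduce everything to the setting of the symmetrized Polish space $(X,\hat d)$ produced by Theorem \ref{topologychara}/(ii). Since $\mathcal{T}_+$ coincides with the topology generated by $\hat d$, the weak topology on $\mathfrak{M}_+(X)$, the product weak topology on $P(X\times X)$, and the notion of $\supp\nu$ are the same whether computed with $d$ or $\hat d$. Consequently, for the purely measure-theoretic statements (i) and (ii), the irreversibility of $d$ plays no role: I would invoke verbatim the classical arguments (see e.g. Villani, Thm.\ 29.20 and the preceding lemmas). Concretely, for (i) one uses the Legendre duality
\[
U_\nu(\mu)=\sup\Bigl\{\int\varphi\,d\mu+\int\psi\,d\nu:\ \varphi(r)+\psi(s)s\leq U(s)s \text{ in an appropriate sense}\Bigr\},
\]
writing $U_\nu(\mu)$ as a supremum of weakly continuous affine functionals of $(\mu,\nu)$, which is automatically lower semi-continuous; for (ii) one checks the inequality on each term of the Lebesgue decomposition and applies Jensen's inequality to the absolutely continuous part, the singular part being handled directly by the $U'(\infty)$ coefficient.

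The substantive part is (iii), which is the workhorse used later in Proposition \ref{CDKNstringent}/(iii), Theorem \ref{supporsetcdkn}, and Theorem \ref{CDknstabilitycompact}. My plan is to build $\mu_k$ via the regularizing kernel from Lemma \ref{Regukern}, applied with $\mathcal K=X$ (which is compact): set
\[
\rho_k(x):=\int_X \mathscr{K}_{1/k}(x,y)\,d\mu(y),\qquad \mu_k:=\rho_k\,\nu.
\]
The continuity of $\rho_k$ follows from the continuity of $\mathscr{K}_{1/k}$, and the symmetry of the kernel together with $\int \mathscr{K}_{1/k}(x,\cdot)\,d\nu=1$ on $\suppor\mathscr{K}_{1/k}(\cdot,y)\cap\supp\nu$ gives $\mu_k\in P(X)$ together with the weak convergence $\mu_k\rightharpoonup\mu$ (because the support of $\mathscr{K}_{1/k}(x,\cdot)$ shrinks to $\{x\}$ under $\hat d$, and $\supp\mu\subset\supp\nu$ ensures the normalization is exact).

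For the upper semi-continuity of $U^\beta_{\pi_k,\nu}(\mu_k)$, I would disintegrate $\pi_k=\mu_k\otimes\pi_k({\ddd}y|x)$ and rewrite
\[
U^\beta_{\pi_k,\nu}(\mu_k)=\int_{X\times X}\!\!\beta(x,y)\,U\!\left(\frac{\rho_k(x)}{\beta(x,y)}\right)\pi_k({\ddd}y|x)\,\nu({\ddd}x).
\]
Substituting the definition of $\rho_k$, applying Jensen's inequality to the convex function $r\mapsto \beta\,U(r/\beta)$ with respect to the probability measure $\mathscr{K}_{1/k}(x,\cdot)\,d\mu(\cdot)/\rho_k(x)$, and then using Fubini (the kernel is symmetric), one gets a bound of the form
\[
U^\beta_{\pi_k,\nu}(\mu_k)\leq \int_{X\times X}\beta(x,y)\,U\!\left(\frac{1}{\beta(x,y)}\right)\widetilde\pi_k({\ddd}x{\ddd}y)+\text{error}_k,
\]
where $\widetilde\pi_k$ is a convolution-type perturbation of $\pi_k$ that still has marginal $\mu$ (after integrating against the kernel) and converges weakly to $\pi$. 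Here is where the assumption of at most polynomial growth on $U$ enters: it bounds $|U(r)|\leq C(1+r^q)$, so that the integrand is uniformly bounded in $L^1$ and the dominated convergence theorem applies. Letting $k\to\infty$ yields $\limsup_k U^\beta_{\pi_k,\nu}(\mu_k)\leq U^\beta_{\pi,\nu}(\mu)$.

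The main obstacle I anticipate is the careful bookkeeping in the Jensen/Fubini step of (iii): the kernel smooths the density $\rho_k$ but one must simultaneously account for the conditional kernel $\pi_k({\ddd}y|x)$, which is not itself regularized; making the comparison measure $\widetilde\pi_k$ converge to the right limit $\pi$ requires the hypothesis $\supp\pi_k\subset \supp\nu\times\supp\nu$ so that the restrictive choice of $\mathscr K_{1/k}$ does not destroy mass, together with the polynomial-growth bound to control the singular behavior of $U(r)/r$ at zero and infinity uniformly in $k$. Once these two ingredients are in place, the estimate passes to the limit and (iii) follows.
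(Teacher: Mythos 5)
Your proposal follows essentially the same route as the paper: the paper proves this lemma by declaring it "a slight modification of the proof of Villani [Theorem 29.20]", relying exactly on the reduction to the symmetrized Polish space $(X,\hat d)$ from Theorem \ref{topologychara}/(ii) and on the regularizing kernels of Lemma \ref{Regukern} with $\mathcal K=X$, and your construction $\rho_k(x)=\int_X\mathscr{K}_{1/k}(x,y)\,d\mu(y)$ is precisely the one recorded in Remark \ref{suppregularing}. The only difference is that you sketch more of Villani's regularization argument (Jensen, Fubini via the kernel's symmetry, polynomial growth for domination) than the paper reproduces, and your bookkeeping there should ultimately be organized as in Villani's proof rather than via the auxiliary plan $\widetilde\pi_k$; but the approach is the same.
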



\begin{remark}\label{suppregularing}
In view of the proof of Villani \cite[Theroem 29.20/(iii)]{Vi}, the construction of $\mu_k=\rho_k\nu$ in Lemma \ref{continuonUvUpi}/(iii) is independent of both $\beta$ and $U$. In fact, by setting $\mu=\rho\nu+\mu_s$ (i.e., the Lebesgue decomposition), one has (cf. Villani \cite[p.812]{Vi})
\[
\rho_{k}:=\rho^a_{k}+\rho^s_{k},\ \rho^a_{k}(x)=\int_{X}\mathscr{K}_{1/k}(x,y)\rho(y)\nu({\ddd}y),\ \rho^s_{k}(x)=\int_{X}\mathscr{K}_{1/k}(x,y) \mu_s({\ddd}y),
\]
where  $\mathscr{K}_{1/k}$ is the regularizing kernel in Lemma \ref{Regukern}.  In particular, $$\suppor \rho_{k}\subset (\supp \mu)^{1/k}= \{x\in X\,|\, d(\supp \mu_i,x)< 1/k\}.$$
\end{remark}

Although dynamical optimal transference plans and associated displacement interpolations are usually defined on a forward geodesic space, we can extend these notions to compact forward metric spaces.

\smallskip
\begin{definition}\label{metrcdefol}
Let $(X,d)$ be a compact forward metric space and set
 \[
 \Gamma(X):=\left\{\gamma\in C([0,1];X)\,|\,d(\gamma(s),\gamma(t))=(t-s)\,d(\gamma(0),\gamma(1)),\,0\leq s\leq t\leq 1\right\}.
 \]
 If $\Gamma(X)\neq\emptyset$, endow $\Gamma(X)$ by the uniform topology, which becomes a compact space.

 Given $\mu_0,\mu_1\in P(X)$, let $\pi$ denote an optimal transference plan  of coupling $(\mu_0,\mu_1)$ with respect to $d^2$. If a measure $\Pi\in P(\Gamma(X))$ with $(e_0,e_1)_\sharp\Pi=\pi$, then $\Pi$ is called a {\it dynamical optimal transference plan} of $(\mu_0,\mu_1)$ and $\mu_t:=(e_t)_\sharp\Pi$, $0\leq t\leq 1$ is called the  {\it associated displacement interpolation}.
\end{definition}

\begin{lemma}\label{compactmetrc} Let $(X,d)$ be a  forward boundedly compact  forward geodesic  space. If $\mu_0,\mu_1$ are two probability measures with compact support, then
there exist a small positive number $\epsilon>0$ and a compact subset $\mathcal {K}\subset X$ such that
\begin{itemize}

\smallskip

\item[(i)]  for $i=0,1$, we have $ \overline{(\supp \mu_i)^\epsilon}= {\{x\in X\,|\, d(\supp \mu_i,x)\leq \epsilon\}}\subset \mathcal {K}$;

\smallskip

\item[(ii)] all the minimal geodesics from $\overline{ (\supp \mu_0 )^{\epsilon}}$ to $\overline{ (\supp \mu_1 )^{\epsilon}}$ are contained in $\mathcal {K}$;

\smallskip

\item[(iii)] for any optimal transference plan $\pi$ of coupling $(\mu_0,\mu_1)$ associated with the dynamical optimal transference plan $\Pi$, we have $\supp \pi \subset \mathcal {K}\times \mathcal {K}$ and $\supp \Pi\subset \Gamma(\mathcal {K})$,
where $$\Gamma(\mathcal {K}):=\left\{\gamma\in C([0,1];\mathcal {K})\,|\,d|_\mathcal {K}(\gamma(s),\gamma(t))=(t-s)\,d|_\mathcal {K}(\gamma(0),\gamma(1)),\,0\leq s\leq t\leq 1\right\};$$

\item[(iv)] let   $(\mu_t)_{0\leq t\leq 1}$ be an associated displacement interpolation in (iii). Then $\supp \mu_t\subset \mathcal {K}$;

\smallskip

\item[(v)]  the quantities $\pi$, $\Pi$ and $(\mu_t)_{0\leq t\leq 1}$ in (iii)-(iv) can be viewed as an optimal transference plan, associated dynamical optimal transference plan and  associated displacement interpolation defined on the compact forward metric space $(\mathcal {K},d|_\mathcal {K})$;

\smallskip

\item[(vi)] let $\tilde{\pi}$, $\widetilde{\Pi}$ and $(\tilde{\mu}_t)_{0\leq t\leq 1}$ be an optimal transference plan of $(\mu_0|_\mathcal {K},\mu_1|_\mathcal {K})$, the associated dynamical optimal transference plan and the associated displacement interpolation defined on the compact forward metric space $(\mathcal {K},d|_\mathcal {K})$. Thus, under the natural extensions induced by the embedding $i:\mathcal {K}\hookrightarrow X$, they can be viewed as the corresponding (optimal) quantities on $(X,d)$ respectively.
\end{itemize}

\end{lemma}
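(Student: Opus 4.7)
The strategy is to build $\mathcal{K}$ as a closed forward ball large enough to swallow both supports together with a safety margin, and then to read off properties (i)--(vi) from this single construction.

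\textbf{Construction of $\epsilon$ and $\mathcal{K}$.} I would first fix a reference point $\star\in\supp\mu_0$ and choose $R>0$ so large that $\supp\mu_0\cup\supp\mu_1\subset \overline{B^+_\star(R)}$; this is possible since both supports are compact. Pick any $\epsilon\in(0,1)$. Since $x\in (\supp\mu_i)^\epsilon$ yields some $a\in\supp\mu_i$ with $d(a,x)<\epsilon$, the triangle inequality gives $d(\star,x)<R+\epsilon$, so $\overline{(\supp\mu_i)^\epsilon}\subset \overline{B^+_\star(R+\epsilon)}$, which is compact by forward bounded compactness. By Theorem \ref{topologychara}/(i), $d$ is continuous on $X\times X$, hence the quantity
\[
D_0:=\sup\bigl\{d(x,y):x,y\in\overline{(\supp\mu_0)^\epsilon}\cup\overline{(\supp\mu_1)^\epsilon}\bigr\}
\]
is finite. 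Finally, set $\mathcal{K}:=\overline{B^+_\star(R+\epsilon+D_0+1)}$, which is compact.

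\textbf{Items (i) and (ii).} Item (i) is immediate from $\overline{(\supp\mu_i)^\epsilon}\subset \overline{B^+_\star(R+\epsilon)}\subset\mathcal{K}$. For (ii), if $\gamma:[0,1]\to X$ is a minimal geodesic from $x_0\in\overline{(\supp\mu_0)^\epsilon}$ to $x_1\in\overline{(\supp\mu_1)^\epsilon}$, then $d(\gamma(0),\gamma(t))\le d(\gamma(0),\gamma(1))=d(x_0,x_1)\le D_0$, whence $d(\star,\gamma(t))\le d(\star,x_0)+D_0<R+\epsilon+D_0$, so $\gamma(t)\in\mathcal{K}$ for all $t$.

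\textbf{Items (iii) and (iv).} Any optimal transference plan $\pi$ of $(\mu_0,\mu_1)$ satisfies $\supp\pi\subset\supp\mu_0\times\supp\mu_1\subset\mathcal{K}\times\mathcal{K}$, which handles the $\pi$-part of (iii). If $\Pi$ is an associated dynamical optimal transference plan, then $\Pi$ is concentrated on those constant-speed minimal geodesics $\gamma$ with $(\gamma(0),\gamma(1))\in\supp\pi\subset\supp\mu_0\times\supp\mu_1$; by (ii) every such $\gamma$ takes values in $\mathcal{K}$, so $\supp\Pi\subset \Gamma(\mathcal{K})$. From the latter, (iv) follows since $\supp\mu_t=\supp(e_t)_\sharp\Pi\subset e_t(\Gamma(\mathcal{K}))\subset\mathcal{K}$.

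\textbf{Items (v) and (vi).} Both directions reduce to the tautological identification $\pi=i_\sharp(\pi|_{\mathcal{K}\times\mathcal{K}})$, and similarly for $\Pi$ and $\mu_t$, granted by the support inclusions above and the fact that $\mu_i|_\mathcal{K}=\mu_i$. For (v), optimality on $X$ of $\pi$ transfers to $\mathcal{K}$ because $d|_\mathcal{K}=d$ on $\mathcal{K}\times\mathcal{K}$, and any competitor transference plan on $\mathcal{K}$ extends trivially to a competitor on $X$ with the same cost. For (vi), the reverse transfer uses the same observation: any competitor $\pi'$ on $X$ of $(\mu_0,\mu_1)$ automatically has $\supp\pi'\subset\supp\mu_0\times\supp\mu_1\subset\mathcal{K}\times\mathcal{K}$, hence restricts to a competitor on $\mathcal{K}$ of the same cost, so $\tilde\pi$ optimal on $\mathcal{K}$ stays optimal on $X$. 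Definition \ref{metrcdefol} and the property $\Gamma(\mathcal{K})\subset\Gamma(X)$ (which holds since $d|_\mathcal{K}=d$ on $\mathcal{K}\times\mathcal{K}$) handle the analogous statements for $\widetilde\Pi$ and $(\tilde\mu_t)_{0\le t\le 1}$.

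\textbf{Main obstacle.} The only non-cosmetic issue is keeping track of asymmetry: the naive neighbourhood $(\supp\mu_i)^\epsilon$ is a \emph{forward} neighbourhood, and minimal geodesics must be controlled by forward balls centred at their starting points, not their endpoints. Once $\mathcal{K}$ is taken to be a forward ball around a point of $\supp\mu_0$ with radius absorbing $R+\epsilon+D_0$, all constraints are satisfied simultaneously and (iii)--(vi) are essentially bookkeeping.
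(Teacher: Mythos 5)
Your proof is correct, but it builds $\mathcal{K}$ differently from the paper. You take $\mathcal{K}$ to be one large closed forward ball $\overline{B^+_\star(R+\epsilon+D_0+1)}$ around a point of $\supp\mu_0$ and invoke forward bounded compactness directly; the paper instead first introduces the auxiliary compact set $\mathcal{B}=\overline{(\supp\mu_0)^{D+1}}$, chooses $\epsilon$ small enough (using the finite reversibility constant $\theta$ on $\mathcal{B}$) so that the enlarged supports and all minimal geodesics between them lie in $\mathcal{B}$, and then defines $\mathcal{K}$ as the image of the geodesic family $\Gamma_{A_0\to A_1}$, whose compactness is obtained via the Arzel\`a--Ascoli theorem (Theorem \ref{Arzela-Ascoli Theorem}) on $\mathcal{B}$. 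Your route is more elementary: it needs no Arzel\`a--Ascoli argument, no smallness restriction on $\epsilon$, and no reversibility bound, since all your triangle-inequality estimates run in the forward direction only (as you correctly note); the price is that your $\mathcal{K}$ is much larger than necessary, whereas the paper's $\mathcal{K}$ is exactly the geodesic hull of the enlarged supports — either choice serves the downstream applications, which only use properties (i)--(vi). The treatment of (iii)--(vi) (marginal-support inclusion, pushforward of supports under $e_t$, cost-preserving restriction/extension of competitors between $(\mathcal{K},d|_\mathcal{K})$ and $(X,d)$, and $\Gamma(\mathcal{K})\subset\Gamma(X)$ because $d|_\mathcal{K}=d$ on $\mathcal{K}\times\mathcal{K}$) coincides with the paper's. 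One cosmetic point: your inclusion $\{x\,|\,d(\supp\mu_i,x)\le\epsilon\}\subset\overline{B^+_\star(R+\epsilon)}$ identifies a sublevel set with the topological closure of an open ball, which is harmless here (the space is forward geodesic, or one simply enlarges the radius by $1$ as you effectively do when defining $\mathcal{K}$), but it is worth phrasing so that compactness follows immediately from forward bounded compactness of a closed ball of strictly larger radius.
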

\begin{proof}
 We consider  a  compact set $\mathcal {B}:=\overline{(\supp \mu_0)^{  D+1 }}$, where $D:=\max_{(x,y)\in\supp \mu_0\times\supp \mu_1}d(x,y)<\infty$. Since
$\theta:=\sup_{x\in \mathcal {B}} \Theta\left( d(\star,x)\right)$ is finite, one can choose a small $\epsilon>0$ (e.g., $\epsilon=1/2$) such that
\begin{itemize}
\item $A_i:=\overline{ (\supp \mu_i)^{\epsilon}}\subset \mathcal {B}$ for $i=0,1$;

\item $\Gamma_{A_0\rightarrow A_1}\subset \mathcal {B}$, where $\Gamma_{A_0\rightarrow A_1}$ is the set of minimal geodesics from $A_0$ to $A_1$.
\end{itemize}
Since $(\mathcal {B}, d|_\mathcal {B})$ is a compact $\theta$-metric space, Theorem \ref{Arzela-Ascoli Theorem} implies that $\Gamma_{A_0\rightarrow A_1}$
 is compact in the uniform topology.
Now set $\mathcal {K}:=\left\{\gamma(t)\,| \,\gamma\in \Gamma_{A_0\rightarrow A_1},\, t\in [0,1]\right\}$, i.e., the image set of $\Gamma_{A_0\rightarrow A_1}$.

 Obviously, $\mathcal {K}$ is compact in $(X,d)$.
Hence, both (i) and (ii) follow. In order to prove (iii) and (iv), choose an optimal transference plan $\pi$ of coupling $(\mu_0,\mu_1)$. It is easy to see
\[
\supp \pi\subset \supp \mu_0\times \supp \mu_1\subset A_1\times A_2.\tag{D.1}\label{imprsupppi}
\]
Since $(e_0,e_1)_\sharp \Pi=\pi$ and $\mu_t=(e_t)_\sharp \Pi$, we obtain
\[
\supp \Pi\subset \Gamma_{A_1\rightarrow A_2}\subset \Gamma(\mathcal {K}),\ \supp \mu_t =\supp \left(\Pi\circ e_t^{-1}\right)\subset \mathcal {K}.
\]
Therefore, (iii) and (iv) are true.

For (v), note that $\Pi$, $\pi$ and $\mu_t$ can be defined on $(\mathcal {K},d|_\mathcal {K})$ due to (iii) and (iv). Since the Wasserstein distance on $P_2(\mathcal {K})$ is always not less than the one on $P_2(X)$, $\pi$ is also an optimal transference plan on $\mathcal {K}$. Thus, (v) follows.

Now we show (vi). Firstly, a contradiction argument together with (v) yields $\tilde{\pi}$ (factually $i_\sharp\tilde{\pi}$) is an optimal  transference plan of $(\mu_0,\mu_1)$. Secondly, for any $\gamma\in \supp\widetilde{\Pi}\subset \Gamma(\mathcal {K})$, we have
\[
d(\gamma(s),\gamma(t))=d|_\mathcal {K}(\gamma(s),\gamma(t))=(t-s)\,d|_\mathcal {K}(\gamma(0),\gamma(1))=(t-s)\,d(\gamma(0),\gamma(1)),
\]
for any $0\leq s\leq t\leq 1$, which implies $\gamma\in \Gamma(X)$. Thus, $\widetilde{\Pi}$ can be defined naturally on $\Gamma(X)$ and hence, it is an dynamical optimal transference plan on $(X,d)$. Therefore, $(\tilde{\mu}_t)_{0\leq t\leq 1}$ is the associated displacement interpolation on $(X,d)$.
\end{proof}





\appendix
\noindent{\textbf{Acknowledgements.}} The first author was supported by the UEFISCDI/CNCS grant PN-III-P4-ID-PCE2020-1001.
The second author was supported by National Natural Science Foundation of China (No. 11761058) and Natural Science Foundation of Shanghai (No. 21ZR1418300, No. 19ZR1411700).


\begin{thebibliography}{10}
	

\bibitem{ADH} R. Abraham,  J. F. Delmas, P. Hoscheit, \textsl{A note on the Gromov-Hausdorff-Prokhorov distance between (locally) compact metric measure spaces}, Electron. J.   Probab., \textbf{18} (2013), 1--21.





%


\bibitem{AGS} L. Ambrosio, N. Gigli, G. Savar\'e, \textsl{Gradient flows in metric spaces and in the space of probability measures}, Lectures in Mathematics, ETH Z\"urich, Birkh\"auser, (Second edition) 2008.


\bibitem{AGS2} L. Ambrosio, N. Gigli, G. Savar\'e, \textsl{Metric measure spaces with Riemannian Ricci curvature bounded from below}, Duke Math. J., \textbf{163}/(7) (2014), 1405--1490.

\bibitem{AGS3}	L. Ambrosio, N. Gigli, G. Savar\'e, \textsl{Calculus and heat flow in metric measure spaces and applications to spaces with Ricci bounds from below}, Invent. Math., \textbf{195} (2014), 289--391.

\bibitem{BCS} D. Bao, S. S. Chern, Z. Shen, \textsl{An Introduction
	to Riemannian-Finsler Geometry}. GTM {\bf{200}}, Springer-Verlag,
2000.


\bibitem{Ba} K. Bacher, \textsl{On Borell-Brascamp-Lieb Inequalities on Metric Measure Spaces}, Potential Anal., \textbf{33} (2010), 1--15.






\bibitem{BRS} D. Bao, C. Robles, Z. Shen,
\textsl{Zermelo navigation on Riemannian manifolds},
J. Differential Geom., \textbf{66}/(3) (2004), 377--435.

















\bibitem{Bre} G. E. Bredon, \textsl{Topology and Geometry}, GTM \textbf{139}, Springer-Verlag, 1993.


\bibitem{DYS}D. Burago, Y. Burago, S. Ivanov, \textsl{A course in metric
geometry}, American Mathematical Society, 2001.


\bibitem{BM}   H. Busemann,  W. Mayer, \textsl{On the foundations of calculus of variations},  Trans. Amer. Math. Soc., \textbf{49}/(2) (1941), 173--198.






\bibitem{CM} F. Cavalletti, A. Mondino, \textsl{Sharp and rigid isoperimetric inequalities in
metric-measure spaces with lower Ricci curvature
bounds}, Invent. math., \textbf{208} (2017), 803--849.




















\bibitem{DV} D. Daley, D. Vere-Jones, \textsl{An Introduction to the Theory of Point Processes}, Vol. 1: Elementary Theory and Methods (2nd ed.), Springer-Verlag, New York, 2003.



\bibitem{E} D. Egloff, \textsl{Uniform Finsler Hadamard manifolds}, Ann. Inst. Henri Poincar\'e, \textbf{66} (1997), 323--357.

\bibitem{GMS} N. Gigli, A. Mondino, G. Savar\'e, \textsl{Convergence of pointed non-compact metric measure spaces and stability of Ricci curvature bounds and heat flows}, Proc. Lond. Math. Soc., \textbf{111} (2015), 1071--1129.

\bibitem{Gromov} M. Gromov,
Metric structures for Riemannian and non-Riemannian spaces. With appendices by M. Katz, P. Pansu and S. Semmes. Progress in Mathematics, 152. Birkh\"auser Boston, Inc., Boston, MA, 1999.

\bibitem{GLP} M. Gromov, J. Lafontaine, P. Pansu,
\textsl{Structures m\'etriques pour les vari\'et\'e riemanniennes},
r\'edig\'e par J. Lafontaine et P. Pansu, Cedic/Fernand Nathan,
Paris, 1981.

\bibitem{HKZ} L. Huang, A. Krist\'aly, W. Zhao, \textsl{Sharp uncertainty principles on general Finsler manifolds,} Trans. Amer. Math. Soc. \textbf{373} (2020), no. 11, 8127--8161.

















\bibitem{AK} A. Khezeli, \textsl{Metrization of the Gromov-Hausdorff(-Prokhorov)
topology for boundedly-compact metric spaces}, Stoch. Proc. Appl., \textbf{130} (2020),  3842--3864.

 \bibitem{Kristaly-GN} A. Krist\'aly, \textsl{Metric measure spaces supporting Gagliardo-Nirenberg inequalities: volume non-collapsing and rigidities}, Calc. Var. Partial Differential Equations \textbf{55} (2016), no. 5, Art. 112, 27 pp.







\bibitem{Lo1} J. Lott, \textsl{Some geometric properties of the Bakry-\'Emery-Ricci tensor}, Comment. Math. Helv., \textbf{78} (2003), 865--883.

\bibitem{LV} J. Lott, C. Villani, \textsl{Optimal transport for metric-measure spaces via
Ricci curvature}, Ann. of Math., \textbf{169} (2009), 903--991.




\bibitem{Matsumoto} M. Matsumoto, \textsl{A slope of a mountain is a Finsler surface with respect to a time measure}, J. Math. Kyoto Univ., \textbf{29}/(1) (1989), 17--25.








\bibitem{O} S. Ohta, \textsl{Finsler interpolation inequalities}, Calc. Var. Partial Differential
Equations, \textbf{36} (2009), 211--249.


\bibitem{O1} S. Ohta, \textsl{Optimal transport and Ricci curvature in Finsler geometry. Probabilistic
approach to geometry}, 323--342, Adv. Stud. Pure Math., \textbf{57}, Math. Soc. Japan,
Tokyo, 2010.


\bibitem{O2} S. Ohta, \textsl{Nonlinear geometric analysis on Finsler manifolds}, Eur. J. Math.  \textbf{3}/(4) (2017), 916--952.

\bibitem{Ot} S. Ohta, K.-T. Sturm, \textsl{Heat flow on Finsler manifolds}, Comm. Pure Appl. Math.
\textbf{62}/(10) (2009), 1386--1433.

\bibitem{PP2} P. Petersen, \textsl{Gromov-Hausdorff convergence of metric spaces}, Proc. Symp. Pure Math. \textbf{54}/(1993), 489--504.


\bibitem{PP} P. Petersen, \textsl{Riemannian geometry (second edition)}, GTM \textbf{171},  Springer, New York, 2006.







\bibitem{R} H.-B. Rademacher, \textsl{Nonreversible Finsler metrics of positive
	flag curvature}. A sampler
of Riemann-Finsler geometry, Cambridge Univ. Press, Cambridge, 2004, pp. 261--302.

\bibitem{Rademacher} H.-B. Rademacher, \textsl{A sphere theorem for non-reversible Finsler metrics}, Math. Ann., \textbf{328} (2004), 373--387.


\bibitem{SZ} Y. Shen, W. Zhao, \textsl{Gromov pre-compactness theorems for nonreversible Finsler manifolds}, Differ. Geom. Appl., \textbf{28} (2010) 565--581.




\bibitem{Shen2013} Z. Shen, \textsl{Differential geometry of spray and Finsler spaces}.  Kluwer Academic
Publishers, 2001.




\bibitem{Sh1} Z. Shen, \textsl{Lectures on Finsler geometry}. World
Sci., Singapore, 2001.


	\bibitem{Sturm-1} K.-T. Sturm, \textsl{On the geometry of metric measure spaces. I}, {Acta
	Math.,} \textbf{196}/(1) (2006), 65--131.

\bibitem{Sturm-2} K.-T. Sturm, \textsl{On the geometry of metric measure spaces. II}, {Acta
	Math.,} \textbf{196}/(1) (2006), 133--177.


\bibitem{V} C. Villani, \textsl{Topics in optimal transportation (second edition)}, Graduate Studies in Mathematics \textbf{58}, American Mathematical
Society, Providence (2016).

\bibitem{Vi} C. Villani, \textsl{Optimal transportation, Old and New}, Grundlehren der mathematischen Wissenschaften \textbf{338}, Spinger (2009).

































\bibitem{Z3} W. Zhao, \textsl{Homotopy finiteness theorems for Finsler manifolds}, Publ. Math. Debrecen, \textbf{83}/(3) (2013), 329--352.


\bibitem{Z4} W. Zhao, \textsl{Hardy inequalities with best constants on
Finsler metric measure manifolds}, J. Geom. Anal., \textbf{31} (2021), 1992--2032.

\bibitem{ZS} W. Zhao, Y. Shen, \textsl{A universal volume comparison theorem
	for Finsler manifolds and related results},  Can. J. Math., \textbf{65} (2013), 1401--1435.




\end{thebibliography}
\end{document}